\let\narrowtilde=\tilde
\let\tilde\widetilde
\newcommand\At{{\widetilde A}}
\newcommand\at{{\widetilde a}}
\newcommand\alphat{{\widetilde\alpha}}
\newcommand\Bt{{\widetilde B}}
\newcommand\bt{{\widetilde b}}
\newcommand\betat{{\widetilde\beta}}
\newcommand\Deltat{{\widetilde\Delta}}
\newcommand\Gt{{\widetilde G}}
\newcommand\gt{{\narrowtilde g}}
\newcommand\Ht{{\widetilde H}}
\newcommand\Mt{{\widetilde M}}
\newcommand\Nt{{\widetilde N}}
\newcommand\Pt{{\widetilde P}}
\newcommand\Phit{{\widetilde\Phi}}
\newcommand\Psit{{\widetilde\Psi}}
\newcommand\St{{\widetilde S}}
\newcommand\st{{\widetilde s}}
\newcommand\Tt{{\widetilde T}}
\renewcommand\tt{{\widetilde t}}
\newcommand\Ut{{\widetilde U}}
\newcommand\ut{{\narrowtilde u}}
\newcommand\Vt{{\widetilde V}}
\newcommand\Zt{{\widetilde Z}}
\newcommand{\bX}{{\mathbf{X}}}
\newcommand{\Z}{{\mathbb{Z}}}
\newcommand{\sfG}{{\sfyes{G}}}
\newcommand{\sfGt}{\widetilde\sfG}
\newcommand{\R}{\mathbb{R}}
\let\connsup=\circ
\newcommand{\conn}{^\connsup}
\DeclareMathOperator{\stab}{stab}
\newcommand\algsup{\textup a}
\newcommand\alg{\textsup\algsup}
\newcommand\sepsup{\textup s}
\newcommand\sep{\textsup\sepsup}
\newcommand{\ka}{{k\alg}}
\newcommand{\ks}{{k\sep}}
\let\sfyes\mathsf
\newcommand{\Q}{\mathbb{Q}}
\DeclareMathOperator{\coker}{coker}
\DeclareMathOperator{\Hom}{Hom}
\DeclareMathOperator{\Int}{Int}
\DeclareMathOperator{\GL}{GL}
\DeclareMathOperator\Or{O}
\DeclareMathOperator{\SL}{SL}
\DeclareMathOperator{\SO}{SO}
\DeclareMathOperator\Sp{Sp}
\DeclareMathOperator{\Gal}{Gal}
\DeclareMathOperator\Aut{Aut}
\DeclareMathOperator\Ind{Ind}
\DeclareMathOperator\PGL{PGL}
\DeclareMathOperator\WRes{R}
\DeclareMathOperator\SU{SU}
\DeclareMathOperator\im{im}
\renewcommand{\AA}{\mathscr{A}}
\newcommand{\BB}{\mathscr{B}}
\renewcommand{\SS}{\mathscr{S}}
\newcommand{\ff}{\mathfrak{f}}
\newcommand{\inv}{^{-1}}
\let\multsup=\times
\newcommand{\mult}{^{\multsup}}
\newcommand\semi{\textsub{semi}}
\newcommand\unip{\textsub{unip}}
\DeclarePairedDelimiter\@sset\{\}
\let\smashsset=\@sset
\newcommand\sset{\@sset*}
\DeclarePairedDelimiterX\@set[2]\{\}{#1\,\delimsize\vert\,\mathopen{}#2}
\let\smashset=\@set
\newcommand\set{\@set*}
\DeclarePairedDelimiterX\@sett[2]\{\}{#1\,\delimsize\vert\,\mathopen{}\text{#2}}
\let\smashsett=\@sett
\newcommand\sett{\@sett*}
\newcommand{\lsup}[1]{{}^{#1}}
\newcommand\lsub[1]{{}_{#1}}
\newcommand\smashsub[1]{_{\smash{#1}}}
\DeclareMathOperator\Lie{Lie}
\DeclarePairedDelimiter\@card\lvert\rvert
\let\smashcard=\@card
\newcommand\card{\@card*}
\DeclarePairedDelimiterX\@pair[2]\langle\rangle{#1, #2}
\let\smashpair=\@pair
\newcommand\pair{\@pair*}
\DeclarePairedDelimiter\@sgen\langle\rangle
\let\smashsgen=\@sgen
\newcommand\sgen{\@sgen*}
\newcommand\ldef{\mathrel{:=}}
\newcommand\adsub{\subtext{ad}}
\newcommand\adform{\textsub\adsub}
\newcommand\dersub{\subtext{der}}
\newcommand\der{\textsub\dersub}
\newcommand\scsub{\subtext{sc}}
\newcommand\scform{\textsub\scsub}
\newcommand\smoothsub{\subtext{sm}}
\newcommand\smooth{\textsub\smoothsub}
\newcommand\pin{\mathcal X}
\newcommand\tildepin{\tilde\pin}
\let\subtext=\textup
\let\suptext=\textup
\newcommand\textsub[1]{_{\subtext{#1}}}
\newcommand\textsup[1]{^{\suptext{#1}}}
\def\fix#1^#2{(#1^{#2})\smooth\conn}
\def\res#1|_{#1\rvert_}
\let\maparrow=\longrightarrow
\let\maptoarrow=\longmapsto
\let\inarrow=\longhookrightarrow
\newcommand\biarrow{\stackrel{\sim}\maparrow}
\newcommand{\maaap}[4]{\ensuremath{{#2}\colon\abmaaap{#1}{#3}{#4}}}
\newcommand{\map}{\maaap\maparrow}
\newcommand{\mapto}{\maaap\maptoarrow}   
\newcommand{\bimap}{\maaap\biarrow}
\newcommand\abmaaap[3]{\ensuremath{{#2}#1{#3}}}
\newcommand{\abmap}{\abmaaap\maparrow}
\newcommand{\abmapto}{\abmaaap\maptoarrow}
\newcommand{\abinmap}{\abmaaap\inarrow}
\newcommand{\abbimap}{\abmaaap\biarrow}
\DeclareMathOperator\Spec{Spec}
\numberwithin{equation}{section}
\newtheorem{thm}[equation]{Theorem}
\newtheorem*{adhocthm}{Theorem \theadhocthm}
\newtheorem{mainthm}{Theorem}
\newtheorem{prop}[equation]{Proposition}
\newtheorem{lem}[equation]{Lemma}
\newtheorem{cor}[equation]{Corollary}
\theoremstyle{definition}
\newtheorem{defn}[equation]{Definition}
\newtheorem{conj}[equation]{Conjecture}
\theoremstyle{remark}
\newtheorem{rem}[equation]{Remark}
\newtheorem*{note}{Note}
\newtheorem{example}[equation]{Example}
\newtheorem{notation}[equation]{Notation}
  \renewenvironment{proof}[1][\proofname]{\par
    \pushQED{\qed}%
    \normalfont \topsep6\p@\@plus6\p@\relax
    \trivlist
    \item\relax
{\itshape #1\@addpunct{.}}\hspace\labelsep\ignorespaces
}{%
    \popQED\endtrivlist\@endpefalse
}
\let\setminus\smallsetminus
\newcommand\uAut{\underline\Aut}
\DeclareMathOperator\Fix{fix}
\DeclareMathOperator\Frob{Frob}
\DeclareMathOperator\Grp{Grp}
\newcommand\uHom{\underline\Hom}
\DeclareMathOperator\Inn{Inn}
	\newcommand\uInn{\underline\Inn}
	\newcommand\uLie{\underline\Lie}
\DeclareMathOperator\Isom{Isom}
\DeclareMathOperator\Mor{Mor}
	\newcommand\uMor{\underline\Mor}
\DeclareMathOperator\Out{Out}
	\newcommand\uOut{\underline\Out}
	\newcommand\AffSch{\mathrm{AffSch}}
\DeclareMathOperator\Sh{Sh}
	\DeclareMathOperator\SubSh{SubSh}
	\DeclareMathOperator\PSh{PSh}
\newcommand\Sets{\mathrm{Sets}}
\DeclareMathOperator\Skew{Skew}
\newcommand\gpon[2]{{#1}\text-{#2}}
\newcommand\ct{{\widetilde c}}
\newcommand\ft{{\narrowtilde f}}
\newcommand\lambdat{{\narrowtilde\lambda}}
\newcommand\vt{{\narrowtilde v}}
\newcommand\wt{{\narrowtilde w}}
\newcommand\Xt{{\widetilde X}}
\newcommand\Yt{{\widetilde Y}}
\newcommand\htilde{{\narrowtilde h}}
\newenvironment{smallpmatrix}
	{\left(\begin{smallmatrix}}
	{\end{smallmatrix}\right)}
\title[On smooth-group actions]%
{On smooth-group actions on reductive groups and spherical buildings}
\date{\today}
\author{Jeffrey D.~Adler}
\email{jadler@american.edu}
\address{Department of Mathematics and Statistics\\
American University\\
Washington, DC 20016-8050}
\author{Joshua M.~Lansky}
\email{lansky@american.edu}
\address{Department of Mathematics and Statistics\\
American University\\
Washington, DC 20016-8050}
\author{Loren Spice}
\email{l.spice@tcu.edu}
\address{Department of Mathematics \\
Texas Christian University \\
Fort Worth, TX 76129}
\let\@wraptoccontribs\wraptoccontribs\makeatother
\address{Department of Mathematics \\
University of Michigan \\
Ann Arbor, MI 48109-1043}
\email{stcotner@umich.edu}
\subjclass{
20G07,
20G15,
14L30,
20E42
}
\keywords{
reductive group,
group action,
fixed points,
quasisemisimple action,
spherical building,
}
\begin{document}

\begin{abstract}
Let $k$ be a field, and suppose that $\Gamma$
is a smooth $k$-group that acts on a connected, reductive $k$-group $\Gt$.
Let $G$ denote the maximal smooth, connected subgroup
of the group of $\Gamma$-fixed points in $\Gt$.
Under fairly general conditions, we show that $G$ is a reductive $k$-group,
and that the image of the functorial embedding
$\abmap{\SS(G)}{\SS(\Gt)}$
of spherical buildings
is the set of ``$\Gamma$-fixed points in $\SS(\Gt)$'',
in a suitable sense.
In particular, we do not need to assume that $\Gamma$ has order
relatively prime to the characteristic of $k$ (nor even that $\Gamma$ is finite),
nor that the action of $\Gamma$ preserves a Borel--torus pair in $\Gt$.
\end{abstract}

\maketitle

\setcounter{tocdepth}{2}
\tableofcontents

\numberwithin{equation}{section}
\section{Introduction}

Throughout this paper, $k$ will denote an arbitrary field
of characteristic exponent $p \geq 1$.
Let $\Gt$ denote a connected, reductive $k$-group.
We will let
\(\Gamma\) be a smooth \(k\)-group that
acts on \(\Gt\), but, for simplicity,
in this introduction we will just take \(\Gamma\) to be
an abstract, finite group.

If $\Gamma$ has order prime to $p$,
then Prasad and Yu \cite{prasad-yu:actions}*{Theorem 2.1}
have shown that the connected
part of the group of fixed points $G:= (\Gt^\Gamma)\conn$
is a reductive group.

In the case where $k$ is finite or local,
one can then ask if there is a natural lifting
from representations of $G(k)$ to those of $\Gt(k)$.
For an appropriate definition of `natural',
base change would be a special case of this phenomenon.

Earlier work \cites{adler-lansky:lifting1,adler-lansky:lifting2}
by some of the present authors
accomplishes a step toward such a lifting
when $k$ is finite,
but for our intended applications we imposed
a different hypothesis on $\Gamma$,
namely, that for some extension $E$ of $k$,
the action of $\Gamma$ on $\Gt_E$
preserves a Borel--torus pair.
That is, $\Gamma$ acts \emph{quasisemisimply} on $\Gt_E$.
(In fact, we deal there with a setting that is still more general,
where $\Gamma$ acts only on the root datum of $\Gt$,
and $G$ need not be a fixed-point subgroup.
But we don't pursue that setting here.)

In the case where $k$ is a nonarchimedean local field
with residue field $\ff$,
we will show
in another work \cite{adler-lansky-spice:actions3} that,
under reasonable tameness hypotheses,
if $\Gamma$ acts quasisemisimply on $\Gt_E$,
then we can identify the Bruhat--Tits bulding $\BB(G,k)$
with the fixed-point set $\BB(\Gt,k)^\Gamma$.
Moreover, for such a fixed point $x\in \BB(\Gt,k)^\Gamma$,
one has a quasisemisimple action of $\Gamma$
on the associated $\ff$-group $\sfGt_x$,
whose rational points are the reductive quotient
of the parahoric subgroup $\Gt(k)_x$ of $\Gt(k)$.
We will also examine the relationship between
$\sfG_x$ and
the maximal smooth, connected subgroup of \(\sfGt_x^\Gamma\).
Since many representations of $\Gt(k)$ are constructed
out of representations of $\sfGt_x(\ff)$,
our lifting for representations of finite groups implies
a lifting for some of the data used to construct representations
of $p$-adic groups.

But in order to accomplish any of the above, we first need to know
that $G$ is a reductive group.

We already know that $G$ is reductive in two situations:
the order of $\Gamma$ is prime to $p$ (from
\cite{prasad-yu:actions}*{Theorem 2.1}, as mentioned above)
or
$\Gamma$ acts quasisemisimply
(from \cite{adler-lansky:lifting1}*{Proposition 3.5}).
Comparing these two hypotheses, one sees
that neither one implies the other,
suggesting that a common generalization exists.

In the present paper, we provide three overlapping results:
Theorem \ref{thm:quass}, which proves our strongest conclusions about
quasisemisimple actions under
a rationality hypothesis;
Theorem \ref{thm:ka-quass}, which removes the rationality hypothesis,
is closely related to
\cite{adler-lansky:lifting1}*{Proposition 3.5}, and corrects
an error in it (see Remark \ref{rem:error});
and Theorem \ref{thm:loc-quass}, which is a common generalization of
\cite{adler-lansky:lifting1}*{Proposition 3.5} and
\cite{prasad-yu:actions}*{Theorem 2.1}, and which
moreover comes close to generalizing Theorem \ref{thm:quass}.
Obviously, one would prefer to have a single result, but
it turns out that there is a trade-off:
we must either impose a rationality assumption,
as we do in Theorem \ref{thm:quass}; or
relax our detailed control over the structure of the fixed-point group,
as we do in Theorem \ref{thm:ka-quass}; or
impose a smoothness assumption,
as we do in Theorem \ref{thm:loc-quass}.
Since Corollary \ref{cor:G-smooth} shows that
the smoothness assumption is only an issue in
certain specific circumstances in characteristic \(2\),
we regard this as only a small imperfection.
Moreover, these are genuine restrictions, not just
an artifact of our proof.
Examples
\ref{ex:sl-not-reductive} and
\ref{ex:sl-accidentally-reductive}
show that the
equivalent statements of
Theorem \ref{thm:ka-quass}(\ref{subthm:ka-quass-quass})
do not always hold;
and
Example \ref{ex:vust:cones:prop:5:cor} is a counterexample to
Theorem \ref{thm:loc-quass}(\ref{subthm:loc-quass-reductive})
if we drop the smoothness hypothesis.

Our main result improves on
\cite{adler-lansky:lifting1}*{Proposition 3.5}
in a few additional ways.
First, we allow $\Gamma$ to be any smooth algebraic group,
rather than just an abstract finite group.
Second, in order to handle the case of certain groups
over imperfect fields of characteristic two,
we previously needed to cite an unpublished result of Lemaire
\cite{lemaire:twisted-characters}*{Th\'eor\`eme 4.6}.
Theorem \ref{thm:ka-quass}(\ref{subthm:ka-quass-quass}),
specifically the equivalence of
(\ref{case:ka-quass-quass}) with (\ref{case:ka-quass-Borel}),
replaces our use of that result.
Third,
as in Prasad--Yu \cite{prasad-yu:actions}*{Proposition 3.4},
but under our weaker hypotheses,
we show that the image of the functorial embedding
$\abmap{\SS(G)}{\SS(\Gt)}$
of spherical buildings
is the set $\SS(\Gt)^\Gamma$ of $\Gamma$-fixed points in $\SS(\Gt)$.

This paper and a subsequent one \cite{adler-lansky-spice:actions3}
are inspired by
work of Prasad and Yu \cite{prasad-yu:actions}.
Although the analogous results of Prasad and Yu are a special
case of our Theorem \ref{thm:loc-quass},
we cannot claim to have ``recovered'' the former,
as we use them in our proofs in an essential way.

The outline of the paper is as follows.
In \S\ref{sec:notation}, we fix notation, and
recall some basic structural results about
root systems and algebraic groups.
In \S\ref{sec:main}, we state our main theorems.
In \S\ref{sec:generalities}, we discuss
general results on fixed-point groups and spherical buildings, and
define a notion of `induction' of
groups with action.
This latter requires some foundations from
algebraic geometry, which are discussed in Appendix \ref{app:ind},
written jointly with Sean Cotner.

The bulk of the paper, \S\ref{sec:rd-quass}--\ref{sec:quass},
is devoted to the quasisemisimple case, i.e., the case where,
at least after base change,
there is a single Borel--torus pair that is
preserved by all elements of \(\Gamma\).
In \S\ref{sec:rd-quass}, we recapitulate and flesh out
some abstract results about quasisemisimple actions on root data,
first systematically discussed in
\cite{adler-lansky:data-actions}.

In \S\ref{sec:quass-smooth}, we observe
(Proposition \ref{prop:quass-rough})
that there is a close connection between
maximal tori in \(G\) and \(\Gt\), which allows us
to build a
``\(\Gamma\)-equivariant structure theory for \(\Gt\)'',
proving analogues of results in \cite{borel:linear}*{\S14},
especially involving root groups.
This culminates in the proof of
Theorem \ref{thm:quass}(\ref{subthm:quass-smoothable}), where we prove that
\((\Gt^\Gamma)\conn\) is smoothable
(the phrase commonly written in the literature as
``\((\Gt^\Gamma)\conn\) is defined over \(k\)'') by
pasting together root subgroups to
exhibit a large smooth subscheme that
remains large after base change.
This
allows us to prove many facts over \(k\) by
first base changing to its algebraic closure.
For example, the proof of
Theorem \ref{thm:quass}(\ref{subthm:quass-reductive}), which says that
the connected, smoothed fixed-point group
\(\fix\Gt^\Gamma\) is reductive, becomes an easy consequence of
\cite{adler-lansky:lifting1}*{Proposition 3.5}.

In \S\ref{sec:quass-smooth}, we use
Theorem \ref{thm:quass}(\ref{subthm:quass-smoothable}) to
study the relation between
Borel subgroups, then
parabolic subgroups, then finally
spherical buildings
for \(G\) and \(\Gt\), deducing
Theorem \ref{thm:quass}(\ref{subthm:quass-spherical-bldg})
as, essentially, a re-statement of
Proposition \ref{prop:parabolic-Borus}.
We then analyze the difference between the root systems
denoted, in the notation of \S\ref{sec:quass-smooth}, by
\(\Phi(\Gt, S)\) and \(\Phi(G, S)\)
in Proposition \ref{prop:GS-vs-GtS}, and, after deducing
the intermediate result Corollary \ref{cor:G-smooth}, use it to prove
Theorem \ref{thm:quass}(\ref{subthm:quass-smooth}).

There are a few places in the paper where
our general investigations turn on a detailed understanding of
a very specific case.  The first of these is
\S\ref{sec:sl-outer}, where we handle the case that
(by Corollary \ref{cor:G-smooth}) is
the only nontrivial obstruction to smoothness of
fixed-point groups for quasisemisimple actions.
(The others are
Lemma \ref{lem:A_{2n}-quass},
Proposition \ref{prop:E_6-outer}, and
\S\ref{subsec:D_4-outer}.)
Since this is also the case where
imperfect descent creates the most trouble, we actually
assume that we only have quasisemisimplicity after
(possibly inseparable) base change.
Proposition \ref{prop:sl-smoothable} discusses how close
\((\Gt^\Gamma)\conn\) comes to being smoothable, a question which,
as we will show when we prove
Theorem \ref{thm:ka-quass}(\ref{subthm:ka-quass-quass}) in
the next section, is
closely related to whether the action was quasisemisimple
before before change.

The proof of Theorem \ref{thm:ka-quass} in
\S\ref{sec:thm:ka-quass} is now almost routine, although
still involved.
It involves mostly the results of \S\ref{sec:quass-smooth}, using
the special case handled in \S\ref{sec:sl-outer} to show that
Theorem \ref{thm:ka-quass}%
	(\ref{subthm:ka-quass-quass})%
	(\ref{case:ka-quass-reductive})
implies
Theorem \ref{thm:ka-quass}%
	(\ref{subthm:ka-quass-quass})%
	(\ref{case:ka-quass-smoothable}).
Our work here allows us to prove the pleasant
Corollary \ref{cor:ss-is-quass}, which
upgrades the classical result
\cite{steinberg:endomorphisms}*{Theorem 7.5}
over an algebraically closed field to handle separably closed fields, and
Corollary \ref{cor:quass-imperfect}, which is quite close to
\cite{lemaire:twisted-characters}*{Th\'eor\`eme 4.6}.

Finally, in \S\ref{sec:thm:loc-quass},
we are ready to prove Theorem \ref{thm:loc-quass}.
Thanks to the work of Prasad and Yu \cite{prasad-yu:actions},
the only cases that can cause real difficulty are when
an absolutely almost-simple group has
an outer-automorphism group
whose order is divisible by \(p\), or
that is not cyclic.
We handle the former case in
\S\ref{subsec:quass-unip}, which proves the important reduction result
Proposition \ref{prop:inductive-step-p}, and
the finite-group-theoretic result
Proposition \ref{prop:inner-solvable}.
Surprisingly to us, this latter result turned out to involve
the Feit--Thompson theorem, and
part of the classification of finite simple groups (though not the full force of it).
The latter case (of a non-cyclic outer-automorphism group),
which can only occur for groups of type \(\mathsf D_4\) and
is only really an issue in characteristic \(p = 2\),
is handled in \S\ref{subsec:D_4-outer}.
Finally, \S\ref{subsec:thm:loc-quass} combines the reductions in
the rest of \S\ref{sec:thm:loc-quass} to prove
Theorem \ref{thm:loc-quass}.

\textbf{Acknowledgments}.
The first- and second-named authors
were partially supported by the National Science Foundation (DMS-0854844).
The first-named author was also partially supported by
the American University College of Arts and Sciences Faculty Research Fund.
The third-named author was partially supported by Simons Foundation grant 636151, and Simons Foundation gift MPS-TPM-00007390.
The first- and third-named authors thank the American Institute of Mathematics
for its hospitality.
We thank
John Stembridge for the proof of Lemma \ref{lem:keep-it-simple},
and
Alex Bauman and Sean Cotner for
Example \ref{ex:sl-not-reductive}.
The latter was communicated to us before we knew
the full statement of Proposition \ref{prop:sl-even}, and
helped us to formulate it.

\numberwithin{equation}{subsection}
\section{Notation and recollections}
\label{sec:notation}

Throughout the paper,
\(k\) is an arbitrary field.

We write \(\ka\) for a fixed (but arbitrary) algebraic closure of \(k\),
\(\ks\) for the maximal separable extension field of \(k\) inside \(\ka\),
and
\(\Gal(k)\) for the automorphism group of \(\ka/k\),
which we may, and do, identify with the Galois group of \(\ks/k\) by
restriction to \(\ks\).
When we refer to an algebraic extension \(E/k\), we will always
assume that \(E\) is contained in the fixed algebraic closure \(\ka\).

We will use ``\(k\)-scheme'' as shorthand for
``affine scheme of finite type over \(k\)'',
and ``\(k\)-group'' as shorthand for
``affine group scheme of finite type over \(k\)'',
but we do not require that our \(k\)-groups be smooth or connected.

For each positive integer \(n\),
we write \(\mu_n\) for the group scheme
\(\Spec (k[X]/(x^n - 1))\) of
\(n\)th roots of unity.

When parsing a symbol involving
subscripts and superscripts not otherwise
disambiguated by parentheses,
such as, in later notation,
\(\Gt\smooth\conn\),
\(\Gt\der^\Gamma\), or
\(\Gt_\ka^{\Gamma_\ka}\),
the subscript should be parsed before the superscript.
Thus,
\(\Gt\smooth\conn\) means
\((\Gt\smooth)\conn\), not \((\Gt\conn)\smooth\), when
they differ;
\(\Gt\der^\Gamma\) means
\((\Gt\der)^\Gamma\), not
\((\Gt^\Gamma)\der\), when
they differ; and
\(\Gt_\ka^{\Gamma_\ka}\) means
\((\Gt_\ka)^{\Gamma_\ka}\), not
\((\Gt^{\Gamma_\ka})_\ka\)
(which is usually meaningless).

\subsection{Root systems, root data, and actions}
\label{subsec:roots}

In this subsection, we do not need a field;
instead, we are concerned only with
abstract root systems and root data.
Let \(\Psi = (X^*, \Phi, X_*, \Phi^\vee)\) be a root datum.

We write
\abmapto a{a^\vee} for the duality map between \(\Phi\) and \(\Phi^\vee\),
and \(W(\Phi)\) for the Weyl group of \(\Phi\).
We say that a subset \(\Phi'\) of \(\Phi\) is
\textit{integrally closed} if the intersection with \(\Phi\) of
the \(\Z\)-span of \(\Phi'\) is
again \(\Phi'\) (in which case \(\Phi'\) is itself a root system).

\begin{defn}
Two elements of \(\Phi\) are called
\textit{strongly orthogonal} if
they are not proportional, and
neither their sum nor their difference belongs to \(\Phi\).
A subset of \(\Phi\) is called
\textit{strongly orthogonal} if
every pair of distinct elements is strongly orthogonal.
\end{defn}

\begin{lem}
\label{lem:keep-it-simple}
Suppose that
	\begin{itemize}
	\item \(\Phi_1\) is a (possibly non-reduced, possibly not closed) sub-root system of \(\Phi\),
	\item \(\Phi_1^+\) is a set of positive roots for \(\Phi_1\),
and	\item \(a \in \Phi_1\) is simple with respect to \(\Phi_1^+\).
	\end{itemize}
Then there is a system of positive roots \(\Phi^+\) for \(\Phi\)
that contains \(\Phi_1^+\),
and for which either \(a\) is simple with respect to \(\Phi^+\),
or \(a\) is divisible in \(\Phi\) and \(a/2\) is simple with respect to \(\Phi^+\).
\end{lem}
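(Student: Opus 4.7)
The approach is to construct $\Phi^+$ as the set of roots on which a carefully chosen linear functional $\lambda$ on $V := \Phi \otimes_\Z \R$ is positive. Let $V_1 \subseteq V$ denote the real span of $\Phi_1$. Simplicity of $a$ in $\Phi_1^+$ means that the hyperplane $\set{\mu_1 \in V_1^*}{\mu_1(a) = 0}$ meets the closure of the $\Phi_1^+$-Weyl chamber in a codimension-one face. First I would choose $\mu_1 \in V_1^*$ in the relative interior of that face---so that $\mu_1(a) = 0$ and $\mu_1(b) > 0$ for every $b \in \Phi_1^+$ not proportional to $a$---perturbing within the face so that in addition $\mu_1(c) \neq 0$ for every $c \in (\Phi \cap V_1) \setminus \R a$. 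This is possible because each such constraint $\mu_1(c) = 0$ defines a proper hyperplane in $V_1^*$, and there are only finitely many of them.

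Next, I would extend $\mu_1$ to some $\mu \in V^*$ by choosing generic values on a complement of $V_1$, ensuring $\mu(b) \neq 0$ for every $b \in \Phi \setminus V_1$; combined with the previous step, this gives $\mu\inv(0) \cap \Phi = \Phi_a := \Phi \cap \R a$. Then I would pick $\nu \in V^*$ with $\nu(a) > 0$ and set $\lambda := \mu + \epsilon\nu$ for $\epsilon > 0$ sufficiently small that the sign of $\lambda(b)$ coincides with that of $\mu(b)$ for every $b \in \Phi$ with $\mu(b) \neq 0$ (i.e., for every $b \in \Phi \setminus \Phi_a$). Define $\Phi^+ := \set{b \in \Phi}{\lambda(b) > 0}$; this is a system of positive roots, and one checks $\Phi_1^+ \subseteq \Phi^+$ by treating the roots in $\Phi_1^+ \setminus \Phi_a$ (where $\mu > 0$) and the positive multiples of $a$ in $\Phi_1^+ \cap \Phi_a$ (where $\lambda = \epsilon\nu > 0$) separately.

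Finally, to check simplicity: suppose $a = b_1 + b_2$ with $b_i \in \Phi^+$. Since $\mu \geq 0$ on $\Phi^+$ and $\mu(b_1) + \mu(b_2) = \mu(a) = 0$, each $\mu(b_i) = 0$, so each $b_i \in \Phi_a \cap \Phi^+$ is a positive rational multiple of $a$ lying in $\Phi$. Inspection of the possible positive multiples that can sum to $a$ shows the only possibility is $b_1 = b_2 = a/2$, which requires $a/2 \in \Phi$. Hence either $a$ is simple in $\Phi^+$, or $a$ is divisible in $\Phi$, in which case the same analysis applied to $a/2$ (no two positive multiples of $a$ in $\Phi^+$ can sum to $a/2$) shows that $a/2$ is simple. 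I expect the main obstacle to be the initial choice of $\mu_1$: one must use simplicity of $a$ in $\Phi_1^+$ to guarantee the existence of the relative interior of the face, and genericity to rule out vanishing on other roots of $\Phi \cap V_1$; after that, the argument is essentially a sign computation.
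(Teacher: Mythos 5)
Your proof is correct and is essentially the paper's approach unpacked: the paper picks a chamber $C$ for $\Phi$ inside the $\Phi_1$-chamber $C_1$ having the $0$-set of $a$ as a wall, and your $\lambda = \mu + \epsilon\nu$ (with $\mu$ generic on that wall and $\nu$ tipping toward $a$) is precisely an explicit point of such a $C$ near its wall. The only difference is level of detail; the underlying geometry---that the wall of $C_1$ supported by $a^\perp$ lies in the hyperplane arrangement of $\Phi$ and so bounds a $\Phi$-chamber inside $C_1$---is the same.
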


\begin{proof}[Proof (John Stembridge)]
Write \(X^\vee = \R\Phi^\vee\),
and let \(C_1\) be the chamber for \(\Phi_1\) in \(X^\vee\) determined by \(\Phi_1^+\).
The \(0\)-set of \(a\) is a wall of \(C_1\).
Let \(C\) be a chamber for \(\Phi\) in \(X^\vee\) that is contained in \(C_1\),
and has the \(0\)-set of \(a\) as a wall;
and let \(\Phi^+\) be the corresponding set of positive roots for \(\Phi\).
\end{proof}

We will need to discuss Borel--de Siebenthal theory
(see \cite{borel-desiebenthal}*{\S7, p.~216, Th\'eor\`eme 6})
in some of the detailed computations of \S\ref{sec:thm:loc-quass}.
This theory describes certain full-rank subgroups, but rests on
a classification of integrally closed subsystems, which we describe now.

\begin{defn}
\label{defn:rd-BdS}
Fix a system \(\Delta\) of simple roots, and
an element \(\alpha \in \Delta\).
Write \(\varpi^\vee\) for
the fundamental coweight corresponding to \(\alpha\),
\(\alpha_0\) for the \(\Delta(B, T)\)-highest root
in the irreducible component of \(\Phi(G, T)\) containing \(\alpha\), and
\(n = \pair{\alpha_0}{\varpi^\vee}\) for
the coefficient of \(\alpha\) in \(\alpha_0\).
Put \(\Delta_\alpha =
\Delta \cup \sset{-\alpha_0} \setminus \sset\alpha\).
We call the integrally closed subsystem of \(\Phi\)
generated by \(\Delta_\alpha\)
the \textit{Borel--de Siebenthal subsystem}
associated to \((\Delta, \alpha)\), and we call
\(\Delta_\alpha\) itself
the \textit{Borel--de Siebenthal basis}.
\end{defn}

\begin{rem}
\label{rem:rd-BdS-facts}\hfill
\begin{enumerate}[label=(\alph*), ref=\alph*]
\item\label{subrem:rd-BdS-fact}
Preserve the notation of Definition \ref{defn:rd-BdS}.
The Borel--de Siebenthal subsystem associated to
\((\Delta, \alpha)\) is
precisely the set of all roots \(\beta\) in \(\Phi\) such that
\(\pair\beta{\varpi^\vee}\) is divisible by \(n\),
or, equivalently, lies in \(\sset{0, \pm n}\).
It is easily verified that
the Borel--de Siebenthal basis is
actually a system of simple roots for
the Borel--de Siebenthal subsystem associated to \(\alpha\).
\item\label{subrem:springer-steinberg:conj:sec:4.5}
With two minor corrections,
the maximal integrally closed subsystems of
\(\Phi\) are described in \cite{springer-steinberg:conj}*{\S4.5}
in terms of
what we have called Borel--de Siebenthal subsystems.
First, in the notation there,
the maximal subsystem that they denote by
\(\langle h, a_2, \dotsc, \widehat{a_i}, \dotsc, a_r\rangle\)
should actually be
\(\langle h, a_1, \dotsc, \widehat{a_i}, \dotsc, a_r\rangle\).
That is, there should be only
one simple root omitted, not two.
Second, their result classifies the
maximal such subsystems only up to conjugacy in
the Weyl group.
With this caveat, we may say the following.
If \(\Phi'\) is a maximal integrally closed subsystem of \(\Phi\)
such that
\(\Z\Phi'\) has finite index in \(\Z\Phi\), then there are
a system of simple roots \(\Delta\) for \(\Phi\) and
a root \(\alpha \in \Delta\) such that
the coefficient of \(\alpha\) in the
\(\Delta\)-highest root of the irreducible component of \(\alpha\)
containing \(\Phi\) is prime, and
\(\Phi'\) is the Borel--de Siebenthal subsystem associated to
\((\Delta, \alpha)\).
\end{enumerate}
\end{rem}

Remark \ref{rem:inspection} is immediately motivated by
Remark \ref{rem:rd-BdS-facts}%
	(\ref{subrem:springer-steinberg:conj:sec:4.5}),
and, more particularly, by our needs in
Propositions
\ref{prop:inductive-step-p-prep} and
\ref{prop:inner-solvable}.

\begin{rem}
\label{rem:inspection}
Suppose that \(\Phi\) is reduced and irreducible, and
admits an automorphism \(\gamma\) of prime order, say \(p\).
By inspection
\cite{bourbaki:lie-gp+lie-alg_4-6}*{Chapter VI, Plates I--IX},
we have that
\(\Phi\) is of type
\(\mathsf A_n\), \(\mathsf D_n\), or \(\mathsf E_6\), and
\(p\) equals \(2\); or
\(\Phi\) is of type \(\mathsf D_4\), and
\(p\) equals \(3\).
In each case, \(p^2\) does not divide
the order of the automorphism group.

We now consider two further possible pieces of information.
First,
if the group of diagram automorphisms of \(\Phi\) has
a nontrivial subgroup \(\Gamma'\) of order relatively prime to \(p\) that
is normalised by \(\gamma\), then
\(p\) equals \(2\), \(\Phi\) is of type \(\mathsf D_4\), and
the natural map from \(\sgen\gamma \ltimes \Gamma'\) to
the group of diagram automorphisms of \(\Phi\) is
an isomorphism.

Second,
if instead there is a simple root of \(\Phi\) whose
coefficient \(\ell\) in the highest root is
prime, then
\(\Phi\) is of type
\(\mathsf D_n\) or \(\mathsf E_6\), and
\(\ell\) equals \(2\); or
\(\Phi\) is of type
\(\mathsf D_4\) or \(\mathsf E_6\), and
\(\ell\) equals \(3\).
In particular, the only possibilities where
\(\ell\) is different from \(p\) are
\(\mathsf E_6\), with \(p = 2\) and \(\ell = 3\); and
\(\mathsf D_4\), with \(p = 3\) and \(\ell = 2\).
\end{rem}

\begin{defn}
\label{defn:rd-quass}
Let \(\Psit\) be a root datum,
\(\Gamma\) an abstract group,
and
\abmap\Gamma{\Aut(\Psit)}
a homomorphism.
We say that the action of $\Gamma$ on $\Psit$,
or sometimes by abuse of notation the pair \((\Psit, \Gamma)\),
is \emph{quasisemisimple} if there is a
system of positive roots in the root system of \(\Psit\) that is
preserved by \(\Gamma\).
\end{defn}

\subsection{Groups and actions}
\label{subsec:groups}

We will follow \cite{milne:algebraic-groups}*{Definition 5.5}
in calling a homomorphism of \(k\)-groups a quotient map,
or just quotient, if
it is surjective and faithfully flat.
By \cite{milne:algebraic-groups}*
	{\S A.12 and
	Proposition 5.47},
a surjective homomorphism \abmap G H of \(k\)-groups is
always a quotient map if \(H\) is smooth.
We shall frequently use this fact without explicit mention.

Let \(G\) be a \(k\)-group.

We denote by
\(G\conn\) the maximal connected subgroup of \(G\)
(i.e., its identity component).
Passing to identity components commutes with base change,
in the sense that \((G\conn)_E\) equals \((G_E)\conn\) for
every field extension \(E/k\)
\cite{milne:algebraic-groups}*{Proposition 1.34}.

We denote by
\(G\smooth\) the maximal smooth subgroup of \(G\)
\cite{conrad-gabber-prasad:prg}*{Lemma C.4.1 and Remark C.4.2}.
Many operations involving smoothing that one might expect to commute
actually do not.

First,
\((G\conn)\smooth\) (which is smooth)
contains
\((G\smooth)\conn\) (which is both smooth and connected), but
\cite{conrad-gabber-prasad:prg}*{Remark C.4.2} gives
an example showing that they need not be equal.
If \(k\) is perfect, then
\((G\conn)\smooth\) is
the maximal reduced subscheme of \(G\conn\), hence is connected,
so equals \((G\smooth)\conn\).
Remember that, when there is a difference,
we will always understand \(G\smooth\conn\)
to mean \((G\smooth)\conn\),
not \((G\conn)\smooth\).
Thus, \(G\smooth\conn\) is always
the maximal smooth, connected subgroup of \(G\).

Second,
\((G_\ka)\smooth\)
contains, but need not equal,
\((G\smooth)_\ka\).
We have by \cite{conrad-gabber-prasad:prg}*{Lemma C.4.1}
that \((G_\ks)\smooth\) equals \((G\smooth)_\ks\);
in particular, we have equality
\((G_\ka)\smooth = (G\smooth)_\ka\) if
\(k\) is perfect.

\begin{defn}
\label{defn:smoothable}
The \(k\)-group \(G\) is called \textit{smoothable}
if \((G_\ka)\smooth\) equals \((G\smooth)_\ka\).
\end{defn}

For example, every group of multiplicative type is smoothable
\cite{conrad-gabber-prasad:prg}*{Corollary A.8.2}.
If \(k\) is perfect, then every \(k\)-group is smoothable.

\begin{rem}
\label{rem:conn-smooth}
If \(G\conn\) is smoothable, then
\(((G\conn)\smooth)_\ka\) equals
\(((G\conn)_\ka)\smooth = ((G_\ka)\smooth)\conn\), and so
is connected.
That is, \((G\conn)\smooth\) is connected, hence equals
\((G\smooth)\conn\).
Therefore, \(((G\smooth)\conn)_\ka\) equals \(((G_\ka)\smooth)\conn\).

Conversely, if
\(((G\smooth)\conn)_\ka\) equals \(((G_\ka)\smooth)\conn\), then
\(((G\conn)\smooth)_\ka\), which is always contained in
\(((G\conn)_\ka)\smooth = (G_\ka)\smooth\conn\),
is contained in \(((G\smooth)\conn)_\ka\), so
\((G\conn)\smooth\) is contained in \((G\smooth)\conn\).
Since the reverse containment always holds, we have equality
\((G\conn)\smooth = (G\smooth)\conn\).
Therefore,
\(((G\conn)\smooth)_\ka\) equals
\(((G\smooth)\conn)_\ka =
((G_\ka)\smooth)\conn =
((G\conn)_\ka)\smooth\).
That is, \(G\conn\) is smoothable.

Finally, if \(G\) is smoothable, then
\(((G\smooth)\conn)_\ka = ((G\smooth)_\ka)\conn\)
equals
\(((G_\ka)\smooth)\conn\), so
\(G\conn\) is smoothable.
The converse of this statement does not hold.
We thank Sean Cotner for explaining the following example.
If \(k\) is imperfect and
\(t\) is an element of \(k\mult \setminus (k\mult)^p\), then
the group \(G = \Spec k[X]/(X^{p^2} - t X^p)\) of
\cite{milne:algebraic-groups}*{\S1.57} has smoothable
identity component
\(G\conn = \alpha_p = \Spec k[X]/(X^p)\) and
\emph{trivial} maximal smooth subgroup, while
\((G_\ka)\smooth = \Spec k[X]/(X^p - \sqrt[p]t X)\)
is nontrivial.
\end{rem}

The literature, including \cites{
	steinberg:endomorphisms,
	digne-michel:non-connected,
	adler-lansky:lifting1
},
often works exclusively with smooth group schemes,
and so does not mention passage to the maximal smooth subgroup.
Moreover, since the maximal smooth subgroup of a \(k\)-group can
be unexpectedly small
(as seen, for example, in Remark \ref{rem:conn-smooth}),
one often wants to base change to \(\ka\) before passing to
the maximal smooth subgroup.
Thus, for example, the reference to
\((\Gt^\Gamma)\conn\) in \cite{adler-lansky:lifting1}*{Lemma 3.4}
is really to the maximal smooth subgroup
\((((\Gt^\Gamma)\conn)_\ka)\smooth\) of
the base-changed fixed-point group
\(((\Gt^\Gamma)\conn)_\ka\).
Since this is now a \(\ka\)-group, it makes sense to ask
whether it is defined over \(k\), and
the content of \cite{adler-lansky:lifting1}*{Lemma 3.4} is that
it is.
Then it is easy to show that the descent to \(k\) must be
the maximal smooth subgroup \(((\Gt^\Gamma)\conn)\smooth\) of
\((\Gt^\Gamma)\conn\); that is, that
\((\Gt^\Gamma)\conn\) is smoothable, in our sense.
In this paper, we always say explicitly when
we are performing base change or
passing to the maximal smooth subgroup.

We write \(\uLie(G)\) for the Lie-algebra-valued functor
\abmapto A{\ker(\abmap{G(A[\epsilon]/(\epsilon^2))}{G(A)})}
on \(k\)-algebras
of \cite{demazure-gabriel:groupes-algebriques}*
	{Ch.~II, \S4, 1.2 and 4.2}, and put
\(\Lie(G) = \uLie(G)(k)\).
Since \(k\) is a field, the canonical map from
the vector group
\abmap A{\Lie(G) \otimes_k A}
to \(\uLie(G)\) is an isomorphism
\cite{demazure-gabriel:groupes-algebriques}*
	{Ch.~II, \S4, Proposition 4.8 and (b)}, and
we freely treat it as equality.

If \(G\) acts on a \(k\)-scheme \(Z\), and
\(X\) is a closed subscheme of \(Z\),
then we write \(\stab_G(X)\) for
the stabilizer of \(X\) in \(G\)
\cite{milne:algebraic-groups}*{Corollary 1.81}.
If \(G\) is acting on \(Z = G\) by conjugation,
we write \(N_G(X)\) in place of \(\stab_G(X)\).

If \(H\) is a \(k\)-group acting on \(G\), then we write \(G^H\) for
the \(H\)-fixed-point subgroup of \(G\), i.e.,
the maximal subgroup of \(G\) on which
\(H\) acts trivially
\cite{milne:algebraic-groups}*{Theorem 7.1}.
For all \(a \in \bX^*(H)\), we will permit ourselves to write
\(\uLie(G)_a\) for the vector group
\abmapto A{\Lie(G)_a \otimes_k A}.
By \cite{demazure-gabriel:groupes-algebriques}*
	{Ch.~II, \S4, Proposition 2.5},
we have that \(\Lie(G^H)\) equals \(\Lie(G)^H\).
If \(H\) is a subgroup of \(G\), then we write
\(C_G(H)\) for the centralizer of \(H\) in \(G\)
\cite{milne:algebraic-groups}*{Proposition 1.92}.
Thus, if \(H\) is a subgroup of \(G\) acting on \(G\) by conjugation,
then \(C_G(H)\) equals \(G^H\).
We will sometimes allow ourselves more generally
to write \(C_G(H)\) for \(G^H\)
whenever \(H\) is a \(k\)-group acting on \(G\).

Write \(\bX^*(G) = \Hom(G, \GL_1)\)
for the abstract group of characters of \(G\),
and \(\bX_*(G) = \Hom(\GL_1, G)\)
for the set of cocharacters of \(G\).

If \(G\) is a \(k\)-group equipped with
a representation on \(X\), in the sense of
\cite{jantzen:alg-reps}*{Part I, \S2.7},
then we write \(X^G\) for the \(G\)-fixed subspace of \(X\)
and, for every \(a \in \bX^*(G)\),
\(X_a\) for the \(a\)-weight space for \(G\) in \(X\)
\cite{jantzen:alg-reps}*{Part I, \S2.10(1, 1\('\))}.
If \(X'\) is a subset of \(X\), then, as in
\cite{jantzen:alg-reps}*{Part I, 2.12(1)}
(except that we use \(C\) in place of \(Z\)), we write
\(C_G(X')\) for the fixer of \(X'\) in \(G\).
We have by
\cite{jantzen:alg-reps}*{Part I, 7.11(10)} that
\(\Lie(C_G(X'))\) equals
\(C_{\Lie(G)}(X')\), i.e.,
the annihilator in \(\Lie(G)\) of \(X'\).

We write \(\Phi(X, G)\) for
the set of nonzero weights of \(G\) on \(X\), i.e.,
those nonzero \(a \in \bX^*(G)\) such that
\(X_a\) is nonzero.
This will be most interesting when \(G\) is a torus, but
we do not require this.

If \(G = S\) is a group of multiplicative type,
then \(\bX_*(S)\) is a lattice, and
it depends only on \(S\smooth\conn\).
Put $V(S) = \bX_*(S) \otimes_\Z \R$.

If \(S\) acts on \(G\), then
we write \(\Phi(G, S)\) for \(\Phi(\Lie(G), S)\), even if
\(G\) is not smooth.

As usual, we call a \(k\)-group \(G\) \textit{reductive} if
it is smooth and
there is no nontrivial smooth, connected, unipotent, normal subgroup
of \(G_\ka\).
If \(G\) is reductive, then
we write \(G\adform\) for its adjoint quotient \(G/Z(G)\), and
\(G\scform\) for the simply connected cover of \(G\der\).

If \(S\) is a torus in \(G\),
then we put \(W(G, S) = N_G(S)/C_G(S)\), even if
this is not a Coxeter group.

\begin{lem}
\label{lem:which-Weyl}
Suppose that \(G\) is connected, reductive, and quasisplit.
Let \(S\) be a maximal split torus in \(G\), and put
\(T = C_G(S)\).
Then \(W(G, S)\) is contained in \(W(G, T)\), and
\(W(G, S)(k)\) equals \(W(G, T)(k)\).
\end{lem}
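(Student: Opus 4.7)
The plan is to begin by noting that, since $G$ is quasisplit with maximal split torus $S$, the centralizer $T = C_G(S)$ is a maximal torus of $G$; this is the standard characterization of quasisplitness. Because $G$ is connected reductive and $T$ is a maximal torus, we moreover have $C_G(T) = T$. For any scheme-theoretic point $n$ of $N_G(S)$, conjugation by $n$ preserves $S$ and hence also $C_G(S) = T$, so $N_G(S) \subseteq N_G(T)$ as closed subgroups of $G$. Quotienting both by the common normal subgroup $T$ then gives a closed immersion
\[
W(G, S) = N_G(S)/T \hookrightarrow N_G(T)/T = W(G, T)
\]
of finite étale $k$-group schemes, proving the first assertion.

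For the second assertion, I would take $w \in W(G, T)(k)$. The conjugation action of $N_G(T)$ on $T$ factors through $W(G, T)$ and realizes $w$ as an automorphism of $T$ defined over $k$. The key observation is that any $k$-automorphism of $T$ must preserve its maximal split subtorus, since $S$ is characterized, among $k$-subgroups of $T$, by the condition $\bX_*(S) = \bX_*(T)^{\Gal(k)}$. Hence, lifting $w$ to some $n \in N_G(T)(\ks)$, the induced automorphism of $T_{\ks}$ preserves $S_{\ks}$, which forces $n \in N_G(S)(\ks)$. The image $w'$ of $n$ in $W(G, S)(\ks)$ therefore maps to $w$ under the inclusion of the previous paragraph.

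To check that $w'$ already lies in $W(G, S)(k)$, I would invoke Galois descent: for each $\sigma \in \Gal(k)$, the element $\sigma(w') \in W(G, S)(\ks)$ also maps to $\sigma(w) = w$ in $W(G, T)(\ks)$, so injectivity of $W(G, S)(\ks) \hookrightarrow W(G, T)(\ks)$ forces $\sigma(w') = w'$, as desired.

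The main subtlety I expect is that an element of $W(G, T)(k)$ need not lift to an element of $N_G(T)(k)$, so one cannot simply pick a $k$-rational representative and inspect its conjugation action on $S$ directly. Working at the level of finite étale Weyl groups, combined with the functorial characterization of the maximal split subtorus as the subtorus with cocharacter lattice $\bX_*(T)^{\Gal(k)}$, is what sidesteps this difficulty.
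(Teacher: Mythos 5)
Your proof is correct and follows essentially the same route as the paper's: establish $N_G(S) \subseteq N_G(T)$ via $C_G(S) = T = C_G(T)$, then for $w \in W(G,T)(k)$ use the fact that the induced $k$-automorphism of $T$ preserves the maximal split subtorus $S$ to get a $\ks$-rational representative in $N_G(S)(\ks)$, and finish by Galois descent. The only difference is that you spell out the final Galois-descent step and the characterization of $S$ inside $T$ a bit more explicitly than the paper does.
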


\begin{proof}
We have that \(C_G(S)\) and \(C_G(T)\) both equal \(T\),
so \(N_G(S)\) normalizes \(C_G(S) = T\) and hence lies in
\(N_G(T)\).
Thus, \(W(G, S) = N_G(S)/T\) is a subgroup of
\(W(G, T) = N_G(T)/T\).

Every automorphism of \(T\) preserves
its maximal split torus \(S\).
(Remember that ``automorphism of \(T\)'' \emph{means}
``automorphism of \(T\) defined over \(k\).'')
Thus, if \(w\) belongs to \(W(G, T)(k)\) and
\(n\) is a representative of \(w\) in \(N_G(T)(\ks)\), then
\(n\) normalizes \(S_\ks\), hence belongs to
\(N_{G_\ks}(S_\ks)(\ks) = N_G(S)(\ks)\).
Therefore, the image \(w\) of \(n\) in \(W(G, T)(k)\) belongs to
\(N(G, S)(k)\).
\end{proof}

Put \(Z(G) = C_G(G)\).
Thus, for example, if the characteristic exponent
\(p\) of \(k\) is greater than \(1\), then
\(Z(\SL_p)\) is the infinitesimal group scheme \(\mu_p\), not
its underlying maximal smooth group scheme, which is trivial.

For every \(k\)-algebra \(A\),
write \(\Aut(G_A)\) for
the abstract group of automorphisms of
\(G_A \ldef \Spec (k[G] \otimes_k A)\); and then
write \(\uAut(G)\) for
the automorphism group functor,
defined by \(\uAut(G)(A) = \Aut(G_A)\)
for all \(k\)-algebras \(A\)
\cite{SGA-3.1}*{Expos\'e I, no.~1.7, p.~10}).
Write \(\Int\) for the inner-automorphism map
\abmap G{\uAut(G)}
given by
\abmapto g{(\abmapto h{g h g\inv})},
\(\uInn(G)\) for the (sheaf-theoretic) image of \(G\), so that
\(\Int\) is an isomorphism of \(G\adform = G/Z(G)\) onto \(\uInn(G)\), and
\(\Inn(G) = \uInn(G)(k)\).
Thus, \(\Int(G(k))\) is contained in, but need not equal,
\(\Inn(G)\).
We shall call an automorphism of \(G\) \textit{inner} if
it belongs to \(\Inn(G)\), and \textit{outer} if
it is trivial or does not belong to \(\Inn(G)\).

Write \([\cdot, \cdot]\) for the commutator map
\abmap{G \times G}G
given by
\abmapto{(g, h)}{g h g\inv h\inv}, and
\(G\der\) for the derived subgroup of \(G\), i.e.,
the subgroup generated by the image of \([\cdot, \cdot]\).

\begin{rem}
\label{rem:aut-inner}
If \(G\) is connected and reductive,
then the group functor \(\uAut(G)\) is
an affine group scheme over \(k\), but is
\emph{not} of finite type over \(k\) unless \(G\) is semisimple;
and
\(\uInn(G)\) is the identity component \(\uAut(G)\conn\)
\cite{SGA-3.3}*{Expos\'e XXIV, Th\'eoreme 1.3(i) and Corollaire 1.7}.
\end{rem}

\begin{rem}
\label{rem:all-ss=>lr}
If \(G\) is smooth and
all elements of \(G(\ka)\) are semisimple, then
\(G\) is linearly reductive, in the sense of
\cite{milne:algebraic-groups}*{Definition 12.52}.
This follows from
\cite{milne:algebraic-groups}*{Corollary 17.25} and
the description of linearly reductive groups in
\citelist{
	\cite{demazure-gabriel:groupes-algebriques}*
		{Ch.~II, \S7, Proposition 3.4}
	\cite{milne:algebraic-groups}*
		{Corollary 22.43}
}.
\end{rem}

If \(G\) is reductive
and
\(S\) is a maximal split torus in \(G\),
then there is a root datum
\(\Psi(G, S) \ldef
(\bX^*(S), \Phi(G, S), \bX_*(S), \Phi^\vee(G, S))\),
\cite{conrad-gabber-prasad:prg}*{Theorem C.2.15}.

A minimal
parabolic
subgroup $B$ of $G$ containing $S$ determines
a system of simple roots $\Delta(B,S)$ in $\Psi(G,S)$, and
we write $\Psi(G,B,S)$ for the associated based root datum.

\begin{defn}
\label{defn:decomposing-data}
A \textit{reductive datum} is a triple
\((\Gt, \Gamma, \varphi)\), where
\(\Gt\) is a connected, reductive \(k\)-group,
\(\Gamma\) is a smooth \(k\)-group, and
\map\varphi{\Gamma \times \Gt}\Gt\ is
an action of \(\Gamma\) on \(\Gt\).
We usually denote a datum by \((\Gt, \Gamma)\), leaving
\(\varphi\) implicit.

A morphism
from one reductive datum \((\Gt, \Gamma)\) to
another \((\Gt', \Gamma')\)
is a pair of group homomorphisms
\abmap\Gt{\Gt'} and
\abmap\Gamma{\Gamma'} with
obvious compatibility properties.
In this paper,
we will only consider the case where
\(\Gamma\) equals \(\Gamma'\) and
the morphism \abmap\Gamma{\Gamma'} is the identity, so
we omit it from the notation.
\end{defn}

Although it is perfectly fine if the group \(\Gamma\) in
a reductive datum \((\Gt, \Gamma)\) is constant,
which is the original motivating case for this paper,
we do \emph{not} assume this, and do \emph{not}
identify \(\Gamma\) with its set of \(\ka\)-points.
If \(\gamma\) is an element of \(\Gamma(k)\), then
\(\sgen\gamma\) always means
the \emph{algebraic} subgroup of \(\Gt\) generated by \(\gamma\),
\emph{not} an \emph{abstract} subgroup of \(\Gamma(k)\).
This leads to surprising notation such as
\(\sgen\gamma(k)\).
If \(\gamma\) has finite order, then this is indeed
the abstract subgroup of \(\Gt(k)\) generated by \(\gamma\);
but, otherwise, \(\sgen\gamma(k)\) is usually strictly bigger than
that abstract subgroup.

\begin{defn}
\label{defn:ordinary-cases}
Let
\((\Gt, \Gamma)\) be a reductive datum.
\begin{enumerate}[label=(\alph*), ref=\alph*]
\item
A \emph{Borel--torus pair} in \(\Gt\) is a pair \((\Bt, \Tt)\)
of a Borel subgroup \(\Bt\) of \(\Gt\)
(i.e., a subgroup such that \(\Bt_\ka\) is a maximal
smooth, connected, solvable subgroup of \(\Gt_\ka\))
and a maximal torus \(\Tt\) in \(\Bt\).
A \emph{parabolic--Levi} pair in \(\Gt\) is a pair \((\Pt, \Mt)\),
where \(\Pt\) is a parabolic subgroup of \(\Gt\)
(i.e., \(\Pt_\ka\) contains a Borel subgroup of \(\Gt_\ka\))
and
\(\Mt\) is a Levi  component of \(\Pt\)
(i.e., a subgroup such that \(\Mt_\ka\) maps isomorphically onto
the maximal reductive quotient of \(\Pt_\ka\)).
The group \(\Gt\) always admits at least one parabolic--Levi pair,
the trivial pair \((\Gt, \Gt)\),
but need not admit a Borel--torus pair; it admits such a pair
if and only if
\(\Gt\) is quasisplit.
\item\label{subdefn:gp-quass}
We say that the action, or again sometimes
by abuse of notation
the pair \((\Gt, \Gamma)\), is
\emph{quasisemisimple} if there is a
Borel--torus pair in \(\Gt\) that is
preserved by \(\Gamma\).
(This is equivalent to saying that
$\Gt$ is quasisplit,
there is a \(\Gamma\)-stable maximal torus \(\Tt\) in \(\Gt\), and
the action of
\(\Gal(k) \ltimes \Gamma(\ks)\) on the root datum
$\Psi(\Gt_\ks, \Tt_\ks)$
is quasisemisimple.
See Remark \ref{rem:how-to-restrict}.)
An automorphism \(\gamma\) of \(\Gt\) is called
\textit{quasisemisimple}, or said to
\textit{act quasisemisimply} on \(\Gt\),
if
\((\Gt, \sgen\gamma)\) is quasisemisimple.
\item
We say that \((\Gt, \Gamma)\) is \emph{exceptional} if
it is quasisemisimple, and
there is
a maximal split torus \(S\) in \(G \ldef \fix\Gt^\Gamma\) such that
some root in \(\Phi(G, S)\) is
divisible in \(\Phi(\Gt^\Gamma, S)\).
An automorphism \(\gamma\) of \(\Gt\) is called
\textit{exceptional}, or said to
\textit{act exceptionally} on \(\Gt\), if
\((\Gt, \sgen\gamma)\) is exceptional.
\end{enumerate}
\end{defn}

\begin{rem}
\label{rem:torus-quass}
If \((\Gt, \Gamma)\) is a reductive datum, then
Remark \ref{rem:aut-inner} gives that
the action of \(\Gamma\conn\) factors uniquely through
\map\Int{\Gt\adform}{\uAut(\Gt)}
to give a map \abmap{\Gamma\conn}{\Gt\adform}.
\begin{enumerate}[label=(\alph*), ref=\alph*]
\item\label{subrem:act-inner}
There is an action of
\(\pi_0(\Gamma)(k)\) on
the set of almost-simple components of \(\Gt\).
\item\label{subrem:quass-to-torus}
If \((\Bt, \Tt)\) is a Borel--torus pair in \(\Gt\) that
is preserved by \(\Gamma\), then
the image of \(\Gamma\conn\) in \(\Gt\adform\)
normalizes \(\Bt\) and \(\Tt\), hence
is contained in \(\Tt/Z(\Gt)\).
Since it is smooth and connected, the image is a torus.
More generally, if we
write \(\Gamma'\) for the subgroup of \(\Gamma\) that
acts on \(\Gt\) by inner automorphisms, then
the image of \(\Gamma'\) in \(\Gt\adform\) need not
\emph{be} a torus, but is still contained in \(\Tt/Z(\Gt)\).
\item\label{subrem:torus-to-quass}
Suppose that
\(\Gt\) is quasi-split and
\(\Gamma\) is a split torus, or,
slightly more generally, a torus whose image under
\abmap\Gamma{\Gt\adform} is split.
Then the image of \(\Gamma\) is
contained in a maximal torus in \(\Gt\adform\) that
is contained in a Borel subgroup of \(\Gt\adform\),
and this Borel--torus pair pulls back to
a Borel--torus pair in \(\Gt\) that is
preserved by \(\Gamma\).
That is, \((\Gt, \Gamma)\) is quasisemisimple.
\item
The conclusion of (\ref{subrem:torus-to-quass})
can fail if
we relax the assumption that
\(\Gamma\) is a torus to
the assumption that
it is of multiplicative type.
Indeed, if \(p\) is odd, then
the subgroup of \(\PGL_2\) generated by the images there of
\(\begin{smallpmatrix} 1 & 0 \\ 0 & -1 \end{smallpmatrix}\)
and
\(\begin{smallpmatrix} 0 & 1 \\ 1 & 0 \end{smallpmatrix}\)
is of multiplicative type, but is not contained in
any torus, and so does not preserve any Borel--torus pair
in \(\PGL_2\).
\end{enumerate}
\end{rem}

\numberwithin{equation}{section}
\section{Statements of the main theorems}
\label{sec:main}

We introduce the common
Notation \ref{notn:main} to be used in
the statement of each of our main theorems.
In the terminology of Definition \ref{defn:decomposing-data},
the conditions on \(\Gt\), and \(\Gamma\) say precisely that
\((\Gt, \Gamma)\) is a reductive datum over \(k\).

\begin{notation}
\label{notn:main}
Let
	$k$ be a field,
	\(\Gt\) a connected, reductive \(k\)-group, and
	\(\Gamma\) a smooth \(k\)-group acting on \(\Gt\).
Put \(G = \fix\Gt^\Gamma\).
\end{notation}

Recall the notion of the spherical building of a reductive group from \cite{curtis-lehrer-tits:spherical}*{\S2}.
See \S\ref{subsec:spherical} for more details.

Theorem \ref{thm:quass} proves our most comprehensive results,
including the notoriously ill-behaved case of
certain outer actions on
groups of type \(\mathsf A_{2n}\) in characteristic \(2\),
under the assumption that \((\Gt, \Gamma)\) is quasisemisimple.
Theorems \ref{thm:ka-quass} and \ref{thm:loc-quass}
generalize this in two ways.

Theorem \ref{thm:ka-quass} shows what happens if we replace
quasisemisimplicity of \((\Gt, \Gamma)\) by
the weaker hypothesis of
quasisemisimplicity of
\((\Gt_\ka, \Gamma_\ka)\).
In this case, we know that the analogue of
Theorem \ref{thm:quass}(\ref{subthm:quass-reductive})
does not hold; see
Example \ref{ex:sl-not-reductive}, due to
Alex Bauman and Sean Cotner.
We describe the possible failure of reductivity in
Theorem \ref{thm:ka-quass}(\ref{subthm:ka-quass-how-reductive}),
and provide many necessary and sufficient conditions for
the stronger hypothesis of rational quasisemisimplicity to hold in
Theorem \ref{thm:ka-quass}(\ref{subthm:ka-quass-quass}).
(It would be pleasant to be able to replace the two-part condition
Theorem \ref{thm:ka-quass}(\ref{subthm:ka-quass-quass})%
	(\ref{case:ka-quass-reductive}) by just
the condition that \(G\) is reductive, but
Example \ref{ex:sl-accidentally-reductive} shows that
this is not sufficient.)
Because of this list of necessary and sufficient conditions,
Theorem \ref{thm:ka-quass} subsumes
Theorem \ref{thm:quass}, except for
Theorem \ref{thm:quass}(\ref{subthm:quass-spherical-bldg}).
That part does not literally make sense in
the general setting of Theorem \ref{thm:ka-quass}, but
we do have Conjecture \ref{conj:ka-quass-spherical-bldg}
concerning an appropriate replacement.

Therem \ref{thm:loc-quass} still further weakens our assumption to
``local quasisemisimplicity'', where we require only that
every point of \(\Gamma(\ka)\) acts
quasisemisimply on \(\Gt_\ka\); but
this is too general for our techniques to handle, so
we must impose a smoothness requirement for
the action of each element of \(\Gamma(\ka)\).
(This assumption is vacuously satisfied unless
we are in characteristic \(2\), and moreover
encounter the difficult case mentioned above of
a certain kind of outer action on \(\mathsf A_{2n}\).)
This restriction is not a failure of our proof techniques, but
an indication of a genuine counterexample to
Theorem \ref{thm:loc-quass}(\ref{subthm:loc-quass-reductive}).
See Example \ref{ex:vust:cones:prop:5:cor}.
We do not know whether the analogue of
Theorem \ref{thm:quass}(\ref{subthm:quass-smoothable})
holds in the situation of
Theorem \ref{thm:loc-quass} if
we drop the extra smoothness assumption from the latter.
Other than this, Theorem \ref{thm:loc-quass}
subsumes Theorem \ref{thm:quass}.

Theorem \ref{thm:quass} overlaps significantly with
\cite{adler-lansky:lifting1}*{Proposition 3.5}.
Much of our proof technique is quite different
(although we do wind up citing
\cite{adler-lansky:lifting1}*{Proposition 3.5} itself in
the proof of
Proposition \ref{prop:quass-facts}(\ref{subprop:quass-split-descends}),
on which Theorem \ref{thm:quass}(\ref{subthm:quass-reductive})
relies), and
our result is somewhat more general, in that
it allows for an action by an algebraic group, rather than
a finite group.

\begin{rem}
\label{rem:error}
Theorem \ref{thm:quass} also corrects an error in
\cite{adler-lansky:lifting1}*{Proposition 3.5},
which should have included the smoothness hypothesis
of Theorem \ref{thm:loc-quass},
which, as remarked above, is automatic in most cases.
Without such a hypothesis, the group $G$ of
Notation \ref{notn:main} is not necessarily reductive,
and one instead has the weaker conclusion of
Theorem \ref{thm:ka-quass}(\ref{subthm:ka-quass-how-reductive}).
The error comes from the citation of
\cite{lemaire:twisted-characters}*{Th\'eor\`eme 4.6},
whose proof rests on
Proposition 4.5 and thence on Lemme 4.5 of the same reference.
The last result asserts in particular that,
over a separably closed field,
a unipotent element of a reductive group lies in
a (rational) Borel subgroup.
This can fail for the so called
\textit{bad} unipotent elements, in the sense of
\cite{tits:unipotent-2}*{\S3.1}.
For example, if
\(k\) is an imperfect field of characteristic \(2\) and
\(t \in k\mult\) is a nonsquare in \(k\mult\), then
the image of
\(\begin{smallpmatrix}
0 & 1 \\
t & 0
\end{smallpmatrix}\) in \(\operatorname{PGL}_2(k)\) is
bad
\cite{tits:unipotent-2}*{Example 3.5}, and so
is a counterexample to
\cite{lemaire:twisted-characters}*{Lemme 4.5}.
\end{rem}

We will prove Theorem \ref{thm:quass} in
\S\S\ref{sec:quass-smooth}, \ref{sec:quass}, as we
build up a considerable `equivariant' structure theory for
reductive groups with quasisemisimple action.
Specifically,
Theorem \ref{thm:quass}%
	(\ref{subthm:quass-smoothable},%
	\ref{subthm:quass-reductive})
is proven at the end of
\S\ref{sec:quass-smooth};
Theorem \ref{thm:quass}(\ref{subthm:quass-smooth})
is proven after
Corollary \ref{cor:G-smooth}; and
Theorem \ref{thm:quass}(\ref{subthm:quass-spherical-bldg})
is proven after
Proposition \ref{prop:parabolic-Borus}.
We are then ready for Theorem \ref{thm:loc-quass},
the essential idea of which is to combine
Theorem \ref{thm:quass} with results of
\cite{prasad-yu:actions}.

\begin{mainthm}
\label{thm:quass}
Let \(k\), \(\Gamma\), \(\Gt\), and \(G\) be as in
Notation \ref{notn:main}.
Suppose that \((\Gt, \Gamma)\) is quasisemisimple.
	\begin{enumerate}[label=(\arabic*), ref=\arabic*]
	\addtocounter{enumi}{-1}
	\item\label{subthm:quass-smoothable}
	\((\Gt^\Gamma)\conn\) is smoothable.
	\item\label{subthm:quass-smooth}
	\((\Gt^\Gamma)\conn\) equals
\((Z(\Gt)^\Gamma)\conn\cdot\fix\Gt^\Gamma\) unless
\(p\) equals \(2\) and \((\Gt_\ks, \Gamma_\ks)\) is exceptional.
	\item\label{subthm:quass-reductive}
	$G$ is connected and reductive.
	\item\label{subthm:quass-spherical-bldg}
	The functorial map from the spherical building \(\SS(G)\) of \(G\)
	to the spherical building \(\SS(\Gt)\) of \(\Gt\)
	identifies \(\SS(G)\) with
	\(\SS(\Gt) \cap \SS(\Gt_\ka)^{\Gamma(\ka)}\).
	\end{enumerate}
\end{mainthm}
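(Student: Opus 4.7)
Fix a $\Gamma$-stable Borel--torus pair $(\Bt, \Tt)$ in $\Gt$, provided by quasisemisimplicity, with unipotent radicals $\Ut$ and $\Ut^-$. The overall plan is to develop a $\Gamma$-equivariant structure theory by analyzing the $\Gamma$-action on $\Tt$ and on each root subgroup $\Ut_a$, for $a \in \Phi(\Gt, \Tt)$, and then reassembling via the big-cell open immersion $\Ut^- \times \Tt \times \Ut \hookrightarrow \Gt$, which is $\Gamma$-equivariant because the pair $(\Bt, \Tt)$ is preserved.

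For part (\ref{subthm:quass-smoothable}), equivariance of the big cell shows $(\Ut^-)^\Gamma \times \Tt^\Gamma \times \Ut^\Gamma$ is an open subscheme of $\Gt^\Gamma$. The torus factor $\Tt^\Gamma$ is smoothable since $\Tt$ is of multiplicative type. For the unipotent factors, one groups root subgroups into $\Gamma$-orbits in $\Phi(\Gt, \Tt)$ and, by explicit analysis of the $\Gamma$-action on each orbit-product, shows that the fixed-point subscheme is smoothable and has the expected dimension. Pasting these pieces together exhibits a smooth $k$-subscheme of $(\Gt^\Gamma)\conn$ whose dimension matches that of $(((\Gt^\Gamma)\conn)_\ka)\smooth$, proving smoothability. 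For part (\ref{subthm:quass-reductive}), once smoothability is known we may base-change to $\ka$ and invoke a corrected form of \cite{adler-lansky:lifting1}*{Proposition 3.5} to conclude that $\fix\Gt^\Gamma$ is reductive over $\ka$; smoothability then allows us to descend reductivity to $k$, and connectedness follows from the explicit root-group construction, which exhibits $G$ as generated by a single torus together with connected root subgroups.

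For part (\ref{subthm:quass-spherical-bldg}), the functorial map $\SS(G) \to \SS(\Gt)$ amounts to pushing a parabolic of $G$ to an associated $\Gamma$-stable parabolic of $\Gt$. The plan is to set up a bijection between parabolic subgroups of $G$ and $\Gamma$-stable parabolic subgroups of $\Gt$, via $\Pt \mapsto \fix\Pt^\Gamma$ in one direction and by applying part (\ref{subthm:quass-reductive}) to a Levi component of $\Pt$ in the other. This is essentially a restatement of Proposition \ref{prop:parabolic-Borus}, once one observes that every $\Gamma$-fixed point of $\SS(\Gt_\ka)$ is represented by a $\Gamma$-stable parabolic subgroup of $\Gt_\ka$.

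Finally, for part (\ref{subthm:quass-smooth}), the structural comparison $(\Gt^\Gamma)\conn = (Z(\Gt)^\Gamma)\conn \cdot \fix\Gt^\Gamma$ rests on comparing the root systems $\Phi(\Gt^\Gamma, S)$ and $\Phi(G, S)$ for $S$ a maximal split torus of $G$ (Proposition \ref{prop:GS-vs-GtS}). When these coincide, both sides of the equality are generated by $S$ and the fixed root subgroups, with central toral slack exactly $(Z(\Gt)^\Gamma)\conn$. The only discrepancy---when a root of $\Phi(G, S)$ is divisible in $\Phi(\Gt^\Gamma, S)$---occurs precisely in characteristic two in the exceptional outer-$\mathsf A_{2n}$ case. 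I expect this exceptional case, where the naive $\Gamma$-fixed subscheme of a single orbit-product of root groups acquires extra infinitesimal directions and forces a genuine discrepancy between the two root systems, to be the main technical obstacle; its dedicated analysis in \S\ref{sec:sl-outer} is exactly what justifies the exception in the statement.
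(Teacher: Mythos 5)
Your proposal follows the paper's strategy closely: the big-cell reassembly for smoothability, base-change to $\ka$ plus \cite{adler-lansky:lifting1}*{Proposition 3.5} for reductivity, Proposition \ref{prop:parabolic-Borus} as the engine for the spherical building statement, and the root-system comparison for the nonexceptional case of the smoothness equality.

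Two places are underspecified enough to deserve flagging. First, in your treatment of part (\ref{subthm:quass-smoothable}) you group root subgroups by ``$\Gamma$-orbits in $\Phi(\Gt, \Tt)$''; the paper instead uses the relative root subgroups $\Ut_a$ for $a \in \Phi(\Gt, S)$ with $S$ the maximal split torus of $G$, i.e., it groups by $\Gal(k)\ltimes\Gamma(\ks)$-orbits on the absolute roots. This joint Galois/equivariant descent is exactly what makes the open cell a scheme over $k$ with the needed rationality, so the grouping you propose would not directly give a $k$-rational decomposition. Second, and more seriously, for part (\ref{subthm:quass-spherical-bldg}) a bijection between parabolics of $G$ and $\Gamma$-stable parabolics of $\Gt$ is not by itself enough: a point of the spherical building is a \emph{ray}, and $\Gamma$ might rescale the ray nontrivially even while preserving the associated parabolic. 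The paper's actual argument produces a $\Gamma$-stable opposite parabolic via Proposition \ref{prop:parabolic-Borus}(\ref{subprop:quass-fixed-Levi}) and then applies the complete-reducibility step (Lemma \ref{lem:spherical-cr}): the common scaling factor on the opposite pair of rays is a homomorphism $\abmap{\Gamma(k)}{\R_{>0}}$ factoring through the finite group $\pi_0(\Gamma)(k)$, hence trivial. You cite the right proposition but omit this crucial step; as written, your plan would establish the parabolic-level correspondence but not the identity of fixed-point sets in the building.
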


\begin{mainthm}
\label{thm:ka-quass}
Let \(k\), \(\Gamma\), \(\Gt\), and \(G\) be as in
Notation \ref{notn:main}.
Suppose that \((\Gt_\ka, \Gamma_\ka)\) is quasisemisimple.
	\begin{enumerate}[label=(\arabic*), ref=\arabic*]
	\item\label{subthm:ka-quass-how-reductive}
	\(G\) is an extension of
a reductive group by
a split unipotent group.
	\item\label{subthm:ka-quass-quass}
	The following statements are equivalent.
		\begin{enumerate}[label=(\alph*), ref=\alph*]
		\item\label{case:ka-quass-quass}
		\((\Gt_\ks, \Gamma_\ks)\) is quasisemisimple.
		\item\label{case:ka-quass-smoothable}
		\((\Gt^\Gamma)\conn\) is smoothable.
		\item\label{case:ka-quass-reductive}
		\(G\) is reductive, and
\(C_\Gt(G)\) is of multiplicative type.
		\item\label{case:ka-quass-torus}
		There is a torus \(T\) in \(G\) such that
\(T_\ka\) is a maximal torus in \(\fix\Gt_\ka^{\Gamma_\ka}\).
		\item\label{case:ka-quass-Borel}
		There are a
\(\Gamma_\ks\)-stable maximal torus \(\Tt\) in \(\Gt_\ks\), and a
\(\Gamma_\ka\)-stable Borel subgroup of \(\Gt_\ka\) containing
\(\Tt_\ka\).
		\end{enumerate}
	\end{enumerate}
\end{mainthm}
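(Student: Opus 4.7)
The strategy is to reduce as much as possible to Theorem~\ref{thm:quass} applied to $(\Gt_\ks,\Gamma_\ks)$ when that datum is already quasisemisimple, and to invoke the detailed structure theory of the $\SL_{2n+1}$ outer-involution case in \S\ref{sec:sl-outer} to quantify what goes wrong when it is not. The equivalence (\ref{case:ka-quass-quass})~$\iff$~(\ref{case:ka-quass-Borel}) is immediate: since $\Gamma$ is smooth, $\Gamma(\ks)=\Gamma(\ka)$, so the $\Gamma$-action on the root datum of any $\Gamma_\ks$-stable maximal torus is the same over $\ks$ as over $\ka$, and a set of positive roots is $\Gamma_\ka$-stable if and only if it is $\Gamma_\ks$-stable.

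Granting (\ref{case:ka-quass-quass}), I would apply Theorem~\ref{thm:quass} directly to $(\Gt_\ks,\Gamma_\ks)$. Since smoothing commutes with separable base change and forming connected components commutes with any base change, $G_\ks=\fix\Gt_\ks^{\Gamma_\ks}$, which by Theorem~\ref{thm:quass} is connected reductive; hence $G$ is reductive, $(\Gt^\Gamma)\conn$ is smoothable over $k$, and the torus and centraliser assertions of (\ref{case:ka-quass-torus}) and (\ref{case:ka-quass-reductive}) descend from the corresponding objects over $\ks$ produced by the quasisemisimple structure theory of \S\ref{sec:quass-smooth}. (Smoothability over $\ks$ forces $G_\ka=\fix\Gt_\ka^{\Gamma_\ka}$, so any maximal torus of $G$ has the maximality property demanded by (\ref{case:ka-quass-torus}).) This handles (\ref{case:ka-quass-quass})~$\Rightarrow$~each of (\ref{case:ka-quass-smoothable}), (\ref{case:ka-quass-reductive}), (\ref{case:ka-quass-torus}), (\ref{case:ka-quass-Borel}).

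For part~(\ref{subthm:ka-quass-how-reductive}) and the remaining converses in (\ref{subthm:ka-quass-quass}), the only source of trouble is the failure of $(\Gt_\ks,\Gamma_\ks)$ to be quasisemisimple; by the classification of diagram automorphisms in Remark~\ref{rem:inspection} and the almost-simple factor decomposition of Remark~\ref{rem:aut-inner}, the obstruction to descending a $\Gamma_\ka$-stable Borel--torus pair from $\ka$ to $\ks$ can only arise from the exceptional outer-involution configuration in characteristic $2$ on factors of type $\mathsf A_{2n}$. Proposition~\ref{prop:sl-smoothable} measures precisely the defect in smoothability and reductivity in this configuration, and identifies the split unipotent contribution that appears in $G$. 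I would use this to prove part~(\ref{subthm:ka-quass-how-reductive}) by reducing to the $\SL$-outer case on each problematic factor, and to prove (\ref{case:ka-quass-torus})~$\Rightarrow$~(\ref{case:ka-quass-quass}) via Remark~\ref{rem:torus-quass}(\ref{subrem:torus-to-quass}), by noting that a torus $T$ with $T_\ka$ maximal in $\fix\Gt_\ka^{\Gamma_\ka}$ already selects, over $\ks$, a Borel--torus pair preserved by $\Gamma_\ks$.

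The main obstacle, as flagged in the outline at the end of the introduction, is the implication (\ref{case:ka-quass-reductive})~$\Rightarrow$~(\ref{case:ka-quass-smoothable}): the hypothesis that $G$ is reductive and $C_\Gt(G)$ is of multiplicative type must force the split unipotent contribution from Proposition~\ref{prop:sl-smoothable} to be trivial, since otherwise it would either enlarge the unipotent radical of $G$ (contradicting reductivity) or contribute a non-multiplicative component to $C_\Gt(G)$. Once this implication is established, (\ref{case:ka-quass-smoothable})~$\Rightarrow$~(\ref{case:ka-quass-quass}) follows by contrapositive from the same quantitative analysis of non-smoothability in the exceptional case, closing the circle of equivalences.
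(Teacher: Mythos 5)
Your overall architecture matches the paper's: reduce to Theorem~\ref{thm:quass} applied to $(\Gt_\ks,\Gamma_\ks)$ when the quasisemisimplicity hypothesis descends, and invoke Proposition~\ref{prop:sl-smoothable} (after reducing to adjoint, separably closed, and almost simple) to quantify the failure in the exceptional $\mathsf A_{2n}$/characteristic-$2$ case. The identification of (\ref{case:ka-quass-reductive})~$\Rightarrow$~(\ref{case:ka-quass-smoothable}) as the crux, handled by Proposition~\ref{prop:sl-smoothable}(\ref{subprop:sl-when-reductive}), is correct.

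There are, however, two concrete errors in the justifications you offer. First, the claim ``since $\Gamma$ is smooth, $\Gamma(\ks)=\Gamma(\ka)$'' is false whenever $k$ is imperfect and $\Gamma$ is positive-dimensional (e.g.\ $\GL_1(\ks)\ne\GL_1(\ka)$). What smoothness actually gives is Zariski density of $\Gamma(\ks)$ in $\Gamma_\ks$ and hence in $\Gamma_\ka$, which, together with the observation that a maximal torus over $\ks$ is automatically split so any Borel of $\Gt_\ka$ containing it descends to $\ks$, does yield a clean direct proof of (\ref{case:ka-quass-quass})~$\iff$~(\ref{case:ka-quass-Borel}). That corrected argument is in fact more elementary than the paper's route, which instead proves (\ref{case:ka-quass-Borel})~$\Rightarrow$~(\ref{case:ka-quass-torus})~$\Rightarrow$~(\ref{case:ka-quass-quass}) using Proposition~\ref{prop:quass-rough}(\ref{subprop:quass-down}).

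Second, you cite Remark~\ref{rem:torus-quass}(\ref{subrem:torus-to-quass}) for (\ref{case:ka-quass-torus})~$\Rightarrow$~(\ref{case:ka-quass-quass}). That remark concerns the case where \emph{$\Gamma$ itself} is a torus; it does not apply here, where the torus $T$ of hypothesis (\ref{case:ka-quass-torus}) lives inside the fixed-point group $G$, not inside $\Gamma$. The paper's proof of this implication goes via Proposition~\ref{prop:quass-rough}(\ref{subprop:quass-up}) applied to $(\Gt_\ka,\Gamma_\ka)$ (to identify $C_\Gt(T)$ as a maximal torus in $\Gt$) and then Lemma~\ref{lem:quass-from-Levi}, which is precisely the tool designed for deducing quasisemisimplicity from the presence of a split torus $S\subseteq G$ with $C_\Gt(S)$ a torus. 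Without something like Lemma~\ref{lem:quass-from-Levi}, this implication does not go through.
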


Since quasisemisimplicity of an action is
preserved under base change,
the assumption of quasisemisimplicity of \((\Gt_E, \Gamma_E)\) is
weaker the larger \(E\) is.
Thus, one might wonder if Theorem \ref{thm:ka-quass} could
be made stronger by weakening its hypothesis to require
quasisemisimplicity of \((\Gt_E, \Gamma_E)\) for
some field extension \(E/k\), not necessarily algebraic.
In this case, we would have that \(\mathscr B^\Gamma(E)\)
was nonempty, where \(\mathscr B\) is the variety of
Borel--torus pairs in \(\Gt\)
(a homogeneous variety for which point stabilisers are
maximal tori); so the Nullstellensatz would give that
\(\mathscr B^\Gamma(\ka)\) was also nonempty, hence that
\((\Gt_\ka, \Gamma_\ka)\) was also quasisemisimple.
That is, we would not gain any additional power from
such a replacement.

\begin{mainthm}
\label{thm:loc-quass}
Let \(k\), \(\Gamma\), \(\Gt\), and \(G\) be as in
Notation \ref{notn:main}.
Suppose, for every \(\gamma \in \Gamma(\ka)\), that
\(\gamma\) acts quasisemisimply on \(\Gt_\ka\) and
\((\Gt\adform)_\ka^\gamma\) is smooth.
	\begin{enumerate}[label=(\arabic*), ref=\arabic*]
	\item\label{subthm:loc-quass-smoothable}
	\((\Gt^\Gamma)\conn\) equals
\((Z(\Gt)^\Gamma)\conn\cdot(\Gt^\Gamma)\smooth\conn\).
	\item\label{subthm:loc-quass-reductive}
	$G$ is reductive.
	\item\label{subthm:loc-quass-spherical-bldg}
	The functorial map from the spherical building \(\SS(G)\) of \(G\)
	to the spherical building \(\SS(\Gt)\) of \(\Gt\)
	identifies \(\SS(G)\) with
	\(\SS(\Gt) \cap \SS(\Gt_\ka)^{\Gamma(\ka)}\).
	\end{enumerate}
\end{mainthm}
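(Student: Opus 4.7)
The plan is to combine Theorem \ref{thm:quass} (for quasisemisimple actions) with the Prasad--Yu theorem (for actions of order prime to $p$), reducing the residual cases to the dedicated treatments in \S\ref{subsec:quass-unip} and \S\ref{subsec:D_4-outer}. First I would reduce to $\Gamma$ finite and étale. Since $\Gamma\conn$ is smooth and connected, its image in $\uAut(\Gt)$ lies in the identity component $\uInn(\Gt)$ by Remark \ref{rem:aut-inner}, and hence in a single maximal torus of $\Gt\adform$. By Remark \ref{rem:torus-quass}(\ref{subrem:torus-to-quass}), $(\Gt, \Gamma\conn)$ is quasisemisimple, so Theorem \ref{thm:quass} applies and replaces $\Gt$ by the reductive group $\fix\Gt^{\Gamma\conn}$, now carrying an action of $\pi_0(\Gamma)$ that still satisfies the hypotheses of Theorem \ref{thm:loc-quass}. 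A further reduction using the induction formalism of Appendix \ref{app:ind} collapses orbits of $\pi_0(\Gamma)(\ks)$ on the almost-simple factors of $\Gt$, reducing us to the case that $\Gt$ is, modulo its centre, absolutely almost-simple.

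For finite $\Gamma$ acting on such an absolutely almost-simple $\Gt$, separate a Sylow $p$-subgroup $\Gamma_p$ from a complement of order prime to $p$, working one prime at a time if needed. The prime-to-$p$ part is handled directly by Prasad--Yu \cite{prasad-yu:actions}: under our hypotheses its fixed-point group is reductive and the expected description of its spherical building holds. The genuine difficulty lies with $\Gamma_p$. By Remark \ref{rem:inspection}, a nontrivial outer automorphism of order $p$ can occur only when the root system is of type $\mathsf A_n$, $\mathsf D_n$, or $\mathsf E_6$ with $p = 2$, or $\mathsf D_4$ with $p = 3$; and a non-cyclic group of diagram automorphisms forces $p = 2$ and type $\mathsf D_4$. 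The cyclic-of-order-$p$ cases are addressed by the inductive reduction Proposition \ref{prop:inductive-step-p}, which relies on the finite-group-theoretic Proposition \ref{prop:inner-solvable}; the non-cyclic $\mathsf D_4$ case requires the dedicated treatment in \S\ref{subsec:D_4-outer}.

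Once quasisemisimplicity of $(\Gt, \Gamma)$ is established in every residual situation, the three conclusions of Theorem \ref{thm:loc-quass} follow from the parallel parts of Theorem \ref{thm:quass}: reductivity of $G$ from Theorem \ref{thm:quass}(\ref{subthm:quass-reductive}), the spherical-building identification from Theorem \ref{thm:quass}(\ref{subthm:quass-spherical-bldg}), and the decomposition asserted in part (\ref{subthm:loc-quass-smoothable}) from Theorem \ref{thm:quass}(\ref{subthm:quass-smooth}). For the last, we use that the pointwise smoothness hypothesis on $(\Gt\adform)_\ka^\gamma$ excludes the exceptional case allowed in Theorem \ref{thm:quass}(\ref{subthm:quass-smooth}); this link is precisely what Corollary \ref{cor:G-smooth} provides.

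The main obstacle is the reduction of the $p$-Sylow case, encapsulated by Proposition \ref{prop:inductive-step-p}. Our hypothesis only guarantees quasisemisimplicity of each individual element of $\Gamma(\ka)$, not of $\Gamma$ as a whole, so we must peel off the fixed-point group one cyclic $p$-step at a time, verifying at each stage that the remaining action continues to satisfy the hypotheses on a now-smaller reductive group. Controlling how the outer $p$-part interacts with the inner part of $\Gamma$ turns out, surprisingly, to require the Feit--Thompson theorem and portions of the classification of finite simple groups, reflecting the genuinely arithmetic nature of the residual obstruction.
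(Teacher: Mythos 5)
Your overall strategy — base change to \(\ks\), reduce to \(\Gamma\) \'etale via Theorem \ref{thm:quass} applied to \(\Gamma\conn\), reduce to almost-simple adjoint \(\Gt\) by the induction formalism, then recurse by peeling off pieces handled by Prasad--Yu and by \S\S\ref{subsec:quass-unip},\ref{subsec:D_4-outer} — is the right picture and closely tracks the paper's proof. But two points in your plan are not quite right.

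First, "separate a Sylow \(p\)-subgroup \(\Gamma_p\) from a complement of order prime to \(p\)" is not a valid group-theoretic manipulation: a Hall \(p'\)-complement need not exist when \(\Gamma\) is not solvable, and even when it does exist it need not be normal, so you cannot pass to the fixed-point group of one factor and then act by the other. The paper's actual mechanism is different and more delicate: at each stage one looks for a \emph{normal} subgroup \(\Gamma'\) of \(\Gamma\) whose \(\ka\)-points are all semisimple, peels it off via Prasad--Yu, and then uses Proposition \ref{prop:inductive-step} (which you do not cite) to verify the hypothesis still holds for every element of \(\Gamma(\ka)\) on \((\Gt^{\Gamma'})\conn\). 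When no such \(\Gamma'\) exists, \(\Gamma\cap\uInn(\Gt)\) is trivial (since inner quasisemisimple elements are semisimple by Remark \ref{rem:torus-quass}(\ref{subrem:quass-to-torus})), so \(\Gamma\) embeds in \(\uOut(\Gt)\); only then does Remark \ref{rem:inspection} force the short list of cases, and one peels off a normal cyclic or order-\(3\) subgroup via Theorem \ref{thm:quass}.

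Second, "once quasisemisimplicity of \((\Gt,\Gamma)\) is established in every residual situation" mischaracterizes the endgame. The proof never establishes quasisemisimplicity of the full pair \((\Gt,\Gamma)\), and in general it will not hold — the hypothesis is genuinely weaker than that of Theorem \ref{thm:quass}. Instead, the conclusions of Theorem \ref{thm:loc-quass} are reached by a downward induction on \(\dim(\Gt)+\card\Gamma\), with Theorem \ref{thm:quass} invoked only for a single normal (cyclic of order \(p\), or the order-\(3\) normal subgroup in type \(\mathsf D_4\)) acting subgroup at each stage, and Prasad--Yu invoked only for normal subgroups with semisimple \(\ka\)-points. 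The result you want does not follow from establishing global quasisemisimplicity; it follows from composing the stage-by-stage conclusions, and your plan should make this induction explicit rather than asserting a global quasisemisimplicity that is not available.
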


Lemma \ref{lem:density-bldg} will give a version of
Theorems
\ref{thm:quass}(\ref{subthm:quass-spherical-bldg})
and
\ref{thm:loc-quass}(\ref{subthm:loc-quass-spherical-bldg})
that
does not require passing to \(\ka\) to identify the image of \(\SS(G)\).

\begin{rem}
\label{rem:ss-loc-quass}
Let \((\Gt, \Gamma)\) be a reductive datum over \(k\) such that
every element of \(\Gamma(\ka)\) is semisimple.
Then the hypotheses of Theorem \ref{thm:loc-quass}
are satisfied
\cite{steinberg:endomorphisms}*{Theorem 7.5}.

In this case, \(\Gamma\) is linearly reductive
by Remark \ref{rem:all-ss=>lr}.
Thus
Theorem \ref{thm:loc-quass}(\ref{subthm:loc-quass-smoothable})
can be strengthened to the statement that \(\Gt^\Gamma\) is
smooth, not just smoothable
\cite{conrad-gabber-prasad:prg}*{Proposition A.8.10(2)}; and
Theorem \ref{thm:loc-quass}(\ref{subthm:loc-quass-reductive}) follows from
\cite{conrad-gabber-prasad:prg}*{Proposition A.8.12}.
We do not know if
Theorem \ref{thm:loc-quass}(\ref{subthm:loc-quass-spherical-bldg})
has already appeared in the literature in this setting, but
its special case where
\(\Gamma\) is generated by a single inner automorphism is
\cite{curtis-lehrer-tits:spherical}*{Proposition 5.1}.
\end{rem}

\numberwithin{equation}{subsection}
\section{Generalities}
\label{sec:generalities}

Throughout this section, we continue with the
field \(k\) of characteristic exponent \(p\) from
\S\ref{sec:notation}.
Let \(\Gt\) be a smooth, connected \(k\)-group.
We will assume in \S\ref{subsec:ind} that
\(\Gt\) is reductive, but we do not do so yet.

\subsection{Fixed points}
\label{subsec:fixed-generalities}

We will soon
(after Lemma \ref{lem:fixed-surjective})
take \(\Gamma\) to be a smooth \(k\)-group acting on \(\Gt\),
but we do not do so quite yet.

The proof of Lemma \ref{lem:fixed-surjective} is essentially
contained in \cite{adler-lansky:lifting1}*{Proposition 3.5}.
Compare Corollary \ref{cor:fixed-surjective} to
\cite{conrad-gabber-prasad:prg}*{Proposition A.8.14(1)}.

\begin{lem}
\label{lem:fixed-surjective}
Let \(\Gamma\) be a finite abstract subgroup of \(\Aut(\Gt_\ks)\) that
is preserved by \(\Gal(k)\).
Suppose that
\(\Zt\) is a finite, central subgroup of \(\Gt\) such that
\(\Zt_\ks\) is preserved by \(\Gamma\).
Write \(\Gt^\Gamma\) and \((\Gt/\Zt)^\Gamma\) for the
descents
to \(k\) of the \(\Gal(k)\)-stable groups
\(\Gt_\ks^\Gamma\) and \((\Gt/\Zt)_\ks^\Gamma\).
Then the map
\abmap{\fix\Gt^\Gamma}{\fix(\Gt/\Zt)^\Gamma}
is an isogeny.
\end{lem}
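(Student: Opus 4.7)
The plan is to verify the two conditions defining an isogeny of smooth connected \(k\)-groups: finite kernel, and equality of dimensions between source and target.  Finite kernel is immediate, because the kernel of \(\Gt \to \Gt/\Zt\) is the finite scheme \(\Zt\), so the kernel of the restricted map \(\fix\Gt^\Gamma \to \fix(\Gt/\Zt)^\Gamma\) is a closed subgroup of \(\Zt\), hence finite.

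For the dimension comparison, I would first observe that smoothing and passing to the identity component preserve Krull dimension:  over \(\ks\), which is perfect, the maximal smooth subgroup of a finite-type affine group scheme coincides with its maximal reduced subscheme, reduction preserves Krull dimension, and the identity component has the same dimension as the ambient group.  Applied to \(\Gt^\Gamma\) and \((\Gt/\Zt)^\Gamma\), this yields \(\dim\fix\Gt^\Gamma = \dim\Gt^\Gamma\) and \(\dim\fix(\Gt/\Zt)^\Gamma = \dim(\Gt/\Zt)^\Gamma\), so it suffices to prove \(\dim\Gt^\Gamma = \dim(\Gt/\Zt)^\Gamma\).

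For this, applying the \(\Gamma\)-fixed-point functor to \(1 \to \Zt \to \Gt \to \Gt/\Zt \to 1\) gives a map \(\phi\colon \Gt^\Gamma \to (\Gt/\Zt)^\Gamma\) whose kernel \(\Zt^\Gamma\) is a closed subgroup of the finite \(\Zt\), hence finite.  The fppf cokernel of \(\phi\) injects into \(H^1(\Gamma, \Zt)\), which I claim is finite:  since \(\Zt\) is central in \(\Gt\) (hence commutative) and \(\Gamma\) is a finite abstract group, the cocycle scheme \(Z^1(\Gamma, \Zt)\) is a closed subscheme of the finite product \(\Zt^{\card\Gamma}\), and the coboundaries form the image of a morphism from \(\Zt\); so \(H^1(\Gamma, \Zt)\) is representable by a finite \(k\)-scheme.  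Thus \(\phi\) has both finite kernel and finite cokernel, forcing \(\dim\Gt^\Gamma = \dim(\Gt/\Zt)^\Gamma\).

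Combining these reductions, the image of \(\fix\Gt^\Gamma \to \fix(\Gt/\Zt)^\Gamma\) is a closed, smooth, connected subgroup of the codomain of the same dimension, hence equals the (connected) codomain; together with the finite kernel this yields an isogeny.  The principal care required is in treating the cokernel of \(\phi\) as an fppf sheaf and arguing its representability by a finite scheme, since the naive exact sequence on \(\ka\)-points can fail when \(\Zt\) is non-\'etale (for example when \(\Zt\) contains \(\mu_p\) in characteristic \(p\)).
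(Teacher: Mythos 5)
There is a genuine gap in the second paragraph, at the assertion that \(\ks\) is perfect. If \(k\) has characteristic \(p>1\) and is imperfect, so is \(\ks\): an element \(t\in k^\times\) with no \(p\)th root in \(k\) still has none in \(\ks\), since \(k(\sqrt[p]{t})/k\) is purely inseparable. Over an imperfect separably closed field, the maximal smooth subgroup of a finite-type affine group scheme is in general \emph{not} its maximal reduced subscheme, and can have strictly smaller Krull dimension. This failure occurs within the scope of the lemma: Example~\ref{ex:sl-not-reductive} (with \(\Gt=\SL_3\), \(\Gamma=\sgen\gamma\), \(\gamma\) the involution given there over an imperfect field of characteristic \(2\)) has \(\dim\Gt^\Gamma=3\) --- its base change to \(\ka\) has a three-dimensional reduced subscheme isomorphic to a symplectic group --- while \(\fix\Gt^\Gamma\) is a one-dimensional additive group. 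Consequently the reduction ``it suffices to prove \(\dim\Gt^\Gamma=\dim(\Gt/\Zt)^\Gamma\)'' is not valid: that dimension equality is correctly established by your fppf-cohomological bound on the cokernel (the finiteness of \(\underline H^1(\Gamma,\Zt)\) as a quotient of finite group schemes is fine), but it does not transfer to the smoothed, connected groups without the failed step.

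The paper's proof avoids dimension counts entirely. After base changing to \(\ks\), it observes that \(\phi(G)\) is normal in \(G'\), so \(G'/\phi(G)\) is a smooth connected group whose triviality can be detected on \(\ks\)-points; it then bounds \(G'(\ks)/\phi(G(\ks))\) using the cohomology exact sequence for the abstract groups of points, finiteness of \(H^1(\Gamma,\Zt(\ks))\), and finiteness of \(\pi_0((\Gt^\Gamma)\smooth)(\ks)\). None of these quantities refers to the Krull dimension of the (possibly badly non-smooth) ambient fixed-point scheme, which is precisely what makes the argument robust over imperfect fields. If you want to salvage a dimension-based approach, you would need to prove directly that \(\dim\fix\Gt^\Gamma\geq\dim\fix(\Gt/\Zt)^\Gamma\) (the reverse inequality being clear from the finite-kernel image argument), but that is essentially equivalent to the surjectivity you are trying to establish.
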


\begin{proof}
We may, and do, assume,
upon replacing \(k\) by \(\ks\), that
\(k\) is separably closed.

It is clear that the kernel of
\abmap{\fix\Gt^\Gamma}{\fix(\Gt/\Zt)^\Gamma}
is finite, so we need only show that
the map is surjective.

Put
\(\Gt' = \Gt/\Zt\),
\(G = \fix\Gt^\Gamma\),
and
\(G' = \fix(\Gt')^\Gamma\).
Write \(\phi\) for the quotient map \abmap\Gt{\Gt'}.
The action of \(\Gt'\) on \(\Gt\) restricts to
an action of \(G'\) on \(G\),
so that \(\phi(G)\) is normal in \(G'\).
Then we need to show that \(G'/\phi(G)\) is trivial.

Since \(G'\) is smooth and connected, so is \(G'/\phi(G)\).
Thus, since \(k\) is separably closed,
it suffices to show that \((G'/\phi(G))(k)\) is finite.
Since \(\phi(G)\) is smooth and \(k\) is separably closed,
we have that
\abmap{G'(k)}{(G'/\phi(G))(k)} is surjective.
Since this map is trivial on \(\phi(G(k))\), hence factors through
\abmap{G'(k)}{G'(k)/\phi(G(k))},
it suffices to show that
\(G'(k)/\phi(G(k))\) is finite.

Since \(\Gt^\Gamma(k)\) equals \(\Gt(k)^\Gamma\), and
analogously for \(\Gt'\), we have
the exact sequence
\[\xymatrix{
	\Gt^\Gamma(k) \ar[r] &
	(\Gt')^\Gamma(k) \ar[r] &
	H^1(\Gamma, \Zt(k)).
}\]
Since \(\Gamma\) and \(\Zt(k)\) are finite,
so is
\(H^1(\Gamma, \Zt(k))\);
so \((\Gt')^\Gamma(k)/\phi(\Gt^\Gamma(k))\) is finite.
It thus suffices to show that
the kernel of
\abmap{G'(k)/\phi(G(k))}{(\Gt')^\Gamma(k)/\phi(\Gt^\Gamma(k))}
is finite.
The kernel is
\((\phi(\Gt^\Gamma(k)) \cap G'(k))/\phi(G(k))\),
which is contained in \(\phi(\Gt^\Gamma(k))/\phi(G(k))\).
This latter is the image under \(\phi\) of
\(\Gt^\Gamma(k)/G(k)
= (\Gt^\Gamma)\smooth(k)/\fix\Gt^\Gamma(k)
= ((\Gt^\Gamma)\smooth/\fix\Gt^\Gamma)(k)
= \pi_0((\Gt^\Gamma)\smooth)(k)\),
which is finite.
Thus, \(G'(k)/\phi(G(k))\) is finite, as desired.
\end{proof}

For the remainder of \S\ref{subsec:fixed-generalities},
let \(\Gamma\) be a \(k\)-group acting on \(\Gt\).

The linear reductivity hypothesis of
Corollary \ref{cor:fixed-surjective} is satisfied whenever
\((\Gt, \Gamma)\) is a quasisemisimple reductive datum over \(k\),
or even just if
\((\Gt_\ka, \Gamma_\ka)\) is quasisemisimple,
by Remark \ref{rem:torus-quass}(\ref{subrem:quass-to-torus}),
so this result may be considered an analogue of
\cite{steinberg:endomorphisms}*{Lemma 9.2(a)}.
The main difference between
Lemma \ref{lem:fixed-surjective} and
Corollary \ref{cor:fixed-surjective} is that,
in the former, \(\Gamma\) is a finite abstract group, whereas
in the latter, \(\Gamma\) is an algebraic group with
a condition imposed on its identity component.

\begin{cor}
\label{cor:fixed-surjective}
Suppose that the image of
\(\Gamma\conn\) in
\(\uAut(\Gt)\) is
linearly reductive;
\(\Zt\) is a central subgroup of \(\Gt\) that
is preserved by \(\Gamma\); and
\(\Zt\) is finite, or \(\Gt\) is reductive.
Then
\abmap{\fix\Gt^\Gamma}{\fix(\Gt/\Zt)^\Gamma}
is a quotient map.
\end{cor}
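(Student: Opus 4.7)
The plan is to decouple the action of $\Gamma$ along the extension $1 \to \Gamma\conn \to \Gamma \to \pi_0(\Gamma) \to 1$, using linear reductivity of the image of $\Gamma\conn$ for the first factor and Lemma \ref{lem:fixed-surjective} for the finite factor $\pi_0(\Gamma)$. Since being a quotient map is preserved and reflected under faithfully flat base change, I would first replace $k$ by $\ks$, so that $\pi_0(\Gamma)$ becomes constant and can be identified with a finite abstract group $\Pi$.

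For the $\Gamma\conn$-step, I apply fixed points to the short exact sequence $1 \to \Zt \to \Gt \to \Gt/\Zt \to 1$. Since the image of $\Gamma\conn$ in $\uAut(\Gt)$ is linearly reductive, its fixed-point functor preserves surjectivity of quotient maps of smooth affine groups (via vanishing of $H^1$ of a linearly reductive group on the kernel, together with the smoothness-preservation results \cite{conrad-gabber-prasad:prg}*{Propositions A.8.10(2) and A.8.12}). We thus obtain a short exact sequence $1 \to \Zt^{\Gamma\conn} \to \Gt^{\Gamma\conn} \to (\Gt/\Zt)^{\Gamma\conn} \to 1$ of smooth groups, with $\Gt^{\Gamma\conn}$ and $(\Gt/\Zt)^{\Gamma\conn}$ reductive, and $\Zt^{\Gamma\conn}$ of multiplicative type, in the case where $\Gt$ is reductive.

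The residual $\Pi$-action then yields $\Gt^\Gamma = (\Gt^{\Gamma\conn})^\Pi$ and similarly for $\Gt/\Zt$. When $\Zt$ is finite, so is $\Zt^{\Gamma\conn}$, and Lemma \ref{lem:fixed-surjective}, applied to the $\Pi$-action on (the identity component of) $\Gt^{\Gamma\conn}$ with central subgroup $\Zt^{\Gamma\conn}$, gives that the induced map on smoothed-connected $\Pi$-fixed-point groups is an isogeny, and hence a quotient map since the target is smooth. When instead $\Gt$ is reductive and $\Zt$ is possibly infinite, $\Zt^{\Gamma\conn}$ is of multiplicative type; I would factor the quotient $\Gt^{\Gamma\conn} \twoheadrightarrow (\Gt/\Zt)^{\Gamma\conn}$ through $\Gt^{\Gamma\conn}/(\Zt^{\Gamma\conn})\conn$, reducing the second factor's finite kernel to Lemma \ref{lem:fixed-surjective} and deferring the first factor's central-torus kernel to a separate cohomological step.

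The main obstacle is this final torus step: showing that a $\Pi$-equivariant quotient $H \twoheadrightarrow H/T$ by a $\Pi$-stable central torus $T$ in a smooth connected reductive group $H$ induces a quotient $(H^\Pi)\smooth\conn \twoheadrightarrow ((H/T)^\Pi)\smooth\conn$. The obstruction to lifting $\ka$-points lies in the flat cohomology group $H^1(\Pi, T(\ka))$, which is torsion of finite exponent; the smoothed-connected fixed-point operation should absorb such finite obstructions, and since the target is smooth, surjectivity at $\ka$-points suffices to obtain a quotient map. Composing the two quotient maps from the $\Gamma\conn$-step and the $\Pi$-step then completes the proof.
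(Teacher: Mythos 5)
Your finite-$\Zt$ argument matches the paper's: reduce to $\pi_0(\Gamma)$ acting on the smooth, connected $\Gamma\conn$-fixed group using linear reductivity and \cite{conrad-gabber-prasad:prg}*{Propositions A.8.10(2) and A.8.14(1)}, then invoke Lemma \ref{lem:fixed-surjective}. For the reductive-$\Gt$ case with $\Zt$ possibly infinite, your route diverges from the paper's. The paper constructs a $\Gamma$-stable quotient torus $\At$ of $\Gt/\Gt\der$ whose rational character space is a stable complement (obtained from Maschke over $\Q$ for the finite group $\Gal(E/k) \ltimes \pi_0(\Gamma)(E)$) to the one cut out by $\Zt$, so that $\abmap\Gt{\At \times \Gt/\Zt}$ is a $\Gamma$-equivariant central \emph{isogeny}; this reduces the entire problem at one stroke to the finite-kernel case already handled. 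You instead factor $\abmap{\Gt^{\Gamma\conn}}{(\Gt/\Zt)^{\Gamma\conn}}$ through the quotient by the connected torus $(\Zt^{\Gamma\conn})\conn$, apply Lemma \ref{lem:fixed-surjective} to the residual finite kernel, and defer the torus kernel to a cohomological step. This is a legitimate alternative, but it trades the paper's one clean reduction for an extra argument.

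That torus step, as you state it, has a gap. You correctly locate the lifting obstruction in $H^1(\Pi, T(\ka))$ and correctly note that this group is annihilated by $\lvert\Pi\rvert$. But ``torsion of finite exponent'' is not ``finite'' --- $(\Z/2)^{\oplus\infty}$ has exponent $2$ --- and your next clause silently upgrades to ``finite obstructions'' before the phrase ``should absorb'' closes the matter without supplying an argument. What actually forces the obstruction group to be finite here is that $T(\ka)$ is divisible with finite $n$-torsion for every $n$: for $n = \lvert\Pi\rvert$, the exact sequence $0 \to T(\ka)[n] \to T(\ka) \xrightarrow{n} T(\ka) \to 0$ together with the fact that multiplication by $n$ annihilates $H^1(\Pi, T(\ka))$ give a surjection from the finite group $H^1(\Pi, T(\ka)[n])$ onto $H^1(\Pi, T(\ka))$. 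Once the cokernel of $\abmap{H^\Pi(\ka)}{(H/T)^\Pi(\ka)}$ is known to be finite, the image of the smooth connected group $(H^\Pi)\smooth\conn$ in $((H/T)^\Pi)\smooth\conn$ is a connected subgroup of finite index, hence all of it, and smoothness of the target upgrades surjectivity on $\ka$-points to a quotient map, as you say. So the plan can be made to work, but finiteness of the cohomology group must be argued, not assumed.
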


\begin{proof}
Suppose first that \(\Zt\) is finite.
We have by
\cite{conrad-gabber-prasad:prg}*{Proposition A.8.10(2)}
that \(\Gt^{\Gamma\conn}\) and \((\Gt/\Zt)^{\Gamma\conn}\) are smooth,
and by
\cite{conrad-gabber-prasad:prg}*{Proposition A.8.14(1)}
that
\abmap
	{\fix\Gt^{\Gamma\conn} = (\Gt^{\Gamma\conn})\conn}
	{((\Gt/\Zt)^{\Gamma\conn})\conn = \fix(\Gt/\Zt)^{\Gamma\conn}}
is surjective.
Since
\(\fix\Gt_\ks^{\Gamma_\ks}\) equals
\(\fix{(\fix\Gt^{\Gamma\conn})}_\ks^{\pi_0(\Gamma)(\ks)}\),
and analogously for
\(\Gt/\Zt\),
we may replace
\(\Gt\) by \(\fix\Gt^{\Gamma\conn}\),
\(\Zt\) by its intersection with \(\fix\Gt^{\Gamma\conn}\),
and
\(\Gamma\) by \(\pi_0(\Gamma)(\ks)\).
Then Lemma \ref{lem:fixed-surjective} gives the result.

Now drop the assumption that \(\Zt\) is finite, and
suppose instead that \(\Gt\) is reductive.
We use this assumption only to conclude that
\(\Gt/\Gt\der\) is a torus.
By rigidity of tori
\cite{milne:algebraic-groups}*{Corollary 12.37},
the action of \(\Gamma\) on
\(\Gt/\Gt\der\) factors through an action of \(\pi_0(\Gamma)\).
Let \(E/k\) be a finite, separable extension such that
\((\Gt/\Gt\der)_E\) is split and
\(\pi_0(\Gamma)_E\) is constant.
Since \(\Gal(E/k) \ltimes \pi_0(\Gamma)(E)\) is finite,
and since \(\bX^*((\Gt/\Zt\cdot\Gt\der)_E) \otimes_\Z \Q\) is
a (\(\Gal(E/k) \ltimes \pi_0(\Gamma)(E)\))-stable subspace of
\(\bX^*((\Gt/\Gt\der)_E) \otimes_\Z \Q\),
we have that
there is a (\(\Gal(E/k) \ltimes \pi_0(\Gamma)(E)\))-stable complement
\(V\)
to
\(\bX^*((\Gt/\Zt\cdot\Gt\der)_E) \otimes_\Z \Q\) in
\(\bX^*((\Gt/\Gt\der)_E) \otimes_\Z \Q\).
Write \(\At\) for the quotient of \(\Gt/\Gt\der\) such that
\(\bX^*(\At_E)\) is \(\bX^*((\Gt/\Gt\der)_E) \cap V\).
Since \(\At\) is a quotient of \(\Gt/\Gt\der\), it
comes equipped with a quotient map \abmap\Gt\At.
Then \abmap\Gt{\At \times \Gt/\Zt} is
a \(\Gamma\)-equivariant, central isogeny, so
Lemma \ref{lem:fixed-surjective} gives that
\abmap{\fix\Gt^\Gamma}{\fix\At^\Gamma \times \fix(\Gt/\Zt)^\Gamma}
is also a central isogeny.
The result follows.
\end{proof}

\begin{cor}
\label{cor:lift-smooth}
Preserve the hypotheses and notation of
Corollary \ref{cor:fixed-surjective}.
If \((\Gt/\Zt)^\Gamma\) is smooth, then
\((\Gt^\Gamma)\conn\) equals
\((\Zt^\Gamma)\conn\cdot\fix\Gt^\Gamma\).
\end{cor}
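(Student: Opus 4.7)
The plan is to work with the natural homomorphism $\pi\colon\Gt^\Gamma\to(\Gt/\Zt)^\Gamma$ induced by the quotient map $\Gt\to\Gt/\Zt$. Since taking $\Gamma$-fixed points is left exact, $\pi$ has kernel $\Zt^\Gamma$. By Corollary \ref{cor:fixed-surjective}, the restriction of $\pi$ to $\fix\Gt^\Gamma$ is a quotient map onto $\fix(\Gt/\Zt)^\Gamma$. The smoothness hypothesis on $(\Gt/\Zt)^\Gamma$ tells us that its identity component $((\Gt/\Zt)^\Gamma)\conn$ is smooth and connected, and therefore coincides with $\fix(\Gt/\Zt)^\Gamma$.

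The central computation is to identify the preimage $\pi\inv(\fix(\Gt/\Zt)^\Gamma)$ inside $\Gt^\Gamma$ with the closed subgroup $H\ldef\Zt^\Gamma\cdot\fix\Gt^\Gamma$. Both subgroups contain $\Zt^\Gamma=\ker\pi$, so it suffices to see that each has quotient by $\Zt^\Gamma$ equal to $\fix(\Gt/\Zt)^\Gamma$. For $H$ this follows from the standard isomorphism $H/\Zt^\Gamma\cong\fix\Gt^\Gamma/(\Zt^\Gamma\cap\fix\Gt^\Gamma)$ combined with the surjectivity supplied by Corollary \ref{cor:fixed-surjective}; for $\pi\inv(\fix(\Gt/\Zt)^\Gamma)$ it follows from the fact that $\fix(\Gt/\Zt)^\Gamma$ lies in the image of $\pi$ (again by Corollary \ref{cor:fixed-surjective}). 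Hence $\pi\inv(\fix(\Gt/\Zt)^\Gamma)=H$.

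Now the image of $(\Gt^\Gamma)\conn$ under $\pi$ is connected, hence lies in the identity component $\fix(\Gt/\Zt)^\Gamma$ of $(\Gt/\Zt)^\Gamma$; so $(\Gt^\Gamma)\conn\subseteq H$. Passing to identity components, I claim $H\conn=(\Zt^\Gamma)\conn\cdot\fix\Gt^\Gamma$: this latter is connected (as the product of two connected subgroups, with $(\Zt^\Gamma)\conn$ central in $\Gt^\Gamma$ since $\Zt$ is central in $\Gt$), and $H/((\Zt^\Gamma)\conn\cdot\fix\Gt^\Gamma)$ is a quotient of $\pi_0(\Zt^\Gamma)$, which is \'etale since $\Zt$, being a closed subgroup of the finite-type group $\Gt$, is itself of finite type. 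This gives the containment $(\Gt^\Gamma)\conn\subseteq(\Zt^\Gamma)\conn\cdot\fix\Gt^\Gamma$; the reverse containment is immediate, since $(\Zt^\Gamma)\conn$ and $\fix\Gt^\Gamma$ are connected subgroups of $\Gt^\Gamma$. The only potential obstacle is scheme-theoretic bookkeeping with identity components and preimages of subgroups under $\pi$, but this is mild and uses no input beyond Corollary \ref{cor:fixed-surjective} together with the smoothness of $(\Gt/\Zt)^\Gamma$.
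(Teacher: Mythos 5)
Your proof is correct and follows essentially the same route as the paper's: both rely on Corollary~\ref{cor:fixed-surjective} to get surjectivity onto $\fix(\Gt/\Zt)^\Gamma = ((\Gt/\Zt)^\Gamma)\conn$, observe that $(\Gt^\Gamma)\conn$ maps into that group, pull back to $\Zt^\Gamma\cdot\fix\Gt^\Gamma$, and finish by passing to identity components via the \'etale-quotient characterization. Your writeup spells out a few more of the intermediate isomorphisms, but the underlying argument is the same.
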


\begin{proof}
Corollary \ref{cor:fixed-surjective} gives that
\abmap
	{\fix\Gt^\Gamma}
	{\fix(\Gt/\Zt)^\Gamma = ((\Gt/\Zt)^\Gamma)\conn}
is surjective.
Since the image of \((\Gt^\Gamma)\conn\) in
\(\Gt/\Zt\) lies in
\(((\Gt/\Zt)^\Gamma)\conn\), it follows that
\((\Gt^\Gamma)\conn\) is contained in
\(\Zt\cdot\fix\Gt^\Gamma\), hence in
its intersection
\(\Zt^\Gamma\cdot\fix\Gt^\Gamma\) with
\(\Gt^\Gamma\), hence in its identity component
\((\Zt^\Gamma)\conn\cdot\fix\Gt^\Gamma\).
The reverse containment is obvious.
\end{proof}

\begin{cor}
\label{cor:smooth-surjective}
Preserve the hypotheses and notation of Corollary \ref{cor:fixed-surjective}.
If \((\Gt^\Gamma)\conn\) is smoothable, then
\(((\Gt/\Zt)^\Gamma)\conn\) is smoothable.
The converse holds if \((\Zt^\Gamma)\conn\) is also smoothable,
which is automatic if \(\Gt\) is reductive.
\end{cor}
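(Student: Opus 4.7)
The plan is to exploit the quotient map \(\abmap{\fix\Gt^\Gamma}{\fix(\Gt/\Zt)^\Gamma}\) provided by Corollary \ref{cor:fixed-surjective}, both over \(k\) and over \(\ka\) (where it is equally applicable because the hypotheses pass to base change), and to use the characterisation of smoothability in terms of base change of the maximal smooth connected subgroup given in Remark \ref{rem:conn-smooth}.

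For the forward direction, set \(H = \Gt^\Gamma\) and \(H' = (\Gt/\Zt)^\Gamma\).  The map \(\abmap{H\smooth\conn}{H'\smooth\conn}\) is fppf, and so base-changes to a surjection \(\abmap{(H\smooth\conn)_\ka}{(H'\smooth\conn)_\ka}\).  Applying Corollary \ref{cor:fixed-surjective} to the reductive datum \((\Gt_\ka, \Gamma_\ka)\) yields a surjection \(\abmap{(H_\ka)\smooth\conn}{(H'_\ka)\smooth\conn}\).  If \(H\conn\) is smoothable, Remark \ref{rem:conn-smooth} identifies \((H\smooth\conn)_\ka\) with \((H_\ka)\smooth\conn\), so that both surjections share the same source; their images, namely \((H'\smooth\conn)_\ka\) and \((H'_\ka)\smooth\conn\), must therefore coincide, which is precisely the statement that \(H'\conn\) is smoothable.

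For the converse, I would analyse the kernels \(K = \ker(H\smooth\conn\to H'\smooth\conn)\) and \(K' = \ker((H_\ka)\smooth\conn \to (H'_\ka)\smooth\conn)\).  Each sits inside a central copy of \(\Zt^\Gamma\) or \((\Zt^\Gamma)_\ka\), and a direct check shows \(K\smooth\conn = (\Zt^\Gamma)\smooth\conn\) and \((K')\smooth\conn = ((\Zt^\Gamma)_\ka)\smooth\conn\) (the containment \((\Zt^\Gamma)\smooth\conn \subset K\) holds because \((\Zt^\Gamma)\smooth\conn\) is smooth, connected, contained in \(\Gt^\Gamma\), hence in \(\fix\Gt^\Gamma = H\smooth\conn\), and obviously in \(\Zt\)).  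Under the hypothesis that \((\Zt^\Gamma)\conn\) is smoothable, Remark \ref{rem:conn-smooth} identifies \(((\Zt^\Gamma)\smooth\conn)_\ka\) with \(((\Zt^\Gamma)_\ka)\smooth\conn\), so that \((K\smooth\conn)_\ka = (K')\smooth\conn\).  Combined with the smoothability of \(H'\conn\) --- which makes \((H'\smooth\conn)_\ka = (H'_\ka)\smooth\conn\) --- a diagram chase (writing \((H_\ka)\smooth\conn\) as the product \((H\smooth\conn)_\ka\cdot K'\) and noting that \((K')\smooth\conn\) is already contained in \((H\smooth\conn)_\ka\)) will yield \((H_\ka)\smooth\conn = (H\smooth\conn)_\ka\), so that \(H\conn\) is smoothable.

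Finally, the parenthetical claim that \((\Zt^\Gamma)\conn\) is smoothable whenever \(\Gt\) is reductive is immediate: \(\Zt\) is central in a reductive group, hence is of multiplicative type, and the fixed subgroup \(\Zt^\Gamma\) of a group of multiplicative type is again of multiplicative type (its character module is the coinvariants \(\bX^*(\Zt)_\Gamma\)), so smoothability follows from \cite{conrad-gabber-prasad:prg}*{Corollary A.8.2} and its transmission to identity components via Remark \ref{rem:conn-smooth}.

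The main obstacle will be the converse direction, specifically showing that \(K_\ka\) exhausts \(K'\) rather than merely the maximal smooth connected part; this is where the smoothability of \((\Zt^\Gamma)\conn\) enters essentially, and where care is needed in checking that the comparison of kernels behaves well with respect to the centrality and the fppf structure of the quotient.
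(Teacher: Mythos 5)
Your forward direction is essentially the paper's argument: both apply Corollary~\ref{cor:fixed-surjective} over $k$ and over $\ka$, use that the two resulting surjections onto $(\fix(\Gt/\Zt)^\Gamma)_\ka$ and $\fix(\Gt/\Zt)_\ka^{\Gamma_\ka}$ are restrictions of the same map $\pi_\ka$, and invoke Remark~\ref{rem:conn-smooth}.

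For the converse, you take a genuinely different route (kernels rather than the paper's saturation by $\Zt$), and you correctly identify the two key facts --- that $(H_\ka)\smooth\conn = K' \cdot (H\smooth\conn)_\ka$ and that $(K')\smooth\conn \subseteq (H\smooth\conn)_\ka$ once $(\Zt^\Gamma)\conn$ is smoothable. But these two facts by themselves do not yield $(H_\ka)\smooth\conn = (H\smooth\conn)_\ka$: you still must dispose of the part of $K'$ beyond its maximal smooth connected subgroup. Your stated diagnosis of the obstacle --- showing that ``$K_\ka$ exhausts $K'$'' --- is not the right target: $K_\ka$ and $K'$ genuinely differ unless you already know $(\Gt^\Gamma)\conn$ is smoothable, which is the conclusion you are after. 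What is actually needed is the following: the quotient $Q \ldef (H_\ka)\smooth\conn/(H\smooth\conn)_\ka$ is smooth, connected, and (using the centrality of $K'$) isomorphic to a quotient of $K'/(K')\smooth\conn$; one must show that $K'/(K')\smooth\conn$ has no nontrivial smooth connected quotient. This is true under the standing hypotheses of Corollary~\ref{cor:fixed-surjective}, because there $\Zt$ is finite or $\Gt$ is reductive (hence $\Zt$, and so $K' \subseteq \Zt_\ka^{\Gamma_\ka}$, is of multiplicative type); in either case $\bX^*(K'/(K')\smooth\conn)$ is torsion and so admits no nontrivial torus quotient, and a smooth connected subquotient of $\Zt$ is a torus. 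With that lemma supplied, your approach does close. The paper instead works with $\Ht\cdot\Zt$, first establishing that $(\Ht\cdot\Zt)\conn = \Ht\cdot\Zt\conn$ and $(\Ht\cdot\Zt)\smooth = \Ht\cdot\Zt\smooth$ for $\Ht$ connected (respectively smooth), and then peels off the \'etale and infinitesimal parts of $\Zt$ in stages; this avoids any analysis of kernels and applies verbatim to any $\Zt$, not only multiplicative-type ones. Your parenthetical argument for why $(\Zt^\Gamma)\conn$ is smoothable when $\Gt$ is reductive is correct.
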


\begin{proof}
Put \(\Gt' = \Gt/\Zt\).
Write \(\pi\) for the quotient map
\abmap\Gt{\Gt'}.

We make a few general observations.
If \(\Ht\) is a connected subgroup of \(\Gt\),
then \(\Ht\cdot\Zt\conn\) is connected, and
\(\Ht\cdot\Zt/\Ht\cdot\Zt\conn\) is a quotient of
the \'etale group \(\Zt/\Zt\conn\), hence \'etale.
The characterization of the identity component of a group as
the unique connected, normal subgroup with \'etale quotient
\cite{milne:algebraic-groups}*{Proposition 1.31(a)}
shows that
\(\Ht\cdot\Zt\conn\) equals \((\Ht\cdot\Zt)\conn\).
Similarly, if \(\Ht\) is a smooth subgroup of \(\Gt\),
then \(\Ht\cdot\Zt\smooth\) is smooth, and
\(\Ht\cdot\Zt/\Ht\cdot\Zt\smooth\) is a quotient of
the infinitesimal group \(\Zt/\Zt\smooth\), hence infinitesimal.
Although it is not true in general that
the maximal smooth subgroup of a group is
the unique smooth, normal subgroup with infinitesimal quotient,
this fails only in one direction
(the maximal smooth subgroup need not be normal);
if
a group has a smooth, normal subgroup with infinitesimal quotient, then
that subgroup is the maximal smooth subgroup.
Therefore, \((\Ht\cdot\Zt)\smooth\) equals \(\Ht\cdot\Zt\smooth\).
Analogous reasoning works over \(\ka\).

Corollary \ref{cor:fixed-surjective} gives that
\(\pi(\fix\Gt^\Gamma)\) equals \(\fix(\Gt')^\Gamma\) and
\(\pi(\fix\Gt_\ka^{\Gamma_\ka})\) equals
\(\fix(\Gt'_\ka)^{\Gamma_\ka}\).
Remark \ref{rem:conn-smooth} gives that
\((\Gt^\Gamma)\conn\) is smoothable if and only if
\((\fix\Gt^\Gamma)_\ka\) equals
\(\fix\Gt_\ka^{\Gamma_\ka}\),
and analogously for \(\Gt'\) and \(\Zt\).
Thus, if \((\Gt^\Gamma)\conn\) is smoothable, then we have
the equalities
\[
\fix(\Gt'_\ka)^{\Gamma_\ka} =
\pi(\fix\Gt_\ka^{\Gamma_\ka}) =
\pi((\fix\Gt^\Gamma)_\ka) =
(\fix\Gt'{}^\Gamma)_\ka,
\]
so that \((\Gt'{}^\Gamma)\conn\) is smoothable.

If \((\Gt'{}^\Gamma)\conn\) is smoothable, then we analogously have
the equality
\[
\pi(\fix\Gt_\ka^{\Gamma_\ka}) =
\pi((\fix\Gt^\Gamma)_\ka),
\]
so
\(\fix\Gt_\ka^{\Gamma_\ka}\cdot\Zt_\ka\) equals
\((\fix\Gt^\Gamma)_\ka\cdot\Zt_\ka\).
Several equalities follow:
\begin{itemize}
\item
of the groups
\(\fix\Gt_\ka^{\Gamma_\ka}\cdot\Zt_\ka^{\Gamma_\ka}\) and
\((\fix\Gt^\Gamma)_\ka\cdot\Zt_\ka^{\Gamma_\ka}\)
of \(\Gamma_\ka\)-fixed points;
\item
then of their maximal smooth subgroups
\(\fix\Gt_\ka^{\Gamma_\ka}\cdot(\Zt_\ka^{\Gamma_\ka})\smooth\) and
\((\fix\Gt^\Gamma)_\ka\cdot(\Zt_\ka^{\Gamma_\ka})\smooth\);
\item
then of their identity components
\(\fix\Gt_\ka^{\Gamma_\ka} =
\fix\Gt_\ka^{\Gamma_\ka}\cdot\fix\Zt_\ka^{\Gamma_\ka}\) and
\((\fix\Gt^\Gamma)_\ka\cdot\fix\Zt_\ka^{\Gamma_\ka}\).
\end{itemize}
If additionally \((\Zt^\Gamma)\conn\) is smoothable, then
this shows that
\(\fix\Gt_\ka^{\Gamma_\ka}\) equals
\((\fix\Gt^\Gamma)_\ka\cdot(\fix\Zt^\Gamma)_\ka =
(\fix\Gt^\Gamma)_\ka\), so that
\((\Gt^\Gamma)\conn\) is smoothable.
\end{proof}

\subsection{Semisimple elements in groups and algebras, and
	their centralizers}

Lemma \ref{lem:toral-Lie-is-Lie-torus} is related to
\cite{conrad-gabber-prasad:prg}*{Proposition A.8.10(2)}.

\begin{lem}
\label{lem:toral-Lie-is-Lie-torus}
If
\(\mathfrak s\) is a commutative subalgebra of \(\Lie(G)\)
such that all elements of \(\mathfrak s\) are semisimple,
then
\(C_G(\mathfrak s)\) is smooth, and
there is a maximal torus \(T\) in \(G\) such that
\(\mathfrak s\) is contained in \(\Lie(T)\).
\end{lem}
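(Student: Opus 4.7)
The plan is to proceed in two steps: first to establish smoothness of $C_G(\mathfrak s)$ by an inductive reduction to the case of a single element, then to embed $\mathfrak s$ in the Lie algebra of a maximal torus of $G$ by combining conjugacy of tori over $\ka$ with Grothendieck's theorem on the existence of maximal tori over an arbitrary field.

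For smoothness, I would fix a $k$-basis $x_1,\dots,x_n$ of $\mathfrak s$ and set $H_0 = G$, $H_{i+1} = C_{H_i}(x_{i+1})$. Since $\mathfrak s$ is commutative, each $x_{i+1}$ lies in $\Lie(H_i)$ and remains semisimple there: its adjoint action on $\Lie(H_i)$ is the restriction of its semisimple adjoint action on $\Lie(G)$ after base change to $\ka$. It then suffices to prove the one-element statement that $C_H(x)$ is smooth whenever $H$ is smooth connected and $x\in\Lie(H)$ is semisimple. Base-changing to $\ka$, one compares $\dim C_{H_\ka}(x) = \dim H - \dim \Ad(H_\ka)(x) = \dim H - \operatorname{rank}\ad(x)$ (by orbit--stabilizer applied to the adjoint action on $\Lie(H)$) with $\dim \Lie(C_{H_\ka}(x)) = \dim \ker\ad(x) = \dim H - \operatorname{rank}\ad(x)$ (using semisimplicity of $\ad(x)$ and the tautological description of the centralizer's Lie algebra); equality of dimensions forces smoothness. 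Iterating gives that $H_n = C_G(\mathfrak s)$ is smooth.

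For the torus, set $H = C_G(\mathfrak s)\conn$; by Grothendieck's theorem this smooth connected $k$-group admits a maximal torus $T_0$ defined over $k$. I would first establish, by induction on $\dim\mathfrak s$ and working over $\ka$, that a commutative subalgebra of semisimple elements of $\Lie(G_\ka)$ is contained in $\Lie(T'')$ for some maximal torus $T''$ of $G_\ka$: given a codimension-one subalgebra $\mathfrak s'\subset\mathfrak s$, induction yields a maximal torus $T'$ of $G_\ka$ with $\mathfrak s'\subset\Lie(T')$, so $T'$ centralizes $\mathfrak s'$ and is thus a maximal torus of $C_G(\mathfrak s')_\ka\conn$; applying the one-element case to $x\in\mathfrak s\smallsetminus\mathfrak s'$ inside $C_G(\mathfrak s')_\ka\conn$ produces a maximal torus $T''$ of that group containing $x$ in its Lie algebra, with $T'' = g T' g\inv$ for some $g\in C_G(\mathfrak s')(\ka)$; since $\Ad(g)$ fixes $\mathfrak s'$ pointwise by definition of the centralizer, $\Lie(T'') = \Ad(g)\Lie(T') \supset \mathfrak s' + \sgen x = \mathfrak s$. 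Then $T''$ and $(T_0)_\ka$ are both maximal tori of $C_G(\mathfrak s)_\ka\conn$, hence conjugate by some $h\in C_G(\mathfrak s)(\ka)\conn$; applying $\Ad(h)$ (which again fixes $\mathfrak s$) transports $\mathfrak s\subset\Lie(T'')$ to $\mathfrak s\subset\Lie((T_0)_\ka)$, and this descends to $\mathfrak s\subset\Lie(T_0)$ because $\mathfrak s$ is a $k$-subspace. Finally, $T_0$ is a $k$-torus in $G$, and extends to a maximal torus $T$ of $G$ over $k$ (take a maximal $k$-torus of the smooth group $C_G(T_0)$).

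The main obstacle is the careful descent from $\ka$ to $k$ in the torus-embedding step: although the existence of a maximal torus over $\ka$ containing $\mathfrak s$ is a direct induction, producing a $k$-rational torus requires the smoothness of $C_G(\mathfrak s)$ (to invoke Grothendieck's theorem) together with the observation that $\Ad(C_G(\mathfrak s))$ fixes $\mathfrak s$ pointwise, so that conjugacy of maximal tori in $C_G(\mathfrak s)_\ka\conn$ preserves the inclusion $\mathfrak s\subset\Lie(\cdot)$ and lets one replace any $\ka$-torus containing $\mathfrak s$ by the base change of $T_0$.
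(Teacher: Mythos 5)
Your argument has a genuine gap at its foundation, the one-element case. In the smoothness step, the chain $\dim C_{H_\ka}(x) = \dim H - \dim \Ad(H_\ka)(x) = \dim H - \operatorname{rank}\ad(x)$ contains two equalities; only the first follows from orbit--stabilizer. The second, $\dim \Ad(H_\ka)(x) = \operatorname{rank}\ad(x)$, is precisely the assertion that the orbit map $\abmapto g{\Ad(g)x}$ is separable, and this is \emph{equivalent} to the smoothness of $C_{H_\ka}(x)$ that you are trying to prove (since the orbit is always smooth of dimension $\dim H - \dim C_{H_\ka}(x)$, while the differential of the orbit map has image $\operatorname{im}\ad(x)$). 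In positive characteristic this separability is exactly the nontrivial content; your appeal to ``orbit--stabilizer'' cannot supply it. Similarly, in the torus step you invoke ``the one-element case'' to produce a maximal torus $T''$ of $C_G(\mathfrak s')_\ka\conn$ with $x \in \Lie(T'')$, but nothing in your proposal establishes that a semisimple element of $\Lie(H)$ lies in the Lie algebra of a maximal torus of $H$; and your induction on $\dim\mathfrak s$ cannot supply it either, since the $\dim\mathfrak s = 1$ case reduces to exactly this statement with $\mathfrak s' = 0$. Both missing facts are nontrivial results (the paper cites \cite{borel:linear}*{Propositions 9.1(2) and 11.8} for them), so your proof is incomplete without either citing them or giving an independent argument.

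Aside from this foundational gap, your surrounding structure is sound and mildly different from the paper's: you iterate the one-element smoothness over a basis rather than inducting once on $\dim\mathfrak s$, and your descent from $\ka$ to $k$ goes via conjugacy of maximal tori in $C_G(\mathfrak s)_\ka\conn$ together with the observation that $\Ad$ of the centralizer fixes $\mathfrak s$ pointwise, whereas the paper instead takes the Cartan subgroup of $C_G(\mathfrak s)$, decomposes it over $\ka$ as torus times unipotent radical, and invokes Jordan decomposition. Both descent arguments are valid once the $\ka$-statement is in hand.
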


\begin{proof}
First suppose that \(k\) is algebraically closed, and
reason by induction on \(\dim(\mathfrak s)\).
If the dimension is \(0\), then \(\mathfrak s\), and
the result, are trivial.
Thus we may, and do, suppose that the dimension is positive,
choose a codimension-\(1\) subspace \(\mathfrak s_1\) of
\(\mathfrak s\), and assume that we have
already proven the result for \(\mathfrak s_1\).

In particular, there is a maximal torus \(T_1\) in \(G\) such that
\(\mathfrak s_1\) is contained in \(\Lie(T_1)\).
Then \(T_1\) is contained in \(C_G(\mathfrak s_1)\), so
the ranks of \(C_G(\mathfrak s_1)\) and \(G\) are equal, i.e.,
every torus that is maximal in \(C_G(\mathfrak s_1)\)
remains maximal in \(G\).
We have that \(\mathfrak s\) is contained in
\(C_{\Lie(G)}(\mathfrak s_1) = \Lie(C_G(\mathfrak s_1))\);
and
\(C_{C_G(\mathfrak s_1)}(\mathfrak s)\) equals
\(C_G(\mathfrak s)\).
Thus we may, and do, assume, upon
replacing \(G\) by \(C_G(\mathfrak s_1)\), that
\(\mathfrak s_1\) is contained in \(\Lie(G)^G\).

Let \(X_1\) be an element of \(\mathfrak s \setminus \mathfrak s_1\),
so that \(C_G(X_1)\) equals \(C_G(\mathfrak s)\).
Then \cite{borel:linear}*{Proposition 9.1(2)} gives that
\(C_G(\mathfrak s) = C_G(X_1)\) is smooth, and
\cite{borel:linear}*{Proposition 11.8} gives that
\(X_1\) belongs to the Lie algebra of
a maximal torus \(T\) in \(G\).
Since \(T\) is \(G(k)\)-conjugate to \(T_1\)
\cite{conrad-gabber-prasad:prg}*{Theorem C.2.3},
and \(\mathfrak s_1\) is contained in \(\Lie(T_1) \cap \Lie(G)^G\),
we have that
\(\mathfrak s_1\), and hence
\(\mathfrak s = \mathfrak s_1 \oplus k X_1\), is
contained in \(\Lie(T)\).

Now drop the assumption that \(k\) is algebraically closed.
Obviously \(\mathfrak s \otimes_k \ka\) is
still commutative, and
every element of it is a commuting sum of semisimple elements,
hence semisimple.
By the case of \(k\) algebraically closed, which
we have already proven, we have that
\(C_G(\mathfrak s)_\ka =
	C_{G_\ka}(\mathfrak s \otimes_k \ka)\)
is smooth.

Now we argue as in \cite{borel:linear}*{Proposition 11.8}.
Let \(T\) be a maximal torus in \(C_G(\mathfrak s)\), and write
\(C = C_{C_G(\mathfrak s)}(T)\conn\) for the corresponding
Cartan subgroup of \(C_G(\mathfrak s)\).
Then
\(C\) is nilpotent by \cite{borel:linear}*{Corollary 11.7}, so
\cite{borel:linear}*{Proposition 10.6(3, 4)} gives that
\(C_\ka\) is the direct product \(T_\ka \times U\), where
\(U\) is the unipotent radical of \(C_\ka\).
For each \(X \in \mathfrak s\), we have that
\(X \otimes_k 1 \in \Lie(C_G(\mathfrak s)_\ka)\) belongs to
\(\Lie(C_G(\mathfrak s)_\ka)^{T_\ka} = \Lie(C_\ka)\), hence
may be written as a
\emph{commuting} sum
\(X \otimes_k 1 = X_{T_\ka} + X_U\), where
\(X_{T_\ka}\) belongs to \(\Lie(T_\ka)\), hence is semisimple, and
\(X_U\) belongs to \(\Lie(U)\), hence is nilpotent.
This is therefore the Jordan decomposition of \(X \otimes_k 1\), which
we already know is semisimple, so that
\(X \otimes_k 1\) equals \(X_{T_\ka}\), hence belongs to
\(\Lie(T_\ka) = \Lie(T) \otimes_k \ka\),
so \(X\) belongs to \(\Lie(T)\).
\end{proof}

Corollary \ref{cor:toral-Lie-is-Lie-torus} is related to
\cite{conrad-gabber-prasad:prg}*
	{Propositions A.8.12 and A.8.14(1)}.

\begin{cor}
\label{cor:toral-Lie-is-Lie-torus}
Suppose that \(G\) is reductive.
If \abmap G{G'} is a central quotient and
\(\mathfrak s'\) is a commutative subalgebra of \(\Lie(G')\)
such that all elements of \(\mathfrak s'\) are semisimple,
then the group
\(C_G(\mathfrak s')\conn\) is reductive, and
the restriction of the quotient \abmap G{G'}
to \(C_G(\mathfrak s')\conn\) is a quotient
\abmap{C_G(\mathfrak s')\conn}{C_{G'}(\mathfrak s')\conn}.
\end{cor}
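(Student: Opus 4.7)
The plan is to derive the corollary from Lemma \ref{lem:toral-Lie-is-Lie-torus} applied to $G'$, by identifying both centralizers with reductive subgroups cut out by subsystems of roots. First, since $K \ldef \ker(\pi)$ is central of multiplicative type, the quotient $G' = G/K$ is again connected reductive, and Lemma \ref{lem:toral-Lie-is-Lie-torus} applied to $G'$ furnishes a maximal torus $T' \subset G'$ whose Lie algebra contains $\mathfrak{s}'$ and shows that $C_{G'}(\mathfrak{s}')$ is smooth. Since $K$ lies in every maximal torus of $G$, the preimage $T \ldef \pi\inv(T')$ is a maximal torus of $G$; and because every root of $G$ is trivial on $Z(G) \supset K$, pullback along $\abmap T{T'}$ identifies $\Phi(G', T')$ with $\Phi(G, T)$ compatibly with root subgroups.

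Next, set $\Phi_0 = \set{\alpha \in \Phi(G', T')}{d\alpha|_{\mathfrak{s}'} = 0}$. Additivity of differentials shows that $\Phi_0$ is a closed, symmetric subsystem. Let $H' \subset G'$ be the closed subgroup generated by $T'$ together with the root subgroups for $\alpha \in \Phi_0$, and let $H \subset G$ be the analogous subgroup; then $H$ and $H'$ are connected reductive, and $\pi$ restricts to a central quotient $\abmap H{H'}$ with kernel $K$. By construction $H' \subset C_{G'}(\mathfrak{s}')$, and the $T'$-weight decomposition of $\Lie(G')$ gives
\[
\Lie(H') = \Lie(T') \oplus \bigoplus_{\alpha \in \Phi_0} \Lie(G')_\alpha
       = C_{\Lie(G')}(\mathfrak{s}')
       = \Lie(C_{G'}(\mathfrak{s}')),
\]
where the final equality uses smoothness of $C_{G'}(\mathfrak{s}')$. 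Two smooth, connected subgroups of $G'$ with a common Lie algebra coincide, so $H' = C_{G'}(\mathfrak{s}')\conn$.

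For $G$, I would use that $G$ acts on $\Lie(G')$ through $\pi$, so that $C_G(\mathfrak{s}') = \pi\inv(C_{G'}(\mathfrak{s}'))$. The containment $H \subset C_G(\mathfrak{s}')\conn$ is immediate from $\pi(H) = H' \subset C_{G'}(\mathfrak{s}')$, and a dimension count using faithful flatness of $\pi$ and smoothness of $C_{G'}(\mathfrak{s}')$ yields
\[
\dim C_G(\mathfrak{s}')\conn = \dim K + \dim C_{G'}(\mathfrak{s}')\conn = \dim K + \dim H' = \dim H,
\]
forcing $H = C_G(\mathfrak{s}')\conn$. This identifies the connected centralizer with the reductive group $H$ and exhibits the restriction of $\pi$ to $C_G(\mathfrak{s}')\conn$ as the central quotient $\abmap H{H'} = \abmap{C_G(\mathfrak{s}')\conn}{C_{G'}(\mathfrak{s}')\conn}$, proving both assertions.

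The main obstacle is converting the Lie-algebra equality provided by Lemma \ref{lem:toral-Lie-is-Lie-torus} into the group-level identification $H = C_G(\mathfrak{s}')\conn$; this is precisely why smoothness of $C_{G'}(\mathfrak{s}')$, supplied by the lemma, is essential. A secondary technical point is the clean identification of the root data of $G$ and $G'$ under the possibly inseparable central quotient $\pi$, but this is routine once one knows that every root of $G$ is trivial on $Z(G)$.
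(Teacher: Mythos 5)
Your overall strategy is essentially the paper's: apply Lemma~\ref{lem:toral-Lie-is-Lie-torus} to $G'$ to find a maximal torus $T'$ with $\mathfrak{s}' \subset \Lie(T')$ and to get smoothness of $C_{G'}(\mathfrak{s}')$, pull $T'$ back to $T$, match roots across the central quotient, and identify both centralizers with the subgroup generated by the torus and the root groups for $\Phi_0$. The paper phrases the $G'$-side via \cite{borel:linear}*{Propositions 13.19 and 13.20} rather than constructing $H'$ by hand, but the content is the same.

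There is, however, a genuine gap at the end of your argument on the $G$-side. From $H \subseteq C_G(\mathfrak{s}')\conn$ and $\dim H = \dim C_G(\mathfrak{s}')\conn$, you \emph{cannot} conclude $H = C_G(\mathfrak{s}')\conn$: equality of Krull dimensions of irreducible subschemes only detects topological coincidence, not scheme-theoretic equality, so what you have shown is that $H$ is the maximal smooth connected subgroup of $C_G(\mathfrak{s}')\conn$ over $\ka$. But $C_G(\mathfrak{s}')\conn$ is a priori allowed to be non-reduced here: $K$ may be infinitesimal (e.g.\ $\mu_p$), and a flat pullback along $\pi$ of the smooth $C_{G'}(\mathfrak{s}')$ need not remain smooth. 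Since the corollary asserts that $C_G(\mathfrak{s}')\conn$ itself is reductive---and reductive means smooth under the paper's conventions---the smoothness of $C_G(\mathfrak{s}')\conn$ is precisely the thing that still needs proof, and your dimension count assumes it rather than establishing it.

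The fix is to run on the $G$-side the same Lie-algebra comparison you already carried out on the $G'$-side. Because $\Lie(\pi)$ is an isomorphism on each root space and carries $\Lie(T)$ into $\Lie(T')$, one has $(\Lie\pi)\inv(\Lie(T')) = \Lie(T)$, whence
\[
\Lie\bigl(C_G(\mathfrak{s}')\bigr) = (\Lie\pi)\inv\bigl(\Lie(C_{G'}(\mathfrak{s}'))\bigr) = \Lie(T) \oplus \bigoplus_{\alpha \in \Phi_0} \Lie(G)_\alpha = \Lie(H).
\]
Combined with $H \subseteq C_G(\mathfrak{s}')\conn$ this gives
\[
\dim H = \dim \Lie(H) = \dim \Lie\bigl(C_G(\mathfrak{s}')\bigr) \ge \dim C_G(\mathfrak{s}')\conn \ge \dim H,
\]
forcing $\dim C_G(\mathfrak{s}')\conn = \dim \Lie\bigl(C_G(\mathfrak{s}')\bigr)$, i.e.\ smoothness of $C_G(\mathfrak{s}')\conn$, and only then does $H = C_G(\mathfrak{s}')\conn$ follow. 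You should also state the reduction to $k$ algebraically closed at the start (as the paper does), since the root-group manipulations implicitly use a split torus.
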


\begin{proof}
We may, and do, assume, upon replacing
\(k\) by \(\ka\), that
\(k\) is algebraically closed.
By Lemma \ref{lem:toral-Lie-is-Lie-torus},
there is a torus, hence a maximal torus \(T'\), in \(G'\) such that
\(\mathfrak s'\) is contained in \(\Lie(T')\).

Write \(T\) for the pre-image of \(T'\) in \(G\).
Then \(T\) is a maximal torus, and, since
the restriction of
\abmap{\Lie(G)}{\Lie(G')}
to any root space for \(T\) in \(\Lie(G)\) is
an embedding in \(\Lie(G')\), we have that
inflation to \(T\) provides
a bijection of
\(\Phi(C_G(\mathfrak s'), T) = \Phi(C_{\Lie(G)}(\mathfrak s'), T)\) with
\(\Phi(C_{\Lie(G')}(\mathfrak s'), T') = \Phi(C_{G'}(\mathfrak s'), T')\).
If \(\alpha' \in \Phi(C_{G'}(\mathfrak s'), T')\) has
inflation \(\alpha \in \Phi(C_G(\mathfrak s'), T)\), then
the image of
the root group \(U_\alpha\) for \(T\) in \(G\) is
the root group \(U_{\alpha'}\) for \(T'\) in \(G'\), which
is contained in
\(C_{G'}(\mathfrak s')\).
Since the action of \(G\) on \(\Lie(G')\) factors through
the map \abmap G{G'}, we have that
\(U_\alpha\) is contained in \(C_G(\mathfrak s')\), hence in
\(C_G(\mathfrak s')\conn\).

The argument of
\cite{borel:linear}*{Proposition 13.19} shows that
\(C_{G'}(\mathfrak s')\conn\) is reductive,
and \cite{borel:linear}*{Proposition 13.20} shows that
it is generated by
\(T'\) and those root groups for \(T'\) in \(G'\)
corresponding to
roots in \(\Phi(C_{G'}(\mathfrak s'), T')\).
Thus
\abmap{C_G(\mathfrak s')\conn}{C_{G'}(\mathfrak s')\conn}
is surjective, hence a quotient map.
Then \(C_G(\mathfrak s')\conn\) is a smooth
(by Lemma \ref{lem:toral-Lie-is-Lie-torus}), connected
extension of the reductive group
\(C_{G'}(\mathfrak s')\conn\) by
\(\ker(\abmap G{G'})\), which is central in \(G\) and so
diagonalizable,
so \(C_G(\mathfrak s')\conn\) is reductive.
\end{proof}

As remarked in \S\ref{subsec:roots},
we will need to discuss Borel--de Siebenthal theory in
some of the detailed computations of \S\ref{sec:thm:loc-quass}.
Although Remark \ref{rem:gp-BdS-facts} seems to be well known,
we could not find its contents stated in the form that we need them.

\begin{defn}
\label{defn:gp-BdS}
Suppose that \(G\) is quasisplit.
Fix a Borel--torus pair \((B, T)\) in \(G\),
let \(S\) be the maximal split torus in \(T\), and fix
an element \(a \in \Delta(B, S)\).
Write \(\varpi^\vee\) for
the fundamental coweight corresponding to \(a\),
\(a_0\) for the \(\Delta(B, S)\)-highest root
in the irreducible component of \(\Phi(G, S)\) containing \(a\), and
\(n = \pair{a_0}{\varpi^\vee}\) for
the coefficient of \(a\) in \(a_0\).
Then we call \(C_G(\varpi^\vee(\mu_n))\conn\)
the Borel--de Siebenthal subgroup of \(G\)
associated to \((B, T, a)\).
\end{defn}

\begin{rem}
\label{rem:gp-BdS-facts}\hfill
\begin{enumerate}[label=(\alph*), ref=\alph*]
\item\label{subrem:BdS-Z}
Preserve the notation and hypotheses of Definition \ref{defn:gp-BdS}.
In the terminology of
Definition \ref{defn:rd-BdS}, we have that
\(\Phi(C_G(\varpi^\vee(\mu_n))\conn, T)\)
is the Borel--de Siebenthal subsystem of
\(\Phi(G, S)\) associated to
\((\Delta(B, S), a)\).
In particular,
Remark \ref{rem:rd-BdS-facts}(\ref{subrem:rd-BdS-fact})
gives that
\(Z(C_{G\adform}(\varpi^\vee(\mu_n))\conn)\) equals
\(\varpi^\vee(\mu_n)\).
\item\label{subrem:maximal-ss}
Let \(H\) be a proper connected, reductive subgroup of \(G\) that
contains a maximally split, maximal torus \(T\) in \(G\), and such that
the maximal split, central torus in \(H\) is
central in \(G\).
Write \(S\) for the maximal split torus in \(T\).
Then \(\Z\Phi(H, S)\) has finite index in \(\Z\Phi(G, S)\).
If \(\Phi(H, S)\) is
integrally closed in \(\Phi(G, S)\)
(which is automatic except if
\(p\) equals \(2\) or \(3\)
\cite{borel-tits:reductive-groups}*{Remarque 2.5}), then
Remark \ref{rem:rd-BdS-facts}%
	(\ref{subrem:springer-steinberg:conj:sec:4.5})
gives that there are
a Borel subgroup \(B\) of \(G\) that contains \(T\), and
a root \(\alpha \in \Delta(B, S)\), such that
the coefficient of \(\alpha\) in the
\(\Delta(B, S)\)-highest root in
the irreducible component of \(\Phi(G, S)\) containing \(a\) is
prime, and
\(H\) is contained in
the Borel--de Siebenthal subgroup of \(G\) associated to
\((B, T, \alpha)\).
\end{enumerate}
\end{rem}

Lemma \ref{lem:gl-Levi} seems
to be well known, but we do not know a reference.

\begin{lem}
\label{lem:gl-Levi}
Let \(\Gt\) be a connected, reductive group such that
\(\Gt\adform\) is isomorphic to a product of
projective general linear groups, and
let \(\Gamma\) be a smooth, diagonalizable subgroup of
\(\Gt\adform\).
Then \((\Gt^\Gamma)\conn\) is a Levi subgroup of \(\Gt\).
\end{lem}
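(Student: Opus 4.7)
The plan is to reduce the computation of \((\Gt^\Gamma)\conn\) to a root-subsystem calculation inside a maximal torus of \(\Gt\), and then to invoke the fact---particular to type \(\mathsf A\)---that every closed subsystem of the root system is a Levi subsystem.

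First, I will observe that the smooth diagonalizable \(\Gamma \subseteq \Gt\adform\) is, after conjugation, contained in a maximal torus \(\Tt\adform\) of \(\Gt\adform\). Let \(\pi \colon \Gt \twoheadrightarrow \Gt\adform\) be the quotient, let \(\Tt = \pi\inv(\Tt\adform)\) (a maximal torus of \(\Gt\)), and let \(\Gamma' = \pi\inv(\Gamma) \subseteq \Tt\). The \(\Gamma\)-action on \(\Gt\) factors through conjugation by any lift in \(\Gt\), and lifts differ only by \(Z(\Gt) \subseteq \Gamma'\); hence \(\Gt^\Gamma = C_\Gt(\Gamma')\).

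Second, since \(\Tt\) is commutative and contains \(\Gamma'\), we have \(\Tt \subseteq C_\Gt(\Gamma')\). Standard root-group structure theory in the spirit of \cite{borel:linear}*{Proposition 13.20} then gives that \(C_\Gt(\Gamma')\conn\) is generated by \(\Tt\) together with the root subgroups \(U_\alpha\) for those \(\alpha \in \Phi(\Gt, \Tt)\) that are trivial on \(\Gamma'\). Because every root kills \(Z(\Gt)\), it descends to a character of \(\Tt\adform\), so triviality on \(\Gamma'\) is equivalent to triviality on \(\Gamma\). Thus the determination of \((\Gt^\Gamma)\conn\) is controlled by the subsystem
\[\Phi' = \set{\alpha \in \Phi(\Gt, \Tt)}{\alpha|_\Gamma = 0},\]
and the lemma reduces to the assertion that \(\Phi'\) is a Levi subsystem of \(\Phi(\Gt, \Tt)\).

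Third, I will use the hypothesis \(\Gt\adform \cong \prod \PGL_{n_i}\) to decompose \(\Phi(\Gt, \Tt)\) into irreducible factors all of type \(\mathsf A\), and treat one factor at a time. In a factor of type \(\mathsf A_{n-1}\), with roots realized as \(\set{e_s - e_t}{s \neq t}\) in the standard way, the intersection of \(\Phi'\) with the factor is \(\set{e_s - e_t}{s \sim t}\), where \(s \sim t\) is the equivalence relation ``the \(s\)th and \(t\)th coordinate characters agree on the lift of \(\Gamma\) to the diagonal torus of \(\GL_n\)''. This is the Levi subsystem associated to the partition of \(\{1,\dotsc,n\}\) into equivalence classes, corresponding to a block-diagonal Levi of the form \(\prod \GL_{m_j}\). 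Assembling these factor-by-factor identifications shows \(\Phi'\) is a Levi subsystem and that the corresponding Levi subgroup of \(\Gt\) is exactly \((\Gt^\Gamma)\conn\). The main obstacle is this type-\(\mathsf A\) step: in other types one finds closed subsystems that are not Levi (for instance, the long roots of \(\mathsf G_2\) form an \(\mathsf A_2\) subsystem that is not a Levi subsystem), so the argument genuinely uses the \(\PGL_n\) hypothesis; by contrast, placing \(\Gamma\) inside a torus, the root-group generation of the connected centralizer, and the descent of roots to \(\Tt\adform\) are formal.
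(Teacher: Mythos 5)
Your proof takes a genuinely different route from the paper's after the common reduction to the adjoint case \(\Gt = \PGL_n\). The paper does not compute the subsystem of roots vanishing on \(\Gamma\); it sets \(\Mt = (\Gt^\Gamma)\conn\), shows \(\Mt = C_\Gt(Z(\Mt))\conn\), introduces \(\St = Z(\Mt)\smooth\conn\) so that \(C_\Gt(\St)\) is a Levi subgroup containing \(\Mt\), and proves the reverse inclusion by a lattice argument: if \(n = \smashcard{Z(\Mt)/\St}\) then \(n\Z\Phi(C_\Gt(\St),\Tt) \subseteq \Z\Phi(\Mt,\Tt)\), so the quotient \(\Z\Phi(C_\Gt(\St),\Tt)/\Z\Phi(\Mt,\Tt)\) is finite, while in type \(\mathsf A\) the torsion of \(\Z\Phi(\Gt,\Tt)/\Z\Phi(\Mt,\Tt)\) is trivial, forcing equality of root lattices and hence of groups. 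You instead identify \(\Phi' = \set{\alpha}{\alpha|_\Gamma = 0}\) explicitly in \(e_s - e_t\) coordinates and recognize it directly as the Levi subsystem of a block partition. Both invoke type \(\mathsf A\) at the crux, but in different guises: your explicit coordinates versus the paper's saturation of sub-root-lattices.

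There is, however, a gap in your first step, and a mis-attribution of where the hypothesis is used. You declare that placing \(\Gamma\) inside a maximal torus of \(\Gt\adform\) is ``formal,'' reserving the \(\PGL_n\) hypothesis for the Levi-subsystem step. That is mistaken. In a general reductive group a smooth diagonalizable subgroup need not lie in any torus; the paper's own Remark~\ref{rem:torus-quass} records precisely this: for \(p\) odd, the Klein four-group generated by the images in \(\PGL_2\) of \(\begin{smallpmatrix}1&0\\0&-1\end{smallpmatrix}\) and \(\begin{smallpmatrix}0&1\\1&0\end{smallpmatrix}\) is smooth and diagonalizable but not contained in any torus (its preimage in \(\GL_2\) is non-commutative, since those two matrices anticommute). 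This is exactly why the paper's proof devotes its opening paragraph to this point, projecting \(\Gamma\) to each \(\PGL_{n_i}\) factor, passing to the preimage in \(\GL_{n_i}\), and citing \cite{milne:algebraic-groups}*{Theorem 12.12} there. The torus containment is thus a second, independent place where the product-of-\(\PGL\)'s hypothesis is exploited, and cannot be waved away as formal; your argument needs either to supply that justification or at the very least to acknowledge its dependence on the hypothesis.
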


\begin{proof}
Suppose that
\((n_1, \dotsc, n_d)\) is a vector of positive integers such that
\(\Gt\adform\) is isomorphic to
\(\prod_{i = 1}^d \PGL_{n_i}\).
Write \(\Gamma_i\) for the projection of \(\Gamma\) on
the \(i\)th factor.
Since the pre-image of \(\Gamma_i\) in \(\GL_{n_i}\) is
contained in a split torus in \(\GL_{n_i}\)
\cite{milne:algebraic-groups}*{Theorem 12.12},
we have that \(\Gamma_i\) itself is contained in
a split torus in \(\PGL_{n_i}\), so
\(\Gamma \subseteq \prod \Gamma_i\) is contained in
a split torus \(\Tt\) in \(\prod \PGL_{n_i} = \Gt\adform\).
We may, and do, arrange, by enlarging \(\Tt\), that
it is maximal.

The restriction to \((\Gt^\Gamma)\conn\) of the adjoint quotient
\abmap\Gt{\Gt\adform}
is surjective onto \((\Gt\adform^\Gamma)\conn\)
\cite{conrad-gabber-prasad:prg}*{Proposition A.8.14(1)},
and we have shown that
\((\Gt^\Gamma)\conn\) contains the kernel \(Z(\Gt)\) of
the quotient, so
\((\Gt^\Gamma)\conn\) is the full pre-image in
\(\Gt\) of \((\Gt\adform^\Gamma)\conn\).
It thus suffices
to prove the result under the assumption that
\(\Gt\) is adjoint, hence isomorphic to
\(\prod \PGL_{n_i}\).
Since this isomorphism identifies
\((\Gt^\Gamma)\conn\) with
\(\prod (\PGL_{n_i}^{\Gamma_i})\conn\),
we may, and do, work one factor at a time, and so assume that
\(\Gt\) is (isomorphic to) \(\PGL_n\).

We have
by \cite{conrad-gabber-prasad:prg}*{Proposition A.8.14} that
\(\Mt \ldef (\Gt^\Gamma)\conn\) is reductive.
Since \(\Gamma\) is contained in \(\Tt\), hence in \(\Mt\), we have that
\(\Gamma\) is central in \(\Mt\); so
we have the containments
\(\Mt\subseteq C_\Gt(Z(\Mt))\conn\subseteq  C_\Gt(\Gamma)\conn = \Mt\),
hence equality \(\Mt =  C_\Gt(Z(\Mt))\conn\).
Put \(\St = Z(\Mt)\smooth\conn\),
so that \(C_\Gt(\St)\) is a Levi subgroup of \(\Gt\).
It suffices to show that \(\Mt\) equals \(C_\Gt(\St)\).
Note that \(Z(\Mt)/\St\) is finite as \(X^*(\St)\) and \(X^*(Z(\Mt))\) have equal rank;
let \(n = |Z(\Mt)/\St|\).

Suppose \(\alpha\) belongs to \(\Z\Phi (C_{\Gt}(\St),\Tt)\).
Then \(\alpha\) is trivial on \(\St\), so
\(n\alpha\) is trivial on \(Z(\Mt)\);
that is, \(n\alpha\) belongs to \(\bX^*(\Tt/Z(\Mt))\), which
equals \(\Z\Phi(\Mt,\Tt)\)
since \(\Gt\) is adjoint.
It follows that \(\Z\Phi (C_\Gt(\St),\Tt)/ \Z\Phi (\Mt,\Tt)\) is finite.
But since \(\Phi(\Gt, \Tt)\) is of type \(\mathsf A\),
the torsion part of \(\Z\Phi (\Gt,\Tt) / \Z\Phi(\Mt,\Tt)\),
and hence that of \(\Z\Phi (C_\Gt(\St),\Tt)/ \Z\Phi (\Mt,\Tt)\),  is trivial.
Thus \(\Z\Phi (\Mt,\Tt)\) equals \(\Z\Phi(C_\Gt(\St),\Tt)\),
so \(\Mt\) equals \(C_\Gt(\St)\).
\end{proof}

\subsection{Induction of reductive data}
\label{subsec:ind}

We define a notion of induction of
reductive data that is adjoint to
the natural notion of restriction.
Our definition is motivated by that of induction for modules;
see \cite{jantzen:alg-reps}*{Part I, \S3.3}.
Filling in the details requires some background, which
we provide in Appendix \ref{app:ind}.

After Remark \ref{rem:ind-root-datum}, we will
fix a reductive datum \((\Gt, \Gamma)\) over \(k\), but
we do not do so yet.

\begin{defn}
\label{defn:ind-data}
Let \((\Gt_1, \Gamma_1)\) be a reductive datum over \(k\), and
\(\Gamma\) a smooth \(k\)-group admitting
\(\Gamma_1\) as an open subgroup.
Write \(\Ind_{\Gamma_1}^\Gamma \Gt_1\) for
the group \(k\)-sheaf
\(\uMor_{\Gamma_1}(\Gamma, \Gt_1)\),
which is a connected, reductive \(k\)-group by
Proposition \ref{prop:ind-scheme},
equipped with the action of \(\Gamma\) described in
Remark \ref{rem:act-on-power}.
We say that \((\Gt, \Gamma)\) is \emph{induced from \(\Gamma_1\)} if
it arises in this way, up to isomorphism.
More generally, we may use the same notation for
any \(k\)-group \(\Gt_1\) with \(\Gamma_1\)-action,
even if it is not connected and reductive.
Then \(\Ind_{\Gamma_1}^\Gamma (\cdot)\) may be viewed as a functor
in a natural way; namely, if
\(\Ht_1\) is another \(k\)-group with \(\Gamma_1\)-action and
\map{f_1}{(\Gt_1, \Gamma_1)}{(\Ht_1, \Gamma_1)} is a morphism, then we define
\map{\Ind_{\Gamma_1}^\Gamma(f_1)}
	{(\Ind_{\Gamma_1}^\Gamma \Gt_1, \Gamma)}
	{(\Ind_{\Gamma_1}^\Gamma \Ht_1, \Gamma)}
 to be the map
\abmap
	{\uMor_{\Gamma_1}(\Gamma, \Gt_1)}
	{\uMor_{\Gamma_1}(\Gamma, \Ht_1)}
given by post-composition with \(f_1\).
\end{defn}

\begin{rem}
\label{rem:ind-adjoint}
Preserve the notation of Definition \ref{defn:ind-data}.
Lemma \ref{lem:induction-split-case} shows that
\(\Ind_{\Gamma_1}^\Gamma(\Gt_{1\,\dersub})\) is
the derived subgroup of \(\Ind_{\Gamma_1}^\Gamma \Gt_1\); that, if
\map\chi{\Gt_{1\,\scsub}}{\Gt_1}
is the simply connected cover of \(\Gt_{1\,\dersub}\), then
\(\Ind_{\Gamma_1}^\Gamma(\chi)\) is
the simply connected cover of
\((\Ind_{\Gamma_1}^\Gamma \Gt_1)\der\); and that, if
\map\pi{\Gt_1}{\Gt_{1\,\adsub}} is
the adjoint quotient of \(\Gt_1\), then
\(\Ind_{\Gamma_1}^\Gamma(\pi)\) is
the adjoint quotient of \(\Ind_{\Gamma_1}^\Gamma \Gt_1\).
\end{rem}

\begin{rem}
\label{rem:ind-root-datum}
Preserve the notation of Definition \ref{defn:ind-data}.
Suppose that \(\Gt_1\) has a maximal torus \(\Tt_1\) that is
preserved by \(\Gamma_1\).
Then Lemma \ref{lem:induction-split-case} gives that
\(\Tt = \Ind_{\Gamma_1}^\Gamma \Tt_1\) is
a maximal torus in \(\Gt\) that is preserved by \(\Gamma\).
Remark \ref{rem:ind-char} provides a
(\(\Gal(k) \ltimes \Gamma(\ks)\))-equivariant identification of
\(\bX^*(\Tt_\ks)\) with
\(\Z[\Gamma(\ks)] \otimes_{\Z[\Gamma_1(\ks)]}
	\bX^*(\Tt_{1\,\ks})\), and then dually of
\(\bX_*(\Tt_\ks)\) with
\(\Hom_{\Z[\Gamma_1(\ks)]}(\Z[\Gamma(\ks)], \bX_*(\Tt_{1\,\ks}))\).
Thus, for every
\(\gamma \in \Gamma(\ks)\) and
	\(\alphat \in \Phi(\Gt_\ks, \Tt_\ks)\),
it makes sense to speak of
the element
\(\gamma \otimes \alphat\) of \(\bX^*(\Tt_\ks)\), and
the element \((\gamma \otimes \alphat)^\vee\) of
\(\bX_*(\Tt_\ks)\) that vanishes at \(\gamma' \in \Gamma(\ks)\)
unless \(\gamma'\) belongs to \(\gamma\Gamma_1(\ks)\),
in which case it sends \(\gamma'\) to
\((\gamma^{\prime\,{-1}}\gamma\alphat)^\vee\).
With the notation of Remark \ref{rem:concrete-descent},
we have for each \(\gamma \in \Gamma(\ks)\) that
\(\smashset
	{\sigma(\gamma) \otimes \alphat}
	{\sigma \in \Gal(k),
		\alphat \in \Phi(\Gt_{1\,\ks}, \Tt_{1\,\ks})}\)
equals \(\Phi(\Gt_{1\,\gamma\,\ks}, \Tt_{1\,\gamma\,\ks})\).
Thus \(\Phi(\Gt_\ks, \Tt_\ks)\) equals
\(\smashset
	{\gamma \otimes \alphat}
	{\gamma \in \Gamma(\ks),
		\alphat \in \Phi(\Gt_{1\,\ks}, \Tt_{1\,\ks})}\).
Further,
\abmapto
	{\gamma \otimes \alphat}
	{(\gamma \otimes \alphat)^\vee}
realizes \(\Phi(\Gt_\ks, \Tt_\ks)\) as a root system in
the sublattice of \(\bX^*(\Tt_\ks)\) that it spans.
\end{rem}

For the remainder of \S\ref{subsec:ind}, let
\((\Gt, \Gamma)\) be a reductive datum over \(k\).

\begin{lem}
\label{lem:ind-simple-ad}
Let \(\Nt_1\) be a
smooth, connected, normal, semisimple subgroup of \(\Gt\).
Write \(\Gamma_1\) for the stabilizer of \(\Nt_1\) in \(\Gt\).
\begin{enumerate}[label=(\alph*), ref=\alph*]
\item\label{sublem:Gamma1-exact}
The subgroup \(\Gamma_1\) of \(\Gamma\) is
open, and
the functor \(\Ind_{\Gamma_1}^\Gamma(\cdot)\)
on the category of \(k\)-groups equipped with an action of \(\Gamma_1\)
is exact.
\item\label{sublem:simple-ad}
There is a unique map \abmap\Gt{\Nt_{1\,\adsub}}
that restricts to the adjoint quotient of \(\Nt_1\) and
annihilates \(C_\Gt(\Nt_1)\smooth\conn\).
It is \(\Gamma_1\)-equivariant, and
annihilates \(C_\Gt(\Nt_1)\).
\end{enumerate}
Lemma \ref{lem:ind-first-adjoint} gives a map
\map\psi\Gt{\Ind_{\Gamma_1}^\Gamma \Nt_{1\,\adsub}}
corresponding to the map
\abmap\Gt{\Nt_{1\,\adsub}} of (\ref{sublem:simple-ad}).
Write \(\Nt\) for the smallest \(\Gamma\)-stable subgroup
of \(\Gt\) containing \(\Nt_1\).
\begin{enumerate}[resume*]
\item\label{sublem:Gamma-closure}
\(\Nt\) is semisimple, and
\(\Nt_\ks\) is generated by those almost-simple components
\(\Gt_1\) of \(\Gt_\ks\) that
admit a \(\Gamma(\ks)\)-conjugate contained in
\(\Nt_1(\ks)\).
\item\label{sublem:ind-isogeny}
The restriction of \(\psi\) to \(\Nt\) is
a central isogeny onto its image.
The multiplication map
\abmap{\ker(\psi)\smooth\conn \times \Nt}\Gt\
is a central isogeny.
\end{enumerate}
\end{lem}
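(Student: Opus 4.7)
The plan is to treat the four parts in order, with the main work occurring over $\ks$ via a decomposition into almost-simple components, and then descending by Galois-equivariance. For (\ref{sublem:Gamma1-exact}), $\Nt_{1\,\ks}$ is a product of a subset of the finitely many almost-simple components of $\Gt_\ks$ (since $\Nt_1$ is smooth, connected, normal, and semisimple in the reductive $\Gt$). The action of $\Gamma$ on this finite set factors through $\pi_0(\Gamma)$, which is \'etale, so $\Gamma_1 = \stab_\Gamma(\Nt_1)$ contains $\Gamma\conn$ and is open in $\Gamma$. Exactness of $\Ind_{\Gamma_1}^\Gamma$ is then immediate from Proposition \ref{prop:ind-scheme}: after base change to $\ks$, $\Ind_{\Gamma_1}^\Gamma X$ becomes a finite direct product of copies of $X$ indexed by $\Gamma(\ks)/\Gamma_1(\ks)$, a functor which is visibly exact.

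For (\ref{sublem:simple-ad}), the conjugation action $\Gt \to \uAut(\Nt_1)$ factors through the identity component $\uAut(\Nt_1)\conn = \uInn(\Nt_1) \cong \Nt_{1\,\adsub}$ (Remark \ref{rem:aut-inner}), since $\Gt$ is connected. Its restriction to $\Nt_1$ is the inner-automorphism map, hence the adjoint quotient; its kernel is $C_\Gt(\Nt_1)$; and $\Gamma_1$-equivariance is built in because $\Gamma_1$ preserves $\Nt_1$. For uniqueness, $C_\Gt(\Nt_1)$ is reductive (as centralizer of a smooth reductive normal subgroup in a reductive group), hence smooth, so $C_\Gt(\Nt_1)\smooth\conn = C_\Gt(\Nt_1)\conn$; and the structure theory of reductive groups shows that $\Nt_1 \cdot C_\Gt(\Nt_1)\conn = \Gt$ (via the decomposition of $\Gt\der$ as an almost-direct product of almost-simple factors, together with $Z(\Gt)\conn$). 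Hence any morphism to $\Nt_{1\,\adsub}$ agreeing with the adjoint quotient on $\Nt_1$ and annihilating $C_\Gt(\Nt_1)\smooth\conn$ is uniquely determined.

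For (\ref{sublem:Gamma-closure}), over $\ks$ the almost-simple components of $\Gt_\ks$ are permuted by $\Gamma(\ks)$, so the smallest $\Gamma_\ks$-stable subgroup of $\Gt_\ks$ containing $\Nt_{1\,\ks}$ is the commuting product of the components in the $\Gamma(\ks)$-orbit of those constituting $\Nt_{1\,\ks}$; this subgroup is semisimple, and $\Gal(k)$-invariance of the description forces the descent $\Nt$ to be semisimple with the stated shape.

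For (\ref{sublem:ind-isogeny}), the adjunction underlying $\psi$ (Lemma \ref{lem:ind-first-adjoint}), together with the explicit form of $\Ind_{\Gamma_1}^\Gamma$ after base change to $\ks$ (Remark \ref{rem:ind-root-datum}), shows that $\psi_\ks$ combines the $\Gamma(\ks)/\Gamma_1(\ks)$-twists of the map from (\ref{sublem:simple-ad}); hence $\ker(\psi)_\ks = \bigcap_\gamma C_\Gt(\gamma\Nt_1\gamma\inv)_\ks = C_{\Gt_\ks}(\Nt_\ks)$, which descends to $C_\Gt(\Nt)$. This centralizer is reductive and hence smooth and connected, so $\ker(\psi)\smooth\conn = C_\Gt(\Nt)$. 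Its intersection with $\Nt$ is the finite central subgroup $Z(\Nt)$, making $\psi|_\Nt$ a central isogeny onto its image; and the multiplication $C_\Gt(\Nt) \times \Nt \to \Gt$ is surjective via the almost-direct decomposition (the almost-simple factors outside $\Nt$, together with $Z(\Gt)\conn$, lie in $C_\Gt(\Nt)$) with finite central kernel $\set{(z\inv, z)}{z \in Z(\Nt)}$. The chief obstacle is identifying this kernel, which requires reading off a concrete description of $\psi_\ks$ from the abstract induced group $\Ind_{\Gamma_1}^\Gamma \Nt_{1\,\adsub}$; the rest then reduces to standard structural facts about reductive groups applied to the almost-simple component decomposition.
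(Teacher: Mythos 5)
Your overall strategy matches the paper's: reduce to $\ks$ via the almost-simple decomposition, use the explicit product description of induction there, and descend by Galois-equivariance. Parts (\ref{sublem:Gamma1-exact}) and (\ref{sublem:Gamma-closure}) coincide with the paper's argument.

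Your existence argument for (\ref{sublem:simple-ad}) is a genuinely different and cleaner route than the paper's. The paper constructs the map over $\ks$ from the almost-direct-product decomposition (uniqueness there gives $\Gal(k)$-invariance, hence descent); you instead observe directly over $k$ that conjugation gives $\abmap\Gt{\uAut(\Nt_1)}$, which by connectedness of $\Gt$ lands in $\uAut(\Nt_1)\conn = \uInn(\Nt_1) \cong \Nt_{1\,\adsub}$ (Remark \ref{rem:aut-inner}, applicable since $\Nt_1$ is semisimple). This avoids any descent and makes $\Gamma_1$-equivariance and the kernel computation transparent. It is a nice improvement.

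However, you make the claim, twice, that centralizers of the form $C_\Gt(\Nt_1)$ and $C_\Gt(\Nt)$ are reductive, hence smooth. This is false in positive characteristic: take $\Gt = \Nt_1 = \SL_p$ in characteristic $p$, where $C_\Gt(\Nt_1) = Z(\SL_p) = \mu_p$ is not smooth (or $\Gt = \SL_p \times \SL_p$ with $\Nt_1$ the first factor). The uniqueness argument in (\ref{sublem:simple-ad}) is unharmed, since what you actually need is $\Gt = \Nt_1 \cdot C_\Gt(\Nt_1)\smooth\conn$, which the structure theory does give and which you invoke correctly. But in (\ref{sublem:ind-isogeny}) the error propagates into a wrong assertion: from $\ker(\psi) = C_\Gt(\Nt)$ you conclude $\ker(\psi)\smooth\conn = C_\Gt(\Nt)$, which need not hold. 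The correct chain is $\ker(\psi) = C_\Gt(\Nt)$, hence $\ker(\psi)\smooth\conn = C_\Gt(\Nt)\smooth\conn$, and the isogeny statement follows from the almost-direct product $\Gt = \Nt \cdot C_\Gt(\Nt)\smooth\conn$. Relatedly, the kernel of the multiplication map is $\smashset{(x\inv, x)}{x \in C_\Gt(\Nt)\smooth\conn \cap \Nt}$, a finite central subgroup contained in $Z(\Nt)$ but not necessarily equal to it; for the claim it only matters that it be finite and central. These are local corrections; the skeleton of the argument is sound.
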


\begin{proof}
Remark \ref{rem:torus-quass}(\ref{subrem:act-inner}) gives that
\(\Gamma_1\) contains the identity component of \(\Gamma\), hence
is open.
Since a sequence of
\(k\)-groups with \(\Gamma_1\)-, or \(\Gamma\)-, action is
exact if and only if it is exact as a sequence of
fppf group sheaves over \(k\),
Corollary \ref{cor:ksep-ind-scheme} gives that
\(\Ind_{\Gamma_1}^\Gamma(\cdot)\) is exact.
This shows (\ref{sublem:Gamma1-exact}).

If \(\Gt_1\) is an almost-simple component of \(\Gt_\ks\) that
admits a \(\Gamma(\ks)\)-conjugate contained in \(\Nt_{1\,\ks}\),
then \(\Gt_1\) is contained in \(\Nt_\ks\).
On the other hand, the subgroup of \(\Gt_\ks\) generated by
all such almost-simple components
is preserved by \(\Gamma(\ks)\), hence by \(\Gamma_\ks\);
contains \(\Nt_{1\,\ks}\)
\cite{milne:algebraic-groups}*{Theorem 21.51}; and
is preserved by \(\Gal(k)\), hence descends to
a \(\Gamma\)-stable subgroup of \(\Gt\) that contains \(\Nt_1\).
It is therefore precisely \(\Nt_\ks\).
Thus \(\Nt_\ks\), and so \(\Nt\), is
smooth and connected.
This shows (\ref{sublem:Gamma-closure}).

The classical structure theory of reductive groups
\cite{milne:algebraic-groups}*
	{Theorem 21.51 and Proposition 21.61(c)}
shows that \(\Gt_\ks\) is the almost-direct product of
\(\Nt_{1\,\ks}\) and
\(C_\Gt(\Nt_{1\,\ks})\smooth\conn) = (C_\Gt(\Nt_1)\smooth\conn)_\ks\),
so there is a unique map
\abmap{\Gt_\ks}{\Nt_{1\,\ks\,\adsub} = \Nt_{1\,\adsub\,\ks}}
that restricts to the adjoint quotient on
\(\Nt_{1\,\ks}\), and
annihilates \((C_\Gt(\Nt_1)\smooth\conn)_\ks\).
This is a stronger uniqueness statement than that asserted in
(\ref{sublem:simple-ad}), but we still need to show existence.
Our stronger uniqueness statement implies that our map
\abmap{\Gt_\ks}{\Nt_{1\,\adsub\,\ks}} is fixed by
\(\Gal(k) \ltimes \Gamma_1(\ks)\), hence
descends to a map as in (\ref{sublem:simple-ad}).
Since
\(\Gt\) equals \(\Nt_1\cdot C_\Gt(\Nt_1)\smooth\conn\),
we have that
\(C_\Gt(\Nt_1)\) equals \(Z(\Nt_1)\cdot C_\Gt(\Nt_1)\smooth\conn\),
and so is
annihilated by this map.
This shows (\ref{sublem:simple-ad}).

For each \(\gamma \in \Gamma(\ks)\), we have the direct product
\(\prod \gamma(\Gt_{1\,\adsub})\)
over all almost-simple components \(\Gt_1\) of \(\Nt_{1\,\ks}\).
Although replacing \(\gamma\) by a right \(\Gamma_1(\ks)\)-translate
can affect the order of the factors, it does not affect
the overall product.
Thus, it makes sense to consider the product
\(\prod_{\gamma \in (\Gamma/\Gamma_1)(\ks)}
	\prod_{\Gt_1} \gamma(\Gt_{1\,\adsub})\).
Lemma \ref{lem:induction-split-case} allows us to identify
\(\psi\) with the product map
\abmap
	{\Gt_\ks}
	{\prod_{\gamma \in (\Gamma/\Gamma_1)(\ks)}
		\prod_{\Gt_1} \gamma(\Gt_{1\,\adsub})},
where each component map
\abmap\Gt{\gamma(\Gt_{1\,\adsub})} is the canonical
projection on an almost-simple component of \(\Gt_{\adsub\,\ks}\).
Again, the classical structure theory of reductive groups
shows that this map restricts to
a central isogeny of \(\Nt_\ks\) onto its image, and
annihilates all
almost-simple components of \(\Gt_\ks\) not contained in \(\Nt_\ks\),
which are therefore contained in
\(\ker(\psi_\ks)\smooth\conn =
(\ker(\psi)\smooth\conn)_\ks\).
This shows (\ref{sublem:ind-isogeny}).
\end{proof}

\begin{lem}
\label{lem:ind-simple-sc}
Preserve the notation and hypotheses of
Lemma \ref{lem:ind-simple-ad}.
Suppose further that
\(\Nt_1\) is an almost-simple component of \(\Gt\), and
\(\Gamma/\Gamma_1\) is constant.
Then there is a \(\Gamma\)-equivariant, central isogeny
\(\phi\) from
\(\Ind_{\Gamma_1}^\Gamma \Nt_1\) onto
\(\Nt\) such that
\(\psi \circ \phi\) is
the adjoint quotient of \(\Ind_{\Gamma_1}^\Gamma \Nt_1\).
\end{lem}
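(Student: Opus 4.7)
The plan is to construct $\phi$ explicitly by combining the concrete description of induction from Lemma \ref{lem:induction-split-case} with the almost-direct product decomposition of $\Nt$.

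By Lemma \ref{lem:ind-simple-ad}(\ref{sublem:Gamma-closure}), $\Nt_\ks$ is generated by the $\Gamma(\ks)$-conjugates of $\Nt_{1\,\ks}$; since $\Nt_1$ is an almost-simple component, these conjugates are distinct almost-simple components of $\Gt_\ks$ indexed by $(\Gamma/\Gamma_1)(\ks)$, and pairwise commute. The multiplication map from their direct product to $\Nt_\ks$ is therefore a central isogeny, $\Gamma_\ks$-equivariant for the action that permutes factors according to the translation action of $\Gamma$ on $\Gamma/\Gamma_1$ combined with the induced $\Gamma_1$-action on each factor. Since $\Gamma/\Gamma_1$ is constant, this map descends to $k$.

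Now I would use Lemma \ref{lem:induction-split-case} to identify $\Ind_{\Gamma_1}^\Gamma \Nt_1$ with this direct product of conjugates: concretely, over a suitable base where coset representatives $\gamma_i$ of $\Gamma(\cdot)/\Gamma_1(\cdot)$ are available, an element $f \in \uMor_{\Gamma_1}(\Gamma, \Nt_1)$ corresponds to the tuple $(f(\gamma_i))_i$, and the $\gamma_i$-action sends $\Nt_1$ isomorphically to $\gamma_i \Nt_1 \subseteq \Gt$. Pre-composing the multiplication map above with these component isomorphisms defines $\phi$. Together with the previous paragraph this realises $\phi$ as a central isogeny onto $\Nt$, and comparing how the $\Gamma$-action permutes cosets on the induction side with how it permutes conjugates on the $\Gt$-side yields $\Gamma$-equivariance; the check reduces to the identity $\phi(\gamma \cdot (x_i)) = \gamma \cdot \phi((x_i))$ on each factor.

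Finally, for $\psi \circ \phi$: Remark \ref{rem:ind-adjoint} identifies the adjoint quotient of $\Ind_{\Gamma_1}^\Gamma \Nt_1$ with $\Ind_{\Gamma_1}^\Gamma(\Nt_{1\,\adsub})$, with quotient map equal to $\Ind_{\Gamma_1}^\Gamma$ of the adjoint quotient of $\Nt_1$. The map $\psi$ was obtained via adjointness (Lemma \ref{lem:ind-first-adjoint}) from the map $\Gt \to \Nt_{1\,\adsub}$ of Lemma \ref{lem:ind-simple-ad}(\ref{sublem:simple-ad}) that restricts to the adjoint quotient on $\Nt_1$. Thus, on each conjugate component of $\Nt$, the composite $\psi \circ \phi$ agrees with the adjoint quotient of $\Nt_1$ transported by $\gamma_i$; by naturality of the adjunction this is exactly $\Ind_{\Gamma_1}^\Gamma$ applied to the adjoint quotient, as required.

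The main obstacle is the careful bookkeeping of equivariance in step two: one must match the convention for the $\Gamma$-action on $\uMor_{\Gamma_1}(\Gamma, \Nt_1)$ from Remark \ref{rem:act-on-power} with the (possibly non-inner) $\Gamma$-action on conjugate components of $\Gt$, and verify that the $\gamma_i$-translation isomorphisms intertwine the two. Once this is correctly set up, all remaining verifications are immediate from what has already been proved in Lemma \ref{lem:ind-simple-ad} and the appendix.
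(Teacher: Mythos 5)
Your approach and the paper's produce the same map $\phi$, namely the one given concretely by Equation~\eqref{eq:ind-image-of-second}: $\phi(\ft) = \prod_\gamma \gamma\cdot\ft(\gamma)$ over coset representatives. The difference is in how that map is legitimately produced as a morphism of $k$-group schemes, and here I think you underestimate the gap. You propose to invoke Lemma~\ref{lem:induction-split-case}, which requires a sheaf-theoretic section of $\abmap{\Gamma}{\Gamma/\Gamma_1}$. Constancy of $\Gamma/\Gamma_1$ does \emph{not} give you such a section over $k$: it only tells you that the $k$- and $\ks$-points of the quotient agree, not that each coset of $\Gamma_1$ has a $k$-point. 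So your explicit identification $\Ind_{\Gamma_1}^\Gamma\Nt_1 \cong \prod_i \Nt_1$ lives over a separable extension $k'$ (Lemma~\ref{lem:local-sections-separable}), and the composite $\phi$ is a priori a $k'$-morphism. The obstruction you name (``matching the $\Gamma$-action conventions'') is genuine but secondary; the missing piece is descent from $k'$ to $k$.

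That gap is fillable, and here is what you need: show the composite $\ft\mapsto\prod_i\gamma_i\cdot\ft(\gamma_i)$ is independent of the choice of representatives $\gamma_i$. If $\gamma_i' = \gamma_i\gamma_{1,i}$ with $\gamma_{1,i}\in\Gamma_1$, then $\ft(\gamma_i') = \gamma_{1,i}^{-1}\cdot\ft(\gamma_i)$ (from $\Gamma_1$-equivariance of $\ft$), while the translation isomorphism changes by the same $\gamma_{1,i}$, so the product is unchanged. Because $\Gamma/\Gamma_1$ is constant, $\Gal(k)$ acts trivially on the coset set, so a Galois conjugate $\sigma(\gamma_i)$ is again a representative of the same coset; hence $\phi$ is $\Gal(k'/k)$-invariant and descends. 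The paper avoids this bookkeeping entirely by packaging it into the abstract machinery of the appendix: it produces $\phi$ directly over $k$ by feeding the inclusion $\abmap{\Nt_1}{\Gt}$ into Corollary~\ref{cor:was-subprop:ind-image-of-second-smooth} (via Lemma~\ref{lem:was-cor:trans-induction}), whose commutativity hypothesis is exactly what the conjugates of $\Nt_1$ being distinct almost-simple components buys you. Your route is a more elementary, hands-on version of the same construction; the trade-off is that you must do the section-independence and descent checks explicitly rather than inheriting them from Proposition~\ref{prop:ind-second-adjoint}. Your final step on $\psi\circ\phi$ is fine and matches the paper's diagram argument.
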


\begin{proof}
Regard the inclusion
\abmap{\Nt_1}\Gt\ as
an element of \(\Hom_{\Gamma_1}(\Nt_1, \Gt)\),
hence, by Lemma \ref{lem:was-cor:trans-induction},
a \(\Gamma\)-fixed element of
\(\Mor_{\Gamma_1}(\Gamma, \uHom(\Nt_1, \Gt))\).

The inclusion
\abmap{(\Gamma/\Gamma_1)(k)}{(\Gamma/\Gamma_1)(\ks)}
is a bijection, so,
for every \(\gamma, \gamma' \in \Gamma(\ks)\) such that
\(\gamma\) and \(\gamma'\) belong to distinct
\(\Gamma_1(\ks)\)-cosets, we have that
\(\gamma\Nt_{1\,\ks}\) and \(\gamma'\Nt_{1\,\ks}\) are
the base changes to \(\ks\) of
distinct almost-simple factors of \(\Gt\).
That is, the element of
\(\Mor_{\Gamma_1}(\Gamma, \uHom(\Nt_1, \Gt))\)
corresponding to the inclusion \abmap{\Nt_1}\Gt\ satisfies
the commutativity condition of
Corollary \ref{cor:was-subprop:ind-image-of-second-smooth}, which therefore
produces a \(\Gamma\)-equivariant homomorphism
\map\phi{\Ind_{\Gamma_1}^\Gamma \Nt_1}\Gt\
given by Equation \eqref{eq:ind-image-of-second}.

Write \(\iota\) for the map
\abmap{\Nt_1}{\Ind_{\Gamma_1}^\Gamma \Nt_1} of
Definition \ref{defn:ind-first-section}.
Equation \eqref{eq:ind-image-of-second} shows two things.
First, the image of \(\phi_\ks\) is
contained in the product of the \(\Gamma(\ks)\)-conjugates of
\(\Nt_{1\,\ks}\), hence is contained in \(\Nt_\ks\); but
\(\phi \circ \iota\) is the identity, so the image of \(\phi\) is
a \(\Gamma\)-stable subgroup of \(\Gt\) containing \(\Nt_1\), hence
containing \(\Nt\).
That is, the image of \(\phi\) is precisely \(\Nt\).
Second, the diagram
\[\xymatrix{
\Nt_1
	\ar[r]_-\iota
	\ar@(ul,ur)@{^(->}[rr]^{} &
\Ind_{\Gamma_1}^\Gamma \Nt_1
	\ar[r]_-\phi &
\Gt
	\ar[r]_-\psi
	\ar@(ul,ur)@{->>}[rr] &
\Ind_{\Gamma_1}^\Gamma \Nt_{1\,\adsub}
	\ar[r] &
\Nt_{1\,\adsub}
}\]
commutes, so that \(\psi \circ \phi\) is the map
corresponding by functoriality to
the adjoint quotient
\abmap
	{\Nt_1}
	{\Nt_{1\,\adsub}}.
Remark \ref{rem:ind-adjoint} shows that
\(\Ind_{\Gamma_1}^\Gamma \Nt_1\) is semisimple and
\(\psi \circ \phi\) is
the adjoint quotient of
\(\Ind_{\Gamma_1}^\Gamma \Nt_1\), hence, in particular,
is surjective.
In particular, the kernel of \(\phi\) is central in
the semisimple group \(\Ind_{\Gamma_1}^\Gamma \Nt_1\), hence
finite, so that \(\phi\) is a central isogeny onto its image.
\end{proof}

Corollary \ref{cor:ind-simple} can be re-phrased
informally as follows.
With the notation \(\widehat G\) introduced there,
if \(\Gt\) is adjoint, then
the various maps \(\psi\) of Lemma \ref{lem:ind-simple-ad}
piece together to a \(\Gamma\)-equivariant isomorphism
\abmap\Gt{\widehat G}; whereas,
if \(\Gt\) is simply connected, then
the various maps \(\phi\) of Lemma \ref{lem:ind-simple-sc}
piece together to a \(\Gamma\)-equivariant isomorphism
\abmap{\widehat G}\Gt.

\begin{cor}
\label{cor:ind-simple}
Suppose that \(\pi_0(\Gamma)\) is constant.
For each \(\pi_0(\Gamma)(k)\)-orbit \(i\) of
almost-simple components of \(\Gt\),
fix a representative \(\Gt_i\) of \(i\),
put \(\Gamma_i = \stab_\Gamma(\Gt_i)\), and
let \(\map{\psi_i}{\Gt}{ \Ind_{\Gamma_i}^\Gamma \Gt_{i\,\adsub}}\)
and \(\map{\phi_i}{ \Ind_{\Gamma_i}^\Gamma \Gt_i}{\Gt}\)
be the maps of Lemma \ref{lem:ind-simple-ad} and
Lemma \ref{lem:ind-simple-sc}.
Put \(\widehat G = \prod_i \Ind_{\Gamma_i}^\Gamma \Gt_i\).
If \(\Gt\) is adjoint, then the map \((\psi_i)_i\)
is a \(\Gamma\)-equivariant isomorphism
\abmap\Gt{\widehat G}.
If \(\Gt\) is simply connected,
then the map taking
\((\gt_i)_i\) to the product of the \(\phi_i(\gt_i)\) is a
\(\Gamma\)-equivariant isomorphism
\abmap{\widehat G}\Gt.
\end{cor}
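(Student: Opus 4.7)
The plan is to verify both isomorphism claims after base change to \(\ks\), where Lemma \ref{lem:induction-split-case} expresses each induced group concretely as a direct product of \(\Gamma(\ks)\)-conjugates. The hypothesis that \(\pi_0(\Gamma)\) is constant means that every coset in \((\Gamma/\Gamma_i)(\ks)\) is accounted for by the fixed representatives, so we may label the almost-simple components of \(\Gt_\ks\) by pairs \((i,\gamma)\) with \(i\) a \(\pi_0(\Gamma)(k)\)-orbit and \(\gamma \in (\Gamma/\Gamma_i)(\ks)\). This yields a canonical identification
\[
\widehat G_\ks \;\cong\; \prod_i \prod_{\gamma\in(\Gamma/\Gamma_i)(\ks)} \gamma\Gt_{i\,\ks}
\]
(respectively, with \(\Gt_{i\,\adsub}\) in place of \(\Gt_i\) when we apply the identification to the adjoint factors appearing in the adjoint statement). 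Both maps in the statement are visibly \(\Gamma\)-equivariant homomorphisms of connected reductive \(k\)-groups, so by fpqc descent it will suffice to check that each becomes an isomorphism after base change to \(\ks\).

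For the adjoint case, each \(\Gt_i\) is itself adjoint, so the induced factor is just \(\Ind_{\Gamma_i}^\Gamma \Gt_i\). Applying the uniqueness part of Lemma \ref{lem:ind-simple-ad}(\ref{sublem:simple-ad}) to each conjugate component \(\gamma\Gt_{i\,\ks}\) identifies the \(\gamma\)-component of \((\psi_i)_\ks\) with the canonical projection of \(\Gt_\ks\) onto that almost-simple factor. Collecting these over all pairs \((i,\gamma)\) recovers the canonical isomorphism \(\Gt_\ks \to \widehat G_\ks\), and descent finishes the adjoint case.

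For the simply connected case, Lemma \ref{lem:ind-simple-sc} gives that each \(\phi_i\) is a central isogeny onto the smallest \(\Gamma\)-stable subgroup \(\Nt_i\) of \(\Gt\) containing \(\Gt_i\). Over \(\ks\), the \(\Nt_{i\,\ks}\) corresponding to distinct orbits involve disjoint sets of almost-simple factors of \(\Gt_\ks\) and hence commute pairwise; this both makes \((\gt_i)_i \mapsto \prod_i \phi_i(\gt_i)\) a well-defined \(\Gamma\)-equivariant homomorphism and forces its \(\ks\)-image to be all of \(\prod_i \Nt_{i\,\ks} = \Gt_\ks\). Since \(\Gt\) is simply connected, each factor \(\gamma\Gt_{i\,\ks}\) is simply connected, so \(\widehat G\) is simply connected; the combined map is therefore a central isogeny of simply connected reductive groups, hence an isomorphism. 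The only genuinely delicate step I anticipate is the bookkeeping needed to match the \(\ks\)-factorization with the abstract maps \(\psi_i\) and \(\phi_i\), invoking the uniqueness parts of Lemmas \ref{lem:ind-simple-ad} and \ref{lem:ind-simple-sc}; beyond that the argument is essentially formal.
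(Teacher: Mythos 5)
Your argument is correct, but it takes a more explicit group-level route than the paper does. The paper's proof is very short: after noting $\Gamma$-equivariance, it invokes Remark \ref{rem:ind-root-datum} to check that $(\psi_i)_i$ and the combined map $\widehat G \to \Gt$ induce isomorphisms on root data (using the constant-$\pi_0(\Gamma)$ hypothesis so that $\pi_0(\Gamma)(k) \to \pi_0(\Gamma)(\ks)$ is bijective, which forces the induced lattice map to be surjective as well as injective), and then concludes. You instead base change to $\ks$, use Lemma \ref{lem:induction-split-case} to trivialize the induced groups as products, match the components of each map against projections and embeddings via Lemma \ref{lem:ind-simple-ad}(\ref{sublem:ind-isogeny}) and Lemma \ref{lem:ind-simple-sc}, and then run a central-isogeny argument in the simply connected case. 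Both are valid; the root-datum check is more compact, while your decomposition over $\ks$ is more transparent and, in the simply connected case, supplies the observation (not isolated in the paper's proof) that the $\Nt_{i}$ commute pairwise so that the product map is actually a homomorphism.

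Two small cautions, neither fatal. First, the labelling ``almost-simple components of $\Gt_\ks$ by pairs $(i,\gamma)$'' is slightly off: $\gamma\Gt_{i\,\ks}$ is generally a \emph{product} of several $\Gal(k)$-conjugate almost-simple components of $\Gt_\ks$, not a single one, since $\Gt_i$ is a $k$-component. This does not affect the argument--you only need that the $\gamma\Gt_{i\,\ks}$ are pairwise commuting normal semisimple subgroups whose product is all of $\Gt_\ks$--but the phrasing should be adjusted. Second, when concluding the simply connected case, the commutativity of the $\Nt_i$ (a closed condition) can and should be checked once over $\ks$ and then asserted over $k$ so that the product map is a genuine $k$-homomorphism before you invoke descent; you gesture at this but it deserves to be said explicitly.
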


\begin{proof}
These maps are \(\Gamma\)-equivariant by construction.
Since \(\pi_0(\Gamma)\) is constant, so that
the inclusion \abmap{\pi_0(\Gamma)(k)}{\pi_0(\Gamma)(\ks)} is
an isomorphism,
Remark \ref{rem:ind-root-datum}
shows that
each of these maps induces
an isomorphism on root data, hence is
an isomorphism.
\end{proof}

\begin{rem}
\label{rem:fixed-simple}
Preserve the notation and hypothesis of Corollary \ref{cor:ind-simple}.
If \(\Gt\) is adjoint, then
\(\map{(\psi_i)_i}{\Gt}{\widehat G}\)
restricts to an isomorphism of
\(\Gt^\Gamma\) onto \(\widehat G^\Gamma\), whose
composition with the isomorphism
\abbimap{\widehat \Gt^\Gamma}{\prod \Gt_i^{\Gamma_i}}
of Lemma \ref{lem:was-cor:trans-induction} is an isomorphism of
\(\Gt^\Gamma\) onto \(\prod_i \Gt_i^{\Gamma_i}\).
\end{rem}

\subsection{Spherical buildings}
\label{subsec:spherical}

Recall the notion of the spherical building \(\SS(G)\) of a reductive \(k\)-group
\(G\) from \cite{curtis-lehrer-tits:spherical}*{\S2}.
If \(S\) is a split \(k\)-torus,
\(E/k\) is a field extension,
\(T\) is a split \(E\)-torus,
and \abmap{S_E}T is an embedding, then
we obtain a corresponding embedding \abinmap{\bX_*(S_E)}{\bX_*(T)},
which, when pre-composed with the natural isomorphism
\abbimap{\bX_*(S)}{\bX_*(S_E)}, furnishes an embedding
\abinmap{\bX_*(S)}{\bX_*(T)}, hence
\abinmap{V(S)}{V(T)}.
This induces an embedding
\abinmap
	{\SS(S) = (V(S) \setminus \sset0)/\R_{> 0}}
	{(V(T) \setminus \sset0)/\R_{> 0} = \SS(T)}
of spherical apartments
\cite{curtis-lehrer-tits:spherical}*{\S1}.
If we take
\(S\) to be a maximal torus in \(G\)
and
\(T\) to be a maximal torus in \(C_G(S)_E\), then
we see that every apartment of \(\SS(G)\) embeds
in an apartment of \(\SS(G_E)\).
If \(b\) belongs to \(\SS(S)\), viewed as
an apartment \(\AA(S)\) in \(\SS(G)\), and
we write
\(P_G(b)\) for the corresponding parabolic subgroup of \(G\)
\cite{curtis-lehrer-tits:spherical}*{\S1} and
\(b_E\) for the corresponding element of \(\AA(T)\), then
the analogous parabolic subgroup \(P_{G_E}(b_E)\) of \(G_E\) equals
\(P_G(b)_E\)
(as can be verified on the level of Lie algebras).
Thus, two apartments that are glued in \(\SS(G)\) are
also glued in \(\SS(G_E)\).
We thus obtain a canonical map \abmap{\SS(G)}{\SS(G_E)}.
We now make three observations that, together, show that
\abmap{\SS(G)}{\SS(G_E)} is an embedding
(i.e., injection):
\begin{itemize}
\item Any two elements of \(\SS(G)\) lie in a common apartment
\cite{curtis-lehrer-tits:spherical}*{(2.3)}.
\item For any tori \(S\) and \(T\) as above,
the restriction to \(\SS(S)\) of
\abmap{\SS(G)}{\SS(G_E)} is
the embedding \abmap{\AA(S)}{\AA(T)}.
\item
The map from an apartment into the full spherical building is
an embedding
\cite{curtis-lehrer-tits:spherical}*{Lemma 2.2(ii)}.
\end{itemize}
With this in mind, we use the map \abmap{\SS(G)}{\SS(G_E)} to regard
\(\SS(G)\) as a subset of \(\SS(G_E)\).
It thus makes sense to ask if an element of \(\SS(G)\) is fixed by
an automorphism of \(G_E\) (acting on \(\SS(G_E)\)), even if
that automorphism is not the base change to \(E\) of
an automorphism of \(k\).

Recall that \(\SS\) is a functor from
the category of reductive \(k\)-groups and embeddings
to
the category of sets and injections
\cite{curtis-lehrer-tits:spherical}*{\S4}.

\begin{lem}
\label{lem:spherical-Res}
Let
\(E/k\) be a finite, separable field extension.
\begin{enumerate}[label=(\alph*), ref=\alph*]
\item\label{sublem:spherical-Res}
Let \(H_1\) be a reductive \(E\)-group.
If
\(S_1\) is a maximal (\(E\)-)split torus in \(H_1\), and
\(S\) is the maximal (\(k\)-)split torus in \(\WRes_{E/k}S_1\), then
\(S\) is a maximal split torus in \(\WRes_{E/k}H_1\), and
the Weil adjunction
\abmap{\Hom_k(\GL_{1, k}, \WRes_{E/k}S_1)}{\Hom_E(\GL_{1, E}, S_1)}
restricts to an isomorphism
\abmap{\bX_*(S)}{\bX_*(S_1)}.
The map \abmapto{S_1}S is a bijection between
the maximal split tori in \(H_1\) and \(\WRes_{E/k}H_1\).
The resulting maps \abmap{\SS(S)}{\SS(S_1)} fit together into
a (\(\Gal(E/k) \ltimes H_1(E)\))-equivariant bijection
\abmap{\SS(\WRes_{E/k}H_1)}{\SS(H_1)}.
\item\label{sublem:which-spherical-bc}
Let \(H\) be a reductive \(k\)-group.
The inclusion \abmap{\SS(H)}{\SS(H_E)} and
the functorial map \abmap{\SS(H)}{\SS(\WRes_{E/k}H_E)}
are identified by the bijection in (\ref{sublem:spherical-Res}).
\end{enumerate}
\end{lem}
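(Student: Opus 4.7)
The plan is to exploit the Weil adjunction
\(\Hom_k(X, \WRes_{E/k}Y) = \Hom_E(X_E, Y)\) between
\(k\)-schemes \(X\) and \(E\)-schemes \(Y\) throughout.
For part (\ref{sublem:spherical-Res}), applying the adjunction with
\(X = \GL_{1,k}\) and \(Y = S_1\) yields the natural isomorphism
\(\bX_*(\WRes_{E/k}S_1) = \bX_*(S_1)\) described in the statement;
since \(\WRes_{E/k}S_1\) is a torus, its full \(k\)-cocharacter
lattice is carried by its maximal split subtorus \(S\), so
\(\bX_*(S) = \bX_*(S_1)\).
To show that \(S\) is maximal split in \(\WRes_{E/k}H_1\), I would
take any split \(k\)-torus \(S' \subseteq \WRes_{E/k}H_1\); the
adjunction yields an \(E\)-morphism \(S'_E \to H_1\) whose image is
an \(E\)-split subtorus of \(H_1\), contained in some maximal
\(E\)-split torus \(S_1'\), so \(S' \subseteq \WRes_{E/k}S_1'\).
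Comparing ranks then gives
\(\dim S' \leq \dim S_1' = \dim S_1 = \dim S\); moreover, if \(S'\)
is itself maximal, it must coincide with the maximal split
\(k\)-subtorus of \(\WRes_{E/k}S_1'\) and \(S_1'\) must be maximal
\(E\)-split. This produces the inverse correspondence
\(S' \mapsto S_1'\) to \(S_1 \mapsto S\).

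Next, I would promote the cocharacter identification
\(\bX_*(S) = \bX_*(S_1)\) to an identification of apartments
\(\SS(S) = \SS(S_1)\) and glue these into a map of buildings. The
key technical ingredient is that the parabolic subgroups of
\(\WRes_{E/k}H_1\) attached to cocharacters are Weil restrictions
of parabolic subgroups of \(H_1\): if \(\lambda \in \bX_*(S)\)
corresponds to \(\lambda_1 \in \bX_*(S_1)\) under Weil adjunction,
then the dynamical definition of the parabolic
\(P(\lambda) \subseteq \WRes_{E/k}H_1\) translates, via
\((\WRes_{E/k}H_1)(R) = H_1(R \otimes_k E)\), into the dynamical
definition of \(P(\lambda_1) \subseteq H_1\), giving
\(P(\lambda) = \WRes_{E/k}P(\lambda_1)\). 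This compatibility shows
that \(b \in V(S)\) and its image \(b_1 \in V(S_1)\) determine the
same simplex in their respective buildings, so the apartment
identifications glue to a bijection
\(\SS(\WRes_{E/k}H_1) = \SS(H_1)\). The stated
\((\Gal(E/k) \ltimes H_1(E))\)-equivariance is then automatic from
the naturality of the adjunction together with the identity
\(H_1(E) = (\WRes_{E/k}H_1)(k)\).

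For part (\ref{sublem:which-spherical-bc}), I would take a maximal
split \(k\)-torus \(S_0\) of \(H\); the unit
\(H \to \WRes_{E/k}H_E\) sends \(S_0\) into \(\WRes_{E/k}S_{0,E}\),
where a rank comparison identifies \(S_0\) as the maximal split
\(k\)-subtorus. Fix a maximal \(E\)-split torus
\(T_0 \subseteq H_E\) containing \(S_{0,E}\), and let
\(S' \subseteq \WRes_{E/k}T_0\) be its maximal split \(k\)-subtorus;
then \(S_0 \subseteq S'\), and under the bijection of
(\ref{sublem:spherical-Res}) applied to \(H_1 = H_E\), the apartment
\(\AA(S')\) corresponds to \(\AA(T_0)\) via Weil adjunction on
cocharacters. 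Both the direct embedding
\(\SS(H) \hookrightarrow \SS(H_E)\) and the composite
\(\SS(H) \to \SS(\WRes_{E/k}H_E) \to \SS(H_E)\) send
\(b \in V(S_0)\) to the image of \(b\) under the canonical
inclusion \(\bX_*(S_0) \hookrightarrow \bX_*(T_0)\), so the two maps
agree on \(\AA(S_0)\), hence globally. The main obstacle I
anticipate is verifying the parabolic identification
\(P(\lambda) = \WRes_{E/k}P(\lambda_1)\) cleanly; once that is in
hand, the rest of the argument is cocharacter-lattice bookkeeping.
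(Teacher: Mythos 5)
Your proposal is correct and follows the same overall strategy as the paper (identify cocharacter lattices via Weil adjunction, match parabolics, glue apartments), but replaces several citations with direct arguments. Where the paper invokes \cite{borel-tits:reductive-groups}*{\S6.21(i)} for the rank comparison and \cite{conrad-gabber-prasad:prg}*{Proposition A.5.15(2)} for the bijection on maximal split tori, you reprove both from scratch using Weil adjunction and dimension counts, which is a reasonable self-contained substitute (though you should note that well-definedness of the inverse $S' \mapsto S_1'$ requires observing that the image of $S'_E \to H_1$ is a torus of dimension exactly $\dim S'$, hence is itself maximal when $S'$ is). The more substantive difference is in the parabolic identification: the paper cites \cite{borel-tits:reductive-groups}*{\S\S6.20--6.21} to transport quasi-closed sets of roots through Weil restriction, whereas you use the dynamical description $P_G(\lambda)$ and the fact that Weil restriction commutes with the limit functor. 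This is a genuinely different key lemma and is indeed the "main obstacle" you flag; the dynamical argument does work (the needed compatibility is essentially \cite{conrad-gabber-prasad:prg}*{Proposition A.5.15} for the dynamical functors, and can be checked by unwinding the counit on $\GL_1$-points), but the bookkeeping with the two $E$-algebra structures on $A \otimes_R (R \otimes_k E)$ needs care before you can declare the two orbit maps literally adjoint. Your treatment of part~(\ref{sublem:which-spherical-bc}) reaches the same conclusion by explicitly choosing a maximal $E$-split torus $T_0 \supseteq S_{0,E}$, whereas the paper observes more cleanly that it suffices to check the claim with $H$ a split torus; the two are equivalent, your version just carries along more of the building structure than is strictly needed.
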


\begin{proof}
We have by
\cite{borel-tits:reductive-groups}*{\S6.21(i)} that
the (\(E\)-)rank of \(H_1\) is the (\(k\)-)rank of \(\WRes_{E/k}H_1\),
so the construction in the statement produces
maximal tori in \(\WRes_{E/k}H_1\).
Since the inclusion of
\(\bX_*(S)\) in
\(\bX_*(\WRes_{E/k}S_1) = \Hom_k(\GL_{1, k}, \WRes_{E/k}S_1)\)
is an equality, we have that
\abmap{\bX_*(S)}{\bX_*(S_1)} is a bijection.
Since the Weil adjunction is given by composition with the co-unit
\abmap{(\WRes_{E/k}S_1)_E}{S_1}, we have that
\abmap{\bX_*(S)}{\bX_*(S_1)} respects addition, hence is
an isomorphism.

That the map \abmapto{S_1}S is a bijection is
\cite{conrad-gabber-prasad:prg}*{Proposition A.5.15(2)}.
We thus have a bijection between
apartments in \(\SS(\WRes_{E/k}H_1)\) and
apartments in \(\SS(H_1)\) such that
there is a bijection between corresponding pairs of apartments.
This family of bijections is
(\(\Gal(E/k) \ltimes H_1(E)\))-equivariant, in
the obvious sense.
To obtain the desired \(H_1(E)\)-equivariant bijection
\abmap{\SS(\WRes_{E/k}H_1)}{\SS(H_1)}, we need only show that
the way that apartments are glued matches.

The discussion of
\cite{borel-tits:reductive-groups}*{\S6.20}
furnishes an isomorphism
\abmap{\bX^*(S_1)}{\bX^*(S)}
(the one denoted there by \(\beta\), not by \(\alpha\))
dual to our map \abmap{\bX_*(S)}{\bX_*(S_1)}, and
\cite{borel-tits:reductive-groups}*{\S6.21(i)}
shows that it identifies
\(\Phi(H_1, S_1)\)
with
\(\Phi(\WRes_{E/k}H_1, S)\) in such a way that
the Weil restriction of
the subgroup of \(H_1\) associated to a quasi-closed
set of roots in \(\Phi(H_1, S_1)\) is
the subgroup of \(\WRes_{E/k}H_1\) associated to the corresponding
set of roots in \(\Phi(\WRes_{E/k}H_1, S)\).
We do not go into the details of this latter point, only note that
it shows that, if \(b\) belongs to \(V(S)\) and
\(b_1\) is the corresponding element of \(V(S_1)\), then
\(P_{\WRes_{E/k}H_1}(b)\) is
\(\WRes_{E/k}P_{H_1}(b_1)\).
In particular, the canonical identification of
\((\WRes_{E/k}H_1)(k)\) with \(H_1(E)\) identifies
\(P_{\WRes_{E/k}H_1}(b)(k)\) with
\(P_{H_1}(b_1)(E)\).
The gluings of the apartments thus match, as desired.
This shows (\ref{sublem:spherical-Res}).

For (\ref{sublem:which-spherical-bc}), since
a spherical building is made by gluing together spherical apartments,
it suffices to check this for \(H\) a split torus \(S\).
Write \(T\) for the maximal split torus in \(\WRes_{E/k}S_E\).
We have the map
\abmap
	{\bX_*(S) = \Hom_k(\GL_{1, k}, S)}
	{\bX_*(S_E) = \Hom_E(\GL_{1, E}, S_E)}
used to define \abmap{\SS(S)}{\SS(S_E)}, as well as the map
\abmap
	{\bX_*(S_E) = \Hom_E(\GL_{1, E}, S_E)}
	{\bX_*(\WRes_{E/k}S_E) = \Hom_k(\GL_{1, k}, \WRes_{E/k}S_E)}
used to define \abmap{\SS(S_E)}{\SS(\WRes_{E/k}S_E)}.
The composite of these maps on cocharacter lattices is
exactly the functorial map
\abmap{\bX_*(S)}{\bX_*(\WRes_{E/k}S_E) = \bX_*(T)}
used to define the functorial map
\abmap{\SS(S)}{\SS(\WRes_{E/k}S_E)}.
This shows (\ref{sublem:which-spherical-bc}).
\end{proof}

For the remainder of \S\ref{subsec:spherical},
we fix a reductive datum \((\Gt, \Gamma)\) over \(k\).

Lemma \ref{lem:density-bldg} allows us
to state the conclusions in
Theorems
\ref{thm:quass}(\ref{subthm:quass-spherical-bldg})
and
\ref{thm:loc-quass}(\ref{subthm:loc-quass-spherical-bldg})
without passing to \(\ka\), as long as
\(\Gamma(k)\) is Zariski dense in \(\Gamma\).

\begin{lem}
\label{lem:density-bldg}
Suppose that
\(\Gamma(k)\) is Zariski dense in \(\Gamma\).
Then \(\SS(\Gt) \cap \SS(\Gt_\ka)^{\Gamma(\ka)}\) equals
\(\SS(\Gt)^{\Gamma(k)}\).
\end{lem}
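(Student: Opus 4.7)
One inclusion is immediate.  For the reverse, let \(x \in \SS(\Gt)^{\Gamma(k)}\).  The plan is to show that the stabilizer of \(x\) in \(\Gamma\) is a closed \(k\)-subgroup scheme of \(\Gamma\); once this is established, Zariski density of \(\Gamma(k)\) together with \(\Gamma(k) \subseteq \stab_\Gamma(x)(k)\) will force \(\stab_\Gamma(x) = \Gamma\), and hence \(\Gamma(\ka)\) will fix \(x\).

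Choose a maximal \(k\)-split torus \(S\) in \(\Gt\) whose apartment contains \(x\), and write \(x = [b]\) for some \(b \in V(S) \setminus \sset0\).  Let \(P = P_\Gt(b)\), a \(k\)-parabolic; let \(L = P/\mathrm{R}_u(P)\) be its canonical reductive quotient; and let \(A = Z(L)\smooth\conn\), the canonical connected-smooth central \(k\)-torus of \(L\).  Under the natural identification of \(S\) with its image in \(L\), the vector \(b\) lies in \(V(A_\ka)\).  The standard description of stabilizers of points in a spherical building says that an element of \(\uAut(\Gt_\ka)\) fixes \(x\) precisely when it preserves \(P_\ka\) and its induced action on \(A_\ka\) sends the ray \(\R_{> 0} b\) to itself.

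The condition of preserving \(P\) carves out the closed \(k\)-subgroup scheme \(N_\Gamma(P) \subseteq \Gamma\), which contains \(\Gamma(k)\); Zariski density thus forces \(N_\Gamma(P) = \Gamma\).  The resulting action of \(\Gamma\) on \(A\) yields a homomorphism \(\rho \colon \Gamma \to \uAut(A)\) to an étale \(k\)-group scheme, hence factoring through the component group \(\pi_0(\Gamma)\).  The subgroup of \(\uAut(A_\ka)\) stabilizing \(\R_{> 0} b\) is \(\Gal(k)\)-stable (since \(b\) is \(k\)-rational), so its preimage \(H\) under \(\rho\) is a clopen \(k\)-subgroup scheme of \(\Gamma\); by the characterization, \(H\) equals \(\stab_\Gamma(x)\).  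Since \(H(k) \supseteq \Gamma(k)\), Zariski density forces \(H = \Gamma\), completing the argument.

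The main technical obstacle will be verifying cleanly the characterization used above — that fixing \(x\) amounts to preserving \(P\) and fixing the ray in \(V(A_\ka)\).  At the level of \(\ka\)-points this follows from the classical identity \(\stab_{\Gt(\ka)}(x) = P(\ka)\) for inner actions, via the observation that any automorphism of \(\Gt_\ka\) preserving \(P_\ka\) can be modified by an inner automorphism from \(P(\ka)\) to one preserving \(S_\ka\), reducing the claim to a transparent assertion inside the single apartment \(\AA(S_\ka)\).
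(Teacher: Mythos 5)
Your approach is genuinely different from the paper's, and, modulo the characterization of fixers that you yourself flag as the ``main technical obstacle,'' it works. You try to exhibit the stabilizer of \(x\) in \(\Gamma\) directly as a clopen \(k\)-subgroup scheme \(H\) (preimage under the factored map \(\pi_0(\Gamma)\to\uAut(A)\) of the stabilizer of the ray) and then invoke Zariski density once, at the end, to force \(H = \Gamma\). The paper instead factors a general \(\gamma\in\Gamma(\ka)\) as \(\gamma_0\gamma_1\) with \(\gamma_0\in\Gamma\conn(\ka)\) and \(\gamma_1\in\Gamma(k)\) (density is used to produce the \(k\)-rational piece \(\gamma_1\), and to first upgrade ``\(P_\Gt(\bt)\) is \(\Gamma(k)\)-stable'' to ``\(\Gamma\)-stable''); \(\gamma_1\) fixes \(\bt\) by hypothesis, and \(\gamma_0\) acts by an \emph{inner} automorphism whose image lands in \(N_{\Gt\adform}(P_\Gt(\bt)) = P_{\Gt\adform}(\bt)\), and elements of \(P_\Gt(\bt)(\ka)\) fix \(\bt_\ka\) by the Curtis--Lehrer--Tits definition of the building. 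This sidesteps your characterization entirely: for inner automorphisms the stabilizer of a point of the spherical building is already the corresponding parabolic, and no analysis of the ray in \(V(A_\ka)\) is needed. Your characterization --- that an arbitrary automorphism fixes \([b]\) if and only if it normalizes \(P\) and its induced action on the central torus of the Levi fixes the ray \(\R_{>0}b\) --- is true, and your sketch (modify by an element of \(P(\ka)\) to preserve \(S_\ka\), then check inside the apartment) can be made to work, but it is an extra statement to establish, and the paper's inner/outer decomposition is more economical. One small loose end in your write-up: you take \(A = Z(L)\smooth\conn\), which need not be \(k\)-split; this is harmless here because \(b\) is already \(\Gal(k)\)-fixed on account of coming from \(V(S)\) with \(S\) split, but it would be cleaner to take \(A\) to be the maximal \(k\)-split central torus of \(L\), as the paper does in the proof of Lemma \ref{lem:spherical-cr}.
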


\begin{proof}
It is clear that \(\SS(\Gt)^{\Gamma(k)}\) contains
\(\SS(\Gt) \cap \SS(\Gt_\ka)^{\Gamma(\ka)}\).
Suppose conversely that
\(\bt\) is an element of \(\SS(\Gt)^{\Gamma(k)}\).
Although we have agreed to regard the map
\abmap{\SS(\Gt)}{\SS(\Gt_\ka)} as
an inclusion, in this proof we will write
\(\bt_\ka\) for emphasis when
we regard \(\bt\) as an element of \(\SS(\Gt_\ka)\).

For every \(\gamma \in \Gamma(k)\), we have that
\(\gamma P_\Gt(\bt) = P_\Gt(\gamma\bt)\) equals
\(P_\Gt(\bt)\).
Thus \(P_\Gt(\bt)\) is preserved by \(\Gamma(k)\), hence
by \(\Gamma\).
In particular, the image of \(\Gamma\conn\) in
\(\Gt\adform\) under the map of Remark \ref{rem:torus-quass}(\ref{subrem:act-inner}) lies in
\(N_{\Gt\adform}(P_\Gt(\bt))\), which, by
\cite{conrad-gabber-prasad:prg}*{Propositions 2.2.9 and 3.5.7},
equals \(P_{\Gt\adform}(\bt)\).
In particular, for every
\(\gamma \in \Gamma\conn(\ka)\), the action of \(\gamma\) on
\(\Gt_\ka\), hence on \(\SS(\Gt_\ka)\), is by an element of
\(P_{\Gt\adform}(\bt)(\ka)\).
Such an element lifts to
\(P_\Gt(\bt)(\ka) = P_{\Gt_\ka}(\bt_\ka)\), and so
fixes \(\bt_\ka\) by the definition of the spherical building
\cite{curtis-lehrer-tits:spherical}*{\S2}.

Since \(\Gamma(k)\) is Zariski dense in \(\Gamma\), we have that
every connected component of \(\Gamma\) contains
an element of \(\Gamma(k)\); so also
every connected component of \(\Gamma_\ka\) contains
an element of \(\Gamma(k)\).
Thus, for every \(\gamma \in \Gamma(\ka)\), we can write
\(\gamma = \gamma_0\gamma_1\), with
\(\gamma_0 \in \Gamma\conn(\ka)\) and
\(\gamma_1 \in \Gamma(k)\).
We have shown that both \(\gamma_0\) and \(\gamma_1\) fix
\(\bt_\ka\), so \(\gamma\) does as well.
That is, \(\bt_\ka\) belongs to \(\SS(\Gt_\ka)^{\Gamma(\ka)}\),
so \(\bt\) belongs to \(\SS(\Gt) \cap \SS(\Gt_\ka)^{\Gamma(\ka)}\).
\end{proof}

Lemma \ref{lem:spherical-cr}
is related to complete reducibility,
in the sense of \cite{serre:cr}*{Definition 2.2.1}.

\begin{lem}
\label{lem:spherical-cr}
Suppose that \(\Ht\) is a reductive subgroup of \(\Gt\) containing
\(\fix\Gt^{\Gamma(k)}\).
Each of a pair of points in \(\SS(\Gt)^{\Gamma(k)}\) that are
opposite in \(\SS(\Gt)\), in the sense of
\cite{curtis-lehrer-tits:spherical}*{\S3},
belongs to \(\SS(\Ht)\).
\end{lem}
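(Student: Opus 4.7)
The plan is to find a split $k$-subtorus $S_1 \subseteq \Ht$ whose apartment $\SS(S_1) \subseteq \SS(\Gt)$ contains both $b_1$ and $b_2$.  I would first recover the opposite $\Gamma(k)$-stable parabolic subgroups $P_i \ldef P_\Gt(b_i)$, together with their common Levi $L \ldef P_1 \cap P_2$ (also $\Gamma(k)$-stable), and then the maximal split subtorus $S$ of $Z(L)$; being canonically attached to $L$, it is $\Gamma(k)$-stable.  By the apartment theory recalled in \S\ref{subsec:spherical}, both points lie in $\SS(S) \subseteq \SS(\Gt)$, and there is some $\lambda \in V(S) \setminus \sset0$ with $b_1 = [\lambda]$ and $b_2 = [-\lambda]$.

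The main obstacle is to upgrade the ray-preservation $\gamma \cdot b_1 = b_1$ into the pointwise equality $\gamma\lambda = \lambda$.  The difficulty is that, for general $L$ (in particular when $L$ is a torus), $\Gamma(k)$ may act on $\bX_*(S)$ with infinite image in $\GL(\bX_*(S))$, so ray-preservation alone is too weak.  The key point is that $\Gamma(k)$ preserves the root system $\Phi(\Gt, S) \subseteq \bX^*(S)$, which spans the annihilator of $V_0 \ldef \bX_*(S') \otimes_\Z \R$ in $\bX^*(S) \otimes_\Z \Q$, where $S'$ denotes the maximal split subtorus of $Z(\Gt)$ (necessarily contained in $S$).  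Hence the induced action of $\Gamma(k)$ on $V(S)/V_0$ factors through the finite group $\Aut(\Phi(\Gt, S))$.  Writing $\gamma\lambda = c(\gamma)\lambda$ with $c(\gamma) \in \R_{> 0}$ and letting $n$ be the order of $\gamma$'s image on $V(S)/V_0$, one gets $c(\gamma)^n\bar\lambda = \bar\lambda$ in $V(S)/V_0$, where $\bar\lambda$ is the image of $\lambda$.  Since $P_\Gt(\lambda) = P_1 \ne \Gt$, the vector $\lambda$ does not lie in $V_0$, so $\bar\lambda \ne 0$; this forces $c(\gamma)^n = 1$, and together with $c(\gamma) > 0$ it gives $c(\gamma) = 1$.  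Thus $\lambda$ is fixed by every $\gamma \in \Gamma(k)$.

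To conclude, I would take $S_1 \ldef \fix S^{\Gamma(k)}$, a split $k$-subtorus of $S$ whose cocharacter lattice is exactly $\bX_*(S)^{\Gamma(k)}$, so that $V(S_1) = V(S)^{\Gamma(k)}$ contains $\lambda$.  Consequently, both $b_1$ and $b_2$ lie in $\SS(S_1)$.  Being smooth, connected, and contained in $\Gt^{\Gamma(k)}$, the torus $S_1$ lies in $\fix\Gt^{\Gamma(k)}$, hence in $\Ht$ by hypothesis.  The functorial embedding $\SS(\Ht) \hookrightarrow \SS(\Gt)$ then realizes $b_1$ and $b_2$ as elements of $\SS(\Ht)$.
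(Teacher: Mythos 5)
Your proof follows the same overall plan as the paper's: from the opposite points $b_\pm$ one recovers the opposite parabolics, their common Levi $\Mt$, and the maximal split central torus $\At$ of $\Mt$; one reduces to showing that the scaling factor $c(\gamma)$ by which each $\gamma\in\Gamma(k)$ acts on the ray through $\lambda$ equals $1$; and one then descends to $(\At^{\Gamma(k)})\smooth\conn\subseteq\Ht$.  The genuine difference is in the finiteness argument used to get $c(\gamma)=1$.  The paper invokes rigidity of tori (\cite{milne:algebraic-groups}*{Corollary 12.37}) to see that the action of $\Gamma(k)$ on $\At$ factors through a finite group, so the homomorphism $\gamma\mapsto c(\gamma)$ into $\R_{>0}$ is automatically trivial.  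You worry instead that $\Gamma(k)$ might act with infinite image on $\bX_*(\At)$, and so take a detour through the root system: the induced action on $V(\At)/V_0$, where $V_0=\bX_*(Z(\Gt)\smooth\conn)\otimes_\Z\R$, permutes the finite spanning set $\Phi(\Gt,\At)$ and so has finite image, and the nonvanishing of $\bar\lambda$ then forces $c(\gamma)$ to have finite order.  This works, but the worry is a misconception: Remark~\ref{rem:torus-quass} shows that $\Gamma\conn$ acts by inner automorphisms, hence on $\bX_*(\At)$ via the finite relative Weyl group, so (as $\pi_0(\Gamma)(k)$ is also finite) the image of $\Gamma(k)$ in $\GL(\bX_*(\At))$ is already finite.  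Your version also quietly assumes $P_\Gt(b_\pm)\ne\Gt$ (so that $\bar\lambda\ne0$) and hence omits the degenerate case where $\lambda$ is a central direction, which the paper's argument handles uniformly.  Otherwise the two proofs agree.
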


\begin{proof}
Let \(\bt_\pm\) be opposite points in
\(\SS(\Gt)^{\Gamma(k)}\).
Thus \(P_\Gt(\bt_\pm)\) are
opposite parabolic subgroups of \(\Gt\).
Write \(\Mt\) for their intersection, which is
a Levi component of both.
If \(\St\) is a maximal split torus in \(\Mt\), then,
\(\bt_\pm\) belong to \(\AA(\St)\), and,
by the definition of `opposite',
they satisfy
\(\bt_- = -\bt_+\) there.
Since the intersection of \(\Mt\) with
the (solvable) radical of \(P_\Gt(\bt_\pm)\) is
the radical of \(\Mt\), i.e., its center,
we have that the maximal torus \(\At\) in
the intersection of \(\St\) with the radical of \(P_\Gt(\bt_\pm)\) is
central in \(\Mt\).
Conversely, it is clear that
the maximal split, central torus in \(\Mt\) is
contained in \(\St\), hence in \(\At\), so
we have equality.
In particular, \(\At\) is preserved by \(\Gamma(k)\).
We have by \cite{curtis-lehrer-tits:spherical}*{Lemma 1.2(ii)} that
\(\bt_\pm\) belong to \(\SS(\At)\), hence to
\(\SS(\At)^{\Gamma(k)}\).
Thus there is a homomorphism from
\(\Gamma(k)\) to
the multiplicative group \(\R_{> 0}\) that measures
the (common, because they are opposite)
scaling factor by which \(\gamma\) acts on the rays
\(\bt_\pm \in (V(\At) \setminus \sset0)/\R_{> 0}\).
By rigidity of tori \cite{milne:algebraic-groups}*{Corollary 12.37},
the action of \(\Gamma(k)\) factors through
the finite group \(\pi_0(\Gamma)(k)\).
Since \(\R_{> 0}\) has no nontrivial, finite subgroup,
the homomorphism
\abmap{\Gamma(k)}{\R_{> 0}} is trivial.
That is, \(\gamma\) acts trivially on the rays \(\bt_\pm\),
which are therefore contained in
\((\bX_*(\At) \otimes_\Z \R)^{\Gamma(k)} =
\bX_*((\At^{\Gamma(k)})\smooth\conn) \otimes_\Z \R\).
That is, \(\bt_\pm\) belong to
\(\SS((\At^{\Gamma(k)})\smooth\conn)\), and so to
\(\SS(\Ht)\).
\end{proof}

\begin{lem}
\label{lem:spherical-descent}
Let \(H\) be a reductive \(k\)-group.
Then \(\SS(H)\) equals \(\SS(H_\ks)^{\Gal(k)}\).
\end{lem}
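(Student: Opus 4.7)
The plan is to prove the two inclusions separately. The containment \(\SS(H) \subseteq \SS(H_\ks)^{\Gal(k)}\) is immediate from the construction of \(\SS(H) \hookrightarrow \SS(H_\ks)\) reviewed above: it is assembled from \(\Gal(k)\)-equivariant apartment inclusions \(\AA(S) \hookrightarrow \AA(T_\ks)\) coming from the cocharacter inclusions \(\bX_*(S) = \bX_*(S_\ks) \hookrightarrow \bX_*(T_\ks)\), and \(\Gal(k)\) acts trivially on \(\bX_*(S)\) since \(S\) is \(k\)-split. Every point of \(\SS(H)\) is therefore fixed.

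For the reverse containment, I would take \(b \in \SS(H_\ks)^{\Gal(k)}\) and first read off its associated parabolic \(P_\ks \ldef P_{H_\ks}(b)\). Functoriality gives \(\sigma\cdot P_\ks = P_{H_\ks}(\sigma\cdot b) = P_\ks\), so \(P_\ks\) is \(\Gal(k)\)-stable and descends to a \(k\)-parabolic \(P \subseteq H\). I would then fix a Levi decomposition \(P = M U\) over \(k\), let \(S\) be the maximal \(k\)-split subtorus of \(Z(M)\conn\), and choose a maximal torus \(T\) of \(M\) defined over \(k\); then \(T_\ks\) is a maximal torus of \(H_\ks\) with Galois-stable apartment \(\AA(T_\ks)\) containing \(b\).

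The central step is to locate \(b\) within this apartment. The open facet of \(P_\ks\) in \(\AA(T_\ks)\) is classically identified with the open cone in \(V(Z(M)\conn_\ks) \subseteq V(T_\ks)\) of rays vanishing on \(\Phi(M_\ks, T_\ks)\) and strictly positive on \(\Phi(U_\ks, T_\ks)\). Since \(b\) is \(\Gal(k)\)-fixed and the Galois action on \(V(T_\ks)\) factors through a finite quotient, the ray \(b\) has a Galois-fixed representative vector: a rescaling character \(\Gal(k) \to \R_{> 0}\) would factor through a finite group, and so must be trivial, exactly as in the proof of Lemma \ref{lem:spherical-cr}. Hence \(b\) lies in \(V(Z(M)\conn_\ks)^{\Gal(k)}\). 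Galois descent together with the fact that every \(k\)-cocharacter of a torus factors through its maximal \(k\)-split subtorus gives \(\bX_*(Z(M)\conn_\ks)^{\Gal(k)} = \bX_*(Z(M)\conn) = \bX_*(S)\), and hence \(V(Z(M)\conn_\ks)^{\Gal(k)} = V(S)\). Extending \(S\) to any maximal \(k\)-split torus \(S'\) of \(H\) then places \(b\) in the image of \(\AA(S') \subseteq \SS(H)\).

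I expect the main obstacle to be the identification of the open facet of \(P_\ks\) with the explicit open cone in \(V(Z(M)\conn_\ks)\); although this is classical spherical-building theory for a single apartment, reducing to the root-hyperplane arrangement in \(\AA(T_\ks)\), it needs to be stated carefully enough to interface with the Galois-descent computation that converts ray-wise fixity into a cocharacter-lattice statement over \(k\). Once the facet description and Galois descent for split tori are both in hand, the remainder is essentially bookkeeping among the cocharacter spaces of \(T\), \(Z(M)\conn\), and \(S\).
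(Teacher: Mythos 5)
Your proof is correct, and it takes a genuinely different route from the paper's. The paper first reduces to a finite Galois extension $E/k$ and then transfers the whole problem to the Weil restriction $\WRes_{E/k}H_E$ via Lemma \ref{lem:spherical-Res}(\ref{sublem:which-spherical-bc}); there it descends the parabolic, chooses a $\Gal(E/k)$-stable \emph{opposite} parabolic, locates the corresponding opposite fixed point $b_-$, and then invokes the complete-reducibility mechanism of Lemma \ref{lem:spherical-cr} to place $b_+$ in the building of the fixed-point group. You bypass Weil restriction and the opposite-parabolic trick entirely: you descend $P_\ks$ to a $k$-parabolic $P$, pick a $k$-Levi $M$, and use the explicit open-cone description of the facet of $P_\ks$ inside $V(Z(M)\conn_\ks)$ to run the Galois-fixed-ray argument and Galois descent for split tori directly. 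What the paper's route buys is reuse: the opposite-parabolic device (Lemma \ref{lem:spherical-cr}) is a standalone complete-reducibility statement that is also the engine behind Theorem \ref{thm:quass}(\ref{subthm:quass-spherical-bldg}) and Theorem \ref{thm:loc-quass}(\ref{subthm:loc-quass-spherical-bldg}). What your route buys is concreteness and economy: no Weil restriction, and the Galois-to-$\R_{>0}$ homomorphism argument is applied once, in exactly the place it is needed. The one ``bookkeeping'' step you flag genuinely does need a sentence, namely that the point of $\AA(S')$ you construct maps to the original $b$ under the inclusion $\SS(H) \hookrightarrow \SS(H_\ks)$; this follows because both points lie in the facet of $P_\ks$, which by \cite{curtis-lehrer-tits:spherical}*{Lemma 1.2(ii)} is canonically identified with rays in $V(Z(M)\conn_\ks)$ independently of the ambient apartment, and your construction produces the same ray in that space. (The paper leans on the same citation inside Lemma \ref{lem:spherical-cr}.) With that observation filled in, your argument is complete.
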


\begin{proof}
We have that
\(\SS(H_\ks)\) equals \(\bigcup \SS(H_E)\),
the union over all finite, Galois field extensions \(E/k\), so
it suffices to prove that
\(\SS(H)\) equals \(\SS(H_E)^{\Gal(E/k)}\).
By Lemma \ref{lem:spherical-Res}(\ref{sublem:which-spherical-bc}),
it suffices to show, for every such extension \(E/k\), that
the functorial map \abmap{\SS(H)}{\SS(\WRes_{E/k}H_E)} is
a bijection onto \(\SS(\WRes_{E/k}H_E)^{\Gal(E/k)}\).

Functoriality implies that the image of \(\SS(H)\)
lies in \(\SS(\WRes_{E/k}H_E)^{\Gal(E/k)}\).
If \(b_+\) belongs to
\(\SS(\WRes_{E/k}H_E)^{\Gal(E/k)}\), then
the corresponding parabolic subgroup
\(P_{\WRes_{E/k}H_E}(b_+)\),
which by \cite{borel-tits:reductive-groups}*{Corollaire 6.19}
is of the form \(\WRes_{E/k}P_1^+\) for some
parabolic subgroup \(P_1^+\) of \(H_E\),
is preserved by the \emph{algebraic} action of \(\Gal(E/k)\),
so that \(P_1^+\) is preserved by the \emph{field} action of \(\Gal(E/k)\)
and hence is of the form \(P^+_E\) for some
parabolic subgroup \(P^+\) of \(H\).
If \(P^-\) is an opposite parabolic subgroup of \(H\), then
\(\WRes_{E/k}P^-\) is a parabolic subgroup of \(\WRes_{E/k}H_E\) that is
opposite to \(P_{\WRes_{E/k}H_E}(b_+)\) and
preserved by \(\Gal(E/k)\).
By \cite{curtis-lehrer-tits:spherical}*{\S3}, there is
a unique point \(b_- \in \SS(\WRes_{E/k}H_E)\) that is
opposite to \(b_+\) and
satisfies \(P_{\WRes_{E/k}H_E}(b_-) = \WRes_{E/k}P^-\).
By uniqueness, \(b_-\) is also fixed by \(\Gal(E/k)\), so
Lemma \ref{lem:spherical-cr} gives that
\(b_+\) belongs to the spherical building of
\((\WRes_{E/k}H_E)^{\Gal(E/k)}\).
This latter group is precisely the image of \(H\) in
\(\WRes_{E/k}H_E\), so
\(b_+\) belongs to the image of \(\SS(H)\).
\end{proof}

\numberwithin{equation}{section}
\section{Quasisemisimple actions on root systems}
\label{sec:rd-quass}

Let \(\Psit\) be a (possibly non-reduced) root datum.
In \S\ref{sec:rd-quass}, unlike in most of the rest of the paper,
we let \(\Gamma\) be an abstract group, and
\abmap\Gamma{\Aut(\Psit)} a quasisemisimple action that
factors through a finite quotient.
(Note that \(\Gamma\) is not a \(k\)-group;
indeed, there is no longer a field \(k\) in sight.)

We are only interested in the root system of \(\Psit\), so,
whenever convenient, we may
replace the root datum
\((\Xt, \Phit, \Xt^\vee, \Phit^\vee)\)
by the corresponding `adjoint' datum
with character lattice \(\Z\Phit\) and
root system \(\Phit\).
In particular, we may, and do, assume that
\(\Aut(\Psit)\) is finite.
Then,
since all of our constructions depend only on the action of \(\Gamma\),
we may always replace \(\Gamma\) by its image in
\(\Aut(\Psit)\) and so work with a finite acting group; but
occasionally it is handy not to have to do so.

Below, we present a collection of results concerning
the action of $\Gamma$ on a root system.
In applications, we will often have an action of a
smooth \(k\)-group $\Gamma$
on a connected, reductive group $\Gt$,
and we will apply these results to the natural action of
\(\Gal(k) \ltimes \pi_0(\Gamma)(\ks)\)
on the (absolute) root datum of $\Gt_\ks$.
See Notation \ref{notn:pi0-act}.

The pair \((\Psit, \Gamma)\) has an associated ``quotient root datum''
\(\Psi = (X, \Phi, X^\vee, \Phi^\vee)\),
constructed in \cite{adler-lansky:data-actions}*{Theorem 7}.
(In \cite{adler-lansky:data-actions},
they write \(\Psi\) for what we call \(\Psit\),
and \(\bar\Psi\) for what we call \(\Psi\).)
It is characterized by the facts that
\(X\) is the maximal torsion-free quotient of the module of co-invariants
\(\Xt_\Gamma\),
and \(\Phi\) is the image in \(X\) of \(\Phit\).
In particular, \(X^\vee\) is the module of invariants
\((\Xt^\vee)^\Gamma\).
We write \(i_\Gamma^*\) for the quotient morphism
\abmap\Xt X, so that \(i_\Gamma^*(\Phit)\) equals \(\Phi\).
(The behavior of the transpose map \abmap{X^\vee}{\Xt^\vee}
on \(\Phi^\vee\) is somewhat more complicated;
see
Proposition \ref{prop:rd-restriction}(\ref{subprop:coroot-nonmultipliable})
and
Lemma \ref{lem:half-pairs-a}(\ref{sublem:coroot-multipliable}).)
For \(\at \in \Phit\),
we refer to \(a = i_\Gamma^*(\at)\)
as the \emph{restriction} of \(\at\),
and to \(\at\) as an \emph{extension} of \(a\).

\begin{rem}
\label{rem:rd-subquotient}
Let \(\Phi'\) be an integrally closed subsystem of \(\Phi\), and
choose a system \(\Phi^{\prime\,{+}}\) of
positive roots for \(\Phi'\).
If we write \(\Phit'\)
(respectively, \(\Phit^{\prime\,{+}}\)) for
the set of extensions of elements of \(\Phi'\)
(respectively, \(\Phi^{\prime\,{+}}\)), then
\(\Phit'\) is
an integrally closed subsystem of \(\Phit\), and
\(\Phit^{\prime\,{+}}\) is
a system of positive roots for \(\Phit'\), so that
the action of \(\Gamma\) on
\((\Xt, \Phit', \Xt^\vee, \Phit^{\prime\,\vee})\) is
quasisemisimple.
The ``quotient root datum'' is
\((X, \Phi', X^\vee, \Phi^{\prime\,\vee})\).
\end{rem}

Proposition \ref{prop:rd-restriction} is essentially some of \cite{adler-lansky:data-actions}*{\S2}, rephrased in our language.

\begin{prop}
\label{prop:rd-restriction}\hfill
	\begin{enumerate}[label=(\alph*), ref=\alph*]
	\item\label{subprop:rd-root-system}
	\(\Phi\) is a (possibly non-reduced) root system.
	\item\label{subprop:rd-fiber}
	\abmap{\Phit/\Gamma}\Phi\ is a bijection.
	\item\label{subprop:rd-Borus}
	\abmapto{\Phit^+}{i_\Gamma^*(\Phit^+)}
	is a bijection from
	\(\Gamma\)-stable systems of positive roots in \(\Phit\)
	to systems of positive roots in \(\Phi\), with
	inverse bijection
	\abmapto{\Phi^+}{(i_\Gamma^*)\inv(\Phi^+)}.
	\end{enumerate}
Fix \(a \in \Phi\), and write
\(\Phit_a\) for the set of elements of
\(\Phit\) whose restriction is
an integer multiple of \(a\).
This is an integrally closed subsystem of \(\Phit\).
	\begin{enumerate}[resume*]
	\item\label{subprop:coroot-nonmultipliable}
	If \(a\) is not multipliable in \(\Phi\), then
	each irreducible component of \(\Phit_a\) is
	of type \(\mathsf A_1\), and
	contains exactly one extension of \(a\), which
	spans it.
	\(a^\vee\) equals
\(\sum\limits_{i^*_\Gamma(\at) = a}
	\at^\vee\).
	\item\label{subprop:rd-multipliable}
	If \(a\) is multipliable in \(\Phi\),
then one of the following holds.
		\begin{enumerate}[label=(\roman*), ref=\roman*]
		\item\label{case:rd-multipliable(nred)}
		There is some positive integer \(n\) such that
		every irreducible component of \(\Phit\) intersecting \(\Phit_a\) is
		of type \(\mathsf{BC}_n\), and
		intersects \(\Phit_a\) in
		an irreducible component of \(\Phit_a\) of type \(\mathsf{BC}_1\).
		Each such component contains exactly one extension of \(a\), which
		spans it.
		\item\label{case:rd-multipliable(red)}
		There is some positive integer \(n\) such that
		every irreducible component of \(\Phit\) intersecting \(\Phit_a\) is
		of type \(\mathsf A_{2n}\), and
		intersects \(\Phit_a\) in
		an irreducible component of \(\Phit_a\) of type \(\mathsf A_2\).
		Each such component contains exactly two extensions of \(a\), which
		form a system of simple roots for it.
		\end{enumerate}
	\end{enumerate}
\end{prop}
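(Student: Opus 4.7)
The plan is to derive the proposition from the constructions in \cite{adler-lansky:data-actions}*{\S2}, rephrased in the present notation. For parts (a)--(c), quasisemisimplicity provides a \(\Gamma\)-stable positive system \(\Phit^+\) in \(\Phit\), whose image \(\Phi^+ = i_\Gamma^*(\Phit^+)\) is a positive system in \(\Phi\) making \(\Phi\) a root system; this gives (a) and half of (c). Part (b) reduces to showing that any two extensions of a given \(a \in \Phi\) are \(\Gamma\)-conjugate, which follows because their difference maps to \(0\) in \(X\) while the quasisemisimple \(\Gamma\)-action is transitive on such fibers of \(i_\Gamma^*\) (as in \cite{adler-lansky:data-actions}*{\S2}). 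The second half of (c) then follows from (b) together with the identity \(\Phit^+ = (i_\Gamma^*)^{-1}(\Phi^+) \cap \Phit\).

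For part (d), fix a non-multipliable \(a \in \Phi\) and an irreducible component \(C\) of \(\Phit_a\). If \(\at_1, \at_2 \in C\) are distinct extensions of \(a\), then \(\at_1 + \at_2 \notin \Phit\) (else its restriction \(2a\) would lie in \(\Phi\), contradicting non-multipliability), and \(\at_1 - \at_2 \notin \Phit\) (else it would be a root of \(\Phit\) restricting to \(0\), but the \(\Gamma\)-orbit sum of such a positive root is a nonzero \(\Gamma\)-invariant element of \(\Xt\) mapping to \(0\) in the torsion-free quotient \(X\), a contradiction). Hence distinct extensions are strongly orthogonal, so by irreducibility \(C\) is of type \(\mathsf A_1\) and contains a unique extension of \(a\). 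The coroot formula then follows because the right-hand side is \(\Gamma\)-invariant (hence in \(X^\vee = (\Xt^\vee)^\Gamma\)), pairs with \(a\) to give \(2\) (cross-terms vanishing by strong orthogonality), and is uniquely characterized among elements of \(X^\vee\) by its Cartan integers with the roots of \(\Phi\).

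For part (e), multipliability gives \(2a \in \Phi\), hence by (b) an extension \(\bt \in \Phit\) of \(2a\). Either \(\bt = 2\at\) for a single extension \(\at\) of \(a\), in which case the ambient irreducible component of \(\Phit\) is non-reduced, hence of type \(\mathsf{BC}_n\), with intersection \(\sset{\pm\at,\pm 2\at\,}\) of type \(\mathsf{BC}_1\) inside \(\Phit_a\); or \(\bt = \at_1 + \at_2\) for distinct extensions, in which case \(\at_1\) and \(\at_2\) generate an \(\mathsf A_2\) subsystem of \(\Phit_a\). The principal obstacle is to show, in this latter case, that the ambient irreducible component of \(\Phit\) is of type \(\mathsf A_{2n}\). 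Since \(\at_1, \at_2\) are \(\Gamma\)-conjugate by (b), some \(\gamma \in \Gamma\) acts on this component by an automorphism identifying two simple roots whose sum is also a root; an inspection of the irreducible reduced root systems with nontrivial diagram automorphisms shows that this condition singles out the order-two automorphism of \(\mathsf A_{2n}\) acting on its middle pair of simple roots, completing the classification.
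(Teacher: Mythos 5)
Your approach is necessarily different from the paper's, which for this proposition simply cites \cite{adler-lansky:data-actions} (Theorem 7, Lemma 6, Lemma 14, Notation 4, and Remark 12) without reproducing any argument; you attempt a self-contained derivation. Your sketch captures the right geometric picture but has genuine gaps, the most serious being a circularity in (a) and (d). To establish that \(\Phi\) is a root system one must \emph{construct} a coroot for each \(a\) and check that the associated reflection preserves \(\Phi\); this is precisely what \cite{adler-lansky:data-actions}*{Notation 4 and Theorem 7} do. Your write-up takes (a) essentially for granted (that \(i_\Gamma^*\) sends a \(\Gamma\)-stable positive system to a positive system does not by itself make \(\Phi\) a root system), and then in (d) justifies the coroot formula by appealing to uniqueness of coroots ``in the root system \(\Phi\)''---but that root-system structure is exactly what the coroot construction is supposed to provide. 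Part (b) as stated is also circular: you reduce (b) to the transitivity of \(\Gamma\) on fibers of \(i_\Gamma^*\), which \emph{is} (b).

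The arguments in (d) and (e) are sound in outline but under-justified in two places. In (d), the decisive fact that no root of \(\Phit\) restricts to \(0\) is true, but your parenthetical should say why a nonzero \(\Gamma\)-invariant element cannot die in \(X\): since \(\Gamma\) acts through a finite quotient and \(\Xt\) is torsion-free, the composite \(\Xt^\Gamma \to \Xt_\Gamma \to \Xt^\Gamma\) (the second map being the norm) is multiplication by the order of that quotient, hence injective, and the (free, hence torsion-free) image of \(\Xt^\Gamma\) in \(\Xt_\Gamma\) therefore avoids torsion, so \(\Xt^\Gamma\) embeds in \(X\). In (e), the exhaustiveness of the dichotomy \(\bt = 2\at\) versus \(\bt = \at_1 + \at_2\) is unargued; it follows upon arranging \(a\) to be simple via Lemma~\ref{lem:simple-witness}(\ref{sublem:rd-simple}), so that all extensions of \(a\) are simple in \(\Phit\), and then expanding the positive root \(\bt\) in simple roots and noting that the total coefficient over the orbit of extensions of \(a\) must be \(2\) while all other coefficients vanish. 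With \(a\) simple, your final inspection argument also becomes precise: \(\at_1, \at_2\) are adjacent simple roots in a single irreducible component, the conjugating element acts there as a diagram automorphism, and the only such automorphism swapping an adjacent pair of simple nodes is the involution of \(\mathsf A_{2n}\).
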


\begin{proof}
Parts
(\ref{subprop:rd-root-system}),
(\ref{subprop:rd-fiber}),
and
(\ref{subprop:rd-Borus})
are \cite{adler-lansky:data-actions}*{Theorem 7, Lemma 6, and Lemma 14},
respectively.
Part
(\ref{subprop:coroot-nonmultipliable}) comes from combining
\cite{adler-lansky:data-actions}*{Notation 4} with the definition
before \cite{adler-lansky:data-actions}*{Theorem 7}.
Part
(\ref{subprop:rd-multipliable}) is
\cite{adler-lansky:data-actions}*{Remark 12}.
\end{proof}

\begin{rem}
\label{rem:rd-Gamma-induced}
A system \(\Deltat\) of simple roots for \(\Phit\)
that is preserved by \(\Gamma\)
is \textit{a fortiori} a basis of \(\Z\Phit\), so that
\(\Z\Phit\) is induced as a \(\Gamma\)-module.
Since \((\Psit^\vee, \Gamma)\) is also quasisemisimple,
we also have that \(\Z\Phit^\vee\) is induced.
\end{rem}

\begin{defn}
\label{defn:rd-exceptional}
In the situation of Proposition \ref{prop:rd-restriction}(\ref{subprop:rd-multipliable}),
an \emph{exceptional (unordered) pair} for \((\Psit, \Gamma)\)
is the multiset of order 2 whose underlying set
is the intersection of \(\Phit_a\) with
the set of positive roots in an irreducible component of \(\Phit\).
We say that the exceptional pair \textit{extends \(a\)}.
\end{defn}

Thus,
in the situation of
Proposition \ref{prop:rd-restriction}%
	(\ref{subprop:rd-multipliable})(\ref{case:rd-multipliable(red)})
	(respectively,
Proposition \ref{prop:rd-restriction}%
	(\ref{subprop:rd-multipliable})(\ref{case:rd-multipliable(nred)})),
an exceptional pair consists of 2 distinct elements
(respectively, a single element of multiplicity 2).
In either case, if \(\sset{\at, \at'}\) is
an exceptional pair, then
\(\at + \at'\) belongs to \(\Phit\).

\begin{lem}
\label{lem:half-pairs-a}
Suppose that \(a \in \Phi\) is multipliable.
	\begin{enumerate}[label=(\alph*), ref=\alph*]
	\item\label{sublem:coroot-multipliable}
	\(a^\vee\) equals \(2\sum (\at + \at')^\vee\),
	the sum taken over all exceptional pairs
	\(\sset{\at, \at'}\)
	for \((\Psit, \Gamma)\) extending \(a\).
	\item\label{sublem:pairs-orbit}
	The set of exceptional pairs extending \(a\) is nonempty, and
permuted transitively by \(\Gamma\).
\item\label{sublem:all-pairs-same}
Either
all exceptional pairs extending \(a\) have 2 distinct elements,
or
all exceptional pairs extending \(a\) have 1 element with
	multiplicity 2.
\end{enumerate}
\end{lem}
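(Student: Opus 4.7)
My plan is to handle (c) and (b) by direct appeal to Proposition \ref{prop:rd-restriction}, and to prove (a) by applying Proposition \ref{prop:rd-restriction}(\ref{subprop:coroot-nonmultipliable}) to the non-multipliable root $2a$ and then using the root-datum identity $a^\vee = 2(2a)^\vee$, which is forced by the fact that $s_a$ and $s_{2a}$ share the same reflection hyperplane.

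For (c), the two alternatives of Proposition \ref{prop:rd-restriction}(\ref{subprop:rd-multipliable}) match the dichotomy in the statement: in case (\ref{case:rd-multipliable(nred)}) each irreducible component of $\tilde\Phi$ meeting $\tilde\Phi_a$ contains a unique extension $\tilde b_0$ of $a$, so the exceptional pair is the multiset $\{\tilde b_0, \tilde b_0\}$, while in case (\ref{case:rd-multipliable(red)}) each such component contributes a pair of two distinct extensions. For (b), the existence of any extension of $a \in \Phi$ gives at least one exceptional pair, so the family is nonempty. Transitivity follows because Proposition \ref{prop:rd-restriction}(\ref{subprop:rd-fiber}) makes $\Gamma$ act transitively on $(i_\Gamma^*)^{-1}(a)$, hence on the components of $\tilde\Phi$ containing such an extension; since quasisemisimplicity forces $\Gamma$ to preserve $\tilde\Phi^+$, this transitive action descends to the exceptional pairs themselves.

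For (a), since $a$ is multipliable we have $2a \in \Phi$, while $4a \notin \Phi$ forces $2a$ itself to be non-multipliable. Proposition \ref{prop:rd-restriction}(\ref{subprop:coroot-nonmultipliable}) therefore gives
\[
(2a)^\vee = \sum_{i_\Gamma^*(\tilde c) = 2a} \tilde c^\vee.
\]
A case-by-case inspection of Proposition \ref{prop:rd-restriction}(\ref{subprop:rd-multipliable}) shows that the unique extension of $2a$ in each irreducible component $C$ of $\tilde\Phi$ meeting $\tilde\Phi_a$ is the sum $\tilde a + \tilde a'$ of the exceptional pair of $C$: this sum is the long root $2\tilde b_0$ of the $\mathsf{BC}_1$-subsystem $\tilde\Phi_a \cap C$ in case (\ref{case:rd-multipliable(nred)}), and the highest root of the $\mathsf A_2$-subsystem in case (\ref{case:rd-multipliable(red)}). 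Thus $(2a)^\vee = \sum (\tilde a + \tilde a')^\vee$ over exceptional pairs extending $a$, and multiplying by $2$ produces the formula claimed in (a). The main obstacle is the case-by-case identification of the extension of $2a$ as $\tilde a + \tilde a'$, which becomes routine once one decodes the structure of $\tilde\Phi_a \cap C$ in each case of Proposition \ref{prop:rd-restriction}(\ref{subprop:rd-multipliable}).
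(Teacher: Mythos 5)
Your proof is correct. For parts (b) and (c) your argument is essentially the same as the paper's, with one trivial variation: the paper derives (c) as an immediate consequence of the transitivity statement (b), whereas you read it off directly from the uniformity built into Proposition~\ref{prop:rd-restriction}(\ref{subprop:rd-multipliable}) (which fixes, once and for all, whether the components are of type $\mathsf{BC}_n$ or $\mathsf A_{2n}$). Both work; the paper's route is marginally shorter.

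For part (a), the difference is more noticeable but in the end harmless. The paper simply cites an external reference (\cite{adler-lansky:data-actions}*{Notation 4} and the definition preceding \cite{adler-lansky:data-actions}*{Theorem 7}) for the coroot formula. You instead give a self-contained derivation: apply Proposition~\ref{prop:rd-restriction}(\ref{subprop:coroot-nonmultipliable}) to the non-multipliable root $2a$, use the standard non-reduced root-datum identity $a^\vee = 2(2a)^\vee$ (correctly justified via the shared reflection), and observe that the extensions of $2a$ are exactly the sums $\tilde a + \tilde a'$ over exceptional pairs extending $a$. That last observation is, in effect, Lemma~\ref{lem:half-pairs-b}(\ref{sublem:coexc-extensions}), which the paper states separately and also derives from Proposition~\ref{prop:rd-restriction}(\ref{subprop:rd-fiber},~\ref{subprop:rd-multipliable}); your case check in $\mathsf{BC}_1$ and $\mathsf A_2$ is a perfectly good independent proof of it. So your argument is an unpacking of what the paper outsources, not a different mathematical idea, and the buys are symmetric: the paper is shorter but opaque without the reference in hand; yours is longer but readable on its own.
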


\begin{proof}
Claim
(\ref{sublem:coroot-multipliable}) comes from combining
\cite{adler-lansky:data-actions}*{Notation 4} with the definition
before \cite{adler-lansky:data-actions}*{Theorem 7}.
Claim
(\ref{sublem:pairs-orbit})
follows immediately from
Proposition \ref{prop:rd-restriction}%
	(\ref{subprop:rd-fiber},%
	\ref{subprop:rd-multipliable}).
Claim (\ref{sublem:all-pairs-same})
follows immediately from
(\ref{sublem:pairs-orbit}).
\end{proof}

\begin{defn}
\label{defn:inert}
Suppose that $a\in\Phi$ is multipliable.
Say that $a$ is \emph{split} (respectively, \emph{inert})
for \((\Psit, \Gamma)\)
if some
(hence, by
Lemma \ref{lem:half-pairs-a}(\ref{sublem:pairs-orbit}),
every)
exceptional pair extending $a$ consists of 2 distinct elements
(respectively, consists of a single element of multiplicity 2).
When \((\Psit, \Gamma)\) is understood, we may just say that
\(a\) is split or inert, without further qualification.
\end{defn}

\begin{rem}
\label{rem:not-inert}
If \(\Phit\) is reduced, then
every multipliable element of \(\Phi\) is split.
On the other hand,
if the action of \(\Gamma\) is trivial, then
every multipliable element of \(\Phi = \Phit\) is inert.
\end{rem}

\begin{lem}
\label{lem:half-pairs-b}
Suppose that \(a \in \Phi\) is multipliable.
\begin{enumerate}[label=(\alph*), ref=\alph*]
	\item\label{sublem:half-pairs}
	Every extension of \(a\) belongs to
exactly one exceptional pair
extending \(a\).
	\item\label{sublem:coexc-extensions}
	The map
\abmapto{\sset{\at, \at'}}{\at + \at'}
is a \(\Gamma\)-equivariant bijection from
the set of exceptional pairs extending \(a\) onto
the set of extensions of \(2a\).
	\item\label{sublem:exc-stab}
	If \(\sset{\at, \at'}\) is an
exceptional pair, then
the stabilizers in \(\Gamma\) of
\(\sset{\at, \at'}\) and of
\(\at + \at'\) are equal.
	\end{enumerate}
\end{lem}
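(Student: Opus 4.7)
The plan is to read off all three parts directly from the structural classification in Proposition~\ref{prop:rd-restriction}(\ref{subprop:rd-multipliable}), which identifies the irreducible components of \(\Phit\) meeting \(\Phit_a\) as either of type \(\mathsf{BC}_n\) (intersecting \(\Phit_a\) in \(\mathsf{BC}_1\), the inert case) or of type \(\mathsf{A}_{2n}\) (intersecting \(\Phit_a\) in \(\mathsf{A}_2\), the split case). Let \(\mathcal C\) denote the set of such components. By Definition~\ref{defn:rd-exceptional}, exceptional pairs extending \(a\) are in canonical bijection with \(\mathcal C\): each component \(C \in \mathcal C\) contributes the one exceptional pair formed by the positive roots of \(C \cap \Phit_a\).

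For part (\ref{sublem:half-pairs}), I would observe that each extension of \(a\) lies in a unique irreducible component of \(\Phit\), which necessarily belongs to \(\mathcal C\); and by inspection of \(\mathsf{BC}_1\) or \(\mathsf{A}_2\), the extensions of \(a\) inside \(C\) are exactly the elements of the exceptional pair associated to \(C\). Hence each extension of \(a\) belongs to exactly one exceptional pair.

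For part (\ref{sublem:coexc-extensions}), the map \(\sset{\at, \at'} \mapsto \at + \at'\) is well-defined since \(\at + \at'\) belongs to \(\Phit\) (by the remark after Definition~\ref{defn:rd-exceptional}), and its image is an extension of \(2a\) because \(i_\Gamma^*\) is additive. The map is \(\Gamma\)-equivariant by the same linearity. For injectivity, note that \(\at + \at'\) lies in the same component \(C\) as the pair \(\sset{\at, \at'}\), and each \(C \in \mathcal C\) contributes only one exceptional pair, so the sum recovers \(C\) and thence the pair. For surjectivity, given any extension \(\bt\) of \(2a\), we have \(\bt \in \Phit_a\), so the component \(C\) of \(\bt\) belongs to \(\mathcal C\); inspection of \(\mathsf{BC}_1 = \sset{\pm\at, \pm 2\at}\) and \(\mathsf{A}_2 = \sset{\pm\at, \pm\at', \pm(\at + \at')}\) shows that the unique positive element of \(C \cap \Phit_a\) restricting to \(2a\) is precisely the sum of the exceptional pair of \(C\).

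Part (\ref{sublem:exc-stab}) is then a formal consequence of the \(\Gamma\)-equivariant bijection in (\ref{sublem:coexc-extensions}): if \(\gamma\) stabilizes \(\sset{\at, \at'}\), it fixes \(\at + \at'\) by linearity; conversely, if \(\gamma\) fixes \(\at + \at'\), then \(\gamma \cdot \sset{\at, \at'}\) is an exceptional pair with the same image under the map, so it equals \(\sset{\at, \at'}\) by injectivity. There is no real obstacle beyond careful case-by-case bookkeeping; the whole argument reduces to inspection of the \(\mathsf{BC}_1\) and \(\mathsf{A}_2\) subsystems furnished by Proposition~\ref{prop:rd-restriction}(\ref{subprop:rd-multipliable}).
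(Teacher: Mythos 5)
Your proof is correct, and for parts (\ref{sublem:half-pairs}) and (\ref{sublem:coexc-extensions}) it follows essentially the same route as the paper, which simply asserts that these claims ``follow immediately'' from Proposition~\ref{prop:rd-restriction}(\ref{subprop:rd-fiber}, \ref{subprop:rd-multipliable}); you supply the case-by-case inspection of the $\mathsf{BC}_1$ and $\mathsf A_2$ subsystems that makes this explicit.

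For part (\ref{sublem:exc-stab}), your route is genuinely different from the paper's. The paper first observes the obvious containment $\Gamma_{\sset{\at, \at'}} \subseteq \Gamma_{\at+\at'}$ and then establishes equality via a commutative diagram whose vertical arrows are the orbit maps $\Gamma/\Gamma_{\sset{\at,\at'}} \to \{\text{exceptional pairs extending }a\}$ and $\Gamma/\Gamma_{\at+\at'} \to \{\text{extensions of }2a\}$, both bijections by Proposition~\ref{prop:rd-restriction}(\ref{subprop:rd-fiber}) and Lemma~\ref{lem:half-pairs-a}(\ref{sublem:pairs-orbit}); bijectivity of the bottom arrow (part (\ref{sublem:coexc-extensions})) then forces bijectivity of the top, hence equality of stabilizers. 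You instead argue directly from injectivity of the map in (\ref{sublem:coexc-extensions}): if $\gamma$ fixes $\at+\at'$, then $\gamma\cdot\sset{\at,\at'}$ is again an exceptional pair extending $a$ (because $\Gamma$ acts trivially on $\Phi$ and hence permutes the set of such pairs) with the same image under $\sset{\at,\at'}\mapsto\at+\at'$, so injectivity forces $\gamma\cdot\sset{\at,\at'} = \sset{\at,\at'}$. Both arguments are valid and of comparable length; yours trades the paper's orbit-counting for a direct use of injectivity, and is perhaps marginally more elementary.
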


\begin{proof}
The claims
(\ref{sublem:half-pairs},%
\ref{sublem:coexc-extensions})
follow immediately from
Proposition \ref{prop:rd-restriction}%
	(\ref{subprop:rd-fiber},%
	\ref{subprop:rd-multipliable}).
In particular, if we write
\(\Gamma_{\sset{\at, \at'}}\) and
\(\Gamma_{\at + \at'}\) for
the appropriate stabilizers, then
it is clear that
\(\Gamma_{\sset{\at, \at'}}\)
is contained in
\(\Gamma_{\at + \at'}\).
We have a commutative diagram
\[\xymatrix{
\Gamma/\Gamma_{\sset{\at, \at'}} \ar[d]\ar[r] &
\Gamma/\Gamma_{\at + \at'} \ar[d] \\
\sset{\text{exceptional pairs extending \(a\)}} \ar[r] &
\sset{\text{extensions of \(\at + \at'\)}}.
}\]
Proposition \ref{prop:rd-restriction}(\ref{subprop:rd-fiber})
and
Lemma \ref{lem:half-pairs-a}(\ref{sublem:pairs-orbit})
show that the vertical arrows are bijections,
and
(\ref{sublem:coexc-extensions}) shows that
the bottom arrow is a bijection,
so the top must also be a bijection.
That gives (\ref{sublem:exc-stab}).
\end{proof}

\begin{cor}
\label{cor:avg-coroot}
For every \(a \in \Phi\), we have that
\(a^\vee\) equals
\(\sum_{i_\Gamma^*(\at) = a} \at^\vee\)
if \(a\) is non-multipliable or inert, and
\(a^\vee\) equals
\(2\sum_{i_\Gamma^*(\at) = a} \at^\vee\)
if \(a\) is split.
\end{cor}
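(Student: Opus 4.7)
The plan is to handle the three cases (non-multipliable, split multipliable, inert multipliable) separately, reducing each to already-established results.

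The non-multipliable case is immediate from Proposition \ref{prop:rd-restriction}(\ref{subprop:coroot-nonmultipliable}), so I need only treat multipliable \(a\). In that case, Lemma \ref{lem:half-pairs-a}(\ref{sublem:coroot-multipliable}) provides the formula \(a^\vee = 2\sum (\at + \at')^\vee\), the sum ranging over all exceptional pairs \(\sset{\at, \at'}\) extending \(a\). My task reduces to rewriting this sum in terms of \(\at^\vee\) alone, using the structure of the irreducible component of \(\Phit\) in which the pair lies, as classified in Proposition \ref{prop:rd-restriction}(\ref{subprop:rd-multipliable}), and then accounting for each extension of \(a\) exactly once via Lemma \ref{lem:half-pairs-b}(\ref{sublem:half-pairs}).

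For the split case, each exceptional pair \(\sset{\at, \at'}\) lies in an irreducible \(\mathsf A_2\) component of \(\Phit_a\), inside an \(\mathsf A_{2n}\) component of \(\Phit\); and \(\at, \at'\) form a system of simple roots for that \(\mathsf A_2\). Since every root in a simply laced root system has the same length, we have \((\at + \at')^\vee = \at^\vee + \at'{}^\vee\). Summing over all exceptional pairs and invoking Lemma \ref{lem:half-pairs-b}(\ref{sublem:half-pairs}), which says that each extension of \(a\) belongs to precisely one pair, converts the sum over pairs into a sum over all extensions of \(a\), yielding \(a^\vee = 2\sum_{i^*_\Gamma(\at) = a} \at^\vee\).

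For the inert case, each ``pair'' is a multiset \(\sset{\at, \at}\) sitting inside an irreducible \(\mathsf{BC}_1\) component of \(\Phit_a\); the ``sum'' \(\at + \at'\) is then \(2\at\), which is indeed a root of that \(\mathsf{BC}_1\). A direct computation of coroots in \(\mathsf{BC}_1\) (equivalently, from the general formula \(\alpha^\vee = 2\alpha/\pair\alpha\alpha\)) gives \((2\at)^\vee = \tfrac12 \at^\vee\). Under the map \(\sset{\at, \at} \mapsto \at\), the set of exceptional pairs extending \(a\) is in bijection with the set of extensions of \(a\) (this is the inert-case content of Lemma \ref{lem:half-pairs-b}(\ref{sublem:half-pairs})). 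Therefore
\[
a^\vee = 2\sum_{\sset{\at,\at}} (2\at)^\vee = 2\sum_{i^*_\Gamma(\at) = a} \tfrac12 \at^\vee = \sum_{i^*_\Gamma(\at) = a} \at^\vee,
\]
as required. The only real content beyond bookkeeping is the two coroot identities in \(\mathsf A_2\) and \(\mathsf{BC}_1\); these are routine, so I do not expect any serious obstacle.
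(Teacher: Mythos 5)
Your proof is correct and follows essentially the same route as the paper: quote Proposition \ref{prop:rd-restriction}(\ref{subprop:coroot-nonmultipliable}) in the non-multipliable case, start from Lemma \ref{lem:half-pairs-a}(\ref{sublem:coroot-multipliable}) in the multipliable case, compute \(2(\at+\at')^\vee\) in each of the split (\(\mathsf A_2\)) and inert (\(\mathsf{BC}_1\)) situations, and re-index the sum over exceptional pairs as a sum over extensions. You are a bit more explicit than the paper in citing Lemma \ref{lem:half-pairs-b}(\ref{sublem:half-pairs}) for the re-indexing, which is a small improvement.
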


\begin{proof}
If \(a\) is non-multipliable, then this is
Proposition \ref{prop:rd-restriction}%
	(\ref{subprop:coroot-nonmultipliable}).
If \(a\) is inert, then, for every exceptional pair
\(\sset{\at, \at'}\) extending \(a\), we have that
\(\at\) equals \(\at'\), so that
\(2(\at + \at')^\vee\)
equals \(\at^\vee\).
If \(a\) is split, then, for every exceptional pair
\(\sset{\at, \at'}\) extending \(a\), we have by
Proposition \ref{prop:rd-restriction}%
	(\ref{subprop:rd-multipliable})(\ref{case:rd-multipliable(red)})
that \(2(\at + \at')^\vee\) equals
\(2(\at^\vee + \at^{\prime\,\vee})\).
In either case,
Lemma \ref{lem:half-pairs-a}(\ref{sublem:coroot-multipliable})
gives the result.
\end{proof}

\begin{cor}[Corollary to
	Proposition \ref{prop:rd-restriction} and
	Lemma \ref{lem:half-pairs-a}]
\label{cor:coroots-vs-coroots}
The sublattices
\(\Z\Phi^\vee\) and
\((\Z\Phit^\vee)^\Gamma\)
of \(\Xt^\vee\)
are equal.
\end{cor}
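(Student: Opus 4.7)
The plan is to prove both inclusions of the claimed equality separately.

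The forward inclusion $\Z\Phi^\vee \subseteq (\Z\Phit^\vee)^\Gamma$ will be immediate from Corollary \ref{cor:avg-coroot}, which expresses each generator $a^\vee$ as a positive integer multiple of a $\Gamma$-orbit sum of coroots of extensions of $a$, and hence as a $\Gamma$-invariant element of $\Z\Phit^\vee$.

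For the reverse inclusion, I first note that $(\Psit^\vee, \Gamma)$ is itself quasisemisimple: if $\Phit^+$ is a $\Gamma$-stable system of positive roots in $\Phit$, then the set of coroots $(\Phit^+)^\vee$ is a $\Gamma$-stable system of positive roots in $\Phit^\vee$. Applying Remark \ref{rem:rd-Gamma-induced} to the dual datum then furnishes a $\Gamma$-stable $\Z$-basis $D \subseteq \Phit^\vee$ of $\Z\Phit^\vee$, namely a $\Gamma$-stable system of simple roots of $\Phit^\vee$. I expect this step to be the main subtlety: when $\Phit$ is non-reduced, the obvious candidate---the set of coroots of a $\Gamma$-stable simple system of $\Phit$---fails to be a basis of $\Z\Phit^\vee$ (it spans only a proper sublattice), so passing through the dual is essential. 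Since $\Gamma$ permutes $D$, the invariants $(\Z\Phit^\vee)^\Gamma$ are freely spanned by orbit sums $s_\mathcal{O}$ obtained by summing over the $\Gamma$-orbits $\mathcal{O}$ in $D$.

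I then show each such $s_\mathcal{O}$ lies in $\Z\Phi^\vee$. Put $\mathcal{O}_* = \{\at \in \Phit : \at^\vee \in \mathcal{O}\}$; via the $\Gamma$-equivariant bijection $\at \leftrightarrow \at^\vee$, the set $\mathcal{O}_*$ is a single $\Gamma$-orbit in $\Phit$, so Proposition \ref{prop:rd-restriction}(\ref{subprop:rd-fiber}) identifies it with the full fiber $\{\at \in \Phit : i_\Gamma^*(\at) = a\}$ for a unique $a \in \Phi$. Hence $s_\mathcal{O} = \sum_{i_\Gamma^*(\at) = a} \at^\vee$, and Corollary \ref{cor:avg-coroot} identifies this with $a^\vee$ when $a$ is non-multipliable or inert, and with $\tfrac12 a^\vee$ when $a$ is split. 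The split case is handled by one final observation: $a$ multipliable forces $2a \in \Phi$, so $(2a)^\vee = \tfrac12 a^\vee$ already lies in $\Z\Phi^\vee$, and in all cases $s_\mathcal{O} \in \Z\Phi^\vee$ as required.
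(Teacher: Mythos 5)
Your proposal is correct and takes essentially the same approach as the paper: both directions use Corollary~\ref{cor:avg-coroot} (or its ingredients, Proposition~\ref{prop:rd-restriction}(\ref{subprop:coroot-nonmultipliable}) and Lemma~\ref{lem:half-pairs-a}(\ref{sublem:coroot-multipliable})) to relate orbit sums of coroots to $a^\vee$ or $(2a)^\vee$, and both use Remark~\ref{rem:rd-Gamma-induced}, via the dual datum $(\Psit^\vee, \Gamma)$, to reduce the reverse inclusion to showing that $\Gamma$-orbit sums of coroots lie in $\Z\Phi^\vee$. Your explicit observation about why passing to the dual is necessary when $\Phit$ is non-reduced is a genuine subtlety that the paper leaves implicit in Remark~\ref{rem:rd-Gamma-induced}.
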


\begin{proof}
Proposition \ref{prop:rd-restriction}%
	(\ref{subprop:coroot-nonmultipliable}) and
Lemma \ref{lem:half-pairs-a}(\ref{sublem:coroot-multipliable})
show that
\(\Phi^\vee\), hence \(\Z\Phi^\vee\), is contained in
\(\Z\Phit^\vee \cap X^\vee =
\Z\Phit^\vee \cap (\Xt^\vee)^\Gamma \subseteq
(\Z\Phit^\vee)^\Gamma\).

Conversely, we have by Remark \ref{rem:rd-Gamma-induced} that
the sums of \(\Gamma\)-orbits of elements of \(\Phit^\vee\)
span \((\Z\Phit^\vee)^\Gamma\),
so it suffices to show that such sums lie in \(\Z\Phi^\vee\).

Fix \(\at \in \Phit\), and let
\(a = i_\Gamma^*(\at)\).
If \(a\) is non-multipliable in \(\Phi\), then
Proposition \ref{prop:rd-restriction}%
	(\ref{subprop:rd-fiber},%
	\ref{subprop:coroot-nonmultipliable}) gives that
the sum of the \(\Gamma\)-orbit of \(\at^\vee\) equals
\(a^\vee\), and so
belongs to \(\Z\Phi^\vee\).
If \(\at \in \Phit\) is multipliable in \(\Phit\), then
\(2\at\) is not, so we have just shown that
the sum of the \(\Gamma\)-orbit of
\(\tfrac1 2\at^\vee = (2\at)^\vee\), and hence of
\(\at^\vee\), belongs to
\(\Z\Phi^\vee\).

Now suppose that \(\at\) is
non-multipliable in \(\Phit\), but
\(a\) is
multipliable in \(\Phi\).
Then \(a\) is split, so
Corollary \ref{cor:avg-coroot} gives that
\(\frac1 2 a^\vee\) is
the sum of the coroots corresponding to the extensions of \(a\) in \(\Phit\), which, by
Proposition \ref{prop:rd-restriction}(\ref{subprop:rd-fiber}),
is the sum of the \(\Gamma\)-orbit of \(\at^\vee\); but
\(\frac1 2 a^\vee\) equals \((2a)^\vee\), and so
belongs to \(\Z\Phi^\vee\), as desired.
\end{proof}

\begin{lem}
\label{lem:simple-witness}\hfill
\begin{enumerate}[label=(\alph*), ref=\alph*]
\item\label{sublem:simple-witness}
If \(a\) and \(b\) are elements of \(\Phi\) such that
\(a + b\) belongs to \(\Phi\), then,
for every
extension \(\at\) of \(a\), there is an
extension \(\bt\) of \(b\) such that
\(\at + \bt\) belongs to \(\Phit\).
\item\label{sublem:rd-simple}
\abmapto\Deltat{i_\Gamma^*(\Deltat)}
is a bijection from
\(\Gamma\)-stable systems of simple roots in \(\Phit\)
to systems of simple roots in \(\Phi\), with
inverse bijection
\abmapto\Delta{(i_\Gamma^*)\inv(\Delta)}.
\end{enumerate}
\end{lem}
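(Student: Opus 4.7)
The plan is to establish part~(\ref{sublem:rd-simple}) first (since its proof does not invoke (\ref{sublem:simple-witness})) and then to deduce part~(\ref{sublem:simple-witness}) using (\ref{sublem:rd-simple}).

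For (\ref{sublem:rd-simple}), I start from the bijection of Proposition~\ref{prop:rd-restriction}(\ref{subprop:rd-Borus}) between $\Gamma$-stable positive systems in $\tilde\Phi$ and positive systems in $\Phi$. Given a $\Gamma$-stable system of simple roots $\tilde\Delta \subseteq \tilde\Phi^+$ and the corresponding simple system $\Delta \subseteq \Phi^+ := i_\Gamma^*(\tilde\Phi^+)$, I show $i_\Gamma^*(\tilde\Delta) = \Delta$. Every $c \in \Phi^+$ lifts to some $\tilde c \in \tilde\Phi^+$, which as a non-negative integer combination of $\tilde\Delta$ projects under $i_\Gamma^*$ to a non-negative integer combination of $i_\Gamma^*(\tilde\Delta)$; so $i_\Gamma^*(\tilde\Delta)$ generates the positive cone. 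The cardinality $|i_\Gamma^*(\tilde\Delta)|$ equals the number of $\Gamma$-orbits on $\tilde\Delta$ by Proposition~\ref{prop:rd-restriction}(\ref{subprop:rd-fiber}); meanwhile the rank of $\Phi$ equals the rank of $\mathbb Z\Phi^\vee$, which Corollary~\ref{cor:coroots-vs-coroots} identifies with $(\mathbb Z\tilde\Phi^\vee)^\Gamma$, itself of rank equal to the number of $\Gamma$-orbits on the $\Gamma$-stable basis $\tilde\Delta^\vee$ of the permutation module $\mathbb Z\tilde\Phi^\vee$. Hence $|i_\Gamma^*(\tilde\Delta)|$ matches the rank of $\Phi$, and, combined with the spanning property, forces $i_\Gamma^*(\tilde\Delta) = \Delta$. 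For the converse statement $(i_\Gamma^*)^{-1}(\Delta) = \tilde\Delta$, I note $\tilde\Delta \subseteq (i_\Gamma^*)^{-1}(\Delta)$ is immediate; any $\tilde\alpha \in (i_\Gamma^*)^{-1}(\Delta) \setminus \tilde\Delta$ would be a non-simple positive root of $\tilde\Phi^+$, hence decompose as $\tilde\alpha = \tilde\beta + \tilde\gamma$ with $\tilde\beta, \tilde\gamma \in \tilde\Phi^+$, so $i_\Gamma^*(\tilde\alpha) = i_\Gamma^*(\tilde\beta) + i_\Gamma^*(\tilde\gamma)$ would express a simple root of $\Phi^+$ as a sum of two positive roots, a contradiction.

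For (\ref{sublem:simple-witness}), I exploit the transitivity of the $\Gamma$-action on the set $O_a$ of extensions of $a$ (Proposition~\ref{prop:rd-restriction}(\ref{subprop:rd-fiber})) to reduce to producing one pair $(\tilde a_0, \tilde b_0)$ of extensions with $\tilde a_0 + \tilde b_0 \in \tilde\Phi$: if $\gamma \tilde a_0 = \tilde a$, then $\gamma \tilde b_0$ extends $b$ and $\tilde a + \gamma \tilde b_0 = \gamma(\tilde a_0 + \tilde b_0) \in \tilde\Phi$. Equivalently, I seek some extension $\tilde c$ of $a + b$ and extension $\tilde a$ of $a$ with $\tilde c - \tilde a \in \tilde\Phi$, in which case $\tilde b := \tilde c - \tilde a$ automatically extends $b$. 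After choosing a positive system $\Phi^+ \ni a, b, a+b$ and lifting to a $\Gamma$-stable $\tilde\Phi^+$ via (\ref{sublem:rd-simple}), the integer quantity
\[
S := \sum_{\tilde c \in O_{a+b}}\langle \tilde c, \tilde a^\vee\rangle = \Bigl\langle\sum \tilde c, \tilde a^\vee\Bigr\rangle
\]
is computed by averaging $\tilde a^\vee$ over its $\Gamma$-orbit (justified by $\Gamma$-invariance of $\sum \tilde c$) and applying Corollary~\ref{cor:avg-coroot}, yielding
\[
S = \frac{c_a\,|O_{a+b}|}{|O_a|}\bigl(2 + \langle b, a^\vee\rangle\bigr),
\]
where $c_a = 1$ if $a$ is non-multipliable or inert and $c_a = 1/2$ if $a$ is split. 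If $S \geq 1$, integrality of each summand forces at least one $\langle \tilde c, \tilde a^\vee\rangle \geq 1$; this gives $p \geq 1 + q \geq 1$ in the $\tilde a$-string through $\tilde c$, so $\tilde c - \tilde a \in \tilde\Phi$ and we are done.

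The hard part will be the residual case, where $\langle b, a^\vee\rangle$ is sufficiently negative that $S \leq 0$ and the averaging argument is silent. Here I plan to reduce to the rank-two $\Gamma$-stable integrally closed sub-root system $\tilde\Phi' := (i_\Gamma^*)^{-1}(\mathbb Z\{a, b\}\cap\Phi)\cap\tilde\Phi$ of $\tilde\Phi$, with quotient the rank-two system $\mathbb Z\{a, b\}\cap\Phi$ of $\Phi$, and verify the conclusion case by case, using Proposition~\ref{prop:rd-restriction}(\ref{subprop:rd-multipliable}) and Lemma~\ref{lem:half-pairs-b} to control the behavior of extensions in the presence of multipliable roots and exceptional pairs. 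This rank-two case analysis, especially for split multipliable roots where the coexceptional pairs of Lemma~\ref{lem:half-pairs-b} must be tracked, will be the main obstacle.
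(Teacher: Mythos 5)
Your approach to (\ref{sublem:rd-simple}) is a genuine alternative to the paper's, which deduces it from (\ref{sublem:simple-witness}); your independent argument via rank counting (Corollary~\ref{cor:coroots-vs-coroots}) combined with the spanning-of-the-positive-cone observation is sound. Two small points: when \(\Phit\) is non-reduced, \(\Deltat^\vee\) is not literally a basis of \(\Z\Phit^\vee\) (in type \(\mathsf{BC}_1\) the coroot lattice is generated by \(\frac12\alphat^\vee = (2\alphat)^\vee\), not by \(\alphat^\vee\)), so you should count \(\Gamma\)-orbits on a simple system for \(\Phit^\vee\) instead, which has the same orbit count; and you should say explicitly that surjectivity of \(\abmapto\Deltat{i_\Gamma^*(\Deltat)}\) onto simple systems of \(\Phi\) follows from Proposition~\ref{prop:rd-restriction}(\ref{subprop:rd-Borus}) together with the injectivity you establish. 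Neither affects the conclusion.

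The substantive gap is in (\ref{sublem:simple-witness}). Your averaged quantity \(S\) carries a spurious \(+2\) from \(\pair a{a^\vee}\), so the sign argument is silent exactly when \(\pair b{a^\vee} \leq -2\). This is not a fringe case: it already occurs for a short and a long root in \(\mathsf B_2\) (pairing \(-2\)), in \(\mathsf G_2\) (pairing \(-3\)), and for \(b = -2a\) in a non-reduced component (pairing \(-4\)). You leave this residual case as an unexecuted rank-two classification, so the proposal does not actually establish the statement. The paper closes the case at no cost by averaging the coroots of extensions of \(b\), rather than of \(a + b\), against a fixed \(\at\): first dispose of \(a = b\) (an exceptional pair containing \(\at\) exists and sums to an element of \(\Phit\)); then reduce the orthogonal case to the non-orthogonal one via the pair \((-a, a+b)\), which is non-orthogonal because \(\pair{a+b}{a^\vee} = 2\) and whose sum is \(b\); and finally, for \(a \neq b\) non-orthogonal with \(a + b \in \Phi\), observe that \(\pair\at{b^\vee} = \pair a{b^\vee} < 0\), so Corollary~\ref{cor:avg-coroot} applied in the \(b^\vee\) variable yields an extension \(\bt\) with \(\pair\at{\bt^\vee} < 0\), forcing \(\at + \bt \in \Phit\). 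Since this uses negativity rather than a lower bound, it handles all signs simultaneously, works for \emph{every} extension \(\at\) directly, and makes your transitivity reduction to ``find one pair'' unnecessary. Adopt this decomposition rather than pursuing the rank-two case analysis.
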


\begin{proof}
We begin with (\ref{sublem:simple-witness}).
If \(a\) equals \(b\) and
\(\sset{\at, \at'}\) is
an exceptional pair containing \(\at\)
(which exists, by
Lemma \ref{lem:half-pairs-b}(\ref{sublem:half-pairs})), then
\(\at + \at'\) belongs to \(\Phit\).
Thus we may, and do, suppose that \(a\) and \(b\) are distinct.

Suppose first that
\(a\) and \(b\) are not orthogonal
(in addition to being distinct).
Then \(\pair\at{b^\vee} = \pair a{b^\vee}\) is negative
\cite{bourbaki:lie-gp+lie-alg_4-6}*
	{Ch.~VI, no.~1.3, p.~149, Corollaire to Th\'eor\`eme 1}.
Corollary \ref{cor:avg-coroot} gives that
there is some extension \(\bt\) of \(b\) such that
\(\pair\at{\bt^\vee}\) is negative; and another
application of
\emph{loc.~cit.}
gives that \(\at + \bt\) belongs to \(\Phit\).

Finally, suppose that \(a\) and \(b\) are orthogonal.
Then \(\pair{a + b}{a^\vee}\) equals \(2\), so
\(-a\) and \(a + b\) are \emph{not} strongly orthogonal; and,
in fact, \(-a + (a + b) = b\) belongs to \(\Phi\).
Thus, we have just shown that there is an extension
\(\ct\) of \(a + b\) such that
\(-\at + \ct\)
belongs to \(\Phit\).
Put \(\bt = -\at + \ct\).

For (\ref{sublem:rd-simple}), suppose first that
\(\Deltat\) is a \(\Gamma\)-stable
system of simple roots for \(\Phit\)
with corresponding system of positive roots
\(\Phit^+\).
Put \(\Delta = i_\Gamma^*(\Deltat)\).
We have by
Proposition \ref{prop:rd-restriction}(\ref{subprop:rd-Borus})
that \(\Phi^+ \ldef i_\Gamma^*(\Phit^+)\) is
a system of positive roots for \(\Phi\), and that
\(\Phit^+\) equals
\((i_\Gamma^*)\inv(\Phit^+)\).
If \(\Delta\) is not simple, then there exist \(a, b \in \Phi^+\) such that
\(a + b\) belongs to \(\Delta\).
By (\ref{sublem:simple-witness}), we have that there are
extensions \(\at\) and \(\bt\) of \(a\) and \(b\),
necessarily in \(\Phit^+\), such that
\(\at + \bt\) belongs to \(\Phit\).
Since \(\at + \bt\) belongs to
\((i_\Gamma^*)\inv(a + b) \in (i_\Gamma^*)\inv(\Delta)\),
we have by
Proposition \ref{prop:rd-restriction}(\ref{subprop:rd-fiber})
that
\(\at + \bt\) belongs to \(\Deltat\).
This contradicts the simplicity of \(\Deltat\), so \(\Delta\) must be simple.

Now suppose conversely that \(\Delta\) is
a system of simple roots for \(\Phi\) with
corresponding system of positive roots \(\Phi^+\).
Put \(\Deltat = (i_\Gamma^*)\inv(\Delta)\), which is
clearly preserved by \(\Gamma\).
Again by
Proposition \ref{prop:rd-restriction}(\ref{subprop:rd-Borus}),
we have that
\(\Phit^+ \ldef (i_\Gamma^*)\inv(\Phi^+)\) is
a \(\Gamma\)-stable system of positive roots for \(\Phit\).
It is clear that there do not exist
\(\at, \bt \in \Phit^+\) such that
\(\at + \bt\) belongs to \(\Deltat\).
Now fix \(\at \in \Phit^+ \setminus \Deltat\), and put
\(a = i_\Gamma^*(\at)\), which belongs to
\(\Phi^+ \setminus \Delta\).
By simplicity, there is some \(b \in \Delta\) such that
\(a - b\) belongs to \(\Phi^+\).
By (\ref{sublem:simple-witness}), there is some extension
\(\bt\) of \(b\), necessarily in \(\Deltat\), such that
\(\at - \bt\) belongs to \(\Phit^+\).
This shows (\ref{sublem:rd-simple}).
\end{proof}

\begin{lem}
\label{lem:rd-Dynkin}
The nodes of the Dynkin diagram of \(\Phi\) are
in bijection with the \(\Gamma\)-orbits of
nodes of the Dynkin diagram of \(\Phit\), and
two nodes of the Dynkin diagram of \(\Phi\) are adjacent
if and only if
they are restrictions of adjacent nodes of
the Dynkin diagram of \(\Phit\).
\end{lem}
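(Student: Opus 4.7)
My plan is to fix a $\Gamma$-stable system $\Deltat$ of simple roots for $\Phit$, which exists by the quasisemisimplicity hypothesis combined with Lemma \ref{lem:simple-witness}(\ref{sublem:rd-simple}), and put $\Delta = i_\Gamma^*(\Deltat)$, which is a system of simple roots for $\Phi$ by the same lemma. The nodes of the two Dynkin diagrams are, by definition, the elements of $\Delta$ and $\Deltat$, respectively. The bijection $\Phit/\Gamma \to \Phi$ of Proposition \ref{prop:rd-restriction}(\ref{subprop:rd-fiber}), restricted to the $\Gamma$-stable subset $\Deltat$, produces the asserted bijection between $\Gamma$-orbits of nodes of the Dynkin diagram of $\Phit$ and nodes of the Dynkin diagram of $\Phi$.

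For adjacency, two distinct simple roots $\alpha, \beta$ in a simple system for a (possibly non-reduced) root system are adjacent in the Dynkin diagram if and only if $\pair\alpha{\beta^\vee} \neq 0$, and they always satisfy $\pair\alpha{\beta^\vee} \leq 0$. Fix distinct $a, b \in \Delta$ and any extension $\at \in \Deltat$ of $a$. The pairing $X \times X^\vee \to \Z$ is inherited from $\Xt \times \Xt^\vee \to \Z$ by passing to coinvariants in the first factor and invariants in the second (and $\Z$ being torsion-free), so $\pair a{b^\vee}$ equals $\pair\at{b^\vee}$, where on the right we view $b^\vee$ as an element of $(\Xt^\vee)^\Gamma \subseteq \Xt^\vee$. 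Applying Corollary \ref{cor:avg-coroot} to write $b^\vee = c\sum_{\bt}\bt^\vee$, with $c \in \sset{1, 2}$ and the sum ranging over all extensions of $b$, yields
\[
\pair a{b^\vee} = c\sum_\bt\pair\at{\bt^\vee}.
\]
By Proposition \ref{prop:rd-restriction}(\ref{subprop:rd-fiber}), the extensions of $a$ and of $b$ are disjoint subsets of $\Deltat$, so every summand $\pair\at{\bt^\vee}$ on the right is non-positive; hence the sum vanishes if and only if every summand vanishes. This translates precisely into the desired equivalence: $a$ and $b$ are adjacent in the Dynkin diagram of $\Phi$ if and only if there exist extensions $\at, \bt \in \Deltat$ of $a, b$ that are adjacent in the Dynkin diagram of $\Phit$.

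I do not anticipate a genuine obstacle here, since the heavy lifting has already been done in the preceding lemmas and corollaries. The one point requiring care is confirming that Corollary \ref{cor:avg-coroot} applies uniformly across the non-multipliable, inert, and split cases (which it does, producing just the factor $c \in \sset{1, 2}$), and that the choice of extension $\at$ of $a$ is immaterial. The latter is clear from $\Gamma$-equivariance: replacing $\at$ by $\gamma\at$ permutes the extensions of $b$ in the sum, leaving it unchanged.
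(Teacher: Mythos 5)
Your proof is correct and works entirely within the same framework (Lemma \ref{lem:simple-witness}(\ref{sublem:rd-simple}), Proposition \ref{prop:rd-restriction}(\ref{subprop:rd-fiber}), Corollary \ref{cor:avg-coroot}), but it is genuinely shorter than the paper's on the adjacency criterion. After writing $\pair b{a^\vee}$ as a positive multiple of $\sum_{\gamma}\pair\bt{\gamma\at^\vee}$, the paper first observes (as you do) that the vanishing of every summand forces the sum to vanish, but then, for the converse, it performs a case-by-case examination of the Bourbaki plates to verify a structural fact about diagram automorphisms (that if some diagram automorphism moves $\at$ to a node adjacent to $\bt$, then every diagram automorphism preserving $\bt$'s component fixes $\bt$) in order to re-index the sum as a positive multiple of a single $\pair\bt{\at^\vee}$. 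Your argument bypasses this entirely: since $\at$ and each extension $\bt$ of $b$ are \emph{distinct} simple roots in the $\Gamma$-stable base $\Deltat$ (distinct because they restrict to the distinct elements $a$ and $b$), every summand $\pair\at{\bt^\vee}$ is already non-positive, so no cancellation can occur and the sum is zero if and only if each term is. You also correctly flag and dispose of the one subtlety, that the choice of extension $\at$ is immaterial. This is a clean simplification; the Bourbaki examination is not needed for this lemma.
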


\begin{proof}
Fix a system \(\Deltat\) of simple roots for \(\Phit\).
Lemma \ref{lem:simple-witness}(\ref{sublem:rd-simple}
gives that \(\Delta \ldef i_\Gamma^*(\Deltat)\)
is a system of simple roots for \(\Phi\), and
Proposition \ref{prop:rd-restriction}(\ref{subprop:rd-fiber}) shows that
we may identify \(\Delta\) with the set of
orbits of \(\Gamma\) on \(\Deltat\).

Now suppose that \(\at\) and \(\bt\) belong to
\(\Deltat\), and
write \(a\) and \(b\) for their respective restrictions.
Proposition \ref{prop:rd-restriction}(\ref{subprop:rd-fiber}) again,
and Corollary \ref{cor:avg-coroot}, give that
\(\pair b{a^\vee}\) is a positive multiple of
\(\sum_{\gamma \in \Gamma/{\stab_\Gamma \at}}
	\smashpair\bt{\gamma\at^\vee}\).
In particular, the sum is \(0\), so that
\(a\) and \(b\) are not adjacent, unless
some \(\Gamma\)-conjugate of \(\at\)
is adjacent to \(\bt\).

To complete the proof,
we may, and do, assume, upon replacing \(\at\) by
a \(\Gamma\)-conjugate if necessary, that
\(\at\) and \(\bt\) are adjacent.
An examination of the irreducible root systems
(and their diagram automorphisms), say in
\cite{bourbaki:lie-gp+lie-alg_4-6}*{Chapter VI, Plates I--IX},
shows that, if there is
a diagram automorphism of \(\Phit\) that
moves \(\at\) to a node of
the Dynkin diagram of \(\Phit\) that
is adjacent to \(\bt\), then
every diagram automorphism of \(\Phit\)
preserving the irreducible component to which \(\bt\) belongs
fixes \(\bt\).
Thus
\(\sum_{\gamma \in \Gamma/{\stab_\Gamma \at}}
	\smashpair\bt{\gamma\at^\vee}
	 = \sum_{\gamma \in \Gamma/{\stab_\Gamma \at}}
	\smashpair{\gamma\bt}{\at^\vee}\) is
a positive multiple of \(\smashpair\bt{\at^\vee}\),
hence so is \(\pair b{a^\vee}\).
In particular, \(a\) and \(b\) are adjacent.
\end{proof}

\begin{cor}
\label{cor:rd-irred}
The map \abmapto{\Phit_1}{i_\Gamma^*(\Phit_1)}
is a surjection from
the irreducible components of \(\Phit\) onto
the irreducible components of \(\Phi\).
It induces a bijection between
the \(\Gamma\)-orbits of irreducible components of \(\Phit\) and
the irreducible components of \(\Phi\), with inverse map sending
an irreducible component \(\Phi_1\) of \(\Phi\) to
the \(\Gamma\)-orbit of any irreducible component of
\((i_\Gamma^*)\inv(\Phi_1)\).
\end{cor}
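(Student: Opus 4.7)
The plan is to reduce the statement to a corresponding fact about the Dynkin diagrams of $\Phi$ and $\Phit$, and then to invoke Lemma \ref{lem:rd-Dynkin}. First I would fix a $\Gamma$-stable system of simple roots $\Deltat$ for $\Phit$, which by Lemma \ref{lem:simple-witness}(\ref{sublem:rd-simple}) projects to a system of simple roots $\Delta = i_\Gamma^*(\Deltat)$ for $\Phi$. The irreducible components of $\Phit$ (respectively, of $\Phi$) are in bijection with the connected components of the Dynkin diagram on $\Deltat$ (respectively, on $\Delta$); because $\Gamma$ acts on $\Deltat$ by diagram automorphisms, it permutes these connected components, and $\Gamma$-orbits of connected components of the diagram on $\Deltat$ therefore correspond to $\Gamma$-orbits of irreducible components of $\Phit$.

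Next I would invoke Lemma \ref{lem:rd-Dynkin}, which identifies the nodes of the diagram on $\Delta$ with the $\Gamma$-orbits of nodes of the diagram on $\Deltat$, and says that adjacency is preserved and reflected. This immediately implies that $i_\Gamma^*$ sends each connected component of the diagram on $\Deltat$ into a single connected component of the diagram on $\Delta$, and that two connected components of the diagram on $\Deltat$ have the same image exactly when they are $\Gamma$-conjugate. Translating back through the correspondence between connected components of the Dynkin diagram and irreducible components of the root system, the map $\Phit_1 \mapsto i_\Gamma^*(\Phit_1)$ is surjective onto the irreducible components of $\Phi$ and descends to a bijection between $\Gamma$-orbits of irreducible components of $\Phit$ and irreducible components of $\Phi$.

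For the description of the inverse, I would observe that if $\Phi_1$ is an irreducible component of $\Phi$ and $\Phit_1$ is any irreducible component of $\Phit$ meeting $(i_\Gamma^*)^{-1}(\Phi_1)$, then $i_\Gamma^*(\Phit_1)$ is an irreducible component of $\Phi$ intersecting $\Phi_1$, hence equals $\Phi_1$; by the bijection just established, the $\Gamma$-orbit of $\Phit_1$ is uniquely determined by $\Phi_1$. The one point requiring care is that two distinct $\Gamma$-orbits of irreducible components of $\Phit$ produce distinct images in $\Phi$; this follows from Proposition \ref{prop:rd-restriction}(\ref{subprop:rd-fiber}), since if $i_\Gamma^*(\Phit_1) = i_\Gamma^*(\Phit_1')$, picking $\at \in \Phit_1$ and choosing $\at' \in \Phit_1'$ with the same restriction yields some $\gamma \in \Gamma$ with $\gamma\at = \at'$, forcing $\gamma\Phit_1 = \Phit_1'$. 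I do not foresee any real obstacle beyond this essentially formal unpacking of Lemma \ref{lem:rd-Dynkin}.
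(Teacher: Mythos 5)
The paper gives no proof of this corollary, stating it immediately after Lemma~\ref{lem:rd-Dynkin}, and your overall strategy — translating to Dynkin diagrams and reading off the corollary from the lemma — is clearly the intended one. Your argument is almost complete, but there is one point that you assert without justification, and it is the point that actually carries the content of the corollary: that $i_\Gamma^*(\Phit_1)$ is a \emph{full} irreducible component of $\Phi$, rather than a proper irreducible subsystem of one.

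You establish (correctly) that $i_\Gamma^*$ carries each connected component $C$ of the Dynkin diagram on $\Deltat$ \emph{into} a single connected component $D$ of the diagram on $\Delta$, and that two such $C$'s have the same image iff they are $\Gamma$-conjugate. But from this alone one cannot conclude that $i_\Gamma^*(C) = D$, which is what you need both to know that $\Phit_1 \mapsto i_\Gamma^*(\Phit_1)$ really takes values in the set of irreducible components of $\Phi$, and to deduce surjectivity. (Your final paragraph likewise asserts without proof that ``$i_\Gamma^*(\Phit_1)$ is an irreducible component of $\Phi$.'') The missing step is filled by the ``only if'' direction of the adjacency criterion in Lemma~\ref{lem:rd-Dynkin}: if $a_0 \in i_\Gamma^*(C)$ and $a \in D$ is adjacent to $a_0$, lift to adjacent nodes $\at, \at_0$ of $\Deltat$ with $i_\Gamma^*(\at) = a$ and $i_\Gamma^*(\at_0) = a_0$; then some $\gamma \in \Gamma$ sends $\at_0$ into $C$, and since $\gamma\at$ is adjacent to $\gamma\at_0 \in C$ we get $\gamma\at \in C$, so $a = i_\Gamma^*(\gamma\at) \in i_\Gamma^*(C)$. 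Connectedness of $D$ then gives $i_\Gamma^*(C) = D$. With this added, your argument is complete.
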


\begin{rem}
\label{rem:red-to-nred-facts}
Suppose that \(\Phit_1\) is a reduced irreducible component of \(\Phit\)
and that the corresponding irreducible component
\(\Phi_1\) of \(\Phi\) is non-reduced.
Write
\(\Gamma_1\) for the subgroup of \(\Gamma\) that
preserves \(\Phit_1\), and
\(\Gamma_1'\) for the subgroup of \(\Gamma_1\) that
acts trivially on it.
Proposition \ref{prop:rd-restriction}%
	(\ref{subprop:rd-fiber},%
	\ref{subprop:rd-multipliable})
gives that
\(\Phit_1\) is of type \(\mathsf A_{2n}\) for some \(n\),
and \(\Gamma_1/\Gamma_1'\) has order \(2\).
Let \(\gamma_0\) be an element of \(\Gamma_1 \setminus \Gamma_1'\).

We shall use the terminology (motivated by \eqref{subrem:pre-mult-div} below) that
an element \(\at \in \Phit_1\) is
\textit{pre-multipliable} if
\(\at\) and \(\gamma_0\at\) are
neither equal nor orthogonal, and
\textit{pre-divisible} if
\(\at\) is fixed by \(\gamma_0\).
This condition is independent of the choice of \(\gamma_0\).
\begin{enumerate}[label=(\alph*), ref=\alph*]
\item\label{subrem:pre-mult-div}
An element of \(\Phit_1\) is pre-divisible
(respectively, pre-multipliable)
if and only if
its restriction is divisible (respectively, multipliable)
in \(\Phi_1\).
If \(\at \in \Phit_1\) is pre-multipliable, then
\(\sset{\at, \gamma_0\at}\) is an exceptional pair.
\item\label{subrem:red-stab}
For every \(\at \in \Phit_1\), we have that
\(\stab_\Gamma(\at)\) equals
\(\Gamma_1\) or \(\Gamma_1'\), according as
\(\at\) is or is not pre-divisible.
\end{enumerate}
\end{rem}

\numberwithin{equation}{section}
\section{Quasisemisimplicity and smoothability}
\label{sec:quass-smooth}

Proposition \ref{prop:rd-restriction} is
phrased entirely in the abstract language of actions on root data,
but, in this paper, we are most interested in actions on groups.
Recall the field \(k\) with characteristic exponent \(p\) from
\S\ref{sec:notation}.
Throughout the rest of the paper
(not just \S\ref{sec:quass-smooth}),
	\((\Gt, \Gamma)\) is a reductive datum over \(k\),
in the sense of
Definition \ref{defn:decomposing-data}.
Put \(G = \fix\Gt^\Gamma\).

Recall the definition of quasisemisimplicity from
Definition \ref{defn:ordinary-cases}(\ref{subdefn:gp-quass})
(although we do not assume that \((\Gt, \Gamma)\) is
quasisemisimple until Proposition \ref{prop:quass-rough}).
Lemma \ref{lem:quass-by-component} shows that
quasisemisimplicity can be checked on the level of
almost-simple components, at least
after passing to a sufficiently large
separable extension of \(k\).

\begin{lem}
\label{lem:quass-by-component}
If \((\Gt, \Gamma)\) is quasisemisimple, then
\((\Gt_1, \stab_\Gamma(\Gt_1))\) is quasisemisimple for every
almost-simple component \(\Gt_1\) of \(\Gt\).
If \(\pi_0(\Gamma)\) is constant, then the converse holds.
\end{lem}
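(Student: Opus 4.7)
The forward direction will be essentially formal. Suppose $(\Bt, \Tt)$ is a $\Gamma$-stable Borel--torus pair in $\Gt$, and let $\Gt_1$ be an almost-simple component of $\Gt$, defined over some subextension of $\ks/k$. Because every Borel subgroup (resp.\ maximal torus) of $\Gt$ decomposes compatibly with the almost-direct product structure of $\Gt$ as $Z(\Gt)\conn$ times the almost-simple components, the intersections $\Bt \cap \Gt_1$ and $\Tt \cap \Gt_1$ form a Borel--torus pair in $\Gt_1$. Both are preserved by $\stab_\Gamma(\Gt_1)$, which preserves $\Gt_1$ by the definition of stabilizer and $(\Bt, \Tt)$ by hypothesis. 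Hence $(\Gt_1, \stab_\Gamma(\Gt_1))$ is quasisemisimple.

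For the reverse direction, assume $\pi_0(\Gamma)$ is constant and each $(\Gt_1, \stab_\Gamma(\Gt_1))$ is quasisemisimple. I will first reduce to the case that $\Gt$ is adjoint, and then assemble the pieces via induction. The reduction is immediate: the adjoint quotient $\Gt \to \Gt\adform$ induces a $\Gamma$-equivariant bijection between Borel--torus pairs in $\Gt$ and in $\Gt\adform$, because the scheme-theoretic center $Z(\Gt)$ is contained in every maximal torus and therefore in every Borel subgroup, so preimages of Borel--torus pairs are Borel--torus pairs. The analogous statement applies to each $(\Gt_i, \Gamma_i)$, where $\Gt_i$ ranges over representatives of the $\pi_0(\Gamma)(k)$-orbits of almost-simple components of $\Gt$ and $\Gamma_i \ldef \stab_\Gamma(\Gt_i)$.

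In the adjoint case, Corollary \ref{cor:ind-simple} (whose only hypothesis is the constancy of $\pi_0(\Gamma)$) provides a $\Gamma$-equivariant isomorphism $\Gt\adform \cong \prod_i \Ind_{\Gamma_i}^\Gamma \Gt_{i\,\adsub}$. By hypothesis each $(\Gt_{i\,\adsub}, \Gamma_i)$ admits a $\Gamma_i$-stable Borel--torus pair $(\Bt_i, \Tt_i)$. Applying the functor $\Ind_{\Gamma_i}^\Gamma(\cdot)$ of Definition \ref{defn:ind-data} to the inclusions $\Tt_i \hookrightarrow \Bt_i \hookrightarrow \Gt_{i\,\adsub}$ produces $\Gamma$-equivariant closed embeddings $\Ind_{\Gamma_i}^\Gamma \Tt_i \hookrightarrow \Ind_{\Gamma_i}^\Gamma \Bt_i \hookrightarrow \Ind_{\Gamma_i}^\Gamma \Gt_{i\,\adsub}$. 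Taking the product across $i$ will yield a $\Gamma$-stable Borel--torus pair on $\Gt\adform$, provided these induced subgroups are in fact a maximal torus and a Borel subgroup.

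The main obstacle is exactly this last verification: that induction sends a Borel--torus pair to a Borel--torus pair. This is read off from the explicit description of the induced root datum in Remark \ref{rem:ind-root-datum}, which exhibits $\Phi(\Ind_{\Gamma_i}^\Gamma \Gt_{i\,\adsub\,\ks}, \Ind_{\Gamma_i}^\Gamma \Tt_{i\,\ks})$ as the disjoint union over $\Gamma(\ks)/\Gamma_i(\ks)$ of twisted copies of $\Phi(\Gt_{i\,\adsub\,\ks}, \Tt_{i\,\ks})$, with an analogous description of the positive system cut out by $\Bt_{i\,\ks}$. Everything else will be formal bookkeeping.
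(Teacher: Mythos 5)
Your forward direction matches the paper essentially verbatim. Your reverse direction is correct but genuinely different in implementation. The paper works directly with the group $\Gt$, avoiding both the reduction to the adjoint case and all of the \S\ref{subsec:ind} apparatus: it considers the set of triples $(\Gt_1, \Bt_1, \Tt_1)$ consisting of an almost-simple component together with a $\stab_\Gamma(\Gt_1)$-stable Borel--torus pair in it, notes that $\pi_0(\Gamma)(k)$ acts on these triples, chooses a $\pi_0(\Gamma)(k)$-equivariant section of the resulting surjection onto the set of almost-simple components, and then takes the Borel subgroup generated by $Z(\Gt)\smooth\conn$ and the $\Bt_1$'s in the image of the section (and similarly for the torus). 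This is shorter, self-contained, and does not require knowing that $\Ind_{\Gamma_i}^\Gamma(\cdot)$ carries Borel--torus pairs to Borel--torus pairs. Your proof, by contrast, first reduces to the adjoint case via the standard central-quotient argument (essentially re-deriving Lemma \ref{lem:quass-by-iso}), then invokes Corollary \ref{cor:ind-simple} to decompose $\Gt\adform$ as $\prod_i \Ind_{\Gamma_i}^\Gamma \Gt_i$, and pushes the component-level pairs through the induction functor, verifying they land on a Borel--torus pair via Lemma \ref{lem:induction-split-case} (the split case of induction), which is the real content behind your appeal to Remark \ref{rem:ind-root-datum}. The trade-off is that your route is structurally cleaner---the $\Gamma$-equivariance of the assembled pair is built into the functor rather than arranged by hand via an equivariant section---but it relies on substantially heavier machinery from the appendix to prove something that admits an elementary direct argument.
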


\begin{proof}
If \((\Bt, \Tt)\) is a Borel--torus pair in \(\Gt\) that is
preserved by \(\Gamma\) and
\(\Gt_1\) is a smooth, connected, normal subgroup of \(\Gt\), then
\((\Bt \cap \Gt_1, \Tt \cap \Gt_1)\) is
a Borel--torus pair in \(\Gt_1\) that is
preserved by \(\stab_\Gamma(\Gt_1)\).
In particular, \((\Gt_1, \stab_\Gamma(\Gt_1))\) is quasisemisimple.

Now suppose that
\((\Gt_1, \stab_\Gamma(\Gt_1))\) is quasisemisimple for
every almost-simple component \(\Gt_1\) of \(\Gt\).
Consider the set of triples
\((\Gt_1, \Bt_1, \Tt_1)\), where
\(\Gt_1\) is an almost-simple component of \(\Gt\) and
\((\Bt_1, \Tt_1)\) is a Borel--torus pair in \(\Gt_1\) that
is preserved by \(\stab_\Gamma(\Gt_1)\).
Remark \ref{rem:torus-quass}(\ref{subrem:act-inner}) and
Remark \ref{rem:torus-quass}(\ref{subrem:quass-to-torus})
give that
\(\pi_0(\Gamma)(k)\) acts on the set of such triples.
By assumption, the natural map from such triples to
almost-simple components of \(\Gt\) is surjective, and
obviously it is \(\pi_0(\Gamma)(k)\)-equivariant.
Arbitrarily
choose a \(\pi_0(\Gamma)(k)\)-equivariant section.
Then the subgroup of \(\Gt\) generated by
\(Z(\Gt)\smooth\conn\) and the various \(\Bt_1\)
(respectively \(\Tt_1\)) arising as
a component of a triple in the image of the section is
a Borel subgroup \(\Bt\) of \(\Gt\)
(respectively a maximal torus in \(\Bt\)) that is
preserved by \(\pi_0(\Gamma)(k)\), hence by \(\Gamma\),
since \(\pi_0(\Gamma)\) is constant.
\end{proof}

Lemma \ref{lem:cochar-by-torus} is vacuous unless
\((\Gt, \Gamma)\) is quasisemisimple, but
we still find it convenient
(for Lemma \ref{lem:quass-from-Levi})
to state the lemma before
making that assumption.

\begin{lem}
\label{lem:cochar-by-torus}
If \((\Bt, \Tt)\) is a Borel--torus pair in \(\Gt\) that
is preserved by \(\Gamma\), then
there is a cocharacter \(\delta\) of the
maximal split torus in \(\fix\Tt^\Gamma\)
such that
\(C_\Gt(\delta)\) equals \(\Tt\) and
the parabolic subgroup
\(P_\Gt(\delta)\) of \(\Gt\) associated to \(\delta\)
\cite{springer:lag}*{Proposition 8.4.5}
is \(\Bt\).
\end{lem}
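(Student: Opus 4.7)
The plan is to work over $\ks$, realize $\bX_*(S)$ as the cocharacter lattice of the quotient root datum furnished by Proposition \ref{prop:rd-restriction}, and take $\delta$ to be (a scalar multiple of) the half-sum of positive coroots there.

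By rigidity of tori, the action of $\Gamma_\ks$ on $\Tt_\ks$ factors through the finite \'etale group $\pi_0(\Gamma)(\ks)$, so $\Gamma' \ldef \Gal(k) \ltimes \pi_0(\Gamma)(\ks)$ acts naturally on the absolute root datum $\Psit \ldef \Psi(\Gt_\ks, \Tt_\ks)$. Since $(\Bt, \Tt)$ is defined over $k$ and preserved by $\Gamma$, the positive system $\Phit^+$ of $\Psit$ determined by $\Bt_\ks$ is $\Gamma'$-stable, so $(\Psit, \Gamma')$ is quasisemisimple in the sense of Definition \ref{defn:rd-quass}. Standard facts about smooth-group actions on $k$-tori identify $\bX_*(S)$ with $\bX_*(\Tt_\ks)^{\Gamma'}$.

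I then apply Proposition \ref{prop:rd-restriction} to $(\Psit, \Gamma')$, obtaining a quotient root datum $\Psi = (X, \Phi, X^\vee, \Phi^\vee)$ with $X^\vee = \bX_*(\Tt_\ks)^{\Gamma'} = \bX_*(S)$ and preserved positive system $\Phi^+ \ldef i_{\Gamma'}^*(\Phit^+)$ of $\Phi$, and I take $\delta \ldef \sum_{a \in \Phi^+} a^\vee \in \Z\Phi^\vee \subseteq \bX_*(S)$. The classical identity $\delta = 2\rho^\vee$ in the (possibly non-reduced) root system $\Phi$ gives $\pair{b}{\delta} > 0$ for every $b \in \Phi^+$. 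Since $\delta$ lies in $\bX_*(\Tt_\ks)^{\Gamma'}$, the pairing $\pair{\alphat}{\delta}$ for a root $\alphat$ of $\Gt_\ks$ relative to $\Tt_\ks$ coincides with the pairing of its restriction $i_{\Gamma'}^*(\alphat) \in \Phi$ against $\delta$. Hence this pairing is nonzero on every root and strictly positive on $\Phit^+$, and the standard weight-space descriptions of $C_\Gt(\delta)$ and $P_\Gt(\delta)$ then give $C_\Gt(\delta) = \Tt$ and $P_\Gt(\delta) = \Bt$.

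The only subtle point is the identification $\bX_*(S) = \bX_*(\Tt_\ks)^{\Gamma'}$: one needs both that $\fix\Tt^\Gamma$ commutes with base change to $\ks$ (since $\Tt$ is a torus acted on by the smooth group $\Gamma$) and that the cocharacters of the maximal $k$-split subtorus of a $k$-torus are exactly the $\Gal(k)$-invariants in its absolute cocharacter lattice. Once this is granted, everything else is a direct translation of ``$\delta$ is regular and dominant'' from the root system to the group.
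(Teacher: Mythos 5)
Your proof is correct. It differs from the paper's argument in one structural respect: the paper takes $\delta_\ks$ to be the sum $\sum_{\alphat \in \Phit^+} \alphat^\vee$ of positive coroots in the \emph{absolute} root datum $\Psi(\Gt_\ks, \Tt_\ks)$, observes that this element is fixed by $\Gal(k) \ltimes \Gamma(\ks)$ (since that group permutes $\Phit^+$), uses Zariski density of $\Gamma(\ks)$ to descend to a $\Gamma$-fixed cocharacter $\delta$ of $\Tt$ defined over $k$, and then reads off regularity and dominance directly from the classical identity $\pair{\alphat}{\delta_\ks} = 2$ for $\alphat \in \Delta(\Bt_\ks, \Tt_\ks)$. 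You instead pass to the \emph{quotient} root datum $\Psi$ of Proposition~\ref{prop:rd-restriction}, take $\delta = \sum_{a \in \Phi^+} a^\vee$ there, and then carry it back across the pairing. The two cocharacters you and the paper construct need not coincide (compare the scalings in Corollary~\ref{cor:avg-coroot}, which can differ for split multipliable roots), but both satisfy the required strict inequalities. Your route costs a little extra: you must note that the pairing $\pair{\cdot}{\cdot}_{X,X^\vee}$ is compatible with $i_{\Gamma'}^*$, and since $\Phi$ may be non-reduced the phrase ``the classical identity $\delta = 2\rho^\vee$'' is a slight overreach — what you actually use is just that $\sum_{a \in \Phi^+} a^\vee$ pairs positively with each simple root, which does hold in type $\mathsf{BC}$ but is not literally $2\rho^\vee$. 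What it buys is that the regularity/dominance argument lives entirely in the quotient datum, where $\Phi^+ = i_{\Gamma'}^*(\Phit^+)$ is a genuine positive system by Proposition~\ref{prop:rd-restriction}(\ref{subprop:rd-Borus}), which some readers may find more transparent. The identification $\bX_*(S) = \bX_*(\Tt_\ks)^{\Gal(k)\ltimes\Gamma(\ks)}$ you flag as the subtle point is handled in the paper by the sequence of factorizations through $\Tt^\Gamma$, $\fix\Tt^\Gamma$, and the maximal split subtorus; either packaging is fine.
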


\begin{proof}
Consider the cocharacter
\(\sum_{\alphat \in \Phi(\Bt_\ks, \Tt_\ks)} \alphat^\vee\)
of \(\Tt_\ks\).
We abuse notation by denoting this cocharacter by
\(\delta_\ks\), even though
we have not yet defined a cocharacter \(\delta\) of which
it is the base change.
Since \(\delta_\ks\) is fixed by \(\Gal(k) \ltimes \Gamma(\ks)\),
and since \(\Gamma(\ks)\) is Zariski dense in \(\Gamma_\ks\),
we may regard it as a \(\Gamma\)-fixed cocharacter \(\delta\)
of \(\Tt\).
It is therefore a cocharacter of \(\Tt^\Gamma\), hence of
\(\fix\Tt^\Gamma\), hence of
its maximal split torus.

Since
\(\pair\alphat{\delta_\ks}\) equals \(2\) for
all \(\alphat \in \Delta(\Bt_\ks, \Tt_\ks)\),
we have that
\(C_\Gt(\delta)_\ks = C_{\Gt_\ks}(\delta_\ks)\) equals \(\Tt_\ka\) and
\(P_\Gt(\delta)_\ks = P_{\Gt_\ks}(\delta_\ks)\) equals \(\Bt_\ks\)
\cite{springer:lag}*{Proposition 8.4.5},
hence that
\(C_\Gt(\delta)\) equals \(\Tt\) and
\(P_\Gt(\delta)\) equals \(\Bt\).
\end{proof}

Lemma \ref{lem:quass-from-Levi} shows one convenient way to recognize
quasisemisimple actions.
Proposition \ref{prop:quass-rough}(\ref{subprop:quass-down}) and
Lemma \ref{lem:quass-to-centralizer}
provide converses to parts of Lemma \ref{lem:quass-from-Levi}.

\begin{lem}
\label{lem:quass-from-Levi}
Suppose that
\(S\) is a split torus in \(G\) and that
\((\Bt', \Tt)\) is a Borel--torus pair in
\(C_\Gt(S)\) that is preserved by \(\Gamma\).
Then there is a Borel--torus pair
\((\Bt, \Tt)\) in \(\Gt\) that is preserved by \(\Gamma\).
In particular, if there is a split torus \(S\) in
\(G\) such that
\(C_\Gt(S)\) is a torus, then
\(C_\Gt(S)\) is contained in
a \(\Gamma\)-stable Borel subgroup of \(\Gt\), and
\((\Gt, \Gamma)\) is quasisemismple.
\end{lem}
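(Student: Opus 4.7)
The plan is to build the desired Borel subgroup of $\Gt$ by combining two cocharacters: one that cuts out $C_\Gt(S)$ as a Levi subgroup, and one provided by Lemma \ref{lem:cochar-by-torus} that cuts out $\Bt'$ inside $C_\Gt(S)$.

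First I would note that, because $S \subseteq G = \fix\Gt^\Gamma$, the torus $S$ is fixed pointwise by $\Gamma$, so $C_\Gt(S)$ is automatically $\Gamma$-stable; and since $S$ lies in the centre of $C_\Gt(S)$, it is contained in every maximal torus of $C_\Gt(S)$, in particular in $\Tt$. Since $S$ is split, I can choose a cocharacter $\lambda \in \bX_*(S)$ that pairs nontrivially with every element of $\Phi(\Gt, \Tt) \setminus \Phi(C_\Gt(S), \Tt)$, so that $C_\Gt(\lambda) = C_\Gt(S)$ and the parabolic subgroup $P_\Gt(\lambda)$ has Levi component $C_\Gt(S)$. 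This $\lambda$ is $\Gamma$-fixed since $\Gamma$ acts trivially on $S$.

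Next I would apply Lemma \ref{lem:cochar-by-torus} to the reductive datum $(C_\Gt(S), \Gamma)$ and its $\Gamma$-stable Borel--torus pair $(\Bt', \Tt)$ to obtain a cocharacter $\delta$ of the maximal split subtorus of $\fix\Tt^\Gamma$ with $C_{C_\Gt(S)}(\delta) = \Tt$ and $P_{C_\Gt(S)}(\delta) = \Bt'$. Then I would consider the combined cocharacter $\mu = N\lambda + \delta \in \bX_*(\Tt)$ for $N$ a sufficiently large positive integer. Both $\lambda$ and $\delta$ are $\Gamma$-fixed, so $\mu$ is as well, and consequently $P_\Gt(\mu)$ is a $\Gamma$-stable parabolic subgroup of $\Gt$ containing $\Tt$.

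The main step to verify is that $P_\Gt(\mu)$ is in fact a Borel subgroup. For any $\alphat \in \Phi(\Gt, \Tt)$, one has $\pair{\alphat}{\mu} = N\pair{\alphat}{\lambda} + \pair{\alphat}{\delta}$; if $\pair{\alphat}{\lambda} \neq 0$, then for $N$ large (depending only on the finitely many roots) this is nonzero, while if $\pair{\alphat}{\lambda} = 0$, then $\alphat \in \Phi(C_\Gt(S), \Tt)$ and $\pair{\alphat}{\mu} = \pair{\alphat}{\delta} \neq 0$ by the choice of $\delta$. Thus $C_\Gt(\mu) = \Tt$, so $P_\Gt(\mu)$ is a Borel subgroup $\Bt$ containing $\Tt$, and $(\Bt, \Tt)$ is the desired $\Gamma$-stable Borel--torus pair. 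For the second assertion, the hypothesis that $C_\Gt(S)$ is a torus forces $\Tt = C_\Gt(S)$ and $\Bt' = \Tt$, both manifestly $\Gamma$-stable, so the first part produces a $\Gamma$-stable Borel $\Bt$ of $\Gt$ with $C_\Gt(S) = \Tt \subseteq \Bt$, witnessing quasisemisimplicity. No genuine obstacle arises beyond the standard perturbation argument; the only care needed is ensuring that $\mu$ lies in the cocharacter lattice of the maximal split torus of $\fix\Tt^\Gamma$, which is immediate from the constructions of $\lambda$ and $\delta$.
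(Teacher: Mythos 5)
Your proof is correct and takes essentially the same approach as the paper: both obtain a cocharacter $\delta$ (the paper's $\lambda'$) from Lemma \ref{lem:cochar-by-torus} and perturb by a $\Gamma$-fixed cocharacter of $S$ to make all absolute root pairings nonzero. The only cosmetic difference is that the paper chooses a single $\lambda^\perp \in \bX_*(S_\ks) \otimes_\Z \Q$ avoiding finitely many proper affine hyperplanes and sets $\lambda = \lambda' - \lambda^\perp$, whereas you first pick a generic $\lambda \in \bX_*(S)$ and then take $N\lambda + \delta$ with $N$ large; these are the same perturbation argument.
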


\begin{proof}
By Lemma \ref{lem:cochar-by-torus},
there is a cocharacter \(\lambda'\) of
\(\fix C_\Gt(S)^\Gamma = C_G(S)\) such that
\(\Bt'\) is the associated parabolic subgroup
\(P_{C_\Gt(S)}(\lambda')\) of \(C_\Gt(S)\), and
\(C_{C_\Gt(S)}(\lambda')\) is \(\Tt\).
For every
\(\alphat \in
\Phi(\Gt_\ks, \Tt_\ks) \setminus
	\Phi(C_\Gt(S)_\ks, \Tt_\ks)\),
the affine subspace \(V_\alphat\) of \(\bX_*(S_\ks) \otimes_\Z \Q\) on which
\(\alphat\) equals \(\pair\alphat{\lambda'}\) is proper;
so the complement
\((\bX_*(S_\ks) \otimes_\Z \Q) \setminus
	\bigcup_{\alphat \in
		\Phi(\Gt_\ks, \Tt_\ks) \setminus
			\Phi(C_\Gt(S)_\ks, \Tt_\ks)}
	V_\alphat\)
is nonempty.  Let \(\lambda^\perp\) be an element of the complement.
After multipling by a positive integer, we may, and do, assume that
\(\lambda^\perp\) belongs to \(\bX_*(S_\ks)\).
Since \(S\) is split, so that the natural map
\abmap{\bX_*(S)}{\bX_*(S_\ks)} is an isomorphism,
we may, and do, regard
\(\lambda^\perp\) as an element of \(\bX_*(S)\).
Put \(\lambda = \lambda' - \lambda^\perp\).
Then \(\pair\alphat{\lambda_\ks}\) is nonzero for all
\(\alphat \in \Phi(\Gt_\ks, \Tt_\ks)\), so
\(C_{\Gt_\ks}(\lambda_\ks)\) equals \(\Tt_\ks\) and
\(P_{\Gt_\ks}(\lambda_\ks)\) is a Borel subgroup of \(\Gt_\ks\).
The analogous facts without base change to \(\ks\) follow, so
\((P_\Gt(\lambda), \Tt)\) is
a Borel--torus pair in \(\Gt\) that is preserved by \(\Gamma\).
\end{proof}

Lemma \ref{lem:quass-by-iso} shows that,
when checking the quasisemisimplicity of $(\Gt,\Gamma)$,
we may always replace $\Gt$ by a simply connected or adjoint group.

\begin{lem}
\label{lem:quass-by-iso}
Let \(\Nt\) be a normal subgroup of \(\Gt\) that
is preserved by \(\Gamma\).
If \((\Gt, \Gamma)\) is quasisemisimple, then
so is \((\Gt/\Nt, \Gamma)\).
The converse holds if
\(\Nt\) is central in \(\Gt\).
\end{lem}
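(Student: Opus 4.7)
The plan is to transport $\Gamma$-stable Borel--torus pairs across the quotient map $\map\pi\Gt{\Gt/\Nt}$, which is $\Gamma$-equivariant because $\Nt$ is $\Gamma$-stable.

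For the forward direction, given a $\Gamma$-stable Borel--torus pair $(\Bt,\Tt)$ in $\Gt$, I would take $(\pi(\Bt),\pi(\Tt))$ as the candidate pair in $\Gt/\Nt$; $\Gamma$-stability is automatic from $\Gamma$-equivariance of $\pi$. Working over $\ka$, the image $\pi(\Bt)_\ka$ is smooth, connected, and solvable as the image under a homomorphism of such a subgroup. Moreover, the quotient $(\Gt/\Nt)/\pi(\Bt) = \Gt/(\Bt\cdot\Nt)$ is a quotient of the proper scheme $\Gt/\Bt$, hence is proper, so $\pi(\Bt)$ is parabolic. A smooth, connected, solvable parabolic subgroup of a connected reductive group must coincide with any Borel it contains, so $\pi(\Bt)$ is a Borel. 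Similarly, the restriction of $\pi$ to $\Bt$ is a surjection of smooth, connected, solvable groups, and such maps send maximal tori to maximal tori, so $\pi(\Tt)$ is a maximal torus in $\pi(\Bt)$.

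For the converse, I would assume $\Nt$ is central. Then $\Nt\subseteq Z(\Gt)$ lies in the centralizer of every maximal torus of $\Gt$, hence inside every maximal torus, and therefore inside every Borel subgroup of $\Gt$. It follows that $\pi$ induces a bijection between Borel subgroups (respectively, maximal tori) of $\Gt$ and those of $\Gt/\Nt$, with inverse $\pi\inv$. Indeed, given a Borel $\Bt'$ of $\Gt/\Nt$, the surjectivity from the forward direction combined with conjugacy of Borels over $\ka$ and the lifting of conjugating elements along the surjection $\Gt(\ka)\to(\Gt/\Nt)(\ka)$ shows that $\pi\inv(\Bt')_\ka$ is a Borel of $\Gt_\ka$, so $\pi\inv(\Bt')$ is a Borel of $\Gt$; the analogous argument applies to maximal tori. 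Thus, given a $\Gamma$-stable Borel--torus pair $(\Bt',\Tt')$ in $\Gt/\Nt$, the pair $(\pi\inv(\Bt'),\pi\inv(\Tt'))$ in $\Gt$ is a Borel--torus pair, and it is $\Gamma$-stable by $\Gamma$-equivariance of $\pi$ together with $\Gamma$-stability of $\Bt'$, $\Tt'$, and $\Nt$.

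The main obstacle is just keeping the structural claims honest: in the forward direction, the image of a Borel under $\pi$ could \emph{a priori} be only parabolic rather than Borel; in the converse direction, the preimage of a Borel under $\pi$ could be strictly larger than a Borel (i.e., properly parabolic). The centrality hypothesis enters the converse precisely to force every Borel of $\Gt$ to contain $\Nt$, so that $\pi\inv$ inverts $\pi$ on Borels; without centrality, that bijection fails and the converse genuinely can fail.
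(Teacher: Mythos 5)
Your proposal is correct and takes essentially the same approach as the paper's proof, which simply cites \cite{borel:linear}*{Proposition 11.14(1)} for the classical facts that $\pi$ carries Borel--torus pairs to Borel--torus pairs and that every Borel--torus pair downstairs lifts, and then observes that centrality of $\Nt$ forces $\Nt\subseteq\Bt$ and $\Nt\subseteq\Tt$ so that the lift is the (automatically $\Gamma$-stable) preimage. You reprove those structural facts from scratch rather than citing them, but the underlying mechanism is identical.
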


\begin{proof}
The first statement follows from
\cite{borel:linear}*{Proposition 11.14(1)}.
This also shows that, if
\((\Bt', \Tt')\) is a Borel--torus pair in \(\Gt/\Nt\) that
is preserved by \(\Gamma\), then
there is some Borel--torus pair \((\Bt, \Tt)\) in \(\Gt\) that
maps onto \((\Bt', \Tt')\), so that
\(\Bt\cdot\Nt\) and \(\Tt\cdot\Nt\) are
preserved by \(\Gamma\).
If \(\Nt\) is central in \(\Gt\), then it is contained in
\(\Bt\) and \(\Tt\), so
the second statement follows.
\end{proof}

Throughout
the rest of
\S\ref{sec:quass-smooth}
and
\S\ref{sec:quass},
we assume that
\((\Gt, \Gamma)\) is quasisemisimple; so, in particular,
\(\Gt\) is quasisplit.
In
\S\S\ref{sec:sl-outer},%
\ref{sec:thm:ka-quass},
we do not impose this assumption directly, although our goal is
Theorem \ref{thm:ka-quass}(\ref{subthm:ka-quass-quass}) that concludes
quasisemisimplicity.

\begin{prop}
\label{prop:quass-rough}\hfill
\begin{enumerate}[label=(\alph*), ref=\alph*]
\item\label{subprop:quass-down}
Let \(\Tt\) be a \(\Gamma\)-stable maximal torus in \(\Gt\)
that is contained in a \(\Gamma\)-stable Borel subgroup of \(\Gt\).
Put \(T = \fix\Tt^\Gamma\), and
let \(S\) be the maximal split torus in \(T\).
Then
\(T\) equals \(\Tt \cap G\) and is a maximal torus in \(G\),
\(S\) is a maximal split torus in \(G\), and
\(\Tt\) equals \(C_\Gt(S)\).
\item\label{subprop:quass-up}
Let \(S\) be a maximal split torus in \(G\).
Then \(C_\Gt(S)\) 
is the unique
maximal torus \(\Tt\) in \(\Gt\) containing \(S\), and
\(\Tt \cap G\) is the unique maximal torus in \(G\) containing \(S\).
We have that \(\Tt\) is \(\Gamma\)-stable and
contained in a \(\Gamma\)-stable Borel subgroup of \(\Gt\).
\item\label{subprop:quass-torus-orbit}
The set of \(\Gamma\)-stable maximal tori in \(\Gt\) that are
contained in a \(\Gamma\)-stable Borel subgroup is
permuted transitively by \(G(k)\).
\end{enumerate}
\end{prop}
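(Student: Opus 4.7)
The plan is to bootstrap part (a) from Lemma~\ref{lem:cochar-by-torus}, and then to deduce parts (b) and (c) by conjugating a single fixed $\Gamma$-stable Borel--torus pair (which exists by quasisemisimplicity).

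For (a), given the $\Gamma$-stable pair $(\Bt, \Tt)$, Lemma~\ref{lem:cochar-by-torus} furnishes a cocharacter $\delta \in \bX_*(S)$ with $C_\Gt(\delta) = \Tt$ and $P_\Gt(\delta) = \Bt$. Since $\delta$ factors through $S \subseteq \Tt$, the sandwich
\[\Tt = C_\Gt(\delta) \supseteq C_\Gt(S) \supseteq C_\Gt(\Tt) = \Tt\]
immediately gives $C_\Gt(S) = \Tt$. For $T = \Tt \cap G$, note on the one hand that $T \subseteq \Tt \cap G$ because $T = \fix\Tt^\Gamma$ is a smooth, connected subgroup of $\Tt$ sitting inside $\Gt^\Gamma$, hence inside $G$; and on the other hand that $\Tt \cap G$ lies in $\Tt^\Gamma$ (any point lying in both $\Tt$ and $\Gt^\Gamma$ is $\Gamma$-fixed) and equals $C_G(S) = C_G(\delta)$, which is smooth and connected by the standard theory of dynamic centralizers in smooth, connected affine groups (cf.~\cite{conrad-gabber-prasad:prg}*{Lemma 2.1.5}, which applies to the smooth, connected group $G$). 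Since $T = (\Tt^\Gamma)\smooth\conn$, the reverse containment $\Tt \cap G \subseteq T$ follows. Finally, any torus $T' \subseteq G$ containing $T$ commutes with $T \supseteq S$ and hence lies in $C_G(S) = T$; and any split torus of $G$ containing $S$ is forced into $T$ by the same argument and therefore into its maximal split subtorus $S$. This gives the maximality of $T$ and of $S$.

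For (b), fix any $\Gamma$-stable Borel--torus pair $(\Bt_0, \Tt_0)$ in $\Gt$ (which exists by quasisemisimplicity), set $T_0 = \fix\Tt_0^\Gamma$, and let $S_0$ be its maximal split subtorus. Part (a) shows that $S_0$ is a maximal split torus of $G$. Since maximal split tori in the smooth, connected $k$-group $G$ are $G(k)$-conjugate (Borel, \emph{Linear Algebraic Groups}, Theorem~20.9), we may write $S = g S_0 g\inv$ for some $g \in G(k)$. Since $g$ commutes with the $\Gamma$-action, conjugating the conclusions of (a) by $g$ shows that $C_\Gt(S) = g\Tt_0 g\inv$ is a $\Gamma$-stable maximal torus of $\Gt$ containing $S$ and lying in the $\Gamma$-stable Borel $g\Bt_0 g\inv$, and that $C_\Gt(S) \cap G = gT_0 g\inv$ is a maximal torus of $G$ containing $S$. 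Uniqueness in both cases is formal: any maximal torus of $\Gt$ (respectively, of $G$) containing $S$ must centralize $S$ and so lie in the already maximal torus $C_\Gt(S)$ (respectively, in $C_\Gt(S) \cap G$).

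Part (c) is now a one-line consequence: given two $\Gamma$-stable maximal tori $\Tt_1$, $\Tt_2$, each contained in a $\Gamma$-stable Borel, part (a) attaches to each a maximal split torus $S_i$ of $G$ with $\Tt_i = C_\Gt(S_i)$; conjugating $S_1$ to $S_2$ by an element of $G(k)$ then conjugates $\Tt_1$ to $\Tt_2$. The main obstacle in all of this is the identification $T = \Tt \cap G$ in part (a); once one accepts that centralizers of (sub)tori in smooth, connected affine groups are themselves smooth and connected, everything else reduces to bookkeeping and a single appeal to the $G(k)$-conjugacy of maximal split tori.
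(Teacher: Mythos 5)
Your proposal is correct and follows the same overall structure as the paper's proof (centralizer via the cocharacter of Lemma~\ref{lem:cochar-by-torus}, then conjugacy of maximal split tori to get (b) and (c)). There is one genuine difference worth noting, in part (a): the paper establishes that $T$ is a maximal torus in $G$ by passing to $\ka$, invoking smoothability of groups of multiplicative type (Remark~\ref{rem:conn-smooth}) and then citing \cite{adler-lansky:lifting1}*{Proposition~3.5(ii)} to see that $\fix\Tt_\ka^{\Gamma_\ka}$ is maximal in $\fix\Gt_\ka^{\Gamma_\ka}$, and only afterwards derives $T = \Tt \cap G = C_G(S)$. You instead derive $C_G(S) = T$ first and then observe that any torus $T' \supseteq T$ commutes with $S$ and so sits inside $C_G(S) = T$, and similarly for split tori containing $S$. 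This makes the maximality of $T$ and of $S$ consequences of the same one identity, avoiding the base change and the appeal to the earlier Adler--Lansky paper. The citations you give for smoothness and connectedness of the dynamic centralizer $C_G(\delta)$ are slightly off (CGP Lemma~2.1.5 is about the dynamic subgroups; the smoothness statement the paper uses is CGP Proposition~A.8.10(2), and connectedness is Borel Corollary~11.12 or CGP Proposition~2.1.8(3)), but the facts you invoke are all standard and correct, so this is a cosmetic matter.
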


\begin{proof}
Since all groups of multiplicative type are smoothable, we have
by Remark \ref{rem:conn-smooth} that
\(T_\ka = (\fix\Tt^\Gamma)_\ka\) equals
\(\fix\Tt_\ka^{\Gamma_\ka}\), which is
a maximal torus in \(\fix\Gt_\ka^{\Gamma_\ka}\)
by \cite{adler-lansky:lifting-1}*{Proposition 3.5(ii)}.
Since
\(G_\ka = (\fix\Gt^\Gamma)_\ka\) is
contained in \(\fix\Gt_\ka^{\Gamma_\ka}\), we have that
\(T\) is a maximal torus in \(G\).

Let \(\delta\) be the cocharacter of \(S\) constructed in
Lemma \ref{lem:cochar-by-torus}, so that
\(C_\Gt(\delta)\) equals \(\Tt\).
Since the first and last terms in the
obvious sequence of containments
\[
\Tt \subseteq
C_\Gt(T) \subseteq
C_\Gt(S) \subseteq
C_\Gt(\delta)
\]
are equal, all the containments are equalities.
Thus \(\Tt \cap G\) equals \(C_\Gt(S) \cap G = C_G(S)\), which is
smooth by
\cite{conrad-gabber-prasad:prg}*{Proposition A.8.10(2)},
connected by
\cite{borel:linear}*{Corollary 11.12}, and
contained in \(\Tt\).
Therefore \(\Tt \cap G\) is a torus in \(G\), hence
contained in the maximal torus \(T\) in \(G\).
The reverse containment being obvious, we have the equality
\(\Tt \cap G = C_G(S) = T\).
In particular, since \(S\) is the maximal split torus in \(T\),
in fact \(S\) is maximal split in \(G\).
This shows (\ref{subprop:quass-down}), and
(\ref{subprop:quass-up}) for one choice of
maximal split torus \(S\) in \(G\).
Since the maximal split tori in \(G\) are \(G(k)\)-conjugate
by \cite{conrad-gabber-prasad:prg}*{Theorem C.2.3},
we have shown
(\ref{subprop:quass-up}) in general, and
(\ref{subprop:quass-torus-orbit}).
\end{proof}

For the remainder of \S\ref{sec:quass-smooth}, fix
a maximal split torus \(S\) in \(G\).
By Proposition \ref{prop:quass-rough}(\ref{subprop:quass-up}),
there are unique maximal tori
\(T\) in \(G\) and
\(\Tt\) in \(\Gt\)
containing \(S\), as well as a \(\Gamma\)-stable Borel subgroup \(\Bt\) of \(\Gt\) containing \(\Tt\).
Let \(\St\) be the maximal split torus in \(\Tt\).

\begin{lem}
\label{lem:quass-to-centralizer}
If \(D\) is a subgroup of \(S\), then
\((C_\Gt(D)\conn, \Gamma)\) is quasisemisimple.
If \(\mathfrak d\) is a subspace of \(\Lie(S)\), then
\((C_\Gt(\mathfrak d)\conn, \Gamma)\) is
quasisemisimple.
\end{lem}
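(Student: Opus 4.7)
The plan is to use the $\Gamma$-fixed cocharacter $\delta$ of $S$ provided by Lemma \ref{lem:cochar-by-torus}, which satisfies $P_\Gt(\delta) = \Bt$ and $C_\Gt(\delta) = \Tt$, to transport the Borel--torus pair $(\Bt, \Tt)$ to a $\Gamma$-stable Borel--torus pair inside $H$, where $H$ denotes $C_\Gt(D)\conn$ in the first case and $C_\Gt(\mathfrak d)\conn$ in the second. Note that $\delta$ indeed lies in $\bX_*(S)$ because $\fix\Tt^\Gamma = T$ by Proposition \ref{prop:quass-rough}(\ref{subprop:quass-down}), so that the maximal split torus in $\fix\Tt^\Gamma$ is $S$.

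First I would observe that $\Gamma$ preserves $H$: since $D \subseteq S \subseteq G$, the action of $\Gamma$ fixes $D$ pointwise, and since $\mathfrak d \subseteq \Lie(S) \subseteq \Lie(G)$, it fixes $\mathfrak d$ pointwise as well. Next, the commutativity of $S$ shows that $S$ (hence $\delta$) commutes with $D$, and $\Tt = C_\Gt(S)$ commutes with $D \subseteq S$, so both the image of $\delta$ and $\Tt$ lie in $H$. The same conclusions hold in the Lie-algebra case because $\Tt$ acts trivially on $\Lie(\Tt) \supseteq \mathfrak d$. Thus $\Tt$ is a maximal torus of $H$, and $H$ is a reductive group: in the first case by \cite{conrad-gabber-prasad:prg}*{Propositions A.8.10(2) and A.8.12}, since $D$ is diagonalizable; in the second case by Lemma \ref{lem:toral-Lie-is-Lie-torus} together with Corollary \ref{cor:toral-Lie-is-Lie-torus} applied to the identity quotient $\Gt \to \Gt$.

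The dynamic-method identity $P_H(\delta) = P_\Gt(\delta) \cap H = \Bt \cap H$ then defines a parabolic subgroup of $H$ whose Levi is $C_H(\delta) = C_\Gt(\delta) \cap H = \Tt$; since this Levi is a torus, $P_H(\delta)$ is a Borel subgroup of $H$. Both $P_H(\delta)$ and $\Tt$ are preserved by $\Gamma$ because $\delta$ is $\Gamma$-fixed and $H$ is $\Gamma$-stable, yielding the desired $\Gamma$-stable Borel--torus pair in $H$. I do not anticipate any substantive obstacle here: the argument is merely a matter of packaging the cocharacter description of $\Bt$ from Lemma \ref{lem:cochar-by-torus} with standard smoothness and reductivity statements for centralizers of diagonalizable subgroups and toral Lie subalgebras.
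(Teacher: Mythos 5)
Your proof is correct. The paper's own proof is shorter: it observes that $S$ is a split torus in $\fix C_\Gt(D)^\Gamma$ (respectively $\fix C_\Gt(\mathfrak d)^\Gamma$), that $C_{C_\Gt(D)\conn}(S) = C_\Gt(S)$ is a torus by Proposition~\ref{prop:quass-rough}(\ref{subprop:quass-up}), and then invokes the ``in particular'' clause of Lemma~\ref{lem:quass-from-Levi} applied with $H$ in the role of $\Gt$. Your argument is an inlined version of essentially the same idea: both ultimately rest on Lemma~\ref{lem:cochar-by-torus} and the dynamic construction of parabolic subgroups. The modest difference is that you work directly with the regular cocharacter $\delta$ of the ambient group $\Gt$, for which $C_H(\delta) = \Tt$ and $P_H(\delta) = \Bt \cap H$ automatically, whereas Lemma~\ref{lem:quass-from-Levi} starts from a cocharacter regular only in the Levi $C_\Gt(S)$ and must perturb it by a generic $\lambda^\perp$ to make it regular in the bigger group. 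Your version avoids that perturbation because $\delta$ is already regular in $\Gt$ and a fortiori in $H$; the paper's version is more modular, as Lemma~\ref{lem:quass-from-Levi} is reused elsewhere (e.g.\ in the proof of Theorem~\ref{thm:ka-quass}). You do establish reductivity of $H$ explicitly, which the paper relegates to the Note following the statement; the citations you give (\cite{conrad-gabber-prasad:prg}*{Proposition A.8.12} for $C_\Gt(D)\conn$ and Corollary~\ref{cor:toral-Lie-is-Lie-torus} for $C_\Gt(\mathfrak d)\conn$) match those in the Note, so this is fine.
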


\begin{note}
We do not assume that \(D\) is a torus, or even smooth.
We have that \(C_\Gt(D)\conn\) is reductive by
\cite{conrad-gabber-prasad:prg}*{Proposition A.8.12}, and
\(C_\Gt(\mathfrak d)\conn\) is reductive by
Corollary \ref{cor:toral-Lie-is-Lie-torus}.
\end{note}

\begin{proof}
We have that \(S\) is a split torus in
\(\fix C_\Gt(D)^\Gamma\) (respectively,
\(\fix C_\Gt(\mathfrak d)^\Gamma\)), and
\(C_{C_\Gt(D)\conn}(S)\)
(respectively,
\(C_{C_\Gt(\mathfrak d)\conn}(S)\))
equals \(C_\Gt(S)\), which is a torus by
Proposition \ref{prop:quass-rough}(\ref{subprop:quass-up}).
Then Lemma \ref{lem:quass-from-Levi} gives the result.
\end{proof}

Remark \ref{rem:how-to-restrict}
allows us to apply the results of
Proposition \ref{prop:rd-restriction} and
Lemma \ref{lem:half-pairs-a} in
the setting of connected, reductive groups.

\begin{rem}
\label{rem:how-to-restrict}
We can `restrict' an element of
\(\bX^*(\Tt_\ks)\),
for example, an element of \(\Phi(\Gt_\ks, \Tt_\ks)\), to
\(\St\) by restricting from \(\Tt_\ks\) to \(\St_\ks\), and then
using the fact that
\abmap{\bX^*(\St)}{\bX^*(\St_\ks)}
is an isomorphism.
Similarly, we can `restrict' from \(\Tt_\ks\) or \(T\) to \(S\).

Proposition \ref{prop:quass-rough}(\ref{subprop:quass-up})
gives that \(\Tt\)
is
preserved by \(\Gamma\).
By rigidity of tori
\cite{milne:algebraic-groups}*{Corollary 12.37},
\(\Gamma\conn\) fixes \(\Tt\) pointwise,
so \(\pi_0(\Gamma)\) acts on \(\Tt\); and
the action of \(\Gamma(\ks)\) on
the absolute root datum \(\Psi(\Gt_\ks, \Tt_\ks)\)
factors through
the finite quotient \(\pi_0(\Gamma)(\ks)\).

We have that \(\Phi(\Gt_\ks, T_\ks)\) and \(\Phi(\Gt, S)\)
are the sets of restrictions to \(T_\ks\) and to \(S\)
of elements of \(\Phi(\Gt_\ks, \Tt_\ks)\) and \(\Phi(\Gt, \St)\).
This is just the definition, together with the fact that,
by Proposition \ref{prop:quass-rough}(\ref{subprop:quass-up})
(or Proposition \ref{prop:rd-restriction}(\ref{subprop:rd-root-system})),
no element of \(\Phi(\Gt_\ks, \Tt_\ks)\) has
trivial `restriction' to \(S\)
(so that also no element has trivial restriction to \(T_\ks\)).

Write
\(\Psi(\Gt, \St)\),
\(\Psi(\Gt_\ks, T_\ks)\), and
\(\Psi(\Gt, S)\)
for the ``quotient root data'' of
\(\Psi(\Gt_\ks, \Tt_\ks)\) by
\(\Gal(k)\),
\(\Gamma(\ks)\), and
\(\Gal(k) \ltimes \Gamma(\ks)\), respectively.
The maps
	\begin{itemize}
	\item \abmap{\bX_*(\St)}{\bX_*(\Tt_\ks)^{\Gal(k)}},
	\item \abmap{\bX_*(T_\ks)}{\bX_*(\Tt_\ks)^{\Gamma(\ks)}},
and	\item \abmap{\bX_*(S)}{\bX_*(\Tt_\ks)^{\Gal(k) \ltimes \Gamma(\ks)}}
	\end{itemize}
are all isomorphisms, which we may use to identify
the character lattices of the root data with
\(\bX^*(\St)\),
\(\bX^*(T_\ks)\), and
\(\bX^*(S)\), respectively,
in which case their root systems are identified with
\(\Phi(\Gt, \St)\),
\(\Phi(\Gt_\ks, T_\ks)\), and
\(\Phi(\Gt, S)\),
respectively.
We denote the corresponding duality map
\abmap{\Phi(\Gt, S)}{\bX_*(S)} by
\abmapto a{a^\vee},
and denote its image by \(\Phi^\vee(\Gt, S)\);
and similarly for \(\Phi(\Gt_\ks, T_\ks)\)
(and for \(\Phi(\Gt, \St)\), though that is just
the classical construction of
relative root systems).

Using the above identifications of root systems,
we may also refer to exceptional pairs in \(\Phi(\Gt_\ks, T_\ks)\)
that `extend' roots in \(\Phi(\Gt, S)\), as in Definition~\ref{defn:rd-exceptional}.

Note that restriction from \(\St\) to \(S\) cannot
always be thought of
as in \S\ref{sec:rd-quass},
because \(\St\) need not be preserved by \(\Gamma\),
but
must in general rather be thought of as
`extension' from \(\St\) to \(\Tt_\ks\), followed by
`restriction' from \(\Tt_\ks\) to \(S\).
Thus, for example, it does not always make sense to say that
the fibers of the restriction map
\abmap{\Phi(\Gt, \St)}{\Phi(\Gt, S)}
are \(\Gamma(\ks)\)-orbits; but it does make sense to say, and,
even better, by
Proposition \ref{prop:rd-restriction}(\ref{subprop:rd-fiber}),
is true,
that, for every \(a \in \Phi(\Gt, S)\),
the set of elements of \(\Phi(\Gt_\ks, \Tt_\ks)\) that
`restrict' to \(a\) is a
(\(\Gal(k) \ltimes \Gamma(\ks)\))-orbit, and that
the set of elements of \(\Phi(\Gt, \St)\) that
restrict to \(a\) is parametrized by the
\(\Gal(k)\)-orbits in that
(\(\Gal(k) \ltimes \Gamma(\ks)\))-orbit.
\end{rem}

\begin{rem}
\label{rem:adj-or-sc-T}
Since \(\Gamma\) is smooth, we have that
\(\Gamma(\ks)\) is Zariski dense in \(\Gamma_\ks\), so
\((\Tt^\Gamma)_\ks\) equals \(\Tt_\ks^{\Gamma(\ks)}\), and hence
\(\bX^*((\Tt^\Gamma)_\ks)\) is the co-invariant module
\(\bX^*(\Tt_\ks)_{\Gamma(\ks)}\).

If \(\Gt\) is adjoint (respectively, simply connected), then
\(\Gamma\) permutes
the basis of \(\bX^*(\Tt_\ks)\) given by
the \(\Bt_\ks\)-simple roots
(respectively, the dual to the basis of \(\bX_*(\Tt_\ks)\) consisting of
\(\Bt_\ks\)-simple coroots),
so that the co-invariant module
\(\bX^*((\Tt^\Gamma)_\ks) = \bX^*(\Tt_\ks)_{\Gamma(\ks)}\) is
torsion free.
Thus \(\Tt^\Gamma\) is a torus, hence smooth.
\end{rem}

We will apply
Lemma \ref{lem:root-space} and
Corollary \ref{cor:root-space-facts} only to
the (\(\Gamma \ltimes \Gt\))-module \(\Vt = \Lie(\Gt)\).
However, the slight extra generality in our statements
involves no extra difficulty in the proof.

\begin{lem}
\label{lem:root-space}
Let \(\Vt\) be a \((\Gamma \ltimes \Gt)\)-module such that
\(\Phi(\Vt\otimes_k\ks, \Tt_\ks)\) equals
\(\Phi(\Gt_\ks, \Tt_\ks)\).
Fix \(\alphat \in \Phi(\Gt_\ks, \Tt_\ks)\), and
write \(a\) for its `restriction' to \(S\).
Write \(\pi_\alphat\) for
the \(\Tt_\ks\)-equivariant projection of \(\Vt \otimes_k \ks\) on
its \(\alphat\)-weight space.
The restriction of \(\pi_\alphat\) is
a \(\Tt^\Gamma(k)\)-equivariant, \(k\)-linear isomorphism
\abmap
	{\Vt^\Gamma_a}
	{(\Vt \otimes_k \ks)_\alphat^
		{\stab_{\Gal(k) \ltimes \Gamma(\ks)}
		(\alphat)}}
of \(k\)-vector spaces.
\end{lem}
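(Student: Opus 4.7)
\textbf{Proof plan for Lemma \ref{lem:root-space}.} The strategy is to reduce the claim, after base change to $\ks$, to a familiar finite-group calculation: projecting from the invariants of a ``permutation-type'' representation onto the invariants of a single summand.

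The first step is to identify $\Vt^\Gamma_a$ with the $(\Gal(k)\ltimes\Gamma(\ks))$-invariants in $(\Vt\otimes_k\ks)_a$. Since $a\in\bX^*(S)$ and $S$ is a $k$-torus, the subspace $\Vt_a\subseteq\Vt$ is $k$-rational and satisfies $\Vt_a\otimes_k\ks=(\Vt\otimes_k\ks)_a$. Because $S\subseteq G=\fix\Gt^\Gamma$ commutes with $\Gamma$, the subspace $\Vt_a$ is $\Gamma$-stable, so $(\Vt_a)^\Gamma=(\Vt^\Gamma)_a=\Vt^\Gamma_a$. Since $\Gamma$ is smooth, $\Gamma(\ks)$ is Zariski dense in $\Gamma_\ks$; combining this with Galois descent, we get
\[
\Vt^\Gamma_a=(\Vt\otimes_k\ks)_a^{\Gal(k)\ltimes\Gamma(\ks)}.
\]

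The second step is to analyze the $\Tt_\ks$-weight decomposition of $(\Vt\otimes_k\ks)_a$. Under $\Tt_\ks\supseteq S_\ks$ one has
\[
(\Vt\otimes_k\ks)_a=\bigoplus_{\betat\in O}(\Vt\otimes_k\ks)_\betat,
\]
where $O=\{\betat\in\Phi(\Vt\otimes_k\ks,\Tt_\ks):\betat|_S=a\}$. By the hypothesis on weights, $O\subseteq\Phi(\Gt_\ks,\Tt_\ks)$, and Remark \ref{rem:how-to-restrict} together with Proposition \ref{prop:rd-restriction}(\ref{subprop:rd-fiber}) identify $O$ with the fiber of the restriction map, hence with a single $(\Gal(k)\ltimes\Gamma(\ks))$-orbit containing $\alphat$. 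The action of $\Gal(k)\ltimes\Gamma(\ks)$ permutes the summands according to its action on $O$: for $\tau\in\Gal(k)\ltimes\Gamma(\ks)$, $\tau((\Vt\otimes_k\ks)_\betat)=(\Vt\otimes_k\ks)_{\tau\betat}$.

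The third step is the finite-group-theoretic argument. Let $\Stab=\stab_{\Gal(k)\ltimes\Gamma(\ks)}(\alphat)$, so that $O$ is in bijection with $(\Gal(k)\ltimes\Gamma(\ks))/\Stab$. If $v=\sum_{\betat\in O}v_\betat$ is fixed by $\Gal(k)\ltimes\Gamma(\ks)$, then comparing $\Tt_\ks$-weights on both sides of $\tau v=v$ gives $v_{\tau\alphat}=\tau(v_\alphat)$ for every $\tau$; hence $v$ is determined by $v_\alphat=\pi_\alphat(v)$, and in particular $v_\alphat$ must lie in $(\Vt\otimes_k\ks)_\alphat^{\Stab}$. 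Conversely, given any $w\in(\Vt\otimes_k\ks)_\alphat^{\Stab}$, the element $v=\sum_{[\tau]\in(\Gal(k)\ltimes\Gamma(\ks))/\Stab}\tau(w)$ is well defined, lies in $(\Vt\otimes_k\ks)_a^{\Gal(k)\ltimes\Gamma(\ks)}=\Vt^\Gamma_a$, and satisfies $\pi_\alphat(v)=w$. This gives the bijection; $k$-linearity is immediate from the $\ks$-linearity of $\pi_\alphat$ and the $k$-rationality of the source.

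The final, routine step is $\Tt^\Gamma(k)$-equivariance. For $t\in\Tt^\Gamma(k)\subseteq\Tt(k)$ and $v\in\Vt^\Gamma_a$, one has $t\cdot(v\otimes1)=(t\cdot v)\otimes1$ in $\Vt\otimes_k\ks$, and the $\Tt_\ks$-equivariance of $\pi_\alphat$ gives $\pi_\alphat(t\cdot v)=\alphat(t)\pi_\alphat(v)$, which is exactly the way $\Tt^\Gamma(k)$ acts on the target. The only subtlety worth checking carefully is the base-change identification in Step 1, which is where smoothness of $\Gamma$ enters; beyond that, the argument is a direct weight-space computation made clean by Proposition \ref{prop:rd-restriction}(\ref{subprop:rd-fiber}).
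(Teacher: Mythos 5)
Your proof is correct and follows essentially the same route as the paper: identify $\Vt^\Gamma_a$ with the $(\Gal(k)\ltimes\Gamma(\ks))$-invariants of $\Vt_a\otimes_k\ks$, use Proposition \ref{prop:rd-restriction}(\ref{subprop:rd-fiber}) to see that the $\Tt_\ks$-weights occurring form a single orbit, and show that $\pi_\alphat$ and the orbit-averaging map are mutual inverses. The paper states this more tersely (phrasing the averaging map as a section which is also a retraction), but the underlying argument is identical to yours.
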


\begin{proof}
Put \(\Sigma = \Gal(k) \ltimes \Gamma(\ks)\).
It is clear that \(\pi_\alphat\) is
\(\stab_\Sigma(\alphat)\)-equivariant, so maps
\(\Vt_a\) into
\((\Vt \otimes_k \ks)_\alphat^
	{\stab_\Sigma(\alphat)}\).
The map in the other direction that sends
\(\Xt_\alphat\)
to
\(\sum_
	{\sigma \in \Sigma/{\stab_\Sigma(\alphat)}}
	\sigma\Xt_\alphat \in
(\Vt_a \otimes_k \ks)^\Sigma =
\Vt_a^\Gamma\)
is clearly a section, and
Proposition \ref{prop:rd-restriction}(\ref{subprop:rd-fiber}) shows that
it is also a retraction.
\end{proof}

\begin{cor}
\label{cor:root-space-facts}
Preserve the notation and hypotheses of
Lemma \ref{lem:root-space}.
Suppose that \((\Vt \otimes_k \ks)_\alphat\) is one-dimensional.
Let \(\Gal(k) \ltimes \Gamma(\ks)\) act on \(\ks\) through
the projection on \(\Gal(k)\), and write
\(k_\alphat\) for the fixed field in \(\ks\) of
\(\stab_{\Gal(k) \ltimes \Gamma(\ks)}(\alphat)\).
\begin{enumerate}[label=(\alph*), ref=\alph*]
\item\label{subcor:root-space-dim}
\(a\) belongs to \(\Phi(\Vt^\Gamma, S)\) if and only if
the \(k_\alphat\)-vector space
\((\Vt \otimes_k \ks)_\alphat^
	{\stab_{\Gal(k) \ltimes \Gamma(\ks)}(\alphat)}\)
is one-dimensional.
\item\label{subcor:root-space-fix}
Suppose that \(\Gal(k)\) fixes \(\alphat\), and put
\(\Vt_\alphat = (\Vt \otimes_k \ks)_\alphat^{\Gal(k)}\).
Then \(a\) belongs to \(\Phi(\Vt^\Gamma, S)\) if and only if
\(\stab_{\Gamma(\ks)}(\alphat)\) acts trivially on
\(\Vt_\alphat \otimes_k \ks\), in which case
the projection
\abmap{\Vt_a^\Gamma}{\Vt_\alphat}
is an isomorphism.
\end{enumerate}
\end{cor}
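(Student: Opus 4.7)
The plan is to deduce both parts directly from Lemma \ref{lem:root-space}, which supplies a \(k\)-linear isomorphism \(\Vt^\Gamma_a \cong L^H\), where we abbreviate \(L \ldef (\Vt \otimes_k \ks)_\alphat\) (a one-dimensional \(\ks\)-line by hypothesis) and \(H \ldef \stab_{\Sigma}(\alphat)\) with \(\Sigma \ldef \Gal(k) \ltimes \Gamma(\ks)\).
The key observation is that \(H\) fits into an exact sequence
\[ 1 \to \stab_{\Gamma(\ks)}(\alphat) \to H \to H_{\mathrm{Gal}} \to 1, \]
where \(H_{\mathrm{Gal}}\) is the image of \(H\) under projection to \(\Gal(k)\); by definition of \(k_\alphat\) we have \(H_{\mathrm{Gal}} = \Gal(\ks/k_\alphat)\).

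The normal subgroup \(\stab_{\Gamma(\ks)}(\alphat)\) acts \(\ks\)-linearly on the one-dimensional space \(L\), hence via a character, so \(L^{\stab_{\Gamma(\ks)}(\alphat)}\) is either \(0\) or all of \(L\).
In the former case \(L^H = 0\), so \(a \notin \Phi(\Vt^\Gamma, S)\), and \(L^H\) has \(k_\alphat\)-dimension zero.
In the latter case \(L^H = L^{H_{\mathrm{Gal}}}\), and the residual action of \(H_{\mathrm{Gal}} \cong \Gal(\ks/k_\alphat)\) on \(L\) is \(\ks\)-semilinear; Galois descent (Speiser's theorem, or Hilbert 90 specialized to a one-dimensional space) then identifies \(L^{H_{\mathrm{Gal}}}\) as a one-dimensional \(k_\alphat\)-vector space with \(L \cong L^{H_{\mathrm{Gal}}} \otimes_{k_\alphat} \ks\).
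These two cases jointly establish (\ref{subcor:root-space-dim}).

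For (\ref{subcor:root-space-fix}), the hypothesis that \(\Gal(k)\) fixes \(\alphat\) immediately gives \(H_{\mathrm{Gal}} = \Gal(k)\) and \(k_\alphat = k\), so the conclusions of the previous paragraph specialize to \(L^{H_{\mathrm{Gal}}} = \Vt_\alphat\) and \(L = \Vt_\alphat \otimes_k \ks\).
The condition that \(\stab_{\Gamma(\ks)}(\alphat)\) act trivially on \(\Vt_\alphat \otimes_k \ks\) is then precisely the condition placing us in the ``latter case'' of the dichotomy, since \(L = \Vt_\alphat \otimes_k \ks\) is one-dimensional over \(\ks\) and the action is \(\ks\)-linear.
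Under this condition, \(L^H = L^{H_{\mathrm{Gal}}} = \Vt_\alphat\), and the composition of Lemma \ref{lem:root-space}'s isomorphism with this identification is exactly the projection \(\pi_\alphat\), yielding the desired isomorphism \(\Vt_a^\Gamma \cong \Vt_\alphat\).

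The only substantive input beyond Lemma \ref{lem:root-space} is Galois descent for a one-dimensional semilinear representation, which is standard; the remaining content is bookkeeping with the exact sequence defining \(H\) and keeping straight which parts of \(\Sigma\) act linearly versus semilinearly.
I do not expect either step to require any structural properties of \(\Gt\) beyond the one-dimensionality of the weight space \((\Vt \otimes_k \ks)_\alphat\), which in the applications will come from the root-space decomposition of \(\Lie(\Gt)\).
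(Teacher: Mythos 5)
Your proof is correct and follows essentially the same route as the paper's: reduce to the fixed points of $\stab_{\Gal(k)\ltimes\Gamma(\ks)}(\alphat)$ on the one-dimensional weight line via Lemma~\ref{lem:root-space}, split the normal subgroup $\stab_{\Gamma(\ks)}(\alphat)$ off, and apply Galois descent (Speiser/Hilbert~90) on the semilinear quotient action. The only cosmetic difference is that you make explicit the dichotomy (the $\ks$-linear action on the line is either trivial or has no fixed vectors) that the paper's terser proof leaves implicit in the assertion that the fixed space is ``at most one-dimensional over $k_\alphat$'', so your write-up is slightly more self-contained but is not a different argument.
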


\begin{proof}
Put \(\Sigma = \Gal(k) \ltimes \Gamma(\ks)\).
Since
\((\Vt \otimes_k \ks)_\alphat\) is one-dimensional over \(\ks\),
we have that its space of \(\stab_\Sigma(\alphat)\)-fixed points is
at most one-dimensional over \(k_\alphat\).
Thus
(\ref{subcor:root-space-dim}) follows from
Lemma \ref{lem:root-space}.

For (\ref{subcor:root-space-fix}), suppose that
\(\Gal(k)\) fixes \(\alphat\).
Then
\(\stab_\Sigma(\alphat)\) equals
\(\Gal(k) \ltimes \stab_{\Gamma(\ks)}(\alphat)\), so
the dimension over \(k\) of
\((\Vt \otimes_k \ks)_\alphat^{\stab_\Sigma(\alphat)}\) is
the dimension over \(\ks\) of
\((\Vt_\alphat \otimes \ks)^{\stab_{\Gamma(\ks)}(\alphat)}\).
In particular, by (\ref{subcor:root-space-dim}),
we have that \(a\) belongs to \(\Phi(\Gt^\Gamma, S)\) if and only if
\((\Vt_\alphat \otimes_k \ks)^{\stab_{\Gamma(\ks)}(\alphat)}\)
is one-dimensional, i.e., if and only if
\(\stab_{\Gamma(\ks)}(\alphat)\) acts trivially on
(the one-dimensional \(\ks\)-vector space)
\(\Vt_\alphat \otimes_k \ks\); and, when this happens,
Lemma \ref{lem:root-space}
gives that the projection
\abmap{\Vt^\Gamma_a}{\Vt_\alphat}
is an isomorphism.
\end{proof}

\begin{rem}
\label{rem:how-to-irred}
We show how to apply Corollary \ref{cor:rd-irred} in our situation.

If \(\Gt_1\) is an almost-simple component of \(\Gt\), then
\(\Phi(\Gt_{1\,\ks}, \Tt_\ks)\) is
the union of the \(\Gal(k)\)-orbit of
an irreducible component \(\Phit\) of \(\Phi(\Gt_{1\,\ks}, \Tt_\ks)\),
and \(\Phi(\Gt_1, S)\) is
the set of `restrictions' of the elements of \(\Phit\) to \(S\),
hence is an irreducible component of \(\Phi(\Gt, S)\).

Conversely, if
\(\Phi\) is an irreducible component of \(\Phi(\Gt, S)\) and
\(\Phit\) is an element of the corresponding
(\(\Gal(k) \ltimes \Gamma(\ks)\))-orbit of irreducible components of
\(\Phi(\Gt_\ks, \Tt_\ks)\), then
the \(\Gal(k)\)-orbit of \(\Phit\) corresponds to
an irreducible component of
\(\Phi(\Gt, \St)\), hence to
an almost-simple component \(\Gt_1\) of \(\Gt\).
Then \(\Phi(\Gt_1, S)\) is the set of restrictions to \(S\) of
elements of \(\Phi(\Gt_1, \St)\), hence equals \(\Phi\).
\end{rem}

\begin{rem}
\label{rem:gp-nred-facts}
Suppose that \(\Gt\) is split.
Then
\(\Gal(k)\) acts trivially on \(\bX^*(\Tt)\), and
the almost-simple components of \(\Gt\adform\) are
absolutely almost simple and so are
permuted by \(\Gamma(k)\).
Let \(\Gt_1\) be an almost-simple component of \(\Gt\adform\),
and
write \(\Gamma_1'\) for the subgroup of \(\Gamma\) that
acts on \(\Gt_1\) by inner automorphisms.
Suppose that \(\Phi(\Gt_1, T)\) is not reduced.
\begin{enumerate}[label=(\alph*), ref=\alph*]
\item\label{subrem:gp-nred-type}
Since
\(\Phi(\Gt_1, \Tt)\) is reduced,
Proposition \ref{prop:rd-restriction}(\ref{subprop:rd-multipliable})
gives that \(\Gt_1\) is split adjoint of type \(\mathsf A_{2n}\),
i.e., isomorphic to \(\PGL_{2n + 1}\),
for some positive integer \(n\), and that
not every element of \(\stab_\Gamma(\Gt_1)(\ks)\) acts
by inner automorphisms on \(\Gt_{1\,\ks}\).
Since \(\stab_{\Gamma(\ks)}(\Gt_1)/\Gamma_1'(\ks)\) maps into
the automorphism group of the Dynkin diagram of \(\Gt_{1\,\ks}\)
(with respect to \((\Bt_\ks, \Tt_\ks)\)) and so has
cardinality at most \(2\), this implies that
\(\stab_\Gamma(\Gt_1)/\Gamma_1'\) is an \'etale group of order \(2\).
\item\label{subrem:root-commute}
If \(\alphat, \betat \in \Phi(\Gt_1, \Tt)\)
are such that \(\alphat + \betat\) belongs to
\(\Phi(\Gt_1, \Tt)\), then
the fact that
\abmap\Gt{\Gt_1} induces isomorphisms on
appropriate root groups and weight spaces,
together with
explicit computation in \(\mathfrak{pgl}_{2n + 1}\),
shows that
the unique \(\Tt\)-equivariant isomorphisms
\abmap{\uLie(\Gt)_{\tilde r}}{\Ut_{\tilde r}}
for \(\tilde r \in \Phi(\Gt', \Tt)\)
fit together into a commutative diagram
\[\xymatrix{
\Ut_\alphat \times \Ut_\betat \ar[r] & \Ut_{\alphat + \betat} \\
\uLie(\Gt)_\alphat \times \uLie(\Gt)_\betat \ar[r]\ar[u] & \uLie(\Gt)_{\alphat + \betat}, \ar[u]
}\]
where the top arrow is the group commutator and
the bottom arrow is the Lie-algebra commutator;
and that the latter gives a \(k\)-linear isomorphism
\abmap
	{\Lie(\Gt)_\alphat \otimes_k \Lie(\Gt)_\betat}
	{\Lie(\Gt)_{\alphat + \betat}}.
\end{enumerate}
\end{rem}

\begin{defn}
\label{defn:tilde-root}
If \(\mathcal R\) is a subset of \(\bX^*(S)\), then
we write \(\Gt_{\mathcal R}\) for
the derived subgroup of
the identity component of
the centralizer of
\(\bigcap_{a \in \mathcal R} \ker(a)\), so that
\(\Phi(\Gt_{\mathcal R}, S)\) equals
\(\Z\mathcal R \cap \Phi(\Gt, S)\).
We define \(\Gt_{\tilde{\mathcal R}}\) similarly for
a subset \(\tilde{\mathcal R}\) of \(\bX_*(\St)\).

If \(\Z_{\ge 0}\mathcal R \cap \Phi(\Gt, S)\) is
a system of positive roots for
\(\Z\mathcal R \cap \Phi(\Gt, S)\)
(for example, if there is some system
\(\Delta\) of simple roots for \(\Phi(\Gt, S)\) such that
every element of \(\mathcal R\) is
a non-negative integer multiple of
an element of \(\Delta\)), then
Remark \ref{rem:rd-subquotient} gives that
the set of all roots of \(\Tt_\ks\) whose `restriction' to \(S\)
lies in \(\Z_{\ge 0}\mathcal R \cap \Phi(\Gt, S)\) is
a (\(\Gal(k) \ltimes \Gamma(\ks)\))-stable system of positive roots for
\(\Phi(\Gt_{\mathcal R\,\ks}, \Tt_\ks)\).
Write
\(\Bt_{\mathcal R}\) for the corresponding Borel subgroup of
\(\Gt_{\mathcal R}\), and
\(\Ut_{\mathcal R}\) for the unipotent radical of \(\Bt_{\mathcal R}\).

If \(\mathcal R\) is a singleton \(\sset a\), then
we may write \(\Gt_a\), \(\Bt_a\), and \(\Ut_a\) in place of
\(\Gt_{\mathcal R}\), \(\Bt_{\mathcal R}\), and \(\Ut_{\mathcal R}\).
\end{defn}

\begin{rem}
\label{rem:root-characterize}
Preserve the notation and hypotheses of Definition \ref{defn:tilde-root},
including the assumption that
\(\Z_{\ge 0}\mathcal R \cap \Phi(\Gt, S)\) is
a system of positive roots for
\(\Z\mathcal R \cap \Phi(\Gt, S)\).

We have by
Proposition \ref{prop:quass-rough}(\ref{subprop:quass-up})
that \(S\) has no fixed points in \(\Lie(\Ut_{\mathcal R})\), so
\(\Ut_{\mathcal R}\) is an
\(S\)-stable, smooth, connected subgroup of \(\Gt\) whose
Lie algebra is
the sum of the weight spaces for \(S\) on \(\Lie(\Gt)\)
corresponding to weights in \(\Z_{\ge 0}\mathcal R \cap \Phi(\Gt, S)\).
By \cite{conrad-gabber-prasad:prg}*{Proposition 3.3.6},
these properties characterize \(\Ut_{\mathcal R}\) uniquely; in fact,
\(\Ut_{\mathcal R\,\ks}\) is the unique
\(S_\ks\)-stable, smooth, connected subgroup of \(\Gt_\ks\) whose
Lie algebra is
the sum of the weight spaces for \(S_\ks\) on \(\Lie(\Gt_\ks)\)
corresponding to weights in
\(\Z_{\ge 0}\mathcal R_\ks \cap \Phi(\Gt_\ks, S_\ks)\).

In particular,
if \(\Z_{\ge 0}\mathcal R \cap \Phi(\Gt, S)\) is empty, then
\(\Ut_{\mathcal R}\) is trivial.
\end{rem}

{\newcommand\nd{\textsub{nd}}
\begin{lem}
\label{lem:tilde-borel:linear:prop:13.20}
Let \(\Ht\) be a subgroup of \(\Gt\) that is
preserved by \(\Gamma \ltimes \Tt\).
Write \(\Phi(\Ht, S)\nd\) for the set of
non-divisible elements of \(\Phi(\Ht, S)\), i.e.,
elements \(a \in \Phi(\Ht, S)\) such that
\(a/2\) does not belong to \(\Phi(\Ht, S)\).
\begin{enumerate}[label=(\alph*), ref=\alph*]
\item\label{sublem:sub-multipliable}
An element of \(\Phi(\Ht, S)\) is
multipliable in \(\Phi(\Ht, S)\) if and only if it is
multipliable in \(\Phi(\Gt, S)\).
\item\label{sublem:prop:13.20-Lie}
\(\Lie(\Ht)\) equals
\(\Lie(\Ht \cap \Tt) \oplus
	\sum_{a \in \Phi(\Ht, S)\nd} \Lie(\Ut_a)\).
\item\label{sublem:sub-abs-roots}
\(\Phi(\Ht_\ks, \Tt_\ks)\) is the set of elements of
\(\Phi(\Gt_\ks, \Tt_\ks)\) whose
`restriction' to \(S\) lies in
\(\Phi(\Ht, S)\).
\item\label{sublem:prop:13.20-gp}
If \(\Ht\) is smooth and connected, then
it is generated by
\(\Ht \cap \Tt\) and
\(\Ut_a\) as \(a\) ranges over \(\Phi(\Ht, S)\nd\).
\item\label{sublem:prop:13.20-ss}
If \(\Ht\) is semisimple, then
it is generated by
\(\Ut_a\) as \(a\) ranges over \(\Phi(\Ht, S)\nd\).
\item\label{sublem:prop:14.4}
If \(\Ht\) is smooth and
contained in the unipotent radical of
a Borel subgroup \(\Bt\) containing \(\Tt\), then
\(\Ht\) is connected and
directly spanned by
\(\Ut_a\) as \(a\) ranges over \(\Phi(\Ht, S)\nd\),
in any order.
\end{enumerate}
\end{lem}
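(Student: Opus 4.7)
The plan is to prove (c) first as the foundational identity, then derive (a) and (b) from it, and finally obtain (d), (e), (f) by combining (b) with the classical Borel structure theory after base-changing to \(\ks\) and regrouping absolute root subgroups by restriction class to \(S\).

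For (c), since \(\Ht\) is preserved by \(\Gamma\ltimes\Tt\), the subspace \(\Lie(\Ht)_\ks\subseteq\Lie(\Gt_\ks)\) is stable under both \(\Tt_\ks\) and \(\Gal(k)\ltimes\Gamma(\ks)\); each absolute root space \(\Lie(\Gt_\ks)_{\alphat}\) is one-dimensional, so for a single \(\Gal(k)\ltimes\Gamma(\ks)\)-orbit in \(\Phi(\Gt_\ks,\Tt_\ks)\) either every corresponding root space lies in \(\Lie(\Ht)_\ks\) or none does, and Proposition \ref{prop:rd-restriction}(\ref{subprop:rd-fiber}) identifies such orbits with the fibers of restriction onto \(\Phi(\Gt,S)\). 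For (a), one direction is immediate; conversely, if \(a\in\Phi(\Ht,S)\) and \(2a\in\Phi(\Gt,S)\), then by (c) every extension of \(a\) lies in \(\Phi(\Ht_\ks,\Tt_\ks)\), and Proposition \ref{prop:rd-restriction}(\ref{subprop:rd-multipliable}) furnishes an exceptional pair \(\{\alphat,\alphat'\}\) whose sum restricts to \(2a\). Since \(\Lie(\Ht)\) is a Lie subalgebra and the bracket \([\Lie(\Gt_\ks)_\alphat,\Lie(\Gt_\ks)_{\alphat'}]=\Lie(\Gt_\ks)_{\alphat+\alphat'}\) is nonzero by Remark \ref{rem:gp-nred-facts}(\ref{subrem:root-commute}), this forces \(\Lie(\Gt_\ks)_{\alphat+\alphat'}\subseteq\Lie(\Ht)_\ks\), so a second application of (c) yields \(2a\in\Phi(\Ht,S)\).

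For (b), decompose \(\Lie(\Ht)\) into \(S\)-weight spaces: the zero weight part equals \(\Lie(\Ht)\cap\Lie(\Tt)=\Lie(\Ht\cap\Tt)\) because \(\Lie(\Tt)=\Lie(\Gt)^S\), and for each non-divisible \(a\in\Phi(\Ht,S)\) the sum \(\Lie(\Ht)_a\oplus\Lie(\Ht)_{2a}\) (second summand possibly zero) equals \(\Lie(\Gt)_a\oplus\Lie(\Gt)_{2a}=\Lie(\Ut_a)\) by combining (c), (a), and the characterisation in Remark \ref{rem:root-characterize}. For (d), base change to \(\ks\) and apply \cite{borel:linear}*{Proposition 13.20} to express \(\Ht_\ks\) as generated by \(\Ht_\ks\cap\Tt_\ks\) and the absolute root subgroups \(\Ut_{\alphat}\) for \(\alphat\in\Phi(\Ht_\ks,\Tt_\ks)\), then regroup, noting that the subgroup of \((\Ut_a)_\ks\) generated by those \(\Ut_\alphat\) with \(\alphat\) restricting to \(a\) or \(2a\) is all of \((\Ut_a)_\ks\) by Remark \ref{rem:root-characterize} over \(\ks\). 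The containment \(\Ut_a\subseteq\Ht\) descends from the Lie-algebra inclusion \(\Lie(\Ut_a)\subseteq\Lie(\Ht)\) via that same characterisation. Part (e) follows from (d) because \(\Phi(\Ht,S)\) is closed under negation for semisimple \(\Ht\), so \(\Ht\cap\Tt\) lies in the subgroup generated by the various \(\Ut_a\) for \(a\in\Phi(\Ht,S)\nd\) by the classical fact that the maximal torus of a semisimple group is generated by images of coroots. Part (f) proceeds analogously, base-changing and invoking \cite{borel:linear}*{Proposition 14.4}, then regrouping absolute root factors into the \(\Ut_a\); direct spanning in any order is preserved because a closed ordering of positive roots on the absolute side descends to a closed ordering on the relative side.

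The main obstacle will lie in (a), where the non-reduced possibility forces us to trace through the explicit commutator computation of Remark \ref{rem:gp-nred-facts}(\ref{subrem:root-commute}); and in the descent steps in (d)--(f), where we must argue that \(\Ut_a\) itself, a subgroup of \(\Gt\) defined over \(k\), lies inside \(\Ht\), rather than merely that \((\Ut_a)_\ks\subseteq\Ht_\ks\), so that the generation and direct-spanning statements are genuinely statements over \(k\).
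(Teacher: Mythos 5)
Your proposal is correct and tracks the paper's own proof quite closely. Both arguments establish the Lie-algebra-level facts (a)--(c) via Proposition~\ref{prop:rd-restriction}(\ref{subprop:rd-fiber}) (since $\Lie(\Ht)\otimes_k\ks$ is $(\Gal(k)\ltimes\Gamma(\ks))$-stable, whole restriction-fibers of absolute root spaces lie in it) together with the nonvanishing commutator from Remark~\ref{rem:gp-nred-facts}(\ref{subrem:root-commute}), and then lift to the group level in (d)--(f). Two small differences are worth noting. For (d), you pass to $\ks$, apply \cite{borel:linear}*{Proposition~13.20}, regroup the absolute root groups into the $\Ut_a$'s, and descend; the paper instead works directly over $k$: Remark~\ref{rem:root-characterize} and \cite{conrad-gabber-prasad:prg}*{Proposition~3.3.6} already put $\Ut_a$ inside $\Ht$, so the subgroup $\Ht'$ generated by $\Ht\cap\Tt$ (smooth by \cite{conrad-gabber-prasad:prg}*{Proposition~A.8.10(2)}) and the $\Ut_a$'s is a smooth subgroup of $\Ht$ with $\Lie(\Ht')\supseteq\Lie(\Ht)$ by (b), hence equals $\Ht$. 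The paper's route avoids a separate descent argument. For (e), your stated justification---that the maximal torus of a semisimple group is generated by images of coroots---is only straightforwardly available over $\ks$ (where the \emph{absolute} coroots live); the relative coroots $a^\vee$ only land in the split torus $S\cap\Ht$ and do not account for the anisotropic part of $\Ht\cap\Tt$. The intended argument is really the one the paper gives: over $\ks$ the subgroup $\Ht''$ generated by the $\Ut_a$'s contains every absolute root subgroup, and $\Ht_\ks$ is split semisimple, hence generated by these, so $\Ht''_\ks=\Ht_\ks$ and then $\Ht''=\Ht$. Your conclusion is right, but the sentence as written elides the base change that makes the coroot fact applicable.
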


\begin{note}
The terminology `directly spanned' in
(\ref{sublem:prop:14.4}) is as in \cite{borel:linear}*{\S14.3}.
The Borel subgroup in (\ref{sublem:prop:14.4}) is not
assumed to be preserved by \(\Gamma\).
\end{note}

\begin{proof}
Suppose that \(a\) belongs to \(\Phi(\Ht, S)\), and
let \(\alphat\) be a weight of \(\Tt_\ks\) on
\(\Lie(\Ht)_a \otimes_k \ks\).
Since \(\Lie(\Gt_\ks)_\alphat\) is one dimensional and
intersects \(\Lie(\Ht) \otimes_k \ks\) nontrivially, it is contained in
\(\Lie(\Ht) \otimes_k \ks\).
Since \(\Lie(\Ht) \otimes_k \ks\) is preserved by
\(\Gal(k) \ltimes \Gamma(\ks)\), we have by
Proposition \ref{prop:rd-restriction}(\ref{subprop:rd-fiber}) that
\(\Lie(\Ht) \otimes_k \ks\) contains \(\Lie(\Gt)_a \otimes_k \ks\), hence that
\(\Lie(\Ht)\) contains \(\Lie(\Gt)_a\).

It is clear that, if \(a\) is multipliable in \(\Phi(\Ht, S)\), then
it is multipliable in \(\Phi(\Gt, S)\).
For the converse, suppose that \(a\) is multipliable in \(\Phi(\Gt, S)\).
Let \(\sset{\alphat, \alphat'}\) be
an exceptional pair for
\((\Psi(\Gt_\ks, \Tt_\ks), \Gal(k) \ltimes \Gamma(\ks))\)
`extending' \(a\).
Since \(\Phi(\Gt_\ks, \Tt_\ks)\) is reduced,
we have by
Proposition \ref{prop:rd-restriction}(\ref{subprop:rd-multipliable})
that we are in case
(\ref{case:rd-multipliable(red)}),
hence, by Remark~\ref{rem:gp-nred-facts}(\ref{subrem:root-commute}), that
the commutator map
\abmap
	{\Lie(\Gt_\ks)_\alphat \otimes_\ks
		\Lie(\Gt_\ks)_{\alphat'}}
	{\Lie(\Gt_\ks)_{\alphat + \alphat'}}
is an isomorphism.
Since
\(\Lie(\Gt_\ks)_\alphat\) and \(\Lie(\Gt_\ks)_{\alphat'}\)
are both contained in \(\Lie(\Gt)_a \otimes_k \ks\),
hence in \(\Lie(\Ht) \otimes_k \ks\), so is their commutator
\(\Lie(\Gt_\ks)_{\alphat + \alphat'}\).
That is, \(2a\) belongs to \(\Phi(\Ht, S)\).
This shows (\ref{sublem:sub-multipliable}).

We have shown that \(\Lie(\Ht)\) contains
\(\sum_{a \in \Phi(\Ht, S)\nd} \Lie(\Gt)_a \oplus \Lie(\Gt)_{2a} =
\sum_{a \in \Phi(\Ht, S)\nd} \Lie(\Ut_a)\), and
it is clear that it contains \(\Lie(\Ht \cap \Tt)\).
This shows the containment \(\supseteq\) in
(\ref{sublem:prop:13.20-Lie}).
The reverse containment follows from the equality
\(\Lie(\Ht) \otimes_k \ks =
(\Lie(\Ht \cap \Tt) \otimes_k \ks) \oplus
	\bigoplus_{\alphat \in \Phi(\Ht_\ks, \Tt_\ks)}
		\Lie(\Gt_\ks)_\alphat\)
and the fact that,
if \(\alphat\) is an element of \(\Phi(\Ht_\ks, \Tt_\ks)\) and
we write \(a\) for the `restriction' of \(\alphat\) to \(S\), then
\(\Lie(\Gt_\ks)_\alphat\) is contained in
\(\Lie(\Ut_a) \otimes_k \ks\) and, if \(a\) is divisible, in
\(\Lie(\Ut_{a/2}) \otimes_k \ks\).
This shows
(\ref{sublem:prop:13.20-Lie},%
\ref{sublem:sub-abs-roots}).

In the situation of (\ref{sublem:prop:14.4}),
we have by \cite{borel:linear}*{Proposition 14.4(2a)} that
\(\Ht\) is connected.
Thus we may, and do, assume for the remainder of the proof that
\(\Ht\) is smooth and connected.

Together with (\ref{sublem:prop:13.20-Lie}),
Remark \ref{rem:root-characterize} and
\cite{conrad-gabber-prasad:prg}*{Proposition 3.3.6}
give that \(\Ht\) contains every root subgroup \(\Ut_a\)
corresponding to an element \(a \in \Phi(\Ht, S)\).
Therefore, the subgroup \(\Ht'\) of \(\Gt\) generated by
\(\Ht \cap \Tt\), and
those \(\Ut_a\) with \(a \in \Phi(\Ht, S)\nd\),
is contained in \(\Ht\).
Since \(\Ht \cap \Tt\) is the fixed-point subgroup of \(\Tt\) on
the \(\Tt\)-stable, smooth, connected subgroup \(\Ht\) of \(\Gt\),
we have that it is smooth
\cite{conrad-gabber-prasad:prg}*{Proposition A.8.10(2)}, so
\(\Ht'\) is smooth.
Since
\(\Lie(\Ht')\) contains
\(\Lie(\Ht \cap \Tt) \oplus
	\sum_{a \in \Phi(\Ht, S)\nd} \Lie(\Ut_a) =
\Lie(\Ht)\),
it follows that \(\Ht'\) equals \(\Ht\).
This shows (\ref{sublem:prop:13.20-gp}).

In the situation of (\ref{sublem:prop:14.4}), we have by
\cite{borel:linear}*{Proposition 14.4(2a)}
(which we have already used to show that \(\Ht\) is connected)
that
\(\Ht_\ks\) is directly spanned by
\(\Ut_\alphat\) as \(\alphat\) ranges over
\(\Phi(\Ht_\ks, \Tt_\ks)\), in any order, but also
for every \(a \in \Phi(\Ht, S)\) that
\(\Ut_{a\,\ks}\) is directly spanned by
\(\Ut_\alphat\) as \(\alphat\) ranges over
the `extensions' of \(a\) and \(2a\) in \(\Phi(\Gt_\ks, \Tt_\ks)\),
again in any order.
Grouping the elements of \(\Phi(\Ht_\ks, \Tt_\ks)\)
according to their `restrictions' to \(S\) thus shows that
(\ref{sublem:prop:14.4}) holds
after base change to \(\ks\), hence already holds rationally.

Finally, suppose that \(\Ht\) is semisimple.
Then the subgroup \(\Ht''\) of \(\Ht\) generated by
those \(\Ut_a\) with \(a \in \Phi(\Ht, S)\nd\) has the property that,
for every \(\alphat \in \Phi(\Ht_\ks, \Tt_\ks)\) with
`restriction' to \(S\) equal to \(a\) or \(2a\),
the base-changed group \(\Ht''_\ks\) contains
\(\Ut_{a\,\ks}\), hence its subgroup \(\Ut_\alphat\).
Since \(\Ht_\ks\) is a split, semisimple group, it is
generated by its root subgroups, so
\(\Ht''_\ks\) equals \(\Ht_\ks\), and hence
\(\Ht''\) equals \(\Ht\).
This shows (\ref{sublem:prop:13.20-ss}).
\end{proof}
}

\begin{notation}
\label{notn:pi0-act}
Since \(\Gamma\conn(\ks)\) acts trivially on \(\Tt_\ks\)
(by Remark \ref{rem:torus-quass}(\ref{subrem:quass-to-torus})),
we have that \(\pi_0(\Gamma)(\ks)\) acts on
\(\Phi(\Gt_\ks, \Tt_\ks)\).
If \(\tilde{\mathcal R}\) is a subset of \(\Phi(\Gt, \St)\), then
we write \(\Gamma_{\tilde{\mathcal R}}\) for
the descent to \(k\) of
the subgroup of \(\Gamma_\ks\) generated by
\(\Gamma\conn_\ks\) and
the subgroup of \(\Gamma(\ks)\) preserving
the subset of \(\Phi(\Gt_\ks, \Tt_\ks)\)
consisting of elements whose `restriction' to \(\St\) belongs to
\(\tilde{\mathcal R}\).

If \(\tilde{\mathcal R}\) is a singleton \(\at\), then we
may write \(\Gamma_\at\) in place of \(\Gamma_{\tilde{\mathcal R}}\).
\end{notation}

\begin{rem}
\label{rem:switch-exc-pair}
If \(\sset{\alphat, \alphat'}\) is
an exceptional pair for
\((\Psi(\Gt_\ks, \Tt_\ks), \Gal(k) \ltimes \Gamma(\ks))\), and
\(\at\) and \(\at'\) are the `restrictions' to \(\St\) of
\(\alphat\) and \(\alphat'\), then
Lemma \ref{lem:half-pairs-b}(\ref{sublem:exc-stab})
shows that
\(\Gamma_{\sset{\at, \at'}}\) and
\(\Gamma_{\at + \at'}\)
are both the descent to \(k\) of the subgroup of \(\Gamma_\ks\) generated by
\(\Gamma\conn_\ks\) and
the intersection of the stabilizers in \(\pi_0(\Gamma)(\ks)\) of
the irreducible components of
\(\Phi(\Gt_\ks, \Tt_\ks)\) that
contain some \(\Gal(k)\)-conjugate of
\(\alphat\) or \(\alphat'\), so
they are equal.

If \(\alphat\) and \(\alphat'\) are
fixed by \(\Gal(k)\), so that
\(\Gamma_{\sset{\at, \at'}}\) is
the descent to \(k\) of
\(\Gamma_{\ks\,\sset{\alphat, \alphat'}}\), and
analogously for
\(\Gamma_\at\) and \(\Gamma_{\at'}\), then
Proposition \ref{prop:rd-restriction}(\ref{subprop:rd-fiber})
gives that
\(\Gamma_\at(\ks) = \stab_{\Gamma(\ks)}(\alphat)\) and
\(\Gamma_{\at'}(\ks) = \stab_{\Gamma(\ks)}(\alphat')\)
are the same index-\(2\) subgroup of
\(\stab_{\Gamma(\ks)}\sset{\alphat, \alphat'} =
\Gamma_{\sset{\at, \at'}}(\ks)\).
Thus
\(\Gamma_{\at\,\ks}\) and \(\Gamma_{\at'\,\ks}\) are
the same index-\(2\) subgroup of
\(\Gamma_{\sset{\at, \at'}\,\ks}\), so
\(\Gamma_\at\) and \(\Gamma_{\at'}\) are
the same index-\(2\) (open) subgroup of
\(\Gamma_{\sset{\at, \at'}}\).
\end{rem}

\begin{cor}
\label{cor:root-induced}
Fix an element \(a \in \Phi(\Gt, S)\).
If \(a\) is non-multipliable, then let
\(\alphat = \alphat'\) be a weight of \(\Tt_\ks\) on
\(\Lie(\Gt)_a \otimes_k \ks\).
If \(a\) is multipliable, then let
\(\sset{\alphat, \alphat'}\) be
an exceptional pair for
\((\Psi(\Gt_\ks, \Tt_\ks), \Gal(k) \ltimes \Gamma(\ks))\)
`extending' \(a\).
In either case, write
\(\at\) and \(\at'\) for
the `restrictions' to \(\St\) of
\(\alphat\) and \(\alphat'\).
Then
\(\Gt_{\sset{\at, \at'}}\) is
an almost-simple factor of
\(\Gt_a\).
The corresponding map
\map{\psi_{\sset{\at, \at'}}}
	{\Gt_a}
	{\Ind_
		{\Gamma_{\sset{\at, \at'}}}
	^	\Gamma
		\Gt_{\sset{\at, \at'}\,\adsub}}
from Lemma \ref{lem:ind-simple-ad}
is a central isogeny onto its image, and
restricts to an isomorphism of \(\Ut_a\) on its image, which is
contained in
\(\Ind_{\Gamma_{\sset{\at, \at'}}}^\Gamma \Ut_{\sset{\at, \at'}}\).
If \(\Gamma/\Gamma_{\sset{\at, \at'}}\) is constant, then
\(\psi_{\sset{\at, \at'}}\) is surjective, and
restricts to an isomorphism of \(\Ut_a\) onto
\(\Ind_{\Gamma_{\sset{\at, \at'}}}^\Gamma
	\Ut_{\sset{\at, \at'}}\).
\end{cor}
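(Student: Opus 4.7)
The plan is to identify $\Gt_{\{\at,\at'\}}$ as an almost-simple factor of $\Gt_a$ and then invoke the induction machinery of Lemmas~\ref{lem:ind-simple-ad} and~\ref{lem:ind-simple-sc}. By the definition of $\Gt_{\{\at,\at'\}}$ as the derived subgroup of the connected centralizer of $\ker(\at) \cap \ker(\at') \subseteq \St$, the absolute root system $\Phi(\Gt_{\{\at,\at'\}\,\ks}, \Tt_\ks)$ consists of those $\tilde\beta \in \Phi(\Gt_\ks, \Tt_\ks)$ whose restriction to $\St$ lies in $\Z\{\at,\at'\}$; such $\tilde\beta$ necessarily restrict to integer multiples of $a$ on $S$ and so lie in $\Phit_a$. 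Using the explicit description of $\Phit_a$ in Proposition~\ref{prop:rd-restriction}(\ref{subprop:coroot-nonmultipliable},~\ref{subprop:rd-multipliable})---a disjoint union of irreducible components of type $\mathsf A_1$, $\mathsf{BC}_1$, or $\mathsf A_2$, each $\Z$-spanned by its extensions of $a$---one sees that $\Phi(\Gt_{\{\at,\at'\}\,\ks}, \Tt_\ks)$ is precisely the union of the components of $\Phit_a$ whose extensions of $a$ all restrict to $\at$ or $\at'$ on $\St$, equivalently, the $\Gal(k)$-orbit of the component containing $\alphat$. By Corollary~\ref{cor:rd-irred} together with Remark~\ref{rem:how-to-irred}, this identifies $\Gt_{\{\at,\at'\}}$ as an almost-simple factor of the semisimple group $\Gt_a$.

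Next, since all extensions of $a$ form a single $(\Gal(k)\ltimes\Gamma(\ks))$-orbit by Proposition~\ref{prop:rd-restriction}(\ref{subprop:rd-fiber}), the $\Gamma(\ks)$-translates of $\Gt_{\{\at,\at'\}\,\ks}$ exhaust the almost-simple components of $\Gt_{a\,\ks}$, so the smallest $\Gamma$-stable subgroup of $\Gt_a$ containing $\Gt_{\{\at,\at'\}}$---the ``$\Nt$'' of Lemma~\ref{lem:ind-simple-ad}---equals $\Gt_a$ itself. Lemma~\ref{lem:ind-simple-ad}(\ref{sublem:ind-isogeny}) then gives that $\psi_{\{\at,\at'\}}|_{\Gt_a}$ is a central isogeny onto its image. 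Its kernel, being central in a reductive group and hence of multiplicative type, intersects the unipotent group $\Ut_a$ trivially, so $\psi_{\{\at,\at'\}}|_{\Ut_a}$ is a closed immersion. The containment of its image in $\Ind_{\Gamma_{\{\at,\at'\}}}^\Gamma \Ut_{\{\at,\at'\}}$ follows by exactness of induction (Lemma~\ref{lem:ind-simple-ad}(\ref{sublem:Gamma1-exact})) applied to the inclusion $\Ut_{\{\at,\at'\}} \hookrightarrow \Gt_{\{\at,\at'\}}$ (whose image under the adjoint quotient to $\Gt_{\{\at,\at'\}\,\adsub}$ is isomorphic to $\Ut_{\{\at,\at'\}}$, since the adjoint quotient is injective on unipotent subgroups), combined with the observation that the underlying map $\Gt_a \to \Gt_{\{\at,\at'\}\,\adsub}$ from Lemma~\ref{lem:ind-simple-ad}(\ref{sublem:simple-ad}) carries $\Ut_a$ into $\Ut_{\{\at,\at'\}\,\adsub}$, as it annihilates the other almost-simple components.

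Finally, when $\Gamma/\Gamma_{\{\at,\at'\}}$ is constant, Lemma~\ref{lem:ind-simple-sc} supplies a $\Gamma$-equivariant central isogeny $\Ind_{\Gamma_{\{\at,\at'\}}}^\Gamma \Gt_{\{\at,\at'\}} \to \Gt_a$ whose composition with $\psi_{\{\at,\at'\}}$ is the adjoint quotient of the induced group; in particular $\psi_{\{\at,\at'\}}|_{\Gt_a}$ is surjective. A comparison of dimensions---$\dim\Ut_a$ equals the number of positive roots in $\Phit_a$, which in turn is $[\Gamma(\ks):\Gamma_{\{\at,\at'\}}(\ks)]$ times the number of positive absolute roots of $\Gt_{\{\at,\at'\}}$, that is, $\dim\Ut_{\{\at,\at'\}}$, matching $\dim\Ind_{\Gamma_{\{\at,\at'\}}}^\Gamma \Ut_{\{\at,\at'\}}$---then forces the closed immersion $\psi_{\{\at,\at'\}}|_{\Ut_a}$ to be an isomorphism onto $\Ind_{\Gamma_{\{\at,\at'\}}}^\Gamma \Ut_{\{\at,\at'\}}$. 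The main subtlety is verifying the root-theoretic identification in the first paragraph, especially in the split multipliable case where the $\mathsf A_2$ component containing $\alphat, \alphat'$ also contains the root $\alphat + \alphat'$ (an extension of $2a$), whose restriction $\at + \at'$ to $\St$ indeed lies in $\Z\{\at,\at'\}$, so that the whole component---and not just the two simple extensions---is correctly accounted for.
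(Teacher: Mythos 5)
Your overall plan coincides with the paper's: identify $\Gt_{\sset{\at,\at'}}$ as an almost-simple factor of $\Gt_a$ via the root-theoretic description, invoke Lemma~\ref{lem:ind-simple-ad}(\ref{sublem:ind-isogeny}) for the central isogeny, use the multiplicative-type nature of the kernel to get a closed immersion on $\Ut_a$, and appeal to Lemma~\ref{lem:ind-simple-sc} in the constant case. However, there is a genuine gap at the step where you claim the map $\abmap{\Gt_a}{\Gt_{\sset{\at,\at'}\,\adsub}}$ from Lemma~\ref{lem:ind-simple-ad}(\ref{sublem:simple-ad}) ``carries $\Ut_a$ into $\Ut_{\sset{\at,\at'}\,\adsub}$, as it annihilates the other almost-simple components.'' Annihilating the other components only tells you the image lands in $\Gt_{\sset{\at,\at'}\,\adsub}$ (which is vacuous, since that is the codomain); it says nothing about where inside $\Gt_{\sset{\at,\at'}\,\adsub}$ the image lies. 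The content one must supply is a positivity argument: the roots of $\Phi(\Gt_{\sset{\at,\at'}}, \St) = (\Z\at + \Z\at') \cap \Phi(\Gt,\St)$ whose restriction to $S$ is a positive multiple of $a$ are exactly $(\Z_{\ge 0}\at + \Z_{\ge 0}\at') \cap \Phi(\Gt,\St) = \Phi(\Ut_{\sset{\at,\at'}},\St)$, so the image of $\Ut_a$ --- all of whose $S$-weights lie in $\Z_{\ge 0}\cdot a$ --- is forced into $\Ut_{\sset{\at,\at'}}$. This is precisely the weight-space computation the paper carries out, and it is not optional: without it, nothing distinguishes $\Ut_{\sset{\at,\at'}}$ from, say, its opposite inside $\Gt_{\sset{\at,\at'}\,\adsub}$.

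Your final isomorphism argument via a dimension count does differ from the paper's route (which deduces it from the surjectivity of $\psi_{\sset{\at,\at'}}$ plus the fact that images of unipotent radicals of Borel subgroups under quotient maps are again unipotent radicals of Borel subgroups). The dimension count is valid in the constant case, but it quietly relies on the identity ``(number of almost-simple factors of $\Gt_a$) $= [\Gamma(\ks):\Gamma_{\sset{\at,\at'}}(\ks)]$,'' which in turn uses transitivity of $\Gamma(\ks)$ on the $\Gal(k)$-orbits of absolute irreducible components and the constancy hypothesis to identify $k$-points with $\ks$-points of $\Gamma/\Gamma_{\sset{\at,\at'}}$; this deserves to be said explicitly, since it is exactly where the constancy hypothesis enters. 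The paper's argument sidesteps this bookkeeping.
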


\begin{proof}
Remark \ref{rem:how-to-irred} and
Proposition \ref{prop:rd-restriction}%
	(\ref{subprop:coroot-nonmultipliable},%
	\ref{subprop:rd-multipliable})
give that \(\Gt_{\sset{\at, \at'}}\) is
an almost-simple factor of \(\Gt_a\), and,
together with
Lemma \ref{lem:ind-simple-ad}(\ref{sublem:Gamma-closure}), that
\(\Gt_a\) is
the smallest \(\Gamma\)-stable subgroup of \(\Gt\)
containing \(\Gt_{\sset{\at, \at'}}\).
Therefore,
Lemma \ref{lem:ind-simple-ad}(\ref{sublem:ind-isogeny})
gives that \(\psi_{\sset{\at, \at'}}\) is
a central isogeny onto its image.
Since \(\Ut_a\) is unipotent, its intersection with
\(\ker(\psi_{\sset{\at, \at'}})\), which is
central in \(\Gt_a\) and so of multiplicative type, is
trivial, so that the restriction of \(\psi_{\sset{\at, \at'}}\) to
\(\Ut_a\) is an isomorphism onto its image.
Thus,
if we write \(\Ut'\) for the image of \(\Ut_a\) in
\(\Gt_{\sset{\at, \at'}\,\adsub}\) under the map of
Lemma \ref{lem:ind-simple-ad}(\ref{sublem:simple-ad}), then,
since all weights of \(S\) on \(\Lie(\Ut_a)\) lie in
\(\Z_{\ge 0}\cdot a\),
it follows that all weights of \(S\) on \(\Lie(\Ut')\)
also lie in \(\Z_{\ge 0}\cdot a\).
However, the set of elements of
\(\Phi(\Gt_{\sset{\at, \at'}}, \St) =
(\Z\at + \Z\at') \cap \Phi(\Gt, \St)\)
that restrict to an element of \(\Z_{\ge 0}\cdot a\) is
\((\Z_{\ge 0}\at + \Z_{\ge 0}\at') \cap \Phi(\Gt, \St) =
\Phi(\Ut_{\sset{\at, \at'}}, \St)\), for each of which
the corresponding root group for \(\St\) in \(\Gt\) is
contained in \(\Ut_{\sset{\at, \at'}}\).
That is, the image of \(\Ut_a\) in
\(\Gt_{\sset{\at, \at'}\,\adsub}\) is
contained in \(\Ut_{\sset{\at, \at'}}\), so
the image of \(\Ut_a\) in
\(\Ind_{\Gamma_{\sset{\at, \at'}}}^\Gamma \Gt_{\sset{\at, \at'}}\) is
contained in
\(\Ind_{\Gamma_{\sset{\at, \at'}}}^\Gamma \Ut_{\sset{\at, \at'}}\).

If \(\Gamma/\Gamma_{\sset{\at, \at'}}\) is constant, then
Lemma \ref{lem:ind-simple-sc}
implies that \(\psi_{\sset{\at, \at'}}\) is surjective.
It follows from
\cite{borel:linear}*{Proposition 11.14(1)} and
the fact that
\(\Ind_{\Gamma_{\sset{\at, \at'}}}^\Gamma \Ut_{\sset{\at, \at'}}\) is
the unipotent radical of a Borel subgroup of
\(\Ind_{\Gamma_{\sset{\at, \at'}}}^\Gamma \Gt_{\sset{\at, \at'}}\) that
it is the image of \(\Ut_a\).
\end{proof}

Although it must be well known, we could not find a reference for
the statement in Lemma \ref{lem:root-linear}
about derived subgroups of root groups even when
\(\Gamma\) acts trivially, so that we are just talking about
ordinary relative-root subgroups.

\begin{lem}
\label{lem:root-linear}
Fix \(a \in \Phi(\Gt, S)\).
The derived subgroup of \(\Ut_a\) is \(\Ut_{2a}\), which is
central in \(\Ut_a\).
The quotient \((\Ut_a/\Ut_{2a})_\ks\) carries a unique
\(S_\ks\)-equivariant linear structure
(i.e., \(S_\ks\)-equivariant isomorphism with
\(\uLie((\Ut_a/\Ut_{2a})_\ks)\),
whose derivative is the identity).
This linear structure is also
\((\Gamma_\ks \ltimes \Tt_\ks)\)-equivariant.
It descends to linear structures on
\(\Ut_a/\Ut_{2a}\) and
\((\Ut_a/\Ut_{2a})^\Gamma\).
\end{lem}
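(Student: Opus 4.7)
The plan is to handle the three claims of the lemma in order: first the structural assertion $[\Ut_a,\Ut_a] = \Ut_{2a}$ with $\Ut_{2a}$ central, then existence and uniqueness of the linear structure over \(\ks\), and finally descent and restriction. I will reduce everything to \(\ks\) at the outset, since uniqueness of the linear structure on \((\Ut_a/\Ut_{2a})_\ks\) automatically propagates \((\Gamma_\ks\ltimes\Tt_\ks)\)-equivariance (using that \(S\subseteq G\) is centralized by both \(\Gamma\) and \(\Tt\)), \(\Gal(k)\)-invariance (hence descent to \(k\)), and restriction to the \(\Gamma\)-fixed subgroup (a sub-vector-group of \(\Ut_a/\Ut_{2a}\)).

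For the derived-subgroup claim, the group \(\Ut_{a\,\ks}\) is smooth and lies in the unipotent radical of a Borel of \(\Gt_\ks\) containing \(\Tt_\ks\), so Lemma~\ref{lem:tilde-borel:linear:prop:14.4}(\ref{sublem:prop:14.4}) presents it as directly spanned by the absolute root subgroups \(\Ut_\alphat\) for \(\alphat\in\Phi(\Gt_\ks,\Tt_\ks)\) restricting to \(\Z_{>0}\cdot a\). The Chevalley commutator formula in the split reductive group \(\Gt_\ks\) places \([\Ut_\alphat,\Ut_\betat]\) inside the product of \(\Ut_{i\alphat+j\betat}\) over \(i,j\ge 1\) with \(i\alphat+j\betat\in\Phi(\Gt_\ks,\Tt_\ks)\). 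When \(\alphat,\betat\) both restrict to \(a\), the sum restricts to \((i+j)a\in\Phi(\Gt,S)\subseteq\sset{a,2a}\) by Proposition~\ref{prop:rd-restriction}, forcing \(i=j=1\) and placing the commutator in \(\Ut_{\alphat+\betat}\subseteq\Ut_{2a\,\ks}\); this yields \([\Ut_a,\Ut_a]\subseteq\Ut_{2a}\). When \(a\) is multipliable I choose an exceptional pair \(\sset{\alphat,\alphat'}\) extending \(a\) (which exists by Lemma~\ref{lem:half-pairs-a}(\ref{sublem:pairs-orbit})); the simply-laced \(\mathsf A_2\)-type commutator gives \([\Ut_\alphat,\Ut_{\alphat'}]=\Ut_{\alphat+\alphat'}\), and Lemma~\ref{lem:half-pairs-b}(\ref{sublem:coexc-extensions}) identifies the \((\Gal(k)\ltimes\Gamma(\ks))\)-conjugates of \(\alphat+\alphat'\) with the extensions of \(2a\), so by the directly spanned description of \(\Ut_{2a\,\ks}\) equality holds. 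Centrality of \(\Ut_{2a}\) in \(\Ut_a\) is an analogous commutator computation: \([\Ut_{\alphat+\alphat'},\Ut_{\tilde\gamma}]\) lies in a product of \(\Ut_{i(\alphat+\alphat')+j\tilde\gamma}\), whose restriction \((2i+j)a\) with \(i,j\ge 1\) cannot lie in \(\Phi(\Gt,S)\subseteq\sset{a,2a}\).

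For existence of the linear structure, the direct-spanning decomposition produces an \(S_\ks\)-equivariant scheme isomorphism \((\Ut_a/\Ut_{2a})_\ks\cong\prod_\alphat\Ut_\alphat\), the product over roots \(\alphat\) restricting non-divisibly to \(a\); since each \(\Ut_\alphat\) is canonically \(\Tt_\ks\)-equivariantly isomorphic to \(\uLie(\Gt_\ks)_\alphat\), this assembles into an \(S_\ks\)-equivariant group isomorphism with \(\uLie((\Ut_a/\Ut_{2a})_\ks)\) whose derivative is the identity. For uniqueness I compare two such structures by their difference, an \(S_\ks\)-equivariant group automorphism of the vector group \(\uLie((\Ut_a/\Ut_{2a})_\ks)\) with identity derivative at \(0\); since any group automorphism of a vector group in characteristic \(p\) has the shape \(v\mapsto v+\sum_{i\ge 1}A_i(v^{p^i})\), and the \(S_\ks\)-weight of \(v^{p^i}\) is \(p^i a\) while that of \(v\) is \(a\), the \(S_\ks\)-equivariance forces every \(A_i\) with \(i\ge 1\) to vanish. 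This uniqueness then delivers the \((\Gamma_\ks\ltimes\Tt_\ks)\)-equivariance, Galois descent to \(\Ut_a/\Ut_{2a}\), and the restricted linear structure on \((\Ut_a/\Ut_{2a})^\Gamma\) as described above. The main obstacle is the correct deployment of the commutator formula together with the restricted-root-system bookkeeping of Proposition~\ref{prop:rd-restriction} and Lemma~\ref{lem:half-pairs-b} to rule out higher-order commutators and identify the extensions of \(2a\); the uniqueness weight argument is then routine but essential to transport everything back down from \(\ks\).
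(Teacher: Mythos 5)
Your proof is correct, but it swaps out the paper's black-box citations for explicit computations, so the routes are worth comparing. The paper derives centrality of \(\Ut_{2a}\) in \(\Ut_a\) from \cite{conrad-gabber-prasad:prg}*{Proposition 3.3.5 and Example 3.3.2}, and the existence plus uniqueness of the \(S_\ks\)-equivariant linear structure on \((\Ut_a/\Ut_{2a})_\ks\) from \cite{conrad-gabber-prasad:prg}*{Lemma 3.3.8}; from uniqueness the equivariance and descent statements then fall out exactly as you say. You instead re-derive centrality from a Chevalley-commutator weight count, build the linear structure by hand from the direct-spanning decomposition (your citation here has a typo — it should be to Lemma \ref{lem:tilde-borel:linear:prop:13.20}(\ref{sublem:prop:14.4}), not to a nonexistent label), and prove uniqueness via the additive-polynomial ($p$-polynomial) shape of vector-group endomorphisms. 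That is a genuinely more self-contained argument, at the cost of length, and it does require the unstated (but easy) observation that the multiplication map \(\prod_\alphat \Ut_\alphat \to (\Ut_a/\Ut_{2a})_\ks\) is a group isomorphism, not just a scheme isomorphism — precisely because every commutator among the \(\Ut_\alphat\) lands in \(\Ut_{2a\,\ks}\), which is the same weight count you already did. For the derived-subgroup equality in the multipliable case both you and the paper use the commutator surjectivity of Remark \ref{rem:gp-nred-facts}(\ref{subrem:root-commute}) on an exceptional pair, but you then enumerate the generators of \(\Ut_{2a\,\ks}\) via Lemma \ref{lem:half-pairs-b}(\ref{sublem:coexc-extensions}) and direct spanning, while the paper instead notes that \(\Ut_{a\,\dersub}\) is nontrivial with weight set \(\sset{2a}\) and invokes Lemma \ref{lem:tilde-borel:linear:prop:13.20}(\ref{sublem:prop:13.20-gp}); these are interchangeable and of comparable effort. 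One small slip: when ruling out higher commutators you should also dispose of the pairs where one or both of \(\alphat,\betat\) restrict to \(2a\) (trivial, since \(3a, 4a \notin \Phi(\Gt,S)\)), and \((i+j)a\) should be required to lie in \(\Phi(\Gt,S)\cap\Z a\), not in all of \(\Phi(\Gt,S)\).
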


\begin{proof}
Since \(\Phi(\Gt, S)\) is a root system
(by Remark \ref{rem:how-to-restrict}), we have that
\(\Phi(\Gt, S) \cap \Z\cdot a\) is contained in
\(\pm\sset{a, 2a}\).
In particular,
\cite{conrad-gabber-prasad:prg}*
	{Proposition 3.3.5 and Example 3.3.2} give that
\(\Ut_{2a}\) is central in \(\Ut_a\).
We have by
\cite{conrad-gabber-prasad:prg}*{Lemma 3.3.8} that there is
a unique \(S_\ks\)-equivariant linear structure on
\((\Ut_a/\Ut_{2a})_\ks\); and uniqueness shows that
it is fixed by both \(\Gal(k)\) and \(\Gamma(\ks) \ltimes \Tt(\ks)\),
hence by \(\Gamma_\ks \ltimes \Tt_\ks\)
(because \(\Gamma \ltimes \Tt\) is smooth).
It follows that the \(S_\ks\)-equivariant linear structure on
\((\Ut_a/\Ut_{2a})_\ks\)
descends to linear structures on
\(\Ut_a/\Ut_{2a}\) and
\((\Ut_a/\Ut_{2a})^\Gamma\), as claimed.

Since \(\Ut_a/\Ut_{2a}\) is commutative,
\(\Ut_{2a}\) contains
the derived subgroup of \(\Ut_a\).
If \(a\) is not multipliable in \(\Phi(\Gt, S)\), then
\(\Ut_{2a}\) is trivial, so the reverse containment, and
hence equality, is clear.
Thus we may, and do, assume that \(a\) is multipliable.
Let \(\sset{\alphat, \alphat'}\) be
an exceptional pair for
\((\Psi(\Gt_\ks, \Tt_\ks), \Gal(k) \ltimes \Gamma(\ks))\)
`extending' \(a\).
By Remark \ref{rem:gp-nred-facts}(\ref{subrem:root-commute}),
the commutator map
\abmap
	{\Ut_\alphat \times \Ut_{\alphat'}}
	{\Ut_{\alphat + \alphat'}}
is surjective; so, since
\(\Ut_\alphat\) and \(\Ut_{\alphat'}\) are
contained in \(\Ut_{a\,\ks}\), we have that
\(\Ut_{\alphat + \alphat'}\) is contained in
\((\Ut_a)_{\ks\,\dersub} = (\Ut_{a\,\dersub})_\ks\).
It follows that \(\Ut_{a\,\dersub}\), which is
contained in \(\Ut_{2a}\), is not
the trivial group, so that
\(\Phi(\Ut_{a\,\dersub}, S)\) equals \(\sset{2a}\).
Then
Lemma \ref{lem:tilde-borel:linear:prop:13.20}(\ref{sublem:prop:13.20-gp})
gives that
\(\Ut_{a\,\dersub}\) contains, hence equals, \(\Ut_{2a}\).
\end{proof}

\begin{lem}
\label{lem:root-smooth}
Fix \(a \in \Phi(\Gt, S)\).
If \(a\) is not multipliable in \(\Phi(\Gt, S)\), or
\(a\) does not belong to \(\Phi(\Gt^\Gamma, S)\), or
\(p\) is odd, then
\(\Ut_a^\Gamma\) carries a unique
\(T\)-equivariant linear structure.
\end{lem}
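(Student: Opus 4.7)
The plan is to split into three cases matching the three alternative hypotheses, in each case reducing either to Lemma \ref{lem:root-linear} (which already produces $T$-equivariant linear structures on $\Ut_a/\Ut_{2a}$ and $\Ut_{2a}$) or to an explicit Heisenberg computation. Throughout, the $T$-action commutes with the $\Gamma$-action, because $T$ lies in $G = \fix\Gt^\Gamma$, so any $T$-equivariant canonical structure on $\Ut_a$ (or on a canonical subquotient) is automatically $\Gamma$-equivariant, and in particular restricts to one on the $\Gamma$-fixed subscheme. Uniqueness in every case will follow from the same argument used in the proof of Lemma \ref{lem:root-linear}, invoking \cite{conrad-gabber-prasad:prg}*{Lemma 3.3.8} applied to the nontrivial $S$-weights $a$ and $2a$ on $\Lie(\Ut_a^\Gamma)$.

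First I handle the case when $a$ is non-multipliable: then $\Ut_{2a}$ is trivial by Lemma \ref{lem:root-linear}, so $\Ut_a$ itself is linear and $\Ut_a^\Gamma$ is a $T$-stable subspace of a linear $T$-representation, hence inherits a unique $T$-equivariant linear structure. Next I handle the case $a \not\in \Phi(\Gt^\Gamma, S)$ with $a$ multipliable: Lemma \ref{lem:root-linear} gives $\Ut_a/\Ut_{2a}$ a $T$-equivariant linear structure, and its $\Gamma$-fixed subscheme, being the kernel of the linear maps $\gamma - 1$ on the $T$-representation $\Ut_a/\Ut_{2a}$, is a linear subspace with Lie algebra $\Lie(\Ut_a/\Ut_{2a})^\Gamma = \Lie(\Gt)_a^\Gamma = \Lie(\Gt^\Gamma)_a$, which vanishes by hypothesis. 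Consequently $(\Ut_a/\Ut_{2a})^\Gamma = 0$, so $\Ut_a^\Gamma$ is carried into $\Ut_{2a}$ and thus equals $\Ut_{2a}^\Gamma$; this group is linear because $2a$ is non-multipliable, reducing to the first case.

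The remaining case, $p$ odd with $a$ multipliable and $a \in \Phi(\Gt^\Gamma, S)$, is the only substantial one. Here I use Corollary \ref{cor:root-induced} to identify $\Ut_a$ with the image of $\Ut_{\sset{\at, \at'}}$ under the induction $\Ind_{\Gamma_{\sset{\at, \at'}}}^\Gamma(\cdot)$, where $\sset{\alphat, \alphat'}$ is an exceptional pair extending $a$; and by the remark in Remark \ref{rem:switch-exc-pair} together with the transitivity of the induction functor, I reduce $\Ut_a^\Gamma$ to $\Ut_{\sset{\at, \at'}}^{\Gamma_{\sset{\at, \at'}}}$, where $\Ut_{\sset{\at, \at'}}$ is the Heisenberg-type unipotent radical of a Borel in the $\mathsf A_2$-local group $\Gt_{\sset{\at, \at'}}$. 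In characteristic $\neq 2$, the Heisenberg group admits a canonical $T$-equivariant scheme isomorphism with its Lie algebra, obtained from the central extension presentation via the half-commutator splitting $(v_1, w_1)(v_2, w_2) = (v_1 + v_2, w_1 + w_2 + \tfrac{1}{2}c(v_1, v_2))$, where $c$ is the bilinear alternating commutator pairing $\Ut_{\sset{\at, \at'}}/\Ut_{\at + \at'} \times \Ut_{\sset{\at, \at'}}/\Ut_{\at + \at'} \to \Ut_{\at + \at'}$ provided by Remark \ref{rem:gp-nred-facts}(\ref{subrem:root-commute}) and Lemma \ref{lem:root-linear}. Uniqueness of this splitting makes it $\Gamma_{\sset{\at, \at'}}$-equivariant, and taking fixed points gives a $T$-equivariant linear structure on $\Ut_{\sset{\at, \at'}}^{\Gamma_{\sset{\at, \at'}}}$, hence on $\Ut_a^\Gamma$.

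The expected obstacle is the third case: the non-commutativity of $\Ut_a$ means we cannot lift the linear structure directly from Lemma \ref{lem:root-linear}, and the constant $\tfrac{1}{2}$ in the Heisenberg splitting is exactly what forces the hypothesis $p$ odd. The failure of this construction in characteristic $2$ is precisely the source of the exceptional case excluded in Theorem \ref{thm:quass}(\ref{subthm:quass-smooth}) and addressed separately in \S\ref{sec:sl-outer}.
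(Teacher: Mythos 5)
Your proof agrees with the paper's in the first two cases (modulo the paper writing $\Ut_a$ where it surely means $\Ut_a^\Gamma$), and both you and the paper reduce the third case to $\Ut_{\sset{\at, \at'}}^{\Gamma_{\sset{\at, \at'}}}$ via Corollary~\ref{cor:root-induced} after base change to $\ks$. But from there the two arguments genuinely diverge. The paper's route is to prove the \emph{vanishing} statement $2\alpha\notin\Phi(\Gt^\Gamma, T)$: taking $X\in\Lie(\Gt)_\alpha^\Gamma$ with projections $\Xt_\alphat, \Xt_{\alphat'}$, the element $\gamma_0$ that swaps $\alphat$ and $\alphat'$ negates the commutator $[\Xt_\alphat,\Xt_{\alphat'}]$, and since $p$ is odd this forces $\gamma_0$ to act nontrivially on $\Lie(\Gt)_{\alphat+\alphat'}$; then $\Phi(\Ut_\alpha^\Gamma,T)=\{\alpha\}$ and \cite{conrad-gabber-prasad:prg}*{Lemma 3.3.8} closes the argument. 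Your route instead builds the linear structure directly via the Baker--Campbell--Hausdorff/half-commutator exponential on the Heisenberg group, using $p$ odd only to make $\tfrac12$ available; you then pass the (unique, hence $\Gamma_{\sset{\at, \at'}}$-equivariant) exponential to $\Gamma$-fixed points. This is a valid and arguably more constructive alternative, and the weight-vanishing fact is not strictly needed for the conclusion as stated.

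There is, however, a gap you should close: the exponential $\exp\colon\uLie(\Ut_{\sset{\at,\at'}})\to\Ut_{\sset{\at,\at'}}$ is only a \emph{scheme} isomorphism (it is a group homomorphism from $\uLie$ with the BCH product, not the additive one), so its restriction to $\Gamma_{\sset{\at,\at'}}$-fixed points is a group isomorphism onto $\Ut_{\sset{\at,\at'}}^{\Gamma_{\sset{\at,\at'}}}$ only if the fixed Lie subalgebra $\mathfrak h=\Lie(\Ut_{\sset{\at,\at'}})^{\Gamma_{\sset{\at,\at'}}}$ is abelian. This is true, but you never say why: the only potentially nontrivial bracket is $[\mathfrak h_\alpha,\mathfrak h_\alpha]$, and $\mathfrak h_\alpha\cong\Lie(\Gt_\ks)^{\Gamma_\ks}_\alpha$ is one-dimensional by Corollary~\ref{cor:root-space-facts}(\ref{subcor:root-space-dim}) (after base change), so it vanishes. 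Without this observation, ``taking fixed points gives a $T$-equivariant linear structure'' does not follow from $\Gamma$-equivariance of $\exp$ alone. One further small inaccuracy: in Case~1 the triviality of $\Ut_{2a}$ comes from $2a\notin\Phi(\Gt,S)$ and Remark~\ref{rem:root-characterize}, not from Lemma~\ref{lem:root-linear}. Finally, note that what your argument drops (the fact $2\alpha\notin\Phi(\Gt^\Gamma,T)$ for $p$ odd) is exactly what the paper leans on in the proof of Proposition~\ref{prop:GS-vs-GtS}(\ref{subprop:G-roots}); your route would make that downstream step require a separate argument.
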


\begin{note}
Lemma \ref{lem:root-smooth} can fail if
\(a\) is split for \((\Psi(\Gt_\ks, T_\ks), \Gal(k))\),
even if
\(p\) is odd and
\(\Gamma\) is trivial.
For example, suppose that
\(E/k\) is a quadratic, Galois extension,
write \(\Gt\) for the quasisplit \(\SU_{3, E/k}\)
and \(\Gamma\) for the trivial group,
with (necessarily) its trivial action on \(\Gt\).
Thus, \(G \ldef \fix\Gt^\Gamma\) equals \(\Gt\).
Let \(S = \St\) be a maximal split torus in \(G = \Gt\).
Then there are multipliable elements of
\(\Phi(\Gt, S) = \Phi(G, S)\), and,
for every such element \(a\), we have that
\(\Ut_a^\Gamma\) is the full \(a\)-root subgroup for
\(\St\) in \(\Gt\), and that
\((\Ut_a^\Gamma)\der = \Ut_{a\,\dersub}\) equals
\(\Ut_{2a}\), which is nontrivial, by
Lemma \ref{lem:root-linear}.
In particular, \(\Ut_a\) is not even commutative, so
certainly not a vector group.
\end{note}

\begin{proof}
Lemma \ref{lem:tilde-borel:linear:prop:13.20}(\ref{sublem:prop:14.4})
(applied, here and later, with
\(\ks\) in place of \(k\), hence
\(T_\ks\) in place of \(S\))
gives that
\(\Ut_{a\,\ks}\) is
directly spanned by
\(\Ut_\alpha\) as
\(\alpha\) ranges over the `extensions' of \(a\) in
\(\Phi(\Gt_\ks, T_\ks)\).
If \(a\) does not belong to \(\Phi(\Gt^\Gamma, S)\), then
\(\Ut_a\) equals \(\Ut_{2a}\), and
the result follows from Lemma \ref{lem:root-linear}
(with \(2a\) replacing \(a\)).
Thus we may, and do, assume that
\(a\) belongs to \(\Phi(\Gt^\Gamma, S)\).
Then every weight of \(T_\ks\) on \(\Lie(G)_{a\,\ks}\) is
an `extension' of \(a\) that
belongs to \(\Phi(\Gt_\ks^{\Gamma_\ks}, T_\ks)\), so,
by Proposition \ref{prop:rd-restriction}(\ref{subprop:rd-fiber}),
all such `extensions' do.
If \(a\) is non-multipliable, then
so is every `extension'.

Thus we may, and do, assume, upon replacing
\(a\) by an `extension' \(\alpha \in \Phi(\Gt_\ks^{\Gamma_\ks}, T_\ks)\)
and
\(k\) by \(\ks\) (hence \(S\) by \(T\)), that
\(\alpha\) is non-multipliable or
\(p\) is odd.
Now \cite{conrad-gabber-prasad:prg}*{Lemma 3.3.8}
will allow us to conclude if we can show that
\(\Phi(\Ut_\alpha^\Gamma, T)\) equals \(\sset\alpha\).
If \(\alpha\) is not multipliable, then
this is obvious
(since \(\Phi(\Ut_\alpha, T)\) contains
the positive-integer multiples of \(\alpha\) in \(\Phi(\Gt, T)\),
and \(\alpha\) is the only such).
Thus we may, and do, assume that
\(p\) is odd and
\(\alpha\) is multipliable.

Since \(k\) is separably closed, we have that
\(\Gamma/\Gamma_{\sset{\alphat, \alphat'}}\) is constant.
Therefore, by Corollary \ref{cor:root-induced} (and using its notation),
we have by Lemma \ref{lem:was-cor:trans-induction} that
\(\Ut_\alpha^\Gamma\) is \(T\)-equivariantly isomorphic to
\(\Ut_{\sset{\alphat, \alphat'}}^
	{\Gamma_{\sset{\alphat, \alphat'}}}\).
By Remark \ref{rem:red-to-nred-facts}(\ref{subrem:pre-mult-div}),
there is an element \(\gamma_0 \in \Gamma(k)\) that
swaps \(\alphat\) and \(\alphat'\), hence belongs to
\(\Gamma_{\sset{\alphat, \alphat'}}(k)\).
Let \(X\) be a nonzero element of
\(\Lie(\Gt)_\alpha^\Gamma\).
By Lemma \ref{lem:root-space},
the \(\Tt\)-equivariant projections
\(\Xt_\alphat\) and \(\Xt_{\alphat'}\) of
\(X\) on the indicated weight spaces for \(\Tt\) in \(\Lie(\Gt)\) are
also nonzero.
By Remark \ref{rem:gp-nred-facts}(\ref{subrem:root-commute}),
the commutator
\(\Yt \ldef [\Xt_\alphat, \Xt_{\alphat'}]\) is also nonzero.
On the other hand, since \(X\) is preserved by \(\gamma_0\),
we have that
\(\gamma_0(\Xt_\alphat)\) equals
\(\Xt_{\gamma_0\alphat} = \Xt_{\alphat'}\),
and, similarly,
\(\gamma_0(\Xt_{\alphat'})\) equals
\(\Xt_\alphat\), so
\(\gamma_0(\Yt)\) equals
\([\Xt_{\alphat'}, \Xt_\alphat] = -\Yt\).
Since \(p\) is odd, we have that
\(\Yt\) does not equal \(-\Yt\), so that
\(\gamma_0\) does not act trivially on
\(\Lie(\Gt)_{\alphat + \alphat'}\).
Since \(\gamma_0\) preserves \(\alphat + \alphat'\),
Corollary \ref{cor:root-space-facts}(\ref{subcor:root-space-fix})
gives that the restriction \(2\alpha\) of
\(\alphat + \alphat'\) to
\(T\) does not
belong to \(\Phi(\Gt^\Gamma, T)\).
That is, \(\Phi(\Ut_\alpha^\Gamma, T)\) equals \(\sset\alpha\),
as required.
\end{proof}

Usually, dealing with the case where \(p\) equals \(2\) is harder than
dealing with the case where \(p\) is odd.
Proposition \ref{prop:square-pinned}(\ref{subprop:square-pinned})
shows that, unusually,
dealing with multipliable roots is easier when \(p\) equals \(2\),
in the sense that a root in \(\Phi(\Gt^\Gamma, S)\) that
is multipliable in \(\Phi(\Gt, S)\) remains multipliable in
\(\Phi(\Gt^\Gamma, S)\).
See
Proposition \ref{prop:GS-vs-GtS}(\ref{subprop:G-roots}) and
\cite{adler-lansky-spice:actions3}*{Lemma \ref{GA3-lem:pinning-descent}} for
applications of Proposition \ref{prop:square-pinned}.

\begin{prop}
\label{prop:square-pinned}
Suppose that \(p\) equals \(2\) and \(a \in \Phi(\Gt, S)\)
is multipliable.
Write \((\cdot)^{[2]}\) for the \(2\)-power map on
\(\uLie(\Ut_a)\)
\cite{demazure-gabriel:groupes-algebriques}*
	{Ch.~II, \S7, Proposition 3.4}
\begin{enumerate}[label=(\alph*), ref=\alph*]
\item\label{subprop:square-basic}
\((\cdot)^{[2]}\) is a \((\Gamma \ltimes \Tt)\)-equivariant map
\abmap{\uLie(\Ut_a)}{\uLie(\Ut_{2a})} that
factors uniquely through
\abmap{\uLie(\Ut_a)}{\uLie(\Ut_a/\Ut_{2a})}.
\end{enumerate}
Let \(\sset{\alphat, \alphat'}\) be
an exceptional pair for
\((\Psi(\Gt_\ks, \Tt_\ks), \Gal(k) \ltimes \Gamma(\ks))\)
`extending' \(a\).
\begin{enumerate}[resume*]
\item\label{subprop:root-commute}
With the notation of
Lemma \ref{lem:root-space},
we have that
\(\pi_{\alphat + \alphat'}(\Xt^{[2]})\) equals
\([\pi_\alphat(\Xt), \pi_{\alphat'}(\Xt)]\)
for all functorial points \(\Xt\) of \(\uLie(\Ut_a)\).
\item\label{subprop:square-pinned}
\((\cdot)^{[2]}\) does not take the value \(0\) on
\(\Lie(\Ut_a/\Ut_{2a})^\Gamma \setminus \sset0\).
In particular, if
\(a\) belongs to \(\Phi(\Gt^\Gamma, S)\), then
so does \(2a\).
\end{enumerate}
\end{prop}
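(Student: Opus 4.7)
The plan is to exploit the \(2\)-step nilpotent structure on \(\Ut_a\) from Lemma~\ref{lem:root-linear} together with the Jacobson formula \((\Xt + \Yt)^{[2]} = \Xt^{[2]} + \Yt^{[2]} + [\Xt, \Yt]\) in restricted Lie algebras of characteristic \(2\). For (\ref{subprop:square-basic}), first note that \(\Ut_a/\Ut_{2a}\) is a vector group by Lemma~\ref{lem:root-linear}, so the \([2]\)-operation on its Lie algebra is trivial; since \([2]\) is functorial for Lie algebra homomorphisms, this forces \(\Xt^{[2]}\) to lie in the kernel \(\uLie(\Ut_{2a})\) of \abmap{\uLie(\Ut_a)}{\uLie(\Ut_a/\Ut_{2a})}. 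To see that \(\Xt^{[2]}\) depends only on \(\Xt\) modulo \(\uLie(\Ut_{2a})\), apply Jacobson to \(\Xt + \Yt\) with \(\Yt\) in \(\uLie(\Ut_{2a})\): both \(\Yt^{[2]}\) (as just explained) and \([\Xt, \Yt]\) (since \(\Ut_{2a}\) is central in \(\Ut_a\), again by Lemma~\ref{lem:root-linear}) vanish. Equivariance under \(\Gamma \ltimes \Tt\) is automatic because the \(p\)-operation is canonical.

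For (\ref{subprop:root-commute}), I base change to \(\ks\) and decompose an arbitrary functorial point \(\Xt\) of \(\uLie(\Ut_a)\) as \(\sum_\beta \pi_\beta(\Xt)\) along the \(\Tt_\ks\)-weight decomposition, with \(\beta\) ranging over `extensions' of \(a\) and of \(2a\). Induction using Jacobson yields \(\Xt^{[2]} = \sum_\beta \pi_\beta(\Xt)^{[2]} + \sum_{\beta < \beta'} [\pi_\beta(\Xt), \pi_{\beta'}(\Xt)]\). Each square \(\pi_\beta(\Xt)^{[2]}\) has weight \(2\beta\); by Proposition~\ref{prop:rd-restriction}(\ref{subprop:rd-multipliable})(\ref{case:rd-multipliable(red)}), the ambient irreducible component of \(\Phi(\Gt_\ks, \Tt_\ks)\) containing \(\beta\) is of type \(\mathsf A_{2n}\) and hence reduced, so \(2\beta\) is not a root and these squares vanish. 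For the commutator sum, \([\pi_\beta(\Xt), \pi_{\beta'}(\Xt)]\) contributes to weight \(\alphat + \alphat'\) only when \(\beta + \beta' = \alphat + \alphat'\); ruling out cases where both \(\beta, \beta'\) extend \(2a\) or mix extensions of \(a\) and \(2a\) (by comparing restrictions to \(S\)), and invoking Lemma~\ref{lem:half-pairs-b}(\ref{sublem:coexc-extensions}) (distinct exceptional pairs give distinct extensions of \(2a\)) to handle the remaining case where both extend \(a\), shows that the unique contributing unordered pair is \(\sset{\alphat, \alphat'}\).

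For (\ref{subprop:square-pinned}), let \(X\) be a nonzero element of \(\Lie(\Ut_a/\Ut_{2a})^\Gamma\). The \(S\)-weight decomposition \(\Lie(\Ut_a) = \Lie(\Ut_a)_a \oplus \Lie(\Ut_{2a})\) is \(\Gamma\)-stable (since \(\Gamma\) acts trivially on \(S \subseteq G\) and so preserves each \(S\)-weight space), allowing me to lift \(X\) to a nonzero \(\Gamma\)-fixed element \(\tilde X \in \Lie(\Ut_a)_a\). Lemma~\ref{lem:root-space} then forces \(\pi_\alphat(\tilde X)\) and \(\pi_{\alphat'}(\tilde X)\) to be nonzero in the one-dimensional weight spaces \(\Lie(\Gt_\ks)_\alphat\) and \(\Lie(\Gt_\ks)_{\alphat'}\), and Remark~\ref{rem:gp-nred-facts}(\ref{subrem:root-commute}) gives that the Lie bracket induces a \(\ks\)-linear isomorphism \abmap{\Lie(\Gt_\ks)_\alphat \otimes_\ks \Lie(\Gt_\ks)_{\alphat'}}{\Lie(\Gt_\ks)_{\alphat + \alphat'}}, so \([\pi_\alphat(\tilde X), \pi_{\alphat'}(\tilde X)]\) is nonzero. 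By (\ref{subprop:root-commute}) this equals \(\pi_{\alphat + \alphat'}(\tilde X^{[2]})\), certifying \(\tilde X^{[2]} \neq 0\). The ``in particular'' clause then follows because \(\tilde X^{[2]}\) is \(\Gamma\)-fixed (by the equivariance in (\ref{subprop:square-basic})) and lies in the \(2a\)-weight space. The main obstacle throughout is the bookkeeping in (\ref{subprop:root-commute}): ruling out all weight-\((\alphat + \alphat')\) contributions other than \([\pi_\alphat(\Xt), \pi_{\alphat'}(\Xt)]\), which is precisely where the reducedness of the \(\mathsf A_{2n}\) ambient components is crucial.
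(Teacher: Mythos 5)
Your proof is correct and follows essentially the same route as the paper: the Jacobson formula for $(X+Y)^{[2]}$ in characteristic $2$, the two-step nilpotent structure of $\Ut_a$ from Lemma~\ref{lem:root-linear}, the weight decomposition over extensions of $a$ and $2a$ to isolate the $\alphat + \alphat'$-component, the reducedness of the ambient $\mathsf A_{2n}$ components via Proposition~\ref{prop:rd-restriction}(\ref{subprop:rd-multipliable})(\ref{case:rd-multipliable(red)}), and the isomorphism of Lemma~\ref{lem:root-space} together with Remark~\ref{rem:gp-nred-facts}(\ref{subrem:root-commute}) for part (\ref{subprop:square-pinned}). The one minor divergence is in part (\ref{subprop:square-basic}): where you show $\Xt^{[2]} \in \uLie(\Ut_{2a})$ by observing that the image in $\uLie(\Ut_a/\Ut_{2a})$ must vanish since that Lie algebra has trivial $[2]$-operation, the paper instead uses the fact that $(\cdot)^{[2]}$ doubles $S$-weights; both are fine and equally short.
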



\begin{proof}
It is clear that the \(2\)-power map is
compatible with base change, and
functorial
\cite{demazure-gabriel:groupes-algebriques}*
	{Ch.~II, \S7, 1.1}.
Thus
\(((\cdot)^{[2]})_\ks\) is (\(\Gamma(\ks) \ltimes \Tt(\ks)\))-,
hence (\(\Gamma_\ks \ltimes \Tt_\ks\))-,
equivariant,
so \((\cdot)^{[2]}\) is (\(\Gamma \ltimes \Tt\))-equivariant.
In particular, we have by
\cite{demazure-gabriel:groupes-algebriques}*
	{Ch.~II, \S7, D\'efinition 3.3(\(p\)-AL 1)} that
\((\cdot)^{[2]}\) doubles weights of \(S\), hence
carries \(\uLie(\Ut_a)\) into \(\uLie(\Ut_{2a})\).
Since
\(\Ut_{2a}\) is a vector group by Lemma \ref{lem:root-linear},
we have that
\(\uLie(\Ut_{2a})\) is annihilated by \((\cdot)^{[2]}\)
\cite{demazure-gabriel:groupes-algebriques}*
	{Ch.~II, \S7, Exemple 2.2(1)}.
Since \(\Ut_{2a}\) is central in \(\Ut_a\) by
Lemma \ref{lem:root-linear}, and since
\abmap{\uLie(\Ut_a)}{\uLie(\Ut_a/\Ut_{2a})} is
surjective, we have by
\cite{demazure-gabriel:groupes-algebriques}*
	{Ch.~II, \S7, D\'efinition 3.3(\(p\)-AL 3)}
(which simplifies to
\((\Xt + \Yt)^{[2]} =
\Xt^{[2]} + [\Xt, \Yt] + \Yt^{[2]}\) for all
functorial points \(\Xt\) and \(\Yt\) of \(\uLie(\Ut_a)\)
when \(p\) equals \(2\))
that the \([2]\)-power map factors uniquely through
\abmap{\uLie(\Ut_a)}{\uLie(\Ut_a/\Ut_{2a})}.
This shows (\ref{subprop:square-basic}).

We now prove (\ref{subprop:root-commute}).
The \(2\)-power map \(((\cdot)^{[2]})_\ks\) annihilates
\(\uLie(\Ut_\betat)\) for every
\(\betat \in \Phi(\Ut_{a\,\ks}, \Tt_\ks)\)
(indeed, for every
\(\betat \in \Phi(\Gt_\ks, \Tt_\ks)\)).
Thus applying
\cite{demazure-gabriel:groupes-algebriques}*
	{Ch.~II, \S7, D\'efinition 3.3(\(p\)-AL 3)}
iteratively to the computation of the \(2\)-power of an element of
\(\uLie(\Ut_{a\,\ks}) =
\sum_{\betat \in \Phi(\Ut_{a\,\ks}, \Tt_\ks)}
	\uLie(\Ut_\beta)\)
shows that it is a sum of iterated commutators of vectors in
the various \(\uLie(\Ut_\beta)\).
By
Proposition \ref{prop:rd-restriction}%
	(\ref{subprop:rd-multipliable})(\ref{case:rd-multipliable(red)}),
the only nontrivial (i.e., with more than one term)
expression of
\(\alphat + \alphat'\) as
a sum of elements of \(\Phi(\Ut_{a\,\ks}, \Tt_\ks)\) is
the obvious one, so
the equality in (\ref{subprop:root-commute}) follows.

Now
(\ref{subprop:root-commute}),
Lemma \ref{lem:root-space},
and
Remark \ref{rem:gp-nred-facts}(\ref{subrem:root-commute}) give that
\(\pi_{\alphat + \alphat'}(X^{[2]})\),
hence \(X^{[2]}\) itself,
is nonzero for every \(X \in \Lie(\Ut_a/\Ut_{2a})^\Gamma\).
This shows (\ref{subprop:square-pinned}).
\end{proof}

Corollary \ref{cor:no-mult-fixed} says that
an inert, multipliable root in \(\Phi(\Gt^\Gamma, S)\)
disappears upon smoothing, i.e., does not belong to \(\Phi(G, S)\).
However, we can say more than this about the structure of
\(\Ut_a^\Gamma\), and do so in
Proposition \ref{prop:no-mult-fixed}.
Specifically, we view
Proposition \ref{prop:no-mult-fixed}(\ref{subprop:no-mult-fixed}) as
a computation of the connecting map in the usual
``non-commutative'' generalization of the result
\cite{demazure-gabriel:groupes-algebriques}*
	{Ch.~II, \S3, Proposition 1.3}
on Hochschild cohomology, in the sense of
\cite{demazure-gabriel:groupes-algebriques}*{Ch.~II, \S3, 1.1}.
Note that
Proposition \ref{prop:no-mult-fixed}(\ref{subprop:factor-Frob-right})
is stronger than
Proposition \ref{prop:square-pinned}(\ref{subprop:square-pinned}),
but we use the latter in the proof of the former.

\begin{prop}
\label{prop:no-mult-fixed}
Preserve the notation and hypotheses of
Proposition \ref{prop:square-pinned}, and
suppose additionally that
\(a\) is inert for \((\Psi(\Gt_\ks, T_\ks), \Gal(k))\).
Write \((\cdot)^{(2)}\) for the Frobenius twist
and \(\operatorname{Frob}_{(\cdot)}\)
for the Frobenius natural transformation
\abmap{(\cdot)}{(\cdot)^{(2)}}
\cite{demazure-gabriel:groupes-algebriques}*
	{Ch.~II, \S7, 1.1},
and use the linear structures of
Lemma \ref{lem:root-linear} to regard \((\cdot)^{[2]}\) as
a \(\Gamma\)-equivariant map \abmap{\Ut_a/\Ut_{2a}}{\Ut_{2a}}.
\begin{enumerate}[label=(\alph*), ref=\alph*]
\item\label{subprop:factor-Frob-left}
\(\Frob_{(\Ut_a/\Ut_{2a})^\Gamma}\) factors through
\map{(\cdot)^{[2]}}{(\Ut_a/\Ut_{2a})^\Gamma}{\Ut_{2a}^\Gamma}.
\item\label{subprop:factor-Frob-right}
If \(a\) belongs to \(\Phi(\Gt^\Gamma, S)\), then
\map{(\cdot)^{[2]}}{(\Ut_a/\Ut_{2a})^\Gamma}{\Ut_{2a}^\Gamma}
is an infinitesimal isogeny, so that
(\ref{subprop:factor-Frob-left}) yields a unique arrow
\abmap{\Ut_{2a}^\Gamma}{((\Ut_a/\Ut_{2a})^\Gamma)^{(2)}}, and
the diagram
\[\xymatrix{
\Ut_{2a}^\Gamma \ar[r]\ar@(ur,ul)[rr]^{\operatorname{Frob}_{\Ut_{2a}^\Gamma}} & ((\Ut_a/\Ut_{2a})^\Gamma)^{(2)} \ar[r]^-{(\cdot)^{[2]}} & (\Ut_{2a}^\Gamma)^{(2)}
}\]
commutes.
\item\label{subprop:no-mult-fixed}
The sequence
\[\xymatrix{
\Ut_a^\Gamma \ar[r] & (\Ut_a/\Ut_{2a})^\Gamma \ar[r]^-{(\cdot)^{[2]}} & \Ut_{2a}^\Gamma
}\]
is exact.
\end{enumerate}
\end{prop}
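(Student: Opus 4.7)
The strategy is to trade the three statements for linear-algebra statements on Lie algebras via the linear structures from Lemma \ref{lem:root-linear}, and then to apply the commutator-squared identity of Proposition \ref{prop:square-pinned}(\ref{subprop:root-commute}) in the inert situation, where it admits an especially clean form.

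First, I set up the Hermitian picture. Since \(a\) is inert for \((\Psi(\Gt_\ks, T_\ks), \Gal(k))\) and multipliable, pick an exceptional pair \(\sset{\alphat, \alphat'}\) for \((\Psi(\Gt_\ks, \Tt_\ks), \Gal(k) \ltimes \Gamma(\ks))\) extending \(a\); Remark \ref{rem:red-to-nred-facts}(\ref{subrem:pre-mult-div}) together with Remark \ref{rem:switch-exc-pair} produces an element \(\sigma\) of \(\Gal(k) \ltimes \Gamma(\ks)\) swapping \(\alphat\) and \(\alphat'\) and fixing \(\alphat + \alphat'\), and \(\stab(\alphat + \alphat')\) is an index-\(2\) extension of the common stabilizer \(\stab(\alphat) = \stab(\alphat')\). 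Via Lemma \ref{lem:root-space}, the projections \(\pi_\alphat\) and \(\pi_{\alphat + \alphat'}\) identify \(\Lie(\Ut_a/\Ut_{2a})^\Gamma\) and \(\Lie(\Ut_{2a})^\Gamma\) with spaces of fixed vectors in the respective weight spaces over \(\ks\), and the inert assumption means that these identifications acquire a Galois-equivariant structure over the degree-\(2\) extension \(k_\alphat/k_{\alphat + \alphat'}\).

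For part (\ref{subprop:factor-Frob-left}), Proposition \ref{prop:square-pinned}(\ref{subprop:root-commute}) reads \(\pi_{\alphat + \alphat'}(\Xt^{[2]}) = [\pi_\alphat(\Xt), \sigma\pi_\alphat(\Xt)]\) for \(\Gamma\)-fixed \(\Xt\); under the Hermitian identification this is a norm-type expression in \(\pi_\alphat(\Xt)\) over \(k_\alphat/k_{\alphat + \alphat'}\), and the fundamental identity \(\operatorname{Nm}(x) = x \cdot \sigma(x)\) fits \(X \mapsto X^{[2]}\) into a factorization of the relative Frobenius of \((\Ut_a/\Ut_{2a})^\Gamma\). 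For part (\ref{subprop:factor-Frob-right}), injectivity of \((\cdot)^{[2]}\) on Lie algebras is Proposition \ref{prop:square-pinned}(\ref{subprop:square-pinned}), and Corollary \ref{cor:root-space-facts} together with the stabilizer index computation of Step 1 shows that, under the hypothesis \(a \in \Phi(\Gt^\Gamma, S)\), the equidimensional vector groups \((\Ut_a/\Ut_{2a})^\Gamma\) and \(\Ut_{2a}^\Gamma\) have the same dimension, so \((\cdot)^{[2]}\) is an infinitesimal isogeny; the diagonal arrow \(\Ut_{2a}^\Gamma \to ((\Ut_a/\Ut_{2a})^\Gamma)^{(2)}\) and the commutativity of the diagram are then forced by part (\ref{subprop:factor-Frob-left}).

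For part (\ref{subprop:no-mult-fixed}), the containment of the image of \(\Ut_a^\Gamma\) in \(\ker((\cdot)^{[2]})\) is immediate: for \(Y \in \Ut_a^\Gamma\) with image \(X\) in \((\Ut_a/\Ut_{2a})^\Gamma\), \(\sigma\) fixes the lift \(Y\), so \(\pi_{\alphat'}(Y) = \sigma\pi_\alphat(Y) = \pi_\alphat(Y)\), and the commutator formula forces \(X^{[2]} = 0\). For the reverse inclusion, if \(a \notin \Phi(\Gt^\Gamma, S)\) both sides vanish (by Corollary \ref{cor:root-space-facts}), so assume otherwise. The short exact sequence \(1 \to \Ut_{2a} \to \Ut_a \to \Ut_a/\Ut_{2a} \to 1\) is \(\Gamma\)-equivariant, and the obstruction to lifting \(X \in (\Ut_a/\Ut_{2a})^\Gamma\) to \(\Ut_a^\Gamma\) is a class in the Hochschild-style \(H^1(\Gamma, \Ut_{2a})\); identifying this obstruction, via the Hermitian structure above, with \(X^{[2]}\) itself shows that vanishing of the obstruction is equivalent to \(X^{[2]} = 0\), as required.

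The main obstacle is the identification in the second paragraph of \([\pi_\alphat(\Xt), \sigma\pi_\alphat(\Xt)]\) with a genuine norm map over \(k_\alphat/k_{\alphat + \alphat'}\) in a way compatible with descent to \(k\); once that identification is in place, parts (\ref{subprop:factor-Frob-right}) and (\ref{subprop:no-mult-fixed}) become bookkeeping applications of part (\ref{subprop:factor-Frob-left}) together with the non-vanishing input of Proposition \ref{prop:square-pinned}(\ref{subprop:square-pinned}).
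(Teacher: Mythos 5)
Your proposal takes essentially the same point of view that the paper articulates in the text preceding the proposition—the cohomological/norm-map heuristic for the connecting map—but the paper does \emph{not} actually carry out a cohomological proof; it works directly with an explicitly constructed map \(\varphi_A\) of group schemes and then descends. The proposal leaves exactly the hard parts as heuristics.

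There are two concrete gaps. First, in the easy direction of (\ref{subprop:no-mult-fixed}), you write ``\(\pi_{\alphat'}(Y) = \sigma\pi_\alphat(Y) = \pi_\alphat(Y)\)''. The first equality is fine (semilinear equivariance applied to a \(\Gamma\)-fixed \(Y\)), but the second is false: \(\sigma\) moves the weight \(\alphat\), so it cannot fix \(\pi_\alphat(Y)\). The correct conclusion from \(\sigma\)-fixedness of a group-scheme lift is that the two multiplicands \(u \in \Ut_\alphat\), \(u' \in \Ut_{\alphat'}\) of the canonical decomposition of \(Y\) must \emph{commute} (one needs to compare \(\gamma(uu'u_2)\) with \(uu'u_2\) using uniqueness of the direct-spanning decomposition and the fact that \(\Gamma_2\) acts trivially on \(\Ut_2\)); only then does Proposition \ref{prop:square-pinned}(\ref{subprop:square-pinned}) finish. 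Your line, as written, doesn't produce that commutativity.

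Second, you acknowledge yourself that ``the main obstacle is the identification \(\dotsc\) with a genuine norm map over \(k_\alphat/k_{\alphat+\alphat'}\) \(\dotsc\) once that identification is in place, parts (\ref{subprop:factor-Frob-right}) and (\ref{subprop:no-mult-fixed}) become bookkeeping''---but that identification is the entire content. The paper instead (i) uses the direct-spanning decomposition and the inert hypothesis to reduce, by replacing \(k\) by \(\ks\), \(S\) by \(T_\ks\), and \(a\) by an extension \(\alpha\), to the situation where the \(\alphat\)-\(\alphat'\) swap is purely a \(\Gamma\)-element (so no quadratic \emph{field} extension appears; the ``Hermitian'' picture degenerates); (ii) constructs, on the group \((\Gt_1, \Gamma_2)\) singled out by an exceptional pair, an explicit additive map \(\varphi_A\) between weight spaces that factors both Frobenii through \((\cdot)^{[2]}\); and (iii) descends from \((\Gt_1, \Gamma_2)\) back to \((\Gt, \Gamma)\) via \(\Ind_{\Gamma_1}^\Gamma\), Corollary \ref{cor:root-induced}, exactness of induction (Corollary \ref{cor:ksep-ind-scheme}), the transitivity formula (Lemma \ref{lem:was-cor:trans-induction}), and the compatibility of induction with Frobenius (Lemma \ref{lem:ind-Frob}). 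This last descent step is completely missing from your proposal, and without it the proof doesn't close for general \(k\) and \(\Gamma\). Likewise, in (\ref{subprop:factor-Frob-right}) the phrase ``injectivity of \((\cdot)^{[2]}\) on Lie algebras'' is misleading (the differential of \((\cdot)^{[2]}\) as a morphism of group schemes vanishes); what Proposition \ref{prop:square-pinned}(\ref{subprop:square-pinned}) gives is injectivity on \(\ka\)-points, and the isogeny conclusion then comes from the explicit Frobenius factorization rather than a dimension count alone.
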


\begin{proof}
If \(a\) does not belong to \(\Phi(\Gt^\Gamma, S)\), then
(\ref{subprop:factor-Frob-right}) is vacuously true,
and \((\Ut_a/\Ut_{2a})^\Gamma\) is trivial, so
the rest of the result is obvious.
Thus we may, and do, assume that
\(a\) belongs to \(\Phi(\Gt^\Gamma, S)\).
Then every weight of \(T_\ks\) on \((\Lie(\Gt)_a^\Gamma)_\ks\)
is an `extension' of \(a\) in \(\Phi(\Gt_\ks, T_\ks)\) that
belongs to \(\Phi(\Gt_\ks^{\Gamma_\ks}, T_\ks)\);
so Proposition \ref{prop:rd-restriction}(\ref{subprop:rd-fiber})
gives that every `extension' of \(a\) in \(\Phi(\Gt_\ks, T_\ks)\)
belongs to \(\Phi(\Gt_\ks^{\Gamma_\ks}, T_\ks)\).

By Definition \ref{defn:inert}, since
\(a\) is inert for \((\Psi(\Gt_\ks, T_\ks), \Gal(k))\),
every `extension' of \(a\) in \(\Phi(\Gt_\ks, T_\ks)\) is
multipliable, and
Lemma \ref{lem:half-pairs-b}(\ref{sublem:coexc-extensions})
gives that the `extensions' of \(2a\) are
precisely the characters \(2\alpha\) as
\(\alpha\) ranges over the `extensions' of \(a\).
Proposition \ref{prop:rd-restriction}%
	(\ref{subprop:rd-multipliable})(\ref{case:rd-multipliable(nred)})
and
Lemmas
\ref{lem:tilde-borel:linear:prop:13.20}(\ref{sublem:prop:14.4}) and
\ref{lem:root-linear}
give that the multiplication maps
\abmap{\prod \Ut_\alpha}{\Ut_{a\,\ks}},
\abmap{\prod \Ut_{2\alpha}}{\Ut_{2a\,\ks}}, and
\abmap{\prod \Ut_\alpha/\Ut_{2\alpha}}{(\Ut_a/\Ut_{2a})_\ks},
where all products range over
the `extensions' \(\alpha\) of \(a\) in
\(\Phi(\Gt_\ks, T_\ks)\),
are group isomorphisms.
Thus we may, and do, assume, upon
replacing \(k\) by \(\ks\), \(S\) by \(T_\ks\), and
\(a\) by an `extension' \(\alpha \in \Phi(\Gt_\ks, T_\ks)\), that
\(k\) is separably closed.

Choose an exceptional pair \(\sset{\alphat, \alphat'}\) for
\((\Psi(\Gt, \Tt), \Gamma(k))\)
extending \(\alpha\).
Put \(\Gt_1 = \Gt_{\sset{\alphat, \alphat'}}\),
\(\Ut_1 = \Ut_{\sset{\alphat, \alphat'}}\), and
\(\Ut_2 = \Ut_{\alphat + \alphat'}\).

Remark \ref{rem:switch-exc-pair} gives that
\(\Gamma_{\sset{\alphat, \alphat'}}\) equals
\(\Gamma_{\alphat + \alphat'}\),
\(\Gamma_\alphat\) equals \(\Gamma_{\alphat'}\),
and, \(\Gamma_1: = \Gamma_\alphat = \Gamma_{\alphat'}\)
is an index-\(2\) subgroup of
\(\Gamma_2:=\Gamma_{\sset{\alphat, \alphat'}} = \Gamma_{\alphat + \alphat'}\).
Since \(\alpha\) belongs to \(\Phi(\Gt^\Gamma, T)\) and
\(2\alpha\) belongs to \(\Phi(\Gt, T)\),
Proposition \ref{prop:square-pinned}(\ref{subprop:square-pinned})
gives that
\(2\alpha\) belongs to \(\Phi(\Gt^\Gamma, T)\).
Thus,
Corollary \ref{cor:root-space-facts}(\ref{subcor:root-space-fix})
gives that
\(\Gamma_2\)
acts trivially on
\(\Lie(\Gt)_{\alphat + \alphat'} = \Lie(\Ut_2)\), and
\(\Gamma_1\) acts trivially on
\(\Lie(\Gt_1)_\alpha =
\Lie(\Gt)_\alphat + \Lie(\Gt)_{\alphat'} =
\Lie(\Ut_\alphat) + \Lie(\Ut_{\alphat'})\), hence on
\(\Lie(\Ut_1/\Ut_2)\).
Since \(\Ut_\alphat\), \(\Ut_{\alphat'}\), \(\Ut_1/\Ut_2\), and \(\Ut_2\)
all carry \(\Gamma_2\)-equivariant linear structures
(by Lemma \ref{lem:root-linear}), also
\(\Gamma_1\) acts trivially on
\(\Ut_\alphat\), \(\Ut_{\alphat'}\), and \(\Ut_1/\Ut_2\); and
\(\Gamma_2\) acts trivially on \(\Ut_2\).
In particular, \(\Ut_2^{\Gamma_2}\) equals \(\Ut_2\), but
we will still sometimes
include the superscript \(\Gamma_2\) for emphasis.

Notice that, if we replace
\((\Gt, \Gamma)\) by
\((\Gt_1, \Gamma_2)\), then
the common restriction to
\(\Tt \cap \fix\Gt_1^{\Gamma_2}\)
of \(\alphat\) and \(\alphat'\) is
multipliable, so we may apply
Proposition \ref{prop:square-pinned}(\ref{subprop:square-pinned})
(and other results about multipliable restricted roots) to it.
Alternatively, we can observe that
\(\Ut_1/\Ut_2\) is
a subgroup of
\(\Ut_\alpha/\Ut_{2\alpha}\).

Choose a nonzero element
\(\Xt_0 \in \Lie(\Ut_1/\Ut_2)^{\Gamma_2}\).
By Proposition \ref{prop:square-pinned}(\ref{subprop:square-pinned}),
we have that
\(\Xt_0^{[2]} \in \Lie(\Ut_1)\)
is nonzero.
For each \(k\)-algebra \(A\) with
structure map \map{i_A}k A, we have the additive map
\map{\varphi_A}
	{\Lie(\Ut_2)^{\Gamma_2} \otimes_k A}
	{\Lie(\Ut_1/\Ut_2)^{\Gamma_2} \otimes_k \lsub f A}
defined as follows:
for every
\(Y \in \Lie(\Ut_2)^{\Gamma_2}\) and
\(a \in A\),
there is a unique scalar \(c \in k\) such that
\(Y\) equals \(c\Xt_0^{[2]}\), and we
put \(\varphi_A(Y \otimes a) = \Xt_0 \otimes i_A(c)a\).
(Here we have used the notation of
\cite{demazure-gabriel:groupes-algebriques}*
	{Ch.~II, \S7, 1.1},
so that \map f k k is the Frobenius automorphism
\abmapto x{x^2} and
\(\lsub f A\) denotes the restriction of scalars of
\(A\) along \(f\).)
The map \(\varphi_A\) is independent of the choice of \(\Xt_0\).
Since \(\alphat_A\) equals \(\alphat'_A\) on
\(\Tt^{\Gamma_2}(A)\), we have that
\(\varphi_A\) is \(\Tt^{\Gamma_2}(A)\)-equivariant.
If \(B\) is an \(A\)-algebra, then
\((\varphi_A)_B\) equals \(\varphi_B\).

The linear structures from Lemma \ref{lem:root-linear} on
\((\Ut_1/\Ut_2)^{\Gamma_2}\)
and
\(\Ut_2^{\Gamma_2}\)
provide, for each \(k\)-algebra \(A\), isomorphisms
\[
(\Ut_1/\Ut_2)^{\Gamma_2}(A) \cong \uLie(\Ut_1/\Ut_2)^{\Gamma_2}(A)
\quad\text{and}\quad
\Ut_2^{\Gamma_2}(\lsub f A) \cong \uLie(\Ut_2)^{\Gamma_2}(\lsub f A),
\]
which allow us to transform \(\varphi_A\) into
a \(\Tt^{\Gamma_2}(A)\)-equivariant homomorphism
\[\abmap
	{\Ut_2^{\Gamma_2}(A)}
	{(\Ut_1/\Ut_2)^{\Gamma_2}(\lsub f A) =
		((\Ut_1/\Ut_2)^{\Gamma_2})^{(2)}(A)}.
\]
This family of homomorphisms is precisely
a \(\Tt^{\Gamma_2}\)-equivariant homomorphism
\abmap
	{\Ut_2^{\Gamma_2}}
	{((\Ut_1/\Ut_2)^{\Gamma_2})^{(2)}}.
By inspection, the diagram
\[\xymatrix{
(\Ut_1/\Ut_2)^{\Gamma_2} \ar[r]\ar@(ur,ul)[rr]^{\operatorname{Frob}} & \Ut_2^{\Gamma_2} \ar[r]\ar@(ur,ul)[rr]^{\operatorname{Frob}} & ((\Ut_1/\Ut_2)^{\Gamma_2})^{(2)} \ar[r] & (\Ut_2^{\Gamma_2})^{(2)}
}\]
commutes.
In particular,
(\ref{subprop:factor-Frob-left}) holds for
\((\Gt_1, \Gamma_2)\).
Further, the isomorphism
\map\Frob
	{(\Ut_1/\Ut_2)^{\Gamma_2}(\ka)}
	{((\Ut_1/\Ut_2)^{\Gamma_2})^{(2)}(\ka)}
factors through
\map{(\cdot)^{[2]}}
	{(\Ut_1/\Ut_2)^{\Gamma_2}(\ka)}
	{\Ut_2^{\Gamma_2}(\ka)},
which is therefore injective; and the isomorphism
\map\Frob
	{\Ut_2^{\Gamma_2}(\ka)}
	{(\Ut_2^{\Gamma_2})^{(2)}(\ka)}
factors through
\map{((\cdot)^{[2]})^{(2)}}
	{((\Ut_1/\Ut_2)^{\Gamma_2})^{(2)}(\ka)}
	{(\Ut_2^{\Gamma_2})^{(2)}(\ka)},
which is therefore surjective, so that
\map{(\cdot)^{[2]}}
	{(\Ut_1/\Ut_2)^{\Gamma_2}(\ka)}
	{\Ut_2^{\Gamma_2}(\ka)}
is also surjective.
Since \(\Ut_2^{\Gamma_2}\) is smooth, it follows that
\((\cdot)^{[2]}\) is an infinitesimal isogeny.
In particular, (\ref{subprop:factor-Frob-right}) also holds for
\((\Gt_1, \Gamma_2)\).
Finally,
Proposition \ref{prop:rd-restriction}%
	(\ref{subprop:rd-multipliable})(\ref{case:rd-multipliable(red)})
and
Lemma
\ref{lem:tilde-borel:linear:prop:13.20}(\ref{sublem:prop:14.4})
give that the multiplication map
\abmap
	{\Ut_\alphat \times \Ut_{\alphat'}}
	{\Ut_{\sset{\alphat, \alphat'}}/\Ut_{\alphat + \alphat'} = \Ut_1/\Ut_2}
is an isomorphism of schemes (not of group schemes).
Thus, since \(\Gamma_2\) acts trivially on \(\Ut_2\), we have that
a functorial point of \((\Ut_1/\Ut_2)^{\Gamma_2}\)
lifts to a functorial point of \(\Ut_1^{\Gamma_2}\) if and only if
its unique lift in
\(\Ut_\alphat\cdot\Ut_{\alphat'}\) is fixed by \(\Gamma_2\).
Since \(\sset{\alphat, \alphat'}\) is an exceptional pair for
\((\Psi(\Gt_\ks, \Tt_\ks), \Gamma(k))\),
Proposition \ref{prop:rd-restriction}(\ref{subprop:rd-fiber})
gives that there is some
\(\gamma \in \Gamma_2(k)\) such that
\(\gamma\alphat\) equals \(\alphat'\).
Since \(\gamma\) does not belong to \(\Gamma_1\), which is
an index-\(2\) subgroup of \(\Gamma_2\), we have that
\(\Gamma_2\) equals \(\Gamma_1 \sqcup \gamma\Gamma_1\).
Since \(\Gamma_1\) acts trivially on
\(\Ut_\alphat\) and \(\Ut_{\alphat'}\),
we have that an element of
\(\Ut_\alphat\cdot\Ut_{\alphat'}\) is fixed by \(\Gamma_2\)
if and only if
it is fixed by \(\gamma\)
if and only if
the multiplicands commute.
By Proposition \ref{prop:square-pinned}(\ref{subprop:square-pinned}),
this is equivalent to
its lying in the kernel of \((\cdot)^{[2]}\).
This shows (\ref{subprop:no-mult-fixed}) for
\((\Gt_1, \Gamma_2)\).

We have proven the entire result for \((\Gt_1, \Gamma_2)\).
To finish, we need to use
Corollary \ref{cor:root-induced} to realize
\(\Ut_\alpha\) and \(\Ut_{2\alpha}\) as
\(\Ind_{\Gamma_1}^\Gamma \Ut_{\sset{\alphat, \alphat'}}\) and
\(\Ind_{\Gamma_1}^\Gamma \Ut_{\alphat + \alphat'}\);
Corollary \ref{cor:ksep-ind-scheme} to see that
exactness is preserved by induction, and so to identify
\(\Ut_\alpha/\Ut_{2\alpha}\) with an induced group;
Lemma \ref{lem:was-cor:trans-induction} to identify
\(\Gamma_1\)-fixed points with
\(\Gamma\)-fixed points; and
Lemma \ref{lem:ind-Frob}
to handle Frobenius twists.
\end{proof}

\begin{cor}
\label{cor:no-mult-fixed}
Preserve the notation and hypotheses of
Proposition \ref{prop:no-mult-fixed}.
Then \((\Ut_a^\Gamma)\smooth\) equals \(\Ut_{2a}^\Gamma\).
\end{cor}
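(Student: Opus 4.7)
The plan is to combine parts (\ref{subprop:factor-Frob-right}) and (\ref{subprop:no-mult-fixed}) of Proposition \ref{prop:no-mult-fixed} with Lemma \ref{lem:root-smooth} to reduce to a short exact sequence whose cokernel is infinitesimal.

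First, I would observe that taking \(\Gamma\)-fixed points in the short exact sequence \(1 \to \Ut_{2a} \to \Ut_a \to \Ut_a/\Ut_{2a} \to 1\) is left exact, so the kernel of \abmap{\Ut_a^\Gamma}{(\Ut_a/\Ut_{2a})^\Gamma} is exactly \(\Ut_{2a}^\Gamma\). Next, by Proposition \ref{prop:no-mult-fixed}(\ref{subprop:no-mult-fixed}), the image of \(\Ut_a^\Gamma\) in \((\Ut_a/\Ut_{2a})^\Gamma\) coincides with the kernel of \map{(\cdot)^{[2]}}{(\Ut_a/\Ut_{2a})^\Gamma}{\Ut_{2a}^\Gamma}; and by Proposition \ref{prop:no-mult-fixed}(\ref{subprop:factor-Frob-right}), this \([2]\)-power map is an infinitesimal isogeny, so its kernel \(K\) is an infinitesimal group scheme. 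I therefore get the short exact sequence
\[
1 \to \Ut_{2a}^\Gamma \to \Ut_a^\Gamma \to K \to 1.
\]

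Now \(\Ut_{2a}^\Gamma\) is smooth: by Lemma \ref{lem:root-smooth}, applied with \(2a\) in place of \(a\) (and using that \(2a\) is never multipliable, since \(4a\notin\Phi(\Gt,S)\)), it carries a \(T\)-equivariant linear structure and so is a vector group. Consequently \(\Ut_{2a}^\Gamma\) is contained in \((\Ut_a^\Gamma)\smooth\).

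For the reverse inclusion, any smooth subgroup \(H\) of \(\Ut_a^\Gamma\) is reduced, so its scheme-theoretic image in \(K\) under the quotient map is reduced; since \(K\) is infinitesimal, this image is trivial. Hence \((\Ut_a^\Gamma)\smooth\) factors through the kernel \(\Ut_{2a}^\Gamma\) of \abmap{\Ut_a^\Gamma}K, giving \((\Ut_a^\Gamma)\smooth \subseteq \Ut_{2a}^\Gamma\) and so equality. The only step that is not entirely formal is the verification that the two cited parts of Proposition \ref{prop:no-mult-fixed} fit together to give exactly the stated short exact sequence with infinitesimal cokernel; this is really just bookkeeping with the already-established exactness, so no serious obstacle is anticipated.
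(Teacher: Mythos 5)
Your argument is correct, and it parallels the paper's proof in structure (establish \(\Ut_{2a}^\Gamma \subseteq (\Ut_a^\Gamma)\smooth\) via Lemma \ref{lem:root-smooth}, then show the reverse containment using the exact sequence of Proposition \ref{prop:no-mult-fixed}(\ref{subprop:no-mult-fixed})), but you deduce the reverse containment from a different part of the proposition. The paper invokes (\ref{subprop:factor-Frob-left}) (the weaker fact that \(\Frob\) factors through \((\cdot)^{[2]}\)) and argues on \(\ks\)-points: the composite \(\Ut_a^\Gamma(\ks) \to (\Ut_a/\Ut_{2a})^\Gamma(\ks) \to ((\Ut_a/\Ut_{2a})^\Gamma)^{(2)}(\ks)\) is trivial, and injectivity of \(\Frob\) on \(\ks\)-points forces \(\Ut_a^\Gamma(\ks) \to (\Ut_a/\Ut_{2a})^\Gamma(\ks)\) to be trivial; then one takes Zariski closure. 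You instead invoke (\ref{subprop:factor-Frob-right}) (that \((\cdot)^{[2]}\) is an infinitesimal isogeny) to see that the cokernel-sheaf \(K\) of \(\abmap{\Ut_{2a}^\Gamma}{\Ut_a^\Gamma}\) is infinitesimal and conclude scheme-theoretically, which is cleaner but uses more. Both buy the same thing.

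One point you should be careful about: (\ref{subprop:factor-Frob-right}) is conditional on \(a \in \Phi(\Gt^\Gamma, S)\), so citing it as you do leaves the complementary case unaddressed. This is easily patched — when \(a \notin \Phi(\Gt^\Gamma, S)\), the group \((\Ut_a/\Ut_{2a})^\Gamma\) is trivial, hence so is \(K\), so \(K\) is still infinitesimal and your argument runs unchanged — but as written the invocation of (\ref{subprop:factor-Frob-right}) is not literally available without a case split. (The paper's choice to cite (\ref{subprop:factor-Frob-left}), which is unconditional, avoids this.)
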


\begin{proof}
Lemma \ref{lem:root-smooth} gives that
\((\Ut_a^\Gamma)\smooth\) contains \(\Ut_{2a}^\Gamma\).
Proposition \ref{prop:no-mult-fixed}%
	(\ref{subprop:factor-Frob-left},%
	\ref{subprop:no-mult-fixed})
gives that
\[\xymatrix{
\Ut_a^\Gamma(\ks) \ar[r] &
(\Ut_a/\Ut_{2a})^\Gamma(\ks) \ar[r]^-\Frob &
((\Ut_a/\Ut_{2a})^\Gamma)^{(2)}(\ks)
}\]
is trivial, hence, since \(\Frob\) is injective on \(\ks\)-points, that
\abmap{\Ut_a^\Gamma(\ks)}{(\Ut_a/\Ut_{2a})^\Gamma(\ks)} is trivial.
Since \((\Ut_a^\Gamma)_{\smoothsub\,\ks}\) is
the Zariski closure of \(\Ut_a^\Gamma(\ks)\), we have that
\((\Ut_a^\Gamma)_{\smoothsub\,\ks}\) is contained in
\((\Ut_{2a}^\Gamma)_\ks\), hence that
\((\Ut_a^\Gamma)\smooth\) is contained in \(\Ut_{2a}^\Gamma\).
\end{proof}

\begin{cor}
\label{cor:root-smoothable}
For every \(a \in \Phi(\Gt, S)\), we have that
\(\Ut_a^\Gamma\) is smoothable and connected.
\end{cor}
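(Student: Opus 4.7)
The plan is to split into cases based on how much of the structural machinery of \S\ref{sec:quass-smooth} applies, handling the ``nice'' cases by invoking Lemma \ref{lem:root-smooth} and the remaining characteristic-two cases by assembling an extension of a vector group by a connected infinitesimal group, using Proposition \ref{prop:no-mult-fixed} and Corollary \ref{cor:no-mult-fixed}.

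First, suppose \(a\) is not multipliable in \(\Phi(\Gt, S)\), or \(a\) does not belong to \(\Phi(\Gt^\Gamma, S)\), or \(p\) is odd. Then Lemma \ref{lem:root-smooth} equips \(\Ut_a^\Gamma\) with a \(T\)-equivariant linear structure, exhibiting it as a vector group. Vector groups are smooth (so smoothable) and connected, so the conclusion is immediate.

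The remaining case is \(p = 2\) with \(a \in \Phi(\Gt^\Gamma, S)\) multipliable. By Proposition \ref{prop:square-pinned}(\ref{subprop:square-pinned}), \(2a\) also lies in \(\Phi(\Gt^\Gamma, S)\); applying Lemma \ref{lem:root-linear} to the (necessarily non-multipliable) root \(2a\) produces a \((\Gamma \ltimes \Tt)\)-equivariant linear structure on \(\Ut_{2a}\), so that \(\Ut_{2a}^\Gamma\) is a sub-vector group, hence smooth and connected. When \(a\) is inert for \((\Psi(\Gt_\ks, T_\ks), \Gal(k))\), Corollary \ref{cor:no-mult-fixed} yields \((\Ut_a^\Gamma)\smooth = \Ut_{2a}^\Gamma\); the same argument applies verbatim over \(\ka\) (since the non-reducedness of the relevant irreducible component of \(\Phi(\Gt_\ks, T_\ks) = \Phi(\Gt_\ka, T_\ka)\), which encodes inertness, is preserved), so that \(((\Ut_a^\Gamma)_\ka)\smooth = (\Ut_{2a}^\Gamma)_\ka = ((\Ut_a^\Gamma)\smooth)_\ka\), giving smoothability. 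Connectedness in this sub-case follows from Proposition \ref{prop:no-mult-fixed}(\ref{subprop:no-mult-fixed}, \ref{subprop:factor-Frob-right}): the exact sequence there, combined with the fact that \((\cdot)^{[2]}\) is an infinitesimal isogeny, presents \(\Ut_a^\Gamma\) as an extension \(1 \to \Ut_{2a}^\Gamma \to \Ut_a^\Gamma \to \ker((\cdot)^{[2]}) \to 1\), with the kernel infinitesimal (hence connected) and \(\Ut_{2a}^\Gamma\) connected, so that the extension is connected.

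It remains to treat the split sub-case (\(p = 2\), \(a \in \Phi(\Gt^\Gamma, S)\) multipliable, \(a\) split for \((\Psi(\Gt_\ks, T_\ks), \Gal(k))\)). Here the extensions \(\alpha\) of \(a\) in \(\Phi(\Gt_\ks, T_\ks)\) are non-multipliable. After base change to \(\ks\) (where connectedness and smoothness can be checked by faithfully flat descent, and \(\Gamma/\Gamma_{\sset{\at, \at'}}\) is constant), Corollary \ref{cor:root-induced} identifies \(\Ut_{a,\ks}\) with \(\Ind_{\Gamma_{\sset{\at, \at'}}}^\Gamma \Ut_{\sset{\at, \at'}}\), and Lemma \ref{lem:was-cor:trans-induction} identifies its \(\Gamma_\ks\)-fixed points with \(\Ut_{\sset{\at, \at'}}^{\Gamma_{\sset{\at, \at'}}}\). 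The latter is controlled by the inert analysis above applied to the pair \((\Gt_{\sset{\at, \at'}}, \Gamma_{\sset{\at, \at'}})\), where the common restriction of \(\alphat\) and \(\alphat'\) to \(\Tt \cap \fix\Gt_{\sset{\at, \at'}}^{\Gamma_{\sset{\at, \at'}}}\) is multipliable and \emph{inert}; thus \((\Ut_a^\Gamma)_\ks\) is smoothable and connected, and faithfully flat descent gives the same over \(k\). The main obstacle is bookkeeping: matching the stabilizer groups \(\Gamma_{\sset{\at, \at'}}\) and \(\Gamma_{\at + \at'}\) as in Remark \ref{rem:switch-exc-pair}, and verifying that the infinitesimal kernel produced in this descended inert picture is still connected, so that the extension argument survives intact.
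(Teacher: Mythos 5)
Your decomposition is genuinely different from the paper's and substantially more elaborate. The paper makes a single reduction at the outset: by Lemma \ref{lem:tilde-borel:linear:prop:13.20}(\ref{sublem:prop:14.4}), \(\Ut_{a\,\ks}\) is directly spanned (with each factor \(\Gamma_\ks\)-stable) by the \(\Ut_\alpha\) for \(\alpha\) ranging over extensions of \(a\) in \(\Phi(\Gt_\ks, T_\ks)\), so one may replace \(k\) by \(\ks\), \(S\) by \(T_\ks\), and \(a\) by an extension \(\alpha\). After this reduction, the dichotomy is simply whether \(\alpha\) is multipliable; the split sub-case disappears entirely, because when \(a\) is split its extensions \(\alpha\) are non-multipliable, and then Lemma \ref{lem:root-smooth} applies directly. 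Your treatment of the split sub-case via Corollary \ref{cor:root-induced}, Lemma \ref{lem:was-cor:trans-induction}, and the inert analysis applied to \((\Gt_{\sset{\at, \at'}}, \Gamma_{\sset{\at, \at'}})\) is correct in outline but is several layers of machinery beyond what is needed, and you do not verify that the reduced pair inherits quasisemisimplicity, which is required for the machinery to apply (it does, by Lemma \ref{lem:quass-to-centralizer}).

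There is a genuine gap in your inert sub-case, in the step asserting that ``the same argument applies verbatim over \(\ka\)'' to give \(((\Ut_a^\Gamma)_\ka)\smooth = (\Ut_{2a}^\Gamma)_\ka\). Corollary \ref{cor:no-mult-fixed} is stated for a root of a maximal split torus in the fixed-point group, but over \(\ka\) the torus \(S_\ka\) is no longer maximal split in \(G_\ka\) (the maximal torus \(T_\ka\) is now split), so \(a_\ka \in \bX^*(S_\ka)\) is not the right kind of object to feed into the corollary, and \((\Ut_a)_\ka\) is not a root group for the maximal split torus over \(\ka\) but rather a direct span of several. Your justification — that the non-reducedness of the relevant component of \(\Phi(\Gt_\ks, T_\ks) = \Phi(\Gt_\ka, T_\ka)\) is preserved — is aimed at the wrong issue; inertness is automatic over \(\ka\) (trivial Galois action plus Remark \ref{rem:not-inert}), but the missing ingredient is precisely the direct-span decomposition \((\Ut_a)_\ka \cong \prod_\alpha \Ut_\alpha\) (with each \(\Ut_\alpha\) being \(\Gamma_\ka\)-stable) that lets one apply the corollary factor-by-factor with \(T_\ka\) as the split torus. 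Once you supply that, the computation goes through, and it is exactly what the paper's front-loaded reduction is designed to supply once and for all.

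One more small point: your connectedness argument in the inert sub-case (exhibiting \(\Ut_a^\Gamma\) as an extension of the infinitesimal kernel of \((\cdot)^{[2]}\) by the vector group \(\Ut_{2a}^\Gamma\), using Proposition \ref{prop:no-mult-fixed}(\ref{subprop:no-mult-fixed}, \ref{subprop:factor-Frob-right})) is correct and is a reasonable alternative to the paper's shorter observation that \((\Ut_\alpha^\Gamma)_\ka\) has connected maximal reduced subscheme \((\Ut_{2\alpha}^\Gamma)_\ka\) and so is itself connected.
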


\begin{proof}
Lemma \ref{lem:tilde-borel:linear:prop:13.20}(\ref{sublem:prop:14.4})
shows that \(\Ut_{a\,\ks}\) is directly spanned by
subgroups \(\Ut_\alpha\) as \(\alpha\) ranges over
the `extensions' of \(a\) in \(\Phi(\Gt_\ks, T_\ks)\),
so we may, and do, assume,
upon replacing \(k\) by \(\ks\), hence \(S\) by \(T_\ks\), and
\(a\) by an `extension' \(\alpha\), that
\(k\) is separably closed.

If \(\alpha\) is not multipliable in \(\Phi(\Gt, T)\) or
\(p\) is odd, then
Lemma \ref{lem:root-smooth}
shows that \(\Ut_\alpha^\Gamma\) is smooth and connected.
Otherwise,
Corollary \ref{cor:no-mult-fixed} gives that
\((\Ut_\alpha^\Gamma)\smooth\) equals \(\Ut_{2\alpha}^\Gamma\) and
\(((\Ut_\alpha^\Gamma)_\ka)\smooth =
(\Ut_{\alpha_\ka}^{\Gamma_\ka})\smooth\) equals
\(\Ut_{2\alpha_\ka}^{\Gamma_\ka} = (\Ut_{2\alpha}^\Gamma)_\ka\), so that
\(\Ut_\alpha^\Gamma\) is smoothable.
Since the maximal reduced subscheme
\(((\Ut_\alpha^\Gamma)_\ka)\smooth = (\Ut_{2\alpha}^\Gamma)_\ka\)
of \((\Ut_\alpha^\Gamma)_\ka\) is connected
(by Lemma \ref{lem:root-smooth}), also
\((\Ut_\alpha^\Gamma)_\ka\), hence \(\Ut_\alpha^\Gamma\), is
connected.
\end{proof}

{\newcommand\theadhocthm{\ref{thm:quass}(\ref{subthm:quass-smoothable})}
\begin{adhocthm}
\((\Gt^\Gamma)\conn\) is smoothable.
\end{adhocthm}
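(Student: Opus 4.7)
The plan, as foreshadowed in the introduction, is to paste together the smoothed relative-root fixed-point subgroups supplied by Corollary \ref{cor:root-smoothable} into a smooth, irreducible, locally closed subscheme $\Omega_0 \subset \Gt^\Gamma$ containing the identity, whose base change to $\ka$ is open in $\fix\Gt_\ka^{\Gamma_\ka}$. A standard Chevalley argument will force $\Omega_0 \subset G$, and the resulting dimension comparison will yield $G_\ka = \fix\Gt_\ka^{\Gamma_\ka}$, which by Remark \ref{rem:conn-smooth} is exactly the desired smoothability of $(\Gt^\Gamma)\conn$.

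Concretely, I would fix a $\Gamma$-stable Borel--torus pair $(\Bt, \Tt)$, with (automatically $\Gamma$-stable) opposite Borel $\Bt^-$ and unipotent radicals $\Ut^\pm$; and let $S$ denote the maximal split torus of $\fix\Tt^\Gamma$, which by Proposition \ref{prop:quass-rough}(\ref{subprop:quass-down}) is maximal split in $G$ and pointwise fixed by $\Gamma$. Then every relative-root subgroup $\Ut_a$ is $\Gamma$-stable. The big-cell open immersion $\Ut^- \times \Tt \times \Ut \hookrightarrow \Gt$ and the direct-spanning scheme isomorphisms $\prod_{a} \Ut_a \xrightarrow\sim \Ut^\pm$ of Lemma \ref{lem:tilde-borel:linear:prop:13.20}(\ref{sublem:prop:14.4}) (the product taken over non-divisible $a \in \Phi(\Ut^\pm, S)$) are both $\Gamma$-equivariant with respect to the componentwise action on the product; taking $\Gamma$-fixed points yields scheme isomorphisms $\prod_a \Ut_a^\Gamma \xrightarrow\sim (\Ut^\pm)^\Gamma$ and an open immersion $(\Ut^-)^\Gamma \cdot \Tt^\Gamma \cdot \Ut^\Gamma \hookrightarrow \Gt^\Gamma$. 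Within the latter I would define
\[
\Omega_0 \;=\; \Bigl(\prod_a (\Ut_{-a}^\Gamma)\smooth\Bigr) \cdot \fix\Tt^\Gamma \cdot \Bigl(\prod_a (\Ut_a^\Gamma)\smooth\Bigr).
\]
By Corollary \ref{cor:root-smoothable} each $(\Ut_a^\Gamma)\smooth$ is smooth and geometrically connected, as is the torus $\fix\Tt^\Gamma$, so $\Omega_0$ is a smooth, geometrically irreducible, locally closed subscheme of $\Gt^\Gamma$ containing the identity.

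Next, the subgroup of $\Gt^\Gamma$ generated by $\Omega_0 \cup \Omega_0^{-1}$ is, after base change to $\ka$, smooth and connected by the standard fact that a subgroup of an algebraic group generated by a smooth irreducible constructible subset containing the identity is smooth and connected; these properties descend to $k$, so the subgroup is contained in $(\Gt^\Gamma)\smooth\conn = G$, giving $\Omega_0 \subset G$ and $\dim G \geq \dim \Omega_0 = \dim (\Omega_0)_\ka$. Smoothability of each $\Ut_a^\Gamma$ (Corollary \ref{cor:root-smoothable}) and of the torus $\Tt^\Gamma$ rewrites $(\Omega_0)_\ka$ as $\bigl(\prod ((\Ut_{-a})_\ka^{\Gamma_\ka})\smooth\bigr) \cdot \fix\Tt_\ka^{\Gamma_\ka} \cdot \bigl(\prod ((\Ut_a)_\ka^{\Gamma_\ka})\smooth\bigr)$.

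The final step, and the one I expect to be the principal technical obstacle, is to recognise this base-changed expression as the big-cell open subscheme of $\fix\Gt_\ka^{\Gamma_\ka}$ arising from its maximal split torus $S_\ka$, maximal torus $\fix\Tt_\ka^{\Gamma_\ka}$, and relative-root subgroups $((\Ut_a)_\ka^{\Gamma_\ka})\smooth$. Over the algebraically closed field $\ka$, applying the structure theory developed in \S\ref{sec:quass-smooth} to the quasisemisimple datum $(\Gt_\ka, \Gamma_\ka)$, and invoking \cite{adler-lansky:lifting1}*{Proposition 3.5} to know $\fix\Gt_\ka^{\Gamma_\ka}$ is reductive, this identification should reduce to a match of the relative root-space decomposition of $\Lie(\fix\Gt_\ka^{\Gamma_\ka})$ with the Lie algebras $\Lie((\Ut_a)_\ka^{\Gamma_\ka})\smooth$; the awkward case $p = 2$ with multipliable roots is precisely what Propositions \ref{prop:square-pinned} and \ref{prop:no-mult-fixed} were engineered to handle. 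Granting the identification, $(\Omega_0)_\ka$ is open in the smooth connected group $\fix\Gt_\ka^{\Gamma_\ka}$, so $\dim G \geq \dim (\Omega_0)_\ka = \dim \fix\Gt_\ka^{\Gamma_\ka} \geq \dim G_\ka = \dim G$; combined with the containment $G_\ka \subseteq \fix\Gt_\ka^{\Gamma_\ka}$ of smooth, connected $\ka$-groups of equal dimension this forces $G_\ka = \fix\Gt_\ka^{\Gamma_\ka}$, as required.
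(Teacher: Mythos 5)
Your $\Omega_0$ is the paper's subscheme $V'$, and the setup via the $\Gamma$-equivariant big-cell decomposition---direct spanning from Lemma \ref{lem:tilde-borel:linear:prop:13.20}(\ref{sublem:prop:14.4}), smoothable, connected root groups from Corollary \ref{cor:root-smoothable}---is exactly the paper's. Before addressing the substantive point, note that the Chevalley-style argument for $\Omega_0 \subset G$ is superfluous: $\Omega_0$ is by construction a product of smooth, connected \emph{subgroups} of $\Gt^\Gamma$, each already contained in $\fix\Gt^\Gamma = G$, so the containment is immediate.

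The genuine gap is the step you yourself flag as the principal obstacle: you never actually prove that $(\Omega_0)_\ka$ is a big cell of $\fix\Gt_\ka^{\Gamma_\ka}$, and carrying this out directly would require matching $\Lie\bigl(((\Ut_a)_\ka^{\Gamma_\ka})\smooth\bigr)$ against the $S_\ka$-root-space decomposition of $\Lie(\fix\Gt_\ka^{\Gamma_\ka})$, which is essentially the content of Proposition \ref{prop:GS-vs-GtS}---a result the paper develops only \emph{after}, and partly on the strength of, the present theorem. The paper sidesteps the identification entirely. The multiplication map realizes $V \ldef \prod (\Ut_a^\Gamma)\conn \cdot (\Tt^\Gamma)\conn \cdot \prod (\Ut_{-a}^\Gamma)\conn$ as an \emph{open} subscheme of $(\Gt^\Gamma)\conn$, so $V_\ka$ is open in $((\Gt^\Gamma)\conn)_\ka$. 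Since passage to the maximal reduced subscheme over $\ka$ is compatible with open immersions, $V'_\ka$, which is the maximal reduced subscheme of $V_\ka$, is a nonempty open subset of the maximal reduced subscheme of $((\Gt^\Gamma)\conn)_\ka$, that is, of $\fix\Gt_\ka^{\Gamma_\ka}$. Then $(\fix\Gt^\Gamma)_\ka$, a closed subscheme of the irreducible scheme $\fix\Gt_\ka^{\Gamma_\ka}$ containing the nonempty open set $V'_\ka$, equals all of $\fix\Gt_\ka^{\Gamma_\ka}$; Remark \ref{rem:conn-smooth} then gives smoothability. No dimension count, no identification of root groups of $\fix\Gt_\ka^{\Gamma_\ka}$, and no appeal to Propositions \ref{prop:square-pinned} or \ref{prop:no-mult-fixed} is needed at this last step.
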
}

\begin{proof}
Let \(\Bt\) be a Borel subgroup of \(\Gt\) that
contains \(S\) and
is preserved by \(\Gamma\).
Lemma \ref{lem:tilde-borel:linear:prop:13.20}(\ref{sublem:prop:14.4})
gives that
the groups \(\Ut_a\) as \(a\) ranges over \(\Phi(\Bt, S)\)
directly span the unipotent radical of \(\Bt\), and
the groups \(\Ut_{-a}\) as \(a\) ranges over \(\Phi(\Bt, S)\)
directly span the unipotent radical of the Borel subgroup
\(\Bt^-\) opposite to \(\Bt\) with respect to \(\Tt\).
Therefore,
the multiplication map
\abmap
	{\prod_{a \in \Phi(\Bt, S)} \Ut_a \times
	\Tt \times
	\prod_{a \in \Phi(\Bt, S)} \Ut_{-a}}
	{\Gt}
is a \(\Gamma\)-equivariant isomorphism of schemes onto
the open subscheme \(\Bt\cdot\Bt^-\) of \(\Gt\), so
the multiplication map
\abmap
	{\prod_{a \in \Phi(\Bt, S)} \Ut_a^\Gamma \times
	\Tt^\Gamma \times
	\prod_{a \in \Phi(\Bt, S)} \Ut_{-a}^\Gamma}
	{\Gt^\Gamma}
is an isomorphism of schemes onto
an open subscheme of \(\Gt^\Gamma\).
Therefore,
the multiplication map
\abmap
	{\prod_{a \in \Phi(\Bt, S)} (\Ut_a^\Gamma)\conn \times
	(\Tt^\Gamma)\conn \times
	\prod_{a \in \Phi(\Bt, S)} (\Ut_{-a}^\Gamma)\conn}
	{(\Gt^\Gamma)\conn}
is an isomorphism onto
an open subscheme \(V\) of \((\Gt^\Gamma)\conn\).
Since subgroups of tori are always smoothable
\cite{conrad-gabber-prasad:prg}*{Corollary A.8.2},
we have by Corollary \ref{cor:root-smoothable} that
there is a smooth subscheme \(V'\) of \(V\) such that
\(V'_\ka\) is the maximal reduced subscheme of \(V_\ka\).
In particular, since
\(\fix\Gt^\Gamma\) contains \(V'\), we have that
\((\fix\Gt^\Gamma)_\ka\) is
a closed subscheme of
the irreducible scheme \(\fix\Gt_\ka^{\Gamma_\ka}\) that
contains a nonempty open subset \(V'_\ka\) of \(\fix\Gt_\ka^{\Gamma_\ka}\),
hence is all of \(\fix\Gt_\ka^{\Gamma_\ka} = ((\Gt^{\Gamma})_\ka)\smooth\conn\).
That is, \((\Gt^\Gamma)\conn\) is smoothable by Remark~\ref{rem:conn-smooth}.
\end{proof}

{\newcommand\theadhocthm{\ref{thm:quass}(\ref{subthm:quass-reductive})}
\begin{adhocthm}
$G$ is reductive.
\end{adhocthm}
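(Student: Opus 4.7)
The plan is to reduce the problem to the algebraically closed case, where we can invoke a classical result. First, note that $G = ((\Gt^\Gamma)\smooth)\conn$ is smooth and connected by construction, so the only substantive content of the statement is reductivity of $G_\ka$, i.e.\ the absence of any nontrivial smooth, connected, unipotent, normal subgroup of $G_\ka$.

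Combining Theorem \ref{thm:quass}(\ref{subthm:quass-smoothable})—which has just been established—with the final assertion of Remark \ref{rem:conn-smooth}, the smoothability of $(\Gt^\Gamma)\conn$ yields the identification
\[
G_\ka = (((\Gt^\Gamma)\smooth)\conn)_\ka = (((\Gt^\Gamma)_\ka)\smooth)\conn = \fix\Gt_\ka^{\Gamma_\ka},
\]
using that fixed-point schemes commute with base change. Quasisemisimplicity is clearly preserved under base change: if $(\Bt, \Tt)$ is a $\Gamma$-stable Borel--torus pair in $\Gt$, then $(\Bt_\ka, \Tt_\ka)$ is a $\Gamma_\ka$-stable Borel--torus pair in $\Gt_\ka$. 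Thus $(\Gt_\ka, \Gamma_\ka)$ is again a quasisemisimple reductive datum, now over the algebraically closed field $\ka$.

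It then suffices to apply \cite{adler-lansky:lifting1}*{Proposition 3.5} to $(\Gt_\ka, \Gamma_\ka)$, which directly gives that $\fix\Gt_\ka^{\Gamma_\ka}$, hence $G_\ka$, is reductive; therefore $G$ itself is reductive. The main obstacle—controlling the behavior of the smoothed connected fixed-point group under base change over a possibly imperfect field—has already been surmounted in the proof of Theorem \ref{thm:quass}(\ref{subthm:quass-smoothable}) via the explicit description of $\fix\Gt^\Gamma$ built up from the root subgroups $\Ut_a^\Gamma$. Once smoothability is in place, reductivity follows almost formally from the algebraically closed case.
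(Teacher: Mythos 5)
Your proposal correctly reduces to the algebraically closed case via Theorem \ref{thm:quass}(\ref{subthm:quass-smoothable}) and Remark \ref{rem:conn-smooth}, and this is indeed the right reduction — it is exactly how the paper begins. However, there is a gap at the final step: \cite{adler-lansky:lifting1}*{Proposition 3.5} concerns actions of \emph{abstract finite groups} on reductive groups, and does not apply to the quasisemisimple action of an arbitrary smooth algebraic group $\Gamma_\ka$, which may have positive dimension. Indeed, the paper emphasizes in the introduction that one of its improvements over that reference is precisely to "allow $\Gamma$ to be any smooth algebraic group, rather than just an abstract finite group," so one cannot simply cite the earlier result for $(\Gt_\ka, \Gamma_\ka)$.

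The paper closes this gap with an intermediate reduction after base change. Using Remark \ref{rem:torus-quass}(\ref{subrem:quass-to-torus}), the action of $\Gamma\conn$ factors through a map $\abmap\Gamma{\Tt/Z(\Gt)}$ (where $(\Bt,\Tt)$ is a $\Gamma$-stable Borel--torus pair), so the image of $\Gamma\conn$ in $\Gt\adform$ is a smooth connected subgroup of a torus — hence itself a torus. Standard results (\cite{borel:linear}*{\S13.17, Corollary 2(a)} and \cite{borel:linear}*{Proposition 11.15}) then show that $\Gt^{\Gamma\conn}$ is reductive and $(\Bt^{\Gamma\conn}, \Tt)$ is a Borel--torus pair in it, preserved by $\pi_0(\Gamma)(k)$. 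Only at that point, to the resulting action of the \emph{abstract finite group} $\pi_0(\Gamma)(k)$ on $\Gt^{\Gamma\conn}$, is \cite{adler-lansky:lifting1}*{Proposition 3.5(i,ii)} legitimately applied, giving reductivity of $G = \fix(\Gt^{\Gamma\conn})^{\pi_0(\Gamma)(k)}$. You would need to insert this two-stage argument — torus fixed points, then finite-group fixed points — before the final citation.
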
}

\begin{proof}
Note that \((\Gt_\ka, \Gamma_\ka)\) is quasisemisimple.

Theorem \ref{thm:quass}(\ref{subthm:quass-smoothable})
and
Remark \ref{rem:conn-smooth}
give that
\(G_\ka = (\fix\Gt^\Gamma)_\ka\) equals
\(\fix\Gt_\ka^{\Gamma_\ka}\), so we may, and do, assume,
upon replacing \(k\) by \(\ka\), that
\(k\) is algebraically closed.

Let \((\Bt, \Tt)\) be a Borel--torus pair in \(\Gt\) that is
preserved by \(\Gamma\).
We have by
Remark \ref{rem:torus-quass}(\ref{subrem:quass-to-torus}) that
the action of \(\Gamma\conn\) on \(\Gt\) factors through
\abmap{\Tt/Z(\Gt)}{\uAut(\Gt)} to give a map
\abmap\Gamma{\Tt/Z(\Gt)}.
Then the image of \(\Gamma\conn\) in \(\Tt/Z(\Gt)\) is
a smooth, connected subgroup of a torus, hence is itself a torus;
so
\cite{borel:linear}*{\S13.17, Corollary 2(a)} gives that
\(\Gt^{\Gamma\conn}\) is reductive, and
\cite{borel:linear}*{Proposition 11.15} gives that
\(\Bt^{\Gamma\conn}\) is a Borel subgroup of \(\Gt^{\Gamma\conn}\).
In particular, \((\Bt^{\Gamma\conn}, \Tt)\) is
a Borel--torus pair in \(\Gt^{\Gamma\conn}\) that is
preserved by \(\pi_0(\Gamma)(k)\).
Thus, applying \cite{adler-lansky:lifting1}*{Proposition 3.5(i,ii)} to
the action of the abstract, finite group \(\pi_0(\Gamma)(k)\) on
\(\Gt^{\Gamma\conn}\) gives that
\(G = \fix(\Gt^{\Gamma\conn})^{\pi_0(\Gamma)(k)}\) is reductive.
\end{proof}

\numberwithin{equation}{section}
\section{Fixed points for quasisemisimple actions}
\label{sec:quass}

We continue to work with
the field \(k\) of characteristic exponent \(p\), and
reductive datum \((\Gt, \Gamma)\) over \(k\), from
\S\ref{sec:quass-smooth}, and again put
\(G = \fix\Gt^\Gamma\).

Throughout this section, we assume that
\((\Gt, \Gamma)\) is quasisemisimple.

Proposition \ref{prop:quass-facts}
is very close to \cite{adler-lansky:lifting1}*{Proposition 3.5}
and \cite{digne-michel:non-connected}*{Th\'eor\`eme 1.8},
but stated in a way that is more convenient for our purposes.
In particular, it takes into account questions about fields of definition.
It also allows us to translate Proposition \ref{prop:rd-restriction}
to the language of reductive groups.

\begin{prop}
\label{prop:quass-facts}\hfill
\begin{enumerate}[label=(\alph*), ref=\alph*]
\item\label{subprop:quass-split-descends}
\(G\) is quasisplit, and
split if \(\Gt\) is split.
\item\label{subprop:fixed-Borus}
If \((\Bt, \Tt)\) is a \(\Gamma\)-stable Borel--torus pair in \(\Gt\),
then
\((\Bt^\Gamma)\conn\) and \((\Tt^\Gamma)\conn\) are smoothable,
and
\((B, T) \ldef (\fix\Bt^\Gamma, \fix\Tt^\Gamma)\)
equals \((\Bt \cap G, \Tt \cap G)\)
and is a Borel--torus pair in \(G\).
The map \mapto{\overline\pi}{(\Bt, \Tt)}{(B, T)}
is a surjection from the set of \(\Gamma\)-stable Borel--torus pairs in \(\Gt\)
onto the set of Borel--torus pairs in \(G\).
\end{enumerate}
\end{prop}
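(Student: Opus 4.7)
The plan is to handle part (a) quickly from the structure theory of \S\ref{sec:quass-smooth}, then develop part (b) in three stages: smoothability, identification with a Borel--torus pair of $G$, and surjectivity.

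For (a), since $G$ is connected reductive by Theorem \ref{thm:quass}(\ref{subthm:quass-reductive}), I would choose a maximal split torus $S$ in $G$ and apply Proposition \ref{prop:quass-rough}(\ref{subprop:quass-up}): $\Tt \ldef C_\Gt(S)$ is a $\Gamma$-stable maximal torus of $\Gt$ sitting inside a $\Gamma$-stable Borel, while $T \ldef \Tt \cap G = \fix\Tt^\Gamma$ is a maximal torus of $G$ with $C_G(S) = T$. A connected reductive group whose maximal-split-torus centralizer is a torus is quasi-split, so $G$ is quasi-split. If $\Gt$ is additionally split, then the $\Gamma$-stable maximal torus $\Tt$ lies inside a $k$-Borel of $\Gt$ and is therefore $\Gt(k)$-conjugate to a split maximal torus (hence itself split); consequently $\bX^*(\fix\Tt^\Gamma)$ is a Galois-trivial quotient of $\bX^*(\Tt)$, so $T$ and thus $G$ is split.

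For (b), I would start with smoothability. $(\Tt^\Gamma)\conn$ is smoothable because $\Tt^\Gamma$ is of multiplicative type (\cite{conrad-gabber-prasad:prg}*{Corollary A.8.2}). For $\Bt = \Tt\cdot\Ut$ with $\Ut$ the unipotent radical, Lemma \ref{lem:tilde-borel:linear:prop:13.20}(\ref{sublem:prop:14.4}) provides a $\Gamma$-equivariant direct-spanning isomorphism of schemes $\prod_a \Ut_a \to \Ut$ over the non-divisible $a \in \Phi(\Bt, S)$; combined with Corollary \ref{cor:root-smoothable}, this gives $\fix\Ut^\Gamma \cong \prod \fix\Ut_a^\Gamma$ smooth, connected, and with correct base change, so $(\Bt^\Gamma)\conn = \fix\Tt^\Gamma\cdot\fix\Ut^\Gamma$ is smoothable. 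The identification $(B, T) = (\Bt \cap G, \Tt \cap G)$ and its status as a Borel--torus pair of $G$ is then easiest to verify after base change to $\ka$: smoothability gives $G_\ka = \fix\Gt_\ka^{\Gamma_\ka}$, $B_\ka = \fix\Bt_\ka^{\Gamma_\ka}$, and $T_\ka = \fix\Tt_\ka^{\Gamma_\ka}$, and over $\ka$ the image of $\Gamma_\ka\conn$ in $\Gt\adform$ is a torus by Remark \ref{rem:torus-quass}(\ref{subrem:quass-to-torus}), so classical results (\cite{borel:linear}*{\S13.17 Corollary 2(a) and Proposition 11.15}, combined with \cite{adler-lansky:lifting1}*{Proposition 3.5} applied to $\pi_0(\Gamma)(\ka)$ on $\Gt_\ka^{\Gamma_\ka\conn}$ as in the proof of Theorem \ref{thm:quass}(\ref{subthm:quass-reductive})) identify $\fix\Bt_\ka^{\Gamma_\ka} = \Bt_\ka \cap G_\ka = (\Bt \cap G)_\ka$ as a Borel of $G_\ka$; descent then yields the claimed equalities over $k$.

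For the surjectivity of $(\Bt, \Tt) \mapsto (B, T)$, given a Borel--torus pair $(B, T)$ in $G$ with maximal split torus $S$ in $T$, I would set $\Tt = C_\Gt(S)$ (which is $\Gamma$-stable and satisfies $\Tt \cap G = T$ by Proposition \ref{prop:quass-rough}(\ref{subprop:quass-up})) and construct $\Bt = P_\Gt(\delta)$ for a sufficiently generic cocharacter $\delta \in \bX_*(S)$ lying in the $\Phi(G, S)$-chamber corresponding to $B$; the refinement of Weyl chambers of $\Phi(G, S)$ by those of $\Phi(\Gt, S)$ in $\bX_*(S) \otimes_\Z \R$ guarantees such a $\delta$ exists, Lemma \ref{lem:cochar-by-torus} (or rather its construction) ensures $\Bt$ is a $\Gamma$-stable Borel of $\Gt$ containing $\Tt$, and $\Bt \cap G = P_G(\delta) = B$ by the choice of chamber. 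The main obstacle I anticipate is the equality $\fix\Bt^\Gamma = \Bt \cap G$, where one must verify that $\Bt \cap G$ is smooth as a $k$-scheme and not just after base change; this is what the descent from $\ka$ accomplishes, but attention is needed when dealing with multipliable roots in characteristic $2$, where the collapse phenomenon of Corollary \ref{cor:no-mult-fixed} could a priori obstruct smooth matching of root-space decompositions. The direct-spanning smoothness argument together with Theorem \ref{thm:quass}(\ref{subthm:quass-smoothable}) is designed exactly to close this gap.
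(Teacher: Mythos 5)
Your proposal is correct, but takes genuinely different routes from the paper at two key points.

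For the identification $\Bt \cap G = \fix\Bt^\Gamma$ and the verification that $(B,T)$ is a Borel--torus pair, the paper works directly over $k$: it takes the cocharacter $\delta$ of $T$ from Lemma \ref{lem:cochar-by-torus} with $\Bt = P_\Gt(\delta)$, observes that $\Bt \cap G = P_\Gt(\delta) \cap G = P_G(\delta)$ is a parabolic subgroup of $G$ (using that $G$ is reductive) that is also solvable, hence a Borel of $G$; smoothability of $(\Bt^\Gamma)\conn$ then falls out for free by running the same argument over $\ka$ and invoking Remark \ref{rem:conn-smooth}. You instead prove smoothability first via the direct-spanning decomposition of the unipotent radical, base change to $\ka$ to appeal to classical results of Borel and to \cite{adler-lansky:lifting1}, and then descend. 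This works, and it essentially re-runs the smoothability argument from Theorem \ref{thm:quass}(\ref{subthm:quass-smoothable}) localized to $\Bt$, but the paper's version is more economical and avoids the $\ka$-detour entirely. The caution you flag about multipliable roots in characteristic $2$ is handled by Corollary \ref{cor:root-smoothable} and is indeed the reason the direct-spanning route needs care; the paper's route sidesteps this entirely since it never decomposes $\Ut$.

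For surjectivity, you construct the $\Gamma$-stable $\Bt$ by hand from the chamber of $\Phi(G,S)$ cut out by $B$, refining to a chamber of $\Phi(\Gt,S)$. This is a valid and quite natural argument (it hinges on $\Phi(G,S) \subseteq \Phi(\Gt,S)$ so that $\Phi(\Gt,S)$-chambers refine $\Phi(G,S)$-chambers). The paper instead uses $G(k)$-equivariance of $\overline\pi$ together with $G(k)$-transitivity on the Borel--torus pairs of $G$ (\cite{conrad-gabber-prasad:prg}*{Theorems C.2.3, C.2.5}): since $\overline\pi$ hits one point by the direct construction already done, equivariance and transitivity give surjectivity with no further work. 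Your chamber construction is a legitimate alternative and makes the pre-image explicit, at the cost of a little extra combinatorial verification.

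Your argument for (a) is essentially the paper's. One small remark: the paper deduces quasi-splitness of $G$ from the fact that $(B,T)$ is a Borel--torus pair, whereas you deduce it from $C_G(S) = T$ being a torus; both are fine, but be sure the order of argument is consistent with how you establish the other parts.
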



\begin{proof}
Let \((\Bt, \Tt)\) be a \(\Gamma\)-stable Borel--torus pair in \(\Gt\), and
put \((B, T) =  (\fix\Bt^\Gamma, \fix\Tt^\Gamma)\).
As in the proof of Theorem \ref{thm:quass}(\ref{subthm:quass-smoothable}),
we have by
\cite{conrad-gabber-prasad:prg}*{Corollary A.8.2}
that \((\Tt^\Gamma)\conn\) is smoothable; so
we have by Remark \ref{rem:conn-smooth} that
\(T_\ka = (\fix\Tt^\Gamma)_\ka\) equals
\(\fix\Tt_\ka^{\Gamma_\ka}\).
Proposition \ref{prop:quass-rough}(\ref{subprop:quass-down})
shows that \(\Tt \cap G\) equals \(T\) and
\(C_G(S) = C_\Gt(S) \cap G\) equals \(\Tt \cap G = T\),
where \(S\) is the maximal split torus in \(G\), so that
\(T\) is a maximal torus in \(G\).

Let \(\delta\) be the cocharacter of \(T\) constructed in
Lemma \ref{lem:cochar-by-torus},
so that \(\Bt\) equals \(P_\Gt(\delta)\).
Then \(\Bt \cap G\) equals
\(P_\Gt(\delta) \cap G = P_G(\delta)\)
\cite{conrad-gabber-prasad:prg}*{p.~49}.
Since \(G\) is reductive
(by Theorem \ref{thm:quass}(\ref{subthm:quass-reductive})),
we have by
\cite{springer:lag}*{Proposition 8.4.5} that
\(\Bt \cap G\) is
a parabolic subgroup of \(G\); but it is also
solvable (because it is a subgroup of \(\Bt\)), hence is
a Borel subgroup of \(G\).
In particular, \(\Bt \cap G\) is
a smooth, connected subgroup of
\(\Bt \cap \Gt^\Gamma = \Bt^\Gamma\), and so of
\(\fix\Bt^\Gamma\).
Since the reverse containment is obvious,
 \(\Bt \cap G\) equals \(\fix\Bt^\Gamma\).
Since Remark \ref{rem:conn-smooth} gives that
\(G_\ka = (\fix\Gt^\Gamma)_\ka\)
equals
\(\fix\Gt_\ka^{\Gamma_\ka}\),
the same argument that showed that
\(\Bt \cap G\) equals \(\fix\Bt^\Gamma\) shows that
\((\fix\Bt^\Gamma)_\ka = (\Bt \cap G)_\ka = \Bt_\ka \cap G_\ka\)
equals
\(\fix\Bt_\ka^{\Gamma_\ka}\); so
another application of
Remark \ref{rem:conn-smooth} gives that
\((\Bt^\Gamma)\conn\) is smoothable.

Since \((B, T)\) is a Borel--torus pair in \(G\),
in particular \(G\) is quasisplit.
Further, if \(\Gt\) is split, then so is \(\Tt\)
(because it is a maximal torus in a Borel subgroup of \(\Gt\)), so
the maximal torus \(T\) in \(G\) is split, so
\(G\) is split.
This shows (\ref{subprop:quass-split-descends}) and
part of (\ref{subprop:fixed-Borus}).
Note that
\(G(k)\) acts on
the set of \(\Gamma\)-stable Borel--torus pairs in \(\Gt\),
\(\overline\pi\) is \(G(k)\)-equivariant, and
\(G(k)\) acts transitively on
the set of Borel--torus pairs in \(G\)
(the rational conjugacy of Borel subgroups is
\cite{conrad-gabber-prasad:prg}*{Theorem C.2.5}, and then
the rational conjugacy, in that Borel subgroup, of
maximal tori in a Borel subgroup is
\cite{conrad-gabber-prasad:prg}*{Theorem C.2.3}).
This shows that \(\overline\pi\) is surjective, and so completes
the proof of (\ref{subprop:fixed-Borus}).
\end{proof}

\begin{cor}
\label{cor:inherit-quass}
If \(\Gamma'\) is a smooth, normal subgroup of \(\Gamma\), then
\((\fix\Gt^{\Gamma'}, \Gamma)\) is quasisemisimple.
\end{cor}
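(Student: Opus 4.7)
The plan is to produce a $\Gamma$-stable Borel--torus pair in $\fix\Gt^{\Gamma'}$. First, since $\Gamma' \subseteq \Gamma$, any $\Gamma$-stable Borel--torus pair $(\Bt, \Tt)$ in $\Gt$ is \textit{a fortiori} $\Gamma'$-stable, so $(\Gt, \Gamma')$ is itself quasisemisimple. Applying Theorem \ref{thm:quass}(\ref{subthm:quass-reductive}), already proved above, to $(\Gt, \Gamma')$, we obtain that $\fix\Gt^{\Gamma'}$ is connected and reductive.

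Next, I would verify that $(\fix\Gt^{\Gamma'}, \Gamma)$ is actually a reductive datum by exhibiting the $\Gamma$-action. The normality of $\Gamma'$ in $\Gamma$ means that conjugation by a functorial point of $\Gamma$ preserves $\Gamma'$: on points, if $\gamma \in \Gamma$, $\gamma' \in \Gamma'$, and $x \in \Gt^{\Gamma'}$, then $\gamma^{-1}\gamma'\gamma \in \Gamma'$ and thus $\gamma'\cdot(\gamma\cdot x) = \gamma\cdot(\gamma^{-1}\gamma'\gamma\cdot x) = \gamma\cdot x$, so $\Gt^{\Gamma'}$ is $\Gamma$-stable. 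Functoriality of the operations of passage to the maximal smooth subgroup and to the identity component under group automorphisms then gives that $\fix\Gt^{\Gamma'}$ is $\Gamma$-stable.

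The same functoriality argument applied with $\Bt$ and $\Tt$ in place of $\Gt$ shows that $\fix\Bt^{\Gamma'}$ and $\fix\Tt^{\Gamma'}$ are $\Gamma$-stable subgroups of $\fix\Gt^{\Gamma'}$. To conclude that they form a Borel--torus pair, I would invoke Proposition \ref{prop:quass-facts}(\ref{subprop:fixed-Borus}) applied to the quasisemisimple datum $(\Gt, \Gamma')$ and the $\Gamma'$-stable pair $(\Bt, \Tt)$, which yields exactly that $(\fix\Bt^{\Gamma'}, \fix\Tt^{\Gamma'})$ is a Borel--torus pair in $\fix\Gt^{\Gamma'}$. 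Combined with the $\Gamma$-stability just established, this shows that $(\fix\Gt^{\Gamma'}, \Gamma)$ is quasisemisimple. There is no genuine obstacle here; the only point requiring care is the bookkeeping that $\Gamma$-stability survives the layers of operations (fixed points, smoothing, identity component), which is purely formal given the canonical nature of these constructions.
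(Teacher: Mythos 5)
Your proof is correct and, since the paper leaves this corollary without an explicit argument, it follows exactly the route the placement of the corollary right after Proposition \ref{prop:quass-facts} suggests: take a $\Gamma$-stable Borel--torus pair $(\Bt,\Tt)$ (which exists by the standing quasisemisimplicity assumption on $(\Gt,\Gamma)$), use normality of $\Gamma'$ to see that all the canonical constructions $\fix(\cdot)^{\Gamma'}$ inherit a $\Gamma$-action, and apply Proposition \ref{prop:quass-facts}(\ref{subprop:fixed-Borus}) to $(\Gt,\Gamma')$ to recognize $(\fix\Bt^{\Gamma'},\fix\Tt^{\Gamma'})$ as a Borel--torus pair in $\fix\Gt^{\Gamma'}$. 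The appeal to Theorem \ref{thm:quass}(\ref{subthm:quass-reductive}) for reductivity is harmless but not needed for quasisemisimplicity itself.
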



\begin{lem}
\label{lem:H1-inject}
If \((\Bt_0, \Tt_0)\) is a Borel--torus pair in \(\Gt\) that
is preserved by \(\Gamma\), then
\abmap
	{\smashset{\gt \in \Gt(k)}{\gt\Tt_0 \in (\Gt/\Tt_0)^\Gamma(k)}}
	{(\Gt/\Bt_0)^\Gamma(k)}
is surjective.
If the map
\abmap{N_\Gt(\Tt_0)^\Gamma(k)}{W(\Gt, \Tt_0)^\Gamma(k)}
is surjective, then even
\abmap{\Gt^\Gamma(k)}{(\Gt/\Bt_0)^\Gamma(k)}
is surjective.
\end{lem}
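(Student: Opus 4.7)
Since $(\Gt, \Gamma)$ is quasisemisimple, $\Gt$ is quasi-split, so the rational conjugacy of Borel subgroups (\cite{conrad-gabber-prasad:prg}*{Theorem C.2.5}) gives surjectivity of $\Gt(k) \to (\Gt/\Bt_0)(k)$; hence any $x \in (\Gt/\Bt_0)^\Gamma(k)$ lifts to some $g_0 \in \Gt(k)$. The plan is to exploit the morphism $c: \Gamma \to \Gt$ defined on functorial points by $c(\gamma) := g_0^{-1}\gamma(g_0)$, which factors through $\Bt_0$ since $g_0\Bt_0 = x$ is $\Gamma$-fixed. Decomposing $\Bt_0 = \Ut_0 \rtimes \Tt_0$ (with $\Ut_0$ the unipotent radical), I would write $c = u_c \cdot t_c$ with $u_c: \Gamma \to \Ut_0$ and $t_c: \Gamma \to \Tt_0$. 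A direct computation in $\Bt_0$ shows that $g := g_0 u$ for $u \in \Ut_0(k)$ will satisfy $g\Tt_0 \in (\Gt/\Tt_0)^\Gamma(k)$ if and only if
\[
u_c(\gamma) = u \cdot \Int(t_c(\gamma))(\gamma(u))^{-1} \qquad \text{for all } \gamma,
\]
i.e., a coboundary condition for $u_c$ under the twisted $\Gamma$-action $\gamma \star u := \Int(t_c(\gamma))(\gamma(u))$ on $\Ut_0$.

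\textbf{Solving the coboundary on $\Ut_0$.} To solve it, I would filter $\Ut_0$ by the descending chain of subgroups generated by its $\Tt_0$-root subgroups of height $\geq n$; this filtration is stable under $\Gamma \ltimes \Tt_0$ and hence under the twisted action. By induction on the filtration, it suffices to produce the coboundary on each successive quotient, which is a $\Tt_0$-module vector group decomposing by $\Tt_0$-weights. On each $\Tt_0$-root group $\Ut_\alpha$, the twisted action is an inner twist of the $\Gamma$-action by the $\Tt_0$-character $\alpha$, and the required $k$-rational coboundary will exist thanks to the smoothness and connectedness of $\Ut_\alpha^\Gamma$ established in Corollary \ref{cor:root-smoothable}. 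Lifting back up through the filtration produces $u \in \Ut_0(k)$ satisfying the needed identity, completing the first part.

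\textbf{Second part and main obstacle.} For the second statement, the output of the first part gives $g \in \Gt(k)$ with $g\Bt_0 = x$ and cocycle $c: \Gamma \to \Tt_0$; the goal is to produce $g' \in \Gt^\Gamma(k)$ with $g'\Bt_0 = x$, that is, to trivialize $c$ by multiplying $g$ by some $b \in \Bt_0(k)$. The obstruction lives in the image of $H^1(\Gamma, \Tt_0)$ in $H^1(\Gamma, \Bt_0)$, and the surjectivity hypothesis on $N_\Gt(\Tt_0)^\Gamma(k) \to W(\Gt, \Tt_0)^\Gamma(k)$ will be exploited to adjust the torus datum by Weyl representatives until this class is trivialized on the fixed $x$. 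The hardest part will be the coboundary problem on $\Ut_0$ itself: although $\Ut_0$ is split, the inner twist by $t_c$ prevents a naive application of $H^1$-vanishing, so one must track the $\Gamma \ltimes \Tt_0$-equivariant module structure carefully along the filtration, leaning throughout on the fixed-point results for root groups developed in \S\ref{sec:quass-smooth}.
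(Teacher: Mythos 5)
Your reformulation of the first claim as a twisted coboundary problem on $\Ut_0$ is correct, and it is a genuinely different route from the paper's, which is Bruhat-theoretic rather than cohomological. But the central step is not actually carried out. You propose to filter $\Ut_0$ by height and solve a linearized cocycle equation on each graded piece, citing Corollary~\ref{cor:root-smoothable} as the reason a $k$-rational coboundary exists. That corollary is about a different object: it concerns the subgroups $\Ut_a$ attached to \emph{relative} roots $a \in \Phi(\Gt, S)$ for $S$ a maximal split torus in $G$, with the original (untwisted) $\Gamma$-action, whereas your filtration involves $\Tt_0$-root groups and the action twisted by $t_c$. You acknowledge that this twist is the obstruction to a naive $H^1$-vanishing argument, but offer nothing to overcome it; as written, the existence of the required $u\in\Ut_0(k)$ is asserted, not proved. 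The second part is still more conjectural (``adjust the torus datum \ldots{} until this class is trivialized'' is not yet an argument).

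The paper sidesteps all of this with the Bruhat decomposition. Take a rational lift $\gt\in\Gt(k)$ of the given $\Gamma$-fixed flag and the $\Gamma$-stable opposite Borel $\Bt_1\supseteq\Tt_0$ with unipotent radical $\Ut_1$. The double coset $\Ut_0\,\gt\,\Bt_0$ equals $\Ut_0\,w\,\Bt_0$ for a \emph{unique} $w\in W(\Gt,\Tt_0)(\ks)$, and by uniqueness together with the stability of the double coset under $\Gal(k)\ltimes\Gamma(\ks)$, that $w$ lies in $W(\Gt,\Tt_0)^\Gamma(k)$. Because $\Tt_0 = C_\Gt(\St_0)$, Lemma~\ref{lem:which-Weyl} and rationality of Weyl representatives for split tori give a lift $n\in N_\Gt(\Tt_0)(k)$, which may be taken in $N_\Gt(\Tt_0)^\Gamma(k)$ under the extra hypothesis. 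With $\Ut_{0\,w} = \Ut_0\cap\Int(w)\Ut_1$, the map $\Ut_{0\,w}n \to \Ut_0 w\Bt_0/\Bt_0$ is an isomorphism of schemes, $\Gamma$-equivariant after projecting to $\Gt/\Tt_0$ (and fully $\Gamma$-equivariant when $n$ is $\Gamma$-fixed). Applying its inverse to $\gt\Bt_0$ produces the required lift in both cases. This Bruhat-cell identification delivers exactly the coboundary you are trying to manufacture, with no cohomological vanishing left to verify; the missing content of your argument is packaged inside it.
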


\begin{proof}
Let \(\Bt_1\) be the Borel subgroup opposite to \(\Bt_0\)
with respect to \(\Tt_0\), and \(\Ut_1\) its unipotent radical.
Then \(\Bt_{1\,\ks}\) is the Borel subgroup opposite to
\(\Bt_{0\,\ks}\) with respect to \(\Tt_{0\,\ks}\).
This characterizes \(\Bt_{1\,\ks}\) uniquely, so
\(\Bt_{1\,\ks}\), and hence
its unipotent radical \(\Ut_{1\,\ks}\), is
preserved by \(\Gamma(\ks)\).
Since \(\Gamma\) is smooth, we have that
\(\Gamma(\ks)\) is Zariski dense in \(\Gamma_\ks\), so
\(\Ut_{1\,\ks}\) is preserved by \(\Gamma_\ks\), and hence
\(\Ut_1\) is preserved by \(\Gamma\).
Similarly, the unipotent radical \(\Ut_0\) of \(\Bt_0\) itself is
preserved by \(\Gamma\).

By \cite{springer:lag}*{Corollary 15.1.4},
there exists an element \(\gt \in \Gt(k)\) whose image in
\((\Gt/\Bt_0)(k)\)
lies in
\((\Gt/\Bt_0)^\Gamma(k)\).
The double coset \(\Ut_{0\,\ks}\gt\Bt_{0\,\ks}\)
is preserved by \(\Gal(k) \ltimes \Gamma(\ks)\), and
equals \(\Ut_{0\,\ks}w\Bt_{0\,\ks}\)
for a unique element \(w\) of
\(W(\Gt, \Tt_0)(\ks)\).
Uniqueness implies that
\(w\) belongs to
\(W(\Gt, \Tt_0)(\ks)^{\Gal(k) \ltimes \Gamma(\ks)} =
W(\Gt, \Tt_0)_\ks^{\Gamma_\ks}(\ks)^{\Gal(k)} =
W(\Gt, \Tt_0)^\Gamma(k)\).
Note that the maximal split torus \(\St_0\) in \(\Tt_0\) is
maximal split in \(\Gt\)
(because \(\Tt_0\) is contained in
a Borel subgroup of \(\Gt\)).
Lemma \ref{lem:which-Weyl} gives that
\(w\) belongs to \(W(\Gt, \St_0)(k)\), and
\cite{conrad-gabber-prasad:prg}*{Proposition C.2.10}
gives that \(w\) has a representative \(n\) in
\(N_\Gt(\St_0)(k)\), which equals
\(N_\Gt(\Tt_0)(k)\) because
\(\Tt_0\) equals \(C_\Gt(\St_0)\).
If \abmap{N_\Gt(\Tt_0)^\Gamma(k)}{W(\Gt, \Tt_0)^\Gamma(k)}
is surjective, then, of course, we can choose
\(n \in N_\Gt(\Tt_0)^\Gamma(k)\).

Put \(\Ut_{0\,w} = \Ut_0 \cap \Int(w)\Ut_1\).
Then \(\Ut_{0\,w}\) is preserved by \(\Gamma\), and
the restriction to \(\Ut_{0\,w}n\) of the quotient map
\abmap{\Ut_0 w\Bt_0}{\Ut_0 w\Bt_0/\Bt_0}
is an isomorphism of schemes
\cite{milne:algebraic-groups}*{Theorem 21.80(b)}.
If \(n\) belongs to \(N_\Gt(\Tt_0)^\Gamma(k)\), then
the isomorphism is \(\Gamma\)-equivariant.
Otherwise, it only becomes \(\Gamma\)-equivariant after
factoring through the projection to \(\Gt/\Tt_0\).
Applying the inverse of this isomorphism to \(\gt\Bt_0\) yields
an element of \((\Gt/\Tt_0)^\Gamma(k)\) in general, and even
an element of \(\Gt^\Gamma(k)\) if
\(n\) belongs to \(N_\Gt(\Tt_0)^\Gamma(k)\).
\end{proof}

\begin{cor}
\label{cor:H1-inject}
If \((\Bt_0, \Tt_0)\) is a Borel--torus pair in \(\Gt\) that
is preserved by \(\Gamma\), and
\(\Ut_0\) is the unipotent radical of \(\Bt_0\), then
\abmap
	{\Gt^\Gamma(k)}
	{(\Gt/\Ut_0)^\Gamma(k)}
is surjective.
\end{cor}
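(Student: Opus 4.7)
The plan is to deduce surjectivity by combining Lemma \ref{lem:H1-inject} with the observation that the fiber of the projection $\Gt/\Ut_0\to\Gt/\Bt_0$ is a copy of $\Bt_0/\Ut_0\cong\Tt_0$, together with the triviality of $\Tt_0\cap\Ut_0$ inside the semidirect product $\Bt_0=\Tt_0\ltimes\Ut_0$.

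Fix $x\in(\Gt/\Ut_0)^\Gamma(k)$, and let $\bar x\in(\Gt/\Bt_0)^\Gamma(k)$ be its image under the $\Gamma$-equivariant projection $\Gt/\Ut_0\to\Gt/\Bt_0$. Apply Lemma \ref{lem:H1-inject} to obtain $\gt_0\in\Gt(k)$ with $\gt_0\Bt_0=\bar x$ and $\gt_0\Tt_0\in(\Gt/\Tt_0)^\Gamma(k)$. The latter condition gives a morphism of $k$-schemes $t\colon\Gamma\to\Tt_0$ defined functorially by $\gamma\mapsto\gt_0\inv\gamma(\gt_0)$, since $\gt_0\inv\gamma(\gt_0)$ lies in $\Tt_0$ by construction.

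The fiber of $\Gt/\Ut_0\to\Gt/\Bt_0$ over $\bar x$ identifies with $\Tt_0$ via $\tt_0\mapsto\gt_0\tt_0\Ut_0$, so I may write $x=\gt_0\tt_0\Ut_0$ for a unique $\tt_0\in\Tt_0(k)$. Working functorially, the $\Gamma$-invariance of $x$ means that the morphism $\gamma\mapsto(\gt_0\tt_0)\inv\gamma(\gt_0\tt_0)=\tt_0\inv t(\gamma)\gamma(\tt_0)$ factors through $\Ut_0$; but as a product of three elements of $\Tt_0$ it visibly takes values in $\Tt_0$, and $\Tt_0\cap\Ut_0$ is trivial inside $\Bt_0$, so this morphism must be the constant at the identity. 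Equivalently, $t(\gamma)=\tt_0\gamma(\tt_0)\inv$ as morphisms $\Gamma\to\Tt_0$. Setting $\gt:=\gt_0\tt_0\in\Gt(k)$, I compute $\gt\inv\gamma(\gt)=\tt_0\inv t(\gamma)\gamma(\tt_0)=1$ functorially, so $\gt\in\Gt^\Gamma(k)$; and $\gt\Ut_0=\gt_0\tt_0\Ut_0=x$ provides the desired lift.

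The real work is entirely absorbed into Lemma \ref{lem:H1-inject}: the nontrivial step is producing a $k$-rational lift $\gt_0$ of $\bar x$ for which the orbit $\gt_0\Tt_0$ is already $\Gamma$-stable, not merely $\gt_0\Bt_0$. Once such a $\gt_0$ is in hand, the triviality of $\Tt_0\cap\Ut_0$ makes the residual cocycle-trivialization in $\Tt_0$ automatic, with no further appeal to Galois or algebraic-group cohomology needed.
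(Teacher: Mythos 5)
Your proof is correct and follows essentially the same route as the paper's: start with $x \in (\Gt/\Ut_0)^\Gamma(k)$, push to $\bar x\in(\Gt/\Bt_0)^\Gamma(k)$, apply Lemma \ref{lem:H1-inject} to get a $k$-rational lift $\gt_0$ with $\gt_0\Tt_0$ fixed by $\Gamma$, correct by a $k$-point $\tt_0$ of $\Tt_0$ (using $\Bt_0\cong\Tt_0\ltimes\Ut_0$) so that $\gt:=\gt_0\tt_0$ represents $x$, and then observe that $\gamma\mapsto\gt\inv\gamma(\gt)$ lands in $\Tt_0\cap\Ut_0=\{1\}$. The only presentational difference is that you run the final step functorially — noting the morphism $\Gamma\to\Gt$ factors through both closed subschemes $\Tt_0$ and $\Ut_0$, hence through their trivial scheme-theoretic intersection — whereas the paper first checks triviality on $\Gamma(\ks)$-points and then invokes smoothness (Zariski density of $\Gamma(\ks)$ in $\Gamma_\ks$) to conclude. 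Your version is marginally cleaner in that it does not appeal to density; both are valid.
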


\begin{proof}
Since the multiplication map
\abmap{\Tt_0 \ltimes \Ut_0}{\Bt_0} is an isomorphism,
it follows from Lemma \ref{lem:H1-inject} that,
given a coset in \((\Gt/\Ut_0)^\Gamma(k)\),
we may choose a representative
\(\gt \in \Gt(k)\) such that
\(\gt\Tt_0\) belongs to \((\Gt/\Tt_0)^\Gamma(k)\).
Then, for every \(\gamma \in \Gamma(\ks)\), we have that
\(\gt\inv\gamma(\gt)\) belongs to
\(\Ut_0(\ks) \cap \Tt_0(\ks)\), which is trivial.
That is, as an element of \(\Gt(\ks)\), we have that
\(\gt\) is fixed by \(\Gamma(\ks)\), hence,
since \(\Gamma\) is smooth, by \(\Gamma_\ks\).
Thus \(\gt\) belongs to
\(\Gt_\ks^{\Gamma_\ks}(\ks) = \Gt^\Gamma(\ks)\).
Since \(\gt\) already belongs to \(\Gt(k)\), it
belongs to \(\Gt^\Gamma(k)\).
\end{proof}

The statement about the existence of
\(\Gamma\)-stable Levi components of
\(\Gamma\)-stable parabolics in
Proposition \ref{prop:parabolic-Borus}(\ref{subprop:quass-fixed-Levi})
is
closely related to complete reducibility, in the sense of
\cite{serre:cr}*{\S3.2.1}.
In particular, it says that every
quasisemisimple action is completely reducible.

An easy variant of
Lemma \ref{lem:cochar-by-torus} shows that,
in the notation of Proposition \ref{prop:parabolic-Borus},
\(\Pt \cap G\) is a parabolic subgroup of \(G\), and
a bit more work shows that
\(\Ut^\Gamma(k)\) is the group of \(k\)-rational points of
the unipotent radical of \(\Pt \cap G\); but
we do not need this.

\begin{prop}
\label{prop:parabolic-Borus}
Let \(\Pt\) be a parabolic subgroup of \(\Gt\)
that is preserved by \(\Gamma\).
\begin{enumerate}[label=(\alph*), ref=\alph*]
\item\label{subprop:parabolic-Borus}
There is a Borel--torus pair in \(\Pt\) that is preserved by \(\Gamma\).
\item\label{subprop:quass-fixed-Levi}
For all Levi components \(\Mt\) of \(\Pt\) that
are preserved by \(\Gamma\),
we have that
\((\Mt, \Gamma)\) is quasisemisimple.
Such Levi components exist, and they are all
\(\Ut^\Gamma(k)\)-conjugate, where
\(\Ut\) is the unipotent radical of \(\Pt\).
\end{enumerate}
\end{prop}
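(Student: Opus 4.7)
The plan is to prove (a) and (b) together by first locating a $\Gamma$-stable maximal torus $\Tt$ of $\Gt$ inside $\Pt$; once $\Tt$ is in hand, everything else should follow relatively routinely.  Finding such a $\Tt$ will be the main obstacle.

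Given $\Tt$, for (a) I would apply an ``easy variant'' of Lemma \ref{lem:cochar-by-torus}.  Since $\Tt \subseteq \Pt$ and both are $\Gamma$-stable, $\Pt = P_\Gt(\mu)$ for some $\mu \in \bX_*(\Tt)$, and averaging $\mu$ over $\Gamma$ together with projection to the maximal split subtorus $S$ of $\fix\Tt^\Gamma$ produces a $\Gamma$-fixed $\bar\mu \in \bX_*(S)$ with $P_\Gt(\bar\mu) = \Pt$.  I would then add a perturbation $\epsilon\mu'$ for $\mu' \in \bX_*(S)$ chosen regular in the Levi component $\Mt$ of $\Pt$ containing $\Tt$ (i.e., with $\pair\alpha{\mu'} \neq 0$ for every $\alpha \in \Phi(\Mt, S)$); for sufficiently small $\epsilon > 0$, the $\Gamma$-fixed cocharacter $\delta = \bar\mu + \epsilon\mu' \in \bX_*(S)$ satisfies $C_\Gt(\delta) = \Tt$ and $P_\Gt(\delta) \subseteq P_\Gt(\bar\mu) = \Pt$, with $P_\Gt(\delta)$ a Borel of $\Gt$.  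Thus $\Bt := P_\Gt(\delta)$ is $\Gamma$-stable and $(\Bt, \Tt)$ is the required Borel--torus pair in $\Pt$.

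For (b), the Levi of $\Pt$ containing $\Tt$ is uniquely determined by $\Tt$ (any other Levi containing $\Tt$ is $\Ut$-conjugate to $\Mt$ and contains the same $\Tt$, forcing it to equal $\Mt$), so it is automatically $\Gamma$-stable by uniqueness, giving existence; the pair $(\Bt \cap \Mt, \Tt)$ from (a) is then a $\Gamma$-stable Borel--torus pair in $\Mt$, so $(\Mt, \Gamma)$ is quasisemisimple.  For the $\Ut^\Gamma(k)$-conjugacy of $\Gamma$-stable Levi components: having fixed one such $\Mt_0$, the map $u \mapsto \Int(u)\Mt_0$ is a bijection from $\Ut$ to the set of all Levi components of $\Pt$ (using the semidirect product $\Pt = \Ut \rtimes \Mt_0$ together with $N_\Pt(\Mt_0) = \Mt_0$, since $\Ut \cap \Mt_0$ is trivial), and a direct calculation shows $\Int(u)\Mt_0$ is $\Gamma$-stable exactly when $u \in \Ut^\Gamma$, so that any two $\Gamma$-stable Levis are $\Ut^\Gamma(k)$-conjugate after verifying $k$-rationality of the conjugator by descent.

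The main obstacle will be locating the $\Gamma$-stable maximal torus $\Tt \subseteq \Pt$.  By Proposition \ref{prop:quass-rough}(\ref{subprop:quass-torus-orbit}), the $\Gamma$-stable maximal tori of $\Gt$ form a single $G(k)$-orbit, so it suffices to find some $g \in G(k)$ with $g\Tt_0 g\inv \subseteq \Pt$ for a fixed $\Gamma$-stable Borel--torus pair $(\Bt_0, \Tt_0)$.  Since the $\Gt$-conjugacy class of $\Pt$ is $\Gamma$-fixed (as $\Pt$ itself is), I would first choose a standard parabolic $\Pt_0 \supseteq \Bt_0$ in that class which is itself $\Gamma$-stable, corresponding to a $\Gamma$-stable subset of $\Delta(\Bt_0, \Tt_0)$ (obtainable after adjusting within the appropriate $W$-orbit of subsets).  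Writing $\Pt = g\Pt_0 g\inv$ for some $g \in \Gt(\ks)$, the coset $g\Pt_0 \in (\Gt/\Pt_0)^\Gamma(\ks)$ is $\Gamma$-fixed, and a variant of Lemma \ref{lem:H1-inject} adapted to the $\Gamma$-stable parabolic $\Pt_0$ (invoking the Bruhat decomposition of $\Gt$ relative to $\Pt_0$ and its opposite, both $\Gamma$-stable by uniqueness of opposition with respect to the $\Gamma$-stable torus $\Tt_0$) should yield $g \in \Gt^\Gamma(k)$; then $\Tt := g\Tt_0 g\inv \subseteq \Pt$ is the desired $\Gamma$-stable maximal torus.
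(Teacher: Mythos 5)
Your proposal is correct in outline but takes a genuinely different route from the paper, and hand-waves its pivotal step. The paper does \emph{not} invoke any parabolic analogue of Lemma \ref{lem:H1-inject}. Instead, for part (\ref{subprop:parabolic-Borus}) it first produces the Borel subgroup, not the torus: it takes a minimal $\Gamma$-stable parabolic $\Bt$ of $\Gt$ contained in $\Pt$ (which exists by Noetherianity), and shows directly that $\Bt$ is a Borel by a short Borel--Tits argument --- $(\Bt \cap \Bt_0)\Ut$ is a $\Gamma$-stable parabolic inside $\Bt$ hence equals $\Bt$, and $\Bt$ modulo the normal unipotent subgroup $(\Bt \cap \Ut_0)\Ut$ embeds in a torus, so $\Bt$ is solvable. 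Only then does it apply Lemma \ref{lem:H1-inject} \emph{as stated} (to the Borel $\Bt_0$) to the $\Gamma$-fixed coset $\gt\Bt_0$ determined by $\Int(\gt)\Bt_{0\,\ks} = \Bt_\ks$, obtaining a representative $\gt$ with $\gt\Tt_0 \in (\Gt/\Tt_0)^\Gamma(k)$ and hence a $\Gamma$-stable $\Tt = \Int(\gt)\Tt_0 \subseteq \Bt \subseteq \Pt$. You instead want the torus first, via a ``variant of Lemma \ref{lem:H1-inject} adapted to the $\Gamma$-stable parabolic $\Pt_0$,'' and this is the gap: that variant is not in the paper and would require reworking the Bruhat-cell argument for $\Gt/\Pt_0$ in terms of $W/W_I$ cosets, minimal-length representatives, and the cells $\Ut_0 w\Pt_0/\Pt_0$. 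Moreover, your claim that this yields $g \in \Gt^\Gamma(k)$ overstates what the lemma gives; without the extra hypothesis that $\abmap{N_\Gt(\Tt_0)^\Gamma(k)}{W(\Gt, \Tt_0)^\Gamma(k)}$ is surjective, you only get $g\Tt_0 \in (\Gt/\Tt_0)^\Gamma(k)$ --- though that weaker conclusion is in fact all you need, since it already implies $\Int(g)\Tt_0$ is $\Gamma$-stable (and also lies in the $\Gamma$-stable Borel $\Int(g)\Bt_0$, since $\abmap{\Gt/\Tt_0}{\Gt/\Bt_0}$ is $\Gamma$-equivariant). Your subsequent cocharacter-averaging-plus-perturbation argument for the Borel is plausible but adds more machinery than the paper's direct minimality argument; note also that averaging must be performed over the finite group $\Gal(k) \ltimes \pi_0(\Gamma)(\ks)$ (through which the action on $\bX_*(\Tt_\ks)$ factors by rigidity), not over the possibly positive-dimensional $\Gamma$. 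For part (\ref{subprop:quass-fixed-Levi}), your uniqueness argument for the Levi containing $\Tt$ and the $\Ut^\Gamma(k)$-conjugacy argument match the paper's reasoning closely; the paper simply routes the conjugacy through \cite{borel:linear}*{Proposition 11.23(ii)} over $\ks$ and then descends the conjugating element by uniqueness.
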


\begin{proof}
As observed in the proof of Corollary \ref{cor:H1-inject},
since \(\Gamma\) is smooth,
a point of \(\Gt(\ks)\) that is fixed by \(\Gamma(\ks)\)
belongs to
\(\Gt^\Gamma(\ks) = (\Gt^\Gamma)\smooth(\ks)\).
Similarly,
a closed subscheme \(\Xt\) of \(\Gt\) such that
\(\Xt_\ks\) is preserved by \(\Gamma(\ks)\)
actually has the property that
\(\Xt_\ks\) is preserved by \(\Gamma_\ks\), and so
\(\Xt\) is preserved by \(\Gamma\).

Let \((\Bt_0, \Tt_0)\) be a Borel--torus pair in \(\Gt\)
that is preserved by \(\Gamma\).
There is
a minimal \(\Gamma\)-stable, parabolic subgroup \(\Bt\) of \(\Gt\)
that is contained in \(\Pt\)
(since \(\Pt\) is Noetherian).
If we write \(\Ut\) for the unipotent radical of \(\Bt\), then
\(\Ut_\ks\) is the unipotent radical of \(\Bt_\ks\), hence
preserved by \(\Gamma(\ks)\), so
\(\Ut\) is preserved by \(\Gamma\).
Thus \((\Bt \cap \Bt_0)\Ut\) is
a parabolic subgroup of \(\Gt\)
\cite{borel-tits:reductive-groups}*{Proposition 4.4(b)}
that is contained in \(\Bt\) and preserved by \(\Gamma\),
hence equals \(\Bt\).
If we write \(\Ut_0\) for the unipotent radical of \(\Bt_0\), then
\((\Bt \cap \Ut_0)\Ut\) is a normal, unipotent subgroup of
\((\Bt \cap \Bt_0)\Ut = \Bt\), and
\(\Bt/((\Bt \cap \Ut_0)\Ut) =
((\Bt \cap \Bt_0)\Ut)/((\Bt \cap \Ut_0)\Ut) \cong
(\Bt \cap \Bt_0)/(\Bt \cap \Ut_0)\)
embeds into
\(\Bt_0/\Ut_0 \cong \Tt_0\), hence is
of multiplicative type.
Thus \(\Bt_\ks\) is trigonalizable
\cite{milne:algebraic-groups}*{Theorem 16.6},
hence solvable
\cite{milne:algebraic-groups}*{Theorem 16.21},
so that \(\Bt\) is a Borel subgroup of \(\Gt\).

There is a unique element \(\gt\Bt_{0\,\ks} \in (\Gt/\Bt_0)(\ks)\)
such that
\(\Int(\gt)\Bt_{0\,\ks}\) equals \(\Bt_\ks\).
By uniqueness, it belongs to
\((\Gt/\Bt_0)(\ks)^{\Gal(k) \ltimes \Gamma(\ks)} =
(\Gt/\Bt_0)^\Gamma(k)\).
By Lemma \ref{lem:H1-inject}, we may adjust the choice of
representative of the coset to
an element \(\gt\) of \(\Gt(k)\) such that
\(\gt\Tt_0\) is \(\Gamma\)-fixed.
Then
\(\Tt \ldef \Int(\gt)\Tt_0\)
is a maximal torus in \(\Gt\) that is preserved by \(\Gamma\) and
contained in
\(\Int(\gt)\Bt_0 = \Bt\).
This shows (\ref{subprop:parabolic-Borus}).


The Levi component \(\Mt\) of \(\Pt\) that
contains \(\Tt\) has the property that
\(\Mt_\ks\) is the (unique) Levi component of \(\Pt_\ks\) that
contains \(\Tt_\ks\), hence is preserved by \(\Gamma(\ks)\); so
\(\Mt\) is preserved by \(\Gamma\).
This shows, in particular, that
such Levi components of \(\Pt\)
exist.

Since \((\Bt \cap \Mt, \Tt)\) is a Borel--torus pair in \(\Mt\)
that is preserved by \(\Gamma\),
we have shown that \((\Mt, \Gamma)\) is quasisemisimple,
but this only handles the particular choice of
Levi component arising as above.
If \(\Mt_1\) is another such Levi component of \(\Pt\), then
\(\Mt_\ks\) and \(\Mt_{1\,\ks}\) are
Levi components of \(\Pt_\ks\) that are
preserved by \(\Gamma_\ks\).
By
\cite{borel:linear}*{Proposition 11.23(ii)},
there is a unique \(\ks\)-rational point \(u\) in
the unipotent radical of \(\Pt_\ks\), which equals \(\Ut_\ks\),
such that
\(\Int(u)\Mt_\ks\) equals \(\Mt_{1\,\ks}\).
Since \(\Mt_\ks\) and \(\Mt_{1\,\ks}\) are
preserved by \(\Gal(k)\),
we have that \(u\) is fixed by \(\Gal(k)\), i.e.,
belongs to \(\Ut(k)\).
Since \(\Mt_\ks\) and \(\Mt_{1\,\ks}\) are
preserved by \(\Gamma(\ks)\),
so is \(u\)
(viewed as a point of \(\Ut(\ks)\))
so that \(u\in\Ut^\Gamma(\ks)\cap\Ut(k) = \Ut^\Gamma(k)\).
Thus the quasisemisimplicity of \((\Mt_1, \Gamma)\) is witnessed by
the Borel--torus pair \(\Int(u)(\Bt \cap \Mt, \Tt)\).
\end{proof}

{\newcommand\theadhocthm{\ref{thm:quass}(\ref{subthm:quass-spherical-bldg})}
\begin{adhocthm}
The functorial map from the spherical building \(\SS(G)\) of \(G\)
to the spherical building \(\SS(\Gt)\) of \(\Gt\)
identifies \(\SS(G)\) with
\(\SS(\Gt) \cap \SS(\Gt_\ka)^{\Gamma(\ka)}\).
\end{adhocthm}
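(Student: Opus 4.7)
The plan is to verify separately that (i) the functorial map $\SS(G) \to \SS(\Gt)$ is injective with image contained in $\SS(\Gt_\ka)^{\Gamma(\ka)}$, and (ii) every point of $\SS(\Gt) \cap \SS(\Gt_\ka)^{\Gamma(\ka)}$ lies in this image.

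For (i), I would mimic the three-bullet injectivity argument in the preamble to \S\ref{subsec:spherical}, applied now to the embedding $G \hookrightarrow \Gt$. Given a maximal split torus $S$ in $G$, Proposition \ref{prop:quass-rough}(\ref{subprop:quass-up}) identifies $\Tt \ldef C_\Gt(S)$ as the unique $\Gamma$-stable maximal torus of $\Gt$ containing $S$; one then checks that its maximal split torus $\St$ is actually maximal split in $\Gt$. The inclusion $S \hookrightarrow \St$ furnishes a compatible apartment embedding $\AA(S) \hookrightarrow \AA(\St)$, and, combined with the facts that any two points of $\SS(G)$ lie in a common apartment and that an apartment embeds into the full building, this yields injectivity of $\SS(G) \to \SS(\Gt)$. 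The image lies in $\SS(\Gt_\ka)^{\Gamma(\ka)}$ because every cocharacter of $S \subseteq G = \fix\Gt^\Gamma$ is pointwise fixed by $\Gamma$, and so remains fixed by $\Gamma(\ka)$ after base change.

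For (ii), starting from $b \in \SS(\Gt) \cap \SS(\Gt_\ka)^{\Gamma(\ka)}$, I would first observe that $\Pt_\ka \ldef P_{\Gt_\ka}(b)$ is preserved by $\Gamma(\ka)$, hence by $\Gamma_\ka$ (using Zariski density of $\Gamma(\ka)$ in the smooth group $\Gamma_\ka$), and so descends to a $\Gamma$-stable parabolic $\Pt = P_\Gt(b)$ of $\Gt$. Proposition \ref{prop:parabolic-Borus}(\ref{subprop:quass-fixed-Levi}) then supplies a $\Gamma$-stable Levi $\Mt$ of $\Pt$ for which $(\Mt, \Gamma)$ remains quasisemisimple, so Theorem \ref{thm:quass}(\ref{subthm:quass-reductive}) applied to $\Mt$ yields a reductive group $M \ldef \fix\Mt^\Gamma$ contained in $G$. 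The point $b$ is represented in $\SS(\Gt_\ka)$ by a cocharacter $\lambda$ of $(Z(\Mt)\smooth\conn)_\ka$, well defined up to positive real scaling. The key step is to show that $\lambda$ is $(\Gal(k) \ltimes \Gamma(\ka))$-fixed: since $\pi_0(\Gamma)$ is \'etale of finite type (so that $\pi_0(\Gamma)(\ka)$ is finite) and $Z(\Mt)\smooth\conn$ splits over a finite Galois extension of $k$, the action of $\Gal(k) \ltimes \Gamma(\ka)$ on the cocharacter lattice of $(Z(\Mt)\smooth\conn)_\ka$ factors through a finite quotient; the fixity of the ray $\lambda$ then forces each scaling constant to be a root of unity, hence equal to $1$. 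Consequently, $\lambda$ lies in the $k$-rational cocharacter lattice of $(Z(\Mt)^\Gamma)\smooth\conn$, whose image under $\lambda$ is a split $k$-subtorus of $M$; embedding this subtorus into a maximal split torus $S'$ of $G$ exhibits $b$ as the image of the associated point of $\AA(S') \subset \SS(G)$.

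The main obstacle, as I foresee it, will be the careful bookkeeping in this last step: ``the cocharacter $\lambda$'' initially lives in a real vector space and depends on the choice of Levi, so one must interlace the fixed-point and smoothing operations on $Z(\Mt)$ with the descent from a ray in $V((Z(\Mt)\smooth\conn)_\ka)$ to an honest $k$-cocharacter into $M$. The underlying geometric scaffolding, by contrast, is supplied cleanly by Propositions \ref{prop:quass-rough} and \ref{prop:parabolic-Borus}, together with the reductivity conclusion Theorem \ref{thm:quass}(\ref{subthm:quass-reductive}).
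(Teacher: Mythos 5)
Your proposal is correct and follows the same overall strategy as the paper: observe that the parabolic $P_\Gt(b)$ is $\Gamma$-stable, pick a $\Gamma$-stable Levi via Proposition \ref{prop:parabolic-Borus}(\ref{subprop:quass-fixed-Levi}), and then exploit the fact that the $\Gal(k) \ltimes \pi_0(\Gamma)(\ka)$-action on the cocharacter lattice of the Levi's split center is through a finite group, so that a $\Gamma$-fixed ray is actually a $\Gamma$-fixed lattice direction. The organizational difference is that the paper packages the last step into two reusable lemmas: Lemma \ref{lem:spherical-cr} (phrased in terms of opposite pairs, with the Levi recovered as the intersection of the two parabolics) and Lemma \ref{lem:spherical-descent} (separable descent for spherical buildings); you instead inline the rigidity/finiteness computation, which works because Proposition \ref{prop:parabolic-Borus} has already handed you the Levi so that the opposite point $b_-$ is not strictly needed to locate the right torus. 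Two small things you should be explicit about if you write this up: first, $\Gamma(\ka)$ acting through the finite quotient $\pi_0(\Gamma)(\ka)$ on the cocharacter lattice requires citing rigidity of tori; second, the claim that $b$ lies in the subapartment of $Z(\Mt)\smooth\conn$ (rather than of the center of the \emph{a priori} different Levi containing a chosen torus through $b$) needs the remark that Levi components of $P_\Gt(b)$ are conjugate under its unipotent radical, which fixes $b$, together with the cite \cite{curtis-lehrer-tits:spherical}*{Lemma 1.2(ii)}.
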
}

\begin{proof}
Since
\(\Gamma(\ka)\) acts trivially on \(\SS(G_\ka)\), we have by
functoriality that the image of the composition
\(\xymatrix@1{
\SS(G) \ar[r] & \SS(G_\ka) \ar[r] & \SS(\Gt_\ka)
}\)
lies in \(\SS(\Gt_\ka)^{\Gamma(\ka)}\).
Since the diagram
\[\xymatrix{
\SS(G_\ka) \ar[r] & \SS(\Gt_\ka) \\
\SS(G) \ar[r]\ar[u] & \SS(\Gt) \ar[u]
}\]
commutes, the image of
\(\SS(G)\) in \(\SS(\Gt_\ka)\) actually lies in
\(\SS(\Gt) \cap \SS(\Gt_\ka)^{\Gamma(\ka)}\).

Conversely, suppose that \(b_+\) belongs to
\(\SS(\Gt) \cap \SS(\Gt_\ka)^{\Gamma(\ka)}\).
Our argument is similar to that of
Lemma \ref{lem:spherical-descent}.
With
\(\Pt^+\) the parabolic subgroup \(P_\Gt(b_+)\) of \(\Gt\),
we have that
\(\gamma\Pt^+_\ka = \gamma P_\Gt(b_+)_\ka\)
equals
\(P_{\Gt_\ka}(\gamma b_{{+}\,\ka}) =
P_{\Gt_\ka}(b_{{+}\,\ka}) = \Pt^+_\ka\)
for all \(\gamma \in \Gamma(\ka)\).
In particular, \(\Pt^+_\ks\) is preserved by \(\Gamma(\ks)\), hence
by \(\Gamma_\ks\).
Proposition \ref{prop:parabolic-Borus}(\ref{subprop:quass-fixed-Levi})
gives that there is a Levi component \(\Mt\) of \(\Pt^+_\ks\) that is
preserved by \(\Gamma_\ks\).
Let \(\Pt^-\) be the parabolic subgroup of \(\Gt_\ks\) that is
opposite to \(\Pt^+_\ks\), and satisfies
\(\Pt^+_\ks \cap \Pt^- = \Mt\); and then let
\(b_-\) be the point of \(\SS(\Gt_\ks)\) that is
opposite to \(b_+\) and
satisfies \(P_{\Gt_\ks}(b_-) = \Pt^-\).
This condition determines \(b_-\) uniquely, so that it is
preserved by \(\Gamma(\ks)\), and hence \(\Gamma_\ks\).
Since \(\fix\Gt_\ks^{\Gamma(\ks)}\) equals
\(\fix\Gt_\ks^{\Gamma_\ks} = (\fix\Gt^\Gamma)_\ks = G_\ks\),
Lemma \ref{lem:spherical-cr} gives that
\(b_{{+}\,\ks}\) belongs to
\(\SS(\fix\Gt_\ks^{\Gamma(\ks)}) = \SS(G_\ks)\).
Then two applications of Lemma \ref{lem:spherical-descent} give
first that
\(b_{{+}\,\ks}\) is fixed by \(\Gal(k)\)
(by regarding it as an element of \(\SS(\Gt)\)), and
then that
\(b_+\) belongs to \(\SS(G)\).
\end{proof}

%

As in \S\ref{sec:quass-smooth},
for the remainder of \S\ref{sec:quass}, fix
a maximal split torus \(S\) in \(G\), and let
\(T\) and \(\Tt\) be the maximal split tori in \(G\) and \(\Gt\)
containing \(S\),
and \(\St\) the maximal split torus in \(\Tt\).

See \cite{adler-lansky-spice:actions3}*{Corollary \ref{GA3-cor:pinned-roots}}
for a sharper version of
Proposition \ref{prop:GS-vs-GtS} when
\((\Gt, \Gamma)\) is pinned.

\begin{prop}
\label{prop:GS-vs-GtS}\hfill
\begin{enumerate}[label=(\alph*), ref=\alph*]
\item\label{subprop:g-root-space}
For every \(a \in \Phi(G, S)\), we have that
\(\Lie(G)_a\) equals \(\Lie(\Gt)^\Gamma_a\).
\item\label{subprop:G-roots}
\(\Phi(\Gt^\Gamma, S)\)
is a sub-root system of \(\Phi(\Gt, S)\) that
contains \(\Phi(G, S)\).
If \(p\) is odd, then
\(\Phi(G, S)\) equals \(\Phi(\Gt^\Gamma, S)\).
If \(p\) equals \(2\), then
\(\Phi(G, S)\) is the set of
roots in \(\Phi(\Gt^\Gamma, S)\) that
are either non-multipliable (in \(\Phi(\Gt^\Gamma, S)\)) or
split for \((\Psi(\Gt_\ks, T_\ks), \Gal(k))\).
\end{enumerate}
\end{prop}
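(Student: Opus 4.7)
The plan is to identify the relative-root subgroup of $G$ associated to $a \in \Phi(G, S)$ with $(\Ut_a^\Gamma)\smooth\conn$, which will reduce both parts of the proposition to a computation of the maximal smooth, connected subgroup of $\Ut_a^\Gamma$. By Remark \ref{rem:root-characterize} applied inside $G$, the relative-root subgroup $U_a^G$ of $G$ is the unique smooth, connected, $S$-stable, unipotent subgroup whose $S$-weights lie in $\Z_{\ge 0}\cdot a \cap \Phi(G, S)$. Since $(\Ut_a^\Gamma)\smooth\conn \subseteq \Gt^\Gamma$ is smooth, connected, $S$-stable, and unipotent (as a subgroup of $\Ut_a$), and has $S$-weights in $\Z_{\ge 0}\cdot a$, it lies in $U_a^G$; conversely $U_a^G \subseteq \Ut_a \cap G \subseteq \Ut_a^\Gamma$ forces $U_a^G \subseteq (\Ut_a^\Gamma)\smooth\conn$. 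Thus $U_a^G = (\Ut_a^\Gamma)\smooth\conn$, and $\Lie(G)_a$ coincides with the $a$-weight part of $\Lie((\Ut_a^\Gamma)\smooth\conn)$.

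For part (\ref{subprop:g-root-space}), I would show that whenever $a \in \Phi(G, S)$, the group $\Ut_a^\Gamma$ is actually smooth, hence equals $(\Ut_a^\Gamma)\smooth\conn$ by Corollary \ref{cor:root-smoothable}; the identity $\Lie(\Gt^H) = \Lie(\Gt)^H$ for smooth $H$ (cited in \S\ref{subsec:groups}) then gives $\Lie(\Ut_a^\Gamma) = \Lie(\Ut_a)^\Gamma$, whose $a$-weight part is $\Lie(\Gt)^\Gamma_a$. By Lemma \ref{lem:root-smooth}, smoothness of $\Ut_a^\Gamma$ can only fail when $p = 2$, $a$ is multipliable in $\Phi(\Gt, S)$, and $a \in \Phi(\Gt^\Gamma, S)$. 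Of the two subcases, the inert one is ruled out because Corollary \ref{cor:no-mult-fixed} gives $(\Ut_a^\Gamma)\smooth = \Ut_{2a}^\Gamma$, whose Lie algebra lies in weight $2a$, forcing $\Lie(G)_a = 0$ and contradicting $a \in \Phi(G, S)$; the split one I would handle by adapting the sign-flip argument from the proof of Lemma \ref{lem:root-smooth}, using that a split exceptional pair $\sset{\alphat, \alphat'}$ has two distinct elements so that the obstruction does not collapse even in characteristic $2$.

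For part (\ref{subprop:G-roots}), the containment $\Phi(G, S) \subseteq \Phi(\Gt^\Gamma, S)$ is immediate from $\Lie(G) \subseteq \Lie(\Gt)^\Gamma$. In the $p$-odd case, Lemma \ref{lem:root-smooth} applies unconditionally to every $a \in \Phi(\Gt, S)$, so $\Ut_a^\Gamma$ is smooth and contributes $\Lie(\Gt)^\Gamma_a$ to $\Lie(G)_a$; in particular $a \in \Phi(\Gt^\Gamma, S)$ implies $a \in \Phi(G, S)$. In the $p = 2$ case, Proposition \ref{prop:square-pinned}(\ref{subprop:square-pinned}) gives that for $a \in \Phi(\Gt^\Gamma, S)$ multipliability in $\Phi(\Gt^\Gamma, S)$ agrees with multipliability in $\Phi(\Gt, S)$, so the stated dichotomy is well defined; the non-multipliable and split multipliable cases are handled by the smoothness analysis of part (\ref{subprop:g-root-space}), while the multipliable inert case is excluded via Corollary \ref{cor:no-mult-fixed}. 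To confirm that $\Phi(\Gt^\Gamma, S)$ is a sub-root system of $\Phi(\Gt, S)$, observe that $W(G, S)$ acts on $\Lie(\Gt)$ via conjugation by representatives in $N_G(S) \subseteq G \subseteq \Gt^\Gamma$, which commute with the $\Gamma$-action and so preserve $\Lie(\Gt)^\Gamma$, hence preserve $\Phi(\Gt^\Gamma, S)$; for any $a \in \Phi(\Gt^\Gamma, S) \setminus \Phi(G, S)$ (necessarily multipliable inert in characteristic $2$, with $2a \in \Phi(G, S)$ because $2a$ is non-multipliable), the reflection $s_a = s_{2a}$ already lies in $W(G, S)$.

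The main obstacle is the $p = 2$ split multipliable subcase of part (\ref{subprop:g-root-space}): the characteristic-$2$ analogue of the sign-flip argument of Lemma \ref{lem:root-smooth} must be extracted from the mere distinctness of $\alphat$ and $\alphat'$ in the split exceptional pair, with the Frobenius-twisted commutator structure of Propositions \ref{prop:square-pinned} and \ref{prop:no-mult-fixed} supplying the key algebraic input.
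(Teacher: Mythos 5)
Your proposal takes a genuinely different route from the paper for part (\ref{subprop:g-root-space}): you argue via the identification of the relative root subgroup $U^G_a$ with $(\Ut_a^\Gamma)\smooth\conn$ and try to prove that $\Ut_a^\Gamma$ is smooth whenever $a \in \Phi(G,S)$, whereas the paper avoids root subgroups altogether. The paper reduces to $\ks$ (using Proposition~\ref{prop:rd-restriction}(\ref{subprop:rd-fiber}) and the fact that the weights of $T_\ks$ on $\Lie(G)_a \otimes_k \ks$ form a $\Gal(k)$-stable nonempty subset of the extensions of $a$), after which Corollary~\ref{cor:root-space-facts}(\ref{subcor:root-space-dim}) shows $\Lie(\Gt)_\alpha^\Gamma$ is one-dimensional; since $\Lie(G)_\alpha$ is a nonzero subspace, equality follows by a dimension count. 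Your route, if completed, would be longer but would incidentally establish smoothness of $\Ut_a^\Gamma$ for $a \in \Phi(G,S)$, which is more information; the paper instead gets smoothness as a consequence only later. For part (\ref{subprop:G-roots}) your argument parallels the paper's, including the Weyl-group reflection argument, so that part is fine.

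However, there is a genuine gap in the way you propose to finish the split multipliable case of part (\ref{subprop:g-root-space}) when $p = 2$. Your plan to ``adapt the sign-flip argument from the proof of Lemma~\ref{lem:root-smooth}'' cannot work: that argument hinges on $\gamma_0(\Yt) = -\Yt \ne \Yt$, which is precisely the step that collapses in characteristic $2$ (and indeed Proposition~\ref{prop:square-pinned}(\ref{subprop:square-pinned}) records the opposite conclusion in characteristic $2$, namely that $2a$ does belong to $\Phi(\Gt^\Gamma,S)$). The correct observation is both simpler and different in spirit: ``split'' for $(\Psi(\Gt_\ks,T_\ks),\Gal(k))$ places you in case (\ref{case:rd-multipliable(red)}) of Proposition~\ref{prop:rd-restriction}(\ref{subprop:rd-multipliable}), so the extensions $\alpha$ of $a$ to $\Phi(\Gt_\ks,T_\ks)$ are \emph{not multipliable} there. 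After base change to $\ks$ and direct spanning as in Corollary~\ref{cor:root-smoothable}, Lemma~\ref{lem:root-smooth} applies unconditionally to each $\alpha$ (via the ``$a$ not multipliable'' clause), yielding smoothness of $\Ut_a^\Gamma$. With that fix your route closes; as written, though, it relies on an adaptation that has no chance of succeeding in characteristic $2$.
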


\begin{note}
Recall that
\(\Phi(\Gt^\Gamma, S)\)
means
\(\Phi(\Lie(\Gt^\Gamma), S) =
\smashset[\big]{a \in \Phi(\Gt, S)}{\Lie(\Gt^\Gamma)_a \ne \sset0}\).
Since \(\Lie(\Gt^\Gamma)_a\) equals \(\Lie(\Gt)^\Gamma_a\),
we may also describe \(\Phi(\Gt^\Gamma, S)\) as
the set of weights of \(S\) on \(\Lie(\Gt)\) such that
the corresponding weight space admits
nonzero \(\Gamma\)-fixed vectors.

Proposition \ref{prop:GS-vs-GtS}(\ref{subprop:g-root-space})
can be viewed
as saying that, if smoothing does not totally eliminate the \(a\)-weight space, then it leaves it unchanged.
\end{note}

\begin{proof}
%
For (\ref{subprop:g-root-space}), note that the set
\(\Phi(\Lie(G)_a \otimes_k \ks, T_\ks)\)
of weights of \(T_\ks\) on \(\Lie(G)_a \otimes_k \ks\) that
`extend' \(a\) is preserved by \(\Gal(k)\); so, by
Proposition \ref{prop:rd-restriction}(\ref{subprop:rd-fiber}),
we have that \(\Phi(\Lie(G)_a \otimes_k \ks, T_\ks)\) contains
all `extensions' of \(a\) to \(T_\ks\) in
\(\Phi(\Gt_\ks, T_\ks)\).
That is,
\(\Phi(\Lie(G)_a \otimes_k \ks, T_\ks)\) contains
\(\Phi(\Lie(\Gt)_a \otimes_k \ks, T_\ks)\).
Since the reverse containment is obvious, we have equality.
Since
\(\Lie(G)_a \otimes_k \ks\) equals
\(\bigoplus \Lie(G_\ks)_\alpha\) and
\(\Lie(\Gt)_a^\Gamma \otimes_k \ks\) equals
\(\bigoplus \Lie(\Gt_\ks)^{\Gamma_\ks}_\alpha\),
the sums over all
\(\alpha\) in
\(\Phi(\Lie(G)_a \otimes_k \ks, T_\ks) =
\Phi(\Lie(\Gt)_a \otimes_k \ks, T_\ks)\),
for the purposes of proving (\ref{subprop:g-root-space}), we may, and do, assume
upon replacing \(k\) by \(\ks\), hence \(S\) by \(T\), and
\(a\) by an `extension' \(\alpha\), that
\(k\) is separably closed.
Then Corollary \ref{cor:root-space-facts}(\ref{subcor:root-space-dim})
gives that \(\Lie(\Gt)_\alpha^\Gamma\) is one-dimensional.
Since \(\Lie(G)_a\) is certainly one-dimensional and contained in \(\Lie(\Gt)_a^\Gamma\),
we have shown
(\ref{subprop:g-root-space}).

We now turn to (\ref{subprop:G-roots}).
This has two claims: that
\(\Phi(\Gt^\Gamma, S)\) is a root system in
the subspace of \(V(S)\) that it spans, and that
\(\Phi(G, S)\) is a certan subset of \(\Phi(\Gt^\Gamma, S)\).
We prove the latter claim first.

Since \(C_\Gt(S)\) equals \(\Tt\) by
Proposition \ref{prop:quass-rough}(\ref{subprop:quass-up}),
hence also equals \(C_\Gt(T)\),
we have that
\(\Phi(\Gt^\Gamma, S)\), respectively
\(\Phi(G, S)\), is the set of `restrictions' of elements of
\(\Phi((\Gt^\Gamma)_\ks, T_\ks)\), respectively
\(\Phi(G_\ks, T_\ks)\).
We now apply Lemma \ref{lem:root-smooth}
in the case where \(k\) is \(k\sep\) and \(S\) is \(T_\ks\).
If \(p\) is odd, we have that
\(\Phi(G_\ks, T_\ks)\) equals
\(\Phi((\Gt^\Gamma)_\ks, T_\ks)\), hence that
\(\Phi(G, S)\) equals
\(\Phi(\Gt^\Gamma, S)\).
If \(p\) equals \(2\), then
we have that
\(\Phi(G_\ks, T_\ks)\) contains at least the roots in
\(\Phi((\Gt^\Gamma)_\ks, T_\ks)\) that
are not multipliable in \(\Phi(\Gt_\ks, T_\ks)\);
i.e.,
by Proposition \ref{prop:square-pinned}(\ref{subprop:square-pinned}),
the non-multipliable elements of
\(\Phi((\Gt^\Gamma)_\ks, T_\ks)\).
Since \(\Phi(G_\ks, T_\ks)\) is reduced, it is
precisely the set of such roots.
The set of `restrictions' of such roots
to $S$ certainly contains all roots in
\(\Phi(\Gt^\Gamma, S)\) that are not
multipliable in \(\Phi(\Gt, S)\).
On the other hand, a root \(a\) in \(\Phi(\Gt^\Gamma, S)\) that
\emph{is} multipliable in \(\Phi(\Gt, S)\) is
the restriction of a root in \(\Phi((\Gt^\Gamma)_\ks, T_\ks)\)
that is \emph{not} multipliable in \(\Phi(\Gt_\ks, T_\ks)\)
if and only if \(a\) is split. This proves the second part of (\ref{subprop:G-roots}).

To show that \(\Phi(\Gt^\Gamma, S)\) is a root system,
we must show that, for every
\(a \in \Phi(\Gt^\Gamma, S)\), there is
a cocharacter \(a^\vee\) of \(S\) such that
\(\pair{a^\vee}a\) equals \(2\) and
the reflection corresponding to \((a, a^\vee)\)
preserves \(\Phi(\Gt^\Gamma, S)\).
Since
the reflections corresponding to
\((2a, \frac1 2 a^\vee)\) and
\((a, a^\vee)\) are the same,
we may, and do, assume that
\(a\) belongs to \(\Phi(G, S)\)
(by replacing \(a\) by \(2a\), if \(p\) equals \(2\) and
\(a\) is multipliable).
Then we may take \(a^\vee\) to be the
coroot in \(\Phi^\vee(G, S)\) corresponding to \(a\).
Since every element of \(W(G, S)(k)\) has
a representative in \(N_G(S)(k)\), we have that
there is some \(n \in N_G(S)(k)\) whose action on \(\bX^*(S)\) is
the reflection corresponding to \((a, a^\vee)\)
\cite{conrad-gabber-prasad:prg}*{Theorem C.2.15}.
In particular, since \(n\) preserves \(\Gt^\Gamma\),
the reflection preserves \(\Phi(\Gt^\Gamma, S)\).
This completes the proof of (\ref{subprop:G-roots}), and hence
of the result.
\end{proof}

\begin{cor}
\(\Z\Phi^\vee(\Gt^\Gamma, S)\) equals
\(\Z\Phi^\vee(G, S)\), and
the natural map
\abmap{W(G, S)}{W(\Phi(\Gt^\Gamma, S))}
is an isomorphism.
\end{cor}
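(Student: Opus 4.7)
The plan is to deduce both assertions from the explicit description of $\Phi(G, S)$ as a subset of $\Phi(\Gt^\Gamma, S)$ given by Proposition~\ref{prop:GS-vs-GtS}(\ref{subprop:G-roots}). If $p$ is odd, that description yields $\Phi(G, S) = \Phi(\Gt^\Gamma, S)$, making both claims immediate. So the entire content lies in characteristic $2$, and the roots one must analyze are those $a \in \Phi(\Gt^\Gamma, S) \setminus \Phi(G, S)$: each such $a$ is multipliable in $\Phi(\Gt^\Gamma, S)$ (hence, since $\Phi(\Gt^\Gamma, S) \subseteq \Phi(\Gt, S)$, multipliable in $\Phi(\Gt, S)$), and inert for $(\Psi(\Gt_\ks, T_\ks), \Gal(k))$; in particular, $2a \in \Phi(\Gt^\Gamma, S)$, and since $2a$ is non-multipliable, it already belongs to $\Phi(G, S)$.

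For the first equality, I would invoke the standard relation $a^\vee = 2(2a)^\vee$ for a multipliable/divisible pair $(a, 2a)$ inside a (possibly non-reduced) root datum such as $\Psi(\Gt, S)$; this relation is a direct computation, most familiar from type $\mathsf{BC}_n$. Applied to each $a \in \Phi(\Gt^\Gamma, S) \setminus \Phi(G, S)$, it shows that $a^\vee = 2(2a)^\vee$ already lies in $\Z\Phi^\vee(G, S)$. Combined with the obvious reverse containment, this yields $\Z\Phi^\vee(\Gt^\Gamma, S) = \Z\Phi^\vee(G, S)$.

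For the Weyl-group claim, I would note that $W(G, S)$ and $W(\Phi(\Gt^\Gamma, S))$ both embed as reflection subgroups of $\GL(V(S))$, with $W(G, S) = W(\Phi(G, S))$ by the standard identification of the algebraic-group Weyl group with the root-system Weyl group (witness every reflection via a representative in $N_G(S)(k)$, as was already used in the proof of Proposition~\ref{prop:GS-vs-GtS}(\ref{subprop:G-roots})). The natural map is then simply the inclusion induced by $\Phi(G, S) \subseteq \Phi(\Gt^\Gamma, S)$, hence trivially injective. For surjectivity, $W(\Phi(\Gt^\Gamma, S))$ is generated by the reflections $s_a$ for $a \in \Phi(\Gt^\Gamma, S)$; if $a \in \Phi(G, S)$ then $s_a \in W(G, S)$ directly, and otherwise the first paragraph produces $2a \in \Phi(G, S)$, giving $s_a = s_{2a} \in W(G, S)$. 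I do not foresee a genuine obstacle: once Proposition~\ref{prop:GS-vs-GtS} has pinned down how $\Phi(G, S)$ sits inside $\Phi(\Gt^\Gamma, S)$, the corollary reduces to this short combinatorial check.
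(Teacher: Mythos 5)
Your proof is correct and is the natural one; the paper gives no proof for this corollary (it is stated immediately after Proposition~\ref{prop:GS-vs-GtS} with no argument attached), and the argument you supply is exactly the combinatorial check that the paper leaves implicit. The two key facts you use — that every $a \in \Phi(\Gt^\Gamma, S) \setminus \Phi(G, S)$ satisfies $2a \in \Phi(G, S)$ with $a^\vee = 2(2a)^\vee$ and $s_a = s_{2a}$, so that the coroot lattices and reflection groups are unchanged — are precisely the observations the paper already made and exploited in the last paragraph of the proof of Proposition~\ref{prop:GS-vs-GtS}(\ref{subprop:G-roots}), where it reduces to coroots of $G$ by replacing $a$ with $2a$.
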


%

Corollary \ref{cor:G-smooth} is our main tool for
establishing smoothness or near-smoothness,
in the sense of
Theorem \ref{thm:quass}(\ref{subthm:quass-smooth}), of
fixed-point groups.

\begin{cor}
\label{cor:G-smooth}
Suppose that \(p\) is odd or \(\Phi(\Gt^\Gamma, S)\) is reduced.
If \(\Tt^\Gamma\) is smooth, then \(\Gt^\Gamma\) is smooth.
\end{cor}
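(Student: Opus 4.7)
The plan is to exploit the ``big cell'' decomposition of $\Gt^\Gamma$ that was already constructed in the proof of Theorem~\ref{thm:quass}(\ref{subthm:quass-smoothable}). Since smoothness of a group scheme over a field can be checked at a single point and propagated by left translation, it suffices to show that some open neighborhood of the identity in $\Gt^\Gamma$ is smooth.

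More precisely, fix a Borel subgroup $\Bt$ of $\Gt$ containing $\Tt$ and preserved by $\Gamma$, and let $\Bt^-$ be the Borel subgroup of $\Gt$ opposite to $\Bt$ with respect to $\Tt$. From the proof of Theorem~\ref{thm:quass}(\ref{subthm:quass-smoothable}), the multiplication map
\[
\prod_{a\in\Phi(\Bt,S)}\Ut_a^\Gamma\;\times\;\Tt^\Gamma\;\times\;\prod_{a\in\Phi(\Bt,S)}\Ut_{-a}^\Gamma\;\longrightarrow\;\Gt^\Gamma
\]
is an isomorphism of schemes onto an open subscheme of $\Gt^\Gamma$ that contains the identity. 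Given the hypothesis that $\Tt^\Gamma$ is smooth, it therefore suffices to verify that each factor $\Ut_a^\Gamma$ (with $a\in\pm\Phi(\Bt,S)$) is smooth under our assumptions.

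This is the main point, and it reduces in each case to Lemma~\ref{lem:root-smooth}, which guarantees that $\Ut_a^\Gamma$ is a vector group whenever $a$ is non-multipliable in $\Phi(\Gt,S)$, or $a\notin\Phi(\Gt^\Gamma,S)$, or $p$ is odd. If $p$ is odd, we are done immediately. The remaining case, where $p=2$ and $\Phi(\Gt^\Gamma,S)$ is reduced, is the potentially delicate one; here the key observation is that Proposition~\ref{prop:square-pinned}(\ref{subprop:square-pinned}) forces any multipliable $a\in\Phi(\Gt,S)$ that lies in $\Phi(\Gt^\Gamma,S)$ to satisfy $2a\in\Phi(\Gt^\Gamma,S)$ as well, contradicting the assumption that $\Phi(\Gt^\Gamma,S)$ is reduced. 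Hence, in this case, every multipliable $a\in\Phi(\Gt,S)$ fails to lie in $\Phi(\Gt^\Gamma,S)$, placing us again in the scope of Lemma~\ref{lem:root-smooth}.

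Putting these steps together, every factor in the big-cell decomposition is smooth, so the open cell is a smooth open neighborhood of the identity in $\Gt^\Gamma$. Since left translation by any $\ka$-point of $\Gt^\Gamma$ is an automorphism of $\Gt^\Gamma_\ka$, the smooth locus of $\Gt^\Gamma_\ka$ is stable under $\Gt^\Gamma(\ka)$-translation and hence equals all of $\Gt^\Gamma_\ka$; that is, $\Gt^\Gamma$ is smooth. The main obstacle is really the interplay of multipliable roots with $p=2$, which is resolved cleanly by invoking Proposition~\ref{prop:square-pinned}(\ref{subprop:square-pinned}) to reduce to a case already handled by Lemma~\ref{lem:root-smooth}.
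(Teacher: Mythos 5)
Your proof is correct, but it takes a genuinely different route from the paper's. The paper argues purely on Lie algebras: under the given hypotheses, Proposition~\ref{prop:GS-vs-GtS}(\ref{subprop:G-roots}) gives $\Phi(\Gt^\Gamma,S)=\Phi(G,S)$, and Proposition~\ref{prop:GS-vs-GtS}(\ref{subprop:g-root-space}) then shows each nonzero-weight subspace of $\Lie(\Gt^\Gamma)$ already lies in $\Lie(G)$; combined with the hypothesis that $\Lie(\Tt^\Gamma)=\Lie(T)$, this yields $\Lie(\Gt^\Gamma)=\Lie(G)$, and one invokes \cite{milne:algebraic-groups}*{Proposition~10.15} to conclude $(\Gt^\Gamma)\conn=G$, hence smoothness. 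You instead revisit the big-cell decomposition from the proof of Theorem~\ref{thm:quass}(\ref{subthm:quass-smoothable}): since the multiplication map onto the $\Gamma$-fixed points of $\Bt\cdot\Bt^-$ is an isomorphism of schemes, smoothness of $\Gt^\Gamma$ reduces to smoothness of the individual factors $\Ut_{\pm a}^\Gamma$ and $\Tt^\Gamma$, which you check directly via Lemma~\ref{lem:root-smooth}, using Proposition~\ref{prop:square-pinned}(\ref{subprop:square-pinned}) to exclude the only potentially troublesome case (a multipliable $a$ lying in $\Phi(\Gt^\Gamma,S)$ when $p=2$). Both arguments ultimately trace back to Lemma~\ref{lem:root-smooth} and the behavior of the $2$-power map; the paper routes this information through the root-system comparison Proposition~\ref{prop:GS-vs-GtS} and so avoids re-examining the open cell, whereas your proof is more explicit and hands-on, at the mild cost of re-deriving the multipliable-root exclusion that Proposition~\ref{prop:GS-vs-GtS} already packages. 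Your closing translation argument (smooth points of a group scheme over a field, if any exist, are all of the group) is the standard fact and is applied correctly.
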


\begin{proof}
We have that \(\Lie(\Gt^\Gamma)\) is the sum of the \(0\)-weight space
\(\Lie(\Gt^\Gamma)^S = \Lie(\Tt^\Gamma)\) for \(S\) and
the nonzero-weight spaces for \(S\).
Since \(\Phi(\Gt^\Gamma, S)\) equals \(\Phi(G, S)\)
by Proposition \ref{prop:GS-vs-GtS}(\ref{subprop:G-roots}),
it follows from Proposition \ref{prop:GS-vs-GtS}(\ref{subprop:g-root-space})
that each weight space in \(\Lie(\Gt^\Gamma)\) for a nonzero weight is
contained in \(\Lie(G)\).
Thus, since
\(\Lie(\Tt^\Gamma) = \Lie((\Tt^\Gamma)\conn)\) equals
\(\Lie(\fix\Tt^\Gamma) = \Lie(T)\),
we have that \(\Lie(\Gt^\Gamma)\) is contained in, hence equals,
\(\Lie(G) = \Lie(\fix\Gt^\Gamma)\);
so \cite{milne:algebraic-groups}*{Proposition 10.15}
gives that \(G\) equals \((\Gt^\Gamma)\conn\).
It follows that
\((\Gt^\Gamma)\conn\), and hence \(\Gt^\Gamma\), is smooth.
\end{proof}

{\newcommand\theadhocthm{\ref{thm:quass}(\ref{subthm:quass-smooth})}
\begin{adhocthm}
\((\Gt^\Gamma)\conn\) equals
\((Z(\Gt)^\Gamma)\conn\cdot\fix\Gt^\Gamma\) unless
\(p\) equals \(2\) and \((\Gt_\ks, \Gamma_\ks)\) is exceptional.
\end{adhocthm}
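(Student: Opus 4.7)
The strategy is to apply Corollary \ref{cor:lift-smooth} with $\Zt = Z(\Gt)$; the hypotheses are easy to verify, since $Z(\Gt)$ is characteristic and hence $\Gamma$-stable, $\Gt$ is reductive, and by Remark \ref{rem:torus-quass}(\ref{subrem:quass-to-torus}) the image of $\Gamma\conn$ in $\uAut(\Gt)$ is a torus, hence linearly reductive. The corollary then yields exactly the desired equality as soon as we know that $(\Gt/Z(\Gt))^\Gamma = \Gt\adform^\Gamma$ is smooth.

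Smoothness is fpqc-local on the base, and $(\Gt\adform^\Gamma)_\ks$ equals $(\Gt\adform_\ks)^{\Gamma_\ks}$ by \cite{conrad-gabber-prasad:prg}*{Lemma C.4.1} (since $\Gamma$ is smooth), so we may base change to $\ks$. The pair $(\Gt_\ks, \Gamma_\ks)$ remains quasisemisimple (Lemma \ref{lem:quass-by-iso} also handles the passage to $\Gt\adform$), and its exceptional status is unaffected by further separable base change. Thus we may assume $k = \ks$, so that $\Gal(k)$ is trivial, $S = T$, and $\Tt = \St$. By Remark \ref{rem:adj-or-sc-T}, $\Tt\adform^\Gamma$ is a torus, hence smooth; and the central isogeny $\Gt \to \Gt\adform$ identifies the nonzero $S$-weight spaces of $\Lie(\Gt)$ and $\Lie(\Gt\adform)$ $\Gamma$-equivariantly, so $\Phi(\Gt\adform^\Gamma, S\adform) = \Phi(\Gt^\Gamma, S)$. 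Corollary \ref{cor:G-smooth} will therefore give smoothness of $\Gt\adform^\Gamma$ provided $p$ is odd or $\Phi(\Gt^\Gamma, S)$ is reduced.

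If $p = 2$ and $(\Gt, \Gamma) = (\Gt_\ks, \Gamma_\ks)$ is not exceptional, then since $\Gal(k) = 1$, Remark \ref{rem:not-inert} gives that every multipliable element of the relevant quotient root system is inert, so Proposition \ref{prop:GS-vs-GtS}(\ref{subprop:G-roots}) identifies $\Phi(G, S)$ with the non-multipliable elements of $\Phi(\Gt^\Gamma, S)$. If some $c \in \Phi(\Gt^\Gamma, S)$ were multipliable, then $2c \in \Phi(\Gt^\Gamma, S)$ would itself be non-multipliable (as $4c$ is not a root in any classical root system), forcing $2c \in \Phi(G, S)$ to be divisible in $\Phi(\Gt^\Gamma, S)$, contradicting non-exceptionality. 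Thus $\Phi(\Gt^\Gamma, S)$ is reduced, Corollary \ref{cor:G-smooth} yields smoothness of $\Gt\adform^\Gamma$, and Corollary \ref{cor:lift-smooth} completes the proof. The main obstacle is the subtle relationship between exceptionality of $(\Gt_\ks,\Gamma_\ks)$ and reducedness of $\Phi(\Gt^\Gamma, S)$: over a general field the Galois action can produce a non-reduced relative root system from a reduced absolute one via the $\mathsf A_{2n}$ restriction phenomenon of Proposition \ref{prop:rd-restriction}(\ref{subprop:rd-multipliable}), so reducing to $\ks$ is what makes the non-exceptional hypothesis line up precisely with the reducedness required by Corollary \ref{cor:G-smooth}.
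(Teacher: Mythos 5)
Your proof is correct and takes essentially the same route as the paper's: reduce the conclusion to smoothness of $\Gt\adform^\Gamma$ via Corollary \ref{cor:lift-smooth}, base change to $\ks$, use Remark \ref{rem:adj-or-sc-T} and Corollary \ref{cor:G-smooth}, and show that non-exceptionality forces reducedness of $\Phi(\Gt^\Gamma, S)$ via Proposition \ref{prop:GS-vs-GtS}(\ref{subprop:G-roots}). The only cosmetic difference is that you invoke Corollary \ref{cor:lift-smooth} up front while the paper phrases the reduction as "we may assume $\Gt$ is adjoint", and you make explicit the appeal to Remark \ref{rem:not-inert} (that multipliable roots become inert once $\Gal(\ks)$ is trivial) where the paper uses only the implication "$2\alpha$ non-multipliable $\Rightarrow 2\alpha\in\Phi(G,S)$" directly from Proposition \ref{prop:GS-vs-GtS}(\ref{subprop:G-roots}); both formulations are equivalent.
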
}

\begin{proof}
We may, and do, assume, upon replacing \(k\) by \(\ks\), that
\(k\) is separably closed.
Suppose that \(p\) does not equal \(2\), or
\((\Gt, \Gamma)\) is not exceptional.

Since
\abmap{\Lie(\Gt)^\Gamma_\alpha}{\Lie(\Gt\adform)^\Gamma_\alpha} and,
by Corollary \ref{cor:fixed-surjective}, also
\abmap{\Lie(G)_\alpha}{\Lie(\fix\Gt\adform^\Gamma)_\alpha}
is an isomorphism for every nonzero \(\alpha \in \bX^*(T)\),
we have that
\abmap
	{\Phi(G, T)}
	{\Phi(\fix\Gt\adform^\Gamma, T/(Z(\Gt) \cap T))}
and
\abmap
	{\Phi(\Gt^\Gamma, T)}
	{\Phi(\Gt\adform^\Gamma, T/(Z(\Gt) \cap T))}
are bijections.
Thus,
\(p\) does not equal \(2\), or
\((\Gt\adform, \Gamma)\) is not exceptional.

By Corollary \ref{cor:lift-smooth},
we may, and do, thus assume, upon replacing \(\Gt\) by \(\Gt\adform\), that
\(\Gt\) is adjoint, at which point the conclusion becomes that
\((\Gt^\Gamma)\conn\) is smooth.

Suppose first that \(p\) does not equal \(2\) or
\(\Phi(\Gt^\Gamma, T)\) is reduced.
Remark \ref{rem:adj-or-sc-T} gives that \(\Tt^\Gamma\) is smooth, so
Corollary \ref{cor:G-smooth} gives that
\(\Gt^\Gamma\) is smooth.

Thus we may, and do, suppose that \(p\) equals \(2\) and
\(\Phi(\Gt^\Gamma, T)\) is not reduced.
Let \(\alpha\) be a multipliable element of \(\Phi(\Gt^\Gamma, T)\).
Then Proposition \ref{prop:GS-vs-GtS}(\ref{subprop:G-roots})
gives that \(2\alpha\) belongs to \(\Phi(G, T)\).
That is, \((\Gt, \Gamma)\) is exceptional, which is
a contradiction.
\end{proof}

It is easy, regardless of (positive) characteristic, for
passage to fixed points to create non-smoothness, but
this non-smoothness should be thought of as
coming from the failure of smoothness for
an action on a torus.
Remark \ref{rem:adj-or-sc-T} thus suggests that
it should be easier for
\(\Gt\adform^\Gamma\) than for
\(\Gt^\Gamma\)
to be smooth.
Lemma \ref{lem:smooth-to-iso} formalizes this idea
for use in the proof of
Lemma \ref{lem:inductive-step-Z|Lie}.

\begin{lem}
\label{lem:smooth-to-iso}
If \(\Gt^\Gamma\) is smooth and \(\Nt\) is a
\(\Gamma\)-stable, normal subgroup of \(\Gt\) such that
\((\Tt/\Nt \cap \Tt)^\Gamma\) is smooth, then
\((\Gt/\Nt)^\Gamma\) is smooth.
\end{lem}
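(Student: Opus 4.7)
The plan is to apply, to the reductive datum $(\Gt/\Nt,\Gamma)$, the big-cell analysis from the proof of Theorem~\ref{thm:quass}(\ref{subthm:quass-smoothable}), combined with a comparison of root groups under the quotient map $\pi\colon\Gt\to\Gt/\Nt$. First, if $(\Bt,\Tt)$ is a $\Gamma$-stable Borel--torus pair in $\Gt$, then its image $(\Bt',\Tt')\ldef(\Bt\cdot\Nt/\Nt,\Tt/(\Tt\cap\Nt))$ is a $\Gamma$-stable Borel--torus pair in $\Gt/\Nt$, so $(\Gt/\Nt,\Gamma)$ is quasisemisimple. Letting $S'$ be the maximal split torus in $(\Tt')^\Gamma$, the analogue of the big-cell argument at the end of \S\ref{sec:quass-smooth} exhibits an open neighborhood of the identity in $(\Gt/\Nt)^\Gamma$ as a product of $(\Tt')^\Gamma$ with the fixed-point schemes $(\Ut'_{\pm a})^\Gamma$ for $a\in\Phi(\Bt',S')$. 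By translation, smoothness of this open subscheme forces smoothness of $(\Gt/\Nt)^\Gamma$, so it suffices to show that each factor is smooth.

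The torus factor is smooth by hypothesis. For each relative root $a$ of $\Gt/\Nt$, the idea is to identify $(\Ut'_a)^\Gamma$ with $(\Ut_{\tilde a})^\Gamma$, where $\tilde a$ is the inflation of $a$ along $S\to S'$. The critical claim is that $\pi$ restricts to a $\Gamma$-equivariant isomorphism $\Ut_{\tilde a}\to\Ut'_a$. This rests on the structural observation that any normal subgroup of the connected reductive group $\Gt$ is an almost-direct product of a subgroup of $Z(\Gt)$ (which is of multiplicative type, and hence meets any unipotent root group trivially) with a smooth, connected subgroup generated by almost-simple components of $\Gt\der$. Consequently $\Nt\cap\Ut_{\tilde a}$ is either trivial or equal to $\Ut_{\tilde a}$, and the latter case would force $\Ut'_a$ to be trivial, contradicting that $a$ is a root of $\Gt/\Nt$.

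Finally, smoothness of $\Gt^\Gamma$ gives, via the same big-cell analysis applied to $\Gt^\Gamma$ itself (now demanding smoothness, not merely smoothability, of each factor), smoothness of each $(\Ut_{\tilde a})^\Gamma$; transporting along the isomorphism $\Ut_{\tilde a}\cong\Ut'_a$ yields the desired smoothness of $(\Ut'_a)^\Gamma$.

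The main obstacle lies in the root-group identification $\pi\colon\Ut_{\tilde a}\xrightarrow{\sim}\Ut'_a$, where one must invoke the structure of normal subgroups of reductive groups to confine the non-smoothness of $\Nt$ to the center of $\Gt$ and thereby away from root groups; once this is in hand, the remainder is a routine adaptation of the machinery developed in \S\ref{sec:quass-smooth}.
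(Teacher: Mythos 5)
Your approach—rebuilding the big cell for $(\Gt/\Nt)^\Gamma$ and transporting smoothness of root groups along $\pi\colon\Gt\to\Gt/\Nt$—is genuinely different from the paper's, which argues by contradiction via root-system reducedness (using Corollary~\ref{cor:G-smooth}, Proposition~\ref{prop:square-pinned}, and the type-$\mathsf A_{2n}$ analysis). That would be a worthwhile alternative if it worked, but the central structural claim you rely on is false, and the failure is precisely in the direction the argument needs.

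You assert that any normal subgroup scheme $\Nt$ of $\Gt$ is an almost-direct product of a subgroup of $Z(\Gt)$ with a smooth connected subgroup generated by almost-simple components of $\Gt\der$, and deduce that $\Nt\cap\Ut_{\tilde a}$ is trivial or all of $\Ut_{\tilde a}$. This is the classical structure of smooth connected normal subgroups, but it does not hold for arbitrary normal subgroup schemes in positive characteristic. The Frobenius kernel $\Gt_{(1)}=\ker(F\colon\Gt\to\Gt^{(p)})$ is a normal infinitesimal subgroup scheme with $\uLie(\Gt_{(1)})=\uLie(\Gt)$; it is neither central (its Lie algebra contains every root space) nor contained in anything smooth positive-dimensional, and it is compatible with the hypotheses of the lemma (one checks $(\Tt/\Nt\cap\Tt)^\Gamma\cong(\Tt^\Gamma)^{(p)}$ is smooth whenever $\Tt^\Gamma$ is). For such $\Nt$ one has $\Nt\cap\Ut_{\tilde a}=\alpha_p$—neither trivial nor all of $\Ut_{\tilde a}$—so the claimed dichotomy and the claimed isomorphism $\pi\colon\Ut_{\tilde a}\xrightarrow{\sim}\Ut'_a$ both fail: the restriction of $\pi$ to $\Ut_{\tilde a}$ is the Frobenius morphism, which is a bijection on points but not an isomorphism of schemes.

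In this particular example the conclusion of the lemma happens to hold anyway, because $(\Gt/\Nt)^\Gamma=(\Gt^\Gamma)^{(p)}$ and Frobenius twists preserve smoothness; but that rescue is not part of your argument, and there is no reason a general $\Nt$ mixing a Frobenius kernel in one almost-simple factor with central and full components elsewhere should be handled uniformly by a root-group isomorphism. To repair the proof you would need either to restrict to $\Nt$ whose intersection with each almost-simple component is central (which suffices for the paper's applications, where $\Nt\subseteq Z(\Gt)$), or to replace the isomorphism claim with the weaker statement that $\pi|_{\Ut_{\tilde a}}$ is a purely inseparable isogeny onto $\Ut'_a$ and then argue that purely inseparable isogenies of $\Gamma$-groups preserve smoothness of fixed points—neither of which is done here.
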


\begin{proof}
We may, and do, assume, upon replacing \(k\) by \(\ks\), that
\(k\) is separably closed.

Since \(\Gt^\Gamma\) is smooth, we have that
\((\Gt^\Gamma)\conn\) equals \(G\), so that
\((\Gt^\Gamma)\conn\) is reductive and
\(T\) is a maximal torus in \((\Gt^\Gamma)\conn\).
These conditions together imply that
\(\Phi(\Gt^\Gamma, T)\) is reduced
\cite{borel:linear}*{Theorem 14.8}.

We now reason by contradiction.
Suppose that \((\Gt/\Nt)^\Gamma\) is not smooth.
Corollary \ref{cor:G-smooth} gives that
\(p\) equals \(2\) and \(\Phi((\Gt/\Nt)^\Gamma, T)\) is not reduced.
Let \(\alpha\) be a multipliable element of
\(\Phi((\Gt/\Nt)^\Gamma, T)\), and let
\(\Gt_1'\) be an almost-simple component of \(\Gt/\Nt\) such that
\(\alpha\) belongs to \(\Phi(\Gt_1', T)\).
Remark \ref{rem:how-to-irred} gives that
\(\Phi(\Gt_1', T)\) is an irreducible component of
\(\Phi(\Gt/\Nt, T)\), and so also contains
\(2\alpha\); so
Proposition \ref{prop:square-pinned}(\ref{subprop:square-pinned})
gives that
\(\Phi((\Gt_1')^\Gamma, T)\) contains \(2\alpha\).
Remark \ref{rem:gp-nred-facts}(\ref{subrem:gp-nred-type})
gives that
there is a positive integer \(n\) such that \((\Gt_1')\adform\),
and hence \(\Gt_1'\), is
of type \(\mathsf A_{2n}\), and that
there is an element of \(\Gamma(k)\) that preserves \(\Gt_1'\) but
does not act on it by an inner automorphism.
Write \(\Gt_1\) for an almost-simple component of \(\Gt\)
whose image in \(\Gt/\Nt\) is \(\Gt_1'\).
Then \(\ker(\abmap{\Gt_1}{\Gt_1'})\) is a subquotient of
\(\ker(\abmap{(\Gt_1')\scform}{(\Gt'_1)\adform}) = \mu_{2n + 1}\).
Since \(p\) equals \(2\), we have that \(\mu_{2n + 1}\), and hence
\(\ker(\abmap{\Gt_1}{\Gt_1'})\), is \'etale.
It follows that the (obviously) \(\Gamma\)-equivariant morphism
\abmap{\Lie(\Gt_1)}{\Lie(\Gt_1')} is an isomorphism, so
\(\alpha, 2\alpha \in \Phi((\Gt_1')^\Gamma, T)\) also belong to
\(\Phi(\Gt_1^\Gamma, T)\), hence to
\(\Phi(\Gt^\Gamma, T)\).
This is a contradiction of the fact that
\(\Phi(\Gt^\Gamma, T)\) is reduced.
\end{proof}

\numberwithin{equation}{section}
\section{Quasisemisimple outer involutions of special linear groups}
\label{sec:sl-outer}

In this section,
we give an explicit description of an important example that is
already implicit in
the proof of \cite{steinberg:endomorphisms}*{Theorem 8.2}, specifically
\cite{steinberg:endomorphisms}*{pp.~53--54, (2\textquotesingle b)}.
Our explicit understanding is necessary for the proof of
Theorem \ref{thm:quass}(\ref{subthm:quass-smoothable}).
We will handle another specific example in
\S\ref{subsec:D_4-outer}.

We continue to work with
the field \(k\) of
characteristic exponent \(p\) from
\S\ref{sec:quass}.

Let
\(X\) be a nonzero, finite-dimensional \(k\)-vector space,
and put
\(\Gt = \GL(X)\).
Let
\(E/k\) be a field extension, and
\(\gamma\) an involution of \(\Gt\) such that
\(\gamma\) acts by inversion on \(Z(\Gt)\), and
\(\gamma_E\) acts quasisemisimply on \(\Gt_E\).
We do \emph{not} yet assume that
\(\gamma\) acts quasisemisimply on \(\Gt\), although
see Theorem \ref{thm:ka-quass}(\ref{subthm:ka-quass-quass}) for
conditions under which we can conclude this.
For notational convenience, we put \(n = \dim(X) - 1\).
Remember that
\(\Gt\der^\gamma\) means \((\Gt\der)^\gamma\), not
\((\Gt^\gamma)\der\), when they differ; and
\(\Gt_E^{\gamma_E}\) means
\((\Gt_E)^{\gamma_E}\).

We are most interested in the cases where
\(p\) equals \(2\) and \(n\) is even, but we do not require this.

If \(b\) is a bilinear form on \(X\),
then we denote by \(q_b\) the quadratic form
\abmapto x{b(x, x)} on \(X\).
If \(p\) equals \(2\), then
\(q_b\) is a linear map
\abmap X{\lsub f k}, where
\map f k k is the Frobenius automorphism
\abmapto x{x^2} and
\(\lsub f k\) is the restriction of scalars of
\(k\) along \(f\), as in
\cite{demazure-gabriel:groupes-algebriques}*
	{Ch.~II, \S7, 1.1}.

Lemma \ref{lem:sp-fixer} will help us deal with
the obstruction to smoothability of
the fixed-point group in
Proposition \ref{prop:sl-even}.
The statement involves a lot of notation.
It may be informally, but perhaps more clearly, summarized as follows:
the subgroup of a symplectic group fixing a given subspace of
the defining representation is
smooth and connected, and
its maximal pseudo-reductive quotient is
a symplectic group, hence reductive.

{
\newcommand\pminus{^{\prime\,{-}}}
\newcommand\pmstar{^{\prime\,{-}\,{*}}}
\newcommand\pperp{^{\prime\,\perp}}
\newcommand\pplus{^{\prime\,{+}}}
\newcommand\ppstar{^{\prime\,{+}\,{*}}}
\newcommand\pstar{^{\prime\,{*}}}
\newcommand\ppminus{^{\prime\,{\pm}}}
\newcommand\ppperp{^{\prime\prime\,\perp}}
\newcommand\uSkew{\underline\Skew}
\begin{lem}
\label{lem:sp-fixer}
Suppose that \(b\) is a nondegenerate, alternating form on \(X\).
We denote \(b\)-orthogonal spaces by \((\cdot)^\perp\).
Let \(X''\) be a subspace of \(X\), and put
\(X' = X'' + X\ppperp\).
Write
\(G''\) for
the subgroup of \(\Sp(X, b)\) that fixes \(X''\) pointwise,
\(U'\) and \(U''\) for the subgroups
of \(G''\) that fix
\(X'\) and \(X'/X\pperp\) pointwise,
and
\(b'\) for the alternating form on \(X'/X\pperp\)
induced by \(b\).
The form \(b\) puts
\(X\pperp\) and \(X/X'\) in duality, so that there is
a duality involution on the space
\(\Hom(X/X', X\pperp)\) of
\(k\)-linear homomorphisms.
Write \(\Skew(X/X', X\pperp)\) for the space of
skew homomorphisms
(i.e., homomorphisms negated by the duality involution).
\begin{enumerate}[label=(\alph*), ref=\alph*]
\item\label{sublem:(sp-fixer)red}
The subspaces
\(X''/X\pperp\) and \(X\ppperp/X\pperp\) of
\(X'/X\pperp\) are complementary and
nondegenerate for \(b'\).
\item\label{sublem:Ru(sp-fixer)}
The group \(U'\) is
the vector group associated to
\(\Skew(X/X', X\pperp)\).
The group \(U''\) is an extension by \(U'\) of
the vector group associated to
\(\Hom(X'/X'', X\pperp)\).
\item\label{sublem:sp-fixer}
The natural map
\abmap{G''}{\Sp(X\ppperp/X\pperp, b')}
is a quotient map with kernel
\(U''\).
\end{enumerate}
\end{lem}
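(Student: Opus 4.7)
The plan is to parametrise $G''$ via the ``displacement map'' $g \mapsto h \ldef g - 1$, carried out functorially on $k$-algebras $R$. Since $g$ fixes $X''_R$ pointwise, $h$ kills $X''_R$; and since $g$ preserves $b$, for $y \in X''_R$ we have $b(hx, y) = b(gx, gy) - b(x, y) = 0$, so the image of $h$ lies in $(X''^\perp)_R$. Thus $g \mapsto h$ identifies $G''(R)$ with the set of $R$-linear maps $X_R/X''_R \to (X''^\perp)_R$ satisfying the quadratic identity $b(hx, y) + b(x, hy) + b(hx, hy) = 0$, obtained by expanding $b(gx, gy) = b(x, y)$. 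All three structural claims will follow by restricting the domain and codomain of $h$ according to the subgroup under consideration.

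Part (a) is essentially immediate from the definitions: $X'' + X''^\perp = X'$ and $X'' \cap X''^\perp = X'^\perp$, so the images of $X''$ and $X''^\perp$ in $X'/X'^\perp$ are complementary, and $X'^\perp$ is the radical of the restriction of $b$ to each of them, giving nondegeneracy of $b'$ on each summand. For the kernel half of part (c), any $g \in G''$ preserves $X''^\perp$ and fixes $X'^\perp \subseteq X''$ pointwise, so acts on $X''^\perp/X'^\perp$ preserving $b'$; combined with the automatic trivial action on $X''/X'^\perp$, triviality on $X''^\perp/X'^\perp$ is equivalent to triviality on all of $X'/X'^\perp$, that is, to $g \in U''$.

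For part (b) I treat $U'$ first.  Here $h$ kills $X'$, and the quadratic identity applied with $x \in X'$ arbitrary forces $hy \in X'^\perp$ for all $y$, giving a linear map $h \colon X/X' \to X'^\perp$. Since $X'^\perp \subseteq X'' \cap X''^\perp$ is totally isotropic, the quadratic term $b(hx, hy)$ vanishes and the identity collapses to linear skew-adjointness $b(hx, y) + b(x, hy) = 0$; via the perfect pairing between $X/X'$ and $X'^\perp$ induced by $b$, this is exactly $h \in \Skew(X/X', X'^\perp)$. The group law is additive because $h_1 \circ h_2 = 0$ (the image of $h_2$ lies in $X' = \ker h_1$), identifying $U'$ with the advertised vector group. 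For $U''$ the condition that $g$ act trivially on $X''^\perp/X'^\perp$ becomes $h(X''^\perp) \subseteq X'^\perp$, so restriction yields $h_1 \colon X''^\perp/X'^\perp \to X'^\perp$, which via the canonical isomorphism $X''^\perp/X'^\perp \cong X'/X''$ of part (a) is an element of $\Hom(X'/X'', X'^\perp)$. This restriction is additive modulo $U'$, producing a homomorphism $U''/U' \to \Hom(X'/X'', X'^\perp)$ whose kernel is tautologically $U'$; surjectivity will follow by choosing, for each target $h_1$, an arbitrary linear extension to $X/X'' \to X''^\perp$ and correcting the resulting symplectic obstruction by a suitable element of $U'$, whose very definition makes it the natural home of such corrections.

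The main obstacle is the surjectivity of $G'' \to \Sp(X''^\perp/X'^\perp, b')$ in part (c), which requires exhibiting lifts of arbitrary symplectic automorphisms. The plan is a Witt-style decomposition: pick a $b$-nondegenerate complement $V \subseteq X''^\perp$ to $X'^\perp$, and then a complement $Y$ to $X''^\perp$ in $X$ chosen so that $b$ pairs $Y$ perfectly with $X''/X'^\perp$ (which exists because $b$ is nondegenerate and by part (a)). Given $\bar s \in \Sp(X''^\perp/X'^\perp, b')$, lift to $s_V \in \Sp(V, b|_V)$ and extend to $X$ by the identity on $X''$, by $s_V$ on $V$, and by the contragredient $(s_V^*)^{-1}$ on $Y$ (with a small correction landing in $X'^\perp$ so that $b$ is preserved on the nose). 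The construction is manifestly functorial in $R$ and so yields a surjection of smooth affine $k$-group schemes; since any surjection onto a smooth target is automatically a quotient map in the sense of the paper, this completes (c).
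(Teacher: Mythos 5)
Your displacement-map formalism $g \mapsto h = g - 1$ gives a pleasant and genuinely different route to (a), (b), and the kernel half of (c): the paper instead picks a full Witt-type direct sum $X = X'^\perp \oplus Y' \oplus Y$ and writes an explicit block-matrix parametrisation of the fixer of $X'^\perp$, reading off $U'$, $U''$, and the quotient by inspection. Your linearisation works because $U'$ and $U''$ are far enough into the unipotent radical that the multiplication degenerates; all of this part is fine.

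The surjectivity arguments are where the gaps are. For (c), the construction of the lift is not well-posed. With $V$ a nondegenerate complement to $X'^\perp$ in $X''^\perp$ and $Y$ a complement to $X''^\perp$ in $X$, the sum $X'' + V + Y$ does span $X$, but $X'' \cap Y$ is nonzero whenever $X''$ is not totally isotropic (compare dimensions: $\dim X'' + \dim V + \dim Y = \dim X + \dim X'' - \dim X'^\perp$), so prescribing $g$ separately as the identity on $X''$ and as $(s_V^*)^{-1}$ on $Y$ overdetermines $g$ with no consistency check. Relatedly, ``$b$ pairs $Y$ perfectly with $X''/X'^\perp$'' is dimensionally impossible unless $X'^\perp = 0$: $\dim Y = \dim X''$ but $\dim(X''/X'^\perp) = \dim X'' - \dim X'^\perp$; what is true is that $b$ pairs $Y$ with $X''$. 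And since $\dim Y = \dim X'' \ne \dim V$ in general, the contragredient of $s_V$ has no natural action on $Y$. The idea is salvageable but needs to be stated correctly: use the orthogonal decomposition $X = V \oplus V^\perp$ (with $X'' \subseteq V^\perp$) and send $\bar s$ to $s_V \oplus 1_{V^\perp}$, which is manifestly in $G''$ and lifts $\bar s$. For the surjectivity of $U''/U' \to \Hom(X'/X'', X'^\perp)$ in (b), ``correcting the symplectic obstruction by a suitable element of $U'$'' does not work as stated: if $\delta$ is the displacement of an element of $U'$, then $\delta$ is skew (so $b(\delta x, y) + b(x, \delta y) = 0$) and has image in the isotropic space $X'^\perp \subseteq X''$, which makes all the cross terms $b(\delta x, h_0 y)$, $b(h_0 x, \delta y)$, $b(\delta x, \delta y)$ vanish; the net effect is that $c = c_0$, i.e.\ $U'$ never changes the obstruction. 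One must instead exploit the freedom in the choice of extension $h_0$ of $h_1$ to $X/X''$ (a freedom that lands in $\Hom(X/X', X''^\perp)$ rather than $\Skew(X/X', X'^\perp)$), or argue as the paper does by reading the vector-group structure directly off the matrix.
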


\begin{proof}
(\ref{sublem:(sp-fixer)red}) is clear, and implies that
\(
\abmap{\Fix_{\Sp(X'/X\pperp, b')}(X''/X\pperp)}{\Sp(X\ppperp/X\pperp, b')}
\)
is an isomorphism.


Since \(b'\) puts
\(X'/X''\) and
\(X\ppperp/X\pperp\) in duality,
the kernel of the natural map
\(
\abmap{G''}{\Sp(X\ppperp/X\pperp, b')}
\)
is
\(\Fix_{\Sp(X, b)}(X'', X'/X\pperp) = U''\).
Thus (\ref{sublem:sp-fixer}) will follow once we show that
\(
\abmap{G''/U''}{\Fix_{\Sp(X'/X\pperp, b')}(X''/X\pperp)}
\)
is surjective.

Choose a complement \(Y''\) to \(X\pperp\) in \(X''\), and
enlarge it to a complement \(Y'\) to \(X\pperp\) in \(X'\).
Since \(b\) is nondegenerate on \(Y'\), we have that
\(X\) equals \(Y' \oplus Y\pperp\), and hence that
the natural map
\abmap{Y\pperp/X\pperp}{X/X'} is an isomorphism.
Since \(b\) puts \(X/X'\) and \(X\pperp\) in duality, it also puts
\(Y\pperp/X\pperp\) and \(X\pperp\) in duality.
In particular, \(X\pperp\) is a maximal
totally isotropic subspace of \(Y\pperp\).
Let \(Y\) be a complementary (necessarily totally isotropic) subspace.
In addition to the duality involution on
\(\Hom(X/X', X\pperp) \cong \Hom(Y, X\pperp)\) mentioned
in the statement, since \(Y'\) is self-dual, we have a duality isomorphism \((\cdot)^*\) of
\(\Hom(Y, Y')\) with \(\Hom(Y', X\pperp)\).
Choose a polarization of \(Y'\), i.e., a pair
\((Y\pplus, Y\pminus)\) of complementary totally isotropic subspaces.
These furnish maps
\abmap{Y'}{Y\ppminus}, hence
\map{(\cdot)^\pm}{\Hom(Y, Y')}{\Hom(Y, Y\ppminus)}.
Write \(\uSkew(Y, X\pperp)\) and \(\uHom(Y, Y')\) for
the vector groups associated to
\(\Skew(Y, X\pperp)\) and \(\Hom(Y, Y')\).
Write \(P'\) for the parabolic subgroup of \(\Sp(X, b)\) associated to
the self-dual flag
\(0 \subseteq X\pperp \subseteq X' \subseteq X\).
Note that \(U'\) is a normal subgroup of \(P'\).
We have an isomorphism of schemes, \emph{not} of group schemes,
from
\(\Sp(Y', b') \times
	\uHom(Y, Y') \times
	\uSkew(Y, X\pperp)\) onto
the subgroup of \(P'\) that fixes \(X\pperp\) pointwise, given by
\[
\abmapto
	{(g, \xi', \xi\pperp)}
	{\begin{pmatrix} 1 \\ & g \\ && 1 \end{pmatrix}
	\begin{pmatrix} 1 & -\xi\ppstar \\ & 1 & \xi\pplus \\ && 1 \end{pmatrix}
	\begin{pmatrix} 1 & -\xi\pmstar \\ & 1 & \xi\pminus \\ && 1 \end{pmatrix}
	\begin{pmatrix} 1 && \xi\pperp \\ & 1 \\ && 1 \end{pmatrix}}
,
\]
where we use block-matrix notation organized as
\[\bordermatrix{
& X\pperp & Y' & Y \cr
X\pperp & * & * & * \cr
Y' & * & * & * \cr
Y & * & * & *
}.\]
Concretely, the embedding sends
\((g, \xi', \xi\pperp) \in
\Sp(Y', b') \times \uHom(Y, Y') \times \uSkew(Y, X\pperp)\) to
the symplectomorphism
\[
\abmapto
	{x\pperp + y' + y}
	{(x\pperp - \xi\pstar(y') + (\xi\pperp - \xi\ppstar\xi\pminus)(y)) +
		g(y' + \xi'(y)) +
		y}
\]
for all \(x\pperp \in X\pperp\), \(y' \in Y'\), and \(y \in Y\).
Although this map is not a morphism of group schemes, we have that
\begin{itemize}
\item the restriction of our map to
\(\uSkew(Y, X\pperp)\) is
an isomorphism of group schemes onto \(U'\), which
shows part of (\ref{sublem:Ru(sp-fixer)});
\item the composition
\(\xymatrix@1{
\Sp(Y', b) \times \uHom(Y, Y') \times \uSkew(Y, X\pperp) \ar[r] & P' \ar[r] & P'/U'
}\)
factors uniquely through projection on the first two factors
to give an isomorphism of group schemes from
\(\Sp(Y', b') \ltimes \uHom(Y, Y')\) onto
a closed subgroup of \(P'/U'\), and the isomorphism is
independent of the choice of polarization of \(Y'\).
\end{itemize}
Now \(U''\) is the inflation to \(P'\) of
the image in \(P'/U'\)
of
the vector subgroup of \(\uHom(Y, Y')\) corresponding to
\(\sett
	{\xi' \in \Hom(Y, Y')}
	{\(\xi\pstar\) is trivial on \(Y''\)}\), which
shows (\ref{sublem:Ru(sp-fixer)}); and
\(\Fix_{\Sp(X'/X\pperp, b')}(X''/X\pperp) \cong
	\Fix_{\Sp(Y', b')}(Y'')\)
maps isomorphically onto \(G''/U''\), giving
a section of the natural map
\abmap{G''/U''}{\Fix_{\Sp(X'/X\pperp, b')}(X''/X\pperp)}, which
is therefore surjective.
This shows (\ref{sublem:sp-fixer}), and completes the proof.
\end{proof}
}

\begin{notation}
\label{notn:sl-bilinear}
Since
\begin{itemize}
\item \(\gamma\) acts by inversion on \(Z(\Gt)\), and
\item \(\gamma\) conjugates the defining representation
of \(\Gt\der\) to its dual
(either because
\(n\) is greater than \(1\) and
\(\gamma\) restricts to a nontrivial outer automorphism of \(\Gt\der\), or because
\(n\) is at most \(1\),
\(\gamma\) restricts to an inner automorphism of \(\Gt\der\), and
the defining representation of \(\Gt\der\) is self-dual),
\end{itemize}
the \(\gamma\)-conjugate of \(X\) is isomorphic to the dual representation \(X^*\)%
.

Write \(\gamma_X\) for a map \abmap X{X^*}
that intertwines the \(\gamma\)-twisted action of \(\Gt\) on \(X\)
with the natural action of \(\Gt\) on \(X^*\),
and
\(b\) for the associated bilinear form
\abmapto{(x_1, x_2)}{\pair{\gamma_X(x_1)}{x_2}}.
We will always use the notation \(\perp\) for
orthogonal spaces with respect to \(b\).
That is, if \(X'\) is a subspace of \(X\), then
\(X^{\prime\,\perp}\) means
\(\sett{x \in X}{\(b(x, x') = 0\) for all \(x' \in X'\)}\).
\end{notation}

Since \(X\) and \(X^*\) are irreducible representations of \(\Gt\),
the map \(\gamma_X\),
and hence the bilinear form \(b\),
in Notation \ref{notn:sl-bilinear}
are uniquely determined up to
multiplication by a nonzero scalar.

We have that \(\Gt^\gamma\) is the full isometry group
\(\Isom(X, b)\) of \(b\), and
\(\Gt\der^\gamma\) is the group of determinant-\(1\) isometries.
In particular, \(\Gt^\gamma\) is contained in the orthogonal group
\(\Or(X, q_b)\) of \(q_b\).
Since \(\gamma\)
acts by inversion on \(Z(\Gt)\),
we have that \(\det \circ \gamma\) equals \(-{\det}\) as
characters of \(\Gt\), so
\(\det(\Gt^\gamma)\) is contained in \(\mu_2\).

\begin{lem}
\label{lem:b-symmetry}
The pairing \(b\) is symmetric or anti-symmetric.
If \(n\) is even, then \(b\) is symmetric.
\end{lem}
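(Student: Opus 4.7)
The plan is to exploit the uniqueness, up to scalar, of the \(\Gt\)-equivariant map \(\gamma_X\) recorded in Notation \ref{notn:sl-bilinear}. First I would observe that \(\gamma_X\) is necessarily an isomorphism, so \(b\) is nondegenerate: the two \(\Gt\)-representations that \(\gamma_X\) intertwines are \(X\) (with the \(\gamma\)-twisted action) and \(X^*\) (with the natural action), both irreducible; since \(\gamma_X\) is nonzero, Schur's lemma applies.

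Next I would transpose. Writing \(b^T\) for the bilinear form \((x_1, x_2) \mapsto b(x_2, x_1)\), unwinding the identity \(b(\gamma(g)x, y) = b(x, g^{-1}y)\) that characterizes \(\gamma_X\), and using \(\gamma^2 = 1\), shows that \(b^T\) satisfies the same identity, so \(b^T\) arises from another intertwiner between the same pair of irreducible representations. By the aforementioned uniqueness, there is a scalar \(\lambda \in k\mult\) with \(b^T = \lambda b\); transposing once more forces \(\lambda^2 = 1\), so \(\lambda = \pm 1\) and \(b\) is symmetric or anti-symmetric.

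For the final assertion, I would assume \(n\) is even, so that \(\dim X = n + 1\) is odd. If \(p = 2\) then symmetric and anti-symmetric coincide and there is nothing to prove; if \(p\) is odd then a nondegenerate anti-symmetric bilinear form on an odd-dimensional space cannot exist (its Gram matrix would be an odd-order skew-symmetric matrix, hence singular), so the nondegeneracy of \(b\) forces it to be symmetric. The only calculation of any substance is the verification that \(b^T\) satisfies the intertwining identity, which is routine, and I anticipate no serious obstacle.
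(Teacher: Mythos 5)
Your proof is correct, and the first assertion is handled in essentially the same way as the paper: the $\Gt$-module map $\gamma_X$ is unique up to scalar by Schur's lemma, and applying this to the transposed form gives $b^T = \lambda b$ with $\lambda^2 = 1$. (The paper phrases this via the dual map $\gamma_X^*$ rather than $b^T$, but that is the same computation.)

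For the second assertion the two arguments genuinely diverge. The paper deduces, from $\gamma_X^* = c\gamma_X$, the identity $\det(\gamma_X\inv\gamma_X^*) = c^{n+1}$, and then notes that this determinant equals $1$: choosing a basis of $X$ and the dual basis of $X^*$, the map $\gamma_X\inv\gamma_X^*$ corresponds to $M\inv M^{\mathsf T}$ where $M$ is the Gram matrix of $b$, so its determinant is $\det(M)\inv\det(M^{\mathsf T}) = 1$. Hence $c^{n+1} = 1$, and combined with $c^2 = 1$ this forces $c = 1$ whenever $n$ is even, uniformly in all characteristics, including $p = 2$. You instead appeal to nondegeneracy of $b$ (which you correctly extract from Schur) together with the classical fact that an odd-dimensional space admits no nondegenerate alternating form, after disposing of $p = 2$ separately since symmetry and anti-symmetry coincide there. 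Both routes rest on the same underlying linear algebra (essentially $\det(M) = \det(M^{\mathsf T})$), but the paper's determinant identity has the small advantage of avoiding the characteristic-$2$ case split.
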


\begin{proof}
Since \(\gamma\) is an involution,
we have that the dual map \map{\gamma_X^*}X{X^*}
to \(\gamma_X\) also intertwines the \(\gamma\)-conjugate of \(X\)
with \(X^*\) as representations of \(\Gt\),
so \(\gamma_X^*\) equals \(c\gamma_X\) for some constant \(c\).
That is,
\(b(x_1, x_2) = \pair{\gamma_X(x_1)}{x_2}\)
equals
\(\pair{\gamma_X^*(x_2)}{x_1} = c\pair{\gamma_X(x_1)}{x_2}
= c b(x_2, x_1)\).

Since \(\gamma_X^{**}\) equals \(\gamma_X\),
we have that \(c^2\) equals \(1\), so that \(b\) is symmetric or
anti-symmetric.
Moreover, \(\det(\gamma_X\inv\gamma_X^*) = 1\) equals \(c^{n + 1}\),
so that if \(n\) is even, then \(c = c^{n + 1}(c^2)^{-n/2}\) equals \(1\)
and hence \(b\) is symmetric.
\end{proof}

Fix a Borel--torus pair \((\Bt, \Tt)\)
in \(\Gt_E\) that is preserved by \(\gamma_E\).

\begin{notation}
\label{notn:sl-data}
Let \(\ell_0, \dotsc, \ell_n\) be the weight spaces for
\(\Tt\) in \(X \otimes_k E\), and
\(e_0, \dotsc, e_n\) the corresponding weights,
numbered so that
\(\Phi(\Bt, \Tt)\) equals \(\set{e_i - e_j}{i < j}\)
and
\(\gamma(e_i)\) equals \(-e_{n - i}\)
for all \(0 \le i \le n\).
Let \((e_0^\vee, \dotsc, e_n^\vee)\) be the ordered basis of
\(\bX_*(\Tt)\) dual to \((e_0, \dotsc, e_n)\),
so that \(\bX_*(\Tt \cap \Gt\der)\) is the $\Z$-span of
\(\set{e_i^\vee - e_j^\vee}{0 \le i, j \le n}\).
\end{notation}

\begin{rem}
\label{rem:sl-nondeg}
For every \(0 \le i \le n\), we have that
\((\gamma_{X})_E \, (\ell_i)\) is the (\(-e_{n - i}\))-weight
space for \(\Tt\) in \((X \otimes_k E)^*\),
hence can be
identified with \(\ell_{n - i}^*\) by restriction.
In particular, we have for every \(0 \le i, j \le n\)
that \(\ell_i\) is \(b_E\)-orthogonal to \(\ell_j\)
unless \(i + j\) equals \(n\).
\end{rem}

\begin{lem}
\label{lem:q-linear}
Suppose that \(p\) equals \(2\).
\begin{enumerate}[label=(\alph*), ref=\alph*]
\item\label{sublem:X'}
\(\ker(q_{b_E})\) equals
\(\bigoplus_{\substack{i = 0 \\ 2i \ne n}}^n \ell_i\).
The restriction \(b_E'\) of \(b_E\) to \(\ker(q_{b_E})\)
is a nondegenerate, alternating form.
\item\label{sublem:X''}
If \(n\) is odd, then \(\ker(q_{b_E})^\perp\) equals \(\sset0\).
If \(n\) is even, then \(\ker(q_{b_E})^\perp\) equals \(\ell_{n/2}\).
In either case, the restriction \(b_E''\) of \(b_E\) to
\(\ker(q_{b_E})^\perp\) is
nondegenerate.
\end{enumerate}
\end{lem}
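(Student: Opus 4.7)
My plan is to exploit the fact that in characteristic $2$, a bilinear form that is either symmetric or antisymmetric satisfies $b(x,y)+b(y,x)=0$, so that the quadratic form $q_{b_E}$ becomes additive (indeed $k$-linear into $\lsub f E$). This means $q_{b_E}$ interacts cleanly with the weight decomposition $X \otimes_k E = \bigoplus_{i=0}^n \ell_i$, and all the claims reduce to a bookkeeping calculation organised by weights, using the orthogonality information from Remark \ref{rem:sl-nondeg}.

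First I would compute $q_{b_E}(\sum x_i)$ for $x_i \in \ell_i$ directly. Expanding $b_E(\sum x_i, \sum x_j)$, Remark \ref{rem:sl-nondeg} kills all terms with $i+j \ne n$. The surviving terms group into pairs $b_E(x_i, x_{n-i}) + b_E(x_{n-i}, x_i)$ with $i \ne n-i$, each of which vanishes because $b_E$ is symmetric or antisymmetric (Lemma \ref{lem:b-symmetry}) and the characteristic is $2$. This leaves only the diagonal contribution $b_E(x_{n/2}, x_{n/2}) = q_{b_E}(x_{n/2})$ when $n$ is even, and nothing at all when $n$ is odd. Thus $q_{b_E} \equiv 0$ when $n$ is odd, and when $n$ is even it factors as $q_{b_E}|_{\ell_{n/2}}$ composed with the projection onto $\ell_{n/2}$. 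Since $\ell_{n/2}$ is one-dimensional and Remark \ref{rem:sl-nondeg} tells us $b_E$ is nondegenerate on $\ell_{n/2}$, the form $q_{b_E}|_{\ell_{n/2}}$ is nonzero. This identifies $\ker(q_{b_E})$ as claimed in (\ref{sublem:X'}).

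The alternating condition on $b_E'$ is now immediate: by definition $b_E'(x,x)=q_{b_E}(x)=0$ for $x \in \ker(q_{b_E})$. For nondegeneracy, suppose $x = \sum_{i \ne n/2} x_i$ is a nonzero element of $\ker(q_{b_E})$ orthogonal to all of $\ker(q_{b_E})$. Pick $i$ with $x_i \ne 0$; then $i \ne n/2$ forces $n-i \ne n/2$ too, so $\ell_{n-i} \subseteq \ker(q_{b_E})$ and we would need $b_E(x_i, \ell_{n-i}) = 0$, contradicting the nondegenerate pairing between $\ell_i$ and $\ell_{n-i}$ (for $n$ odd this argument goes through verbatim using $\ker(q_{b_E}) = X \otimes_k E$).

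For part (\ref{sublem:X''}), I would compute $\ker(q_{b_E})^\perp$ by the same weight-by-weight analysis: an element $y = \sum y_j$ is orthogonal to $\ker(q_{b_E})$ iff $b_E(x_i, y_{n-i})=0$ for all $i$ with $i \ne n/2$ and all $x_i \in \ell_i$, which by the nondegeneracy of the $\ell_i$--$\ell_{n-i}$ pairing forces $y_j = 0$ for every $j \ne n/2$. This gives $\ker(q_{b_E})^\perp = \ell_{n/2}$ when $n$ is even, and $\ker(q_{b_E})^\perp = \{0\}$ when $n$ is odd. Nondegeneracy of $b_E''$ is then trivial, being vacuous in the odd case and exactly the already-established nondegeneracy of $b_E$ on the one-dimensional space $\ell_{n/2}$ in the even case. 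There is no real obstacle here; the only thing requiring care is keeping track of whether $n/2$ is an allowed weight index, and the sign/symmetry conventions.
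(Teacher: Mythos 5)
Your proposal is correct and takes essentially the same approach as the paper: expand $q_{b_E}(x) = b_E(x,x)$ along the weight decomposition using Remark \ref{rem:sl-nondeg}, kill all cross terms via symmetry of $b_E$ in characteristic $2$, and conclude that $q_{b_E}(x) = b_E(x_{n/2}, x_{n/2})$, from which the identification of $\ker(q_{b_E})$ and the nondegeneracy claims all fall out by another appeal to Remark \ref{rem:sl-nondeg}. The extra detail you spell out for the nondegeneracy of $b_E'$ and the computation of $\ker(q_{b_E})^\perp$ is exactly what the paper compresses into ``another application of Remark \ref{rem:sl-nondeg}.''
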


\begin{proof}
We may, and do, replace \(k\) by \(E\).

For each \(0 \le i \le n\), write \abmapto x{x_i}
for the \(\Tt\)-equivariant projection \abmap X{\ell_i}.
For convenience, write \abmapto x{x_{n/2}} for the \(0\) map if
\(n\) is odd.
Remark \ref{rem:sl-nondeg} and symmetry
(which follows from Lemma \ref{lem:b-symmetry} since
anti-symmetry is the same as symmetry when
\(p\) equals \(2\))
give that
\[
q_b(x) = b(x, x)
\quad\text{equals}\quad
b(x_{n/2}, x_{n/2}) + \sum_{i = 0}^{\lceil n/2\rceil - 1}
	2b(x_i, x_{n - i})
= b(x_{n/2}, x_{n/2})
\]
for all \(x \in X\).
This shows that \(\ker(q_b)\)
equals \(\bigoplus_{\substack{i = 0 \\ 2i \ne n}}^n \ell_i\), and
that \(b'\) is alternating.
Now another application of Remark \ref{rem:sl-nondeg} shows that
\(b'\) is nondegenerate, giving (\ref{sublem:X'}); and that
\(\ker(q_b)^\perp\) equals
\(\sset0\) if \(n\) is odd, and equals
\(\ell_{n/2}\) if \(n\) is even,
hence that
\(b''\) is nondegenerate,
giving (\ref{sublem:X''}).
\end{proof}

\begin{cor}
\label{cor:q-linear}
Suppose that \(p\) equals \(2\).
Then \(\ker(q_{b_E}) \otimes_E L\) equals \(\ker(q_{b_L})\) for
all field extensions \(L/E\).
\end{cor}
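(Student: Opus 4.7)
The plan is to reduce directly to Lemma \ref{lem:q-linear}(\ref{sublem:X'}), applied twice: once over \(E\) and once over \(L\). The containment \(\ker(q_{b_E}) \otimes_E L \subseteq \ker(q_{b_L})\) is immediate, since for \(x \in \ker(q_{b_E})\) and \(\lambda \in L\) we have \(q_{b_L}(x \otimes \lambda) = \lambda^2 q_{b_E}(x) = 0\) (using that \(b\) is symmetric in characteristic \(2\), so all cross terms in \(b_L(v, v)\) vanish). Only the reverse containment requires work.

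For that, first observe that the hypotheses imposed on the triple \((\Gt, \gamma, E)\) in this section persist after base change to \(L\): the involution \(\gamma_L\) still acts by inversion on \(Z(\Gt_L)\), and the Borel--torus pair \((\Bt_L, \Tt_L)\) in \(\Gt_L\), obtained from \((\Bt, \Tt)\) by base change, remains preserved by \(\gamma_L\). In particular \(\gamma_L\) acts quasisemisimply on \(\Gt_L\), so everything in Notation \ref{notn:sl-data} is available with \(E\) replaced by \(L\). Because base change commutes with taking weight spaces, the ordered list of weight spaces for \(\Tt_L\) in \(X \otimes_k L\) is exactly \((\ell_0 \otimes_E L, \dotsc, \ell_n \otimes_E L)\), with the same weights \(e_0, \dotsc, e_n\) (and the same \(\gamma\)-action on weights), so Notation \ref{notn:sl-data} applies over \(L\) with this numbering.

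Now apply Lemma \ref{lem:q-linear}(\ref{sublem:X'}) twice. Over \(E\), it gives
\[
\ker(q_{b_E}) = \bigoplus_{\substack{i = 0 \\ 2i \ne n}}^{n} \ell_i,
\]
and, over \(L\) using the base-changed Borel--torus pair, it gives
\[
\ker(q_{b_L}) = \bigoplus_{\substack{i = 0 \\ 2i \ne n}}^{n} (\ell_i \otimes_E L).
\]
Tensoring the first display with \(L\) over \(E\) yields the right-hand side of the second, so \(\ker(q_{b_E}) \otimes_E L = \ker(q_{b_L})\), as desired. The only step that requires any thought is the verification that the hypotheses of Lemma \ref{lem:q-linear} descend (or rather, ascend) correctly along the extension \(L/E\), and this is routine since the relevant Borel--torus pair, weight decomposition, and action of \(\gamma\) on weights are all stable under base change.
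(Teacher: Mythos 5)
Your proof is correct and is precisely the argument the paper leaves implicit: apply Lemma~\ref{lem:q-linear}(\ref{sublem:X'}) over both $E$ and $L$ (using the base-changed Borel--torus pair, whose weight spaces are $\ell_i \otimes_E L$), and observe that the two resulting descriptions of the kernel match up after tensoring. The forward containment you include is harmless but not needed once you have the two explicit direct-sum descriptions.
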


\begin{lem}
\label{lem:sl-odd}
Suppose that \(p\) or \(n\) is odd.
If \(b\) is anti-symmetric, then
\(\Gt^\gamma\) and \(\Gt\der^\gamma\) both equal
\(\Sp(X, b)\), which is smooth and connected.
If \(p\) is odd and \(b\) is symmetric, then
\(\Gt^\gamma\) equals \(\Or(X, q_b)\), which is smooth, and
\(\Gt\der^\gamma\) equals \(\SO(X, q_b)\), which is
the identity component of \(\Gt^\gamma\).
\end{lem}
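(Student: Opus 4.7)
The plan is to identify $\Gt^\gamma$ with a classical isometry group and then invoke the known structure theory of $\Sp$ and $\Or$. Unwinding Notation \ref{notn:sl-bilinear}, the condition $\gamma(g) = g$ translates to $g$ being a $b$-isometry: since $\gamma_X$ intertwines the $\gamma$-conjugate of the defining representation with its dual, one checks functorially that $\Gt^\gamma = \Isom(X, b)$ as group schemes, and hence $\Gt\der^\gamma = \Isom(X, b) \cap \SL(X)$. The form $b$ is nondegenerate because, by Remark \ref{rem:sl-nondeg}, the base change $b_E$ pairs $\ell_i$ with $\ell_{n - i}$ nondegenerately, and nondegeneracy descends from $E$ to $k$.

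When $b$ is anti-symmetric, my goal is to show $b$ is in fact alternating, so that $\Isom(X, b) = \Sp(X, b)$. In odd characteristic, anti-symmetry forces $b(x, x) = 0$ immediately, and Lemma \ref{lem:b-symmetry} forces $n$ odd, so $\dim X = n + 1$ is even. In characteristic $2$, the hypothesis gives $n$ odd, and Lemma \ref{lem:q-linear}(\ref{sublem:X'}), applied after base change to $E$, shows $\ker(q_{b_E}) = X \otimes_k E$, so $q_b \equiv 0$ and $b$ is alternating. In either case, $b$ is a nondegenerate alternating form on an even-dimensional space, so $\Isom(X, b) = \Sp(X, b)$, which is smooth and connected by the classical structure theory of symplectic groups. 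Since symplectic transformations have determinant $1$, intersection with $\SL(X)$ imposes no further restriction, yielding $\Gt^\gamma = \Gt\der^\gamma = \Sp(X, b)$.

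For the symmetric case with $p$ odd, the polarization identity $b(x, y) = \tfrac{1}{2}(q_b(x + y) - q_b(x) - q_b(y))$ identifies $\Isom(X, b)$ with $\Or(X, q_b)$. Since $p$ is odd and $q_b$ is nondegenerate, $\Or(X, q_b)$ is smooth, and the restriction of the determinant character to it is the sign character with image $\mu_2$ and kernel the identity component $\SO(X, q_b)$; hence $\Gt^\gamma = \Or(X, q_b)$ and $\Gt\der^\gamma = \SO(X, q_b) = (\Or(X, q_b))\conn$.

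The only delicate bookkeeping is in the characteristic-$2$ subcase of the anti-symmetric statement, where ``anti-symmetric'' and ``symmetric'' coincide as conditions on the form; there one must extract the alternating property from the weight-space analysis of Lemma \ref{lem:q-linear} rather than from anti-symmetry alone. After that, the proof reduces entirely to the classical smoothness and connectedness of $\Sp$, and (in odd characteristic) of $\Or$ and $\SO$.
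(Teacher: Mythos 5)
Your proof is correct and takes essentially the same route as the paper's: identify $\Gt^\gamma$ with the isometry group of $b$, show $b$ is alternating in the anti-symmetric case (using oddness of the characteristic, or Lemma \ref{lem:q-linear} when $p = 2$ and $n$ is odd), and then invoke the classical smoothness and connectedness of $\Sp$, $\Or$, and $\SO$. The only cosmetic differences are that you phrase the odd-characteristic symmetric case via the polarization identity and the determinant character rather than citing \cite{conrad:red-gp-sch}, and you spell out a few steps the paper leaves implicit.
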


\begin{proof}
If \(p\) is odd, then every anti-symmetric form is alternating;
so, if \(b\) is anti-symmetric, then it is alternating.
If \(p\) equals \(2\) and \(n\) is odd, then
Lemma \ref{lem:q-linear} gives that
\(b_E\), and hence \(b\), is alternating.
Thus the group \(\Gt^\gamma\) of isometries of \(b\) is
\(\Sp(X, b)\), which is smooth, connected, and
contained in \(\Gt\der\).

An isometry of \(b\) is always also an isometry of \(q_b\).
If \(p\) is odd and \(b\) is symmetric, then an
isometry of \(q_b\) is also an
isometry of \(b\), so
the group \(\Gt^\gamma\) of isometries of \(b\) is
\(\Or(X, q_b)\), which is
smooth \cite{conrad:red-gp-sch}*{Theorem C.1.5}; and
\(\Gt\der^\gamma\) equals
\(\Gt^\gamma \cap \Gt\der = \Or(X, q_b) \cap \SL(X)\),
which equals \(\SO(X, q_b)\) and
is the identity component of \(\Or(X, q_b)\) by
\cite{conrad:red-gp-sch}*{Theorem C.2.11 and Corollary C.3.2}.
\end{proof}

\begin{prop}
\label{prop:sl-even}
Suppose that \(p\) equals \(2\).
\begin{enumerate}[label=(\alph*), ref=\alph*]
\item\label{subprop:gl-even}
\((\Gt^\gamma)\smooth\) equals
\((\Gt\der^\gamma)\smooth\), and
is the subgroup of \(\Gt^\gamma\) fixing \(\ker(q_b)^\perp\) pointwise.
\item\label{subprop:sl-even}
Write \(b_E'\) for the restriction of \(b_E\) to
\(\ker(q_{b_E})\).
Extension trivially across \(\ker(q_{b_E})^\perp\) furnishes
an isomorphism onto \(((\Gt^\gamma)\smooth)_E\) from
the subgroup of
\(\Sp(\ker(q_{b_E}), b_E')\) that fixes
\(\ker(q_{b_E}) \cap (\ker(q_b)^\perp \otimes_k E)\) pointwise.
\end{enumerate}
\end{prop}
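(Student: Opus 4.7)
The plan is to split the argument into the trivial case where $b$ is alternating and the main case where it is not, this dichotomy being controlled by the parity of $n$ via Lemma \ref{lem:q-linear}. If $n$ is odd, then $b$ is alternating, so $\ker(q_b) = X$ and $\ker(q_b)^\perp = 0$; Lemma \ref{lem:sl-odd} identifies $\Gt^\gamma$ with the smooth, connected group $\Sp(X, b)$, which is automatically contained in $\Gt\der$, giving (a) trivially and (b) upon base changing to $E$, where $\ker(q_{b_E}) = X_E$ and $b_E' = b_E$. Henceforth I would assume $n$ is even, so that $b$ is symmetric (Lemma \ref{lem:b-symmetry}), and $\ker(q_b)^\perp$ is a $1$-dimensional $k$-subspace of $X$ (since $\ker(q_b) \otimes_k E = \ker(q_{b_E})$ by Corollary \ref{cor:q-linear}, and then Lemma \ref{lem:q-linear}(b) gives that $\ker(q_{b_E})^\perp = \ell_{n/2}$ is one-dimensional over $E$).

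Write $H$ for the $k$-subgroup of $\Gt^\gamma$ fixing $\ker(q_b)^\perp$ pointwise. To show $(\Gt^\gamma)\smooth \subseteq H$, I would consider the restriction action $\rho \colon \Gt^\gamma \to \GL(\ker(q_b)^\perp) \cong \GL_1$. Since $\ker(q_b)^\perp \not\subseteq \ker(q_b)$ (as already checked over $E$), the form $q_b$ is nonzero on $\ker(q_b)^\perp$, so functorially any isometry $\sigma \in \Gt^\gamma(A)$ acts by a scalar $\lambda \in A^\times$ satisfying $\lambda^2 = 1$. Thus $\rho$ factors through $\mu_2 \hookrightarrow \GL_1$. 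Since $p = 2$, we have $(\mu_2)\smooth = 1$, forcing $(\Gt^\gamma)\smooth \subseteq \ker(\rho) = H$. The same $\det^2 = 1$ computation applied to the determinant character of the isometry group of a nondegenerate symmetric form shows $\det \colon \Gt^\gamma \to \GL_1$ also factors through $\mu_2$, so $(\Gt^\gamma)\smooth \subseteq \SL(X) \cap \Gt^\gamma = \Gt\der^\gamma$, proving $(\Gt^\gamma)\smooth = (\Gt\der^\gamma)\smooth$.

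For the reverse inclusion $H \subseteq (\Gt^\gamma)\smooth$, it suffices to show $H$ is smooth, which can be checked after base change to $E$. By Lemma \ref{lem:q-linear}, the alternating form $b_E'$ is nondegenerate on $\ker(q_{b_E})$, so $\ker(q_{b_E}) \cap \ker(q_{b_E})^\perp = 0$ and hence $X_E = \ker(q_{b_E}) \oplus \ker(q_{b_E})^\perp$ is a $b_E$-orthogonal direct sum decomposition. Any $\sigma \in H_E$ fixes $\ker(q_{b_E})^\perp$ pointwise, preserves its orthogonal complement $\ker(q_{b_E})$, and restricts there to an isometry of $b_E'$; conversely, since $b_E$ is block-diagonal with respect to this decomposition, any element of $\Sp(\ker(q_{b_E}), b_E')$ extends trivially across $\ker(q_{b_E})^\perp$ to an isometry of $b_E$. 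This extension-trivially-across map furnishes the isomorphism $H_E \cong \Sp(\ker(q_{b_E}), b_E')$, which is smooth and connected, completing (a); since $\ker(q_{b_E}) \cap (\ker(q_b)^\perp \otimes_k E) = \ker(q_{b_E}) \cap \ker(q_{b_E})^\perp = 0$, the subgroup of $\Sp(\ker(q_{b_E}), b_E')$ fixing this intersection pointwise is the full group, and this same identification gives (b).

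The main obstacle, I expect, will be to work cleanly scheme-theoretically with the non-smooth group $\Gt^\gamma$ — in particular, verifying that $\rho$ and $\det$ genuinely factor through $\mu_2$ at the level of $k$-group schemes (not merely on $k$-points), and that the functorial fixer $H$ is the correct object to compare with $(\Gt^\gamma)\smooth$. Once the $\mu_2$-factorization is in hand, everything else is bookkeeping with the orthogonal decomposition provided by Lemma \ref{lem:q-linear}.
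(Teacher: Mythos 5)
Your proposal founders on a misreading of Corollary \ref{cor:q-linear}. That corollary only asserts that $\ker(q_{b_E}) \otimes_E L$ equals $\ker(q_{b_L})$ for extensions $L/E$; it says nothing about the natural inclusion $\ker(q_b) \otimes_k E \subseteq \ker(q_{b_E})$ being an equality. In fact, the possible strictness of this inclusion is the crucial subtlety of \S\ref{sec:sl-outer}: the hypothesis is that $\gamma_E$ acts quasisemisimply on $\Gt_E$, \emph{not} that $\gamma$ acts quasisemisimply on $\Gt$, and the gap between $\ker(q_b) \otimes_k E$ and $\ker(q_{b_E})$ measures exactly that failure. (Since $q_b$ is Frobenius-linear, its target $\lsub f k$ can have large $k$-dimension when $k$ is imperfect, so $\ker(q_b)$ need not be a hyperplane and can grow under inseparable base change.) Corollary \ref{cor:sl-smoothable} shows that $\ker(q_b) \otimes_k E = \ker(q_{b_E})$ is equivalent to smoothability of $\Gt^\gamma$, which is genuinely non-automatic; Examples \ref{ex:sl-not-reductive} and \ref{ex:sl-accidentally-reductive} exhibit cases in $\GL_3$ where $\ker(q_b)$ is $1$- or $0$-dimensional while $\ker(q_{b_E})$ is $2$-dimensional.

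This error corrupts nearly every subsequent step. First, $\ker(q_b)^\perp$ need not be $1$-dimensional --- it is $2$-dimensional in the first example and all of $X$ in the second --- so the restriction map $\rho\colon \Gt^\gamma \to \GL(\ker(q_b)^\perp) \cong \GL_1$ does not exist, and neither does the $\mu_2$-factorization that your argument for $(\Gt^\gamma)\smooth \subseteq H$ hangs on. Second, $\ker(q_{b_E}) \cap (\ker(q_b)^\perp \otimes_k E)$ need not vanish (it is nonzero precisely when the inclusion above is strict), so your proposed identification $H_E \cong \Sp(\ker(q_{b_E}), b'_E)$ is false; the correct target is the subgroup of $\Sp(\ker(q_{b_E}), b'_E)$ fixing that intersection pointwise, exactly as the statement of part (\ref{subprop:sl-even}) reads, and the smoothness and structure of that subgroup is the content of Lemma \ref{lem:sp-fixer}, which you never invoke. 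The paper's proof handles the special case $E = k$ first (there the rational points $\Gt^\gamma(k)$ are shown to lie in $\Gt\der$ and fix $\ker(q_b)^\perp$ by an argument with determinants and symplectic restrictions, and Zariski density over the separably closed $k$ then controls $(\Gt^\gamma)\smooth$, without any functorial $\mu_2$ argument) and then bootstraps the general case from the $E$-level conclusion. Your argument only reaches the $E = k$ case and does not address the general one at all.
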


\begin{note}
Lemma \ref{lem:q-linear} gives that
\(b_E'\) is a
nondegenerate, alternating form on \(\ker(q_{b_E})\), so that
it makes sense to speak of \(\Sp(\ker(q_{b_E}), b_E')\); and that
\(\ker(q_{b_E})\) and \(\ker(q_{b_E})^\perp\) are
complementary subspaces of \(X \otimes_k E\), so that
the extension map in (\ref{subprop:sl-even}) is
well defined.
\end{note}

\begin{proof}
We may, and do, assume, upon replacing
\(k\) and \(E\) by their separable closures, that they are
separably closed.
Put \(X' = \ker(q_b)\) and \(X'' = \ker(q_b)^\perp\).

We make a few observations about the extension map of
(\ref{subprop:sl-even}).
Remember that
\(\Gt_E^{\gamma_E} = (\Gt^\gamma)_E\) equals
\(\Isom(X, b)_E = \Isom(X\otimes_k E, b_E)\).
Since
\(X' \otimes_k E = \ker(q_b) \otimes_k E\) is
contained in \(\ker(q_{b_E})\),
we have that
\(X'' \otimes_k E =
\ker(q_b)^\perp \otimes_k E =
(\ker(q_b) \otimes_k E)^\perp\)
contains
\(\ker(q_{b_E})^\perp\).
We observe two consequences.
First,
\(X'' \otimes_k E\) is the direct sum of
\(\ker(q_{b_E}) \cap (X'' \otimes_k E)\) and
\(\ker(q_{b_E})^\perp\), so that the extension map
\abmap
	{\Fix_{\Sp(\ker(q_{b_E}), b_E')}
		(\ker(q_{b_E}) \cap (X'' \otimes_k E))}
	\Gt\
has image in
\(\Isom(X\otimes_k E, b_E) \cap \Fix_{\Gt_E}(X'' \otimes_k E) =
\Fix_{\Gt_E^{\gamma_E}}(X'' \otimes_k E) =
(\Fix_{\Gt^\gamma}(X''))_E\).
Second, there is a restriction map
\abmap
	{(\Fix_{\Gt^\gamma}(X''))_E}
	{\Fix_{\GL(\ker(q_{b_E}))}
		(\ker(q_{b_E}) \cap (X'' \otimes_k E))},
and its image lies in
\(\Isom(\ker(q_{b_E}), b_E') = \Sp(\ker(q_{b_E}), b_E')\).
These maps are mutually inverse, so that the extension map of
(\ref{subprop:sl-even}) is
an isomorphism of
\(\Fix_{\Sp(\ker(q_{b_E}), b_E')}
	(\ker(q_{b_E}) \cap (X'' \otimes_k E))\)
onto
\((\Fix_{\Gt^\gamma}(X''))_E\).
Thus Lemma \ref{lem:sp-fixer}(\ref{sublem:Ru(sp-fixer)},\ref{sublem:sp-fixer})
shows that
\((\Fix_{\Gt^\gamma}(X''))_E\) is smooth, hence that
\(\Fix_{\Gt^\gamma}(X'')\) is smooth, and so contained in
\((\Gt^\gamma)\smooth\).
It remains to show that \((\Gt^\gamma)\smooth\)
fixes \(X''\) pointwise, hence equals \((\Fix_{\Gt^\gamma}(X''))_E\)
(completing the proof of (\ref{subprop:sl-even}) and part of
(\ref{subprop:gl-even})), and that
\((\Gt^\gamma)\smooth\) equals \((\Gt\der^\gamma)\smooth\)
(proving the other part of (\ref{subprop:sl-even})).

Suppose first that \(E\) equals \(k\),
hence that \(\gamma\) acts quasisemisimply on \(\Gt\).
Lemma \ref{lem:q-linear} gives that
\(b'\) is a nondegenerate, alternating form on \(X'\), so
\(X' \cap X''\) is trivial.
Fix \(g \in \Gt^\gamma(k)\).
We have that \(\det(g)\) belongs to \(\mu_2(k) = \sset1\),
i.e., \(g\) belongs to \(\Gt\der(k)\).
Since \(g\) fixes \(b\), it also fixes \(q_b\), and hence
preserves \(\ker(q_b) = X'\).
Thus \(g\) also preserves
the \(b\)-orthogonal space \(X''\) of \(X'\); and
the restriction of \(g\) to \(X'\) preserves \(b'\), hence
belongs to \(\Sp(X', b')(k)\).
By Lemma \ref{lem:q-linear}(\ref{sublem:X''}),
if \(n\) is odd, then \(X''\) is trivial, so we are done;
whereas, if \(n\) is even, then \(X''\) is \(1\)-dimensional,
so that \(g\) acts on it by a scalar.
In the latter case,
since \(g\) has determinant \(1\),
and the restriction of \(g\) to \(X'\) also has determinant \(1\)
(because it belongs to \(\Sp(X', b')(k)\)),
the scalar by which \(g\) acts on \(X''\) is also \(1\), so that
\(g\) fixes \(X''\) pointwise.
Thus \((\Gt^\gamma)\smooth\), which is
the Zariski closure of \(\Gt^\gamma(k)\)
(because \(k\) is separably closed),
is contained in \(\Gt\der\), and fixes \(X''\).
As observed, this proves the result,
under the assumption that \(E\) equals \(k\).

Now drop the assumption that \(E\) equals \(k\)
(but keep the assumption that \(k\) is separably closed).
By the special case of (\ref{subprop:gl-even}) that
we have already handled, we have
for every
\(g \in \Gt^\gamma(k) \subseteq
	\Gt^\gamma(E) = (\Gt^\gamma)\smooth(E)\) that
\(g\) belongs to
\((\Gt_E)\der(E) = \Gt\der(E)\), hence to
\(\Gt\der(k)\); and that
\(g\) fixes \(\ker(q_{b_E})^\perp\) pointwise, so that
the fixed-point subspace \(\ker(g - 1)\) of \(g\) on \(X\) satisfies
the containment
\(\ker(q_{b_E})^\perp \subseteq \ker(g - 1) \otimes_k E\),
from which we deduce successively the containments
\begin{align*}
\ker(g - 1)^\perp \otimes_k E =
(\ker(g - 1) \otimes_k E)^\perp & \subseteq \ker(q_{b_E}),
\intertext{then}
\ker(g - 1)^\perp & \subseteq \ker(q_{b_E}) \cap X = \ker(q_b),
\intertext{and finally}
X'' = \ker(q_b)^\perp & \subseteq \ker(g - 1).
\end{align*}
That is,
\(\Gt^\gamma(k)\) is contained in \(\Gt\der(k)\), and
every element of \(\Gt^\gamma(k)\) fixes \(X''\) pointwise; so
the Zariski closure \((\Gt^\gamma)\smooth\) of \(\Gt^\gamma(k)\)
is contained in \(\Gt\der\), hence
equals \((\Gt\der^\gamma)\smooth\), and
fixes \(X''\) pointwise.
This completes the proof of the result.
\end{proof}

\begin{cor}
\label{cor:sl-red}
Suppose that \(p\) equals \(2\).
The form \(b'\) on \(\ker(q_b)/(\ker(q_b) \cap \ker(q_b)^\perp)\)
induced by \(b\)
is nondegenerate and alternating, and the
natural map
\[
\abmap
	{(\Gt\der^\gamma)\smooth = (\Gt^\gamma)\smooth}
	{\Sp(\ker(q_b)/(\ker(q_b) \cap \ker(q_b)^\perp), b')}
\]
is a quotient whose kernel is an extension of
the vector group associated to
\[
\Hom((\ker(q_b) + \ker(q_b)^\perp)/{\ker(q_b)^\perp},
	\ker(q_b) \cap \ker(q_b)^\perp)
\]
by the vector group associated to the skew-symmetric elements of
\[
\Hom(X/(\ker(q_b) + \ker(q_b)^\perp),
	\ker(q_b) \cap \ker(q_b)^\perp),
\]
i.e., those negated
by the duality involution coming from \(b\).
\end{cor}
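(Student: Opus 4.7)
The plan is to derive this corollary from Proposition~\ref{prop:sl-even}(\ref{subprop:sl-even}) combined with Lemma~\ref{lem:sp-fixer}. First, the claim about $b'$ is essentially formal: the restriction $b|_{\ker(q_b)}$ is alternating because $q_b$ vanishes on $\ker(q_b)$ and $p = 2$ forces a symmetric bilinear form with identically vanishing diagonal to be alternating; nondegeneracy is automatic upon quotienting by the radical, which is precisely $\ker(q_b) \cap \ker(q_b)^\perp$. Next, I would construct the natural map directly over $k$: by Proposition~\ref{prop:sl-even}(\ref{subprop:gl-even}), every functorial point of $(\Gt^\gamma)\smooth$ fixes $\ker(q_b)^\perp$ pointwise, hence preserves $\ker(q_b) = (\ker(q_b)^\perp)^\perp$ (using nondegeneracy of $b$), acts trivially on the subspace $\ker(q_b) \cap \ker(q_b)^\perp$ of $\ker(q_b)^\perp$, and therefore acts on the quotient $\ker(q_b)/(\ker(q_b) \cap \ker(q_b)^\perp)$ preserving $b'$.

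To establish the quotient/kernel structure, I would base change to $E$. Because $p = 2$ makes $q_b \colon X \to \lsub f k$ a $k$-linear map, we have $\ker(q_{b_E}) = \ker(q_b) \otimes_k E$; and since $b$ is nondegenerate on $X$, orthogonals are compatible with base change, so $\ker(q_b)^\perp \otimes_k E = \ker(q_{b_E})^\perp$ inside $X \otimes_k E$. Proposition~\ref{prop:sl-even}(\ref{subprop:sl-even}) then identifies $((\Gt^\gamma)\smooth)_E$ with the subgroup $G''$ of $\Sp(\ker(q_{b_E}), b_E')$ that fixes $(\ker(q_b) \cap \ker(q_b)^\perp) \otimes_k E$ pointwise. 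I then apply Lemma~\ref{lem:sp-fixer} with its ``$X$'' $= \ker(q_{b_E})$, its ``$b$'' $= b_E'$, and its ``$X''$'' $= (\ker(q_b) \cap \ker(q_b)^\perp) \otimes_k E$: part~(\ref{sublem:sp-fixer}) yields that $G''$ is a quotient onto a symplectic group, and part~(\ref{sublem:Ru(sp-fixer)}) presents the kernel as a two-step extension of vector groups.

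The main task — and the principal obstacle — is to match the $E$-rational spaces produced by Lemma~\ref{lem:sp-fixer} with the $k$-rational ones in the statement, and then to descend. For this I would invoke Lemma~\ref{lem:q-linear}(\ref{sublem:X'}--\ref{sublem:X''}) to obtain the direct-sum decomposition $X \otimes_k E = \ker(q_{b_E}) \oplus \ker(q_{b_E})^\perp$, which, together with the base-change identities above, descends to the decomposition coming from the subspaces $\ker(q_b)$ and $\ker(q_b)^\perp$ of $X$. Under these identifications, Lemma~\ref{lem:sp-fixer}'s target symplectic space $X''^\perp/X'^\perp$ corresponds to $(\ker(q_b)/(\ker(q_b) \cap \ker(q_b)^\perp)) \otimes_k E$, and its vector-group quotients $X'/X''$ and $X/X'$ correspond respectively to $((\ker(q_b) + \ker(q_b)^\perp)/\ker(q_b)^\perp) \otimes_k E$ and $(X/(\ker(q_b) + \ker(q_b)^\perp)) \otimes_k E$. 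The skew-symmetry condition in Lemma~\ref{lem:sp-fixer}(\ref{sublem:Ru(sp-fixer)}) is transferred through the identification of the duality on $\ker(q_{b_E})^\perp \times (X \otimes_k E)/\ker(q_{b_E})$ given by $b_E'$ with the one given on the $k$-forms by $b$; these identifications are all $\Gal(E/k)$-equivariant, so faithfully flat descent produces the claimed $k$-rational extension structure on the kernel and the $k$-rational quotient map onto $\Sp(\ker(q_b)/(\ker(q_b) \cap \ker(q_b)^\perp), b')$.
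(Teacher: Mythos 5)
Your overall architecture — base change to $E$, identify $((\Gt^\gamma)\smooth)_E$ via Proposition \ref{prop:sl-even}(\ref{subprop:sl-even}), apply Lemma \ref{lem:sp-fixer}, then descend — is exactly the paper's, and your handling of the $b'$-claim and of the $k$-rational map is fine. But the step that powers the rest of your argument is false: you assert that ``$\ker(q_{b_E}) = \ker(q_b) \otimes_k E$'' because $q_b$ is $k$-linear to $\lsub f k$. It is not: the target of $q_{b_E}$ is $\lsub{f} E$ with the $E$-module structure twisted by the Frobenius of $E$, not the base change $\lsub f k \otimes_k E$, so kernel formation does not commute with this base change. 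Only the containment $\ker(q_b) \otimes_k E \subseteq \ker(q_{b_E})$ holds; the equality is precisely the content of Corollary \ref{cor:sl-smoothable}, equivalent to smoothability, and it genuinely fails in Examples \ref{ex:sl-not-reductive} and \ref{ex:sl-accidentally-reductive}. The same misconception propagates to your claim ``$\ker(q_b)^\perp \otimes_k E = \ker(q_{b_E})^\perp$'' (again only a containment, in the other direction) and to your implicit assertion that $X = \ker(q_b) \oplus \ker(q_b)^\perp$ (they can intersect nontrivially).

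This is not a cosmetic slip: it changes the input to Lemma \ref{lem:sp-fixer}. You apply the lemma with $X'' = (\ker(q_b) \cap \ker(q_b)^\perp) \otimes_k E$, but Proposition \ref{prop:sl-even}(\ref{subprop:sl-even}) actually hands you the fixer of $\ker(q_{b_E}) \cap (\ker(q_b)^\perp \otimes_k E)$, and these can have different dimensions. Concretely, in Example \ref{ex:sl-accidentally-reductive} we have $\ker(q_b) = 0$, so your $X''$ is $0$ and your ``$G''$'' is the full symplectic group $\Sp(\ker(q_{b_E}), b_E')$ — but $((\Gt^\gamma)\smooth)_E$ is trivial, because the paper's $X''$ is all of $\ker(q_{b_E})$. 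The repair is to use the subspaces the paper plugs into Lemma \ref{lem:sp-fixer}: $X = \ker(q_{b_E})$, $X'' = \ker(q_{b_E}) \cap (\ker(q_b)^\perp \otimes_k E)$, and $X' = \ker(q_b) \otimes_k E + X''$; then the direct-sum decomposition $X \otimes_k E = \ker(q_{b_E}) \oplus \ker(q_{b_E})^\perp$ from Lemma \ref{lem:q-linear} (which is a decomposition over $E$, not one that ``descends'') shows that $X'/X''$ and $\ker(q_{b_E})/X'$ embed isomorphically onto $((\ker(q_b) + \ker(q_b)^\perp)/\ker(q_b)^\perp) \otimes_k E$ and $(X/(\ker(q_b)+\ker(q_b)^\perp)) \otimes_k E$ respectively, giving the claimed $k$-rational identifications of the kernel's graded pieces.
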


\begin{note}
That the map in the statement exists follows from
Proposition \ref{prop:sl-even}(\ref{subprop:gl-even}).

Since \(p\) equals \(2\),
requiring that a homomorphism be
skew-symmetric (i.e., negated by duality)
is the same as
requiring that it be
symmetric (i.e., fixed by duality).
\end{note}

\begin{proof}
Since \(E\) is faithfully flat over \(k\),
the statement may be checked after base change to \(E\).
Since
\(X \otimes_k E\) is the direct sum of
\(\ker(q_{b_E})\) and \(\ker(q_{b_E})^\perp\),
also
\(\ker(q_b)^\perp \otimes_k E\) is the direct sum of
\(\ker(q_{b_E}) \cap (\ker(q_b)^\perp \otimes_k E)\) and
\(\ker(q_{b_E})^\perp\), so
the natural embeddings
\begin{multline*}
\ker(q_{b_E})/
	\bigl(\ker(q_b) \otimes_k E + (\ker(q_{b_E}) \cap (\ker(q_b)^\perp \otimes_k E))\bigr)
\maparrow{} \\
\bigl(X/(\ker(q_b) + \ker(q_b)^\perp)\bigr) \otimes_k E
\end{multline*}
and
\begin{multline*}
\bigl(\ker(q_b) \otimes_k E + (\ker(q_{b_E}) \cap (\ker(q_b)^\perp \otimes_k E))\bigr)
	/(\ker(q_{b_E}) \cap (\ker(q_b)^\perp \otimes_k E))
\maparrow{} \\
\bigl((\ker(q_b) + \ker(q_b)^\perp)/{\ker(q_b)^\perp}\bigr) \otimes_k E
\end{multline*}
are isomorphisms.
Thus, by Proposition \ref{prop:sl-even}(\ref{subprop:sl-even}),
the claim follows from
Lemma \ref{lem:sp-fixer}
with \(\ker(q_{b_E})\)
playing the role of \(X\);
 \(\ker(q_{b_E}) \cap (\ker(q_b)^\perp \otimes_k E)\) that of \(X''\);
and
\(\ker(q_b) \otimes_k E + (\ker(q_{b_E}) \cap (\ker(q_b)^\perp \otimes_k E)\)
that of  \(X'\).
\end{proof}

\begin{cor}
\label{cor:sl-smoothable}
Suppose that \(p\) equals \(2\).
The following statements are equivalent.
\begin{enumerate}[label=(\alph*), ref=\alph*]
\item\label{case:gl-smoothable}
\(\Gt^\gamma\) is smoothable.
\item\label{case:sl-smoothable}
\(\Gt\der^\gamma\) is smoothable.
\item\label{case:sl-imperfect}
\(\ker(q_b) \otimes_k E\) equals \(\ker(q_{b_E})\).
\end{enumerate}
\end{cor}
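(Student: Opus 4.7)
The approach is to compare two explicit descriptions of the maximal smooth subgroup of \(\Gt^\gamma\) --- one over \(k\), one over \(\ka\) --- both provided by Proposition \ref{prop:sl-even}.  I would first dispose of the case \(n\) odd: Lemma \ref{lem:q-linear}(\ref{sublem:X'}) then forces \(\ker(q_{b_E}) = X \otimes_k E\), hence \(q_b = 0\) and \(b\) is alternating; Lemma \ref{lem:sl-odd} then gives \(\Gt^\gamma = \Gt\der^\gamma = \Sp(X, b)\), and all three conditions hold trivially.  So I may assume \(n\) is even.

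For (\ref{case:gl-smoothable}) \(\iff\) (\ref{case:sl-smoothable}), I would apply Proposition \ref{prop:sl-even}(\ref{subprop:gl-even}) over \(k\) with witness \(E\) and over \(\ka\) with witness \(\ka\) itself, noting that \(\gamma_\ka\) is quasisemisimple on \(\Gt_\ka\) by the Nullstellensatz argument recorded in the paragraph following Theorem \ref{thm:ka-quass}.  This yields both \((\Gt^\gamma)\smooth = (\Gt\der^\gamma)\smooth\) and \(((\Gt^\gamma)_\ka)\smooth = ((\Gt\der^\gamma)_\ka)\smooth\), from which the two smoothability conditions become identical.

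For (\ref{case:gl-smoothable}) \(\iff\) (\ref{case:sl-imperfect}), the key step is an explicit description of \((\Gt^\gamma_\ka)\smooth\) inside \(\Gt_\ka = \GL(X \otimes_k \ka)\).  Proposition \ref{prop:sl-even}(\ref{subprop:sl-even}) applied over \(\ka\) (with witness \(\ka\)), combined with the vanishing \(\ker(q_{b_\ka}) \cap \ker(q_{b_\ka})^\perp = 0\) from Lemma \ref{lem:q-linear}, realizes \((\Gt^\gamma_\ka)\smooth\) as \(\Sp(\ker(q_{b_\ka}), b_\ka')\), embedded in \(\Gt_\ka\) by extension trivially across the one-dimensional complement \(\ker(q_{b_\ka})^\perp\).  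Meanwhile, Proposition \ref{prop:sl-even}(\ref{subprop:gl-even}) applied over \(k\) and base-changed to \(\ka\) identifies \(((\Gt^\gamma)\smooth)_\ka\) as the subgroup of \(\Gt^\gamma_\ka\) fixing \(\ker(q_b)^\perp \otimes_k \ka\) pointwise.  Since \(((\Gt^\gamma)\smooth)_\ka \subseteq (\Gt^\gamma_\ka)\smooth\) always holds, (\ref{case:gl-smoothable}) is equivalent to \((\Gt^\gamma_\ka)\smooth\) also fixing \(\ker(q_b)^\perp \otimes_k \ka\) pointwise.

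To finish, I would use that \(\Sp(\ker(q_{b_\ka}), b_\ka')\) has no nonzero fixed vector in its standard representation (irreducibility for \(n \ge 2\), trivially so for \(n = 0\)), so that the set of vectors in \(X \otimes_k \ka\) fixed by \((\Gt^\gamma_\ka)\smooth\) is exactly \(\ker(q_{b_\ka})^\perp\).  Thus (\ref{case:gl-smoothable}) becomes \(\ker(q_b)^\perp \otimes_k \ka \subseteq \ker(q_{b_\ka})^\perp\); combined with the reverse containment (which follows automatically from \(\ker(q_b) \otimes_k \ka \subseteq \ker(q_{b_\ka})\) by taking \(b_\ka\)-orthogonals), this is equivalent to \(\ker(q_b)^\perp \otimes_k \ka = \ker(q_{b_\ka})^\perp\), or dually to \(\ker(q_b) \otimes_k \ka = \ker(q_{b_\ka})\); and a dimension count using Corollary \ref{cor:q-linear} translates this last equality to (\ref{case:sl-imperfect}).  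The main burden is simply bookkeeping --- keeping track of witness fields and base changes across the various applications of Proposition \ref{prop:sl-even}, and correctly matching the embedding of \((\Gt^\gamma_\ka)\smooth\) in \(\Gt_\ka\) against the intrinsic action of \(\Sp\) on \(\ker(q_{b_\ka})\) --- rather than any genuinely hard step.
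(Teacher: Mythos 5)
Your argument is correct, and it takes a modestly different route through the same ingredients.  The paper works with \(E\alg\) as a common overfield of \(E\) and \(\ka\): it shows \(((\Gt_E^{\gamma_E})\smooth)_{E\alg} = (\Gt_{E\alg}^{\gamma_{E\alg}})\smooth\) using Proposition \ref{prop:sl-even}(\ref{subprop:sl-even}) twice and Corollary \ref{cor:q-linear}, then uses ``smoothing commutes with base change from an algebraically closed field'' to bridge \(\ka\) and \(E\alg\), and reduces (\ref{case:gl-smoothable}) to the equality \(((\Gt^\gamma)\smooth)_E = (\Gt_E^{\gamma_E})\smooth\), which Proposition \ref{prop:sl-even}(\ref{subprop:gl-even}) translates into \(\ker(q_b)^\perp \otimes_k E = \ker(q_{b_E})^\perp\).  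You instead bypass \(E\alg\) entirely by invoking the Nullstellensatz remark to get quasisemisimplicity over \(\ka\), so that Proposition \ref{prop:sl-even} can be applied over \(\ka\) with witness \(\ka\); you then compare \(((\Gt^\gamma)\smooth)_\ka\) with \((\Gt_\ka^{\gamma_\ka})\smooth\) directly via the pointwise-fixed-subspace descriptions, and transfer the resulting equality \(\ker(q_b) \otimes_k \ka = \ker(q_{b_\ka})\) to statement (\ref{case:sl-imperfect}) by a dimension count.  Your route makes explicit a step the paper leaves implicit: passing from equality of the two fixer subgroups to equality of the two orthogonal complements requires that the fixed-vector space of \((\Gt_\ka^{\gamma_\ka})\smooth\) is exactly \(\ker(q_{b_\ka})^\perp\), which you justify by irreducibility of the standard representation of the symplectic group.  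The paper uses the same fact but does not spell it out.  Two small notes: your separate disposal of \(n\) odd, while correct, is unnecessary (the paper's uniform argument via Lemma \ref{lem:sl-odd} and Proposition \ref{prop:sl-even} covers it); and for the final dimension count, Lemma \ref{lem:q-linear}(\ref{sublem:X'}) already gives \(\dim_E \ker(q_{b_E}) = n = \dim_\ka \ker(q_{b_\ka})\), so Corollary \ref{cor:q-linear} is not really needed there.
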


\begin{proof}
If either of \(\Gt^\gamma\) or \(\Gt\der^\gamma\) is smoothable,
so that
\(((\Gt^\gamma)\smooth)_\ka\) equals \((\Gt_\ka^{\gamma_\ka})\smooth\)
or
\(((\Gt\der^\gamma)\smooth)_\ka\) equals \(((\Gt\der)_\ka^{\gamma_\ka})\smooth\),
then, since Proposition \ref{prop:sl-even} gives that
\((\Gt^\gamma)\smooth\) equals \((\Gt\der^\gamma)\smooth\) and
\((\Gt_\ka^{\gamma_\ka})\smooth\) equals \(((\Gt\der)_\ka^{\gamma_\ka})\smooth\),
we have that both \(\Gt^\gamma\) and \(\Gt\der^\gamma\) are smoothable.
Thus (\ref{case:gl-smoothable}) and (\ref{case:sl-smoothable})
are equivalent.

Since
\(\gamma_{E\alg}\) acts quasisemisimply on \(\Gt_{E\alg}\),
Lemma \ref{lem:q-linear} gives that
\(\ker(q_{b_E}) \cap \ker(q_{b_E})^\perp\) and
\(\ker(q_{b_{E\alg}}) \cap \ker(q_{b_{E\alg}})^\perp\) are
both trivial, and
Corollary \ref{cor:q-linear} gives that
\(\ker(q_{b_E}) \otimes_E E\alg\) equals
\(\ker(q_{b_{E\alg}})\), so
Proposition \ref{prop:sl-even}(\ref{subprop:sl-even})
gives that
\(((\Gt_E^{\gamma_E})\smooth)_{E\alg}\) and
\((\Gt_{E\alg}^{\gamma_{E\alg}})\smooth\) are
both the image in \(\Gt_{E\alg}\) of
\(\Sp(\ker(q_{b_{E\alg}}), b_{E\alg}')\).
In particular, they are equal.

Since smoothing commutes with base change from
an algebraically closed field, we have that
\(((\Gt_\ka^{\gamma_\ka})\smooth)_{E\alg}\) equals
\((\Gt_{E\alg}^{\gamma_{E\alg}})\smooth\).
By definition, (\ref{case:gl-smoothable}) means that
\(((\Gt^\gamma)\smooth)_\ka\) equals \((\Gt_\ka^{\gamma_\ka})\smooth\),
which is equivalent to the equality of
\(((\Gt^\gamma)\smooth)_{E\alg} = (((\Gt^\gamma)\smooth)_\ka)_{E\alg}\)
and
\(((\Gt_\ka^{\gamma_\ka})\smooth)_{E\alg} =
(\Gt_{E\alg}^{\gamma_{E\alg}})\smooth\).
Since \(((\Gt^\gamma)\smooth)_{E\alg}\) is obviously
the base change to \(E\alg\) of
\(((\Gt^\gamma)\smooth)_E\), and
we have observed that
\((\Gt_{E\alg}^{\gamma_{E\alg}})\smooth\) is
the base change to \(E\alg\) of
\((\Gt_E^{\gamma_E})\smooth\), we have that
(\ref{case:gl-smoothable}) is equivalent to the equality of
\((\Gt^\gamma)\smooth)_E\)
and
\((\Gt_E^{\gamma_E})\smooth\).
By Proposition \ref{prop:sl-even}(\ref{subprop:gl-even}), this
is equivalent to the equality of
\((\ker(q_b) \otimes_k E)^\perp = \ker(q_b)^\perp \otimes_k E\)
with
\(\ker(q_{b_E})^\perp\), which is equivalent to
statement (\ref{case:sl-imperfect}).
\end{proof}

{\newcommand\Span{\operatorname{Span}}
Examples \ref{ex:sl-not-reductive} and
\ref{ex:sl-accidentally-reductive}
show different reasons why \(\Gt^\gamma\) is not always smoothable.
In Example \ref{ex:sl-not-reductive}, it is because
\(\fix\Gt^\gamma\) is not reductive.

\begin{example}[Alex Bauman and Sean Cotner]
\label{ex:sl-not-reductive}
Suppose that \(p\) equals \(2\) and \(k\) is imperfect.
Let \(t\) be an element of \(k\mult \setminus (k\mult)^2\), and let
\(\gamma\) be the automorphism
\abmapto
	\gt
	{\Int\begin{smallpmatrix} t \\ & 1 \\ && t \end{smallpmatrix}\gt^{-\mathsf T}}
of \(\Gt:=\GL_3\).
Since \(\gamma\) is an involution,
Lemma \ref{lem:A_{2n}-quass} below
guarantees that it acts quasisemisimply on
\(\SL_{3, \ka}\), hence on \(\GL_{3, \ka}\).
Concretely, if we put \(E = k(\sqrt t)\), then
\(\gamma_E\) preserves the opposite Borel subgroups of
\(\GL_{3, E}\)
corresponding to the flags
\begin{gather*}
0 \subseteq
\Span_E \sset{\begin{pmatrix} 1 \\ 0 \\ 1 \end{pmatrix}} \subseteq
\Span_E \sset{\begin{pmatrix} 1 \\ 0 \\ 1 \end{pmatrix}, \begin{pmatrix} \sqrt t \\ 1 \\ \sqrt t \end{pmatrix}} \subseteq
E^3
\intertext{and}
0 \subseteq
\Span_E \sset{\begin{pmatrix} \sqrt t \\ 1 \\ 0 \end{pmatrix}} \subseteq
\Span_E \sset{\begin{pmatrix} \sqrt t \\ 1 \\ 0 \end{pmatrix}, \begin{pmatrix} \sqrt t \\ 1 \\ \sqrt t \end{pmatrix}} \subseteq
E^3,
\end{gather*}
hence their common maximal torus.
The first Borel subgroup descends to
the Borel subgroup \(\Bt\) of \(\GL_3\) corresponding to the flag
\[
0 \subseteq
\Span_k \sset{\begin{pmatrix} 1 \\ 0 \\ 1 \end{pmatrix}} \subseteq
\Span_k \sset{\begin{pmatrix} 1 \\ 0 \\ 1 \end{pmatrix}, \begin{pmatrix} 0 \\ 1 \\ 0 \end{pmatrix}} \subseteq
k^3,
\]
but the second is not defined over \(k\).

We have that
\(q_b(x)\) equals \(t\inv x_0^2 + x_1^2 + t\inv x_2^2\) for all
\(x = (x_0, x_1, x_2) \in k^3\), so that
\(\ker(q_b)\) equals
\(\Span \sset{\begin{smallpmatrix} 1 \\ 0 \\ 1 \end{smallpmatrix}}\),
but
\(\ker(q_{b_E})\) equals
\(\Span \sset{\begin{smallpmatrix} 1 \\ 0 \\ 1 \end{smallpmatrix}, \begin{smallpmatrix} \sqrt t \\ 1 \\ 0 \end{smallpmatrix}}\).
Using Proposition \ref{prop:sl-even}, we have that
\(((\Gt\der)_E^{\gamma_E})\smooth\) is
the extension trivially across
\(\Span_E \sset{\begin{smallpmatrix} \sqrt t \\ 1 \\ \sqrt t \end{smallpmatrix}}\)
of the symplectic group on
\(\Span_E \sset{\begin{smallpmatrix} 1 \\ 0 \\ 1 \end{smallpmatrix}, \begin{smallpmatrix} \sqrt t \\ 1 \\ 0 \end{smallpmatrix}}\); but
\((\Gt\der^\gamma)\smooth\) is
the subgroup of \(\Isom(k^3, b)\) that fixes
\(\begin{smallpmatrix} 1 \\ 0 \\ 1 \end{smallpmatrix}\),
which is the additive group
\(\sset{\begin{smallpmatrix}
c + 1 & 0 & c \\
0 & 1 & 0 \\
c & 0 & c + 1
\end{smallpmatrix}}\).
Since \((\Gt\der^\gamma)\smooth\) is not reductive,
we have
(by Theorem \ref{thm:quass}(\ref{subthm:quass-reductive}))
that \(\gamma\) does not act quasisemisimply on \(\Gt\der\).
In particular, there is no Borel subgroup of \(\Gt\) that is
opposite to \(\Bt\) and
preserved by \(\gamma\).
\end{example}

In Example \ref{ex:sl-accidentally-reductive},
the fixed-point group \(\fix\Gt\der^\gamma\) is reductive, but
``too small''.
This explains the need for the largeness condition that
a certain centralizer be of multiplicative type in
Theorem \ref{thm:ka-quass}(\ref{subthm:ka-quass-quass})%
	(\ref{case:ka-quass-reductive}).

\begin{example}
\label{ex:sl-accidentally-reductive}
A slight modification of Example \ref{ex:sl-not-reductive}
shows that \(\Gt^\gamma\) can fail to be smoothable even if
\((\Gt\der^\gamma)\smooth\) is reductive.
Namely, continue to suppose that
\(p\) equals \(2\) and
\(k\) is imperfect, but now choose
\(t_0, t_2 \in k\) such that
\(\sset{1, \sqrt{t_0}, \sqrt{t_2}}\) is linearly independent over \(k^2\), and
consider the automorphism
\abmapto
	\gt
	{\Int\begin{smallpmatrix} t_0 \\ & 1 \\ && t_2 \end{smallpmatrix}\gt^{-\mathsf T}}
of \(\GL_3\).
Put \(E = k(\sqrt{t_0}, \sqrt{t_2})\).
This time, \(\gamma_E\)
preserves the opposite Borel subgroups corresponding to the flags
\begin{gather*}
0 \subseteq
\Span_E \sset{\begin{pmatrix} \sqrt{t_0} \\ 1 \\ 0 \end{pmatrix}} \subseteq
\Span_E \sset{\begin{pmatrix} \sqrt{t_0} \\ 1 \\ 0 \end{pmatrix}, \begin{pmatrix} \sqrt{t_0} \\ 1 \\ \sqrt{t_2} \end{pmatrix}} \subseteq
E^3
\intertext{and}
0 \subseteq
\Span_E \sset{\begin{pmatrix} 0 \\ 1 \\ \sqrt{t_2} \end{pmatrix}} \subseteq
\Span_E \sset{\begin{pmatrix} 0 \\ 1 \\ \sqrt{t_2} \end{pmatrix}, \begin{pmatrix} \sqrt{t_0} \\ 1 \\ \sqrt{t_2} \end{pmatrix}} \subseteq
E^3,
\end{gather*}
hence their common maximal torus.

Now
\(q_b(x)\) equals \(t_0\inv x_0^2 + x_1^2 + t_2\inv x_2^2\) for all
\(x = (x_0, x_1, x_2) \in k^3\), so
\(\ker(q_b)\) is trivial, but
\(\ker(q_{b_E})\) equals
\(\Span_E \sset{\begin{smallpmatrix} \sqrt{t_0} \\ 1 \\ 0 \end{smallpmatrix}, \begin{smallpmatrix} 0 \\ 1 \\ \sqrt{t_2} \end{smallpmatrix}}\).
We have that
\(((\Gt\der)_E^{\gamma_E})\smooth\) is the extension trivially across
\(\Span_E \sset{\begin{smallpmatrix} \sqrt{t_0} \\ 1 \\ \sqrt{t_2} \end{smallpmatrix}}\)
of the symplectic group on
\(\ker(q_{b_E})\); but
\((\Gt\der^\gamma)\smooth\) is trivial.
In particular, \((\Gt\der^\gamma)\smooth\) is reductive.
Nonetheless,
\(((\Gt\der^\gamma)\smooth)_\ka\) does not equal
\(((\Gt\der)_\ka^{\gamma_\ka})\smooth\), so
\(\Gt\der^\gamma = (\Gt\der^\gamma)\conn\) is not smoothable.
Thus, Theorem \ref{thm:quass}(\ref{subthm:quass-smoothable}) gives that
\(\gamma\) does not act quasisemisimply on \(\Gt\der\).
\end{example}
}

\begin{cor}[to Lemma \ref{lem:sl-odd} and Proposition \ref{prop:sl-even}]
\label{cor:sl-when-exceptional}
The following statements are equivalent.
\begin{enumerate}[label=(\alph*), ref=\alph*]
\item\label{case:sl-exceptional}
\(\gamma_E\) is an exceptional automorphism of \(\Gt_{\dersub\,E}\).
\item\label{case:p-and-n-even}
\(p\) equals \(2\) and \(n\) is even.
\item\label{case:sl-smooth}
\(\Gt\der^\gamma\) is not smooth.
\end{enumerate}
\end{cor}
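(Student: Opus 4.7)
The plan is to base-change to an algebraically closed field extension of $k$ containing $E$, after which $\gamma$ acts quasisemisimply on $\Gt$. All three conditions are invariant under this replacement: smoothness of $\Gt\der^\gamma$ is preserved and reflected by faithfully flat base change, $p$ and $n$ are unaffected, and exceptionality in the sense of Definition \ref{defn:ordinary-cases} only becomes easier to verify over a larger algebraically closed field. We therefore assume $k$ is algebraically closed and $\gamma$ acts quasisemisimply on $\Gt$.

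To establish (\ref{case:sl-smooth}) $\Leftrightarrow$ (\ref{case:p-and-n-even}), I would treat the two directions separately. If $p$ or $n$ is odd, Lemma \ref{lem:sl-odd} identifies $\Gt\der^\gamma$ with the smooth group $\Sp(X, b)$ or $\SO(X, q_b)$. Conversely, when $p = 2$ and $n$ is even, Lemma \ref{lem:q-linear}(\ref{sublem:X''}) shows that $\ker(q_b)^\perp = \ell_{n/2}$ is a nontrivial line, and Proposition \ref{prop:sl-even}(\ref{subprop:gl-even}) implies that $(\Gt\der^\gamma)\smooth$ fixes $\ker(q_b)^\perp$ pointwise, so $\Lie((\Gt\der^\gamma)\smooth)$ annihilates it. I would then exhibit an explicit trace-zero element of $\Lie(\Gt\der^\gamma)$ that does not annihilate $\ker(q_b)^\perp$; choosing the isomorphism $\gamma_X$ so that the matrix $M$ of $b$ is antidiagonal, $\Lie(\Gt\der^\gamma)$ cuts out trace-zero matrices satisfying the symmetry $X_{ij} = X_{n-j, n-i}$, and the matrix with $1$ in positions $(n/2, 0)$ and $(n, n/2)$ and zero elsewhere satisfies this symmetry but sends $e_{n/2}$ to $e_n$. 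This shows $\Lie((\Gt\der^\gamma)\smooth) \subsetneq \Lie(\Gt\der^\gamma)$ and hence $\Gt\der^\gamma$ is not smooth.

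For (\ref{case:sl-exceptional}) $\Leftrightarrow$ (\ref{case:p-and-n-even}), if $p$ or $n$ is odd, then by Lemma \ref{lem:sl-odd}, $\Gt\der^\gamma$ is the smooth reductive group $\Sp(X,b)$ or $\SO(X, q_b)$, whose restricted root system (of type $\mathsf C$ or $\mathsf B$) is reduced, so no root is divisible and the action fails to be exceptional. Conversely, when $p = 2$ and $n$ is even, the outer involution acting on the $\mathsf A_n$ root system of $\Gt\der$ falls into case (\ref{case:rd-multipliable(red)}) of Proposition \ref{prop:rd-restriction}(\ref{subprop:rd-multipliable}): the pair of swapped middle simple roots forms an exceptional pair $\{\alpha_m, \alpha_{m+1}\}$ whose common restriction $a$ is multipliable, with $2a$ restricting $\alpha_m + \alpha_{m+1}$. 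Since $\gamma$ swaps the two one-dimensional root spaces, averaging across them produces a nonzero $\gamma$-fixed vector in $\Lie(\Gt\der)_a$ via Corollary \ref{cor:root-space-facts}, placing $a \in \Phi(\Gt\der^\gamma, S)$; Proposition \ref{prop:square-pinned}(\ref{subprop:square-pinned}) then lifts this to $2a \in \Phi(\Gt\der^\gamma, S)$. Since $2a$ is non-multipliable in the restricted system, Proposition \ref{prop:GS-vs-GtS}(\ref{subprop:G-roots}) places $2a \in \Phi(\fix\Gt\der^\gamma, S)$, where it is divisible (by $a$) in $\Phi(\Gt\der^\gamma, S)$, witnessing exceptionality. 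The main technical obstacle is confirming via the exceptional-pair machinery of \S\ref{sec:rd-quass}--\S\ref{sec:quass-smooth} that the short restricted root $a$ indeed lies in $\Phi(\Gt\der^\gamma, S)$.
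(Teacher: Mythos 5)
Your proof is correct and follows essentially the same approach as the paper: replace the base field, read off (\ref{case:p-and-n-even}) $\Leftrightarrow$ (\ref{case:sl-smooth}) from Lemma \ref{lem:sl-odd} and Proposition \ref{prop:sl-even}, and then identify the quotient root system of type $\mathsf C$ or $\mathsf{BC}$ and, when $p = 2$ and $n$ is even, produce a $\gamma$-fixed vector in the weight space of the multipliable short restricted root by averaging across the exceptional pair. The main difference is cosmetic — you pass to an algebraically closed field and compute an explicit Lie algebra element to re-derive non-smoothness, and you invoke Proposition \ref{prop:square-pinned} and Proposition \ref{prop:GS-vs-GtS} rather than computing $\Phi(\fix\Gt\der^\gamma, T)$ directly from Proposition \ref{prop:sl-even} — and one should note that your justification for base-change invariance of exceptionality (``only becomes easier'') needs the observation that $\Gt = \GL(X)$ is split, so the maximal torus $T = \fix\Tt^\gamma$ of $\fix\Gt\der^\gamma$ is already split over $E$ and exceptionality is therefore genuinely unchanged by the base change.
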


\begin{proof}
Since \(\Gt\der^\gamma\) is smooth if and only if
\((\Gt\der)_E^{\gamma_E} = (\Gt_E)\der^{\gamma_E}\) is,
we may, and do, replace \(k\) by \(E\).
The equivalence of
(\ref{case:p-and-n-even}) with
(\ref{case:sl-smooth}) is
Lemma \ref{lem:sl-odd} and Proposition \ref{prop:sl-even}.

Put \(T = \fix(\Tt \cap \Gt\der)^\gamma\).
By Lemma \ref{lem:rd-Dynkin},
the quotient root system \(\Phi(\Gt\der, T)\) is of type
\(\mathsf C_{(n + 1)/2}\) if \(n\) is odd, and of type
\(\mathsf{BC}_{n/2}\) if \(n\) is even.
In particular, if \(n\) is odd, then
\(\Phi(\Gt, T)\) is reduced, so that
\(\gamma\) is not exceptional.
If \(n\) is even, then another application of
Lemma \ref{lem:sl-odd} and Proposition \ref{prop:sl-even}
gives that
\(\Phi(\fix\Gt\der^\gamma, T)\) is the set
\(\mathsf C_{n/2}\), respectively \(\mathsf B_{n/2}\), of
non-multipliable, respectively non-divisible, roots in
\(\mathsf{BC}_{n/2}\), according as
\(p\) equals or does not equal \(2\).
Thus, if \(p\) does not equal \(2\), then (\ref{case:sl-exceptional})
does not hold.
Conversely, suppose that \(p\) equals \(2\).
Using Notation \ref{notn:sl-data}, we have that
\(\gamma(e_0 - e_{n/2})\) equals \(e_{n/2} - e_n\),
so that they have
the same restriction \(a\) to \(T\), and
\(2a\) belongs to \(\mathsf C_{n/2} = \Phi(G, T)\); but
the image of
\map{\gamma + 1}{\Lie(\Gt)_{e_0 - e_{n/2}}}{\Lie(\Gt)_a}
is nonzero and pointwise fixed by \(\gamma\), so
\(a\) belongs to \(\Phi(\Gt^\gamma, T)\).
Thus, if \(p\) equals \(2\), then
(\ref{case:sl-exceptional}) holds.
\end{proof}

Proposition \ref{prop:sl-smoothable} is a special case of
Theorem \ref{thm:quass}(\ref{subthm:quass-smoothable})
that is needed in the proof of the latter.
It isolates the obstruction to upgrading
``smoothable'' to ``smooth'' in that result.

\begin{prop}
\label{prop:sl-smoothable}
Let \(\Gamma\) be a smooth \(k\)-group acting on \(\Gt\der\),
and suppose that
\((\Gt_{\dersub\,\ka}, \Gamma_\ka)\) is quasisemisimple.
\begin{enumerate}[label=(\alph*), ref=\alph*]
\item\label{subprop:sl-usually-smooth}
\((\Gt\der^\Gamma)\conn\) is smooth and reductive unless
\(p\) equals \(2\) and
\((\Gt_{\dersub\,\ka}, \Gamma_\ka)\) is exceptional.
\item\label{subprop:sl-how-reductive}
\(\fix\Gt\der^\Gamma\) is an extension of
a reductive group by
a split unipotent group.
\item\label{subprop:sl-when-reductive}
If \(\fix\Gt\der^\Gamma\) is reductive and
\(C_{\Gt\der}(\fix\Gt\der^\Gamma)\) is of multiplicative type, then
\((\Gt\der^\Gamma)\conn\) is smoothable.
\end{enumerate}
\end{prop}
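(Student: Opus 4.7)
Apply Theorem \ref{thm:quass} to the quasisemisimple reductive datum \((\Gt\der_\ka, \Gamma_\ka)\) and descend what can be descended to \(k\), then use the explicit single-involution structure from Proposition \ref{prop:sl-even}, Corollary \ref{cor:sl-red}, and Corollary \ref{cor:sl-smoothable} to handle the exceptional case.

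For (\ref{subprop:sl-usually-smooth}), note that \(Z(\Gt\der) = \mu_{n + 1}\) is finite with trivial identity component, so the correction term \((Z(\Gt\der)^\Gamma)\conn\) in Theorem \ref{thm:quass}(\ref{subthm:quass-smooth}) is trivial. Applied over \(\ka\), that theorem gives \((\Gt\der_\ka^{\Gamma_\ka})\conn = \fix\Gt\der_\ka^{\Gamma_\ka}\) in the non-exceptional case, and this common group is reductive by Theorem \ref{thm:quass}(\ref{subthm:quass-reductive}). Passage to identity components and to \(\Gamma\)-fixed points both commute with the flat base change along \(k \to \ka\), so \(((\Gt\der^\Gamma)\conn)_\ka = (\Gt\der_\ka^{\Gamma_\ka})\conn\) is smooth and reductive over \(\ka\); by descent of geometric smoothness and reductivity, \((\Gt\der^\Gamma)\conn\) is smooth and reductive over \(k\).

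For the remaining content of (\ref{subprop:sl-how-reductive}) and (\ref{subprop:sl-when-reductive}), restrict to the exceptional case, where \(p = 2\) and (by the relative-root analogue of Corollary \ref{cor:sl-when-exceptional}) \(n\) is even. Isolate the inner and outer parts of the action by letting \(\Gamma_0\) be the open, normal subgroup of \(\Gamma\) whose \(\ka\)-points act on \(\Gt\der\) by inner automorphisms; the quotient \(\Gamma/\Gamma_0\) has order at most \(2\). The image of \(\Gamma_0\) in \(\Gt\der\adform = \PGL(X)\) is smooth; over \(\ka\), quasisemisimplicity together with Remark \ref{rem:torus-quass}(\ref{subrem:quass-to-torus}) and the fact that an inner automorphism preserving a Borel--torus pair lies in the torus places this image inside a maximal torus of \(\PGL(X)_\ka\), so it is of multiplicative type. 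The argument of Lemma \ref{lem:gl-Levi}, adapted from the diagonalizable to the smooth multiplicative-type setting by passage to a splitting extension and Galois descent, then shows that \(\Mt \ldef (\Gt\der^{\Gamma_0})\conn\) is a Levi subgroup of \(\Gt\der\). If \(\Gamma = \Gamma_0\) this already yields (\ref{subprop:sl-how-reductive}) and (\ref{subprop:sl-when-reductive}) with trivial unipotent part; otherwise the residual action of \(\Gamma/\Gamma_0 \cong \Z/2\) on \(\Mt\) is by a single outer involution, whose effect on each almost-simple factor of \(\Mt\der\) is handled either by Lemma \ref{lem:sl-odd} (when the factor is not of type \(\mathsf A_{2m}\), or the involution swaps paired factors) or by Corollary \ref{cor:sl-red} (on each preserved \(\mathsf A_{2m}\)-factor), producing \(\fix\Gt\der^\Gamma\) as an extension of a reductive group (a product of symplectic groups and Levi factors) by split unipotent vector groups. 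This proves (\ref{subprop:sl-how-reductive}).

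For (\ref{subprop:sl-when-reductive}), the hypothesis that \(\fix\Gt\der^\Gamma\) is reductive forces the split unipotent kernels appearing in (\ref{subprop:sl-how-reductive}) to be trivial, and the hypothesis that \(C_{\Gt\der}(\fix\Gt\der^\Gamma)\) is of multiplicative type rules out, on each outer factor, a nonzero \(\ker(q_b)^\perp\) summand (whose presence would contribute a nontrivial unipotent centralizer via the extension structure in Proposition \ref{prop:sl-even}(\ref{subprop:sl-even})). These two conditions together force \(\ker(q_b) \otimes_k \ka = \ker(q_{b_\ka})\) on each outer factor, which is exactly the equivalent condition (\ref{case:sl-imperfect}) of Corollary \ref{cor:sl-smoothable}; the factor-by-factor smoothability so obtained assembles to the smoothability of \((\Gt\der^\Gamma)\conn\). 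The main obstacle will be performing the almost-simple factor decomposition of \(\Mt\der\) Galois-equivariantly, since these factors may not be individually defined over \(k\); this is handled by the induction formalism of \S\ref{subsec:ind} (specifically Corollary \ref{cor:ind-simple} and Remark \ref{rem:fixed-simple}), which packages the per-factor conclusions into statements over \(k\).
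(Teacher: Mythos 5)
Your plan closely tracks the paper's in overall structure (isolate the inner part $\Gamma_0$, use Lemma \ref{lem:gl-Levi} to recognize a Levi subgroup, then analyze the residual outer involution via Lemma \ref{lem:sl-odd}, Proposition \ref{prop:sl-even}, Corollaries \ref{cor:sl-red} and \ref{cor:sl-smoothable}), but your shortcut for part (\ref{subprop:sl-usually-smooth}) has a genuine gap. You assert that ``$Z(\Gt\der) = \mu_{n+1}$ is finite with trivial identity component, so the correction term $(Z(\Gt\der)^\Gamma)\conn$ in Theorem \ref{thm:quass}(\ref{subthm:quass-smooth}) is trivial.'' But $\mu_{n+1}$ does \emph{not} have trivial identity component over $\ka$ when $p \mid n+1$: writing $n+1 = p^a m$ with $\gcd(p,m)=1$, we have $(\mu_{n+1})\conn = \mu_{p^a}$, which is a nontrivial infinitesimal group scheme whenever $a \geq 1$. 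In particular, in the case $p = 2$ and $n$ odd (which is a non-exceptional case that (\ref{subprop:sl-usually-smooth}) must cover), $(Z(\Gt\der)^{\Gamma})\conn$ can be $\mu_2$, and Theorem \ref{thm:quass}(\ref{subthm:quass-smooth}) then gives $(\Gt\der_\ka^{\Gamma_\ka})\conn = \mu_2 \cdot \fix\Gt\der_\ka^{\Gamma_\ka}$, which is not visibly smooth. It happens to \emph{be} smooth because $\mu_2$ is absorbed into $\fix\Gt\der^\Gamma = \Sp(X,b)$ as its center, but establishing that containment is precisely the content of Lemma \ref{lem:sl-odd} — your argument cannot bypass the explicit casework the paper performs. (One could repair your approach by first passing to the adjoint group $\PGL(X)$, where the center is trivial, and then lifting back via Corollary \ref{cor:smooth-surjective}; but you do not do this.)

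There is a secondary imprecision in your argument for (\ref{subprop:sl-when-reductive}). You say the multiplicative-type hypothesis ``rules out a nonzero $\ker(q_b)^\perp$ summand (whose presence would contribute a nontrivial unipotent centralizer).'' In fact $\ker(q_b)^\perp$ is always one-dimensional (not zero) when things are well-behaved and $n$ is even, and the centralizer it contributes, $\SL(\ker(q_b)^\perp)$, is reductive, not unipotent. The correct argument (as in the paper) is a two-stage one: reductivity of $\fix\Gt\der^\Gamma$ forces the Hom-groups in Corollary \ref{cor:sl-red} to vanish, which gives $\ker(q_b) \cap \ker(q_b)^\perp = 0$; then the multiplicative-type hypothesis, applied to the factor $\SL(\ker(q_b)^\perp)$, forces $\dim \ker(q_b)^\perp \leq 1$; and finally a dimension count gives $\ker(q_b) \otimes_k E = \ker(q_{b_E})$, which is what Corollary \ref{cor:sl-smoothable}(\ref{case:sl-imperfect}) requires. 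Your final conclusion is right, but the intermediate reasoning as stated does not support it.
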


\begin{proof}
We may, and do, assume, upon replacing \(k\) by \(\ks\), that
\(k\) is separably closed.

Note that, if
\((\Gt\der^\Gamma)\conn\) is smooth and reductive, then
the result is satisfied.

Let \(\Gamma'\) be the subgroup of \(\Gamma\) that
acts on \(\Gt\der\) by inner automorphisms.
In particular, it acts trivially on
\(Z(\Gt\der) = Z(\Gt) \cap \Gt\der\), so its action
may be extended trivially across \(Z(\Gt)\) to
\(Z(\Gt)\cdot\Gt\der = \Gt\).
Put \(\Mt = (\Gt^{\Gamma'})\conn\).

We have that \(\Gamma'_\ka\) acts by
inner automorphisms of
\(\Gt_\ka\) that preserve \((\Bt, \Tt)\), so that
the action factors through
\abmap{\Tt/Z(\Gt_\ka)}{\uInn(\Gt_\ka)}
to give a map
\abmap{\Gamma'_\ka}{\Tt/Z(\Gt_\ka)}.
Lemma \ref{lem:gl-Levi} gives that
\(\Mt_\ka \ldef (\Gt_\ka^{\Gamma'_\ka})\conn\) is
a Levi subgroup of \(\Gt_\ka\), hence reductive.
In particular, \(\Mt_\ka\), hence \(\Mt\), is smooth and reductive; so,
if \(\Gamma'\) is all of \(\Gamma\), then we are done.

Thus, we may, and do, assume that
\(\Gamma'\) is not all of \(\Gamma\).
Since \(\Gamma(k)\) is Zariski dense in \(\Gamma\),
there is some
\(\gamma \in \Gamma(k)\) that acts on \(\Gt\der\) by
an outer automorphism, hence by inversion on
\(Z(\Gt\der) = Z(\Gt) \cap \Gt\der\).
We may thus extend \(\gamma\) to an automorphism of
\(Z(\Gt)\cdot\Gt\der = \Gt\) that acts
by inversion on \(Z(\Gt)\).

Since
\((\Gamma/\Gamma')_\ka = \Gamma_\ka/\Gamma'_\ka\)
embeds into
\(\uAut(\Gt)/\uInn(\Gt) = \uOut(\Gt)\), which
is trivial if \(n\) is at most \(1\) and
has order \(2\) otherwise, we have that
the image of \(\gamma\) generates \(\Gamma/\Gamma'\).
In particular, we may extend
the action of all of \(\Gamma\) to \(\Gt\) so that
\(\Gamma'\) acts trivially on \(Z(\Gt)\) and
\(\gamma\) acts by inversion on \(Z(\Gt)\).
Further,
\(\Gt^\Gamma = \Mt^{\Gamma/\Gamma'}\)
equals
\(\Mt^\gamma\).
Since \(\gamma\) acts by inversion on \(Z(\Gt)\), we have that
\(\fix\Gt^\gamma\) is contained in \(\Gt\der\), hence
equals
\(\fix\Gt\der^\gamma\); and similarly that
\(\fix\Gt_\ka^{\gamma_\ka}\) equals \(\fix(\Gt\der)_\ka^{\gamma_\ka}\).

Since \(\Mt_\ka\) is a Levi subgroup of \(\Gt_\ka = \GL(X \otimes_k \ka)\),
it is the product of general linear groups
corresponding to the weight spaces in \(X \otimes_k \ka\) for the
maximal central torus in \(\Mt_\ka\).
These factors are permuted by \((\Gamma/\Gamma')(\ka)\), and
at most one of them is preserved by \(\gamma_\ka\).
(Concretely, if we form
the Dynkin diagram of \(\Gt_\ka\) with respect to \((\Bt, \Tt)\), then
there is a \(\Gamma(\ka)\)-equivariant bijection between
factors of \(\Mt_\ka\) and connected components of the associated
subdiagram of the Dynkin diagram.)

Since \(\Mt_\ka\) is reductive, so is \(\Mt\).
If \(\At\) is the maximal central torus in \(\Mt\), then
\cite{conrad-gabber-prasad:prg}*{Lemma C.4.4} gives that
\(\At_\ka\) is the maximal central torus in \(\Mt_\ka\).
Since \(\Mt_\ka\) is a Levi subgroup of \(\Gt_\ka\), we have that
\(\Mt_\ka\) equals \(C_{\Gt_\ka}(\At_\ka) = C_\Gt(\At)_\ka\),
so that \(\Mt\) equals \(C_\Gt(\At)\) and hence is
a Levi subgroup of \(\Gt\).
Again, it is the product of general linear groups
corresponding to the weight spaces in \(X\) for \(\At\).
Since the weight spaces in \(X \otimes_k \ka\) for \(\At_\ka\) are just
the base changes to \(\ka\) of
the weight spaces in \(X\) for \(\At\),
it follows that at most one of them is preserved by \(\gamma\).

If no weight space is preserved by \(\gamma\), then
\(\Gt^\Gamma = \Mt^\gamma\) is
a product of general linear groups,
one for each \(\gamma\)-orbit of weight spaces.
In particular,
\((\Gt^\Gamma)\conn = \Gt^\Gamma\) is smooth and reductive.
Thus
\((\Gt_\ka^{\Gamma_\ka})\conn = ((\Gt^\Gamma)\conn)_\ka\) is
also smooth, and so equals
\(\fix\Gt_\ka^{\Gamma_\ka}\), which
we have already observed is
contained in \(\Gt_{\dersub\,\ka}\); so
\((\Gt^\Gamma)\conn\) is contained in \(\Gt\der\), hence
equals \((\Gt\der^\Gamma)\conn\), which is therefore also
smooth and reductive.
Again, in this case, we are done.

Thus we may, and do, assume that
some weight space
\(Y\) is preserved by \(\gamma\).
Note that \(\gamma\) is an involution of \(\GL(Y)\) that
acts by inversion on \(Z(\GL(Y))\).
Since \(\SL(X)^\Gamma\) is
a direct product of a smooth group
(a product of general linear groups) with
\(\SL(Y)^\gamma\),
we may, and do, replace
\(X\) by \(Y\), and \(\Gamma\) by \(\sgen\gamma\).
Since now \(\gamma\) is an involution of \(\Gt\) that
acts by inversion on \(Z(\Gt)\), we may apply
the results of this section.

If \(p\) or \(n\) is odd, then
Lemma \ref{lem:sl-odd} gives
that \(\Gt\der^\gamma\) is connected,
hence equals \((\Gt\der^\gamma)\conn\), and
is smooth and reductive,
so we are done.
Thus we may, and do, finally suppose that
\(p\) equals \(2\) and \(n\) is even.
Then Corollary \ref{cor:sl-when-exceptional} gives that
\((\Gt_{\dersub\,\ka}, \sgen{\gamma_\ka})\) is exceptional, so that
(\ref{subprop:sl-usually-smooth}) is vacuously true; and
(\ref{subprop:sl-how-reductive}) in this case follows from
Corollary \ref{cor:sl-red}.

Finally, suppose that \(\fix\Gt\der^\gamma\) is reductive and
\(C_{\Gt\der}(\fix\Gt\der^\gamma)\) is of multiplicative type.
We now apply Corollary \ref{cor:sl-red} several more times.
First, we observe that reductivity implies that
\[
\Hom((\ker(q_b) + \ker(q_b)^\perp)/{\ker(q_b)^\perp},
	\ker(q_b) \cap \ker(q_b)^\perp)
\]
is trivial.
Second, if \(\ker(q_b) \cap \ker(q_b)^\perp\) is nontrivial, then
its \(b\)-orthogonal space
\(\ker(q_b) + \ker(q_b)^\perp\) is a proper subspace of \(X\); so
\[
\Hom(X/(\ker(q_b) + \ker(q_b)^\perp),
	\ker(q_b) \cap \ker(q_b)^\perp)
\]
is nontrivial, and hence
has a nonzero vector negated by the duality involution
(since \(p\) equals \(2\)).
This contradicts reductivity by Corollary \ref{cor:sl-red},
so in fact
\(\ker(q_b) \cap \ker(q_b)^\perp\) is trivial.
Third and finally,
another application of Corollary \ref{cor:sl-red} gives that
the restriction map
\abmap{(\Gt\der^\gamma)\smooth}{\Sp(\ker(q_b), b')}
is an isomorphism.
Thus the factor \(\SL(\ker(q_b)^\perp)\) of
the subgroup \(\SL(\ker(q_b)) \times \SL(\ker(q_b)^\perp)\)
of \(\Gt\der\)
centralizes \(\fix\Gt\der^\gamma\).
Since \(C_{\Gt\der}(\fix\Gt\der^\gamma)\), and hence
\(\SL(\ker(q_b)^\perp)\), is of multiplicative type, we have that
\(\ker(q_b)^\perp\) is at most one dimensional, so that
\(\ker(q_b)\) is at least \(n\) dimensional.
Therefore the subspace \(\ker(q_b) \otimes_k E\) of \(\ker(q_{b_E})\)
is at least \(n\) dimensional; but
\(\ker(q_{b_E})\) is \(n\) dimensional, by
Lemma \ref{lem:q-linear}(\ref{sublem:X'}), so they are equal.
Then Corollary \ref{cor:sl-smoothable} gives that
\(\Gt\der^\gamma\) is smoothable.
This shows (\ref{subprop:sl-when-reductive}).
\end{proof}

\numberwithin{equation}{section}
\section{Proof of Theorem \ref{thm:ka-quass}}
\label{sec:thm:ka-quass}

As in Notation \ref{notn:main}, we
let \(k\) be a field,
\(\Gt\) a connected, reductive \(k\)-group, and
\(\Gamma\) a smooth \(k\)-group acting on \(\Gt\), and
put \(G = \fix\Gt^\Gamma\).
We do not require the particular choice
\(\Gt = \GL_{n + 1}\) of \S\ref{sec:sl-outer}.

{\newcommand\theadhocthm{\ref{thm:ka-quass}}
\begin{adhocthm}
Suppose that \((\Gt_\ka, \Gamma_\ka)\) is quasisemisimple.
	\begin{enumerate}[label=(\arabic*), ref=\arabic*]
	\item
	\(G\) is an extension of
a reductive group by
a split unipotent group.
	\item
	The following statements are equivalent.
		\begin{enumerate}[label=(\alph*), ref=\alph*]
		\item
		\((\Gt_\ks, \Gamma_\ks)\) is quasisemisimple.
		\item
		\((\Gt^\Gamma)\conn\) is smoothable.
		\item
		\(G\) is reductive, and
\(C_\Gt(G)\) is of multiplicative type.
		\item
		There is a torus \(T\) in \(G\) such that
\(T_\ka\) is a maximal torus in \(\fix\Gt_\ka^{\Gamma_\ka}\).
		\item
		There are a
\(\Gamma_\ks\)-stable maximal torus \(\Tt\) in \(\Gt_\ks\), and a
\(\Gamma_\ka\)-stable Borel subgroup of \(\Gt_\ka\) containing
\(\Tt_\ka\).
		\end{enumerate}
	\end{enumerate}
\end{adhocthm}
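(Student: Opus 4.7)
The plan is to prove (1) and (2) in succession, with the structure of (1) feeding into the hardest implication of (2).

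\textbf{Part (1).} I would first base change to \(\ks\), since both reductivity and split-unipotence of the pieces descend from \(\ks\) to \(k\), and this makes \(\pi_0(\Gamma)\) constant. The image of \(\Gamma\conn\) in \(\uAut(\Gt)\) is linearly reductive by Remark~\ref{rem:torus-quass}(\ref{subrem:quass-to-torus}) (using that \((\Gt_\ka, \Gamma_\ka)\) is quasisemisimple), so Corollary~\ref{cor:fixed-surjective} applies to the quotient \(\Gt \to \Gt\adform\) and exhibits \(G\) as a central extension of \(\fix\Gt\adform^\Gamma\) by a subtorus of \(Z(\Gt)\); consequently the desired conclusion for \(G\) reduces to the same conclusion in the adjoint case. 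I would then use Corollary~\ref{cor:ind-simple} to write \(\Gt\adform \cong \prod_i \Ind_{\Gamma_i}^\Gamma \Gt_i\) as a product of induced almost-simple adjoint factors, and invoke Remark~\ref{rem:fixed-simple} to identify \(\fix\Gt\adform^\Gamma\) with \(\prod_i \fix\Gt_i^{\Gamma_i}\). Each factor \((\Gt_i, \Gamma_i)\) is almost-simple and quasisemisimple over \(\ka\) by Lemma~\ref{lem:quass-by-component}, and is either not of exceptional type \(\mathsf A_{2n}\) in characteristic \(2\)---in which case Theorem~\ref{thm:quass}(\ref{subthm:quass-smooth},\ref{subthm:quass-reductive}) applied over \(\ks\) gives smoothness and reductivity outright---or of the exceptional type, in which case Proposition~\ref{prop:sl-smoothable}(\ref{subprop:sl-how-reductive}) furnishes the desired extension of a reductive group by a split unipotent (vector) group.

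\textbf{Part (2).} I would prove the equivalence via the cycle \((a) \Rightarrow (b) \Rightarrow (d) \Rightarrow (e) \Rightarrow (a)\), together with \((a) \Rightarrow (c) \Rightarrow (b)\). The implications \((a) \Rightarrow (b)\) and \((a) \Rightarrow (c)\) follow from Theorem~\ref{thm:quass}(\ref{subthm:quass-smoothable},\ref{subthm:quass-reductive}) applied over \(\ks\) (smoothability and reductivity both descend), combined with Proposition~\ref{prop:quass-rough}(\ref{subprop:quass-up}), which places \(C_\Gt(G)\) inside a torus. For \((b) \Rightarrow (d)\), smoothability identifies \(G_\ka\) with \(\fix\Gt_\ka^{\Gamma_\ka}\) via Remark~\ref{rem:conn-smooth}, so any maximal torus \(T\) of \(G\) base-changes to a maximal torus of \(\fix\Gt_\ka^{\Gamma_\ka}\). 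For \((d) \Rightarrow (e)\), set \(\Tt := C_\Gt(T)_\ks\); then \(\Tt_\ka = C_{\Gt_\ka}(T_\ka)\) is, by Proposition~\ref{prop:quass-rough}(\ref{subprop:quass-up}) applied to \((\Gt_\ka, \Gamma_\ka)\), a \(\Gamma_\ka\)-stable maximal torus contained in a \(\Gamma_\ka\)-stable Borel, and \(\Gamma_\ks\)-stability of \(\Tt\) itself follows by faithfully flat descent from \(\Gamma_\ka\)-stability of \(\Tt_\ka\). For \((e) \Rightarrow (a)\), the scheme of Borel subgroups of \(\Gt_\ks\) containing \(\Tt\) is a torsor under the finite étale group \(W(\Gt_\ks, \Tt)\), hence constant, so the \(\Gamma_\ka\)-stable Borel over \(\ka\) containing \(\Tt_\ka\) descends uniquely to a \(\ks\)-Borel, whose \(\Gamma_\ks\)-stability descends by the same mechanism.

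The one genuinely hard implication is \((c) \Rightarrow (b)\), where the special analysis of Section~\ref{sec:sl-outer} is essential. Again reducing to \(\ks\) and using Corollaries~\ref{cor:smooth-surjective} and \ref{cor:ind-simple} to decompose into almost-simple factors, the non-exceptional factors are handled by the argument of Part~(1), which gives outright smoothness of the fixed-point group. For exceptional factors, Proposition~\ref{prop:sl-smoothable}(\ref{subprop:sl-when-reductive}) yields smoothability of \((\Gt_i^{\Gamma_i})\conn\) provided \(\fix(\Gt_i)\der^{\Gamma_i}\) is reductive and \(C_{(\Gt_i)\dersub}(\fix(\Gt_i)\der^{\Gamma_i})\) is of multiplicative type. \textbf{The main obstacle} is verifying that the global hypothesis (c) propagates correctly to each exceptional almost-simple factor: reductivity of \(G\) forces reductivity of each \(\fix\Gt_i^{\Gamma_i}\) (a product is reductive iff each factor is), but the multiplicative-type condition on \(C_\Gt(G)\) must be traced, through the induction formalism and adjoint reductions and the identifications of Corollary~\ref{cor:ind-simple}, to the analogous condition on each \(C_{(\Gt_i)\dersub}(\fix(\Gt_i)\der^{\Gamma_i})\). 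Once this bookkeeping is in place, Proposition~\ref{prop:sl-smoothable}(\ref{subprop:sl-when-reductive}) delivers factor-by-factor smoothability, which assembles via Corollary~\ref{cor:smooth-surjective} to smoothability of \((\Gt^\Gamma)\conn\), completing the cycle.
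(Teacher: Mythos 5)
Your proposal tracks the paper's own proof quite closely: both reduce to the adjoint, separably-closed, almost-simple case, handle the non-exceptional case via Theorem~\ref{thm:quass}, and defer the exceptional $\mathsf A_{2n}$, $p = 2$ case to Proposition~\ref{prop:sl-smoothable}. The cycle you propose in Part~(2) routes through~(e), whereas the paper establishes (d)~$\Rightarrow$~(a) directly from Lemma~\ref{lem:quass-from-Levi}; your detour via (d)~$\Rightarrow$~(e)~$\Rightarrow$~(a) (using that the Borel subgroups containing a split torus over $\ks$ form a constant finite set) is valid but slightly longer. Both routes work.

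There are three points where your sketch needs repair or completion. First, in Part~(1) you say the adjoint reduction ``exhibits $G$ as a central extension of $\fix\Gt\adform^\Gamma$ by a subtorus of $Z(\Gt)$, consequently the desired conclusion for $G$ reduces to the same conclusion in the adjoint case.'' This reduction is not automatic: you must first split off the unipotent radical. An extension of (an extension of reductive by split unipotent) by (central multiplicative type) is not obviously of the same form; the paper needs \cite{SGA-3.3}*{Expos\'e XVII, Th\'eor\`eme 6.1.1(A)(ii)} to lift $R\textsub u(\fix\Gt\adform^\Gamma)$ across the central multiplicative kernel. Without that splitting the reduction is a gap. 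Second, you write ``Theorem~\ref{thm:quass}(\ref{subthm:quass-smooth},\ref{subthm:quass-reductive}) applied over $\ks$''; but the hypotheses of Theorem~\ref{thm:quass} include quasisemisimplicity over the base field, which we know here only over $\ka$, not over $\ks$ (that is exactly what (a) asserts and is not part of the hypothesis of the theorem). The paper applies Theorem~\ref{thm:quass} to $(\Gt_\ka, \Gamma_\ka)$ and then descends smoothness and reductivity to $\Gt$ using the equality $((\Gt^\Gamma)\conn)_\ka = (\Gt_\ka^{\Gamma_\ka})\conn$. Third, you correctly flag the propagation of hypothesis (c) through the reductions as ``the main obstacle'' in (c)~$\Rightarrow$~(b), but leave it unresolved; this is precisely the technical heart of that implication. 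The paper's resolution does not trace multiplicative type through the factor decomposition at the adjoint level directly; instead, after reducing to $\Gt$ adjoint, almost simple, and exceptional, it passes to $\Gt\scform$, observes that the image of $C_{\Gt\scform}(\fix\Gt\scform^\Gamma)$ under $\Gt\scform \to \Gt$ lands in $C_\Gt(G)$, and then argues that $C_{\Gt\scform}(\fix\Gt\scform^\Gamma)$ is a central extension of a multiplicative-type group by $Z(\Gt\scform)$, hence itself multiplicative. That is the nontrivial argument your proposal still needs.
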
}

\begin{proof}[Proof of
	Theorem \ref{thm:ka-quass}(\ref{subthm:ka-quass-how-reductive}) and
	Theorem \ref{thm:ka-quass}(\ref{subthm:ka-quass-quass})%
		(\ref{case:ka-quass-reductive} \(\implies\)
			\ref{case:ka-quass-smoothable})]
\newcommand\red{\textsup{red}}
Suppose that we have proven the result for \(\Gt\adform\).
Write \(R\textsub u(\fix\Gt\adform^\Gamma)\) for
the unipotent radical of \(\fix\Gt\adform^\Gamma\),
which is split; and
\((\fix\Gt\adform^\Gamma)\red\) for the quotient of
\(\fix\Gt\adform^\Gamma\) by its unipotent radical,
which is reductive.
Then we have by Corollary \ref{cor:fixed-surjective} that
the natural map
\abmap{\fix\Gt^\Gamma}{\fix\Gt\adform^\Gamma} is a quotient,
obviously with kernel \(Z(\Gt) \cap \fix\Gt^\Gamma\).
The pre-image \(S\) of \(R\textsub u(\fix\Gt\adform^\Gamma)\) is
an extension of \(R\textsub u(\fix\Gt\adform^\Gamma)\) by
\(Z(\Gt) \cap \fix\Gt^\Gamma\).
Since \(R\textsub u(\fix\Gt\adform^\Gamma\) is split, we have by
\cite{SGA-3.3}*{Expos\'e XVII, Th\'eor\`eme 6.1.1(A)(ii)}
that
\(S\) is a trivial extension, i.e., is isomorphic to
\((Z(\Gt) \cap \fix\Gt^\Gamma) \times R\textsub u(\fix\Gt\adform^\Gamma)\).
In particular, we may view \(R\textsub u(\fix\Gt\adform^\Gamma)\) as
a subgroup of \(G\).
Then \(G/R\textsub u(\fix\Gt\adform^\Gamma)\) is an extension
\((\fix\Gt\adform^\Gamma)\red\) by \(Z(\Gt) \cap \fix\Gt^\Gamma\), hence
is reductive.
Finally, Corollary \ref{cor:smooth-surjective} shows that
(\ref{subthm:ka-quass-quass})(\ref{case:ka-quass-smoothable})
is unchanged if we replace \(\Gt\) by \(\Gt\adform\).

Thus we may, and do, assume, upon
replacing \(\Gt\) by \(\Gt\adform\), that
\(\Gt\) is adjoint.
Since a unipotent group is split if and only if
it becomes so after separable base change, and since
formation of the unipotent radical commutes with
separable base change, we may, and do, assume,
upon replacing \(k\) by \(\ks\), that
\(k\) is separably closed.
Then, by Remark \ref{rem:fixed-simple}, we may, and do, assume,
upon replacing \(\Gt\) by
an almost-simple component \(\Gt_1\) and
\(\Gamma\) by \(\stab_\Gamma(\Gt_1)\), that
\(\Gt\) is almost simple
(hence simple, because it is adjoint).

Unless \(p\) equals \(2\) and
\((\Gt_\ka, \Gamma_\ka)\) is exceptional,
Theorem \ref{thm:quass}%
	(\ref{subthm:quass-smooth},%
	\ref{subthm:quass-reductive})
gives that
\(((\Gt^\Gamma)\conn)_\ka = (\Gt_\ka^{\Gamma_\ka})\conn\) is
smooth and reductive, so that
\((\Gt^\Gamma)\conn\) is also smooth and reductive.
In particular,
\((\Gt^\Gamma)\conn\) equals \(\fix\Gt^\Gamma = G\), which
is therefore itself reductive; and
(\ref{subthm:ka-quass-quass})(\ref{case:ka-quass-smoothable}) holds
(hence is certainly implied by
(\ref{subthm:ka-quass-quass})(\ref{case:ka-quass-reductive})).

Thus we may, and do, assume for the remainder of the proof that
\(p\) equals \(2\) and
\((\Gt_\ka, \Gamma_\ka)\) is exceptional.
Remark \ref{rem:gp-nred-facts}(\ref{subrem:gp-nred-type})
gives that \(\Gt\) is of type \(\mathsf A_{2n}\) for
some positive integer \(n\).
We shall use twice the consequence of
Corollary \ref{cor:fixed-surjective} that
\(G = \fix\Gt^\Gamma\) is (isomorphic to)
the quotient of \(\fix\Gt\scform^\Gamma\) by
\(Z(\Gt\scform) \cap \fix\Gt\scform^\Gamma\).

We begin by proving
(\ref{subthm:ka-quass-how-reductive}).
Proposition \ref{prop:sl-smoothable}(\ref{subprop:sl-how-reductive})
gives that \(\fix\Gt\scform^\Gamma\) is
an extension of a reductive group
\((\fix\Gt\scform^\Gamma)\red\)
by a split unipotent group \(R\textsub u(\fix\Gt\scform^\Gamma)\).
Since the multiplicative-type group
\(Z(\Gt\scform) \cap \fix\Gt\scform^\Gamma\) necessarily intersects
the (split) unipotent group \(R\textsub u(\fix\Gt\scform)^\Gamma\)
trivially, we have that
\(G\) is an extension by
\(R\textsub u(\fix\Gt\scform^\Gamma)\) of
a group that is a quotient of \((\fix\Gt\scform^\Gamma)\red\), and so
reductive.
This shows (\ref{subthm:ka-quass-how-reductive}).

Next we show that
(\ref{subthm:ka-quass-quass})(\ref{case:ka-quass-reductive}) implies
(\ref{subthm:ka-quass-quass})(\ref{case:ka-quass-smoothable}),
beginning by assuming that
(\ref{subthm:ka-quass-quass})(\ref{case:ka-quass-reductive}) holds,
i.e., that \(G\) is reductive and
\(C_\Gt(G)\) is of multiplicative type.
Then, first,
\(R\textsub u(\fix\Gt\scform^\Gamma)\) is trivial, so
\(\fix\Gt\scform^\Gamma\) equals \((\fix\Gt\scform^\Gamma)\red\), and hence is
reductive.
Second, the restriction to
\(C_{\Gt\scform}(\fix\Gt\scform^\Gamma)\) of
the natural quotient map \abmap{\Gt\scform}\Gt\
has image in the multiplicative-type group \(C_\Gt(G)\).
Thus
\(C_{\Gt\scform}(\fix\Gt\scform^\Gamma)\) is
a central extension of a multiplicative-type group by
the multiplicative-type group \(Z(\Gt\scform)\), hence is
itself multiplicative
\cite{milne:algebraic-groups}*{Corollary 12.22}.
Then
Proposition \ref{prop:sl-smoothable}(\ref{subprop:sl-when-reductive})
gives that \((\Gt\scform^\Gamma)\conn\), hence,
by Corollary \ref{cor:smooth-surjective}, also
\((\Gt^\Gamma)\conn\), is smoothable.
That is, (\ref{subthm:ka-quass-quass})(\ref{case:ka-quass-smoothable})
holds.
\end{proof}

\begin{proof}[Proof of
	Theorem \ref{thm:ka-quass}(\ref{subthm:ka-quass-quass})%
		(\ref{case:ka-quass-quass} \(\iff\)
		\ref{case:ka-quass-smoothable} \(\iff\)
		\ref{case:ka-quass-torus} \(\iff\)
		\ref{case:ka-quass-Borel} \(\implies\)
		\ref{case:ka-quass-reductive})]
We may, and do, assume, upon replacing \(k\) by \(\ks\), that
\(k\) is separably closed.

First assume (\ref{case:ka-quass-quass}).
Then (\ref{case:ka-quass-Borel}) is obvious;
Theorem \ref{thm:quass}(\ref{subthm:quass-smoothable}) gives
(\ref{case:ka-quass-smoothable}); and
Theorem \ref{thm:quass}(\ref{subthm:quass-reductive}) and
Proposition \ref{prop:quass-rough}(\ref{subprop:quass-up}) give
(\ref{case:ka-quass-reductive}).

Remark \ref{rem:conn-smooth} shows that
(\ref{case:ka-quass-smoothable}) implies that
\(G_\ka = (\fix\Gt^\Gamma)_\ka\) equals
\(\fix\Gt_\ka^{\Gamma_\ka}\).
Then
\cite{conrad-gabber-prasad:prg}*{Lemma C.4.4} gives
(\ref{case:ka-quass-torus}).

Assuming (\ref{case:ka-quass-torus}), we have by
Proposition \ref{prop:quass-rough}(\ref{subprop:quass-up}) that
\(C_\Gt(T)_\ka = C_{\Gt_\ka}(T_\ka)\) is
a maximal torus in \(\Gt_\ka\), so
\(C_\Gt(T)\) is a maximal torus in \(\Gt\).
Then
Lemma \ref{lem:quass-from-Levi} gives that
\((\Gt_\ks, \Gamma_\ks)\) is quasisemisimple, which is
(\ref{case:ka-quass-quass}).

Finally, assume (\ref{case:ka-quass-Borel}).
Proposition \ref{prop:quass-rough}(\ref{subprop:quass-down}) gives that
\(\fix\Tt_\ka^{\Gamma_\ka}\) is
a maximal torus in \(\fix\Gt_\ka^{\Gamma_\ka}\).
Since all subgroups of tori are smoothable, we have by
Remark \ref{rem:conn-smooth} that
\((\fix\Tt^\Gamma)_\ka\) equals \(\fix\Tt_\ka^{\Gamma_\ka}\), hence is
a maximal torus in \(\fix\Gt_\ka^{\Gamma_\ka}\),
giving (\ref{case:ka-quass-torus}).
\end{proof}

\begin{cor}
\label{cor:ss-is-quass}
Suppose that \(k\) is separably closed.
If \(\gamma\) is a semisimple automorphism of \(\Gt\),
in the sense of \cite{steinberg:endomorphisms}*{\S7, p.~51},
then \(\gamma\) is a quasisemisimple automorphism of \(\Gt\).
\end{cor}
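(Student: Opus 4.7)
The plan is to combine Steinberg's classical theorem with Theorem \ref{thm:ka-quass}. Let \(\Gamma\) denote the maximal smooth subgroup of the Zariski closure in \(\uAut(\Gt)\) of the cyclic group \(\sgen\gamma\), so that \((\Gt, \Gamma)\) is a reductive datum over \(k\) and quasisemisimplicity of \(\gamma\) is equivalent to that of \((\Gt, \Gamma)\). First, Steinberg's theorem \cite{steinberg:endomorphisms}*{Theorem 7.5}, applied over \(\ka\), gives a Borel--torus pair in \(\Gt_\ka\) that is fixed by \(\gamma_\ka\); since the stabilizer in \(\uAut(\Gt)_\ka\) of such a pair is a closed subgroup containing \(\gamma_\ka\), it also contains \(\Gamma_\ka\). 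Thus \((\Gt_\ka, \Gamma_\ka)\) is quasisemisimple, placing us in the hypotheses of Theorem \ref{thm:ka-quass}.

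Since \(k\) is separably closed, condition (\ref{case:ka-quass-quass}) of Theorem \ref{thm:ka-quass}(\ref{subthm:ka-quass-quass}) coincides with the quasisemisimplicity of \((\Gt, \Gamma)\), which is exactly what we wish to prove. By the equivalences in that part of the theorem, it suffices to verify (\ref{case:ka-quass-smoothable}), namely that \((\Gt^\Gamma)\conn\) is smoothable. For this I would invoke Remark \ref{rem:ss-loc-quass}: since a semisimple automorphism in Steinberg's sense is detected by an embedding into a linear algebraic group in which \(\gamma\) becomes a semisimple element, the Zariski closure of \(\sgen\gamma\) is of multiplicative type, and in particular every element of \(\Gamma(\ka)\) is itself semisimple. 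Remark \ref{rem:ss-loc-quass} then yields the stronger statement that \(\Gt^\Gamma\) is smooth, so that \((\Gt^\Gamma)\conn\) is trivially smoothable.

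The proof is therefore essentially a bookkeeping matter once Theorem \ref{thm:ka-quass} is in hand; the only subtlety is verifying that the \(\ka\)-points of \(\Gamma\) are all semisimple, which follows from the definition of semisimplicity for automorphisms via an ambient embedding. The hard technical work lies in Theorem \ref{thm:ka-quass}, whose proof in turn rests on the detailed equivariant structure theory of \S\S\ref{sec:quass-smooth}--\ref{sec:sl-outer}; the present corollary is a clean pay-off, extending Steinberg's result from algebraic closure to separable closure by producing the desired rational Borel--torus pair from the smoothness of the fixed-point subgroup.
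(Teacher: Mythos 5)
Your proposal is correct and takes essentially the same route as the paper: Steinberg's Theorem 7.5 over \(\ka\), smoothness of \(\Gt^\gamma\) via linear reductivity of the acting group and \cite{conrad-gabber-prasad:prg}*{Proposition A.8.10(2)}, then the equivalence of conditions (\ref{case:ka-quass-smoothable}) and (\ref{case:ka-quass-quass}) in Theorem \ref{thm:ka-quass}(\ref{subthm:ka-quass-quass}). The only bookkeeping you can drop is the passage to a maximal smooth subgroup, since by the paper's conventions \(\sgen\gamma\) already denotes the Zariski-closed algebraic subgroup generated by \(\gamma\), which for a semisimple automorphism is of multiplicative type and therefore already smooth.
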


\begin{proof}
We have by \cite{steinberg:endomorphisms}*{Theorem 7.5} that
\(\gamma_\ka\) is a quasisemisimple automorphism of \(\Gt_\ka\).
Since \(\Gt^\gamma\) is smooth, by
\cite{conrad-gabber-prasad:prg}*{Proposition A.8.10(2)},
the result follows from
Theorem \ref{thm:ka-quass}(\ref{subthm:ka-quass-quass}).
\end{proof}

{\newcommand\red{\textsup{red}}
Theorem \ref{thm:quass} is nearly subsumed by
Theorem \ref{thm:ka-quass}, except that
the latter has nothing to say about
spherical buildings.
Conjecture \ref{conj:ka-quass-spherical-bldg} describes
an analogue of
Theorem \ref{thm:quass}(\ref{subthm:quass-spherical-bldg}) in
the setting of
Theorem \ref{thm:ka-quass}.

It is not hard to prove
the existence of the set \(\SS(G)\red\) and
the map \(i\) as in Conjecture \ref{conj:ka-quass-spherical-bldg},
but we do not do it here.
Once existence is proven,
uniqueness is obvious, and it is clear that
\(i(\SS(G)\red)\) lies in
\(\SS(\Gt) \cap \SS(\Gt_\ks)^{\Gamma(\ks)}\).
Determining whether \(i\) is surjective will be
the subject of future work.

\begin{conj}
\label{conj:ka-quass-spherical-bldg}
Suppose that \((\Gt_\ka, \Gamma_\ka)\) is quasisemisimple.
Write \(G\red\) for the maximal
pseudo-reductive quotient of \(G\), which is
reductive by
Theorem \ref{thm:ka-quass}(\ref{subthm:ka-quass-how-reductive}).
There are a unique
subset \(\SS(G)\red\) of \(\SS(G\red)\) and
map \map i{\SS(G)\red}{\SS(\Gt)} with
the following properties.
For every (split) torus \(S\) in \(G\red\),
the subset
\(\SS(S) \cap \SS(G)\red\) of
\(\SS(G\red)\) contains precisely the rays through
elements \(\lambda \in \bX_*(S) \otimes_\Z \R \setminus \sset0\) such that
\(P_G(\lambda)\) contains the unipotent radical of \(G\); and
the diagram
\[\xymatrix{
\SS(S) \ar[r] & \SS(\Gt) \\
\SS(S) \cap \SS(G)\red \ar@{^(->}[u]\ar[r] & \SS(G)\red \ar[u]_i
}\]
commutes.
Then \(i\) is a bijection from
\(\SS(G)\red\) onto \(\SS(\Gt) \cap \SS(\Gt_\ks)^{\Gamma(\ks)}\).
\end{conj}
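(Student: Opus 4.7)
The plan is to decompose the statement into three parts: construction of the subset $\SS(G)\red$ and the map $i$; verification that $i$ is an injection into $\SS(\Gt) \cap \SS(\Gt_\ks)^{\Gamma(\ks)}$; and surjectivity of $i$ onto that target. Writing $R\textsub u(G)$ for the (split) unipotent radical of $G$ and $\pi\colon G\onarrow G\red$ for the quotient, I would construct $\SS(G)\red$ apartment-by-apartment: for a split torus $\bar S$ in $G\red$ lifting to a split torus $S$ in $G$, which exists because split unipotent extensions of tori split as semidirect products, the subset $\SS(S) \cap \SS(G)\red$ is declared to be the rays through $\lambda \in V(S)$ with $P_G(\lambda)$ containing $R\textsub u(G)$. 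Independence of the lift follows from $R\textsub u(G)$-conjugacy of lifts, and consistency under inclusion of apartments is inherited from $\SS(G\red)$. The map $i$ is then defined on each such apartment by the composition $V(S) = V(\bar S) \maparrow V(\Tt)$ obtained from any maximal torus $\Tt$ of $\Gt$ containing $S$, and the standard gluing argument of \S\ref{subsec:spherical} ensures that this assembles into a well-defined $i\colon \SS(G)\red \maparrow \SS(\Gt)$.

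For injectivity and the image condition, I would combine Theorem \ref{thm:quass}(\ref{subthm:quass-spherical-bldg}) applied over $\ka$ with Galois descent. Since $(\Gt_\ka, \Gamma_\ka)$ is quasisemisimple and $G_\ka\red$ agrees with the analogous fixed-point group there, Theorem \ref{thm:quass}(\ref{subthm:quass-spherical-bldg}) identifies $\SS(G_\ka\red)$ with $\SS(\Gt_\ka)^{\Gamma(\ka)}$. The composition $\SS(G)\red \inarrow \SS(G\red) \inarrow \SS(G_\ka\red) \inarrow \SS(\Gt_\ka)$, the last two arrows being injections by Lemma \ref{lem:spherical-descent} and the discussion preceding it, agrees with $i$ followed by $\SS(\Gt) \inarrow \SS(\Gt_\ka)$, so $i$ is injective and its image lies in $\SS(\Gt) \cap \SS(\Gt_\ka)^{\Gamma(\ka)}$. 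A further application of Lemma \ref{lem:spherical-descent}, together with the observation that a point of $\SS(\Gt_\ks)$ fixed by $\Gamma(\ks)$ is fixed by $\Gamma_\ks$ (because $\Gamma(\ks)$ is Zariski dense in $\Gamma_\ks$) and hence by all of $\Gamma_\ka$ after base change, pins this image inside $\SS(\Gt) \cap \SS(\Gt_\ks)^{\Gamma(\ks)}$.

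Surjectivity is the step I expect to be the main obstacle, and the author indeed flags it as the subject of future work. Given $b \in \SS(\Gt) \cap \SS(\Gt_\ks)^{\Gamma(\ks)}$, the parabolic $P_\Gt(b)$ is $\Gamma(\ks)$-stable, hence $\Gamma$-stable by Zariski density and smoothness of $\Gamma$. Base-changing to $\ka$ and invoking Theorem \ref{thm:quass}(\ref{subthm:quass-spherical-bldg}) for $(\Gt_\ka, \Gamma_\ka)$ produces a unique $b_\ka\red \in \SS(G_\ka\red)$ mapping to $b_\ka$; the plan would be to descend $b_\ka\red$ to an element of $\SS(G\red)$ via Lemma \ref{lem:spherical-descent} by showing $\Gal(k)$-invariance, and then to verify that the descended point lies in the distinguished subset $\SS(G)\red$ rather than merely in $\SS(G\red)$. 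Galois-invariance of $b_\ka\red$ is plausible from its uniqueness, since $P_\Gt(b)$ is defined over $k$ and one should be able to characterize $b_\ka\red$ by the parabolic of $G_\ka\red$ obtained as the image of $P_\Gt(b)_\ka \cap G_\ka$ under the (split-unipotent) quotient map $G_\ka \onarrow G_\ka\red$.

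The genuine difficulty lies in the last verification: after descending one must identify the associated parabolic of $G\red$ with a parabolic of $G\red$ obtained from a cocharacter $\lambda$ of some split torus in $G\red$ lifting to $G$ and satisfying $P_G(\lambda) \supseteq R\textsub u(G)$. Controlling this containment requires analyzing how $\Gamma$-stable parabolics of $\Gt$ and their fixed-point parabolics of $G$ interact with the split unipotent radical $R\textsub u(G)$ across the descent from $\ka$ to $k$. This seems to require either a Levi-decomposition result for $\Gamma$-stable parabolics over $k$ in the non-quasisemisimple-over-$k$ case (generalizing Proposition \ref{prop:parabolic-Borus}(\ref{subprop:quass-fixed-Levi})), or a direct cohomological argument exploiting the split unipotent radical; I do not see how to produce either with only the tools developed so far in the paper.
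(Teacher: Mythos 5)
The statement you are addressing is a Conjecture, and the paper gives no proof of it. Immediately before stating it, the authors say explicitly that it is not hard to prove the existence of the set \(\SS(G)\red\) and the map \(i\) (though they do not do so), that uniqueness and the containment \(i(\SS(G)\red) \subseteq \SS(\Gt) \cap \SS(\Gt_\ks)^{\Gamma(\ks)}\) are then clear, and that surjectivity will be the subject of future work. You have correctly diagnosed this: your apartment-by-apartment construction of \(\SS(G)\red\) and \(i\), your uniqueness observation, and your injectivity argument fill in the part that the authors declare ``not hard,'' and your honest acknowledgment that surjectivity is the real difficulty matches the paper's own assessment. Since there is no proof in the paper, there is nothing to compare against beyond the authors' remarks, and your proposal is consistent with them.

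One substantive point deserves correction, however. You assert that ``\(G_\ka\red\) agrees with the analogous fixed-point group there,'' i.e., that \((G\red)_\ka\) can be identified with \(\fix\Gt_\ka^{\Gamma_\ka}\), and on that basis that Theorem \ref{thm:quass}(\ref{subthm:quass-spherical-bldg}) identifies \(\SS(G_\ka\red)\) with \(\SS(\Gt_\ka)^{\Gamma(\ka)}\). This is false in general when \((\Gt^\Gamma)\conn\) is not smoothable: \(G_\ka = (\fix\Gt^\Gamma)_\ka\) can be a proper (even trivial) subgroup of \(\fix\Gt_\ka^{\Gamma_\ka}\). The paper's own Example \ref{ex:sl-accidentally-reductive} exhibits this --- there \(G\) is trivial and hence \(\SS(G)\red\) is empty, while \(\fix\Gt_\ka^{\Gamma_\ka}\) is a nontrivial symplectic group. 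This does not damage your injectivity argument, since you only need the composite \(\SS(G)\red \to \SS(G\red) \to \SS((G\red)_\ka) \to \SS(\Gt_\ka)\), and every arrow in that chain exists; but it does create an additional obstruction to your sketched surjectivity argument beyond the Levi-decomposition issue you flag. Your plan produces a point of \(\SS(\fix\Gt_\ka^{\Gamma_\ka})\), and even if you could descend it \(\Gal(k)\)-equivariantly, you would still have to land inside the potentially smaller space \(\SS((G\red)_\ka)\). Establishing that \(\SS(\Gt) \cap \SS(\Gt_\ks)^{\Gamma(\ks)}\) is empty whenever \(\SS(G)\red\) is --- or, more generally, controlling the gap between \(G_\ka\) and \(\fix\Gt_\ka^{\Gamma_\ka}\) at the level of parabolics defined over \(k\) --- is part of what makes the conjecture nontrivial.
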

}

Corollary \ref{cor:quass-imperfect}
is quite close to
\cite{lemaire:twisted-characters}*{Th\'eor\`eme 4.6}.
A special case of this latter result is proven by a different method in
\cite{adler-lansky:lifting1}*{Lemma A.1}.
Note that it provides a practical way to verify
Theorem \ref{thm:ka-quass}(\ref{subthm:ka-quass-quass})(\ref{case:ka-quass-quass}), hence
the equivalent conditions of
Theorem \ref{thm:ka-quass}(\ref{subthm:ka-quass-quass}).

\begin{cor}
\label{cor:quass-imperfect}
If \(G\) contains a split torus \(S\) such that
\(S_\ka\) is a maximal torus in
\(\fix\Gt_\ka^{\Gamma_\ka}\), then
\((\Gt, \Gamma)\) is quasisemisimple.
\end{cor}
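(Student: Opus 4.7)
My strategy is to apply Lemma \ref{lem:quass-from-Levi}: it suffices to verify that $C_\Gt(S)$ is a torus, since the lemma then directly furnishes a $\Gamma$-stable Borel subgroup of $\Gt$ over $k$ containing $C_\Gt(S)$, giving the desired quasisemisimplicity of $(\Gt, \Gamma)$. Since $C_\Gt(S)$ is a smooth, connected reductive subgroup of $\Gt$, it is a torus if and only if its base change to $\ka$ is, so I will focus on showing that $M := C_{\Gt_\ka}(S_\ka)$ is a torus.

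The group $M$ is a connected reductive subgroup of $\Gt_\ka$ containing $S_\ka$ as a central torus, and it is $\Gamma_\ka$-stable because $\Gamma$ fixes $S$ pointwise. The smooth, connected subgroup $\fix M^{\Gamma_\ka}$ of $\fix\Gt_\ka^{\Gamma_\ka}$ contains $S_\ka$, so by the maximality hypothesis, $S_\ka$ is the unique maximal torus of $\fix M^{\Gamma_\ka}$. The crucial step will be to produce a $\Gamma_\ka$-stable Borel--torus pair in $\Gt_\ka$; granting this, Proposition \ref{prop:quass-rough}(\ref{subprop:quass-up}) identifies $M$ as the unique maximal torus of $\Gt_\ka$ containing $S_\ka$, whence $M$ is a torus. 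Following the construction in the proof of Lemma \ref{lem:quass-from-Levi}, I would select a $\Gamma_\ka$-fixed cocharacter $\delta_\ka$ of $S_\ka$ avoiding every root hyperplane of $\Gt_\ka$ that does not already vanish on $S_\ka$, so that $C_{\Gt_\ka}(\delta_\ka) = M$ and $P_{\Gt_\ka}(\delta_\ka)$ is a Borel. The main obstacle here is that the maximality of $S_\ka$ in $\fix\Gt_\ka^{\Gamma_\ka}$ must be leveraged to rule out a nontrivial unipotent radical in $\fix M^{\Gamma_\ka}$, since any such obstruction would prevent a generic cocharacter of $S_\ka$ from being compatible with the $\Gamma_\ka$-action on the root system of $M$.

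Once $C_\Gt(S)$ is known to be a torus, the split-ness of $S$ over $k$ is decisive in the descent: the isomorphism $\bX_*(S) \cong \bX_*(S_\ka)$ guarantees that the $\Gamma_\ka$-invariant cocharacter $\delta_\ka$ of $S_\ka$ descends to a $k$-rational cocharacter $\delta$ of $S$. Because $\Gamma$ fixes $S$ pointwise, it fixes $\delta$, so $P_\Gt(\delta)$ is a $\Gamma$-stable Borel subgroup of $\Gt$ defined over $k$, containing the $\Gamma$-stable maximal torus $C_\Gt(\delta) = C_\Gt(S)$, which completes the proof.
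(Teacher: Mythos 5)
Your overall plan—reduce to showing that \(C_\Gt(S)\) is a torus, check this after base change to \(\ka\), then invoke Lemma \ref{lem:quass-from-Levi}—is exactly the paper's. The gap is in the middle. You write that ``the crucial step will be to produce a \(\Gamma_\ka\)-stable Borel--torus pair in \(\Gt_\ka\)'' and then try to manufacture one from a generic cocharacter of \(S_\ka\). That cannot succeed on its own terms: to select \(\delta_\ka \in \bX_*(S_\ka)\) with \(C_{\Gt_\ka}(\delta_\ka) = M\) \emph{and} \(P_{\Gt_\ka}(\delta_\ka)\) a Borel subgroup, you would already need \(M\) to be a torus, which is precisely what you are trying to prove. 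You then flag the ``main obstacle'' of ruling out a unipotent radical in \(\fix M^{\Gamma_\ka}\) but do not resolve it—nor could you, since the corollary is simply false without a further hypothesis (take \(\Gt = \SL_2\) and \(\Gamma\) generated by a nontrivial unipotent inner automorphism; then \(\fix\Gt_\ka^{\Gamma_\ka}\) has trivial maximal torus and the hypothesis is vacuously satisfied with \(S\) trivial, yet the action is not quasisemisimple).

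What fills the gap is that this corollary sits in the ambit of Theorem \ref{thm:ka-quass}: the standing assumption there is that \((\Gt_\ka, \Gamma_\ka)\) is already quasisemisimple, so a \(\Gamma_\ka\)-stable Borel--torus pair in \(\Gt_\ka\) is given, not to be constructed. Once you use that, your ``granting this'' clause carries the full weight: Proposition \ref{prop:quass-rough}(\ref{subprop:quass-up}) applied to \((\Gt_\ka, \Gamma_\ka)\) and the maximal (split, since \(\ka\) is algebraically closed) torus \(S_\ka\) in \(\fix\Gt_\ka^{\Gamma_\ka}\) shows directly that \(C_{\Gt_\ka}(S_\ka)\) is a maximal torus, hence \(C_\Gt(S)\) is one, and Lemma \ref{lem:quass-from-Levi} finishes the proof. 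Your third paragraph, while correct in spirit, is also unnecessary: once \(C_\Gt(S)\) is known to be a torus over \(k\), the second sentence of Lemma \ref{lem:quass-from-Levi} already produces the rational \(\Gamma\)-stable Borel containing it, so there is no need to re-run the cocharacter-descent argument by hand.
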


\begin{proof}
Proposition \ref{prop:quass-rough}(\ref{subprop:quass-up})
gives that
\(C_\Gt(S)_\ka = C_{\Gt_\ka}(S_\ka)\) is
a maximal torus in \(\Gt_\ka\), so
\(C_\Gt(S)\) is a maximal torus in \(\Gt\).
The result follows from Lemma \ref{lem:quass-from-Levi}.
\end{proof}

\numberwithin{equation}{subsection}
\section{Proof of Theorem \ref{thm:loc-quass}}
\label{sec:thm:loc-quass}

In this section,
\(k\) is any field, and
\(\Gt\) is a connected, reductive \(k\)-group.
We let \(p\) be \(1\) or a prime number, and assume that
\(k\) has characteristic exponent \(p\) or \(1\).

We allow the possibility of characteristic exponent \(1\) to
handle valued fields of mixed characteristic \(p\) in
\cite{adler-lansky-spice:actions3}; but, for
our applications in this paper,
we are most interested in
the case where \(p\) is a prime, and
\(k\) has characteristic exponent \(p\).

Beginning with \S\ref{subsec:thm:loc-quass}, we will
impose the full hypotheses of
Theorem \ref{thm:loc-quass}, and
assume that
\(k\) has characteristic exponent \(p\); but
we do not do so yet.

\subsection{Unipotent, or topologically unipotent, automorphisms}
\label{subsec:quass-unip}

In this subsection, we are mostly interested in
order-\(p\) automorphisms that are
assumed to be quasisemisimple.
We begin, however, with a family of automorphisms for which
quasisemisimplicity is automatic.

Specifically,
Lemma \ref{lem:A_{2n}-quass} shows that
involutions on groups of type \(\mathsf A_{2n}\) in
characteristic \(2\),
which, for many purposes, are the \emph{hardest} case to handle
(see, for example, Proposition \ref{prop:no-mult-fixed}),
are actually \emph{easier} to handle in one respect:
that they are all quasisemisimple.
This should be surprising; see
Remark \ref{rem:auto-quass} on its rarity.

\begin{lem}
\label{lem:A_{2n}-quass}
Suppose that \(k\) is algebraically closed.
If
\(\Gt\) is an almost-simple group of type \(\mathsf A_{2n}\)
for some positive integer \(n\),
and
\(\gamma\) is an outer involution of \(\Gt\), then
\(\gamma\) is quasisemisimple.
\end{lem}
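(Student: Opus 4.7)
The plan is to separate the argument by characteristic. When $k$ has characteristic different from~$2$, the involution $\gamma$ has order prime to the characteristic, and so is semisimple in Steinberg's sense; Corollary~\ref{cor:ss-is-quass} then yields quasisemisimplicity immediately. The substantial content lies in characteristic~$2$, which I would treat directly.

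In characteristic~$2$, I would first apply Lemma~\ref{lem:quass-by-iso} to reduce to $\Gt = \SL_{2n+1}$ (lifting $\gamma$ through the central isogeny from the simply connected cover), and then represent $\gamma$ explicitly. Every outer involution of $\SL_{2n+1}$ has the form $\gamma(g) = J g^{-\mathsf T} J^{-1}$ for some $J \in \GL_{2n+1}(k)$, and in characteristic~$2$ the condition $\gamma^2 = 1$ forces $J = J^\mathsf{T}$. Thus $J$ is the Gram matrix of a nondegenerate symmetric bilinear form $b$ on $X = k^{2n+1}$. The goal becomes to find a basis $e_0, \dotsc, e_{2n}$ of $X$ in which $J$ is anti-diagonal, meaning $b(e_i, e_j) \ne 0$ if and only if $i + j = 2n$: in such a basis, conjugation by $J$ exchanges lower- and upper-triangular matrices, so $\gamma$ visibly preserves the standard upper-triangular Borel and its diagonal maximal torus.

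To construct this basis, I would employ a Witt-style decomposition. The essential observation is that in characteristic~$2$ over algebraically closed~$k$, the map $q_b(x) = b(x, x)$ is $k$-linear on $X$: it is additive, and the relation $q_b(cx) = c^2 q_b(x)$ combined with $k = k^2$ forces $k$-linearity. Moreover $q_b$ is not identically zero, since otherwise $b$ would be alternating, contradicting nondegeneracy on an odd-dimensional space. Hence $W \ldef \ker q_b$ is a hyperplane of dimension~$2n$, and $b|_W$ is alternating. A parity argument then shows $b|_W$ is nondegenerate: any nontrivial radical would give, on passing to the quotient, a nondegenerate alternating form on an odd-dimensional space, which is impossible. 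Letting $v$ span the one-dimensional space $W^\perp \subset X$, we conclude $v \notin W$ (else $v$ would lie in the radical of $b|_W$), so $q_b(v) \ne 0$, and we may rescale so that $q_b(v) = 1$. Interleaving a symplectic basis of $W$ with $v$ placed in the central position then yields the desired anti-diagonal basis.

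The main obstacle is organising the characteristic-$2$ analysis: symmetric nondegenerate forms in characteristic~$2$ cannot be diagonalised, and one must instead track alternating behaviour via $q_b$. The $k$-linearity of $q_b$ over algebraically closed~$k$, together with the parity argument ruling out nondegenerate alternating forms in odd dimension, are the two key tools that make the Witt-style decomposition go through cleanly.
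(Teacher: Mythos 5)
Your proof is correct, but it takes a genuinely different route from the paper's. The paper handles characteristic $2$ by invoking \cite{steinberg:endomorphisms}*{Theorem 7.2} to produce a $\gamma$-stable Borel subgroup $\Bt$, choosing a reference quasisemisimple involution $\gamma_0$ preserving $(\Bt, \Tt)$ in the same outer class, and then showing $\gamma$ is $\Gt\adform(k)$-conjugate to $\gamma_0$ by an inductive deformation through the height filtration on the unipotent radical, using the Chevalley commutation relations and the square-root construction from Proposition~\ref{prop:square-pinned}. Your proof instead reduces to $\SL_{2n+1}$ (valid via the unique lift to the simply connected cover and Lemma~\ref{lem:quass-by-iso}, since the outer action inverts $\mu_{2n+1}$ and hence preserves every central subgroup), expresses $\gamma$ via a symmetric Gram matrix $J$, and runs a characteristic-$2$ Witt normal form argument to produce a basis in which $J$ is anti-diagonal, so that the standard Borel--torus pair is visibly preserved. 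The linear-algebra scaffolding you set up ($b$, $q_b$, $\ker q_b$, nondegeneracy of $b|_W$) closely parallels what the paper develops in \S\ref{sec:sl-outer} (Notation~\ref{notn:sl-bilinear}, Lemma~\ref{lem:b-symmetry}, Lemma~\ref{lem:q-linear}), but the paper deploys that machinery for Proposition~\ref{prop:sl-even}, not for this lemma. Your route is more elementary and self-contained but is special to type $\mathsf A$, whereas the paper's deformation argument is the sort of technique that adapts to other groups (and indeed the paper's Remark~\ref{rem:auto-quass} explains precisely why the conclusion still fails outside type $\mathsf A_{2n}$). Two small imprecisions worth tightening: $q_b$ is additive and Frobenius-twisted, so $\ker q_b$ is a $k$-subspace, but $q_b$ is not literally $k$-linear; and in the parity argument you should note explicitly that the radical of $b|_W$ lies in $W^\perp$ and hence has dimension at most~$1$ (so the only nontrivial possibility is a $1$-dimensional radical, which the odd-dimension contradiction excludes).
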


\begin{proof}
By \cite{steinberg:endomorphisms}*{Theorem 7.5},
the result holds if
\(p\) is not \(2\) or
\(k\) has characteristic exponent \(1\).
Thus we may, and do, assume that
\(p\) equals \(2\) and
\(k\) has characteristic exponent \(p = 2\).
By \cite{steinberg:endomorphisms}*{Theorem 7.2}, there is
a Borel subgroup \(\Bt\) of \(\Gt\) that is
preserved by \(\gamma\).
Let \(\Tt\) be a maximal torus in \(\Bt\).
By, for example, \cite{springer:corvallis}*{Proposition 2.13},
there is a quasisemisimple involution \(\gamma_0\) of
\(\Gt\) that preserves \((\Bt, \Tt)\) and
has the same image in the outer-automorphism group as \(\gamma\).
We claim that \(\gamma\) is conjugate to \(\gamma_0\) in
\(\Aut(\Gt)\) (indeed, in \(\Gt\adform(k)\)).
Since this claim is unaffected if we replace \(\gamma\) by
a \(\Gt\adform(k)\)-conjugate, we do so freely.

Write \(\Bt\adform\) and \(\Tt\adform\) for
the images of \(\Bt\) and \(\Tt\) in \(\Gt\adform\).
Since \(\gamma\gamma_0\inv\) is inner and preserves \(\Bt\),
it belongs to \(\Bt\adform(k)\).
Write \(\bt_0 = \gamma\gamma_0\inv\).
Since \(\bt_0\gamma_0 = \gamma\) is an involution,
we have that
\(\bt_0\cdot\gamma_0(\bt_0)\) is trivial.
Let \(\Ut\) be the unipotent radical of \(\Bt\adform\) and
write \(\bt_0 = \tt_0\ut_0\), with
\(\tt_0 \in \Tt\adform(k)\) and \(\ut_0 \in \Ut(k)\).
By \cite{digne-michel:quass}*{Lemma 1.2(iii)},
we may write \(\tt_0\) as \(\tt_+\tt_1\gamma_0(\tt_1)\inv\), where
\(\tt_1\) belongs to \(\Tt\adform(k)\) and
\(\tt_+\) is a \(k\)-rational point of
the maximal subtorus of \(\Tt\adform\) on which
\(\gamma_0\) acts trivially.
Then
\(\tt_+^2 = \tt_0\cdot\gamma_0(\tt_0)\)
equals
\(\bt_0\gamma_0(\bt_0)\cdot \Int(\gamma_0(\bt_0))\inv \ut_0\inv\cdot \gamma_0(\ut_0)\inv
 = \ut_1\gamma_0(\ut_0)\inv\).
 Since \(\Int(\gamma_0(\bt_1))\inv\ut_0\inv\)
belongs to \(\Ut(k)\), \(\tt_+^2\) belongs to
\(\Tt(k) \cap \Ut(k)\), hence is trivial.
Since \(p\) equals \(2\), this implies that
\(\tt_+\) is trivial.
Then
\(\tt_1\inv\cdot\gamma\cdot\tt_1 =
\tt_1\inv\cdot\bt_0\cdot\gamma_0\cdot\tt_1\) equals
\(\ut_1\cdot\gamma_0\), where
\(\ut_1 \ldef \Int(\gamma_0(\tt_1))\inv\ut_0\in\Ut(k)\).
We may, and do, replace \(\gamma\) by
\(\tt_1\inv\cdot\gamma\cdot\tt_1\).


For each positive integer \(h\),
write \(\Ut_{\ge h}\) for the subgroup of \(\Ut\)
generated by root subgroups corresponding to
roots in \(\Phi(\Gt, \Tt)\) of height at least \(h\).
Thus, each \(\Ut_{\ge h}\) is preserved by \(\gamma_0\).

We now proceed by induction on \(h\).
Fix a positive integer \(h\), and
suppose that we have arranged, after
replacing \(\gamma\) by
a \(\Gt\adform(k)\)-conjugate if needed, that
there is an element \(\ut_h\) of \(\Ut_{\ge h}(k)\)
such that
\(\gamma\) equals \(\ut_h\cdot\gamma_0\).
(The above element \(u_1\) satisfies this condition when \(h=1\).)
We prove the existence of an analogous element \(\ut_{h+1}\in\Ut_{\ge h+1}(k)\).
Note that, since
\(\gamma^2\) is trivial, so is
\(\ut_h\cdot\gamma_0(\ut_h)\);
i.e., \(\ut_h\) is inverted by \(\gamma_0\).

\newcommand\uu{{\mathfrak u}}
\newcommand\uut{{\widetilde\uu}}
We now make a number of computations backed up by
the Chevalley commutation relations
\cite{adler:thesis}*{Proposition 1.2.3}.
We have that \(\Ut_{\ge h + 1}\) is normal in \(\Ut_{\ge h}\).
The unique \(\Tt\)-equivariant linear structures on
the various root groups for \(\Tt\) in \(\Gt\)
\cite{conrad-gabber-prasad:prg}*{Lemma 2.3.8}
piece together to a
\(\Tt\)-equivariant linear structure on
\(\Ut_{\ge h}/\Ut_{\ge h + 1}\).
Let us denote this structure by \(\exp_h\).
Uniqueness of the structures on the individual root groups
implies that \(\exp_h\) is \(\gamma_0\)-equivariant.

There are linearly disjoint,
sub-\(\Tt\)-representations \(\uut^\pm_h\) of
\(\Lie(\Ut_{\ge h})\) such that
\(\gamma_0(\uut^+_h)\) equals
\(\uut^-_h\) and
\(\Lie(\Ut_{\ge h})/
	(\uut^+_h + \uut^-_h + \Lie(\Ut_{\ge h + 1}))\)
is trivial or one dimensional, according as
\(h\) is even or odd.
The subspaces \(\uut^\pm_h\) are not uniquely determined, but
we only need their existence.
If \(h\) is even, then there is
a unique root \(\betat_h\) of height \(h\) that is
pre-divisible, in the sense of
Remark \ref{rem:red-to-nred-facts}, and it is
the weight of \(\Tt\) on
\(\Lie(\Ut_{\ge h})/
	(\uut^+_h + \uut^-_h + \Lie(\Ut_{\ge h + 1}))\).
(Specifically, in the Bourbaki numbering
\cite{bourbaki:lie-gp+lie-alg_4-6}*{Chapter VI, Plate I},
except that we write \(\alphat\) in place of just \(\alpha\),
we have that
\(\betat_h\) equals
\(\alphat_{n - h/2 + 1} + \dotsb + \alphat_{n + h/2}\).)
For convenience, we put \(\betat_h = 0\) if \(h\) is odd.

Choose \(\Xt^\pm_h \in \uut^\pm_h\) such that
\(\exp_h\inv(\ut_h) - (\Xt^+_h + \Xt^-_h)\)
belongs to the \(\betat_h\)-weight space for \(\Tt\) in
\(\Lie(\Ut_{\ge h}/\Ut_{\ge h + 1})\)
(hence is trivial if \(h\) is odd).
Since \(\ut_h\) is inverted by \(\gamma_0\),
\(\betat_h\) is fixed by \(\gamma_0\), and
\(\exp_h\) is \(\gamma_0\)-equivariant, we have that
\(\Xt^-_h\) equals \(-\gamma_0(\Xt^+_h)\).
(Of course the minus sign has no effect, but
we include it to be suggestive.)
Write \(\vt^+_h\) for any element of \(\Ut_{\ge h}(k)\)
such that
\(\Xt^+_h\) belongs to the coset \(\exp_h\inv(\vt^+_h)\).
Then
\(\exp_h\inv\bigl((\vt^+_h)\inv\cdot\ut_h\cdot\gamma_0(\vt^+_h))\bigr)\)
belongs to the \(\betat_h\)-weight space in
\(\Lie(\Ut_{\ge h}/\Ut_{\ge h + 1})\).
Thus we may, and do, assume, upon
replacing \(\gamma\) by \((\vt^+_h)\inv\gamma\vt^+_h\), that
\(\exp_h\inv(\ut_h)\) belongs to
the \(\betat_h\)-weight space.

In particular, if \(h\) is odd, then
\(\exp_h\inv(\ut_h)\) is trivial, so
we may put \(\ut_{h + 1} = \ut_h\).
Thus we may, and do, assume that \(h\) is even.
Let \(\Xt^0_h\) be the vector in
\(\exp_h\inv(\ut_h)\) that belongs to
the \(\betat_h\)-weight space.
Note that, since \(\ut_h\) is inverted by \(\gamma_0\),
it follows that
\(\Xt^0_h\) is negated, and hence fixed, by \(\gamma_0\).
(Actually this does not need any special condition on \(\ut_h\),
since it is not hard to show that \(\gamma_0\) acts trivially on
\(\Lie(\Gt)_{\betat_h}\).)
By Remark \ref{rem:red-to-nred-facts}(\ref{subrem:pre-mult-div}),
the restriction of \(\betat_h\) to \(\fix\Tt^\gamma\) is
divisible, hence may be written as
\(2a\) for some root \(a \in \Phi(\Gt, \fix\Tt^\Gamma)\).
By Proposition \ref{prop:square-pinned}(\ref{subprop:square-pinned})
(applied to \(C_\Gt(\ker(a))\conn\)),
there is a unique element
\(\Xt^0_{h/2}\) in \(\Lie(\Gt)_a^{\gamma_0}\) such that
\((\Xt^0_{h/2})^{[2]}\) equals \(\Xt^0_h\).
Concretely, by
Proposition \ref{prop:square-pinned}(\ref{subprop:root-commute}),
if we let \(\smashsset{\alphat_{h/2}, \alphat_{h/2}'}\)
be an exceptional pair for
\((\Psi(\Gt, \Tt), \sgen{\gamma_0}(k))\) extending \(a\),
then \(\Xt^0_{h/2}\) equals \(\Xt_{h/2} + \Xt_{h/2}'\)
for some
\(\Xt_{h/2} \in \Lie(\Gt)_{\alphat_{h/2}}\) and
\(\Xt_{h/2}' \in \Lie(\Gt)_{\alphat_{h/2}'}\), and
\(\Xt^0_h\) equals \([\Xt_{h/2}, \Xt_{h/2}']\).
Since \(\Xt^0_h\) is fixed by \(\gamma_0\), so is
\(\Xt^0_{h/2}\), so
\(\Xt_{h/2}'\) equals \(\gamma_0(\Xt_{h/2})\).
Now write \(\vt_{h/2}\) for the element of the coset
\(\exp_{h/2}(\Xt_{h/2})\) that lies in the
\(\alphat_{h/2}\)-root group, and
put \(\vt^0_{h/2} = \vt_{h/2}\gamma_0(\vt_{h/2})\).
Then \(\ut_h\) and
\([\vt_{h/2}, \gamma_0(\vt_{h/2})]\) have
the same image in \(\Ut_{\ge h}/\Ut_{\ge h + 1}\),
so
\[(\vt^0_{h/2})\inv\cdot\gamma\cdot\vt^0_{h/2} =
\gamma_0(\vt_{h/2})\inv\vt_{h/2}\inv
	\cdot\ut_h\cdot\gamma_0\cdot
	\vt_{h/2}\gamma_0(\vt_{h/2})\]
equals
\(\ut_{h + 1}\gamma_0\), where
\(\ut_{h + 1} \ldef
\gamma_0(\vt_{h/2})\inv\vt_{h/2}\inv\cdot\ut_h\cdot
	\gamma_0(\vt_{h/2})\vt_{h/2}\)
belongs to \(\Ut_{\ge h + 1}(k)\).

Since \(\Ut_{\ge h}\) is trivial for all
sufficiently large positive integers \(n\), eventually
our process of successive replacements will have replaced
\(\gamma\) by \(\gamma_0\), which is quasisemisimple
by assumption.
\end{proof}

\begin{rem}
\label{rem:auto-quass}
The ``automatic quasisemisimplicity'' property of
Lemma \ref{lem:A_{2n}-quass} is
specific to \(\mathsf A_{2n}\),
in the following sense.
For every other connected Dynkin diagram that admits
an automorphism of order \(p\),
there is at least one node fixed by all automorphisms
of the Dynkin diagram;
and, if \(\Gt\) is a quasisplit
reductive group of that type over \(k\),
then there is a quasisemisimple automorphism
\(\gamma_0\) of \(\Gt\) that
acts trivially on the corresponding root subgroup.
If \(u\) is an element of that root subgroup,
then \(\gamma_0\Int(u)\) is not quasisemisimple.
(This can be shown by combining
Proposition \ref{prop:parabolic-Borus}%
	(\ref{subprop:quass-fixed-Levi})
with a computation as in the proof of
Lemma \ref{lem:A_{2n}-quass}.)
See \cite{chernousov-elduque-knus-tignol:d4}*{\S11} for
the case of \(\mathsf D_4\).
\end{rem}

For the remainder of
\S\ref{subsec:quass-unip},
let \(\gamma\) be a
quasisemisimple, outer automorphism of \(\Gt\) such that
\(\gamma^p\) is trivial.
(Remember that we call the trivial automorphism outer, so
our results here include the possibility that
\(\gamma\) itself is trivial, though
of course they have little content in that case.)
Remember that
\(k\) has characteristic exponent \(1\) or \(p\).
If \(k\) has characteristic exponent \(p\), then
`outer' is redundant; in this case,
every quasisemisimple automorphism \(\gamma\) of \(\Gt\)
satisfying \(\gamma^p = 1\) is already outer.

Recall that notation like
\(\sgen\gamma\) stands for
the \emph{algebraic} group, not the \emph{abstract} group,
generated by \(\gamma\).
Since \(\gamma\) has finite order, there is little distinction;
we have that \(\sgen\gamma\) is the constant group such that
\(\sgen\gamma(k)\) is the abstract group generated by \(\gamma\).
However, for an element not known to be of finite order,
such as the element \(s\) of Lemma \ref{lem:ss-fixed},
it is possible that \(\sgen s(k)\) is strictly bigger than
the abstract group generated by \(s\).

Lemma \ref{lem:ss-fixed} does not assume that
\(k\) has characteristic exponent \(p\).
If \(k\) \emph{does} have characteristic exponent \(p\), then
the semisimplicity of \(s\) already implies that
the hypothesis of Lemma \ref{lem:ss-fixed} is satisfied.
If \(k\) is a valued field of mixed characteristic \(p\) and
\(s\) has finite order
(which already implies that it is semisimple), then
\(\pi_0(\sgen s)(k)\) is the abstract group generated by \(s\), so
the hypothesis of Lemma \ref{lem:ss-fixed} is
equivalent to \(s\) being topologically semisimple, in
the sense that its order is relatively prime to
the residue characteristic \(p\).

\begin{lem}
\label{lem:ss-fixed}
Suppose that \(s\) is a
semisimple, \(\gamma\)-fixed element of \(\Gt(k)\) such that
the order of \(\pi_0(\sgen s)(k)\) is relatively prime to \(p\).
Put \(Z = ((Z(\Gt)\conn)^\gamma)\smooth\).
Then there is a maximal torus \(T'\) in
\(\fix(\Gt/Z)^\gamma\) such that
the image of \(s\) belongs to \(T'(k)\).
\end{lem}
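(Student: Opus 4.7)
The plan is to pass to $\Gt' := \Gt/Z$, form the reductive centralizer $H$ of the image $\bar s$ of $s$, establish that $\gamma$ acts quasisemisimply on $H$, and extract $T'$ from a $\gamma$-stable maximal torus of $H$ containing $\bar s$.

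First I would apply Lemma \ref{lem:quass-by-iso} to the central, $\gamma$-stable subgroup $Z \subseteq \Gt$ to see that $\gamma$ remains a quasisemisimple outer automorphism of $\Gt'$; the image $\bar s \in \Gt'(k)$ is still semisimple and $\gamma$-fixed, and $\pi_0(\sgen{\bar s})(k)$ (a quotient of $\pi_0(\sgen s)(k)$) still has order prime to $p$.  Next I would form $H := C_{\Gt'}(\sgen{\bar s})\conn$: since $\bar s$ is semisimple, $\sgen{\bar s}\conn$ is a torus, so together with the component-group hypothesis this shows $\sgen{\bar s}$ is linearly reductive, and hence by \cite{conrad-gabber-prasad:prg}*{Propositions A.8.10(2) and A.8.12} the group $H$ is smooth, connected, and reductive.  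The element $\bar s$ is central in $H$, and because $\bar s$ lies in some maximal torus of $\Gt'$ (by semisimplicity), which is then automatically contained in $H$, the maximal tori of $H$ coincide with those of $\Gt'$ containing $\bar s$.

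The crucial step, which I expect to be the main obstacle, is showing that $\gamma$ (which preserves $H$ since it fixes $\bar s$) acts quasisemisimply on $H$.  Over $\ka$ this is a classical result of Steinberg \cite{steinberg:endomorphisms}*{\S9}: a quasisemisimple automorphism of a connected reductive group acts quasisemisimply on the identity component of the centralizer of a fixed semisimple element.  To descend this to $k$, I would invoke the equivalence (\ref{case:ka-quass-quass}) $\iff$ (\ref{case:ka-quass-torus}) of Theorem \ref{thm:ka-quass}(\ref{subthm:ka-quass-quass}), applied to $(H, \sgen\gamma)$: the $k$-rational split torus $\sgen{\bar s}\smooth\conn$ lies in $\fix H^\gamma$, and enlarging it to a maximal split torus of $\fix H^\gamma$ furnishes a witness that becomes a maximal torus in $\fix H_\ka^{\gamma_\ka}$ upon base change, using that the quasisemisimple description in Proposition \ref{prop:quass-rough}(\ref{subprop:quass-down}) is available over $\ka$.

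Granting this, Proposition \ref{prop:quass-rough}(\ref{subprop:quass-up}) applied to $(H, \sgen\gamma)$ produces a $\gamma$-stable maximal torus $\tilde T$ of $H$, contained in a $\gamma$-stable Borel subgroup of $H$, with $\bar s \in \tilde T(k)$ because $\bar s$ is central in $H$.  Since $\tilde T$ is also a maximal torus of $\Gt'$, Lemma \ref{lem:quass-from-Levi} together with Proposition \ref{prop:quass-rough}(\ref{subprop:quass-down}) then shows that $T' := \fix\tilde T^\gamma$ is a maximal torus of $\fix(\Gt')^\gamma$.  Finally, that the image of $s$ lies in $T'(k) = \fix\tilde T^\gamma(k)$, and not merely in $\tilde T^\gamma(k)$, follows from the smoothness and connectedness of $\sgen{\bar s}$ (guaranteed by the semisimplicity of $\bar s$ and the order-prime-to-$p$ hypothesis on $\pi_0(\sgen{\bar s})(k)$), which yields the containment $\sgen{\bar s} \subseteq (\tilde T^\gamma)\smooth\conn = T'$.
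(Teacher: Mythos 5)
Your plan (pass to $\Gt' = \Gt/Z$, form $H = C_{\Gt'}(\bar s)\conn$, extract a $\gamma$-stable torus) is a reasonable attempt to avoid the cohomological argument the paper actually uses, but it has two genuine gaps.

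The decisive error is the final step.  You claim that the image of $s$ lies in $T'(k) = \fix\tilde T^\gamma(k)$ because ``$\sgen{\bar s}$ is smooth and connected, guaranteed by \ldots the order-prime-to-$p$ hypothesis on $\pi_0(\sgen{\bar s})(k)$.''  This misreads the hypothesis: ``order of $\pi_0(\sgen s)(k)$ prime to $p$'' does not mean that $\pi_0(\sgen s)(k)$ is trivial.  For example, if $p = 5$ and $\bar s$ has order $2$, then $\sgen{\bar s}$ is a constant cyclic group of order $2$, étale but certainly not connected; the hypothesis is satisfied, yet $\sgen{\bar s} \not\subseteq (\tilde T^\gamma)\smooth\conn$ for trivial dimension reasons.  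The true obstruction is the component group $\pi_0(\tilde T^\gamma)$, and the real reason $\bar s$ lands in the identity component is that this component group is a $p$-group (because $\gamma^p = 1$ makes the torsion of the coinvariant lattice $\bX^*(\tilde T)_\gamma$ a $p$-group, via the norm map $1 + \gamma + \cdots + \gamma^{p-1}$), while the image of $\bar s$ there has order prime to $p$ by hypothesis.  The paper achieves precisely this control through a different mechanism: it shows $H^1(\sgen\gamma(k), \Zt(k))$ is annihilated by $p$, so that $s^p$ lifts to $Z(k) \cdot \fix\Gt^\gamma(k)$ (using Steinberg's connectedness theorem for $(\Gt\scform^\gamma)\smooth$), and then deduces $\sgen{s^p}(k) = \sgen{s}(k)$ from the prime-to-$p$ hypothesis.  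Your proof, as written, does not perform any analogue of this control and simply asserts connectedness.

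The intermediate step also has a circularity problem.  To verify Theorem \ref{thm:ka-quass}(\ref{subthm:ka-quass-quass})(\ref{case:ka-quass-torus}) for $(H, \sgen\gamma)$, you propose taking a maximal split torus of $\fix H^\gamma$ and asserting it ``becomes a maximal torus in $\fix H_\ka^{\gamma_\ka}$ upon base change.''  But the equality $(\fix H^\gamma)_\ka = \fix H_\ka^{\gamma_\ka}$ is exactly the smoothability of $(H^\gamma)\conn$, which by that very theorem is \emph{equivalent} to the quasisemisimplicity you are trying to establish.  You cannot invoke Proposition \ref{prop:quass-rough}(\ref{subprop:quass-down}) for $(H, \sgen\gamma)$ over $k$ until quasisemisimplicity over $k$ is already known.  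The paper sidesteps this entirely: it directly shows $s \in Z \cdot \fix\Gt^\gamma$ by the cohomological argument, at which point \cite{borel:linear}*{Corollary 18.12} applied to the connected group $\fix(\Gt/Z)^\gamma$ immediately produces the required torus $T'$, without ever needing quasisemisimplicity of $\gamma$ on a centralizer.
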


\begin{proof}
Suppose first that \(k\) is algebraically closed.

Since passing to the maximal subgroup scheme does not
affect the group of \(k\)-points, we have that
\(Z(k) =
((Z(\Gt)\conn)^\gamma)\smooth(k)\)
equals
\((Z(\Gt)\conn)^\gamma(k) =
Z(\Gt)\conn(k)^\gamma =
(Z(\Gt)\conn)\smooth(k)\).
Recall that we do not always have that
the maximal smooth subgroup of
a connected subgroup is
connected;
but, over a perfect field like \(k\), we do have that
\((Z(\Gt)\conn)\smooth\) is
the maximal reduced subscheme of
the connected subscheme
\(Z(\Gt)\conn\), hence is
itself connected.
(Actually, we do not even need that \(k\) is perfect here,
since \(Z(\Gt)\) is of multiplicative type.)
Thus
\((Z(\Gt)\conn)\smooth\) is both smooth and connected, hence
contained in
\((Z(\Gt)\smooth)\conn\), which we have agreed to denote by
\(Z(\Gt)\smooth\conn\).
The reverse containment is automatic
(for any group scheme over any field), so we have equality.
Thus,
\(Z(k) = (Z(\Gt)\conn)\smooth(k)^\gamma\) equals
\(Z(\Gt)\smooth\conn(k)^\gamma\).

Because \(k\) is algebraically closed, the sequence
\[\xymatrix{
Z(\Gt)\smooth\conn(k) \times \Gt\scform(k) \ar[r] & \Gt(k) \ar[r] & 1
}\]
is exact.
Thus, arguing as in the proof of
\cite{steinberg:endomorphisms}*{Lemma 9.2}, we may apply
\cite{steinberg:endomorphisms}*{\S4.5} to get an exact sequence
\[\xymatrix{
Z(\Gt)\smooth\conn(k)^\gamma \times \Gt\scform(k)^\gamma \ar[r] & \Gt(k)^\gamma \ar[r] & H^1(\sgen\gamma(k), \Zt(k)),
}\]
where we have put
\(\Zt = \ker(\abmap{Z(\Gt)\smooth\conn \times \Gt\scform}\Gt)\).
It is a general fact
\cite{serre:galois}*{Ch.~I, Proposition 2.4.9}
that \(H^1(\sgen\gamma(k), \Zt(k))\) is
annihilated by the order of \(\sgen\gamma(k)\), hence,
in particular, by \(p\).
(In this case, we can see this fact concretely by observing that
\(H^1(\sgen\gamma(k), \Zt(k))\) may be realized as
a subgroup of the co-invariant quotient \(\Zt(k)_\gamma\) by
evaluating at \(\gamma\);
that \(1 + \dotsb + \gamma^{p - 1}\) equals
the \(p\)-power map on \(\Zt(k)_\gamma\); and that
it annihilates the subgroup \(H^1(\sgen\gamma(k), \Zt(k))\)
by the cocycle condition.)
Thus \(s^p\) lifts to
\[
Z(\Gt)\smooth\conn(k)^\gamma \times \Gt\scform(k)^\gamma =
(Z(\Gt)\smooth\conn)^\gamma(k) \times \Gt\scform^\gamma(k) =
Z(k) \times (\Gt\scform^\gamma)\smooth(k).
\]
Since \((\Gt\scform^\gamma)\smooth\) is connected
\cite{steinberg:endomorphisms}*{Theorem 8.2},
so is its image in \((\Gt^\gamma)\smooth\), so
we have that \(s^p\) lifts to
\(Z(k)\cdot\fix\Gt^\gamma(k)\).

Now \(\sgen s/\sgen{s^p}\) is \'etale,
so \(\sgen s\) and \(\sgen{s^p}\) have
the same identity component.
Therefore, we may regard \(\pi_0(\sgen{s^p})\) as
a subgroup of \(\pi_0(\sgen s)\).
We have that the index of \(\pi_0(\sgen{s^p})(k)\) in
\(\pi_0(\sgen s)(k)\) divides \(p\), hence
equals \(1\).
It follows that
\(\sgen{s^p}(k)\) equals \(\sgen s(k)\), and,
in particular, contains \(s\); so
\(s\) belongs to \(Z\cdot\fix\Gt^\gamma\); so
the image of \(s\) in \(\Gt/Z\) belongs to the image there of
\(\fix\Gt^\gamma\), hence to
\(\fix(\Gt/Z)^\gamma\).
By \cite{borel:linear}*{Corollary 18.12},
we have that \(s\) belongs to a maximal torus in
\(\fix(\Gt/Z)^\gamma\).

Now drop the assumption that \(k\) is algebraically closed.
Since groups of multiplicative type, such as
\((Z(\Gt)\conn)^\gamma\), are
smoothable, we have that
\(Z_\ka = (((Z(\Gt)\conn)^\gamma)\smooth)_\ka\) equals
\(((Z(\Gt_\ka)\conn)^{\gamma_\ka})\smooth\).
Since Lemma \ref{lem:quass-by-iso} gives that
\(\gamma\) acts quasisemisimply on \(\Gt/Z\),
it follows from Theorem \ref{thm:quass}(\ref{subthm:quass-smoothable}) that
\(((\Gt/Z)^\gamma)\conn\) is smoothable, so
Remark \ref{rem:conn-smooth} gives that
\((\fix(\Gt/Z)^\gamma)_\ka\) equals
\(\fix(\Gt_\ka/Z_\ka)^{\gamma_\ka}\).
Thus the special case that we have already proven shows that
the image of \(s_\ka\) in \((\Gt/Z)(\ka)\) belongs to
a maximal torus in \((\fix(\Gt/Z)^\gamma)_\ka\).
We note two consequences.
First, we see that \(s_\ka\) is a \(\ka\)-rational point of
some maximal torus in
\(C_{(\fix(\Gt/Z)^\gamma)_\ka}(s_\ka)\), hence,
by their conjugacy
\cite{conrad-gabber-prasad:prg}*{Theorem C.2.3}, of
all such maximal tori.
Second, we see that
\(C_{\fix(\Gt/Z)^\gamma}(s)_\ka =
C_{(\fix(\Gt/Z)^\gamma)_\ka}(s_\ka)\) has the same rank as
\((\fix(\Gt/Z)^\gamma)_\ka\),
hence that
\(C_{\fix(\Gt/Z)^\gamma}(s)\) has the same absolute rank as
\(\fix(\Gt/Z)^\gamma\).
That is,
\(C_{\fix(\Gt/Z)^\gamma}(s)\) contains a maximal torus \(T'\) in
\(\fix(\Gt/Z)^\gamma\).
Since \(T'_\ka\) is still maximal in
\(C_{\fix(\Gt/Z)^\gamma}(s)_\ka\), we have that
\(s_\ka\) belongs to \(T'(\ka)\), so
\(s\) belongs to \(T'(k)\).
\end{proof}

\begin{lem}
\label{lem:inductive-step-Z|Lie}
Suppose that
\(k\) has characteristic exponent \(p\) and
\(\Gt^\gamma\) is smooth.
Let
\(\Gamma'\) be a \(\gamma\)-stable subgroup of \(\uAut(\Gt)\) such that
\((\Gt, \Gamma')\) is quasisemisimple.
Fix \(s' \in Z(\fix\Gt^{\Gamma'})^\gamma(k)\) and put
\(\Ht = C_\Gt(s')\conn\), or
let \(\mathfrak s'\) be
a subspace of \(\Lie(Z(\fix\Gt^{\Gamma'})^\gamma\) and put
\(\Ht = C_\Gt(\mathfrak s')\conn\).
Then
\(\Ht\) is reductive,
\(\gamma\) gives a quasisemisimple automorphism of \(\Ht\), and
\(\Ht^\gamma\) is smooth.
\end{lem}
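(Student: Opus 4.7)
The plan is to verify the three conclusions---that $\Ht$ is reductive, that $\gamma$ acts quasisemisimply on $\Ht$, and that $\Ht^\gamma$ is smooth---handling the two cases of the lemma by parallel arguments.

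Reductivity of $\Ht$ is quick. By Theorem \ref{thm:quass}(\ref{subthm:quass-reductive}) applied to the quasisemisimple datum $(\Gt, \Gamma')$, the group $\fix\Gt^{\Gamma'}$ is reductive, so its center is of multiplicative type. In Case 1, $s'$ is therefore semisimple in $\Gt$ and $\Ht = C_\Gt(s')\conn$ is reductive. In Case 2, $\mathfrak s'$ is a commutative subalgebra of $\Lie(\Gt)$ all of whose elements are semisimple, and Corollary \ref{cor:toral-Lie-is-Lie-torus} (applied with $G = G'$) gives reductivity of $\Ht = C_\Gt(\mathfrak s')\conn$. In either case, because $\gamma$ fixes $s'$ (respectively $\mathfrak s'$), it preserves $C_\Gt(s')$ (respectively $C_\Gt(\mathfrak s')$) and hence $\Ht$.

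For smoothness of $\Ht^\gamma$ I would observe that $\Ht^\gamma = \Gt^\gamma \cap \Ht$ is open and closed in $\Gt^\gamma \cap C_\Gt(s') = C_{\Gt^\gamma}(s')$ (and similarly with $\mathfrak s'$ in Case 2). Since $s'$ is $\gamma$-fixed, conjugation by $\sgen{s'}$ preserves $\Gt^\gamma$; and as $\sgen{s'}$ is smooth of multiplicative type, it is linearly reductive by Remark \ref{rem:all-ss=>lr}, so \cite{conrad-gabber-prasad:prg}*{Proposition A.8.10(2)} yields smoothness of $C_{\Gt^\gamma}(s')$. In Case 2, the analogous input is Lemma \ref{lem:toral-Lie-is-Lie-torus} applied to the reductive group $\fix\Gt^\gamma$ (reductive because $(\Gt, \sgen\gamma)$ is quasisemisimple, by Theorem \ref{thm:quass}(\ref{subthm:quass-reductive})), which gives smoothness of $C_{\Gt^\gamma}(\mathfrak s')$. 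Either way $\Ht^\gamma$ is smooth.

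The substantive step is producing a $\gamma$-stable Borel--torus pair in $\Ht$. In Case 2, I would use the containments $\mathfrak s' \subseteq \Lie(\fix\Gt^{\Gamma'})^\gamma \subseteq \Lie(\Gt)^\gamma = \Lie(\fix\Gt^\gamma)$ and apply Lemma \ref{lem:toral-Lie-is-Lie-torus} to $\fix\Gt^\gamma$ to place $\mathfrak s'$ inside $\Lie(T')$ for some maximal torus $T'$ of $\fix\Gt^\gamma$. Proposition \ref{prop:quass-rough} then realizes $T'$ as $\fix\Tt^\gamma$ for a $\gamma$-stable maximal torus $\Tt$ of $\Gt$ contained in a $\gamma$-stable Borel $\Bt$; since $\Lie(T') \subseteq \Lie(\Tt)$ forces $\Tt \subseteq C_\Gt(\mathfrak s')$ and hence $\Tt \subseteq \Ht$, the pair $(\Bt \cap \Ht, \Tt)$ is the desired $\gamma$-stable Borel--torus pair. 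In Case 1, I would first verify the hypothesis of Lemma \ref{lem:ss-fixed}: in characteristic exponent $p$ the smooth multiplicative-type group $\sgen{s'}$ has $\pi_0(\sgen{s'})(k)$ of order prime to $p$, so the lemma places the image of $s'$ in a maximal torus of $\fix(\Gt/Z)^\gamma$ for $Z = ((Z(\Gt)\conn)^\gamma)\smooth$. Lifting through the central quotient and appealing again to Proposition \ref{prop:quass-rough} yields a $\gamma$-stable maximal torus $\Tt \subseteq \Gt$ sitting in a $\gamma$-stable Borel $\Bt$ with $s' \in \Tt(k)$; then $\Tt \subseteq \Ht$ and $(\Bt \cap \Ht, \Tt)$ is the required pair, using the standard fact that $\Bt \cap C_\Gt(s')\conn$ is a Borel subgroup of $C_\Gt(s')\conn$. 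The main obstacle will be executing this Case 1 lifting cleanly---threading $s'$ through $\Gt \to \Gt/Z$ and lifting the resulting torus back to a $\gamma$-stable maximal torus of $\Gt$ that additionally lies in a $\gamma$-stable Borel---since Lemma \ref{lem:ss-fixed} only controls the image of $s'$, not $s'$ itself, so one must combine it with the correspondence of Proposition \ref{prop:quass-rough} applied in both $\Gt$ and $\Gt/Z$.
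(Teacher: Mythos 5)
Your proof follows the same broad strategy as the paper's: establish reductivity of $\Ht$, establish smoothness of $\Ht^\gamma$, and produce a $\gamma$-stable maximal torus (or Borel--torus pair) in $\Ht$ via Lemma~\ref{lem:toral-Lie-is-Lie-torus} (Case~II) or Lemma~\ref{lem:ss-fixed} (Case~I), then invoke Proposition~\ref{prop:quass-rough} and Lemma~\ref{lem:quass-from-Levi}. The ingredients are the right ones, and the overall shape of the argument matches. But there is one organizational choice in the paper that resolves exactly the difficulty you single out at the end as your ``main obstacle.''

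The paper does not apply Lemma~\ref{lem:ss-fixed} to $\Gt$ directly. Instead it first passes to $\Gt\adform$: using Lemma~\ref{lem:quass-by-iso}, Remark~\ref{rem:adj-or-sc-T}, and Lemma~\ref{lem:smooth-to-iso} to check that $\gamma$ acts quasisemisimply on $\Gt\adform$ with $\Gt\adform^\gamma$ smooth, sets $G = \fix\Gt^\gamma$, $G' = \fix\Gt\adform^\gamma$, and notes (Corollary~\ref{cor:fixed-surjective}) that $\abmap{G}{G'}$ is a central quotient. Because $\Gt\adform$ is adjoint, the auxiliary group $Z = ((Z(\Gt\adform)\conn)^\gamma)\smooth$ appearing in Lemma~\ref{lem:ss-fixed} is trivial, so the lemma produces a maximal torus $T_{G'}$ of $G'$ whose $k$-points contain the image $\bar s'$ of $s'$ \emph{with no residual quotient to thread through}. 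One then pulls $T_{G'}$ back along the central quotient $\abmap{G}{G'}$ to a maximal torus $T_G$ of $G$, which lies in the preimage $\Tt$ of $T_{G'}$ in $\Gt$---a torus of $\Gt$ whose $k$-points contain $s'$ (since $\bar s' \in T_{G'}(k)$)---and since $\Tt$ is commutative, $T_G \subseteq C_\Gt(s')\conn = \Ht$. This completely bypasses the lifting-through-$\Gt/Z$ bookkeeping that you flag as unresolved. In Case~II the paper makes the same detour, applying Corollary~\ref{cor:toral-Lie-is-Lie-torus} with the central quotient $\abmap G{G'}$; your proposal of applying Lemma~\ref{lem:toral-Lie-is-Lie-torus} directly to $\fix\Gt^\gamma$ is a genuine simplification there (relying on the hypothesis $\Gt^\gamma$ smooth to identify $\Lie(\Gt)^\gamma$ with $\Lie(\fix\Gt^\gamma)$), though you should still spell out the \'etale comparison between $C_{\fix\Gt^\gamma}(\mathfrak s')$ and $\Ht^\gamma = C_\Gt(\mathfrak s')\conn \cap \Gt^\gamma$ before concluding smoothness of the latter.

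Two smaller remarks. First, your appeal to Remark~\ref{rem:all-ss=>lr} for linear reductivity of $\sgen{s'}$ presupposes $\sgen{s'}$ is smooth; it is cleaner to observe that $\sgen{s'}$ is a closed subgroup of the multiplicative-type group $Z(\fix\Gt^{\Gamma'})$, hence itself of multiplicative type, hence linearly reductive without any smoothness hypothesis. Second, your Case~II sentence ``Proposition~\ref{prop:quass-rough} then realizes $T'$ as $\fix\Tt^\gamma$ for a $\gamma$-stable maximal torus $\Tt$\ldots'' is stronger than what that proposition literally provides: Proposition~\ref{prop:quass-rough}(\ref{subprop:quass-down},\ref{subprop:quass-up}) only sets up the correspondence for maximal tori containing a maximal \emph{split} torus of $G$. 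The paper's proof finesses this by exhibiting $T_G \subseteq \Ht$, showing $C_\Ht(T_G)$ is a torus, and then invoking Lemma~\ref{lem:quass-from-Levi}, rather than asserting that an arbitrary maximal torus of $\fix\Gt^\gamma$ is of the form $\fix\Tt^\gamma$ for a $\gamma$-stable $(\Bt,\Tt)$; a $\ka$-conjugacy argument is implicitly needed to see that $C_\Gt(T_G)$ is a torus for every maximal torus $T_G$ of $G$.
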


\begin{proof}
Reductivity follows from
\cite{conrad-gabber-prasad:prg}*{Proposition A.8.12}
or
Corollary \ref{cor:toral-Lie-is-Lie-torus}.

Put
\(\mathfrak s' = \Lie(Z(\fix\Gt\adform^{\Gamma'})^\gamma\).
Let us say that we are
in case (I) if we have put
\(\Ht = C_\Gt(s')\conn\), and
in case (II) if we have put
\(\Ht = C_\Gt(\mathfrak s')\conn\).
We now argue in parallel.

Let \(T_H\) be a maximal torus in \(H\).
Proposition \ref{prop:quass-rough}(\ref{subprop:quass-up})
gives that
\(C_\Gt(T_H)\) is a torus in \(\Gt\), so
\(C_\Ht(T_H)\) is a torus in \(\Ht\).
Thus, Lemma \ref{lem:quass-from-Levi} gives that
\((\Ht, \Gamma')\) is quasisemisimple.

Lemma \ref{lem:quass-by-iso} gives that
\(\gamma\) acts quasisemisimply on \(\Gt\adform\), and
Remark \ref{rem:adj-or-sc-T} and Lemma \ref{lem:smooth-to-iso}
together give that
\(\Gt\adform^\gamma\) is smooth.
We have by
Proposition \ref{prop:quass-facts}%
	(\ref{subprop:quass-split-descends}) that
\(G \ldef (\Gt^\gamma)\conn\) and
\(G' \ldef (\Gt\adform^\gamma)\conn\) are reductive, and
by Corollary \ref{cor:fixed-surjective} that
\abmap G{G'} is a central quotient.
In case (I), we have by
\cite{conrad-gabber-prasad:prg}*{Proposition A.8.10(2)} that
\(C_{\Gt^\gamma}(s') = C_\Gt(s')^\gamma\) is smooth.
Since \(C_\Gt(s')/\Ht\) is \'etale, so is
\(C_\Gt(s')^\gamma/\Ht^\gamma\), so
\(\Ht^\gamma\) is smooth.
Further, since
the characteristic exponent of \(k\) is \(p\), we have by
Lemma \ref{lem:ss-fixed} that
there is a maximal torus \(T_{G'}\) in \(G'\) such that
\(s'\) belongs to \(T_{G'}(k)\).
The corresponding torus \(T_G\) in \(G\) is contained in
\(C_\Gt(s')\conn = \Ht\).
In case (II), since
\(\mathfrak s'\) is a commuting algebra of semisimple elements that
is contained in
\(\Lie(\Gt\adform)^\gamma = \Lie(G')\), we have by
Lemma \ref{lem:toral-Lie-is-Lie-torus} that
there is a maximal torus \(T_{G'}\) in \(G'\) such that
\(\mathfrak s'\) is contained in \(\Lie(T_{G'})\), and by
Corollary \ref{cor:toral-Lie-is-Lie-torus} that
\(C_G(\mathfrak s')\conn\) is smooth.
The torus \(T_G\) in \(G\) corresponding to \(T_{G'}\) is contained in
\(C_\Gt(\mathfrak s')\conn = \Ht\).
Since \(\Gt^\gamma/G\) is \'etale, so is
\(C_{\Gt^\gamma}(\mathfrak s')/C_G(\mathfrak s')\), hence also
\(C_{\Gt^\gamma}(\mathfrak s')/C_G(\mathfrak s')\conn\).
Since \(C_\Gt(\mathfrak s')/\Ht\) is \'etale, so is
\(C_\Gt(\mathfrak s')^\gamma/\Ht^\gamma\).
Since \(C_{\Gt^\gamma}(\mathfrak s')\) equals
\(C_\Gt(\mathfrak s')^\gamma\), and
\(C_G(\mathfrak s')\conn\) is smooth, so is
\(\Ht^\gamma\).

We have now shown, in both cases, that
\(\Ht^\gamma\) is smooth, and that there is a
maximal torus \(T_G\) in \(G\) that is contained in \(\Ht\).
We have by
Proposition \ref{prop:quass-rough}(\ref{subprop:quass-up})
that \(C_\Gt(T_G)\), hence also \(C_\Ht(T_G)\), is
a torus; and then by Lemma \ref{lem:quass-from-Levi} that
\(\gamma\) is a quasisemisimple automorphism of \(\Ht\).
\end{proof}

The following cohomological remark will
be useful in the proofs of
Propositions \ref{prop:E_6-outer} and \ref{prop:D_4-outer}.

\begin{rem}
\label{rem:rel-pos-coboundary}
Suppose that
\((\Gt, \Gamma)\) is a
quasisemisimple reductive datum, and
\((\Bt, \Tt)\) is a Borel--torus pair in \(\Gt\) such that
\(\Tt\), but not necessarily \(\Bt\), is preserved by \(\Gamma\).
There is a unique function \abmap{\Gamma(k)}{W(\Gt, \Tt)(k)},
which is easily verified to be a coboundary, sending
\(\gamma \in \Gamma(k)\) to
the unique element \(w(\gamma) \in W(\Gt, \Tt)(k)\) such that
\(\gamma\Bt\) equals \(\Int(w(\gamma))\inv\Bt\).
Let \((\Bt_0, \Tt_0)\) be
a Borel--torus pair in \(\Gt\) that is preserved by \(\Gamma\).
Then there is a unique element \(w_0 \in W(\Gt, \Tt)(k)\) such that
\(\Bt\) equals \(\Int(w_0)\inv\Bt_0)\), and it follows that
\(w(\gamma)\) equals \(w_0\inv\gamma(w_0)\) for all
\(\gamma \in \Gamma(k)\).
That is, the cohomology class of \abmapto\gamma{w(\gamma)} is
trivial.
\end{rem}

Proposition \ref{prop:inductive-step-p-prep} is
a statement about the action of \(\gamma\) on
a certain fixed-point group.
The hardest case there is when
\(\Gt\) is of type \(\mathsf E_6\), and
certain other conditions are satisfied.
We isolate this case as Proposition \ref{prop:E_6-outer}.

\begin{prop}
\label{prop:E_6-outer}
Suppose that
\(\Gt\) is split and adjoint of type \(\mathsf E_6\)
and
\(\gamma\) preserves a
semisimple subgroup \(\Ht\) of \(\Gt\) of type
\(3\mathsf A_2\).
Then \(Z(\Ht)\) is \'etale of order \(3\), and
\(\gamma\) acts trivially on \(Z(\Ht)\).
\end{prop}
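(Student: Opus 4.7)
The plan is to split the statement into two assertions: first the structural claim that $Z(\Ht) \cong \mu_3$ (étale of order $3$ in our characteristic), and second the claim that $\gamma$ acts trivially on it. The first assertion is essentially Borel--de Siebenthal theory applied to $E_6$, while the second is where the real content lies. Since the context of \S\ref{subsec:quass-unip} is $\gamma^p = 1$ with $\gamma$ a nontrivial outer quasisemisimple automorphism, and since $\Out(E_6) = \Z/2$, we must be in characteristic $p = 2$; this ensures $\mu_3$ is étale.

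For the first part, I would argue as follows. Since $\Ht$ is semisimple of type $3\mathsf A_2$ and so has rank $6 = \operatorname{rank}(\Gt)$, it contains a maximal torus $\Tt$ of $\Gt$. The resulting sub-root system $\Phi(\Ht, \Tt) \subset \Phi(\Gt, \Tt)$ is of type $3\mathsf A_2$ and integrally closed (the characteristic-$2$ or $3$ failures in \cite{borel-tits:reductive-groups}*{Remarque 2.5} do not produce non-integrally closed type $\mathsf A$ subsystems). By Remark \ref{rem:gp-BdS-facts}(\ref{subrem:maximal-ss}), $\Ht$ lies in a Borel--de Siebenthal subgroup associated to a simple root of prime coefficient in the highest root. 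Inspection of the extended Dynkin diagram of $E_6$ with coefficients $(1,2,2,3,2,1)$ shows the only option producing a $3\mathsf A_2$ system is removing the node $\alpha_4$ of coefficient $3$; since the resulting Borel--de Siebenthal subgroup is already of type $3\mathsf A_2$, $\Ht$ equals it. Remark \ref{rem:gp-BdS-facts}(\ref{subrem:BdS-Z}) then identifies $Z(\Ht) = \varpi_4^\vee(\mu_3) \cong \mu_3$.

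For the second part, if $\gamma$ is trivial there is nothing to prove. Otherwise $\gamma$ is a nontrivial outer involution: indeed, in characteristic $2$, any inner $\mathrm{Int}(g)$ with $\mathrm{Int}(g)^2 = 1$ has $g^2 \in Z(\Gt) = 1$, so $g$ is simultaneously unipotent (from $g^2 = 1$ in characteristic $2$) and semisimple (by quasisemisimplicity), forcing $g = 1$. I would then conjugate $\gamma$ by an element of $\Ht(k) \subset C_\Gt(Z(\Ht))$ (which does not alter $\gamma|_{Z(\Ht)}$) to arrange that $\gamma$ preserves a maximal torus $\Tt \subset \Ht$ and a Borel subgroup of $\Ht$ containing $\Tt$. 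At this point $\gamma$ acts on $\mathbf{X}^*(\Tt)$ by an element of $W(\Gt,\Tt) \rtimes \langle \theta \rangle$ (with $\theta$ the pinning-preserving diagram automorphism of $E_6$) whose image in $\Out(\Gt) = \Z/2$ is nontrivial, and this action preserves the chosen positive system of $\Phi(\Ht, \Tt)$. The action on $Z(\Ht)$ is then governed by the induced action on $\mathbf{X}^*(\Tt)/\Z\Phi(\Ht,\Tt) \cong \Z\alpha_4/3\Z\alpha_4$. Since $\alpha_4$ is distinguished among the simple roots as the unique node of coefficient $3$ (and any diagram symmetry of $E_6$ preserving the Borel--de Siebenthal structure fixes it), $\gamma$ fixes $\alpha_4$ modulo $\Z\Phi(\Ht,\Tt)$, hence acts trivially on $Z(\Ht) = \varpi_4^\vee(\mu_3)$.

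The main obstacle is the last step of the second part: arranging that, after all inner twists, $\gamma$ acts on $\mathbf{X}^*(\Tt)$ in a form that manifestly fixes $\alpha_4$ modulo $\Z\Phi(\Ht,\Tt)$. Naively an element of $N_\Gt(\Ht)$ can act by inversion on $Z(\Ht)$ (for example, a representative of the longest Weyl element $w_0$ of $\Gt$, since $w_0 = -\theta$ sends $\varpi_4^\vee \mapsto -\varpi_4^\vee$). The key is therefore not merely that $\gamma$ normalizes $\Ht$, but that $\gamma$ is additionally an involution and quasisemisimple; the characteristic-$2$ dichotomy above eliminates the problematic inner involutions and forces $\gamma$ (up to conjugacy centralizing $Z(\Ht)$) into the pinning-preserving form, for which the fixedness of $\alpha_4$ is immediate.
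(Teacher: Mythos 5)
Your first part — identifying $\Ht$ as the Borel--de Siebenthal subgroup attached to the coefficient-$3$ node, computing $Z(\Ht) = \varpi_4^\vee(\mu_3)$, and reducing to $p = 2$, $\gamma$ a nontrivial outer involution — is correct and follows the paper's own structure.

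The gap is in the final step of your second part. You arrange, by conjugating by $\Ht(k)$, that $\gamma$ stabilizes a Borel--torus pair $(\Bt_{\Ht}, \Tt)$ of $\Ht$, and you then assert that the ``characteristic-$2$ dichotomy forces $\gamma$ into the pinning-preserving form'' so that $\gamma$ fixes $\alpha_4$ mod $\Z\Phi(\Ht, \Tt)$. This step is not justified, and it is in fact false as stated. After normalizing so that $\gamma$ preserves $\Tt$ and a choice of positive roots for $\Phi(\Ht, \Tt)$, there are several possibilities for the induced automorphism of $\bX^*(\Tt)$ — it lies in $W(\Gt, \Tt) \rtimes \langle\gamma_0\rangle$ (where $\gamma_0$ is the pinned diagram involution), but it need not be $\gamma_0$ itself, and the other candidates preserve $\Phi(\Ht, \Tt)$ with its base and yet invert $Z(\Ht)$. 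You already notice this for $-\theta$ and wave it away, but the paper's proof shows the candidates that cause trouble arise in exactly the case-analysis on whether $\gamma$ fixes or swaps the components $\Ht_3$ and $\Ht_5$: for example when $\gamma$ preserves all three components its action on $\bX^*(\Tt)$ is $\wt\gamma_0$ for a specific length-$11$ Weyl element $\wt$, and this candidate sends $\alpha_4 \mapsto \alpha_2 + \alpha_3 + 2\alpha_4 + \alpha_5$, which is \emph{not} congruent to $\alpha_4$ mod $\Z\Phi(\Ht,\Tt)$. These candidates are ruled out not by any purely combinatorial distinguished-node argument, but by the cohomological constraint of Remark \ref{rem:rel-pos-coboundary}: the relative-position function $\gamma \mapsto w(\gamma) \in W(\Gt,\Tt)(k)$ is a coboundary, hence $w(\gamma)$ has determinant $+1$ on $\bX^*(\Tt)$, whereas the bad candidates produce $w(\gamma)$ of determinant $-1$. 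Without invoking something equivalent to this determinant/coboundary argument, the step ``$\gamma$ fixes $\alpha_4$ modulo $\Z\Phi(\Ht,\Tt)$'' is unsupported, and your proof does not close.
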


\begin{proof}
We may, and do, assume, upon
replacing \(k\) by \(\ka\), that
\(k\) is algebraically closed.

Let \(\Tt_\bullet\) be a split maximal torus in \(\Ht\).
Since the ranks of \(\Ht\) and \(\Gt\) are equal,
also \(\Tt_\bullet\) is a split maximal torus in \(\Gt\).
By Remark \ref{rem:gp-BdS-facts},
for some Borel subgroup \(\Bt_\bullet\) of \(\Gt\) containing \(\Tt_\bullet\),
and using the Bourbaki numbering
\[\xymatrix@-.5em{
\alphat_{\bullet\,1} \ar@{-}[r] & \alphat_{\bullet\,3} \ar@{-}[r] & \alphat_{\bullet\,4} \ar@{-}[d]\ar@{-}[r] & \alphat_{\bullet\,5} \ar@{-}[r] & \alphat_{\bullet\,6} \\
&& \alphat_{\bullet\,2}
}\]
of \(\Delta(\Bt_\bullet, \Tt_\bullet)\),
we have that the \(\Delta(\Bt_\bullet, \Tt_\bullet)\)-highest root
\(\alphat_{\bullet\,0}\) equals
\(\alphat_{\bullet\,1} + 2\alphat_{\bullet\,2} + 2\alphat_{\bullet\,3} +
	3\alphat_{\bullet\,4} + 2\alphat_{\bullet\,5} + \alphat_{\bullet\,6}\),
and
the extended Dynkin diagram looks like
\begin{equation}
\label{eq:E6t-Bourbaki}
\tag{$*$}
\xymatrix@-.5em{
\alphat_{\bullet\,1} \ar@{-}[r] & \alphat_{\bullet\,3} \ar@{-}[r] & \alphat_{\bullet\,4} \ar@{-}[d]\ar@{-}[r] & \alphat_{\bullet\,5} \ar@{-}[r] & \alphat_{\bullet\,6} \\
&& \alphat_{\bullet\,2} \ar@{-}[d] \\
&& -\alphat_{\bullet\,0}
}
\end{equation}
\cite{bourbaki:lie-gp+lie-alg_4-6}*{Chapter VI, Plate V.IV}.
The Borel--de Siebenthal subgroups
(Definition \ref{defn:gp-BdS})
associated to
\(\alphat_{\bullet\,2}\), \(\alphat_{\bullet\,3}\), and \(\alphat_{\bullet\,5}\)
are all of type \(\mathsf A_5 + \mathsf A_1\),
and the one associated to
\(\alphat_{\bullet\,4}\) is
of type \(3\mathsf A_2\).
Since \(\mathsf A_1 + \mathsf A_5\) does not contain
a subsystem of type \(3\mathsf A_2\), and
\(3\mathsf A_2\) does not contain
a \emph{proper} subsystem of type \(3\mathsf A_2\),
Remark \ref{rem:gp-BdS-facts}(\ref{subrem:maximal-ss}) gives that
\(\Ht\) is the Borel--de Siebenthal subgroup
corresponding to \(\alphat_{\bullet\,4}\).
Write \(\Bt\smashsub{\Ht\,\bullet}\) for the
Borel subgroup of \(\Ht\) containing \(\Tt\) that
corresponds to the Borel--de Siebenthal basis
(Definition \ref{defn:rd-BdS}).

Since
\(\Gt\) is adjoint and
the coefficient of \(\alphat_{\bullet\,4}\) in \(\alphat_{\bullet\,0}\) is \(3\),
Remark \ref{rem:gp-BdS-facts}(\ref{subrem:BdS-Z}) gives that
\(\alphat_{\bullet\,4}\) is
an isomorphism of \(Z(\Ht)\) onto \(\mu_3\).
Explicit computation shows that each of the cocharacters
\(-2\alphat_{\bullet\,1}^\vee - \alphat_{\bullet\,3}^\vee\),
\(-2\alphat_{\bullet\,6}^\vee - \alphat_{\bullet\,5}^\vee\), and
\(2\alphat_{\bullet\,0}^\vee - \alphat_{\bullet\,2}^\vee\)
pairs to \(1\) with \(\alphat_{\bullet\,4}\), and
maps \(\mu_3\) onto
the center of the almost-simple component
containing the image of the cocharacter.
In particular, each almost-simple component of \(\Ht\)
has center \(Z(\Ht)\).

If \(\gamma\) is trivial, then the remainder of the
result is obvious.
Thus we may, and do, assume that \(\gamma\) is nontrivial,
hence has order \(p\).
Since the outer-automorphism group of \(\mathsf E_6\) has order \(2\),
we have that \(p\) equals \(2\).

If there is some
almost-simple component of \(\Ht\) that is
preserved by \(\gamma\) and on which
the action of \(\gamma\) is inner, then
the action of \(\gamma\) on
the center of that component, and hence on
\(Z(\Ht)\), is
trivial, as desired.
Thus we may, and do, assume that
there is no such component.
Then Lemma \ref{lem:A_{2n}-quass} gives that
the action of \(\gamma\) on every
almost-simple component of \(\Ht\) that it
preserves is quasisemisimple.

Since
\(\Ht\) has three almost-simple components and
\(\gamma\) has order \(2\), it must
preserve at least one almost-simple component
(and possibly all three).
Let \(\Ht_2\) be an almost-simple component of \(\Ht\) that is
preserved by \(\gamma\).
By
\cite{bourbaki:lie-gp+lie-alg_4-6}*{Chapter VI, Plate V.XII},
we may, and do, assume, upon replacing
\(\Bt\smashsub{\Ht\,\bullet}\) by
its conjugate by
a suitable element of \(N_\Gt(\Ht, \Tt_\bullet)(k)\), that
\(\alphat_{\bullet\,2}\) belongs to \(\Phi(\Ht_2, \Tt_\bullet)\).
Let \((\Bt_2, \Tt_2)\) be
a Borel--torus pair in \(\Ht_2\) that is
preserved by \(\gamma\).

For \(i \in \sset{3, 5}\), write
\(\Ht_i\) for the almost-simple component of \(\Ht\) such that
\(\alphat_i\) belongs to \(\Phi(\Ht_i, \Tt_\bullet)\).
Either
\(\gamma\) preserves both \(\Ht_3\) and \(\Ht_5\), or
it swaps them.
If \(\gamma\) preserves both \(\Ht_3\) and \(\Ht_5\), then,
for each \(i \in \sset{3, 5}\),
let \((\Bt_i, \Tt_i)\) be
a Borel--torus pair in \(\Ht_i\) that is
preserved by \(\gamma\).
Otherwise, let \((\Bt_3, \Tt_3)\) be any
Borel--torus pair in \(\Ht_3\), and
put \((\Bt_5, \Tt_5) = \gamma(\Bt_3, \Tt_3)\).

These three Borel--torus pairs determine
a new Borel--torus pair
\((\Bt\smashsub\Ht, \Tt)\) in \(\Ht\) that,
by construction, is preserved by \(\gamma\).
Since
\((\Bt\smashsub{\Ht\,\bullet}, \Tt_\bullet)\) and
\((\Bt\smashsub\Ht, \Tt)\) are
both Borel--torus pairs in \(\Ht\),
there is some \(\htilde\in\Ht(k)\)
such that
\(\Int(\htilde)(\Bt\smashsub{\Ht\,\bullet}, \Tt_\bullet)\)
equals \((\Bt\smashsub\Ht, \Tt)\).
Put
\(\Bt = \Int(h)\Bt\) and
\(\alphat_i = \alphat_{\bullet\,i} \circ \Int(\htilde)\inv\)
for all \(i \in \sset{0, \dotsc, 6}\).
Note that
\(\alphat_4\) equals
\(-\frac1 3(-\alphat_0 + \alphat_1 + 2\alphat_2 +
	2\alphat_3 + 2\alphat_5 + \alphat_6)\), since
the analogous formula holds for \(\alphat_{\bullet\,4}\).

Since \(\gamma\) preserves
\(\Delta(\Bt\smashsub\Ht, \Tt) \cap \Phi(\Ht_2, \Tt) =
	\sset{-\alphat_0, \alphat_2}\), but
\(\gamma\) is not inner on \(\Ht_2\), we have that
\(\gamma\) swaps \(-\alphat_0\) and \(\alphat_2\).

Note that \(\Aut(\Phi(\Gt, \Tt))/W(\Phi(\Gt, \Tt))\) is
generated by the image of the unique
diagram automorphism \(\gamma_0\) with respect to
\(\Delta(\Bt, \Tt)\), which
swaps \(\alphat_1\) and \(\alphat_6\), and
fixes \(\alphat_2\) and \(\alphat_4\), and
swaps \(\alphat_3\) and \(\alphat_5\), hence
has determinant \(1\) as
an automorphism of \(\bX^*(\Tt)\).

Suppose first that \(\gamma\) preserves \(\Ht_3\) and \(\Ht_5\).
As with \(\Ht_2\), we conclude from
the fact that
the automorphisms of \(\Ht_3\) and \(\Ht_5\) induced by \(\gamma\) are
outer that
\(\gamma\)
swaps \(\alphat_1\) and \(\alphat_3\), and
swaps \(\alphat_5\) and \(\alphat_6\).
Thus \(\gamma\) sends
\(\alphat_4\) to
\(-\frac1 3(\alphat_2 + \alphat_3 - 2\alphat_0 + 2\alphat_1 + 2\alphat_6 + \alphat_5) =
\alphat_2 + \alphat_3 + 2\alphat_4 + \alphat_5\).
Combining this with the rest of our information about
\(\gamma\) shows that it has determinant \(-1\)
as an automorphism of \(\bX^*(\Tt)\).
Therefore, the unique element of \(W(\Gt, \Tt)(k)\) that
conjugates \(\gamma\Bt\) to \(\Bt\) also has determinant \(-1\)
as an automorphism of \(\bX^*(\Tt)\).
This is a contradiction of Remark \ref{rem:rel-pos-coboundary}.
(Our computations so far do not tell us whether or not
the automorphism induced by \(\gamma\) lies in the Weyl group.
In fact it does not.
If we put
\(\wt =
\st_2\st_4\st_3\st_1\st_5\st_4\st_3\st_6\st_5\st_4\st_2\), where
\(\st_i\) denotes reflection in \(\alphat_i\)
for every \(i \in \sset{1, \dotsc, 6}\), then
\(\gamma\) equals \(\wt\gamma_0\), so
the relative position of \(\Bt\) and \(\gamma(\Bt)\) is \(\wt\).)

Thus \(\gamma\) must swap \(\Ht_3\) and \(\Ht_5\), hence swap
\(\Delta(\Bt_3, \Tt) = \sset{\alphat_1, \alphat_3}\) and
\(\Delta(\Bt_5, \Tt) = \sset{\alphat_5, \alphat_6}\).
If \(\gamma\)
swaps \(\alphat_1\) and \(\alphat_6\), and
swaps \(\alphat_3\) and \(\alphat_5\), then
it carries
\(-2\alphat_1^\vee - \alphat_3^\vee\) to
\(-2\alphat_6^\vee - \alphat_5^\vee\).
Since these cocharacters carry \(\mu_3\) into
the centers of their respective almost-simple components, hence into
\(Z(\Ht)\), and since
they have equal pairings with \(\alphat_4\),
they restrict to the same isomorphism
\abmap{\mu_3}{Z(\Ht)}, so it follows that
\(\gamma\) acted trivially on \(Z(\Ht)\).
This contradicts the fact that
\(\gamma\) does \emph{not} act trivially on
\(Z(\Ht_2) = Z(\Ht)\).
Thus, actually \(\gamma\)
swaps \(\alphat_1\) and \(\alphat_5\), hence
swaps \(\alphat_3\) and \(\alphat_6\), hence
sends \(\alphat_4\) to
\(\alphat_2 + \alphat_3 + 2\alphat_4 + \alphat_5\).
That is, the automorphism of \(\bX^*(\Tt)\) induced by \(\gamma\) is
the automorphism of the previous paragraph, followed by
the diagram automorphism \(\gamma_0\).
In particular, the unique element of \(W(\Gt, \Tt)(k)\) that
conjugates \(\Bt\) to \(\gamma\Bt\) again
has determinant \(-1\) as an automorphism of \(\bX^*(\Tt)\),
so we have once more reached a contradiction of
Remark \ref{rem:rel-pos-coboundary}.
(Concretely,
with notation for reflections as in the previous paragraph,
the automorphism of \(\bX^*(\Tt)\) induced by \(\gamma\) equals
\(\st_2\st_4\st_5\st_6\st_3\st_4\st_5\st_1\st_3\st_4\st_2\).)
\end{proof}

The conclusion of
Proposition \ref{prop:inductive-step-p-prep} is
surprising, at least to us.
It boils down to a case-by-case check,
which is feasible because of the short list of possibilities in
Remark \ref{rem:inspection}.

\begin{prop}
\label{prop:inductive-step-p-prep}
Suppose that
\(\Gt^\gamma\) is smooth.
Let \(\Gamma'\) be a
nontrivial,
\(\gamma\)-stable
subgroup of
a maximal torus in \(\Gt\adform\).
Then
\(Z(C_{\Gt\adform}(\Gamma')\conn)^\gamma\) is nontrivial
or
\(\Lie(Z(C_{\Gt\adform}(\Gamma')\conn))\) is nontrivial.
\end{prop}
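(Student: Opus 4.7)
The plan is to argue by contradiction, supposing that both $Z(\Ht)^\gamma$ and $\Lie(Z(\Ht))$ vanish, where $\Ht \ldef C_{\Gt\adform}(\Gamma')\conn$. The vanishing of $\Lie(Z(\Ht))$ forces $Z(\Ht)$ to be \'etale and finite, of order coprime to $p$; if $\gamma$ were trivial then $\Gamma' \subseteq Z(\Ht)^\gamma$ would contradict $Z(\Ht)^\gamma = 1$, so $\gamma$ has order exactly $p$.

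After passing to $\ka$ and reducing to $\Gt$ adjoint via Lemma~\ref{lem:smooth-to-iso} and Remark~\ref{rem:adj-or-sc-T} (which together propagate smoothness of $\Gt^\gamma$ to $\Gt\adform^\gamma$ without altering $\Ht$), I analyze $\gamma$-orbits on the set of almost-simple components of $\Gt$: a length-$p$ orbit contributes to $Z(\Ht)^\gamma$ a diagonal copy containing a nontrivial projection of $\Gamma'$, already providing the desired contradiction. Hence I may assume $\Gt$ is adjoint and almost-simple with $\gamma$ preserving it.

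Since $\gamma$ has order $p$ and $\ka$ is algebraically closed, $\gamma$ is semisimple in the sense of \cite{steinberg:endomorphisms}*{\S7}, so its restriction to the $\gamma$-stable connected reductive subgroup $\Ht$ is quasisemisimple by Corollary~\ref{cor:ss-is-quass}. This produces a $\gamma$-stable maximal torus $T \subseteq \Ht$ containing $\Gamma'$. If the induced action of $\gamma$ on $\Phi(\Gt, T)$ is trivial, then $\gamma$ acts trivially on the torus $T$ and hence on $\Gamma' \subseteq T$, giving $\Gamma' \subseteq Z(\Ht)^\gamma$, a contradiction; so $\gamma$ acts on $\Phi(\Gt, T)$ as a nontrivial diagram automorphism of order $p$.

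The heart of the argument is then Borel--de Siebenthal theory. The subsystem $\Phi(\Ht, T)$ is integrally closed and of full rank in $\Phi(\Gt, T)$, so by Remark~\ref{rem:gp-BdS-facts}(\ref{subrem:maximal-ss}), $\Ht$ is contained in a maximal Borel--de Siebenthal subgroup $\Ht_{\max}$ associated to a simple root whose coefficient $\ell$ in the highest root is prime, and the \'etale condition on $Z(\Ht) \supseteq Z(\Ht_{\max})$ forces $\ell \ne p$. Combined with the second part of Remark~\ref{rem:inspection}, which bounds the joint occurrence of an order-$p$ outer diagram automorphism and a simple root of prime coefficient distinct from $p$, the only surviving cases are $(\Gt, p, \ell) = (\mathsf E_6, 2, 3)$ and $(\mathsf D_4, 3, 2)$; in each case it suffices to exhibit a $\gamma$-fixed point in $Z(\Ht_{\max})$. \textbf{The main obstacle is the $\mathsf E_6$ case}, where $\Ht_{\max}$ has type $3\mathsf A_2$ with centre $\mu_3$, and the claim that $\gamma$ fixes this $\mu_3$ pointwise is exactly Proposition~\ref{prop:E_6-outer}, established by an explicit diagram computation via Remark~\ref{rem:rel-pos-coboundary}. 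The $\mathsf D_4$ case (where $\Ht_{\max}$ is of type $4\mathsf A_1$ with centre acted on by a triality automorphism) is handled by an analogous but more direct explicit calculation.
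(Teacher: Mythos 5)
Your overall route (Borel--de Siebenthal, reduction via Remark~\ref{rem:inspection} to the $\mathsf E_6$ and $\mathsf D_4$ cases, concluding by Proposition~\ref{prop:E_6-outer}) is in the spirit of the paper's argument, but there are two genuine gaps, the first of which strikes at the heart of the matter.

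First, the claim that ``$\gamma$ is semisimple in the sense of \cite{steinberg:endomorphisms}*{\S7}'' is false in the case of interest. When $k$ has characteristic exponent $p > 1$, an automorphism of order exactly $p$ is \emph{unipotent}, not semisimple, in Steinberg's sense. The whole purpose of \S\ref{subsec:quass-unip} is to treat this unipotent case, which is not covered by Prasad--Yu; Corollary~\ref{cor:ss-is-quass} simply does not apply. Moreover, even the standing hypothesis that $\gamma$ is quasisemisimple on $\Gt$ does not yet give quasisemisimplicity on $\Ht = C_{\Gt\adform}(\Gamma')\conn$---that is the \emph{conclusion} of Proposition~\ref{prop:inductive-step-p}, which uses the present proposition as an ingredient, so reasoning through it here would be circular. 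The paper's proof sidesteps this by working with a maximal torus $\Tt$ containing $\Gamma'$ that is \emph{not} assumed $\gamma$-stable, and never needs to know that $\gamma$ acts on a root system relative to such a torus.

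Second, having located the Borel--de Siebenthal subgroup $\Ht_{\max}$ with prime index $\ell$, you assert that ``it suffices to exhibit a $\gamma$-fixed point in $Z(\Ht_{\max})$'' without first establishing that $\gamma$ preserves $\Ht_{\max}$ (equivalently, its centre $\tilde\varpi^\vee(\mu_\ell)$). This is not automatic: $\gamma$ preserves $\Ht = C_{\Gt\adform}(\Gamma')\conn$, but $\Ht$ need only be \emph{contained} in $\Ht_{\max}$. The paper handles this with a case-sensitive argument: for $p = 2$ it shows $\gamma$ acts by inversion on the \'etale group $Z(\Ht)$, hence preserves every subgroup including $\tilde\varpi^\vee(\mu_\ell)$; for $p = 3$, $\mathsf D_4$, it invokes Lemma~\ref{lem:gl-Levi} to show $\Ht$ actually \emph{equals} the Borel--de Siebenthal subgroup. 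Without something like this, you cannot even form $Z(\Ht_{\max})^\gamma$. (Your preliminary orbit reduction to $\Gt$ almost simple is a plausible alternative to the paper's in-situ identification of the relevant component $\Gt_1$, but as sketched it would require more care; the paper's route---showing directly that a non-$\gamma$-stable component would yield a $\gamma$-fixed faithful cocharacter, a contradiction---is tighter.)
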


\begin{proof}
We may, and do, assume, upon
replacing \(k\) by \(\ks\), that
\(k\) is separably closed.

Suppose that
\(Z(C_{\Gt\adform}(\Gamma')\conn)^\gamma\) and
\(\Lie(Z(C_{\Gt\adform}(\Gamma')\conn))\) are trivial.
Thus,
\(Z(C_{\Gt\adform}(\Gamma')\conn)\) is \'etale.

Let \(\Tt\) be a maximal torus in \(\Gt\) that
contains \(\Gamma'\).
Note that \(C_{\Gt\adform}(C_\Gt(\Gamma')\conn)\) is contained in
\(C_{\Gt\adform}(\Tt) = \Tt/Z(\Gt)\), hence in
\(C_{\Gt\adform}(\Gamma')\conn\), hence in
\(Z(C_{\Gt\adform}(\Gamma')\conn)\).
Since the reverse containment is obvious,
we have that
\(C_{\Gt\adform}(C_\Gt(\Gamma')\conn)\) equals
\(Z(C_{\Gt\adform}(\Gamma')\conn)\).
We will use this later.
For now,
we have that \(\Phi(C_\Gt(\Gamma')\conn, \Tt)\) is
an integrally closed subsystem of \(\Phi(\Gt, \Tt)\), and
\(\Z\Phi(\Gt, \Tt)/\Z\Phi(C_\Gt(\Gamma')\conn, \Tt)\) is
the character group of the \'etale, multiplicative-type group
\(Z(C_{\Gt\adform}(\Gamma')\conn)\), hence is
finite, but has no nontrivial torsion of
order the characteristic exponent of \(k\).
By Remark \ref{rem:gp-BdS-facts}(\ref{subrem:maximal-ss}),
there are a Borel subgroup \(\Bt\) of \(\Gt\) containing \(\Tt\),
an almost-simple component \(\Gt_1\) of \(\Gt\), and
an element \(\alphat \in \Delta(\Bt \cap \Gt_1, \Tt)\) such that
the coefficient \(\ell\) of \(\alphat\) in the
\(\Delta(\Bt, \Tt)\)-highest root \(\alphat_0\) of \(\Phi(\Gt, \Tt)\)
is prime, and
\(C_\Gt(\Gamma')\conn\) is contained in
the Borel--de Siebenthal subgroup corresponding to
\((\Bt, \Tt, \alphat)\)
(Definition \ref{defn:gp-BdS}).
Note that \(\ell\) is different from
the characteristic exponent of \(k\).
Let \(\tilde\varpi^\vee\) be the fundamental coweight of
\(\Gt\adform\) corresponding to \(\alphat\).
Then
\(\tilde\varpi^\vee(\mu_\ell)\),
which is the center of the relevant Borel--de Siebenthal subgroup
of \(\Gt\adform\)
by Remark \ref{rem:gp-BdS-facts}(\ref{subrem:BdS-Z}),
is contained in
\(C_{\Gt\adform}(C_\Gt(\Gamma')\conn)\), which, we recall,
equals
\(Z(C_{\Gt\adform}(\Gamma')\conn)\).

If \(\gamma\) does not preserve \(\Gt_1\), then
the various conjugates
\(\gamma^i\tilde\varpi^\vee\) with \(0 \le i < p\) are
nontrivial cocharacters of different
almost-simple components of \(\Gt\adform\), so
their sum
\((1 + \gamma + \dotsb + \gamma^{p - 1})\tilde\varpi^\vee\)
is a \(\gamma\)-fixed, faithful cocharacter of \(\Gt\adform\) that
carries \(\mu_\ell\) into
\(Z(C_{\Gt\adform}(\Gamma')\conn)\).
This is a contradiction.
Thus \(\gamma\) preserves \(\Gt_1\).

By
Lemmas \ref{lem:quass-by-component} and \ref{lem:quass-by-iso},
the action of \(\gamma\) on \(\Gt_{1\,\adsub}\)
remains quasisemisimple.
Since the image of \(\Gamma'\) in \(\Gt_{1\,\adsub}\) contains
\(\tilde\varpi^\vee(\mu_\ell)\), it is nontrivial.
Since replacing
\(\Bt\), \(\Tt\), and \(\Gamma'\) by
their images in \(\Gt_{1\,\adsub}\), then
\(\Gt\) by \(\Gt_{1\,\adsub}\), replaces
\(Z(C_{\Gt\adform}(\Gamma')\conn)\) by a subgroup,
we may, and do, make this replacement, and so assume that
\(\Gt\) is almost simple of adjoint type.
In particular, we now need not distinguish between
\(\Gt\) and \(\Gt\adform\).
Now put \(\Ht = C_\Gt(\tilde\varpi^\vee(\mu_\ell))\conn\), so that
\(Z(\Ht)\) equals
\(\tilde\varpi^\vee(\mu_\ell)\).

Since \(\gamma\) has no nontrivial fixed points on
\(Z(C_{\Gt\adform}(\Gamma')\conn)\),
hence on \(\Gamma'\),
in particular it acts nontrivially on \(\Gt\).
That is, \(p\) is prime, and
\(\gamma\) has order \(p\).

Suppose first that \(p\) equals \(2\).
Since \(\gamma^2\) acts trivially, but
\(\gamma\) acts without fixed points, on
\(Z(C_\Gt(\Gamma')\conn)\),
in fact \(\gamma\) acts on \(Z(C_\Gt(\Gamma')\conn)\) by
inversion,
and so preserves \emph{every} subgroup of it.
In particular, \(\gamma\) preserves
\(\tilde\varpi^\vee(\mu_\ell)\).
If \(\ell\) equals \(p\)
(which therefore does not equal the characteristic exponent of \(k\)),
then, since the restriction of \(\gamma\) to
\(\tilde\varpi^\vee(\mu_\ell(k))\) is
an automorphism of a cyclic group of order \(p\) whose
\(p\)th power is trivial,
actually \(\gamma\) itself acts trivially on
the nontrivial group
\(\tilde\varpi^\vee(\mu_\ell(k))\).
This is a contradiction.

Thus we may, and do, suppose that
\(p\) does not equal \(2\), or
\(\ell\) does not equal \(p\).
Since \(\Gt\) is (absolutely) almost simple and
admits an outer automorphism \(\gamma\) of order \(p\),
by Remark \ref{rem:inspection},
there are now only two possibilities.

First,
if \(p\) equals \(2\), then
\(\Gt\) is of type \(\mathsf E_6\),
\(\Ht\) is of type \(3\mathsf A_2\), and
Proposition \ref{prop:E_6-outer} shows that
\(Z(C_\Gt(\Gamma'))^\gamma\) is nontrivial.

Second,
if \(p\) equals \(3\), then
\(\ell\) equals \(2\) and
\(\Gt\) is of type \(\mathsf D_4\).
In the Bourbaki numbering
\cite{bourbaki:lie-gp+lie-alg_4-6}*{Chapter VI, Plate IV}
\begin{equation}
\label{eq:D4-Bourbaki}
\xymatrix@-1.5em{
&&& \alphat_3 \\
\\
\alphat_1  \ar@{-}[rr] && \alphat_2 \ar@{-}[uur]\ar@{-}[ddr] \\
\\
&&& \alphat_4
}
\end{equation}
of \(\Delta(\Bt, \Tt)\) (except with \(\alphat\) in place of \(\alpha\)),
we have that
\(\alphat\) equals \(\alphat_2\), and
\(\Phi(\Ht, \Tt)\) is
the orthogonal direct sum of the
integrally closed root subsystems of \(\Phi(\Gt, \Tt)\) spanned by
\(\sset{\alphat_1}\),
\(\sset{\alphat_3}\),
\(\sset{\alphat_4}\), and
\(\sset{-\alphat_0}\), each of which
has type \(\mathsf A_1\).
In particular, since
\(C_\Gt(\Gamma')\conn\)
is semisimple (as \(Z(C_{\Gt}(\Gamma')\conn)\) is finite) and equals
\(C_\Gt(Z(C_\Gt(\Gamma')\conn))\conn =
	C_\Ht(Z(C_\Gt(\Gamma')\conn))\conn\),
Lemma \ref{lem:gl-Levi} gives that
\(C_\Gt(\Gamma')\conn\) equals
\(\Ht\); thus
\(Z(C_\Gt(\Gamma')\conn)\) equals
\(Z(\Ht) = \tilde\varpi^\vee(\mu_\ell)\),
hence
is preserved by \(\gamma\).
Since \(\gamma^3\) is trivial but
\(\tilde\varpi^\vee(\mu_\ell)\) has order \(\ell = 2\), we have that
\(\gamma\) acts trivially on
\(\tilde\varpi^\vee(\mu_\ell) = Z(C_\Gt(\Gamma')\conn)\).
This is a contradiction.
\end{proof}

\begin{prop}
\label{prop:inductive-step-p}
Suppose that \(k\) is separably closed, that
\(\Gamma'\) is a \(\gamma\)-stable subgroup of
a torus in \(\Gt\adform\), and that
\(\Gt^\gamma\) is smooth.
Then
\(\gamma\) is a quasisemisimple automorphism of
\(C_\Gt(\Gamma')\conn\), and
\((C_\Gt(\Gamma')\conn)^\gamma\) is smooth.
\end{prop}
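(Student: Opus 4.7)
The plan is to proceed by induction on $\dim\Gt$. If $\Gamma'$ is trivial, the conclusion is the hypothesis. So assume $\Gamma'$ is nontrivial. First, since $\Gamma'$ lies in a torus $\Tt$ of $\Gt\adform$, any Borel subgroup of $\Gt\adform$ containing $\Tt$ is normalized by $\Tt\supseteq\Gamma'$, and thus pulls back to a $\Gamma'$-stable Borel--torus pair in $\Gt$, making $(\Gt,\Gamma')$ quasisemisimple. By Theorem~\ref{thm:quass}(\ref{subthm:quass-reductive}) the group $\fix\Gt^{\Gamma'}$ is connected reductive, and by Corollary~\ref{cor:fixed-surjective} the adjoint quotient restricts to a quotient $\fix\Gt^{\Gamma'}\twoheadrightarrow\fix\Gt\adform^{\Gamma'}$ with kernel contained in $Z(\Gt)$.

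Next, I would apply Proposition~\ref{prop:inductive-step-p-prep} to $\Gamma'$ (its smoothness hypothesis on $\Gt^\gamma$ is given) to produce either a nontrivial $\gamma$-fixed element $\bar s\in Z(\fix\Gt\adform^{\Gamma'})^\gamma(k)$ or a nontrivial $\gamma$-fixed subspace $\bar{\mathfrak s}\subseteq\Lie(Z(\fix\Gt\adform^{\Gamma'}))^\gamma$ (note that $(\gamma-1)^p=0$ on a vector space in characteristic $p$, so any nontrivial $\gamma$-stable subspace has a nonzero $\gamma$-fixed vector). I would then lift this datum through the central isogeny above to a $\gamma$-fixed element $s'\in Z(\fix\Gt^{\Gamma'})^\gamma(k)$ (respectively $\gamma$-fixed subspace $\mathfrak s'\subseteq\Lie(Z(\fix\Gt^{\Gamma'})^\gamma)$) whose image in $\Gt\adform$ remains noncentral. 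Applying Lemma~\ref{lem:inductive-step-Z|Lie}, with our $\Gamma'$ in the role of the lemma's $\Gamma'$ and $s'$ (respectively $\mathfrak s'$) in the role of the lemma's $s'$ (respectively $\mathfrak s'$), produces a reductive subgroup $\Ht=C_\Gt(s')\conn$ (respectively $C_\Gt(\mathfrak s')\conn$) on which $\gamma$ acts quasisemisimply and with smooth $\gamma$-fixed subgroup $\Ht^\gamma$.

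Now $\Ht$ is a proper subgroup of $\Gt$ because the image of $s'$ (or $\mathfrak s'$) in $\Gt\adform$ is noncentral, forcing strict dimension drop; $\Ht$ contains $C_\Gt(\Gamma')\conn$ because $s'$ lies in $Z(\fix\Gt^{\Gamma'})$, so is centralized by everything centralizing $\Gamma'$; and the torus $\Tt\subseteq\Gt\adform$ lies in $\Ht\adform$ because its preimage in $\Gt$ is a maximal torus contained in $\fix\Gt^{\Gamma'}$, hence centralizes $s'$ and lies in $\Ht$. Thus the induction hypothesis applies to $(\Ht,\gamma,\Gamma')$, yielding quasisemisimplicity of $\gamma$ on $C_\Ht(\Gamma')\conn=C_\Gt(\Gamma')\conn$ together with smoothness of the fixed-point group, completing the induction.

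The main obstacle will be the lifting step for the element case: arranging that $s'$ is $\gamma$-fixed on the nose rather than merely $\gamma$-fixed modulo $Z(\Gt)$. The discrepancy is a cocycle class in $H^1(\sgen\gamma(k),(Z(\Gt)\cap\fix\Gt^{\Gamma'})(k))$, which is $p$-torsion, so one may either invoke Lemma~\ref{lem:ss-fixed} (applied to a sufficiently high power of an arbitrary lift, whose image in $\Gt\adform$ is again noncentral by replacing $\bar s$ by an element of order prime to $p$ beforehand using the structure of $Z(\fix\Gt\adform^{\Gamma'})^\gamma(k)$) or fall back to the Lie-algebra case, where a $\gamma$-stable direct-sum complement of $\Lie(Z(\Gt))$ in $\Lie(Z(\fix\Gt^{\Gamma'}))$ supplies a $\gamma$-fixed lift of $\bar{\mathfrak s}$ automatically.
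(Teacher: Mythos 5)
Your inductive framework matches the paper's: induct on $\dim\Gt$, invoke Proposition~\ref{prop:inductive-step-p-prep} to obtain a nontrivial $\gamma$-fixed datum (element or Lie-algebra subspace), pass to the proper subgroup $\Ht$ via Lemma~\ref{lem:inductive-step-Z|Lie}, and apply the induction hypothesis. Two remarks on the differences.

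First, you omit the case of characteristic exponent~$1$. The hypothesis is only that $k$ has characteristic exponent $p$ \emph{or}~$1$; when it is~$1$, the automorphism $\gamma$ is \emph{not} unipotent (it is of finite order in characteristic zero, hence semisimple), and the paper disposes of this case separately using Cartier's theorem and Corollary~\ref{cor:ss-is-quass} before reducing to $\gamma$ unipotent. Your argument silently assumes $\gamma$ is unipotent throughout.

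Second, concerning your ``main obstacle'': you are right that Proposition~\ref{prop:inductive-step-p-prep} produces a datum at the level of $\Gt\adform$ rather than $\Gt$, but the correct resolution is not to lift it to $\Gt$. Since $\Gt\adform$ acts on $\Gt$ by conjugation, the centralizer $C_\Gt(\bar s)\conn$ (or $C_\Gt(\bar{\mathfrak s})\conn$) is already defined for $\bar s\in Z(C_{\Gt\adform}(\Gamma')\conn)^\gamma(k)$ (resp.\ $\bar{\mathfrak s}\subseteq\Lie(Z(C_{\Gt\adform}(\Gamma')\conn))^\gamma$), and one checks directly that it is proper, contains $C_\Gt(\Gamma')\conn$ (because $\bar s$ kills every root of $C_\Gt(\Gamma')\conn$), and contains the relevant maximal torus. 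The proof of Lemma~\ref{lem:inductive-step-Z|Lie} is written in exactly these terms (note it sets $\mathfrak s'=\Lie(Z(\fix\Gt\adform^{\Gamma'})^\gamma)$). No $H^1$ obstruction is encountered. Moreover, your two proposed fixes both have gaps: Lemma~\ref{lem:ss-fixed} places a given $\gamma$-fixed element inside a torus; it does not \emph{produce} a $\gamma$-fixed lift of an element, and replacing $\bar s$ by a prime-to-$p$ element is not always possible since $Z(C_{\Gt\adform}(\Gamma')\conn)^\gamma(k)$ could a~priori be a $p$-group. And a $\gamma$-stable direct-sum complement of $\Lie(Z(\Gt))$ in $\Lie(Z(\fix\Gt^{\Gamma'}))$ need not exist, precisely because $\gamma$ is unipotent (a Jordan block already defeats this), nor is the map $\Lie(Z(\fix\Gt^{\Gamma'}))\to\Lie(Z(\fix\Gt\adform^{\Gamma'}))$ automatically surjective in positive characteristic.
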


\begin{proof}
If \(k\) has characteristic exponent \(1\), then
every \(k\)-group is smooth by Cartier's theorem
\cite{milne:algebraic-groups}*{Theorem 3.23}; and
\(\gamma\), being of finite order, is semisimple, hence induces
a quasisemisimple automorphism of any group that
it preserves, such as
\(C_\Gt(\Gamma')\conn\)
(Corollary \ref{cor:ss-is-quass}).
Thus we may, and do, assume that
\(p\) is prime, and
\(k\) has characteristic exponent \(p\).
In particular, \(\gamma\) is unipotent.

We reason by induction on \(\dim(\Gt)\).
If the dimension is \(0\), then \(\Gt\), and hence
the result, is trivial.
Thus we may, and do, assume that
\(\dim(\Gt)\) is positive, and
that we have proven the result for all smaller-dimensional groups.

The result is obvious if \(\Gamma'\) is trivial, so
we assume that it is not.

Since \(C_{\Gt\adform}(\Gamma')\conn\) is reductive by
\cite{conrad-gabber-prasad:prg}*{Proposition A.8.12},
we have that \(Z(C_{\Gt\adform}(\Gamma')\conn)\) is
contained in every maximal torus in
\(C_{\Gt\adform}(\Gamma')\conn\).
Remark \ref{rem:adj-or-sc-T} and
Lemma \ref{lem:smooth-to-iso} give that
\(\Gt\adform^\gamma\) is smooth.

By, and with the notation of,
Proposition \ref{prop:inductive-step-p-prep},
we have that
\(Z(C_\Gt(\Gamma')\conn)^\gamma\) or
\(\Lie(Z(C_\Gt(\Gamma')\conn))\) is nontrivial.

If \(Z(C_\Gt(\Gamma')\conn)^\gamma\) is nontrivial, then
choose a nontrivial \(k\)-point \(s'\), and
put \(\Ht = C_\Gt(s')\conn\).
Since \(s'\) is nontrivial, we have that
\(\Ht\) is a proper subgroup of \(\Gt\).
If \(\Lie(Z(C_\Gt(\Gamma')\conn)\) is nontrivial, then,
since \(\gamma\) is unipotent, so is
\(\Lie(Z(C_\Gt(\Gamma')\conn)^\gamma\).
Put \(\Ht = C_\Gt(\mathfrak s')\conn\).
Since \(\mathfrak s'\) is nontrivial, we have that
\(\Ht\) is a proper subgroup of \(\Gt\).

In either case,
\(\Ht\) is a proper subgroup of \(\Gt\) that
contains \(C_\Gt(\Gamma')\conn\).
We have by Lemma \ref{lem:inductive-step-Z|Lie} that
\(\Ht\) is a connected, reductive subgroup of \(\Gt\), that
\(\gamma\) acts quasisemisimply on \(\Ht\), and that
\(\Ht^\gamma\) is smooth.
Let us temporarily write \(\Tt'_1\) for
a maximal torus in \(\Gt\adform\) that
contains \(\Gamma'\), and
\(\Tt_1\) for the corresponding
maximal torus in \(\Gt\).
Then \(\Tt_1\) is centralized by \(\Tt'_1\), hence by \(\Gamma'\), so
\(\Tt_1\) is contained in \(C_\Gt(\Gamma')\conn\), hence in \(\Ht\).
In particular, \(\Gamma'\) is contained in \(\Ht/Z(\Gt)\); and
the image of \(\Tt_1\) in \(\Ht\adform\) is
a maximal torus there that contains
the image of \(\Gamma'\).
That is, all the hypotheses remain valid if
we replace \(\Gt\) by \(\Ht\), and
\(\Gamma'\) by its image in \(\Ht\adform\).

Since \(\Ht\) is a proper subgroup of \(\Gt\),
we may apply the result inductively to \(\Ht\).
Since
\(C_\Ht(\Gamma')\conn\) equals \(C_\Gt(\Gamma')\conn\),
the conclusion for \(\Ht\) is the same as that for \(\Gt\).
\end{proof}

Proposition \ref{prop:inner-solvable} relies on
surprisingly deep facts about simple groups whose orders avoid
certain small primes.
We thank the user who pointed out that the Suzuki groups are
the only non-commutative, simple groups of
order relatively prime to \(3\) in \cite{MSE4959294},
and the user who provided the excellent history and
literature survey in \cite{MSE4959778}.

We shall also need to rely on the following fact.

\begin{rem}
\label{rem:rank-4-facts}
Suppose that $k$ is separably closed,
$\Gt$ is almost simple of rank at most $4$,
and $\Gt$ does not have type
\(\mathsf F_4\).
Then there is a homomorphism
\abmap\Gt{\PGL_8} with central kernel,
and the group of diagram automorphisms of $\Gt$ has
order at most \(6\).
\end{rem}

\begin{prop}
\label{prop:inner-solvable}
Suppose that
\(\Gt\) is absolutely almost simple, and
\(\gamma\) is nontrivial.
Let \(\mathcal H\) be a nontrivial, finite subgroup of \(\Gt(k)\) of
order relatively prime to \(p\).
Then we have the following:
\begin{enumerate}[label=(\alph*), ref=\alph*]
\item\label{subprop:inner-solvable}
\(\mathcal H\) is solvable.
\item\label{subprop:inner-toral}
\(\mathcal H\)
admits a nontrivial, \(\gamma\)-stable subgroup that is
contained in a torus in \(\Gt\).
\end{enumerate}
\end{prop}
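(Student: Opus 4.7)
The plan is to establish (a) by a case analysis on $p$, and then derive (b) from the solvability given by (a) together with classical facts about commuting semisimple elements in reductive groups.

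For (a), Remark~\ref{rem:inspection}, combined with the standing hypothesis that $\Gt$ is absolutely almost simple admitting a nontrivial quasisemisimple automorphism $\gamma$ of order dividing $p$, forces $p \in \sset{2, 3}$, with $p = 3$ arising only when $\Gt$ has type $\mathsf D_4$. If $p = 2$, the group $\mathcal H$ has odd order and the Feit--Thompson odd order theorem immediately gives solvability. If $p = 3$, then $\Gt$ has rank $4$, so Remark~\ref{rem:rank-4-facts} supplies a homomorphism $\Gt \to \PGL_8$ with central kernel. The image of $\mathcal H$ is then a finite subgroup of $\PGL_8(k)$ of order coprime to $3$, and its differing from $\mathcal H$ by an abelian central subgroup reduces the solvability of $\mathcal H$ to that of the image. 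Were the image nonsolvable, it would contain among its composition factors a non-abelian finite simple group $S$ of order coprime to $3$; by a classical theorem of Thompson (implied by, but not requiring, the full force of the classification of finite simple groups), every such $S$ is isomorphic to a Suzuki group $\mathrm{Sz}(2^{2n + 1})$ with $n \ge 1$. However, every Suzuki group has minimum faithful projective representation degree strictly greater than $8$ in characteristics coprime to its order, including characteristic $3$ (where Brauer character degrees coincide with ordinary character degrees since $3 \nmid \card{\mathrm{Sz}(q)}$), contradicting the existence of the embedding into $\PGL_8(k)$.

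For (b), given (a), the group $\mathcal H$ is solvable, so the final nontrivial term $A$ of its derived series is a characteristic abelian subgroup. The elements of $A$ all have order coprime to $p$, hence are semisimple in $\Gt$, and being pairwise commutative they lie in a common maximal torus of $\Gt$ by a standard result (for instance, the \'etale analogue of Lemma~\ref{lem:toral-Lie-is-Lie-torus}, or \cite{borel:linear}*{Proposition~11.10}). To secure $\gamma$-stability, one considers the iterated intersection $\bigcap_{i = 0}^{p - 1} \gamma^i(A)$, which is $\gamma$-stable by construction; in the setting in which (b) is applied it remains nontrivial, and in particular whenever $\mathcal H$ may be taken $\gamma$-stable (as happens in all the intended applications via Proposition~\ref{prop:inductive-step-p}), the characteristic subgroup $A$ of $\mathcal H$ is itself automatically $\gamma$-stable, furnishing the required subgroup.

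The main obstacle is the case $p = 3$ of (a): verifying that no Suzuki group admits a faithful projective representation of degree at most $8$ in characteristic $3$. This reduces to a finite computation with the known character degrees of $\mathrm{Sz}(q)$, but requires uniform estimates over the infinite family of Suzuki groups, together with a careful invocation of Thompson's classification of non-abelian finite simple groups of order prime to $3$ in the form used here.
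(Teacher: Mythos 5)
Your outline of part (a) for $p = 2$ is correct (Feit--Thompson), and you correctly narrow the $p = 3$ case to type $\mathsf D_4$. But the $p = 3$ case of (a) has a real gap: you pass from ``the image of $\mathcal H$ in $\PGL_8(k)$ is nonsolvable, hence has a Suzuki composition factor $S$'' to ``$S$ embeds in $\PGL_8(k)$, contradiction.'' A composition factor of a subgroup of $\PGL_8$ is a subquotient, not a subgroup, and need not embed into $\PGL_8$. The paper spends most of the proof of (a) on exactly this issue: it runs a minimality argument (choosing a minimal central quotient of a connected reductive subgroup of $\Gt$ still carrying such a nonsolvable $\mathcal H$), forces $\Gt$ adjoint, then shows that if $\mathcal H' = \ker(\mathcal H \to \mathcal H'')$ is nontrivial one can take the last term of its derived series, pass to a smaller centralizer, and contradict minimality. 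Only then is the Suzuki group an actual subgroup, and only then does the representation-theoretic dimension count (which the paper further splits into $n = 1$ via the ATLAS and $n > 1$ via triviality of the Schur multiplier and Suzuki's degree formula) apply.

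The more serious problem is in (b). You claim that the last nontrivial derived term $A$, being a commutative group of semisimple elements, lies in a common maximal torus ``by a standard result.'' This is false: the Klein four-group generated by the images of $\begin{smallpmatrix} 1 & 0 \\ 0 & -1 \end{smallpmatrix}$ and $\begin{smallpmatrix} 0 & 1 \\ 1 & 0 \end{smallpmatrix}$ in $\PGL_2$ (see Remark~\ref{rem:torus-quass}) consists of commuting semisimple elements of order prime to $p$ (for $p$ odd) yet lies in no torus; the obstruction is the disconnectedness of centralizers in non--simply-connected groups. (Lemma~\ref{lem:toral-Lie-is-Lie-torus} concerns commuting semisimple \emph{Lie algebra} elements, where this issue does not arise, and does not carry over.) The paper's (b) therefore does not attempt to put all of $A$ in a torus: it shows that a single element of $\mathcal H$ of prime order together with its $\gamma$-translate already generates a $\gamma$-stable subgroup (because $\gamma^2(\bar s)\gamma(\bar s)\bar s$ is $\gamma$-fixed, hence trivial), and then works hard --- via the order of $\pi_0(C_\Gt(\bar s))$ when $\ell$ is odd, and via lifting to $\Gt\scform$ and the fixed-point-free action of $\gamma\scform$ on $Z(\Gt\scform)(k)$ when $\ell = 2$ --- to show $\gamma(\bar s) \in C_\Gt(\bar s)\conn(k)$, which is exactly what is needed to place both elements in one torus. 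Your intersection $\bigcap_i \gamma^i(A)$ would typically be trivial and your hedge that ``in the setting in which (b) is applied it remains nontrivial'' is not a proof. (You also implicitly need $\mathcal H$ $\gamma$-stable even to talk about $\gamma$-translates lying in $\mathcal H$; the paper's proof does assume this silently, matching its sole application.)
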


\begin{proof}
Let \(\mathcal H\) be such a subgroup of \(\Gt(k)\).
Since \(\gamma\) is nontrivial and
\(\gamma^p\) is trivial, we have that \(p\)
is not \(1\), hence is prime.
We may, and do, replace \(k\) by \(\ka\).
Note that every element of \(\mathcal H\) is semisimple.

If \(p\) equals \(2\), then
(\ref{subprop:inner-solvable}) follows from
the Feit--Thompson odd-order theorem (!)\
\cite{feit-thompson:solvable}.
For (\ref{subprop:inner-toral}),
we may, and do, assume, upon replacing \(\mathcal H\) by
the last nontrivial term in its derived series, that
\(\mathcal H\) is nontrivial and commutative.
Of course, every cyclic subgroup of \(\mathcal H\) is
contained in a maximal torus in \(\Gt\)
\cite{borel:linear}*{Corollary 11.12}.
If \(\gamma\) has a nontrivial fixed point on
\(\mathcal H\), then
the subgroup generated by that fixed point is \(\gamma\)-stable.
If not, then
	 \(\bar s\gamma(\bar s)\) is \(\gamma\)-fixed, so
	 trivial, for all
	 \(\bar s \in \mathcal H\); hence
	 \(\gamma\) acts by inversion on \(\mathcal H\),
so every cyclic subgroup of \(\mathcal H\) is \(\gamma\)-stable.
This shows (\ref{subprop:inner-toral}).

We have handled the case \(p = 2\).
Since \(\Gt\) is (absolutely) almost simple and
admits an outer automorphism \(\gamma\) of order \(p\),
by Remark \ref{rem:inspection},
the only other possibility is that
\(p\) equals \(3\) and \(\Gt\) is of type \(\mathsf D_4\).
We suppose for the remainder of the proof that
we are in this case.

We first show how (\ref{subprop:inner-toral})
follows from (\ref{subprop:inner-solvable}).
Thus, we assume for now that
\(\mathcal H\) is solvable.
Again, we may assume that \(\mathcal H\) is commutative and that
it contains no nontrivial \(\gamma\)-fixed points.
Fix a nontrivial element \(\bar s \in \mathcal H\) of
prime order \(\ell\).
Since
\(\gamma^2(\bar s)\gamma(\bar s)\bar s\)
is preserved by \(\gamma\), it is trivial; so
\(\gamma^2(\bar s)\) equals
\(\bar s\inv\gamma(\bar s)\inv\), whence
the group generated by
\(\bar s\) and \(\gamma(\bar s)\) is
\(\gamma\)-stable.
We claim that
\(\gamma(\bar s)\), which certainly lies in \(C_\Gt(\bar s)(k)\)
(because \(\mathcal H\) is commutative), actually lies in
\(C_\Gt(\bar s)\conn(k)\).
This will show that
\(\gamma(\bar s)\) belongs to a torus in \(C_\Gt(\bar s)\conn\),
which necessarily also contains \(\bar s\)
\cite{borel:linear}*{Corollary 11.12}, hence that
the group generated by \(\bar s\) and \(\gamma(\bar s)\) is
contained in a torus in \(\Gt\).

Suppose first that \(\ell\) is odd.
We have by
\cite{digne-michel:non-connected}*{Proposition 1.27} that
\(\pi_0(C_\Gt(\bar s))(k)\) has order dividing \(\ell\), and by
\cite{steinberg:endomorphisms}*{Theorem 9.1(a)} that
\(\pi_0(C_\Gt(\bar s))(k)\) is isomorphic to a subquotient of
\(\ker(\abmap{\Gt\scform}\Gt)(k)\).
Since \(\Gt\) is of type \(\mathsf D_4\) and
\(\ker(\abmap{\Gt\scform}\Gt)\) is central in
\(\Gt\scform\),
the group of \(k\)-rational points of the kernel, hence
its subquotient \(\pi_0(C_\Gt(\bar s))(k)\), has
\(2\)-power order.
That is, \(\pi_0(C_\Gt(\bar s))(k)\) is trivial, so
\(C_\Gt(\bar s)\) is connected.

Thus we may, and do, assume that
\(\bar s\) has order \(\ell = 2\).
We now turn to the more subtle matter of
showing that \(\gamma(\bar s)\) still
belongs to \(C_\Gt(\bar s)\conn(k)\).

By two applications of \cite{steinberg:endomorphisms}*{Lemma 9.16},
there is a unique automorphism
\(\gamma\scform\) of \(\Gt\scform\) whose action is intertwined,
\textit{via} the quotient \abmap{\Gt\scform}{\Gt\der},
with the action of \(\gamma\) on \(\Gt\der\); and
\(\gamma\scform^3\) is trivial
(because it is intertwined with \(\gamma^3 = 1\)).
In particular, \(\gamma\scform\) is an
order-\(3\), outer automorphism of \(\Gt\scform\).
With the Bourbaki numbering
\cite{bourbaki:lie-gp+lie-alg_4-6}*{Chapter VI, Plate IV}
(see \eqref{eq:D4-Bourbaki})
of a system of simple roots of \(\Phi(\Gt, \Tt)\)
(except with \(\alphat\) instead of \(\alpha\)),
we have that
\(\gamma\scform\) acts as the \(3\)-cycle
\((\alphat_1\ \alphat_3\ \alphat_4)\) on the Dynkin diagram, and
\(Z(\Gt\scform)\) is the constant Klein 4-group
whose nontrivial elements are
\((\alphat_1^\vee + \alphat_3^\vee)(-1)\),
\((\alphat_1^\vee + \alphat_4^\vee)(-1)\), and
\((\alphat_3^\vee + \alphat_4^\vee)(-1)\).
In particular, the action of \(\gamma\scform\) on
\(Z(\Gt\scform)(k)\) has
no nontrivial fixed points.

Let \(s\) be a lift of \(\bar s\) to \(\Gt\scform(k)\).
Remember that
\(\gamma^2(\bar s)\) equals
\(\bar s\inv\gamma(\bar s)\inv = \bar s\gamma(\bar s)\), so the lifts
\(\gamma\scform^2(s)\) of \(\gamma^2(\bar s)\) and
\(s\gamma\scform(s)\) of \(\bar s\gamma(\bar s)\) are
translates of one another by
\(\ker(\abmap{\Gt\scform}\Gt) \subseteq Z(\Gt\scform)\).
That is, there is some \(z \in Z(\Gt\scform)(k)\) such that
\(\gamma\scform^2(s)\) equals \(s\gamma(s)z\).
Similarly, since the image
\([\bar s, \gamma(\bar s)]\) of
\([s, \gamma\scform(s)]\) in \(\mathcal H\) is trivial, we have that
\([s, \gamma\scform(s)]\) lies in \(Z(\Gt\scform)(k)\).
Further, we have that
\begin{multline*}
\gamma\scform[s, \gamma\scform(s)]
\quad\text{equals}\quad \\
[\gamma\scform(s), \gamma\scform^2(s)] =
[\gamma\scform(s), s\gamma\scform(s)z] =
[\gamma\scform(s), s]\cdot\Int(s)[\gamma\scform(s), \gamma\scform(s)z] =
[\gamma\scform(s), s] = [s, \gamma\scform(s)]\inv.
\end{multline*}
It follows that \(\gamma\scform^2\), hence also
\(\gamma\scform = \gamma\scform^4\), fixes \([s, \gamma\scform(s)]\).
Since \(\gamma\scform\) has no nontrivial fixed points on
\(Z(\Gt\scform)(k)\), it follows that
\([s, \gamma\scform(s)]\) is trivial, i.e., that
\(\gamma\scform(s)\) belongs to \(C_{\Gt\scform}(s)(k)\); and
so, by \cite{steinberg:endomorphisms}*{Lemma 9.2(a)}, that
its image
\(\gamma(\bar s)\) belongs to \(C_\Gt(\bar s)\conn(k)\).
This completes the proof of
(\ref{subprop:inner-toral}), assuming
(\ref{subprop:inner-solvable}).

Recall that we have reduced to the case that
\(p\) equals \(3\) and \(\Gt\) is of type \(\mathsf D_4\).
To prove (\ref{subprop:inner-solvable}),
we drop the assumption that \(\mathcal H\) is solvable;
indeed, we assume for the sake of contradiction that it is not.

We may, and do, assume, upon replacing \(\Gt\) by
a central quotient of a connected, reductive subgroup of \(\Gt\), that
$\Gt$ has no
nontrivial such subquotient
whose
group of \(k\)-rational points contains
a finite, non-solvable subgroup of
order relatively prime to \(p = 3\).
(Although this replacement preserves almost simplicity, it
may destroy the existence of a
nontrivial, order-\(p\), quasisemisimple automorphism of \(\Gt\);
but this is no issue, since we used the existence of
such an automorphism only to force \(p = 3\).)
Since \(\mathcal H\) is contained in
the extension of its image in \(\Gt\adform(k)\) by
the commutative group \(Z(\Gt)(k)\),
the image of \(\mathcal H\) in \(\Gt\adform(k)\) is still
a finite, non-solvable subgroup of
order relatively prime to \(p = 3\); so
our minimality assumption forces \(\Gt\) to be adjoint.

Now there is a positive integer \(n\) such that
the Suzuki group \(\mathcal H'' \ldef \lsup2\mathsf B_2(2^{2n + 1})\)
(of order \(2^{2(2n + 1)}(2^{2(2n + 1)} + 1)(2^{2n + 1} - 1)\))
is a composition factor of \(\mathcal H\)
\citelist{
	\cite{aschbacher:s3-free-2-fusion}*{p.~29, Corollary 4}
	\cite{toborg-waldecker:3'=>Sz}*{Theorem 3.1}
}.
Upon replacing \(\mathcal H\) by
an appropriate subgroup, we may, and do, assume that it is
an extension of \(\mathcal H''\) by a solvable group \(\mathcal H'\).

Suppose that \(\mathcal H'\) is nontrivial.
Then the last nontrivial term in the derived series of \(\mathcal H''\) is
a finite, commutative subgroup \(\mathcal S\) of \(\Gt(k)\) of
order relatively prime to \(p\) that
is normalized by \(\mathcal H\), i.e., for which
\(\mathcal H\) is contained in \(N_\Gt(\mathcal S)(k)\).
Since the constant group associated to
\(\mathcal S\) is commutative, and linearly reductive by
Remark \ref{rem:all-ss=>lr}, it is
of multiplicative type by
\cite{milne:algebraic-groups}*{Proposition 12.54}.
The rigidity of such groups
\cite{milne:algebraic-groups}*{Corollary 12.37} gives that
\(C_\Gt(\mathcal S)\conn\), which is
reductive \cite{conrad-gabber-prasad:prg}*{Proposition A.8.12} and
obviously connected,
is the identity component of \(N_\Gt(\mathcal S)\).
Since \(N_\Gt(\mathcal S)(k)/C_\Gt(\mathcal S)\conn(k)\) may
be identified with a group of diagram automorphisms of
\(C_\Gt(\mathcal S)\conn\), it has order at most \(6\)
from Remark \ref{rem:rank-4-facts}.
Since the order of \(\mathcal H''\) is at least
\(8^2(8^2 + 1)(8 - 1)\), which is greater than \(6\), we have that
\(\mathcal H''\) is a composition factor of
\(\mathcal H \cap C_\Gt(\mathcal S)\conn(k)\).
That is, \(\mathcal H \cap C_\Gt(\mathcal S)\conn(k)\) is
a finite, non-solvable subgroup of \(C_\Gt(\mathcal S)\conn(k)\)
consisting of semisimple elements.
By minimality of \(\Gt\), we have that
\(\mathcal S\) is central in \(\Gt\), hence, because
\(\Gt\) is adjoint, is trivial.
This is a contradiction.

That is, \(\mathcal H'\) is trivial, so
the Suzuki group \(\mathcal H''\) is a subgroup of
\(\Gt\).
We may, and do, replace \(\mathcal H\) by \(\mathcal H''\).
From Remark \ref{rem:rank-4-facts},
there is a homomorphism
\map\rho\Gt{\PGL_8}
with central kernel.
Since \(\mathcal H\) is now simple and non-commutative,
the intersection of \(\mathcal H\) with
\(\ker(\rho)(k)\) is trivial, so
\(\rho\) restricts to an embedding of
\(\mathcal H\) into \(\PGL_8(k)\).
Write \(\mathcal H^+\) for the pre-image of
\(\mathcal H\) in \(\SL_8(k)\).
Then
the characteristic exponent of \(k\)
(which is \(1\) or \(3\))
is relatively prime to the order of \(\mathcal H^+\), and
\(\mathcal H^+\) has a nontrivial representation on \(k^8\).
By Maschke's theorem \cite{isaacs:char-fg}*{Theorem 1.9},
all representations of \(\mathcal H^+\) on \(k\)-vector spaces are
completely reducible, so
the existence of
a nontrivial representation of \(\mathcal H^+\) on \(k^8\) implies
the existence of
an \emph{irreducible} representation of \(\mathcal H^+\) on
a \(k\)-vector space of dimension at most \(8\).
By \cite{isaacs:char-fg}*{Theorem 15.13},
there is a
nontrivial, irreducible, complex representation of
\(\mathcal H^+\) of dimension at most \(8\),
hence a
nontrivial, irreducible, complex \emph{projective} representation of
\(\mathcal H\) of dimension at most \(8\).

If \(n\) equals \(1\), that is, if
\(\mathcal H\) is the Suzuki group \(\lsup2\mathsf B_2(8)\), then
\cite{atlas-fg}*{p.~28} shows that the smallest dimension
of a nontrivial, irreducible, complex projective representation of
\(\mathcal H\) is \(14\),
which is a contradiction.

If \(n\) is greater than \(1\), then
\(\mathcal H\) has trivial Schur multiplier
\cite{alperin-gorenstein:schur}*{p.~515, Theorem 1}, so
\(\mathcal H\) has a
nontrivial, irreducible, complex (linear) representation of
dimension at most \(8\).
By \cite{suzuki:my-groups}*{\S11, p.~127, Theorem 5},
the smallest dimension of a
nontrivial, irreducible, complex representation
of \(\mathcal H\) is
\(2^n(2^{2n + 1} - 1)\), which is greater than \(8\).
Again, this is a contradiction.
This completes the proof of
(\ref{subprop:inner-solvable}).
\end{proof}

\subsection{Purely outer, non-cyclic actions on $\mathsf D_4$}
\label{subsec:D_4-outer}

We are building to the proof of
Theorem \ref{thm:loc-quass}, which concerns the action of
a group of automorphisms each of which individually
preserves a Borel--torus pair, but where the pair might
depend on the automorphism.
Although this is a weakening of the hypothesis of
Theorem \ref{thm:quass}, where the automorphisms are
required to preserve a \emph{common} Borel--torus pair,
we will show in \S\ref{subsec:thm:loc-quass} that
fine control of the automorphism groups of
absolutely almost-simple groups allows us nearly to
reduce to that case, or the prime-to-\(p\) case handled by
\cite{prasad-yu:actions}.
Most of the difficulty is caused by the presence of
an outer-automorphism group whose order is divisible by \(p\),
which was handled in \S\ref{subsec:quass-unip}, and by
a non-cyclic outer-automorphism group.
In this subsection, we handle the latter case.

Remember that \(p\) is \(1\) or a prime number, and that
\(k\) has characteristic exponent \(p\) or \(1\).
In this section, we consider only \(p = 2\).

\begin{prop}
\label{prop:D_4-outer}
Suppose that
\(k\) is separably closed,
\(p\) equals \(2\),
\((\Gt, \Gamma)\) is a reductive datum with
\(\Gt\) the adjoint group of type \(\mathsf D_4\), and
the map \(\abmap{\Gamma}{\uOut(\Gt)}\) is an isomorphism.
Let \(\sigma\) and \(\tau\) be elements of \(\Gamma(k)\cong\Out(\Gt)\)
of
respective orders \(2\) and \(3\).
Suppose that \(\sigma\) and \(\tau\) act quasisemisimply on \(\Gt\).
Then \(\sigma\) acts quasisemisimply on \((\Gt^\tau)\conn\), and
\(((\Gt^\tau)\conn)^\sigma\) is smooth.
\end{prop}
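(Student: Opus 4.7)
The idea is to show that $\sigma$ actually acts trivially on $H \ldef (\Gt^\tau)\conn$, from which both conclusions of the proposition follow at once. Since $\tau$ is quasisemisimple of order $3$ (prime to $p = 2$), standard theory (\cite{steinberg:endomorphisms}*{Theorem 8.1}, or Theorem \ref{thm:quass} together with Remark \ref{rem:ss-loc-quass}) gives that $\Gt^\tau$ is smooth and $H$ is connected reductive of type $\mathsf G_2$ (the classical triality computation; compare Lemma \ref{lem:rd-Dynkin}). Because $\sigma\tau\sigma\inv = \tau\inv$, $\sigma$ preserves $\Gt^\tau$ and therefore acts on $H$; since $\Out(\mathsf G_2) = 1$ and $Z(\mathsf G_2) = 1$, this action is $\Int(g)$ for a unique $g \in H(k)$, and $\sigma^2 = 1$ yields $g^2 = 1$.

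By Steinberg's rational conjugacy theorem for quasisemisimple elements of order prime to $p$ (\cite{steinberg:endomorphisms}*{\S7--\S10}), after conjugating $\Gamma$ by an element of $\Gt(k)$ we may assume $\tau = \tau_0$ is the pinning-preserving triality with respect to some pinning $(\Bt_0, \Tt_0, \sset{X_\alpha})$ of $\Gt$ (and the quasisemisimplicity of $\sigma$ is preserved by this inner adjustment). Let $\sigma_0$ denote the pinning-preserving order-$2$ outer automorphism satisfying $\sigma_0\tau_0\sigma_0 = \tau_0\inv$, and write $\sigma = \Int(s)\sigma_0$ uniquely for some $s \in \Gt(k)$ (possible as $\Gt$ is adjoint). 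A direct computation from $\sigma\tau_0\sigma\inv = \tau_0\inv$ forces $s \in \Gt^{\tau_0}(k) = H(k)$ (using connectedness of $\Gt^{\tau_0}$, valid since $|\tau_0|$ is prime to $p$). Since $\sigma_0$ preserves the induced pinning of $H \cong \mathsf G_2$ and $\mathsf G_2$ has no diagram automorphisms, $\sigma_0|_H$ is trivial; in particular $\sigma_0(s) = s$, so $\sigma^2 = 1$ becomes $s^2 = 1$, and $\sigma|_H = \Int(s)$, identifying $g = s$.

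The crux is to use the quasisemisimplicity of $\sigma$ on $\Gt$ to show $s$ is semisimple. Form the (non-connected) reductive group $\Gt^+ \ldef \Gt \rtimes \sgen{\sigma_0}$, in which $s\sigma_0$ is an element of order $2$ whose inner action on $\Gt$ coincides with $\sigma$. By the standard theory of quasisemisimple elements in non-connected reductive groups (\cite{steinberg:endomorphisms}*{\S7, \S9}; compare Theorem \ref{thm:ka-quass}(\ref{subthm:ka-quass-quass})), $\sigma$ is quasisemisimple on $\Gt$ if and only if $s\sigma_0$ is a semisimple element of $\Gt^+$. Because $s \in H$ and $\sigma_0|_H = \mathrm{id}$, the elements $s$ and $\sigma_0$ commute in $\Gt^+$, so writing $s = s_{\mathrm{s}} s_{\mathrm{u}}$ for the Jordan decomposition of $s$ in $\Gt$, the factorisation $s\sigma_0 = (s_{\mathrm{s}}\sigma_0) \cdot s_{\mathrm{u}}$ has commuting factors, with $s_{\mathrm{s}}\sigma_0$ semisimple (commuting product of two semisimple elements, noting that $\sigma_0$ is semisimple in $\Gt^+$ by virtue of being pinning-preserving) and $s_{\mathrm{u}}$ unipotent. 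Uniqueness of Jordan decomposition in $\Gt^+$ yields $(s\sigma_0)_{\mathrm{u}} = s_{\mathrm{u}}$, so semisimplicity of $s\sigma_0$ forces $s_{\mathrm{u}} = 1$, i.e., $s$ is semisimple in $H$. Then $s$ lies in a maximal torus $T$ of $H$, and in characteristic $2$ we have $T[2](k) = 0$, so $s^2 = 1$ forces $s = 1$.

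Hence $\sigma|_H = \mathrm{id}$, which is trivially quasisemisimple, and $((\Gt^\tau)\conn)^\sigma = H$ is smooth, as required. The main obstacle will be the Jordan-decomposition step in the non-connected group $\Gt^+$: one must carefully justify both the equivalence between quasisemisimplicity of $\sigma$ as an automorphism of $\Gt$ and semisimplicity of $s\sigma_0$ as an element of $\Gt^+$, and the semisimplicity of the pinning-preserving $\sigma_0 \in \Gt^+$ itself. Both are standard facts about non-connected reductive groups, but they deserve careful handling in characteristic $2$, where pathologies involving unipotent elements of order $2$ are abundant.
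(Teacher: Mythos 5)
Your proposal has two genuine gaps, both of which you partially flag at the end, but which are not matters of ``careful handling''; they are fundamental errors that break the argument.

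First, the claimed reduction to the pinned case is false. There is no conjugacy theorem asserting that all quasisemisimple order-\(3\) outer automorphisms of \(\Gt\) in the triality coset of \(\Out(\Gt)\) are \(\Gt(k)\)-conjugate---nor even \(\Gt(\ka)\)-conjugate. Indeed, the pinned triality \(\tau_0\) has fixed-point group of type \(\mathsf G_2\), dimension \(14\), while \(\tau_0 \circ \Int(\varpi_2^\vee(\zeta))\) with \(\zeta\) a primitive cube root of unity has fixed-point group of dimension \(4\) (an \(\mathsf A_1\) together with a rank-\(1\) central torus). Conjugate automorphisms have isomorphic fixed-point groups, so these two automorphisms are not conjugate, and both can occur as the action of your \(\tau\). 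As a result, your proof covers only one of two genuinely distinct cases. Moreover, in the non-pinned case, the intended conclusion ``\(\sigma\rvert_H\) is trivial'' is simply \emph{wrong}: the paper's analysis shows that when \(\sigma\) inverts \(Z(H)\), one has \(H^\sigma = H_\dersub \neq H\), which is still smooth but is a proper subgroup. So the strategy of showing \(\sigma\rvert_H = \mathrm{id}\) cannot work in general; a different argument for smoothness of \(H^\sigma\) is required.

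Second, the Jordan-decomposition step fails outright in characteristic \(2\). The pinning-preserving involution \(\sigma_0\), viewed as an element of \(\Gt^+ = \Gt \rtimes \sgen{\sigma_0}\), has order \(2 = p\) and is therefore \emph{unipotent}, not semisimple. Likewise, since \(s \in H(k)\) satisfies \(s^2 = 1\) and commutes with \(\sigma_0\), the element \(s\sigma_0\) has \((s\sigma_0)^2 = 1\), hence is unipotent. A nontrivial unipotent element is never semisimple, so the asserted equivalence ``\(\sigma\) quasisemisimple on \(\Gt\) iff \(s\sigma_0\) is a semisimple element of \(\Gt^+\)'' cannot hold: this would force \(s\sigma_0 = 1\), which is impossible, contradicting the hypothesis that \(\sigma\) is quasisemisimple. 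The relationship between quasisemisimplicity of an automorphism and semisimplicity of the corresponding element in a non-connected group (\cite{steinberg:endomorphisms}*{Theorem~7.5}, the paper's Corollary \ref{cor:ss-is-quass}) goes only from semisimplicity to quasisemisimplicity; the converse fails precisely when the order is divisible by \(p\), which is the case here. The paper's proof in the pinned case instead shows that a quasisemisimple involution in characteristic \(2\) preserves a pinning, then uses a nonabelian \(H^1\) argument via Corollary \ref{cor:H1-inject} together with an explicit computation in the root groups to conclude that the twisting element \(u\) is trivial. You will need a replacement for your Jordan-decomposition step along those lines, and a separate argument for the non-pinned \(\tau\) case (as in the paper, analyzing the action of \(\sigma\) on the almost-simple components of \(C_\Gt(d\varpi_2^\vee)\conn\), a group of type \(4\mathsf A_1\)).
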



\begin{proof}
Let \((\Bt, \Tt)\) be a Borel--torus pair in \(\Gt\) that
is preserved by \(\tau\).
Since the characteristic exponent of \(k\) is not \(3\),
we have that \(\tau\) is semisimple,
so Remark \ref{rem:all-ss=>lr} and
\cite{conrad-gabber-prasad:prg}*{Proposition A.8.10(2)} give that
\(\Tt^\tau\) and \(\Gt^\tau\) are smooth.
Theorem \ref{thm:quass}(\ref{subthm:quass-reductive})
gives that \(H \ldef (\Gt^\tau)\conn\) is reductive, while
Proposition \ref{prop:quass-facts}(\ref{subprop:quass-split-descends})
and
Proposition \ref{prop:quass-rough}(\ref{subprop:quass-down},\ref{subprop:quass-up})
give that
\(T \ldef (\Tt^\tau)\conn\) is
a maximal torus in \(H\), and
\(C_\Gt(T)\) equals \(\Tt\).

If \(k\) has characteristic exponent \(1\), then
\(\sigma\) is semisimple, so the result follows from
Corollary \ref{cor:ss-is-quass}.
Thus we may, and do, assume that
\(k\) has characteristic exponent \(2\).

We have that \(\Delta(\Bt, \Tt)\) is
the union of two \(\tau\)-orbits of sizes \(1\) and \(3\).
Specifically, if we number \(\Delta(\Bt, \Tt)\) as
in \eqref{eq:D4-Bourbaki},
then \(\tau\) fixes \(\alphat_2\), and
admits \(\sset{\alphat_1, \alphat_3, \alphat_4}\) as
an orbit.
Upon replacing \(\tau\) by \(\tau\inv\) if necessary, which
does not affect the conclusion, we may, and do, assume that
\(\tau\alphat_1\) equals \(\alphat_3\), and
\(\tau\alphat_3\) equals \(\alphat_4\).
If \(\Xt_{\alphat_1}\) is a nonzero element of
\(\Lie(\Gt)_\alphat\), then
the \(\tau\)-orbit of \(\Xt_{\alphat_1}\) is
obviously preserved by \(\tau\), and
contains exactly one nonzero root vector for each
root in \(\sset{\alphat_1, \alphat_3, \alphat_4}\).
Thus, combining it with any nonzero element of
the \(\alphat_2\)-root space gives
a pinning \(\tildepin\), in the sense of
\citelist{
	\cite{SGA-3.3}*{Expos\'e XXIII, D\'efinition 1.1}
	\cite{kaletha-prasad:bt-theory}*{Definition 2.9.1}
}.
This pinning is preserved by \(\tau\) if and only if
\(\tau\) is pinned, in the sense of
\cite{SGA-3.3}*{Expos\'e XXIII, D\'efinition 1.3}.
It determines a section of the natural map from
\(\Aut(\Gt)\) to
the group of diagram automorphisms of
the based root datum \(\Psi(\Gt, \Bt, \Tt)\), hence a
retraction of \(\Aut(\Gt)\) onto
the subgroup of automorphisms that
preserve \(\Bt\), \(\Tt\), and \(\tildepin\).
Write \(\sigma_0\) and \(\tau_0\) for
the images of \(\sigma\) and \(\tau\) under
this retraction.
Thus, \(\sigma_0\) and \(\tau_0\) are the pinned automorphisms corresponding
to \(\sigma\) and \(\tau\).

Write \(\alpha_1 = \alpha_3 = \alpha_4\) for
the common restriction to \(T\) of
\(\alphat_1\), \(\alphat_3\), and \(\alphat_4\), and
\(\alpha_2\) for the restriction of \(\alphat_2\).
We have that
\(\Phi((\Gt^{\tau_0})\conn, T)\) equals \(\Phi(\Gt, T)\) and is of
type \(\mathsf G_2\), and
\(\Delta((\Bt^{\tau_0})\conn, T)\)
and
\(\Delta(\Bt, T)\)
both equal
\(\sset{\alpha_1, \alpha_2}\).

Suppose first that \(\tau\) is pinned, so that
\(\tau\) equals \(\tau_0\) and
\(H\) equals \((\Gt^{\tau_0})\conn\).
We have that
\(\sigma_0\) preserves \((\Bt,\Tt)\),
and \(\sigma\) equals \(\sigma_0\circ\Int(u)\) for some \(u\in\Gt(k)\).
Since both \(\sigma\) and \(\sigma_0\) normalize \(\sgen\tau\),
they preserve \(H\), so
\(\Int(u)\) must as well.

Since \(H\) is of type \(\mathsf G_2\),
it has no nontrivial outer automorphisms, so
the restriction of \(\sigma_0\) to \(H\)
is a toral inner automorphism, hence is trivial since its order is \(2\).
Thus the common restriction of
\(\sigma\) and \(\Int(u)\) to \(H\) is inner, hence
coincides with \(\Int(u')\) for some
\(u'\in H(k)\)
(because \(H\) is adjoint).
That is, \(u u^{\prime\,{-1}}\) centralizes \(H\).
Since \(H\) is adjoint and
\(\Int(u')^2\) is the restriction of \(\sigma^2 = 1\) to \(H\),
we have that \(u^{\prime\,2}\) is trivial, so \(u'\) is unipotent.
Examining the orbits of \(\tau\) on \(\Psi(\Gt,\Tt)\)
shows that each root space for
\(\Tt\) in \(\Lie(\Gt)\) is
the image under projection from a root space for
\(T\) in \(\Lie(H)\).
Consequently, the group \( C_\Gt(H)\), which is certainly contained in
\(C_\Gt(T) = \Tt\), also
acts trivially on each root space for \(\Tt\) in \(\Lie(\Gt)\),
hence is central in \(\Gt\).
In particular, \(u u^{\prime\,{-1}}\) is central in \(\Gt\).
That is, \(\Int(u)\) equals \(\Int(u')\) on all of \(\Gt\), so
we may, and do, assume, upon replacing \(u\) by \(u'\), that
\(u\) is a unipotent element of \(H(k)\).
Let \(B_H\) be a Borel subgroup of \(H\) with \(u\in B_H(k)\),
and let \(\Ut\) be the \(\tau\)-stable unipotent radical of a
\(\tau\)-stable Borel subgroup of
\(\Gt\) containing \(B_H\)
(whose existence is given by
Proposition \ref{prop:quass-facts}(\ref{subprop:fixed-Borus})).

Since \(k\) has characteristic exponent \(2\)
and \(\sigma\) is a quasisemisimple involution of \(\Gt\),
we have that \(\sigma\) is pinned.
(We are \emph{not} (yet) claiming that \(\sigma\) preserves
(\(\Bt\), \(\Tt\)), or \(\tildepin\), only that \(\sigma\) preserves
some pinning with respect to
a Borel--torus pair that it preserves.
The argument for this is the same as we used to show that
\(\tau\) was ``almost pinned'',
namely, collecting
pairs of root vectors for every pair of roots swapped by \(\sigma\),
but now noting that the scalar by which \(\sigma\) can
act on the root space corresponding to
a \(\sigma\)-fixed root is
an element of \(\mu_2(k) = \sset1\).)
Since all pinnings of \(\Gt\) are conjugate in \(\Gt(k)\)
(because \(\Gt\) is adjoint),
there exists \(g\in\Gt(k)\) such that
\(\Int(g)\sigma\Int(g)\inv = \Int(g)\sigma_0\Int(u)\Int(g)\inv\) equals \(\sigma_0\).
Letting \(\bar u\) and \(\bar g\) be the respective images
of \(u\) and \(g\) in \(\Gt\adform(k)\),
we have \(\bar u = \sigma_0(\bar g){\bar g}\inv\).
Let \(\Ut'\) be the image of \(\Ut\) in \(\Gt\adform\), and
let \(z\) be the class of the cocycle
\(\theta\mapsto\theta(\bar g)\bar g\inv\) in \(H^1(\langle\sigma_0\rangle,\Ut'(k))\).

Corollary~\ref{cor:H1-inject} implies that the map
\abmap{\Gt\adform^{\sigma_0}(k)}{(\Gt\adform/\Ut')^{\sigma_0}(k)} is surjective.
It follows from the cohomology exact sequence associated to the
inclusion \abmap{\Ut'(k)}{\Gt\adform(k)} that the kernel of the map
\abmap{H^1(\langle\sigma_0\rangle,\Ut'(k))}{H^1(\langle\sigma_0\rangle,\Gt\adform(k))}
is trivial. Since the image of \(z\) in \(H^1(\langle\sigma_0\rangle,\Gt\adform(k))\) is trivial,
\(z\) must therefore be as well. Thus \(\bar u\) equals \(\sigma_0(\bar v)\bar v\inv\) for some \(\bar v\in \bar\Ut(k)\).
Since \(u\) is fixed by \(\tau\), so is
\(\bar u = \sigma_0(\bar v)\bar v\inv\).
A straightforward explicit computation involving the root groups in \(\Ut'\)
shows that this is possible only if \(\sigma_0(\bar v)\) equals \(\bar v\), i.e., \(\bar u = \sigma_0(\bar v)\bar v\inv\) is trivial.
It follows that \(\sigma = \sigma_0 \circ \Int(u)\)
equals \(\sigma_0\), hence acts trivially on \(H\).
This is certainly a quasisemisimple action, and
the fixed-point group is all of \(H\), hence smooth.

Now suppose instead that \(\tau\) is not pinned.
The fundamental coweight
\(\varpi^\vee_2\) associated to
\(\alpha_2 \in \Delta((\Bt^{\tau_0})\conn, T)\)
(which is the same as the fundamental coweight associated to
\(\alphat_2 \in \Delta(\Bt, \Tt)\))
equals
\(\alphat_1^\vee + 2\alphat_2^\vee + \alphat_3^\vee + \alphat_4^\vee\).
Put \(t = \varpi^\vee_2(\zeta)\), where
\(\zeta\) is the scalar by which \(\tau\) acts on
the \(\alphat_2\)-root space.
Then \(\tau\) equals \(\tau_0 \circ \Int(t)\), and
\(H\) equals \(C_{\Gt^{\tau_0}}(t)\conn\).
(There is content to the latter assertion, as
the analogous statement is not true for
an arbitrary product of a pinned automorphism and
a semisimple, inner automorphism with which it commutes.)
In fact, \(Z(H)\) is a torus, and
its cocharacter lattice is spanned by \(\varpi^\vee_2\).
Since \(\sigma\) has order \(2\) and preserves \(Z(H)\), it
sends \(\varpi^\vee_2\) to \(\pm\varpi^\vee_2\), so
fixes \(d\varpi^\vee_2\).
By Lemma \ref{lem:inductive-step-Z|Lie}, we have that
\(\sigma\) acts quasisemisimply on
\(\Ht \ldef C_\Gt(d\varpi^\vee_2)\conn\).
Since \(\varpi^\vee_2\) is a central cocharacter of \(H\), we have that
\(H\) is contained in \(\Ht\), hence equals
\((\Ht^\tau)\conn\).

We have that \(\Phi(\Ht, \Tt)\) is
the orthogonal direct sum of the
integrally closed root subsystems of \(\Phi(\Gt, \Tt)\) spanned by
\(\sset{\alphat_1}\),
\(\sset{\alphat_3}\),
\(\sset{\alphat_4}\), and
\(\sset{-\alphat_0}\), each of which
has type \(\mathsf A_1\).
Both \(\sigma\) and \(\tau\) preserve \(\Ht\),
and \(\sigma\) permutes the almost-simple components of \(\Ht\adform\) that
intersect \((\Ht\adform^\tau)\conn\) nontrivially.
There are three of these components, with root systems generated by
\(\sset{\alphat_1}\),
\(\sset{\alphat_3}\) and
\(\sset{\alphat_4}\).
Since \(\sigma\) has order \(2\), it must
preserve at least one of these almost-simple components.
Let \(\Ht_1\) be such an almost-simple component of \(\Ht\adform\) that is
preserved by \(\sigma\).
(This labelling is arbitrary; but,
if desired for notational consistency, then
we could replace \(\sigma\) by
its conjugate by \(\tau\) or \(\tau\inv\), which
does not affect the conclusion, to
ensure that \(\alphat_1\) belongs to \(\Phi(\Ht_1, \Tt)\).)
Since \(\sigma\) acts quasisemisimply on \(\Ht\),
Lemma \ref{lem:quass-by-iso} and Lemma \ref{lem:quass-by-component}
give that
it also acts quasisemisimply on \(\Ht_1\).
Since
the characteristic exponent of \(k\) is \(2\),
\(\sigma^2\) is trivial, and
\(\Ht_1\) has type \(\mathsf A_1\),
we have that \(\sigma\) has a
unipotent inner action on \(\Ht_1\),
hence is trivial on \(\Ht_1\).
Since the canonical projection \abmap\Ht{\Ht_1}
restricts to
a \(\sigma\)-equivariant, central isogeny
from \(H = (\Ht^\tau)\conn\) onto \(\Ht_1\),
Lemma \ref{lem:quass-by-iso} gives that
\(\sigma\) acts quasisemisimply on \(H\).

Since \(\sigma\) acts trivially on a
central quotient of \(H\), it acts trivially on
\(H\der\).
Remember that \(Z(H)\) is the image of \(\varpi_2^\vee\), and that
\(\sigma\varpi_2^\vee\) equals \(\pm\varpi_2^\vee\).
If \(\sigma\varpi_2^\vee\) equals \(\varpi_2^\vee\), then
\(\sigma\) also acts trivially on \(Z(H)\).
That is, \(H^\sigma\) is all of \(H\).
If \(\sigma\varpi_2^\vee\) equals \(-\varpi_2^\vee\), then
\(H^\sigma\) is generated by
\(H\der\) and
\(Z(H)^\sigma = \varpi_2^\vee(\mu_2)\).
Since
\(\varpi_2^\vee - (\alpha_1^\vee + \alpha_3^\vee + \alpha_4^\vee)\)
equals \(2\alpha_2^\vee\), hence is trivial on \(\mu_2\),
we have that
\(\varpi_2^\vee(\mu_2)\)
equals
\((\alpha_1^\vee + \alpha_3^\vee + \alpha_4^\vee)(\mu_2)\),
which is contained in \(H\der\).
Thus, \(H^\sigma\) equals \(H\der\), which is again smooth.
\end{proof}

\subsection{Completion of the proof of Theorem \ref{thm:loc-quass}}
\label{subsec:thm:loc-quass}

In this subsection,
we use the notation and hypotheses of Theorem \ref{thm:loc-quass}.
That is, we now consider, not just
our connected, reductive \(k\)-group \(\Gt\), but
a reductive datum \((\Gt, \Gamma)\) over \(k\).
We assume that every \(\gamma \in \Gamma(\ka)\)
acts quasisemisimply on \(\Gt_\ka\), with
smooth fixed-point group on \(\Gt_{\adsub\,\ka}\).
As usual, we put \(G = \fix\Gt^\Gamma\).

So far in \S\ref{sec:thm:loc-quass}, we have assumed that
\(p\) is \(1\) or a prime number, and that
\(k\) has characteristic exponent \(1\) or \(p\).
We now require that \(k\) actually have
characteristic exponent \(p\)
(but we continue to allow the possibility that
\(p\) equals \(1\)).

\begin{rem}
\label{rem:loc-quass-p=1}
Suppose that \(p\) equals \(1\), and that
\(\gamma \in \Gamma(\ka)\) is unipotent.
Let \(\Gt_1\) be an almost-simple component of \(\Gt\).
There is some positive integer \(n\) such that
\(\gamma^n\) preserves \(\Gt_1\), and
some positive multiple \(N\) of \(n\) such that
\(\gamma^N\) is an inner automorphism of \(\Gt_1\).
Since \(\gamma^N\) is an inner automorphism of \(\Gt_1\), and
acts quasisemisimply on \(\Gt_1\) by Lemma \ref{lem:quass-by-component},
we have that \(\gamma^N\) is a
inner, quasisemisimple, unipotent automorphism of \(\Gt_1\),
hence trivial
(Remark \ref{rem:torus-quass}(\ref{subrem:quass-to-torus})).
Thus \(\gamma^n\) acts as a
finite-order, unipotent automorphism of \(\Gt_1\), hence is trivial.
\end{rem}

\begin{prop}
\label{prop:inductive-step}
Suppose that \(k\) is algebraically closed,
and that
\(\Gamma'\) is a smooth, normal subgroup of \(\Gamma\) such that
\(\Gamma'(k)\) contains only semisimple elements.
For every element \(\gamma \in \Gamma(k)\), we have that
\(\Gt\adform^{\smashsgen{\gamma, \Gamma'}}\) is smooth and
\(\gamma\) acts quasisemisimply on
\(C_\Gt(\Gamma')\conn\).
\end{prop}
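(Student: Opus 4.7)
Let $H := C_\Gt(\Gamma')\conn$ and $H\adform := C_{\Gt\adform}(\Gamma')\conn$. The plan is to establish: (a) $\gamma$ acts quasisemisimply on $H$, and (b) $(H\adform)^\gamma$ is smooth. Since $\Gamma'(k) = \Gamma'(\ka)$ consists of semisimple elements, Remark \ref{rem:all-ss=>lr} makes $\Gamma'$ linearly reductive; then \cite{conrad-gabber-prasad:prg}*{Propositions A.8.10(2) and A.8.12} give that $\Gt\adform^{\Gamma'}$ is smooth and that $H$, $H\adform$ are reductive. Since $\gamma$ normalizes $\Gamma'$, we have $\Gt\adform^{\sgen{\gamma,\Gamma'}} = (\Gt\adform^{\Gamma'})^\gamma$, and smoothness of the latter reduces to smoothness of $(H\adform)^\gamma$ modulo the étale component group of $\Gt\adform^{\Gamma'}$.

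Next I would Jordan-decompose $\gamma = su = us$ in $\sgen\gamma(k)$, with $s$ semisimple and $u$ unipotent. Since $s$ acts semisimply on $\Lie(\Gt)$ and preserves the subspace $\Lie(H\adform)$, it acts semisimply there, so Corollary \ref{cor:ss-is-quass} yields that $s$ acts quasisemisimply on both $H$ and $H\adform$. The group $\sgen s\cdot\Gamma'$ remains linearly reductive (identity component multiplicative, component group of order prime to $p$), whence $(H\adform)^s$ is smooth. Put $H_s := (H^s)\conn$ and $H_s\adform := ((H\adform)^s)\conn$, both reductive and $u$-stable (as $u$ commutes with $s$ and normalizes $\Gamma'$). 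Proposition \ref{prop:quass-rough}(\ref{subprop:quass-up}) shows that a maximal torus of $H_s$ is still maximal in $H$, so Lemma \ref{lem:quass-from-Levi} propagates any $u$-stable Borel--torus pair in $H_s$ to a $\gamma$-stable one in $H$; and $(H\adform)^\gamma$ is smooth once $(H_s\adform)^u$ is. The problem thus reduces to proving the analogous quasisemisimplicity and smoothness statements for the unipotent $u$ acting on $(H_s, H_s\adform)$.

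I would then induct on $\dim\Gt$. If $H_s \subsetneq \Gt$, the inductive hypothesis applies to $(H_s, \sgen u)$ with trivial $\Gamma'$, once one has verified the standing smoothness hypothesis on $H_s\adform$ under $u$ (via Lemma \ref{lem:smooth-to-iso}) and the quasisemisimplicity of $u$ on $H_s$ (via Lemma \ref{lem:quass-by-iso}). Otherwise $H_s = H = \Gt$, so $s$ acts trivially on $\Gt$ and we are reduced to the case $\gamma = u$ unipotent. After reducing further to $\Gt$ adjoint (Corollary \ref{cor:fixed-surjective} and Lemma \ref{lem:quass-by-iso}) and absolutely almost simple (Remark \ref{rem:fixed-simple}), Remark \ref{rem:inspection} restricts $\Gt$ to type $\mathsf A_n$, $\mathsf D_n$, or $\mathsf E_6$. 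In every sub-case except the non-cyclic outer action on $\mathsf D_4$ in characteristic $2$, iterating Lemma \ref{lem:inductive-step-Z|Lie} — taking centralizers of central semisimple elements or Lie subspaces of fixed-point subgroups, with Proposition \ref{prop:inductive-step-p-prep} guaranteeing termination — shrinks $\Gamma'$ to a subgroup of a torus in $\Gt\adform$, at which point Proposition \ref{prop:inductive-step-p} finishes. The finite-group-theoretic Proposition \ref{prop:inner-solvable} supplies the nontrivial $u$-stable toral subgroup needed in the residual prime-to-$p$ inner situation.

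The main obstacle is the exceptional non-cyclic $\mathsf D_4$ configuration in characteristic $2$: Lemma \ref{lem:inductive-step-Z|Lie} fails to strictly shrink $\Gt$ because the requisite central subgroup is not stable under a non-cyclic outer action. Here I would isolate order-$2$ and order-$3$ generators of the image of $\sgen{u,\Gamma'}$ in $\Out(\Gt) \cong \mathsf S_3$ and apply Proposition \ref{prop:D_4-outer} directly to obtain quasisemisimplicity on $(\Gt^\tau)\conn$ and smoothness of $((\Gt^\tau)\conn)^\sigma$; absorbing any remaining linearly reductive piece of $\Gamma'$ through a final prime-to-$p$ reduction completes the inductive step.
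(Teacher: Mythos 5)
Your proposal follows the same broad outline as the paper's proof: a form of induction, Jordan decomposition to separate the semisimple and unipotent parts, reduction to the adjoint and almost-simple cases, and deployment of Proposition~\ref{prop:inductive-step-p}, Proposition~\ref{prop:inner-solvable}, and Proposition~\ref{prop:D_4-outer} in the hard cases. The one genuinely different move is in how the semisimple part is peeled off: the paper enlarges $\Gamma'$ to $\sgen{\gamma\semi, \Gamma'}$, proves a modified statement in which $\Gamma/\Gamma'$ is generated by a unipotent element, and then invokes Digne--Michel \cite{digne-michel:non-connected}*{Lemme 1.14} to pass from quasisemisimplicity of $\sgen{\gamma,\Gamma'}$ on $C_\Gt(\Gamma',\gamma\semi)\conn$ to quasisemisimplicity of $\gamma$ on $C_\Gt(\Gamma')\conn$. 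You instead Jordan-decompose $\gamma = su$ directly, set $H_s = (H^s)\conn$, and try to carry out the analogous transfer with Proposition~\ref{prop:quass-rough}(\ref{subprop:quass-up}) and Lemma~\ref{lem:quass-from-Levi}.

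That transfer step is where your argument has a real gap. First, the claim that ``a maximal torus of $H_s$ is still maximal in $H$'' is false as stated: if $T_s$ is a maximal torus in $H_s$, then Proposition~\ref{prop:quass-rough}(\ref{subprop:quass-up}) gives that $C_H(T_s)$ is the unique maximal torus in $H$ \emph{containing} $T_s$, and in general it strictly contains $T_s$. Second, and more seriously, Lemma~\ref{lem:quass-from-Levi} requires a $\Gamma$-stable Borel--torus pair in $C_\Gt(S)$ for a split torus $S$ in $G = \fix\Gt^\Gamma$. To apply it with $\Gt = H$, $\Gamma = \sgen\gamma$, the natural choice of $S$ is $(T_s^u)\conn$, which does land in $\fix H^\gamma$; but then one must produce a $\gamma$-stable Borel--torus pair in $C_H(S)$, and $C_H(S)$ is in general a proper reductive subgroup of $H$ that is not a torus, so one has merely relocated the problem. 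The cocharacter $\delta$ of $(T_s^u)\conn$ produced by Lemma~\ref{lem:cochar-by-torus} avoids the root hyperplanes of $\Phi(H_s, T_s)$ but not necessarily those of $\Phi(H, T_s)$ (the containment $\Phi(H_s, T_s) \subseteq \Phi(H, T_s)$ is generally strict, cf.\ Proposition~\ref{prop:GS-vs-GtS}), so $P_H(\delta)$ need not be a Borel subgroup of $H$. This is exactly the content that Lemme 1.14 of Digne--Michel supplies, and I don't see how to avoid citing it or reproving it. (A smaller bookkeeping point: the paper's inductive quantity $\dim(\Gt) + \dim(Z(\Gt)) + \card{Z(\Gt\der)}$, rather than just $\dim\Gt$, is what lets the reduction to the adjoint case go through the induction; your direct reduction via Corollary~\ref{cor:fixed-surjective} and Lemma~\ref{lem:quass-by-iso} should work but also needs the separate verification that smoothness of $(H\adform)^\gamma$ follows from smoothness of $(H_s\adform)^u$, since $(H_s)\adform$ and $((H\adform)^s)\conn$ are different groups.)
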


\begin{note}
The group \(C_\Gt(\Gamma')\conn\) is reductive by
Remark \ref{rem:all-ss=>lr} and
\cite{conrad-gabber-prasad:prg}*{Proposition A.8.12}.
\end{note}

\begin{proof}
We reason by induction on
\(\dim(\Gt) + \dim(Z(\Gt)) + \smashcard{Z(\Gt\der)}\).
Here, \(\smashcard{Z(\Gt\der)}\) is the cardinality of
the finite group scheme \(Z(\Gt\der)\)
(the dimension of its ring of regular functions), not just of
its group of \(k\)-rational points.
Thus, for example, \(\smashcard{\mu_p}\) equals \(p\), not \(1\)
(unless \(p\) equals \(1\)).

If the sum is \(1\), then
\(\Gt\), and the result, are trivial.

Suppose first that we have proven the result under
the additional hypothesis that
\(\Gamma/\Gamma'\) is generated by
a unipotent element, and
the conclusion replaced by the claim that
\(\Gt\adform^\Gamma\) is smooth and
\(\Gamma\) or, equivalently, any generator of \((\Gamma/\Gamma')(k)\)
acts quasisemisimply on \(C_\Gt(\Gamma')\conn\).
We
fix \(\gamma \in \Gamma(k)\), and write
\(\gamma\semi\) and \(\gamma\unip\) for
its semisimple and unipotent parts.
Then, with
\(\Gamma\) replaced by \(\sgen{\gamma, \Gamma'}\) and
\(\Gamma'\) replaced by \(\sgen{\gamma\semi, \Gamma'}\),
our additional hypothesis is satisfied; so
\(\Gt\adform^{\sgen{\gamma\unip, \gamma\semi, \Gamma'}}\),
which equals
\(\Gt\adform^{\sgen\gamma, \Gamma'}\),
is smooth, and
\(\sgen{\gamma, \Gamma'}\) acts quasisemisimply on
\(C_\Gt(\Gamma', \gamma\semi)\conn\).
Then \cite{digne-michel:non-connected}*{Lemme 1.14} gives that
\(\gamma\) acts quasisemisimply on
\(C_\Gt(\Gamma')\conn\).

Thus we may, and do, assume that
\(\Gamma/\Gamma'\) is generated by
a unipotent element.
If \(\gamma\) is any element of \(\Gamma(k)\) whose
image in \((\Gamma/\Gamma')(k)\) generates \(\Gamma/\Gamma'\), then
the image of its unipotent part also
generates \(\Gamma/\Gamma'\).
Thus we may, and do, assume, upon replacing
\(\gamma\) by its unipotent part, that
\(\gamma\) is unipotent.
Since the result is trivial if
\(\gamma\) or \(\Gamma'\) acts trivially on \(\Gt\),
we assume that neither does.

The bulk of the proof consists of reducing to the situation of
Proposition \ref{prop:inductive-step-p}.
This takes some work.

First, we show that we may assume that
\(\Gt\) is adjoint.
If it is not, then
\(\dim(Z(\Gt))\) or
\(\smashcard{Z(\Gt\der)}\) is strictly greater than \(1\).
This means that
\(\dim(\Gt\adform) + \dim(Z(\Gt\adform)) + \smashcard{Z(\Gt\adform)} =
\dim(\Gt\adform) + 1\)
is strictly less than
\(\dim(\Gt) + \smashcard{Z(\Gt)}\), so
we already have the result for \(\Gt\adform\) by
the inductive hypothesis.
Then Lemma \ref{lem:quass-by-iso} gives
the result for \(\Gt\).
Thus we may, and do, assume that \(\Gt\) is adjoint.

Next, we show that we may assume that
\(\Gt\) is almost simple (as well as adjoint).
By Lemma \ref{lem:quass-by-component},
the following assertions are equivalent:
\begin{enumerate}[label={(\roman*\textsubscript{qs})}, ref={\roman*\textsubscript{qs}}]
\item\label{case:prop:inductive-step:quass-Gt}
the action of \(\Gamma\)
on
\((\Gt^{\Gamma'})\conn\) is quasisemisimple; and
\item\label{case:prop:inductive-step:quass-Ht1}
for every almost-simple component \(\Ht_1\) of
\((\Gt^{\Gamma'})\conn\),
the action of \(\stab_\Gamma(\Ht_1)\) on
\(\Ht_1\) is quasisemisimple.
\end{enumerate}
Corollary \ref{cor:ind-simple} and
Lemma \ref{lem:was-cor:trans-induction} show that
\(\Gt^{\Gamma'}\) is isomorphic to
\(\prod \Gt_1^{\stab_{\Gamma'}(\Gt_1)}\), the product taken over one
almost-simple component \(\Gt_1\) of \(\Gt\) from each
\(\Gamma'(k)\)-orbit of such components.
Therefore, another application of
Lemma \ref{lem:quass-by-component} gives that
(\ref{case:prop:inductive-step:quass-Ht1}) is equivalent to
the following statement:
\begin{enumerate}[resume*]
\item\label{case:prop:inductive-step:quass-Gt1}
for every almost-simple component \(\Gt_1\) of \(\Gt\),
the action of \(\stab_\Gamma(\Gt_1)\) on
\((\Gt_1^{\stab_{\Gamma'}(\Gt_1)})\conn\) is quasisemisimple.
\end{enumerate}
Finally, another application of
Corollary \ref{cor:ind-simple} and
Lemma \ref{lem:was-cor:trans-induction} shows that
\(\Gt^\Gamma\) is isomorphic to
\(\prod \Gt_1^{\stab_\Gamma(\Gt_1)}\), the product taken over one
almost-simple component \(\Gt_1\) of \(\Gt\) from each
\(\Gamma(k)\)-orbit of such components.
Thus,
remembering that \(\Gt\) and hence
each of its simple components is
adjoint,
the following assertions are also equivalent:
\begin{enumerate}[label={(\roman*\textsubscript{sm})}, ref={\roman*\textsubscript{sm}}]
\item\label{case:prop:inductive-step:smooth-Gt}
\(\Gt\adform^\Gamma\) is smooth; and
\item\label{case:prop:inductive-step:smooth-Gt1}
for every almost-simple component \(\Gt_1\) of \(\Gt\),
the fixed-point group \(\Gt_{1\,\adsub}^{\stab_\Gamma(\Gt_1)}\)
is smooth.
\end{enumerate}
Thus, if \(\Gt\) is not almost simple, then we know inductively that
(\ref{case:prop:inductive-step:quass-Gt1}) and
(\ref{case:prop:inductive-step:smooth-Gt1})
hold, so that
(\ref{case:prop:inductive-step:quass-Gt}) and
(\ref{case:prop:inductive-step:smooth-Gt}) hold; and
these two together give the modified result for \(\Gt\).
Thus we may, and do, assume that
\(\Gt\) is almost simple.
In particular,
since \(\gamma\) acts nontrivially on \(\Gt\),
Remark \ref{rem:loc-quass-p=1} gives that
\(p\) does not equal \(1\).

Now, since \(\gamma\) is unipotent, it has finite,
\(p\)-power order.
We have by Remark \ref{rem:inspection} that
the image of \(\gamma\) in
the outer-automorphism group of \(\Gt\) has
order dividing \(p\).
In particular, since an
inner, quasisemisimple, unipotent automorphism of \(\Gt\)
is trivial
(Remark \ref{rem:torus-quass}(\ref{subrem:quass-to-torus})),
we have that
\(\sgen\gamma\) is constant of order \(p\), and that
the natural map from \(\sgen\gamma\) to
the outer-automorphism group of \(\Gt\) is
an embedding.
This will now allow us to apply
Proposition \ref{prop:inductive-step-p} in all cases but one,
which we handle separately.

If \(\Gamma' \cap \Gt\) is trivial, then
the images of
\(\sgen\gamma\) and \(\Gamma'\) in
the outer-automorphism group of \(\Gt\) are
nontrivial subgroups of order \(p\) and relatively prime to \(p\),
respectively.
Another appeal to Remark \ref{rem:inspection} gives that
\(p\) equals \(2\),
\(\Gt\) is of type \(\mathsf D_4\), and
\(\sgen\gamma \ltimes \Gamma'\) maps isomorphically onto
\(\uOut(\Gt)\).
Then the result follows from Proposition \ref{prop:D_4-outer}.
Thus we may, and do, assume that
\(\Gamma' \cap \Gt\) is nontrivial.

We have that \(\Gamma^{\prime\,\connsup}\) is a torus
\cite{milne:algebraic-groups}*{Corollary 17.25}.
If it does not act trivially on \(\Gt\), then
Proposition \ref{prop:inductive-step-p} allows us to
conclude by applying the inductive hypothesis to
the action of \(\Gamma/\Gamma^{\prime\,\connsup}\) on
\(C_\Gt(\Gamma^{\prime\,\connsup})\conn\).
Thus we may, and do, assume that \(\Gamma^{\prime\,\connsup}\)
acts trivially, and so, upon
replacing \(\Gamma'\) by \(\pi_0(\Gamma')\), that
\(\Gamma'\) is \'etale.

Suppose first that
\(\Gamma'\) is commutative, and contained in \(\Gt'\).
Since \(\Gt\) is adjoint, we may regard it as
the identity component of \(\uAut(\Gt)\).
If \(\gamma\) acts trivially on \(\Gt\), then
we are done.
Otherwise, by
Proposition \ref{prop:inner-solvable}(\ref{subprop:inner-toral}),
there is a nontrivial, \(\gamma\)-stable
subgroup \(\Gamma''\) of \(\Gamma'\) that is
contained in a torus in \(\Gt\).
Since \(\Gamma'\) is commutative, and
\(\Gamma/\Gamma'\) is generated by the image of \(\gamma\), we have that
\(\Gamma''\) is normal in \(\Gamma\).
Proposition \ref{prop:inductive-step-p} gives that
\(\gamma\) is a quasisemisimple automorphism of
\(C_\Gt(\Gamma'')\conn\), and
\(C_\Gt(\Gamma'')^\gamma\) is smooth.
Then we may apply our inductive hypothesis to
the action of \(\Gamma/\Gamma''\) on
\(C_\Gt(\Gamma'')\conn\) to obtain the desired result.

Now drop the assumption that
\(\Gamma'\) is commutative and contained in \(\Gt\), but
keep the assumption that \(\Gamma' \cap \Gt\) is nontrivial.
Since \(\gamma\) is also nontrivial,
Proposition \ref{prop:inner-solvable}(\ref{subprop:inner-solvable})
shows that \(\Gamma' \cap \Gt\) is solvable.
Now let \(\Gamma''\) be the last term in the derived series of
\(\Gamma' \cap \Gt\), so that
\(\Gamma''\) is a commutative, normal, \(\gamma\)-stable subgroup of \(\Gamma'\),
hence a normal subgroup of \(\Gamma\).
The special case that we have already handled shows that
\(\gamma\) is a quasisemisimple automorphism of
\(C_\Gt(\Gamma'')\conn\), and
\(C_\Gt(\Gamma'')^\gamma\) is smooth.
Then we conclude by applying the inductive hypothesis to
the action of \(\Gamma/\Gamma''\) on \(C_\Gt(\Gamma'')\conn\).
\end{proof}

{\newcommand\theadhocthm{\ref{thm:loc-quass}}
\begin{adhocthm}
Suppose, for every \(\gamma \in \Gamma(\ka)\), that
\(\gamma\) acts quasisemisimply on \(\Gt_\ka\) and
\((\Gt\adform)_\ka^\gamma\) is smooth.
	\begin{enumerate}[label=(\arabic*), ref=\arabic*]
	\item
	\((\Gt^\Gamma)\conn\) equals
\((Z(\Gt)^\Gamma)\conn\cdot(\Gt^\Gamma)\smooth\conn\).
	\item
	$G$ is reductive.
	\item
	The functorial map from the spherical building \(\SS(G)\) of \(G\)
	to the spherical building \(\SS(\Gt)\) of \(\Gt\)
	identifies \(\SS(G)\) with
	\(\SS(\Gt) \cap \SS(\Gt_\ka)^{\Gamma(\ka)}\).
	\end{enumerate}
\end{adhocthm}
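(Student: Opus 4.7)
The strategy is to use Proposition \ref{prop:inductive-step} to reduce the pointwise quasisemisimplicity hypothesis of Theorem \ref{thm:loc-quass} to the globally quasisemisimple hypothesis of Theorem \ref{thm:quass}, with the Prasad--Yu theorem \cite{prasad-yu:actions} (packaged as Remark \ref{rem:ss-loc-quass}) handling the prime-to-$p$ part of $\Gamma$. First I would reduce to $k = \ka$: conclusions (\ref{subthm:loc-quass-smoothable}) and (\ref{subthm:loc-quass-reductive}) can be verified after faithfully flat base change using Remark \ref{rem:conn-smooth} and \cite{conrad-gabber-prasad:prg}*{Lemma C.4.4}, and (\ref{subthm:loc-quass-spherical-bldg}) descends via Lemma \ref{lem:spherical-descent}. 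Then, using Corollary \ref{cor:ind-simple}, Remark \ref{rem:fixed-simple}, and Lemma \ref{lem:was-cor:trans-induction}, the problem decomposes over $\Gamma(k)$-orbits of the almost-simple components of $\Gt\adform$ (with Lemma \ref{lem:spherical-Res} on the buildings side, and Corollary \ref{cor:lift-smooth} to recover the central factor), reducing us to the case in which $\Gt$ is absolutely almost simple and adjoint.

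Next I would isolate a maximal smooth normal subgroup $\Gamma'$ of $\Gamma$ whose $k$-points are all semisimple, so that the image of $\Gamma/\Gamma'$ is generated by unipotent elements of $\Gamma(k)$. Since $\Gamma'(k)$ consists of semisimple elements, the image of $\Gamma'$ in $\uAut(\Gt)$ is linearly reductive by Remark \ref{rem:all-ss=>lr}, so Remark \ref{rem:ss-loc-quass} gives that $H \ldef C_\Gt(\Gamma')\conn = \fix\Gt^{\Gamma'}$ is smooth and reductive, and that $G$ equals $\fix H^{\Gamma/\Gamma'}$. Proposition \ref{prop:inductive-step} applied to the pair $(\Gamma', \Gamma)$ then provides, simultaneously, that $\Gt\adform^\Gamma$ is smooth and that each $\gamma \in \Gamma(k)$ acts quasisemisimply on $H$.

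To upgrade this pointwise statement into genuine quasisemisimplicity of $(H, \Gamma/\Gamma')$, I would invoke Theorem \ref{thm:ka-quass}(\ref{subthm:ka-quass-quass})(\ref{case:ka-quass-torus}): the smoothness of $\Gt\adform^\Gamma$ produces a maximal torus $T$ in $\fix H^\Gamma$, and Proposition \ref{prop:quass-rough}(\ref{subprop:quass-up}) shows that $C_H(T)$ is then a maximal torus of $H$ centralized by $\Gamma/\Gamma'$, yielding the common Borel--torus pair needed to witness quasisemisimplicity. With Theorem \ref{thm:quass} now applicable to $(H, \Gamma/\Gamma')$, conclusion (\ref{subthm:loc-quass-reductive}) follows from part (\ref{subthm:quass-reductive}) there; conclusion (\ref{subthm:loc-quass-smoothable}) follows from part (\ref{subthm:quass-smoothable}) via Corollary \ref{cor:lift-smooth} applied to the adjoint quotient; and conclusion (\ref{subthm:loc-quass-spherical-bldg}) follows from part (\ref{subthm:quass-spherical-bldg}) applied to $(H, \Gamma/\Gamma')$, composed with the identification $\SS(H) = \SS(\Gt) \cap \SS(\Gt_\ka)^{\Gamma'(\ka)}$ coming from Remark \ref{rem:ss-loc-quass} and Lemma \ref{lem:spherical-cr}.

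The main obstacle lies in the upgrade step in the third paragraph: pointwise quasisemisimplicity of the elements of $\Gamma/\Gamma'$ on $H$ does not, \emph{a priori}, assemble into a common $\Gamma$-stable Borel--torus pair, and it is precisely the smoothness of $\Gt\adform^\Gamma$ -- which itself requires the full force of Proposition \ref{prop:inductive-step-p-prep} and the case-by-case analyses of \S\ref{subsec:quass-unip} and \S\ref{subsec:D_4-outer} -- that supplies the structural input needed to pass from pointwise to uniform. Example \ref{ex:vust:cones:prop:5:cor} shows that the reductivity conclusion genuinely fails without the smoothness hypothesis, so this obstruction cannot be dissolved by softer means.
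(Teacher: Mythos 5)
Your overall architecture — reduce to adjoint and almost simple, isolate a semisimple normal subgroup $\Gamma'$ of $\Gamma$, invoke Prasad--Yu for $\Gamma'$ and Proposition~\ref{prop:inductive-step} for the quotient — matches the paper's, but the crucial ``upgrade'' step in your third paragraph has a circularity that cannot be repaired without reverting to the paper's inductive strategy.

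You want to conclude that $(H, \Gamma/\Gamma')$ is (globally) quasisemisimple by appealing to Theorem~\ref{thm:ka-quass}(\ref{subthm:ka-quass-quass})(\ref{case:ka-quass-torus}). But Theorem~\ref{thm:ka-quass} carries the standing \emph{hypothesis} that $(\Gt_\ka, \Gamma_\ka)$ is quasisemisimple; all five conditions in part~(\ref{subthm:ka-quass-quass}) are equivalent \emph{under that hypothesis}, and the content of the equivalence is whether quasisemisimplicity over $\ka$ descends to $\ks$. It says nothing about upgrading pointwise quasisemisimplicity (each $\gamma$ preserves some Borel--torus pair, possibly $\gamma$-dependent) to the existence of a single $\Gamma$-stable pair. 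That pointwise-to-global gap is exactly the content of Theorem~\ref{thm:loc-quass}, and it is what distinguishes it from Theorem~\ref{thm:quass}; you cannot import the conclusion you are trying to establish. The auxiliary citations compound the problem: Proposition~\ref{prop:quass-rough}(\ref{subprop:quass-up}) and Corollary~\ref{cor:quass-imperfect} likewise live under the blanket assumption that the acting group is already quasisemisimple.

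Two further, smaller gaps. First, Proposition~\ref{prop:inductive-step} gives, for each individual $\gamma$, that $\Gt\adform^{\sgen{\gamma, \Gamma'}}$ is smooth; it does not give smoothness of $\Gt\adform^\Gamma$ unless $\Gamma = \sgen{\gamma, \Gamma'}$ for some single $\gamma$ (a smooth scheme is not the intersection of smooth schemes in general). Second, reducing to $k = \ka$ at the outset is incompatible with conclusion~(\ref{subthm:loc-quass-spherical-bldg}), which compares $\SS(\Gt)$ over $\ks$ with $\Gamma(\ka)$-fixed points; Lemma~\ref{lem:spherical-descent} handles descent from $\ks$ to $k$ only, and the paper accordingly reduces only to $\ks$, postponing the further pass to $\ka$ until after the reduction to the adjoint case (where conclusion~(\ref{subthm:loc-quass-smoothable}) becomes insensitive to inseparable base change) and carrying conclusion~(\ref{subthm:loc-quass-spherical-bldg}) along by an argument modeled on Lemma~\ref{lem:spherical-descent}.

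The paper avoids the circularity by \emph{not} attempting a one-shot upgrade. Instead it inducts on $\dim\Gt + \card\Gamma$: at each stage it finds some nontrivial normal $\Gamma'$ with all-semisimple $\ka$-points, applies Prasad--Yu and Proposition~\ref{prop:inductive-step} to re-establish the pointwise hypotheses for $(\Gt^{\Gamma'})\conn$ with $\Gamma/\Gamma'$ acting, and recurses; and when no such $\Gamma'$ exists, the action is purely outer, at which point Remark~\ref{rem:inspection} pins the structure of $\Gamma$ (cyclic, or the exceptional $\mathsf D_4$/$p=3$ case), a single generator $\gamma$ is peeled off via Theorem~\ref{thm:quass}, and the induction continues with $\Gamma/\sgen\gamma$. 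Each iteration applies Theorem~\ref{thm:quass} only to a reductive datum that is already known to be quasisemisimple, which is how the hypothesis is discharged. Your proposal needs this peeling and the purely-outer structural analysis; the maximal-$\Gamma'$-plus-upgrade shortcut does not close.
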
}

\begin{rem}
\label{rem:conn-act-torus}
In the context of
Theorem \ref{thm:loc-quass},
let \(\Gamma'\) be the subgroup of \(\Gamma\) that
acts on \(\Gt\) by inner automorphisms,
so that there is a map
\abmap{\Gamma'}{\Gt\adform}.
The image in \(\Gt\adform\) of every element of \(\Gamma'(\ka)\)
is a quasisemisimple, inner automorphism, hence semisimple.
In particular,
the image of \(\Gamma'\) is linearly reductive
by Remark \ref{rem:all-ss=>lr}.

Remark \ref{rem:torus-quass}(\ref{subrem:act-inner}) gives that
\(\Gamma'\) contains \(\Gamma\conn\).
The image of \(\Gamma\conn\) in \(\Gt\adform\) is
a smooth, connected group all of whose
\(\ka\)-rational points are semisimple, so
\cite{milne:algebraic-groups}*{Corollary 17.25}
gives that the image is a torus.
(This generalizes part of
Remark \ref{rem:torus-quass}(\ref{subrem:quass-to-torus}).)
\end{rem}

\begin{proof}[Proof of Theorem \ref{thm:loc-quass}]
By Lemma \ref{lem:spherical-descent}, we may, and do, assume,
upon replacing \(k\) by \(\ks\), that
\(k\) is separably closed.

Remark \ref{rem:conn-act-torus} shows that
the image of \(\Gamma\conn\) in \(\uAut(\Gt)\)
is a torus in \(\Gt\adform\) that is
preserved by \(\Gamma\), and so, by
Remark \ref{rem:torus-quass}(\ref{subrem:torus-to-quass}),
that \((\Gt, \Gamma\conn)\) is quasisemisimple.
We have by
\cite{borel:linear}*{Corollary 11.12} that
\(\Gt^{\Gamma\conn}\) is connected and by
\cite{conrad-gabber-prasad:prg}*{Proposition A.8.12}
that \(\Gt^{\Gamma\conn}\)
is reductive---in particular, smooth.
Theorem \ref{thm:quass}(\ref{subthm:quass-spherical-bldg})
shows that \(\SS(\Gt^{\Gamma\conn})\) equals
\(\SS(\Gt) \cap \SS(\Gt_\ka)^{\Gamma\conn(\ka)}\).
Proposition \ref{prop:inductive-step-p} shows,
for every \(\gamma \in \Gamma(\ka)\), that
\(\gamma\) acts quasisemisimply on \((\Gt^{\Gamma\conn})_\ka\)
and
\((\Gt^{\Gamma\conn})_\ka^\gamma\) is smooth.
Thus we may, and do, assume, upon replacing
\(\Gt\) by \((\Gt^{\Gamma\conn})\conn\) and
\(\Gamma\) by its image in \(\Aut((\Gt^{\Gamma\conn})\conn)\),
that
\(\Gamma\) is \'etale, hence constant.

We now proceed by induction on
\(\dim(\Gt) + \card\Gamma\).
If the sum is \(1\), then
\(\Gt\), and hence the result, is trivial.

Suppose that we have proven the result for \(\Gt\adform\).
In particular, \((\Gt\adform^\Gamma)\conn\) is smooth, so
Corollary \ref{cor:lift-smooth} gives
(\ref{subthm:loc-quass-smoothable}).
Corollary \ref{cor:fixed-surjective} gives that
\(\fix\Gt^\Gamma\) is a smooth, connected group that is
an extension of the reductive group \(\fix\Gt\adform^\Gamma\) by
a group of multiplicative type, whence (\ref{subthm:loc-quass-reductive}).
To handle the reduction of (\ref{subthm:loc-quass-spherical-bldg}) to
the adjoint case, we argue once more as in the proofs of
Lemma \ref{lem:spherical-descent} and
Theorem \ref{thm:quass}(\ref{subthm:quass-spherical-bldg}).
Let \(b_+\) be a point of
\(\SS(\Gt) \cap \SS(\Gt_\ka)^{\Gamma(\ka)}\), and
\(b'_+\) its image in \(\SS(\Gt\adform)\).
(Functoriality of the formation of spherical buildings is discussed
only with respect to embeddings in
\cite{curtis-lehrer-tits:spherical}*{\S4}, or
isogenies in \cite{curtis-lehrer-tits:spherical}*{\S4, Remark (iv)},
but this is only needed if we insist that
the resulting map of spherical buildings be
an injection.
An arbitrary homomorphism of reductive groups still
gives a perfectly good map of
the corresponding spherical buildings in
the obvious fashion.)
Then \(b'_+\) belongs to
\(\SS(\Gt\adform) \cap \SS((\Gt\adform)_\ka)^{\Gamma(\ka)}\),
hence, by assumption, is the image in
\(\SS(\Gt\adform)\) of
an element of \(\SS(\fix\Gt\adform^\Gamma)\),
which we will also denote by \(b'_+\).
Let \(b'_-\) be any point of
the spherical building \(\SS(\fix\Gt\adform^\Gamma)\)
opposite to \(b'_+\).
The pullback \(P_\Gt(b'_-)\) to \(\Gt\) of
the corresponding parabolic subgroup \(P_{\Gt\adform}(b'_-)\) of
\(\Gt\adform\) is
a \(\Gamma\)-stable parabolic subgroup of \(\Gt\) that
is opposite to \(P_\Gt(b_+)\).
It follows from
\cite{curtis-lehrer-tits:spherical}*{\S3} that there is
a unique point \(b_- \in \SS(\Gt)\) that is
opposite to \(b_+\) and satisfies
\(P_\Gt(b_-) = P_\Gt(b'_-)\).
By uniqueness,
\(b_-\) is also fixed by \(\Gamma(k)\), so
Lemma \ref{lem:spherical-cr} gives that
\(b_+\) belongs to \(\SS(G)\).

That is, we may, and do, assume that \(\Gt\) is adjoint.
Now (\ref{subthm:loc-quass-smoothable}) is the statement that
\((\Gt^\Gamma)\conn\) is smooth, not just smoothable.
Since this statement is unaffected by arbitrary base change,
we may, and do, assume, upon
replacing \(k\) by \(\ka\), that
\(k\) is algebraically closed.

By
Corollary \ref{cor:ind-simple} and
Lemma \ref{lem:was-cor:trans-induction},
we may, and do, assume, upon
replacing
\(\Gt\) by an (absolutely) almost-simple component \(\Gt_1\) and
\(\Gamma\) by \(\stab_\Gamma(\Gt_1)\), that
\(\Gt\) is almost simple.
Suppose that there is a nontrivial
normal subgroup \(\Gamma'\) of \(\Gamma\) such that
\(\Gamma'(k)\) contains only semisimple elements.
Then \cite{prasad-yu:actions}*{Theorem 2.1 and Proposition 3.4}
gives that
\((\Gt^{\Gamma'})\conn\) is (smooth and) reductive, and that
\(\SS((\Gt^{\Gamma'})\conn)\) equals \(\SS(\Gt)^{\Gamma'(k)}\).
(In particular, note that this implies that (\ref{subthm:loc-quass-spherical-bldg})
holds for \((\Gt,\Gamma')\), as \(k  =\ka\).)
Proposition \ref{prop:inductive-step} gives that every
\(\gamma \in \Gamma(k)\) acts
quasisemisimply, with smooth fixed-point group, on
\((\Gt^{\Gamma'})\conn\).
Thus we may, and do, conclude by
applying the inductive hypothesis to
the action of \(\Gamma/\Gamma'\) on
\((\Gt^{\Gamma'})\conn\).

We thus may, and do, assume that
there is no such normal subgroup of \(\Gamma\).
Since \(\Gamma \cap \uInn(\Gt)\) is
a normal subgroup of \(\Gamma\) that, by
Remark \ref{rem:torus-quass}(\ref{subrem:quass-to-torus}),
has only semisimple \(k\)-rational points,
it is trivial; that is, the action of
\(\Gamma\) on \(\Gt\) is purely outer.
By Remark \ref{rem:inspection},
we have that \(\Gamma\) is cyclic, or
\(p\) equals \(3\), \(\Gt\) is of type \(\mathsf D_4\), and
\abmap\Gamma{\uOut(\Gt)} is an isomorphism.
(It cannot happen that \(p\) equals \(2\),
\(\Gt\) is of type \(\mathsf D_4\), and
\abmap\Gamma{\uOut(\Gt)} is an isomorphism, since then
\(\Gamma\) would have a
normal subgroup of order relatively prime to \(p\).)
Let \(\gamma\) be a generator of \(\Gamma\)
(if \(\Gamma\) is cyclic), or
a generator of the normal, order-\(3\) subgroup of \(\Gamma\)
(in the \(\mathsf D_4\) case).
Then \(\Gt^\gamma\) is smooth by assumption, so
\((\Gt^\gamma)\conn\) equals \(\fix\Gt^\gamma\), hence
is reductive by
Theorem \ref{thm:quass}(\ref{subthm:quass-reductive}).
Theorem \ref{thm:quass}(\ref{subthm:quass-spherical-bldg})
gives that
\(\SS((\Gt^\gamma)\conn)\) equals
\(\SS(\Gt)^\gamma\).
Since \(\sgen\gamma\) is normal in \(\Gamma\), the result follows by
applying the inductive hypothesis to
the action of \(\Gamma/\sgen\gamma\) on
\((\Gt^\gamma)\conn\).
\end{proof}

\begin{example}
\label{ex:not-quass}
Theorem \ref{thm:loc-quass}%
	(\ref{subthm:loc-quass-smoothable},%
	\ref{subthm:loc-quass-reductive})
can fail if we remove the assumption
that every element of $\Gamma(\ka)$ preserves a Borel--torus pair
in \(\Gt_\ka\).
Suppose that \(p\) does not equal \(1\), and put
$\Gt = \SL_{2, k}$.
If $\Gamma$ is the constant \(k\)-group generated by
$\Int\begin{smallpmatrix}1&1\\0&1\end{smallpmatrix}$,
then $G$ is not reductive. (A similar counterexample also holds
in arbitrary characteristic if one replaces the above constant group by the
group of upper triangular unipotent matrices in \(\Gt\).)
In fact, the behavior of the fixed-point group can be even worse.
If \(p\) equals \(2\),
\(k\) is not perfect,
and
\(\Gamma\) is instead generated by
\(\Int\begin{smallpmatrix} 0 & 1 \\ t & 0 \end{smallpmatrix}\),
where \(t\) is a non-square in \(k\),
then
\(\Gt^\Gamma = \set
	{\begin{smallpmatrix} a & b \\ b t & a \end{smallpmatrix}}
	{a^2 - b^2 t = 1}\)
is reduced and connected, but not geometrically reduced.
We have that
\((\Gt^\Gamma)\smooth\) is trivial,
and
\(((\Gt^\Gamma)_\ka)\smooth\) equals
\(\set
	{\begin{smallpmatrix} a & b \\ b t & a \end{smallpmatrix}}
	{a - b\sqrt t = 1}\),
which does not descend to a subgroup of \(\Gt\).
\end{example}

\begin{example}
\label{ex:vust:cones:prop:5:cor}
Theorem \ref{thm:loc-quass}(\ref{subthm:loc-quass-reductive})
can fail without the hypothesis about
smoothness of fixed points.

Consider the involution
\mapto\gamma
	\gt
	{\Int\begin{smallpmatrix}
		0 & 0 & 1 \\
		0 & 1 & 0 \\
		1 & 0 & 0
	\end{smallpmatrix}\gt^{-\mathsf T}}
of \(\GL_3\), which
we will also regard as an automorphism of
\(\Gt \ldef \PGL_3\).
Note that Proposition \ref{prop:sl-even}(\ref{subprop:gl-even}) implies
that $\Gt^\gamma$ is not smooth.
The map
\mapto\lambdat
	t
	{\begin{smallpmatrix}
	t & 1 - t & 1 - t \\
	0 & 1 & 1 - t \\
	0 & 0 & t
	\end{smallpmatrix}}
is a cocharacter of \(\GL_3\), and
\(\gamma \circ \lambdat\) equals \(-\lambdat\).
The general linear group of
the weight-\(1\) space for \(\lambdat\) in
the defining representation \(k^3\) of \(\GL_3\) maps
isomorphically onto \(C_\Gt(\lambdat)\), and
the ordered basis \(((1, 0, 0), (0, 1, -1))\) of
the weight-\(1\) space provides
an isomorphism with \(\GL_2\) that
identifies \(\lambdat\) with
an isomorphism from \(\GL_1\) onto \(Z(\GL_2)\).
Explicit computation shows that the involution of \(\GL_2\)
induced by \(\gamma\) is
\abmapto
	\gt
	{\det(\gt)\inv\Int\begin{smallpmatrix}
		-1 & 1 \\
		0 & 1
	\end{smallpmatrix}\gt}.

So far we have been agnostic about the characteristic.
Now, to fit this example into the general framework of
the rest of the section, suppose that \(p\) equals \(2\).
Then \(\gamma\) acts on \(\SL_2\) as conjugation by
a regular unipotent element and,
in particular, preserves no maximal (or even nontrivial) torus,
i.e., \(\im\lambda\) centralizes no maximal torus in \(\fix\Gt^\gamma\).
Moreover, note that since \(\fix\GL_2^\gamma\) is contained in \(\SL_2\),
it is the unipotent group
\(\fix\SL_2^\gamma = C_{\SL_2}{\begin{smallpmatrix}
	-1 & 1 \\
	0 & 1
\end{smallpmatrix}}\smooth\);
in particular, \(\fix C_\Gt(\lambdat)^\gamma\) is not reductive.
\end{example}

\appendix

\numberwithin{equation}{section}
\section{Induction of schemes with group action}
\label{app:ind}

In this section we quickly recall some definitions and results concerning sites and sheaves. In a way,
this is overkill for the purposes of this paper,
where ultimately we are only interested in
sheaves on one particular site. However, in order to clarify the roles that the various concepts play in
our results, it is useful to separate out this material from the main
development. For more details on the contents of this section, see \cite{vistoli:descent} or
\cite{milne:etale-cohom}. We will ignore
set-theoretic issues; they can be dealt with in a number of different ways, each of which would be
a distraction to the main aim of this appendix.

\begin{defn}\cite{vistoli:descent}*{Definition 2.24}
    A \textit{site} is a category $\mathcal{C}$ equipped with a collection of sets of morphisms
$\{\abmap{U_i}U\}_{i \in I}$, called \textit{covers}, subject to the following conditions.
    \begin{itemize}
        \item If $U$ is an object of $\mathcal{C}$, then $\{\map{\mathrm{id}}U U\}$ is a cover.
        \item If $\{\abmap{U_i}U\}_{i \in I}$ is a cover and $\abmap V U$ is a morphism in $\mathcal{C}$, then every
fiber product $U_i \times_U V$ exists and $\{\abmap{U_i \times_U V}V\}_{i \in I}$ is a
cover.
        \item If $\{\abmap{U_i}U\}_{i \in I}$ is a cover and for each $i \in I$ we are given a cover
$\{\abmap{V_{ij}}{U_i}\}_{j \in J_i}$, then $\{\abmap{V_{ij}}U\}_{\substack{i \in I \\ j \in J_i}}$ is a cover.
    \end{itemize}
    The collection of covers is called a \textit{topology} on $\mathcal{C}$.
(Often
(and originally in \cite{SGA-4.1}*{Expos\'e II, D\'efinition 1.3}),
this is called a \textit{pretopology}.)
\end{defn}

\begin{example}
    If $\mathcal{C}$ is any category, then it can be given the \textit{discrete topology}, whose only
covers are of the form $\{\map{\mathrm{id}}U U\}$
as \(U\) ranges over the objects of \(\mathcal C\).
\end{example}

\begin{example}\label{example:topologies-on-affsch}
    If $k$ is a ring and $\mathcal{C}$ is the category $\AffSch_k$ of affine $k$-schemes, then there are
several topologies on $\mathcal{C}$ which are commonly in use, among which
are the Zariski, \'etale, and fppf topologies. In each of these topologies, the coverings are jointly
surjective collections $\{\map{j_i}{U_i}U\}_{i \in I}$ of morphisms. In the Zariski
topology, each $j_i$ is an open embedding; in the \'etale topology, each $j_i$ is \'etale; in the fppf
topology, each $j_i$ is flat and locally of finite presentation. These sites are called the
(big) Zariski site, the (big) \'etale site, and the (big) fppf site, respectively.

    One can also define the \textit{small} Zariski, \'etale,
    and fppf sites of a ring $k$ as follows: let
$\mathcal{C}_\text{Zar}$,
$\mathcal{C}_\text{\'et}$, and $\mathcal{C}_\text{fppf}$ denote the full
subcategories of $\AffSch_k$ consisting of those affine $k$-schemes $X$ which are disjoint unions of
Zariski open subschemes of $\Spec k$
(respectively, \'etale over $\Spec k$;
respectively, fppf over $\Spec k$). We give these categories the Zariski, \'etale, and fppf topologies,
respectively.
\end{example}

\begin{example}
    If $k$ is a field, then the small Zariski site of $k$ has as objects $\Spec \prod_{i=1}^n k$ for every
integer $n \geq 0$. The small \'etale site of $k$ has as objects all schemes $\Spec
\prod_{i=1}^n k_i$, where each $k_i$ is a finite separable extension of $k$. The (small or big) fppf site
of $k$ is much larger: it includes \emph{all} $k$-algebras.
\end{example}

\begin{defn}[\cite{vistoli:descent}*{Definition 2.37}]
\label{defn:sheaf}
    If $\mathcal{C}$ and $\mathcal{D}$ are categories, then a \textit{$\mathcal{D}$-valued presheaf} on
$\mathcal{C}$ is a contravariant functor $\abmap{\mathcal{C}}{\mathcal{D}}$. If
$\mathcal{C}$
is a site, then a \textit{$\mathcal{D}$-valued sheaf} is a $\mathcal{D}$-valued presheaf $\mathcal{F}$ on
$\mathcal{C}$
satisfying the following \textit{sheaf condition}: if $\{\abmap{U_i}U\}_{i \in I}$ is a cover in $\mathcal{C}$, then the products $\prod_{i \in I} \mathcal{F}(U_i)$ and $\prod_{i, j \in I} \mathcal{F}(U_i
\times_U U_j)$ exist in $\mathcal{D}$ and the diagram
    \[
    \mathcal{F}(U) \to \prod_{i \in I} \mathcal{F}(U_i) \rightrightarrows \prod_{i, j \in I} \mathcal{F}(U_i \times_U U_j)
    \]
    is an equalizer in $\mathcal{D}$.

    If $X$ is an object of $\mathcal{C}$ and $a \in \mathcal{F}(X)$, then we will call $a$ a \textit{local
section} of $\mathcal{F}$. If $\abmap Y X$ is a morphism in $\mathcal{C}$, then we will often write $a_Y$
to denote the image of $a$ under the corresponding map $\abmap{\mathcal{F}(X)}{\mathcal{F}(Y)}$. We note
that this notation is abusive because $a_Y$ depends not just on $Y$, but on the \emph{map} $\abmap Y X$.
However, in practice there will only be one map under consideration at a time, so this should not lead
to substantial confusion.

    There is an evident notion of morphism for presheaves, and we let
$\PSh_{\mathcal{C}}(\mathcal{D})$
(respectively, $\Sh_{\mathcal{C}}(\mathcal{D})$) denote the category of
$\mathcal{D}$-valued
presheaves on $\mathcal{C}$ (respectively, its full subcategory of sheaves). We will use the simplifying
notation $\PSh_{\mathcal{C}} \ldef
\PSh_{\mathcal{C}}(\Sets)$
and $\Sh_{\mathcal{C}} \ldef \Sh_{\mathcal{C}}(\Sets)$.
By default, a \textit{sheaf} is assumed to
be set-valued (i.e., \(\mathcal D\) is \(\Sets\)),
while a \textit{group sheaf} is valued in the category of groups.
\end{defn}

One should have in mind that $\Sh_{\mathcal{C}}(\mathcal{D})$ has all good categorical properties
enjoyed by $\mathcal{D}$.
For example, if $\mathcal{D}$ admits products, then so does $\Sh_{\mathcal{C}}(\mathcal{D})$: given
two sheaves $\mathcal{F}$ and $\mathcal{G}$, one defines the product
$\mathcal{F}
\times \mathcal{G}$ to be the sheaf sending $X$ to $\mathcal{F}(X) \times \mathcal{G}(X)$; see
\cite{milne:etale-cohom}*{II, Lemma 2.12}.
Similar
constructions work for all finite limits and colimits.

If $\map F{\mathcal{D}}{\mathcal{E}}$ is a limit-preserving functor, then there is an induced functor
$\abmap{\Sh_{\mathcal{C}}(\mathcal{D})}{\Sh_{\mathcal{C}}(\mathcal{E})}$ (which we will also denote by
$F$) given by sending a sheaf $\mathcal{F}$ to the sheaf $\abmapto X{F(\mathcal{F}(X))}$. If $F$ is a
forgetful functor, then we will often omit explicit mention of this functor.

\begin{example}[\cite{milne:etale-cohom}*{II, Example 2.18(a)}]
    If $\mathcal{C}$ is a category which is considered as a site with the discrete topology and
$\mathcal{D}$
is any category, then $\Sh_{\mathcal{C}}(\mathcal{D})$
equals
$\PSh_{\mathcal{C}}(\mathcal{D})$.
If $\mathcal{C}$ is the category with a unique object and morphism, then
$\Sh_{\mathcal{C}}(\mathcal{D})$
is naturally equivalent to $\mathcal{D}$.
\end{example}

\begin{example}
    Let $k$ be a ring. A set-valued sheaf on the big Zariski site (respectively, the big \'etale site;
respectively, the big fppf site) is called a Zariski sheaf (respectively, \'etale sheaf;
respectively, fppf sheaf). A sheaf on the small Zariski site of $k$ is the same as a sheaf on the
topological space $\lvert\Spec k\rvert$, and this serves as the motivation for the general definition of sheaves on a site.
On the
other hand, a sheaf on the big Zariski site of $k$ contains \emph{much} more information; it takes
values on (the spectrum of) every $k$-algebra.

    If $X$ is a $k$-scheme, then there is a set-valued functor $h_X$ on $\AffSch_k$ defined by
$h_X(\Spec A) =
\Mor_k(\Spec A, X)$. By \cite{vistoli:descent}*{2.55}, the functor $h_X$ is an fppf sheaf
(and therefore also a Zariski sheaf and an \'etale sheaf). We call the functor $\abmapto X{h_X}$ the \textit{Yoneda embedding}.
We will (abusively) use the letter $X$ to refer also to the image of $X$ under the Yoneda embedding.
\end{example}

Just as in the topological case, if $\mathcal{C}$ is a site and $\mathcal{D}$ is either the category of
sets, the category of groups, or the category of abelian groups, then the inclusion
$\abmap{\Sh_{\mathcal{C}}(\mathcal{D})}{\PSh_{\mathcal{C}}(\mathcal{D})}$ admits a left adjoint $\abmapto{\mathcal{F}}{\mathcal{F}^\text{sh}}$,
called the \textit{sheafification} functor.
See \cite{vistoli:descent}*{Theorem 2.64} for the case
$\mathcal{D} = \Sets$.

\begin{example}
    If $\mathcal{C}$ is a site and $S$ is a set, then we define the \textit{constant sheaf}
$\underline S$
to be the sheafification of the presheaf $\abmapto X S$ on
$\mathcal{C}$.
(To see the effect of sheafification in our setting, note that,
if \(\mathcal C\) is \(\AffSch_k\) for some ring \(k\), then
\(\underline S(\Spec(k \oplus k))\) is
\(S \times S\), not just \(S\).)
If $S$ is a group, then $\underline S$ is a group sheaf on $\mathcal{C}$.
\end{example}

In practice, one is interested in \emph{sheaves} on sites, and \emph{not} in sites themselves. In our
case, we are largely interested in sheaves on the (big) fppf site of a field, and in fact
our interest lies mainly in group sheaves. One benefit of working with fppf group sheaves is that they
allow us to work with group schemes ``as if'' they were ordinary groups (see below). In order to
do this, we must first set up some formalism which is best understood in our abstract setting. Note
that a group sheaf on a site is the same as a group object in
the category of sheaves.

\begin{defn}[\cite{milne:etale-cohom}*{II, Theorem 2.15}]
    Let $\mathcal{C}$ be a site, and let $\mathcal{G}$ and $\mathcal{H}$ be group sheaves on
$\mathcal{C}$.
If $\map f{\mathcal{G}}{\mathcal{H}}$ is a homomorphism of group
sheaves (i.e., a morphism of sheaves
such that for every object $X \in \mathcal{C}$, the map $\map{f(X)}{\mathcal{G}(X)}{\mathcal{H}(X)}$
is a group homomorphism), then we define the \textit{kernel} $\ker f$ of $f$ by
$(\ker f)(X) = \ker(f(X))$ for every object $X \in \mathcal{C}$; with this definition, one can check that
$\ker
f$ is a group sheaf. If $f(\mathcal{G}(X)) \subseteq \mathcal{H}(X)$ is a normal
subgroup for every object $X$, then we define the cokernel $\coker f$ of $f$ to be the
\textit{sheafification} of the group presheaf $\abmapto X{\coker(f(X))}$. We say that $f$ is a
\textit{monomorphism} if $\ker f$ is the trivial sheaf $\abmapto X{\sset1}$ (i.e., if $f(X)$ is injective for all
$X$), and we say that $f$ is an \textit{epimorphism} if $\coker f$ is the trivial sheaf
(i.e., for each $X$ and each $h \in \mathcal{H}(X)$, there is a cover $\{\abmap{U_i}X\}$ and elements $g_i
\in \mathcal{G}(U_i)$ such that $f(g_i) = h_{U_i}$ for all $i$). The \textit{image}
$\im
f$ of $f$ is the cokernel of the map $\abmap{\ker f}{\mathcal{G}}$.
There is a natural monomorphism \abmap{\im f}{\mathcal H}, induced by \(f\), which we use to regard \(\im f\) as a subsheaf of \(\mathcal H\).
Using this identification, the image of \(f\) is the sheafification of \abmapto X{f(\mathcal G(X))}.
A sequence $\mathcal{F} \xrightarrow{\varphi} \mathcal{G} \xrightarrow{\psi} \mathcal{H}$ is \textit{exact} if $\ker \psi = \im \varphi$.
\end{defn}

\begin{example}
    If $k$ is a field and $n > 1$ is an integer, then the $n$th power map $\map{[n]}{\GL_1}{\GL_1}$ is
an epimorphism of fppf group sheaves, but it is usually \emph{not} an epimorphism
in the category of functors: there are many $k$-algebras $A$ which admit elements $a \in A$ which
are not $n$th roots. On the other hand, for any such $a \in A$, the algebra $B =
A[x]/(x^n
- a)$ is $A$-flat and admits an $n$th root of $a$.
\end{example}

Before specializing to the case of fppf group sheaves, we would like to attempt to motivate why one
would be interested in the fppf site over the Zariski or \'etale sites when studying
group schemes. In essence, if one is interested only in smooth $k$-group schemes and smooth
homomorphisms between them, then the \'etale site is entirely sufficient: for instance, if
$\abmap G Q$ is a smooth surjective homomorphism of smooth $k$-group schemes with kernel $N$,
then the sequence
\[
1 \to h_N \to h_G \to h_Q \to 1
\]
is a short exact sequence of \'etale group sheaves (but \emph{not} of group presheaves!). However, even
this simple statement fails when smoothness is relaxed: for example, let $k$ be an
imperfect field of characteristic exponent $p > 1$. The homomorphism $\map{[p]}{\GL_1}{\GL_1}$ is surjective with kernel $\mu_p$, but the corresponding sequence
\[
1 \to h_{\mu_p} \to h_{\GL_1} \xrightarrow{[p]} h_{\GL_1} \to 1
\]
of \'etale group sheaves is \emph{not} exact: if $x \in k$
is not a $p$th power, then there is no separable extension $k'/k$ such that $x$ is
a $p$th power in $k'$. Moreover, if one simply extends the small \'etale site to include \emph{all} finite
extensions of $k$, one still encounters the strange issue that $[p]$
is both a monomorphism and an epimorphism, but not an isomorphism;
this is a symptom of the fact that $\mu_p$ is a nontrivial group scheme which has
no nontrivial field-valued points. Thus, in order to work
with group schemes ``as if they are groups'', particularly in positive characteristic, one must work with
rings which are not fields. The fppf site is the right setting in which to do this.

In particular, the Yoneda embedding gives a fully faithful embedding from the category
of $k$-group schemes to the category of fppf group sheaves
on $\AffSch_k$, and it is a theorem of Grothendieck
(see
\cite{SGA-3.1}*{Expos\'e VI\textsubscript{B}, Th\'eor\`eme 3.2}
and
Theorem~\ref{thm:fppf-descent})
that, if $k$ is a field, then $\Sh_{\AffSch_k}(\Grp)$
is closed under quotients and extensions in the sheaf category.
Thus it is reasonable to transport the notion of exact sequence from the category of
fppf group sheaves to the category of finite type flat $k$-group schemes.

\begin{defn}
\label{defn:mor-sheaf}
Let $\mathcal{C}$ be a site and
\(\mathcal D\) a category.
If $X$ is an object of $\mathcal{C}$, then we define a category $\mathcal{C}_{/X}$
with objects being morphisms $\abmap Y X$ in $\mathcal{C}$, and morphisms
$\abmap{(\abmap Y X)}{(\abmap Z X)}$ being morphisms $\abmap Y Z$ which are compatible with the maps to $X$. One can give
a natural topology on $\mathcal{C}_{/X}$. If $\mathcal{F}$ is any sheaf on $\mathcal{C}$, then there is
a natural restricted sheaf $\res\mathcal{F}|_X$ defined by
\[
\res\mathcal{F}|_X(\abmap Y X) \coloneqq \mathcal{F}(Y),
\]
and with the natural restriction maps.

For sheaves $\mathcal{F}, \mathcal{G} \in \Sh_{\mathcal{C}}(\mathcal{D})$, there is a set of
morphisms $\Mor_{\Sh_{\mathcal{C}}(\mathcal{D})}(\mathcal{F}, \mathcal{G})$. We define
an object $\uMor(\mathcal{F}, \mathcal{G}) \in \Sh_{\mathcal{C}}(\Sets)$ by defining, for every
object $X$ of $\mathcal{C}$,
\[
\uMor(\mathcal{F}, \mathcal{G})(X) =
\Mor_{\Sh_{\mathcal{C}_{/X}}(\mathcal{D})}(\res\mathcal{F}|_X,
\res\mathcal{G}|_X).
\]
It is straightforward to check that $\uMor(\mathcal{F}, \mathcal{G})$ is a sheaf on
$\mathcal{C}$.
Notice that $\uMor(\mathcal{F}, \mathcal{F})$ is a sheaf of monoids.

One special case of interest to us is when
\(\mathcal D\) is the category of groups.
In that case, we will often write
\(\uHom\) instead of \(\uMor\).

Another case that is of interest is when
\(\mathcal D\) is the category of sets, but
the sheaf \(\mathcal G\) of sets arises by
forgetting the group structure on a sheaf of groups.
Then the resulting morphism
\(\mathcal G \times \mathcal G \to \mathcal G\) makes
each morphism set
\(\underline\Mor_{\Sh_{\mathcal C_{/X}}(\Sets)}(\mathcal F, \mathcal G)\)
into a group, so
\(\underline\Mor(\mathcal F, \mathcal G)\) is
a group sheaf.
\end{defn}

We briefly introduce two more constructions which will be useful later.

\begin{defn}
    Let $\mathcal{C}$ be a site, and let $\mathcal{F}$ be a sheaf of sets on $\mathcal{C}$. Define $\SubSh(\mathcal{F})(X)$ to be the set of subsheaves of $\res\mathcal{F}|_X$ on $\mathcal{C}_{/X}$. Note that this defines a presheaf $\SubSh(\mathcal{F})$ on $\mathcal{C}$.
\end{defn}

\begin{lem}\label{lemma:subsh-is-sheaf}
    If $\mathcal{C}$ is a site and $\mathcal{F}$ is a sheaf of sets on $\mathcal{C}$, then
$\SubSh(\mathcal{F})$ is a sheaf of sets on $\mathcal{C}$.
\end{lem}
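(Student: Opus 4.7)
The plan is to verify the sheaf condition directly: for a cover $\{j_i\colon U_i \to U\}_{i \in I}$ in $\mathcal{C}$, show that the diagram
\[
\SubSh(\mathcal{F})(U) \to \prod_i \SubSh(\mathcal{F})(U_i) \rightrightarrows \prod_{i,j} \SubSh(\mathcal{F})(U_i \times_U U_j)
\]
is an equalizer. The restriction maps are given by $\mathcal{G} \mapsto \mathcal{G}|_{U_i}$, which is meaningful because a subsheaf of $\res\mathcal{F}|_U$ restricts to a subsheaf of $\res\mathcal{F}|_{U_i}$, and this is compatible with further restriction to $U_i \times_U U_j$.

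For injectivity, suppose $\mathcal{G}, \mathcal{G}'$ are two subsheaves of $\res\mathcal{F}|_U$ with $\mathcal{G}|_{U_i} = \mathcal{G}'|_{U_i}$ for all $i$. Given an object $V \to U$ of $\mathcal{C}_{/U}$ and $s \in \mathcal{F}(V)$, the pullback family $\{V \times_U U_i \to V\}_i$ is a cover of $V$, and for each $i$ the element $s|_{V \times_U U_i}$ lies in $\mathcal{G}(V \times_U U_i) = \mathcal{G}'(V \times_U U_i)$ if and only if it lies in the other. Since $\mathcal{G}$ and $\mathcal{G}'$ are subsheaves and membership in a subsheaf is a local condition (because $\mathcal{F}/\mathcal{G}$ and $\mathcal{F}/\mathcal{G}'$ are sheaves and the membership condition amounts to the image being trivial), we conclude $s \in \mathcal{G}(V)$ iff $s \in \mathcal{G}'(V)$.

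For gluing, given a compatible family $(\mathcal{G}_i)_i$ with $\mathcal{G}_i|_{U_i \times_U U_j} = \mathcal{G}_j|_{U_i \times_U U_j}$, I would define a candidate subpresheaf $\mathcal{G}$ of $\res\mathcal{F}|_U$ by
\[
\mathcal{G}(V \to U) = \set{s \in \mathcal{F}(V)}{s|_{V \times_U U_i} \in \mathcal{G}_i(V \times_U U_i) \text{ for all } i \in I},
\]
and verify in turn that $\mathcal{G}$ is a subsheaf (which follows from the sheaf property of $\mathcal{F}$ together with the sheaf property of each $\mathcal{G}_i$, applied to the cover $\{V' \times_U U_i \to V'\}$ for each $V' \to V$ appearing in a cover of $V$) and that $\mathcal{G}|_{U_i}$ equals $\mathcal{G}_i$.

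The main obstacle, and the step that requires the compatibility hypothesis, is showing $\mathcal{G}|_{U_i} = \mathcal{G}_i$. For $V \to U_i$ and $s \in \mathcal{G}_i(V)$, one must show $s|_{V \times_U U_j} \in \mathcal{G}_j(V \times_U U_j)$ for every $j$. The key point is to consider the triple fiber product $V \times_U U_i \times_U U_j$, view it as an object over $U_i \times_U U_j$, and use that $\mathcal{G}_i|_{U_i \times_U U_j} = \mathcal{G}_j|_{U_i \times_U U_j}$ to transfer the membership of $s|_{V \times_U U_i \times_U U_j}$ from $\mathcal{G}_i$ to $\mathcal{G}_j$; then the sheaf property of $\mathcal{G}_j$, applied to the cover $\{V \times_U U_i \times_U U_j \to V \times_U U_j\}_i$, promotes this to $s|_{V \times_U U_j} \in \mathcal{G}_j(V \times_U U_j)$. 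The reverse containment for $s \in \mathcal{G}|_{U_i}(V)$ follows by pulling $s|_{V \times_U U_i} \in \mathcal{G}_i(V \times_U U_i)$ back along the diagonal section $V \to V \times_U U_i$ determined by $V \to U_i$.
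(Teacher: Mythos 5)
Your proof is correct and follows essentially the same strategy as the paper's: uniqueness from the fact that a subsheaf's sections over any $V\to U$ are determined by their restrictions along the pulled-back cover, and gluing by defining $\mathcal G(V)$ as the compatible families, then checking it is a subsheaf restricting to each $\mathcal G_i$. One small remark: the detour through ``$\mathcal F/\mathcal G$ is a sheaf'' to justify that membership in a subsheaf is local is unnecessary (and, for set-valued sheaves, not quite the right framing); the locality of membership follows directly from the sheaf axiom for $\mathcal G$ and the injectivity part of the sheaf axiom for $\mathcal F$, which is what the paper invokes implicitly via the equalizer diagram.
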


\begin{proof}
    Let $\{\abmap{U_i}X\}$ be a cover in $\mathcal{C}$. We must show that the sequence
    \[
    \SubSh(\mathcal{F})(X) \to \prod_i \SubSh(\mathcal{F})(U_i) \rightrightarrows \prod_{i, j} \SubSh(\mathcal{F})(U_i \times_X U_j)
    \]
    is an equalizer sequence. Note first that the left arrow is injective: let $\mathcal{G}$ and
$\mathcal{G}'$ be two subsheaves of $\res\mathcal{F}|_X$ such that
$\res\mathcal{G}|_{U_i} = \res\mathcal{G}'|_{U_i}$ for all $i$. If $\abmap Y X$ is any morphism in $\mathcal{C}$,
then there is an equalizer sequence
    \[
    \mathcal{G}(Y) \to \prod_i \mathcal{G}(U_i \times_X Y) \rightrightarrows \prod_{i, j} \mathcal{G}(U_i \times_X U_j \times_X Y)
    \]
     and similarly for $\mathcal{G}'$.
(Note that we have abused notation by writing, e.g., $\mathcal{G}(Y)$ instead of the more proper $\mathcal{G}(\abmap Y X)$.)
Since all but the leftmost term in the above sequence are the same for $\mathcal{G}$ and
$\mathcal{G}'$, it follows that $\mathcal{G}(Y) = \mathcal{G}'(Y)$, as desired.

     Now let $\mathcal{G}_i$ be a subsheaf of $\res\mathcal{F}|_{U_i}$ on $\mathcal{C}_{/U_i}$ for each
$i$, and suppose that $\res\mathcal{G}_i|_{U_i \times_X U_j} = \res\mathcal{G}_j|_{U_i \times_X U_j}$ for all
$i$, $j$. For each morphism $\abmap Y X$ in $\mathcal{C}$, we get maps $\abmap{Y \times_X U_i}{U_i}$, and we
define $\mathcal{G}(Y)$ such that the sequence
\[
    \mathcal{G}(Y) \to \prod_i \mathcal{G}_i(U_i \times_X Y) \rightrightarrows \prod_{i, j} \mathcal{G}_{ij}(U_i \times_X U_j \times_X Y)
\]
is exact, where $\mathcal{G}_{ij} = \res\mathcal{G}_i|_{U_{ij}} = \res\mathcal{G}_j|_{U_{ij}}$. The fact that
$\mathcal{G}$ is a sheaf follows from a version of the nine lemma which we leave to the reader.
\end{proof}

\begin{lem}\label{lemma:incidence-sheaf}
    Let $\mathcal{F}$ be a sheaf on a site $\mathcal{C}$. Let $\mathcal{G}$ be the presheaf defined by
sending an object $X$ to the set of pairs $(\mathcal{G}_1, \mathcal{G}_2)$ in $\SubSh(\res\mathcal{F}|_X)^2$
satisfying $\mathcal{G}_1 \subseteq \mathcal{G}_2$.
Then $\mathcal{G}$ is a sheaf.
\end{lem}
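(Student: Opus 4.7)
The plan is to realize $\mathcal{G}$ as a subpresheaf of $\SubSh(\mathcal{F}) \times \SubSh(\mathcal{F})$ and show that the defining condition ``$\mathcal{G}_1 \subseteq \mathcal{G}_2$'' is local in the site. Since Lemma \ref{lemma:subsh-is-sheaf} gives that $\SubSh(\mathcal{F})$ is a sheaf, and finite products of sheaves are sheaves, I only need to verify the gluing axiom for $\mathcal{G}$.

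First, I would unwind what needs to be checked. Fix a cover $\{\abmap{U_i}X\}$ and a matching family $(\mathcal{G}_{1,i}, \mathcal{G}_{2,i}) \in \mathcal{G}(U_i)$ whose restrictions agree on overlaps. Using the sheaf property of $\SubSh(\mathcal{F}) \times \SubSh(\mathcal{F})$, the families $(\mathcal{G}_{1,i})_i$ and $(\mathcal{G}_{2,i})_i$ glue uniquely to subsheaves $\mathcal{G}_1, \mathcal{G}_2$ of $\res{\mathcal{F}}|_X$ with $\res{\mathcal{G}_n}|_{U_i} = \mathcal{G}_{n,i}$ for $n \in \sset{1, 2}$. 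The only thing left to verify is that the glued pair lies in $\mathcal{G}(X)$, i.e.\ that $\mathcal{G}_1 \subseteq \mathcal{G}_2$ as subsheaves of $\res{\mathcal{F}}|_X$.

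For the key step, let $\abmap Y X$ be any morphism in $\mathcal{C}$, and fix $s \in \mathcal{G}_1(Y) \subseteq \mathcal{F}(Y)$. I want to show $s \in \mathcal{G}_2(Y)$. For each $i$, restricting along $\abmap{U_i \times_X Y}Y$, the image $s_{U_i \times_X Y}$ lies in $\mathcal{G}_1(U_i \times_X Y) = \mathcal{G}_{1,i}(U_i \times_X Y) \subseteq \mathcal{G}_{2,i}(U_i \times_X Y) = \mathcal{G}_2(U_i \times_X Y)$, where the middle containment is the hypothesis that each $(\mathcal{G}_{1,i}, \mathcal{G}_{2,i})$ belongs to $\mathcal{G}(U_i)$. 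Since $\{\abmap{U_i \times_X Y}Y\}$ is a cover of $Y$ and $\mathcal{G}_2$ is a subsheaf of $\res{\mathcal{F}}|_X$, the sheaf axiom for $\mathcal{G}_2$ forces $s$ itself to lie in $\mathcal{G}_2(Y)$. This holds for every $Y$, so $\mathcal{G}_1 \subseteq \mathcal{G}_2$ and $(\mathcal{G}_1, \mathcal{G}_2) \in \mathcal{G}(X)$.

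There is no hard part here; the argument is a formal exercise in how the sheaf condition propagates a pointwise inclusion from a cover to the base. The only mild subtlety is keeping track of the fact that ``subsheaf of $\mathcal{F}$ on $\mathcal{C}_{/X}$'' really means a choice of subset of $\mathcal{F}(Y)$ for every $\abmap Y X$, so that the inclusion $\mathcal{G}_1 \subseteq \mathcal{G}_2$ is genuinely tested section-by-section, which is exactly what allows the sheaf axiom for $\mathcal{G}_2$ to be invoked.
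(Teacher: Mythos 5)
Your proof is correct and follows the same route as the paper: it reduces the sheaf property of $\mathcal{G}$ to Lemma \ref{lemma:subsh-is-sheaf} and the locality of the containment relation between subsheaves, which the paper asserts in one line and you verify in full. The one small thing worth making explicit in your key step is that once the restrictions $s_{U_i \times_X Y}$ are known to lie in $\mathcal{G}_2$ over the cover, concluding $s \in \mathcal{G}_2(Y)$ uses both the gluing axiom for $\mathcal{G}_2$ (to produce some $t \in \mathcal{G}_2(Y)$ with those restrictions) and the separation axiom for $\mathcal{F}$ (to force $t = s$ inside $\mathcal{F}(Y)$); this is the standard fact that membership in a subsheaf can be tested on a cover, which is exactly what the paper invokes without proof.
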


\begin{proof}
    This follows directly from Lemma~\ref{lemma:subsh-is-sheaf} and the fact that, if $\mathcal{G}_1$
and $\mathcal{G}_2$ are two subsheaves of $\res\mathcal{G}|_X$, then one can check that
$\mathcal{G}_1 \subset \mathcal{G}_2$ after passing to a cover of $X$.
\end{proof}

\begin{defn}
    Let $\mathcal{C}$ be a site. If $\mathcal{F}$ is a sheaf of sets on $\mathcal{C}$ and $\Gamma$ is a
sheaf of groups on $\mathcal{C}$ with multiplication morphism $m_{\Gamma}$, then an
\textit{action} of $\Gamma$ on $\mathcal{F}$ is a map of sheaves $\map\alpha{\Gamma \times \mathcal{F}}{\mathcal{F}}$ such that the following diagram commutes:
\[
\begin{tikzcd}
\Gamma \times \Gamma \times \mathcal{F} \arrow[r, "m_{\Gamma} \times \mathrm{id}_{\mathcal{F}}"]
\arrow[d, "\mathrm{id}_{\Gamma} \times \alpha"]
    &\Gamma \times \mathcal{F} \arrow[d, "\alpha"] \\
\Gamma \times \mathcal{F} \arrow[r, "\alpha"]
    &\mathcal{F}
\end{tikzcd}
\]
Equivalently, a group action is a homomorphism $\abmap\Gamma{\uMor(\mathcal{F}, \mathcal{F})}$.
We call the pair \((\mathcal F, \alpha)\) a
\(\Gamma\)-sheaf, and will often suppress \(\alpha\) from the notation.

If $(\mathcal{F}_1,\alpha_1)$ and $(\mathcal{F}_2,\alpha_2)$ are
$\Gamma$-sheaves,
then we say that a morphism $\map f{\mathcal{F}_1}{\mathcal{F}_2}$ is
$\Gamma$-equivariant if $f \circ \alpha_1 = \alpha_2 \circ f$.
We denote the set of $\Gamma$-equivariant morphisms by $\Mor_{\Gamma}(\mathcal{F}_1,
\mathcal{F}_2)$. As before, there is also a sheaf of sets
$\uMor_{\Gamma}(\mathcal{F}_1,
\mathcal{F}_2)$.
The collection of \(\Gamma\)-sheaves (along with \(\Gamma\)-equivariant morphisms) assembles into a category
$\gpon{\Gamma}\Sh_{\mathcal{C}}$.

If
$\mathcal{F}$ is a $\Gamma$-sheaf, then we define a sheaf
of sets $\mathcal{F}^{\Gamma}$ by sending an object $X$ of $\mathcal{C}$ to the set
$\mathcal{F}(X)^{\Gamma}$
consisting of those elements $a$ of $\mathcal{F}(X)$ such that, for every morphism $\abmap Y X$ and
every $\gamma \in \Gamma(Y)$, we have $\gamma \cdot a_Y =
a_Y$. The map $\abmapto{\mathcal{F}}{\mathcal{F}^{\Gamma}}$ induces a functor
$\map{(\cdot)^{\Gamma}}
{\gpon{\Gamma}\Sh_{\mathcal{C}}}{\Sh_{\mathcal{C}}}$.
\end{defn}

Most familiar facts about group actions on sets extend straightforwardly to facts about actions of group
sheaves on sheaves of sets. We will give complete details only a couple of times;
as the reader will hopefully find, proving these extensions only requires more bookkeeping than the
usual facts, and it requires few new ideas.

\begin{lem}
	Let $\Gamma$ be a group sheaf on a site $\mathcal{C}$. The functor $(\cdot)^{\Gamma}$ is right
adjoint to the functor $\abmap{\Sh_{\mathcal{C}}}{\gpon{\Gamma}\Sh_{\mathcal{C}}}$
which gives a sheaf of sets the trivial $\Gamma$-action.
\end{lem}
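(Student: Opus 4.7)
The plan is to exhibit, for each sheaf of sets $\mathcal{S}$ and each $\Gamma$-sheaf $(\mathcal{F},\alpha)$, a natural bijection
\[
\Mor_{\gpon{\Gamma}\Sh_{\mathcal{C}}}(\mathcal{S}_{\mathrm{triv}}, \mathcal{F}) \;\longleftrightarrow\; \Mor_{\Sh_{\mathcal{C}}}(\mathcal{S}, \mathcal{F}^{\Gamma}),
\]
where $\mathcal{S}_{\mathrm{triv}}$ denotes $\mathcal{S}$ equipped with the trivial action. The bijection is forced by the obvious candidate maps: in one direction, post-compose with the inclusion $\mathcal{F}^{\Gamma}\hookrightarrow\mathcal{F}$; in the other direction, observe that a $\Gamma$-equivariant map out of a trivially-acted-on sheaf must land in the fixed-point subsheaf.

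First I would spell out the construction from right to left. Given $g\colon \mathcal{S}\to\mathcal{F}^{\Gamma}$, let $\iota\colon \mathcal{F}^{\Gamma}\hookrightarrow\mathcal{F}$ be the canonical inclusion (coming from the definition of $\mathcal{F}^{\Gamma}$ as a subsheaf of $\mathcal{F}$), and set $f=\iota\circ g$. To see that $f$ is $\Gamma$-equivariant, it suffices to check commutativity of the square defining equivariance on local sections: for every object $X$ of $\mathcal{C}$, every morphism $Y\to X$, every $\gamma\in\Gamma(Y)$, and every $a\in\mathcal{S}(X)$, one has $\gamma\cdot f(a)_{Y}=\gamma\cdot\iota(g(a))_{Y}=\iota(g(a))_{Y}=f(a)_{Y}$, where the middle equality uses that $g(a)$ is a local section of $\mathcal{F}^{\Gamma}$, hence fixed by $\gamma$ after restriction. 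Since the trivial action on $\mathcal{S}$ means $\gamma\cdot a_{Y}=a_{Y}$, equivariance follows.

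Next I would build the inverse. Given a $\Gamma$-equivariant $f\colon \mathcal{S}_{\mathrm{triv}}\to\mathcal{F}$, I claim that for every object $X$ and every $a\in\mathcal{S}(X)$, the local section $f(a)\in\mathcal{F}(X)$ lies in $\mathcal{F}^{\Gamma}(X)$. Indeed, for every morphism $Y\to X$ and every $\gamma\in\Gamma(Y)$, equivariance of $f$ and triviality of the action on $\mathcal{S}$ give $\gamma\cdot f(a)_{Y}=f(\gamma\cdot a_{Y})=f(a_{Y})=f(a)_{Y}$, which is precisely the defining condition for $f(a)$ to belong to $\mathcal{F}^{\Gamma}(X)$. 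Thus $f$ factors uniquely as $\iota\circ g$ for a morphism $g\colon\mathcal{S}\to\mathcal{F}^{\Gamma}$ of sheaves of sets, and the two assignments $f\mapsto g$ and $g\mapsto \iota\circ g$ are mutually inverse by construction.

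Finally I would note naturality in both $\mathcal{S}$ and $\mathcal{F}$: both constructions are defined by pre- or post-composition with canonical morphisms, so naturality is automatic. There is no real obstacle here; the only thing to take care of is the bookkeeping involving restriction to arbitrary $Y\to X$ (rather than working only with global sections), which is necessary because ``$\Gamma$-fixed'' is defined as a condition that must hold after every restriction, not just at $X$ itself.
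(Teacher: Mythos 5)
Your proof is correct and follows essentially the same route as the paper: one direction is the evident post-composition with the inclusion $\mathcal{F}^{\Gamma}\hookrightarrow\mathcal{F}$, and the other amounts to checking that a $\Gamma$-equivariant map out of a trivially-acted sheaf lands in the fixed-point subsheaf, verified on local sections after arbitrary restriction. The paper packages the two directions a little more tersely (it treats one inclusion as already given and only checks that it is an equality), but the substance of the argument is identical.
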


\begin{proof}
	There is an evident inclusion $\Mor_{\Sh_{\mathcal{C}}}(\mathcal{F}, \mathcal{G}^{\Gamma})
\subseteq \Mor_{\gpon{\Gamma}\Sh_{\mathcal{C}}}(\mathcal{F}, \mathcal{G})$ for
$\mathcal{F}
\in \Sh_{\mathcal{C}}$ and $\mathcal{G} \in \gpon{\Gamma}\Sh_{\mathcal{C}}$
(where we have denoted the sheaf \(\mathcal F\) with
the trivial \(\Gamma\)-action again by
\(\mathcal F\)),
and we need only
check that it is an equality. Indeed, if the morphism $\map\phi{\mathcal{F}}{\mathcal{G}}$ is $\Gamma$-equivariant,
$X$ is an object of $\mathcal{C}$, and $f$ belongs to
$\mathcal{F}(X)$, then, for any $\abmap Y X$ and any $\gamma \in \Gamma(Y)$, we have
	\[
	\phi(f)_Y = \phi(f_Y) = \phi(\gamma \cdot f_Y) = \gamma \cdot \phi(f_Y) = \gamma \cdot \phi(f)_Y,
	\]
	so by definition $\phi(f)$ belongs to $\mathcal{G}^{\Gamma}(X)$.
\end{proof}

Let $\map f{\Gamma'}\Gamma$ be a homomorphism of group sheaves on a site $\mathcal{C}$,
and let $\mathcal{F}$ be a sheaf of sets on $\mathcal{C}$. Define an action of
$\Gamma'$
on $\uMor(\Gamma, \mathcal{F})$ as follows: if $X$ is an object, $\gamma' \in \Gamma'(X)$
and $\gamma \in \Gamma(X)$ are local sections, and $\map\phi
{\res\Gamma|_X}{\res\mathcal{F}|_X}$ is a morphism, then we set $(\gamma' \cdot \phi)(\gamma) =
\phi(f(\gamma')^{-1} \gamma)$.

\begin{rem}
\label{rem:act-on-power}
If $\map f{\Gamma_1}\Gamma$ is a homomorphism,
then we may equip \(\Gamma\) with the structure of a \(\Gamma_1\)-sheaf
via right multiplication
(that is, for every object \(X\) of \(\mathcal C\) and every
\(\gamma_1 \in \Gamma_1(X)\) and \(\gamma \in \Gamma(X)\),
the result of acting by \(\gamma_1\) on \(\gamma\) is
\(\gamma f(\gamma_1)\inv\)).
If $\mathcal{F}$ is a $\Gamma_1$-sheaf, then the
$\Gamma$-action
on $\uMor(\Gamma, \mathcal{F})$
deduced from the identity map \abmap\Gamma\Gamma\
preserves the
subsheaf $\uMor_{\Gamma_1}(\Gamma, \mathcal{F})$. Thus
$\uMor_{\Gamma_1}(\Gamma, (\cdot))$ is a functor from $\Gamma_1$-sheaves to
$\Gamma$-sheaves.
When we take into account the group-sheaf structure on
morphism sets defined in
Definition \ref{defn:mor-sheaf}, we have that
\(\underline\Mor_{\Gamma_1}(\Gamma, (\cdot))\)
restricts to a functor from
group \(\Gamma_1\)-sheaves to
group \(\Gamma\)-sheaves
\end{rem}

\begin{lem}
\label{lem:act-on-power}
	Let $\Gamma' \subseteq \Gamma$ and $\Delta$ be group sheaves on a site $\mathcal{C}$, and suppose that
$\Gamma$
acts on $\Delta$ through group automorphisms. If $\mathcal{F}$ is a (\(\Gamma \ltimes \Delta\))-sheaf
on $\mathcal{C}$, then the restriction morphism
$\map\rho
{\uMor_{\Gamma' \ltimes \Delta}(\Gamma \ltimes \Delta, \mathcal{F})}
{\uMor_{\Gamma'}(\Gamma, \mathcal{F})}$
is a \(\Gamma\)-equivariant isomorphism of sheaves.
\end{lem}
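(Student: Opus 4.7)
My plan is to construct an explicit inverse to $\rho$ using the decomposition of $\Gamma \ltimes \Delta$ as, on the level of sheaves of sets, the product $\Delta \cdot \Gamma$ of its sub-group-sheaves $\Delta$ and $\Gamma$: every section $(\gamma, \delta)$ can be written uniquely as $(1, \delta)(\gamma, 1)$. Given a section $\psi$ of $\uMor_{\Gamma'}(\Gamma, \mathcal{F})$ over an object $X$, I will define $\tilde\psi$ to be the unique extension of $\psi$ from $\Gamma \times \{1\}$ to all of $\Gamma \ltimes \Delta$ that is equivariant for the $\Delta$-action coming from the inclusion $\Delta \hookrightarrow \Gamma \ltimes \Delta$ and the $(\Gamma \ltimes \Delta)$-action on $\mathcal{F}$; concretely, this takes the form
\[
\tilde\psi(\gamma, \delta) \;=\; (1, \delta) \cdot \psi(\gamma),
\]
with appropriate interpretation depending on action conventions.

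The main step is to verify that this $\tilde\psi$ is not only $\Delta$-equivariant by construction but also $\Gamma'$-equivariant, so that it lies in $\uMor_{\Gamma' \ltimes \Delta}(\Gamma \ltimes \Delta, \mathcal{F})$. This follows by direct calculation from the semidirect-product multiplication rule $(\gamma_0', \delta_0)(\gamma, \delta) = (\gamma_0'\gamma, \delta_0 \cdot {}^{\gamma_0'}\delta)$ together with the $\Gamma'$-equivariance identity $\psi(\gamma_0'\gamma) = (\gamma_0', 1) \cdot \psi(\gamma)$: both sides of the desired equality $\tilde\psi((\gamma_0', \delta_0)(\gamma, \delta)) = (\gamma_0', \delta_0) \cdot \tilde\psi(\gamma, \delta)$ unfold to $(\gamma_0', \delta_0 \cdot {}^{\gamma_0'}\delta) \cdot \psi(\gamma)$. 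The mutual-inverse identities $\rho(\tilde\psi) = \psi$ and $\widetilde{\rho(\phi)} = \phi$ are then essentially tautological: the first holds by inspection at $\delta = 1$, and the second follows from $\Delta$-equivariance of $\phi$, since $\widetilde{\rho(\phi)}(\gamma, \delta) = (1, \delta) \cdot \phi(\gamma, 1) = \phi((1, \delta)(\gamma, 1)) = \phi(\gamma, \delta)$.

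Finally, $\Gamma$-equivariance of $\rho$ follows from the fact that the $\Gamma$-action on $\Gamma \ltimes \Delta$ (from the inclusion $\Gamma \hookrightarrow \Gamma \ltimes \Delta$, by left multiplication) preserves the slice $\Gamma \times \{1\}$ on which $\rho$ reads its input, so passing to the restriction commutes with the $\Gamma$-action in the required sense. The main obstacle throughout is simply bookkeeping: keeping the various left/right action conventions for $\Gamma'$, $\Delta$, $\Gamma' \ltimes \Delta$, and $\Gamma$ consistent, since some must be left actions and others right actions (by inverses) in order to commute with one another. At its heart, the result is the sheaf-theoretic translation of the classical adjunction $\mathrm{Hom}_{G' \ltimes H}(G \ltimes H, F) = \mathrm{Hom}_{G'}(G, F)$ for groups with $F$ a $(G \ltimes H)$-set and $G' \subseteq G$.
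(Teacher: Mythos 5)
Your proposal takes essentially the same approach as the paper's proof: both construct an explicit inverse to $\rho$ by extending a $\Gamma'$-equivariant $\psi\colon\Gamma\to\mathcal F$ across the $\Delta$-direction using the $\Delta$-equivariance condition, and then checking $(\Gamma'\ltimes\Delta)$-equivariance by a direct semidirect-product calculation. The only difference is in the explicit formula for the inverse: the paper sets $\eta(\psi)(\gamma,\delta)=\gamma\delta\gamma^{-1}\cdot\psi(\gamma)$, which carries a conjugation twist that your $(1,\delta)\cdot\psi(\gamma)$ omits -- this discrepancy, like the difference between your left-translation equivariance identity $\psi(\gamma_0'\gamma)=(\gamma_0',1)\cdot\psi(\gamma)$ and the paper's inverted-right-translation convention $\psi(\gamma\gamma_1^{-1})=\gamma_1\cdot\psi(\gamma)$ from Remark~\ref{rem:act-on-power}, is exactly the convention-dependent bookkeeping you explicitly flag, and the underlying argument is the same.
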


\begin{proof}
It is clear that \(\rho\) is \(\Gamma\)-equivariant.
To show that \(\rho\) is an isomorphism,
	we must define an inverse morphism $\eta$. To do so, if $X$ is an
object of $\mathcal{C}$ and $\map\psi{\res\Gamma|_X}{\res\mathcal{F}|_X}$ is a $\Gamma'$-equivariant morphism,
define $\map{\eta(\psi)}{\res(\Gamma \ltimes \Delta)|_X}{\res\mathcal{F}|_X}$ as follows: if $\abmap Y X$ is a
morphism in $\mathcal{C}$ and $(\gamma, \delta)$ belongs to $(\Gamma \ltimes \Delta)(Y)$,
then we set $\eta(\psi)(\gamma, \delta) = \gamma\delta\gamma^{-1} \cdot \psi(\gamma)$. The fact that $\eta(\psi)$ is
$\Gamma' \ltimes \Delta$-equivariant
is baked into the definition, and the fact that $\eta \circ \rho$ and
$\rho \circ \eta$ are the respective identity morphisms is a direct calculation from the definitions.
\end{proof}

For the remainder of the appendix, let $\Gamma_1 \subseteq \Gamma$ be an inclusion of group
sheaves on some fixed site $\mathcal{C}$.
As in Remark \ref{rem:act-on-power},
we let $\Gamma_1$ act on $\Gamma$ by
(inverted) right translation.
From now on, we will begin to be less strict about choosing objects in proofs, relying more heavily on
the terminology of local sections. We do this with the aim that it will make the following proofs less
heavy on notation, without sacrificing too much clarity.

\begin{lem}
\label{lem:ind-first-adjoint}
	The functor $\map{\uMor_{\Gamma_1}(\Gamma, (\cdot))}
	{\gpon{\Gamma_1}\Sh_{\mathcal{C}}}{\gpon{\Gamma}\Sh_{\mathcal{C}}}$
	from Remark~\ref{rem:act-on-power} is right adjoint to the forgetful functor.
\end{lem}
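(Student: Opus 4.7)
The plan is to establish the adjunction by exhibiting an explicit unit and counit and verifying the triangle identities, all of which amount to direct calculations with local sections once the actions are unwound correctly. Throughout, write $U$ for the forgetful functor from $\Gamma$-sheaves to $\Gamma_1$-sheaves, so that the assertion is $U \dashv \uMor_{\Gamma_1}(\Gamma, (\cdot))$.

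First I would fix conventions. Recall that $\Gamma_1$ acts on $\Gamma$ by $\gamma_1 \cdot \gamma = \gamma \gamma_1^{-1}$, and the induced $\Gamma$-action on $\uMor(\Gamma, \mathcal F)$ (from Remark~\ref{rem:act-on-power}, applied to $\mathrm{id}_\Gamma$) is $(\gamma \cdot \phi)(\gamma') = \phi(\gamma^{-1}\gamma')$; this $\Gamma$-action preserves the subsheaf $\uMor_{\Gamma_1}(\Gamma, \mathcal F)$ of $\Gamma_1$-equivariant morphisms. A morphism $\phi$ is $\Gamma_1$-equivariant iff $\phi(\gamma \gamma_1^{-1}) = \gamma_1 \cdot \phi(\gamma)$ for all local sections $\gamma_1 \in \Gamma_1$, $\gamma \in \Gamma$.

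Next I would define the unit and counit. For a $\Gamma$-sheaf $\mathcal H$ and a local section $h \in \mathcal H(X)$, set
\[
\eta_{\mathcal H}(h)(\gamma) = \gamma^{-1} \cdot h
\quad\text{for }\gamma \in \Gamma(Y),\ Y \to X;
\]
one checks that $\eta_{\mathcal H}(h)$ is $\Gamma_1$-equivariant (by the computation $(\gamma\gamma_1^{-1})^{-1} h = \gamma_1 \gamma^{-1} h$) and that $\eta_{\mathcal H}$ itself is $\Gamma$-equivariant (since both $\eta_{\mathcal H}(\gamma \cdot h)(\gamma')$ and $(\gamma \cdot \eta_{\mathcal H}(h))(\gamma')$ evaluate to $\gamma'^{-1}\gamma h$). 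For a $\Gamma_1$-sheaf $\mathcal F$, set
\[
\epsilon_{\mathcal F}(\phi) = \phi(1),
\]
which is $\Gamma_1$-equivariant as a map $U(\uMor_{\Gamma_1}(\Gamma, \mathcal F)) \to \mathcal F$: the computation $(\gamma_1 \cdot \phi)(1) = \phi(\gamma_1^{-1}) = \gamma_1 \cdot \phi(1)$ uses precisely the $\Gamma_1$-equivariance of $\phi$ in the last step. Both $\eta$ and $\epsilon$ are visibly natural in their respective variables.

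Finally, I would verify the triangle identities. For a $\Gamma_1$-sheaf $\mathcal F$, the composite
\[
U(\uMor_{\Gamma_1}(\Gamma, \mathcal F))
\xrightarrow{U(\eta)}
U(\uMor_{\Gamma_1}(\Gamma, \uMor_{\Gamma_1}(\Gamma, \mathcal F)))
\xrightarrow{\epsilon}
\uMor_{\Gamma_1}(\Gamma, \mathcal F)
\]
sends $\phi$ to $\eta(\phi)(1)$, whose value at $\gamma$ is $1^{-1} \cdot \phi(\gamma) = \phi(\gamma)$; and for a $\Gamma$-sheaf $\mathcal H$, the composite $\mathcal H \xrightarrow{\eta} \uMor_{\Gamma_1}(\Gamma, U\mathcal H) \xrightarrow{\uMor_{\Gamma_1}(\Gamma, \epsilon)} \mathcal H$ sends $h$ to $\eta_{\mathcal H}(h)(1) = 1^{-1} \cdot h = h$. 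Thus both triangle identities hold and the adjunction is established. The proof involves no substantive obstacle; the only potential pitfall is keeping track of which $\Gamma_1$-action appears in each equivariance check, but once one draws up the conventions above, every verification is a one-line computation.
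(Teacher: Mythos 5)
Your proof is correct, and the formulas you use are exactly the ones in the paper's proof: your unit $\eta_{\mathcal H}(h)(\gamma)=\gamma^{-1}\cdot h$ is the paper's bijection $\eta(\phi)(x)(\gamma)=\phi(\gamma^{-1}x)$ specialized to $\phi=\mathrm{id}_{U\mathcal H}$, and your counit $\epsilon_{\mathcal F}(\phi)=\phi(1)$ is the paper's inverse map $\rho$ specialized to $\psi=\mathrm{id}$. The only difference is that the paper exhibits the hom-set bijection $\Mor_{\Gamma_1}(U\mathcal F,\mathcal G)\cong\Mor_\Gamma(\mathcal F,\uMor_{\Gamma_1}(\Gamma,\mathcal G))$ directly and asserts that $\eta$ and $\rho$ are mutually inverse, while you package the same data as a unit and counit and check the triangle identities. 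Since the two formulations of adjunction are equivalent and the verifications are of comparable length, this is not a genuinely different route, just a different packaging of the same computation; if anything, your version is marginally more explicit since the paper leaves "mutually inverse" to the reader.

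One small bookkeeping slip worth flagging: in your two triangle-identity displays the arrow labels are transposed. The composite on a $\Gamma_1$-sheaf $\mathcal F$, namely $R\mathcal F\to RUR\mathcal F\to R\mathcal F$, has second arrow $R\epsilon_{\mathcal F}=\uMor_{\Gamma_1}(\Gamma,\epsilon_{\mathcal F})$, not bare $\epsilon$; while the composite on a $\Gamma$-sheaf $\mathcal H$, namely $U\mathcal H\to URU\mathcal H\to U\mathcal H$, has second arrow $\epsilon_{U\mathcal H}$, not $\uMor_{\Gamma_1}(\Gamma,\epsilon)$. (Relatedly, the source of your second composite should be $U\mathcal H$ rather than $\mathcal H$, since one must forget the $\Gamma$-structure before applying $\epsilon$.) Your explicit element computations — $\eta(\phi)(1)=\phi$ and $\eta_{\mathcal H}(h)(1)=h$ — are nevertheless the right ones, so this is a labeling issue rather than a gap; it is precisely the pitfall you foresaw at the end.
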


\begin{proof}
	Let $\mathcal{F}$ and $\mathcal{G}$ be sheaves on $\mathcal{C}$, equipped with actions of \(\Gamma\) and \(\Gamma_1\), respectively.
We define $\map\eta
{\Mor_{\Gamma_1}(\mathcal{F}, \mathcal{G})}
{\Mor_{\Gamma}(\mathcal{F}, \uMor_{\Gamma_1}(\Gamma, \mathcal{G}))}$
by $\eta(\phi)(x)(\gamma) \ldef
\phi(\gamma^{-1} x)$ for local sections $x$ of $\mathcal{F}$ and $\gamma$ of $\Gamma$. Note first
that $\eta(\phi)(x)$ is $\Gamma_1$-equivariant for every $x$: indeed, if \(\gamma_1\) is a local section of \(\Gamma_1\), then
we have
	\[
	\eta(\phi)(x)(\gamma \gamma_1^{-1}) = \phi(\gamma_1 \gamma^{-1} x) = \gamma_1 \cdot
\phi(\gamma^{-1}x) = \gamma_1 \cdot \eta(\phi)(x).
	\]
	Next, $\eta(\phi)$ is $\Gamma$-equivariant: indeed, if $\gamma_0$ is a local section of $\Gamma$, then we have
	\[
	\eta(\phi)(\gamma_0 \cdot x)(\gamma) = \phi(\gamma^{-1} \gamma_0 x) =
\eta(\phi)(x)(\gamma_0^{-1}
\gamma) = (\gamma_0 \cdot \eta(\phi)(x))(\gamma).
	\]
	We define now an inverse map $\map\rho
{\Mor_{\Gamma}(\mathcal{F}, \uMor_{\Gamma_1}(\Gamma, \mathcal{G}))}
{\Mor_{\Gamma_1}(\mathcal{F}, \mathcal{G})}$
via $\rho(\psi)(x) \ldef \psi(x)(1_\Gamma)$. To see that this is well defined, we compute
	\[
	\rho(\psi)(\gamma_1 \cdot x) = \psi(\gamma_1 \cdot x)(1_\Gamma) = (\gamma_1 \cdot \psi(x))(1_\Gamma) =
\psi(x)(\gamma_1^{-1})
= \gamma_1 \cdot \psi(x)(1_\Gamma) = \gamma_1 \cdot \rho(\psi)(x).
	\]
	It is straightforward to check that $\eta$ and $\rho$ are mutually inverse, as desired.
\end{proof}

\begin{lem}\label{lem:was-cor:trans-induction}
	There is a natural isomorphism between the functors
$\uMor_{\Gamma_1}(\Gamma, (\cdot))^{\Gamma}$ and $(\cdot)^{\Gamma_1}$
from $\gpon{\Gamma_1}\Sh_{\mathcal{C}}$ to $\Sh_{\mathcal{C}}$,
given by evaluation at $1_{\Gamma}$.
\end{lem}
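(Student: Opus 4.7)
My plan is to verify directly that evaluation at $1_\Gamma$ is an isomorphism of sheaves, by unwinding the two actions from Remark \ref{rem:act-on-power}. The statement reduces to a pair of short observations once the conventions are pinned down, and I will verify naturality at the end. I will also sketch an abstract cross-check via the adjunction of Lemma \ref{lem:ind-first-adjoint}.

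First I would fix an object $X$ of $\mathcal{C}$, a $\Gamma_1$-sheaf $\mathcal{F}$, and a local section $\phi \in \uMor_{\Gamma_1}(\Gamma, \mathcal{F})^{\Gamma}(X)$. With the $\Gamma$-action on $\uMor_{\Gamma_1}(\Gamma, \mathcal{F})$ given by $(\gamma_0 \cdot \phi)(\gamma) = \phi(\gamma_0^{-1}\gamma)$, the $\Gamma$-invariance of $\phi$ says that, on every slice $Z \to X$ and for all $\gamma_0, \gamma \in \Gamma(Z)$, one has $\phi_Z(\gamma_0^{-1}\gamma) = \phi_Z(\gamma)$. Specialising $\gamma$ to $1_\Gamma$ and letting $\gamma_0$ vary, $\phi_Z$ is the constant morphism with value $\phi_Z(1_\Gamma) = \phi_X(1_\Gamma)_Z$, so $\phi$ is determined by $\phi_X(1_\Gamma)$. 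The $\Gamma_1$-equivariance condition $\phi(\gamma \gamma_1^{-1}) = \gamma_1 \cdot \phi(\gamma)$ from Remark \ref{rem:act-on-power}, specialised to $\gamma = 1_\Gamma$ and combined with the constancy just established, forces $\phi_X(1_\Gamma) \in \mathcal{F}^{\Gamma_1}(X)$. Thus evaluation at $1_\Gamma$ gives a well-defined, injective morphism of sheaves $\uMor_{\Gamma_1}(\Gamma, \mathcal{F})^\Gamma \to \mathcal{F}^{\Gamma_1}$.

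For the inverse, given $x \in \mathcal{F}^{\Gamma_1}(X)$, I would send it to the constant morphism $\phi_x\colon \res\Gamma|_X \to \res\mathcal{F}|_X$ defined on each $Y \to X$ by $(\phi_x)_Y(\gamma) = x_Y$. Constancy makes $\Gamma$-invariance automatic, while $\Gamma_1$-equivariance reduces to the defining property $\gamma_1 \cdot x_Y = x_Y$ of $\mathcal{F}^{\Gamma_1}$. The two assignments are visibly mutually inverse, and naturality in $\mathcal{F}$ (under $\Gamma_1$-equivariant morphisms) and in $X$ (under morphisms in $\mathcal{C}$) is immediate because both constructions are given by explicit universal formulas.

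The only real obstacle is bookkeeping: one must keep straight the $\Gamma_1$-action on $\Gamma$ by inverted right translation (entering the equivariance condition) and the $\Gamma$-action on $\uMor_{\Gamma_1}(\Gamma, \mathcal{F})$ by inverted left translation on the source, since it is exactly this mismatch that makes $\Gamma$-invariance reduce to a one-point identity and makes the value at $1_\Gamma$ automatically $\Gamma_1$-fixed. As a cross-check, one can obtain the global-sections version abstractly by applying the adjunction of Lemma \ref{lem:ind-first-adjoint} with $\mathcal{G}$ the terminal $\Gamma$-sheaf (necessarily carrying the trivial $\Gamma$-action), so that $\Mor_\Gamma(\mathcal{G}, \uMor_{\Gamma_1}(\Gamma, \mathcal{F})) = \Mor_{\Gamma_1}(\mathcal{G}, \mathcal{F})$ becomes the desired equality of fixed-point sets, with the unit of the adjunction being evaluation at $1_\Gamma$; carrying out the same argument over each slice site $\mathcal{C}_{/X}$ upgrades this to the sheaf-level statement.
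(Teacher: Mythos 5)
Your proof is correct and takes essentially the same approach as the paper: both define the forward map as evaluation at $1_\Gamma$, verify that the value lands in $\mathcal{F}^{\Gamma_1}$ by combining the $\Gamma$-invariance of $\phi$ with the $\Gamma_1$-equivariance condition, and exhibit the inverse as the constant morphism $\gamma \mapsto a$. You supply a bit more detail than the paper (making constancy explicit via $\Gamma$-invariance, spelling out naturality, and adding a cross-check through the adjunction of Lemma \ref{lem:ind-first-adjoint}), but the core argument is identical.
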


\begin{proof}
	First, this is actually well-defined: let $\mathcal{F}$ be a sheaf with $\Gamma_1$-action and let
$\map\phi\Gamma{\mathcal{F}}$ be a $\Gamma_1$-equivariant morphism which is fixed by the
$\Gamma$-action. If $\map i{\Gamma_1}\Gamma$ is the inclusion, then for all local sections
$\gamma$ of $\Gamma$ and $\gamma_1$ of $\Gamma_1$ we have
	\[
	\gamma_1 \cdot \phi(1_{\Gamma}) = \phi(\gamma_1) = (i(\gamma_1)^{-1} \cdot \phi)(1_{\Gamma}) = \phi(1_{\Gamma}).
	\]
	We now define the inverse natural transformation
$\map\eta{(\cdot)^{\Gamma_1}}{\uMor_{\Gamma_1}(\Gamma, (\cdot))^{\Gamma}}$ as
follows: let $\mathcal{F}$ be a sheaf with $\Gamma_1$-action and let $a$ be a local section of
$\mathcal{F}$ which is fixed by the $\Gamma_1$-action. We define
$\map{\eta(a)}\Gamma{\mathcal{F}}$ via $\eta(a)(\gamma) = a$ for all $\gamma$, and note that $\eta$ is clearly an inverse.
\end{proof}

\begin{lem}\label{lem:induction-split-case}
	Suppose that the quotient $\Gamma/\Gamma_1$ is isomorphic to the constant sheaf
$\underline{S}$ for some set $S$ and there is a sheaf-theoretic section
$\map{\sigma}{\underline{S}}{\Gamma}$. There is a unique natural isomorphism
$\map{\epsilon_\sigma}
{\uMor_{\Gamma_1}(\Gamma, (\cdot))}{\prod_S (\cdot)}$
whose composition with the projection on the factor
corresponding to $s \in S$ is given by evaluation at $\sigma(s)$.
\end{lem}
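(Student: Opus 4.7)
The plan is to reduce the computation to the evident fact that morphisms out of a constant sheaf $\underline S$ into $\mathcal F$ form a product $\prod_S \mathcal F$. First I would use the section $\sigma$ to produce a $\Gamma_1$-equivariant isomorphism $\map\tau{\underline S \times \Gamma_1}\Gamma$ given on local sections by $(s, \gamma_1) \mapsto \sigma(s)\gamma_1^{-1}$, where $\Gamma_1$ acts trivially on $\underline S$ and on itself by (inverted) right translation; the hypothesis that $\sigma$ splits $\abmap\Gamma{\underline S}$ over the big site guarantees that $\tau$ is bijective on local sections of every $\Gamma_1$-torsor trivialization, hence an isomorphism of sheaves.

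Then for any $\Gamma_1$-sheaf $\mathcal F$ I would apply $\uMor_{\Gamma_1}((\cdot), \mathcal F)$ to $\tau$ to get a natural isomorphism
\[
\uMor_{\Gamma_1}(\Gamma, \mathcal F)
\xrightarrow{\sim} \uMor_{\Gamma_1}(\underline S \times \Gamma_1, \mathcal F)
\xrightarrow{\sim} \uMor(\underline S, \mathcal F)
\xrightarrow{\sim} \prod_S \mathcal F,
\]
where the middle isomorphism is the special case of Lemma \ref{lem:was-cor:trans-induction} in which $\Gamma$ plays the role of $\Gamma_1$ and $\Gamma_1$ the role of the trivial group, combined with the fact that $\Gamma_1$ acts trivially on $\underline S$ so that $\Gamma_1$-equivariance amounts to the value at $\gamma_1 \in \Gamma_1$ being determined by the value at $1$; and the final isomorphism is the universal property of the constant sheaf (morphisms out of $\underline S$ locally are determined by a function on $S$). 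Unwinding the composition on a local section $\phi \in \uMor_{\Gamma_1}(\Gamma, \mathcal F)(X)$, one checks that its image is the tuple $(\phi(\sigma(s)))_{s \in S}$, which is the required projection property.

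For uniqueness, suppose $\epsilon'$ is another natural transformation with the same component maps. Since a natural transformation into $\prod_S (\cdot)$ is determined by its components, and each component is, by hypothesis, the composition of evaluation at $\sigma(s)$ with the identity on $\mathcal F$, we have $\epsilon' = \epsilon_\sigma$. Naturality in $\mathcal F$ of the constructed $\epsilon_\sigma$ is immediate, since post-composition with a $\Gamma_1$-equivariant map $\abmap{\mathcal F}{\mathcal G}$ commutes with evaluation at the fixed local section $\sigma(s)$ of $\Gamma$.

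I do not expect any real obstacle here; the only minor care needed is in verifying that the section $\sigma$ (which is defined over the site, not just on global sections) does split the $\Gamma_1$-torsor $\abmap\Gamma{\underline S}$, so that $\tau$ is a genuine isomorphism of sheaves rather than just a morphism that is bijective on some cover. Since $\sigma$ is given as a morphism $\abmap{\underline S}\Gamma$ of sheaves splitting the quotient map, this is automatic.
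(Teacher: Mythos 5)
Your approach is sound and arrives at the same end result, but via an explicit $\Gamma_1$-equivariant isomorphism $\tau\colon\underline S\times\Gamma_1\to\Gamma$ followed by the universal property of the constant sheaf, whereas the paper simply writes down the inverse $\eta_\sigma$ directly (its formula $\eta_\sigma((x_s))(\sigma(s)\gamma_1^{-1})=\gamma_1\cdot x_s$ is exactly the composite of your inverse maps). The two are computationally the same; yours foregrounds the sheaf-theoretic reason the formula works (namely that $\Gamma$ is a trivial $\Gamma_1$-torsor over $\underline S$ and the constant sheaf is a coproduct of points), which is arguably more illuminating, at the cost of an extra intermediate object.

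Two small imprecisions worth fixing. First, with your $\tau(s,\gamma_1)=\sigma(s)\gamma_1^{-1}$ and the paper's action $\gamma_1'\cdot\gamma=\gamma\gamma_1'^{-1}$ on $\Gamma$, the $\Gamma_1$-action on the second factor of $\underline S\times\Gamma_1$ that makes $\tau$ equivariant is \emph{left} translation $\gamma_1'\cdot\gamma_1=\gamma_1'\gamma_1$, not inverted right translation: one has $\gamma_1'\cdot\tau(s,\gamma_1)=\sigma(s)\gamma_1^{-1}\gamma_1'^{-1}=\sigma(s)(\gamma_1'\gamma_1)^{-1}=\tau(s,\gamma_1'\gamma_1)$. (Alternatively, keep inverted right translation and change to $\tau(s,\gamma_1)=\sigma(s)\gamma_1$.) Second, the invocation of Lemma \ref{lem:was-cor:trans-induction} for your middle isomorphism does not really apply as stated: what you are using is the elementary observation that a $\Gamma_1$-equivariant morphism out of $\underline S\times\Gamma_1$ (with the action just described, trivial on $\underline S$) is determined by its restriction to $\underline S\times\{1\}$, and conversely any $\psi_0\colon\underline S\to\mathcal F$ extends uniquely by $\psi(s,\gamma_1)=\gamma_1\cdot\psi_0(s)$. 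That is a direct check, not a degenerate case of \ref{lem:was-cor:trans-induction}. Neither point affects the validity of the plan.
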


\begin{proof}
	There is clearly a natural transformation as claimed. In the reverse direction, if $\mathcal{F}$ is a
$\Gamma_1$-sheaf and $(x_s)_{s \in S}$ is a local section of $\prod_S
\mathcal{F}$, then we define $\eta_\sigma((x_s)_{s \in S})(\sigma(s)\gamma_1^{-1}) = \gamma_1 \cdot
x_s$, and check that $\epsilon_\sigma$ and $\eta_\sigma$ are mutually
inverse.
\end{proof}

We finally specialize to the category of finite type affine group schemes
over a ring (eventually, a field).
Thus, from now on, we fix a ring $k$ and consider only the site $\mathcal C = \AffSch_k$ equipped with the fppf topology.
Our work in the sequel will require
some basic elements of descent theory, summarized in
Theorem \ref{thm:fppf-descent}.
We will assume further after the theorem that
\(k\) is a field, but we do not do so yet.

\begin{thm}\label{thm:fppf-descent}
	Let $k$ be a ring, and let $k'$ be a faithfully flat $k$-algebra. Let $\mathcal{F}$ be an fppf sheaf
over $k$, and suppose that the restriction $\mathcal{F}_{k'}$ to the category of affine $k'$-schemes is
isomorphic to $h_{X'}$ for some affine $k'$-scheme $X'$. Then there is an affine $k$-scheme $X$ such that $\mathcal{F} \cong h_X$. If $X'$ is of finite type (respectively, smooth), then the same is true of $X$.
\end{thm}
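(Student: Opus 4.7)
\textbf{Plan for Theorem \ref{thm:fppf-descent}.} The plan is to deduce the theorem from Grothendieck's classical faithfully flat descent for affine schemes, applied to the affine $k'$-scheme $X'$, with descent data inherited from the sheaf $\mathcal{F}$. Write $p_1, p_2 : \Spec(k' \otimes_k k') \to \Spec k'$ for the two projections. Since $\mathcal{F}$ is itself an fppf sheaf on $\AffSch_k$, its further restrictions along $p_1$ and $p_2$ are canonically identified. Transporting these identifications through the given isomorphism $\mathcal{F}_{k'} \cong h_{X'}$ equips $X'$ with an isomorphism $\varphi : p_1^* X' \xrightarrow{\sim} p_2^* X'$; the cocycle condition on $\Spec(k' \otimes_k k' \otimes_k k')$ is again automatic from the sheaf property of $\mathcal{F}$.

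I would then invoke faithfully flat descent for affine schemes (Vistoli, \cite{vistoli:descent}, Theorem 4.33, or SGA 1, Expos\'e VIII, Th\'eor\`eme 2.1) to produce an affine $k$-scheme $X = \Spec A$ together with an isomorphism $X \times_{\Spec k} \Spec k' \cong X'$ compatible with $\varphi$. Concretely, this descends the quasi-coherent $\mathcal{O}_{\Spec k'}$-algebra corresponding to $A'$ to a quasi-coherent $\mathcal{O}_{\Spec k}$-algebra corresponding to $A$.

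Next, I would verify that $h_X \cong \mathcal{F}$. By construction, we have an isomorphism $(h_X)_{k'} \cong h_{X'} \cong \mathcal{F}_{k'}$ of fppf sheaves on $\AffSch_{k'}$ that is compatible with the respective descent data (the former is trivial since both sides come from $k$; the latter is the $\varphi$ above). Applying the sheaf condition for $\uMor(h_X, \mathcal{F})$ to the fppf cover $\Spec k' \to \Spec k$ lets this isomorphism descend uniquely to an isomorphism $h_X \cong \mathcal{F}$ of fppf sheaves on $\AffSch_k$.

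Finally, for the preservation clauses, I appeal to the fact that the properties of being of finite type and of being smooth are each fppf-local on the base: if $A \otimes_k k'$ is finitely generated (respectively, smooth) over $k'$, then $A$ is finitely generated (respectively, smooth) over $k$, by EGA IV$_2$, Proposition 2.7.1 (respectively, EGA IV$_4$, Proposition 17.7.3). There is no real obstacle; the statement is essentially a translation of the classical descent theorem into the language of fppf sheaves set up in the appendix, so the work consists mainly in assembling the descent datum from the hypothesis and then citing the standard references.
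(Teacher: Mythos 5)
Your argument is correct and follows essentially the same route as the paper: build the descent datum on $X'$ from the fact that $\mathcal{F}$ is already a sheaf over $k$, invoke effectivity of fppf descent for affine schemes, and conclude with the fppf-locality of finite type and smoothness (citing the same two EGA results). The only difference of any note is that you make explicit the final step of identifying $h_X$ with $\mathcal{F}$ via the sheaf condition on $\uMor(h_X,\mathcal{F})$, which the paper folds into the citation of the descent theorem; your version is a touch more careful on that point.
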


\begin{proof}
	Let $\map{p_1, p_2}{\Spec(k' \otimes_k k')}{\Spec(k')}$ be the two projection maps. Note
that there is a natural isomorphism $p_1^* X' \cong p_2^* X'$ of affine (\(k' \otimes_k k'\))-schemes
between the pullbacks of $X'$ along $p_1$ and $p_2$,
coming from the fact that the pullbacks
$p_i^* \mathcal{F}_{k'}$ both equal $\mathcal{F}_{k' \otimes_k k'}$ for $i = 1, 2$. This isomorphism is compatible
in the natural way with the three projection maps
$\map{p_{ij}}{\Spec(k' \otimes_k k' \otimes_k k')}{\Spec(k' \otimes_k k')}$, i.e., it forms
a \textit{descent datum} in the sense of
\cite{bosch-lutkebohmert-raynaud:neron}*{Section 6.1}.
By \cite{bosch-lutkebohmert-raynaud:neron}*{6.1, Theorem 6},
it follows that there is an affine
$k$-scheme $X$ such that $\mathcal{F} \cong h_X$. The final claims follow from
\citelist{
	\cite{grothendieck:EGA-IV.2}*{Proposition 2.7.1}
	\cite{grothendieck:EGA-IV.4}*{Corollaire 17.7.3}
}.
\end{proof}

We now assume that \(k\) is a field
(in addition to the standing assumption from before
Theorem \ref{thm:fppf-descent} that
\(\mathcal C\) is the site
\(\AffSch_k\)).
It is now convenient for us to view
affine \(k\)-schemes as special sorts of fppf sheaves;
that is, we will leave the Yoneda embedding implicit,
so that, for example, we may ask whether
a sheaf \(\mathcal F\) \emph{is} a scheme, meaning that
it is of the form \(h_X\) for some scheme \(X\).
In particular, we will now usually use the letter \(X\)
and related notation,
rather than \(\mathcal F\), for sheaves
when we expect most of the applications to be to schemes.

With this implicit identification in mind, we require that
the fppf group sheaf $\Gamma$ is actually
a smooth finite type $k$-group scheme, and
the subgroup sheaf $\Gamma_1$ is
an open $k$-subgroup scheme.
A \textit{\(\Gamma_1\)-scheme} is
a \(\Gamma_1\)-sheaf that is a scheme, and
similarly for \(\Gamma\)-schemes.

\begin{rem}
\label{rem:induction-split-case}
Let \(\Xt_1\) be an fppf \(\Gamma_1\)-sheaf.
With the notation and hypotheses of
Lemma \ref{lem:induction-split-case},
we may equip \(\Xt \ldef \prod_S \Xt_1\) with
an action of \(\Gamma(k)\) as follows.
For each \(s \in S\), write \(\pi_s\) for the
corresponding projection \abmap\Xt{\Xt_1}.
We equip each \(\gamma \in \Gamma(k)\) with
the unique action on \(\Xt\) so that
\(\pi_s \circ \gamma\) equals
\((\gamma s)\inv\sigma(\gamma s) \circ \pi_{\sigma(\gamma s)}\)
for every \(s \in S\).
Then the morphism
\(\epsilon_\sigma\) of
Lemma \ref{lem:induction-split-case} is
\(\Gamma(k)\)-equivariant.
\end{rem}

\begin{rem}
\label{rem:ind-bc+Res}
Let \(A\) be a \(k\)-algebra.

For every fppf \(\Gamma_1\)-sheaf \(\Xt_1\),
the fppf \(\Gamma_A\)-sheaves
\(\uMor_{\Gamma_{1\,A}}(\Gamma_A, \Xt_{1\,A})\) and
\(\uMor_{\Gamma_1}(\Gamma, \Xt_1)_A\)
are equal (not just naturally isomorphic!).

The functors
\(\WRes_{A/k}\uMor_{\Gamma_{1\,A}}(\Gamma_A, (\cdot))\)
and
\(\uMor_{\Gamma_1}(\Gamma, \WRes_{A/k}(\cdot))\)
from fppf \(\Gamma_{1\,A}\)-sheaves to fppf \(\Gamma\)-sheaves are
naturally isomorphic, because they are both left adjoint to
the ``forgetful base-change'' functor that sends
an fppf \(\Gamma\)-sheaf \(\Xt\) to
\(\Xt_A\), regarded as an fppf \(\Gamma_{1\,A}\)-sheaf.
\end{rem}

\begin{lem}\label{lem:local-sections-separable}
	There exists a finite separable extension $k'/k$ such that
$(\Gamma/\Gamma_1)_{k'}$ is constant over $\Spec k'$ and the map
$\abmap{\Gamma_{k'}}{(\Gamma/\Gamma_1)_{k'}}$ admits a section.
\end{lem}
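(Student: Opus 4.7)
The plan is to first observe that $\Gamma/\Gamma_1$ is an étale $k$-group, and then exploit the smoothness of the $\Gamma_1$-torsor $\abmap\Gamma{\Gamma/\Gamma_1}$ to produce the desired section after a finite separable base change.

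First I would note that, since $\Gamma_1$ is open in the smooth $k$-group $\Gamma$, it is smooth and necessarily contains the identity component $\Gamma\conn$. The fppf quotient $\Gamma/\Gamma_1$ is thus a quotient of the étale group scheme $\pi_0(\Gamma) = \Gamma/\Gamma\conn$, so is itself étale over $k$. Any étale $k$-scheme of finite type becomes constant over some finite separable extension, so there is a finite separable extension $k''/k$ for which $(\Gamma/\Gamma_1)_{k''}$ is the constant sheaf on a finite set $S$, concretely $S = (\Gamma/\Gamma_1)(\ks)$.

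Next I would reduce the existence of a section to a rational-point problem. Once the quotient is constant, a section of $\abmap{\Gamma_{k'}}{(\Gamma/\Gamma_1)_{k'}}$ over an extension $k'/k''$ is precisely a family, indexed by $s \in S$, of $k'$-points of the fiber $F_s$ of $\abmap{\Gamma_{k''}}{(\Gamma/\Gamma_1)_{k''}}$ over the $k''$-point corresponding to $s$. Each $F_s$ is a $\Gamma_{1, k''}$-torsor for the fppf topology; since $\Gamma_1$ is smooth, $F_s$ is smooth over $k''$, and it is nonempty because the fiber of $\abmap{\Gamma_\ks}{(\Gamma/\Gamma_1)_\ks}$ over $s$ is a coset of $\Gamma_{1, \ks}$, hence contains a $\ks$-point (for instance, a translate of the identity). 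In particular each $F_s$ has a $\ks$-point, defined over some finite separable extension $k_s/k''$.

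Taking $k'$ to be the compositum of the $k_s$ inside $\ks$ (still finite separable over $k$) gives a finite separable extension over which $(\Gamma/\Gamma_1)_{k'}$ is constant and each fiber $F_{s, k'}$ acquires a $k'$-point. Assembling one such chosen point for each $s$ into a morphism from the disjoint union of copies of $\Spec k'$ indexed by $S$ yields the required section. No real obstacle arises: the argument just combines the standard facts that a finite type étale $k$-scheme becomes constant over a finite separable extension and that a smooth surjective morphism of finite type $k$-schemes admits $\ks$-points in every fiber.
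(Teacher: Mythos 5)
Your proof is correct and follows essentially the same line as the paper's: use smoothness of $\Gamma$ (equivalently, of the fibers of $\abmap\Gamma{\Gamma/\Gamma_1}$) to produce $\ks$-points, then descend to a finite separable extension over which the étale quotient $\Gamma/\Gamma_1$ is constant and every fiber has a rational point. The paper phrases this more compactly by invoking density of $\Gamma(\ks)$ in $\Gamma_{\ks}$ and working with connected components rather than fibers, but since $\Gamma_1 \supseteq \Gamma\conn$ the two bookkeeping schemes are interchangeable.
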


\begin{proof}
	Since $\Gamma$ is smooth, $\Gamma(\ks)$ is dense in $\Gamma_{\ks}$. Thus there is a finite
separable extension $k'$ of $k$ such that each component of $\Gamma_{k'}$ contains a $k'$-point.
The conclusions of the lemma hold for this choice of $k'$.
\end{proof}

\begin{cor}
\label{cor:ksep-ind-scheme}
The functor
$\uMor_{\Gamma_1}(\Gamma, (\cdot))$ on fppf group \(\Gamma_1\)-sheaves over $k$ is exact.
\end{cor}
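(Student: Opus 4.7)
The plan is to reduce to the case where the quotient $\Gamma/\Gamma_1$ is constant and the quotient map admits a section, at which point induction becomes a finite product, whose exactness is obvious.

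First I would observe that left-exactness is immediate. Indeed, Lemma \ref{lem:ind-first-adjoint} shows that $\uMor_{\Gamma_1}(\Gamma, (\cdot))$ is right adjoint to the forgetful functor from $\Gamma$-sheaves to $\Gamma_1$-sheaves, so it preserves kernels and monomorphisms. Thus the only content is the preservation of epimorphisms; equivalently, given a short exact sequence
\[
1 \maparrow \Xt_1 \maparrow \Yt_1 \maparrow \Zt_1 \maparrow 1
\]
of fppf group $\Gamma_1$-sheaves over $k$, I need to show that the induced map on $\uMor_{\Gamma_1}(\Gamma,\cdot)$ is an epimorphism of fppf group $\Gamma$-sheaves over $k$.

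Next, by Lemma \ref{lem:local-sections-separable}, choose a finite separable extension $k'/k$ over which $(\Gamma/\Gamma_1)_{k'}$ is constant with underlying (necessarily finite) set $S$, and over which the quotient map $\abmap{\Gamma_{k'}}{(\Gamma/\Gamma_1)_{k'}}$ admits a section $\sigma$. By Remark \ref{rem:ind-bc+Res}, for each $\Gamma_1$-sheaf $\Wt_1$ we have $\uMor_{\Gamma_1}(\Gamma,\Wt_1)_{k'} = \uMor_{\Gamma_{1\,k'}}(\Gamma_{k'},\Wt_{1\,k'})$. By Lemma \ref{lem:induction-split-case}, the natural transformation $\epsilon_\sigma$ identifies this base change with the finite product $\prod_S \Wt_{1\,k'}$. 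Since a finite product of epimorphisms of fppf group sheaves is an epimorphism (sheafification commutes with finite products, and a finite product of surjections of groups is surjective), the base-changed sequence
\[
1 \maparrow \uMor_{\Gamma_{1\,k'}}(\Gamma_{k'},\Xt_{1\,k'}) \maparrow \uMor_{\Gamma_{1\,k'}}(\Gamma_{k'},\Yt_{1\,k'}) \maparrow \uMor_{\Gamma_{1\,k'}}(\Gamma_{k'},\Zt_{1\,k'}) \maparrow 1
\]
is exact.

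Finally, I would descend exactness back from $k'$ to $k$ using faithful flatness. Since $k'/k$ is faithfully flat, the base-change functor on fppf sheaves over $k$ is exact and conservative; in particular, a sequence of fppf group sheaves over $k$ is exact if and only if its base change to $k'$ is exact. Combined with the identification of the previous paragraph, this yields the required exactness of $\uMor_{\Gamma_1}(\Gamma,(\cdot))$ over $k$. The main (minor) obstacle is simply keeping the bookkeeping straight: verifying that the base-change isomorphism of Remark \ref{rem:ind-bc+Res} together with the product decomposition of Lemma \ref{lem:induction-split-case} really does transport the question on $k$ to the trivial question about finite products on $k'$, and that faithfully flat descent applies in this fppf-sheaf setting rather than requiring us to stay in the category of schemes.
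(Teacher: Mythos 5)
Your proposal is correct and takes essentially the same approach as the paper: pass to a finite separable extension via Lemma \ref{lem:local-sections-separable} to make $\Gamma/\Gamma_1$ constant with a section, use Lemma \ref{lem:induction-split-case} to identify the induced functor with a finite product, and conclude by exactness of finite products and faithfully flat descent. The only cosmetic differences are that you separately dispose of left-exactness via the adjunction in Lemma \ref{lem:ind-first-adjoint} and you spell out the descent step more explicitly (citing Remark \ref{rem:ind-bc+Res}), whereas the paper simply states that exactness of group sheaves can be checked after a finite separable base change.
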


\begin{proof}
	Exactness of a sequence of group sheaves can be
checked after passage to a finite separable extension of
$k$, so by Lemma~\ref{lem:local-sections-separable} we may and do
assume that $\Gamma/\Gamma_1 \cong \underline{S}$ is
constant and the map $\abmap\Gamma{\Gamma/\Gamma_1}$ admits a
sheaf-theoretic section. In particular, if
$1 \to \mathcal{F} \to \mathcal{G} \to \mathcal{H} \to 1$ is an
exact sequence of fppf group sheaves over $k$
equipped with $\Gamma_1$-actions, then Lemma~\ref{lem:induction-split-case}
shows that the sequence obtained by
applying $\uMor_{\Gamma_1}(\Gamma, (\cdot))$ can
be identified with the product sequence
	\[
	1 \to \prod_{s \in S} \mathcal{F} \to \prod_{s \in S} \mathcal{G} \to \prod_{s \in S} \mathcal{H} \to 1,
	\]
	which is exact because finite products are exact in the category of sheaves.
\end{proof}

Write \(k_2\) for the dual numbers
\(k[\epsilon]/(\epsilon^2)\).
The evaluation map that sends \(\epsilon\) to \(0\) provides
a ring homomorphism
\abmap{k_2}k, which we use to regard \(k\) as a \(k_2\)-algebra.
For any fppf \(k\)-sheaf \(X\),
the identity map on \(X = (X_{k_2})_k\) provides a morphism
\abmap{X_{k_2}}{\WRes_{k/k_2} X}, and we obtain
by functoriality a canonical map
\abmap{\WRes_{k_2/k} X_{k_2}}{\WRes_{k_2/k}\WRes_{k/k_2} X \cong X}.
If \(X\) is a group sheaf, then, by definition, its Lie algebra is
the kernel of this map.

\begin{cor}
\label{cor:ind-Lie}
If \(\Gt_1\) is an fppf group \(\Gamma_1\)-sheaf, then
the sheaf isomorphism
\abmap
	{\uMor_{\Gamma_1}(\Gamma, \WRes_{k_2/k}\Gt_{1\,k_2})}
	{\WRes_{k_2/k}\uMor_{\Gamma_{1\,k_2}}(\Gamma_{k_2}, \Gt_{1\,k_2})}
from Remark \ref{rem:ind-bc+Res}
restricts to a Lie-algebra isomorphism of
\(\uMor_{\Gamma_1}(\Gamma, \uLie(\Gt_1))\)
onto
\(\uLie(\uMor_{\Gamma_1}(\Gamma, \Gt_1))\).
\end{cor}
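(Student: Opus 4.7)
The plan is to deduce this from the defining short exact sequence for the Lie algebra, combined with the exactness of the induction functor. By the definition of $\uLie(\Gt_1)$, we have the short exact sequence of fppf group $\Gamma_1$-sheaves
\[
1 \to \uLie(\Gt_1) \to \WRes_{k_2/k}\Gt_{1\,k_2} \to \Gt_1 \to 1,
\]
where the right-hand map is induced by the evaluation $\epsilon \mapsto 0$. Surjectivity is witnessed by the $\Gamma_1$-equivariant retraction $\Gt_1 \to \WRes_{k_2/k}\Gt_{1\,k_2}$ arising from the structure map $k_2 \to k$. The $\Gamma_1$-equivariance of all three terms and the two maps follows from functoriality of Weil restriction and of the formation of Lie algebras.

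Next I would apply the functor $\uMor_{\Gamma_1}(\Gamma, -)$, which is exact by Corollary \ref{cor:ksep-ind-scheme}, to obtain the short exact sequence
\[
1 \to \uMor_{\Gamma_1}(\Gamma, \uLie(\Gt_1)) \to \uMor_{\Gamma_1}(\Gamma, \WRes_{k_2/k}\Gt_{1\,k_2}) \to \uMor_{\Gamma_1}(\Gamma, \Gt_1) \to 1
\]
of fppf group $\Gamma$-sheaves. This exhibits $\uMor_{\Gamma_1}(\Gamma, \uLie(\Gt_1))$ as the kernel of a canonically defined quotient map whose source, by the sheaf isomorphism $\Phi$ of Remark \ref{rem:ind-bc+Res}, is canonically isomorphic to $\WRes_{k_2/k}\uMor_{\Gamma_{1\,k_2}}(\Gamma_{k_2}, \Gt_{1\,k_2})$.

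The remaining point, which I regard as the main (but still routine) step, is to verify that under $\Phi$ the quotient map in the displayed sequence corresponds to the canonical map
\[
\WRes_{k_2/k}\uMor_{\Gamma_{1\,k_2}}(\Gamma_{k_2}, \Gt_{1\,k_2}) \to \uMor_{\Gamma_1}(\Gamma, \Gt_1)
\]
whose kernel \emph{defines} $\uLie(\uMor_{\Gamma_1}(\Gamma, \Gt_1))$; here I use the equality from Remark \ref{rem:ind-bc+Res} identifying $\uMor_{\Gamma_{1\,k_2}}(\Gamma_{k_2}, \Gt_{1\,k_2})$ with the base change $\uMor_{\Gamma_1}(\Gamma, \Gt_1)_{k_2}$. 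Both maps are induced, via the adjunction in Lemma \ref{lem:ind-first-adjoint} and the natural isomorphism of Remark \ref{rem:ind-bc+Res}, from the single evaluation morphism $k_2 \to k$, so their agreement is a diagram chase of naturality squares. Once this identification is in place, $\Phi$ restricts to the desired sheaf isomorphism on kernels; since $\Phi$ is already known to be an isomorphism of group sheaves (both source and target being constructed as adjoint functors applied to group sheaves), the restriction is automatically a group isomorphism, and hence a Lie-algebra isomorphism by functoriality of the bracket.
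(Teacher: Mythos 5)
Your proof is correct and takes essentially the same approach as the paper's own: apply the exact induction functor (Corollary \ref{cor:ksep-ind-scheme}) to the defining short exact sequence for $\uLie(\Gt_1)$, identify the resulting sequence with the defining sequence for $\uLie(\uMor_{\Gamma_1}(\Gamma, \Gt_1))$ via Remark \ref{rem:ind-bc+Res}, and note that the Lie-algebra structure is inherited from the group-sheaf structure. You spell out the matching of the two quotient maps under $\Phi$ slightly more explicitly than the paper does, but this is elaboration of the same argument rather than a different route; one small slip is that the section $\Gt_1 \to \WRes_{k_2/k}\Gt_{1\,k_2}$ is induced by the $k$-algebra structure map $k \to k_2$ (the unit of the adjunction), not by the evaluation map $k_2 \to k$.
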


\begin{proof}
It is clear that the sheaf isomorphism is also a group isomorphism.
That it carries
\(\uMor_{\Gamma_1}(\Gamma, \uLie(\Gt_1))\) onto
\(\uLie(\uMor_{\Gamma_1}(\Gamma, \Gt_1))\) follows from
exactness of \(\uMor_{\Gamma_1}(\Gamma, (\cdot))\)
(Corollary \ref{cor:ksep-ind-scheme}), applied to
the exact sequence
\(\xymatrix@1{
0 \ar[r] & \uLie(\Gt_1) \ar[r] & \WRes_{k_2/k}\Gt_{1\,k_2} \ar[r] & \Gt_1 \ar[r] & 1
}\).
Since the Lie-algebra structure on
\(\uLie(\Gt_1)\) is deduced from
the group-sheaf structure on \(\Gt_{1\,k_2}\)
\cite{demazure-gabriel:groupes-algebriques}*
	{Ch.~II, \S4, Proposition 4.5},
and analogously for
\(\uLie(\uMor_{\Gamma_1}(\Gamma, \Gt_1))\),
it follows that the restriction is
a Lie-algebra isomorphism.
\end{proof}

\begin{lem}
\label{lem:ind-Frob}
Write \(p\) for the characteristic exponent of \(k\),
\((\cdot)^{(p)}\) for the Frobenius twist,
and \(\operatorname{Frob}_{(\cdot)}\)
for the Frobenius natural transformation
\abmap{(\cdot)}{(\cdot)^{(p)}}
\cite{demazure-gabriel:groupes-algebriques}*
	{Ch.~II, \S7, 1.1}
(taken to be trivial if \(p\) equals \(1\)).
For every \(\Gamma_1\)-scheme \(\Xt_1\),
we have that the subfunctors
\(\uMor_{\Gamma_1}(\Gamma, \Xt_1)^{(p)}\),
\(\uMor_{\Gamma_1}(\Gamma^{(p)}, \Xt_1^{(p)})\), and
\(\uMor_{\Gamma_1^{(p)}}(\Gamma^{(p)}, \Xt_1^{(p)})\) of
\(\uMor(\Gamma, \Xt_1)^{(p)} =
\uMor(\Gamma^{(p)}, \Xt_1^{(p)})\)
are equal,
the functorial morphism
\abmap
	{\uMor_{\Gamma_1}(\Gamma^{(p)}, \Xt_1^{(p)})}
	{\uMor_{\Gamma_1}(\Gamma, \Xt_1^{(p)})}
is an isomorphism, and
the diagram
\[\xymatrix{
\uMor_{\Gamma_1}(\Gamma, \Xt_1) \ar[d]_{\Frob_{\Xt_1}\circ(\cdot)}\ar[r]^-\Frob & \uMor_{\Gamma_1}(\Gamma, \Xt_1)^{(p)} \ar@{=}[r] & \uMor_{\Gamma_1}(\Gamma^{(p)}, \Xt_1^{(p)}) \ar[dll]^{(\cdot)\circ\Frob_\Gamma} \\
\uMor_{\Gamma_1}(\Gamma, \Xt_1^{(p)})
}\]
commutes.
\end{lem}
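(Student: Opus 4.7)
The plan is to establish the three assertions in order, with the main work going into the second.

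For the first assertion, that the three subfunctors of $\uMor(\Gamma, \Xt_1)^{(p)} = \uMor(\Gamma^{(p)}, \Xt_1^{(p)})$ coincide, two largely formal observations suffice. Since $\Gamma_1$ is open in the smooth $k$-group $\Gamma$, it is itself smooth, so the relative Frobenius $\Frob_{\Gamma_1}\colon \Gamma_1 \to \Gamma_1^{(p)}$ is faithfully flat. Because faithfully flat morphisms are epimorphisms of fppf sheaves, a morphism between sheaves equipped with $\Gamma_1^{(p)}$-actions is $\Gamma_1^{(p)}$-equivariant if and only if it is $\Gamma_1$-equivariant for the induced action via $\Frob_{\Gamma_1}$; applied to the canonical $\Gamma_1^{(p)}$-actions on $\Gamma^{(p)}$ and $\Xt_1^{(p)}$, this yields $\uMor_{\Gamma_1}(\Gamma^{(p)}, \Xt_1^{(p)}) = \uMor_{\Gamma_1^{(p)}}(\Gamma^{(p)}, \Xt_1^{(p)})$. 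Meanwhile, the Frobenius twist functor commutes with finite limits and with the formation of internal morphism sheaves, so applying it to the equalizer diagram defining $\uMor_{\Gamma_1}(\Gamma, \Xt_1)$ inside $\uMor(\Gamma, \Xt_1)$ identifies $\uMor_{\Gamma_1}(\Gamma, \Xt_1)^{(p)}$ with $\uMor_{\Gamma_1^{(p)}}(\Gamma^{(p)}, \Xt_1^{(p)})$ as subfunctors of $\uMor(\Gamma^{(p)}, \Xt_1^{(p)})$.

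For the second assertion, I reduce to the split case. By Lemma \ref{lem:local-sections-separable}, there is a finite separable extension $k'/k$ and a set $S$ such that $(\Gamma/\Gamma_1)_{k'}$ is the constant sheaf associated to $S$ and the projection $\Gamma_{k'} \to (\Gamma/\Gamma_1)_{k'}$ admits a section $\sigma$. Since isomorphism of fppf sheaves can be checked fppf-locally, and Remark \ref{rem:ind-bc+Res} gives that the formation of $\uMor_{\Gamma_1}(\Gamma, (\cdot))$ and $\uMor_{\Gamma_1}(\Gamma^{(p)}, (\cdot))$ commutes with base change, it suffices to verify the claim over $k'$. There, because the constant sheaf $\underline S$ is defined over $\mathbb F_p$, the naturality of the Frobenius transformation applied to $\sigma$ gives $\Frob_{\Gamma_{k'}} \circ \sigma = \sigma^{(p)}$, where $\sigma^{(p)}$ is a section of $\Gamma_{k'}^{(p)} \to (\Gamma/\Gamma_1)_{k'}^{(p)}$. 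Two applications of Lemma \ref{lem:induction-split-case}---one using $\sigma^{(p)}$ on the source and one using $\sigma$ on the target---then identify both $\uMor_{\Gamma_{1\,k'}}(\Gamma_{k'}^{(p)}, \Xt_{1\,k'}^{(p)})$ and $\uMor_{\Gamma_{1\,k'}}(\Gamma_{k'}, \Xt_{1\,k'}^{(p)})$ with $\prod_S \Xt_{1\,k'}^{(p)}$, and under these identifications the morphism $\psi \mapsto \psi \circ \Frob_{\Gamma_{k'}}$ corresponds to the identity map, hence is an isomorphism.

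Finally, the commutativity of the diagram reduces directly to the naturality of the Frobenius transformation: for every morphism $\phi\colon \Gamma \to \Xt_1$, one has $\Frob_{\Xt_1} \circ \phi = \phi^{(p)} \circ \Frob_\Gamma$, where $\phi^{(p)}$ is the image of $\phi$ under the composite of the top arrow of the diagram with the identification from the first assertion. The main obstacle will be the second assertion, where one must carefully track the interaction of Frobenius twist with the section $\sigma$ and with the identifications produced by Lemma \ref{lem:induction-split-case}; the other two assertions follow either from standard facts about smooth group schemes or directly from functoriality.
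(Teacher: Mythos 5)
Your proposal is essentially correct and runs closely parallel to the paper's own argument, with two points worth calling out.

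For the first assertion, you argue via the faithful flatness of \(\Frob_{\Gamma_1}\) (valid because \(\Gamma_1\), being open in a smooth group, is smooth) together with the claim that the Frobenius twist commutes with limits and internal morphism sheaves. The paper instead phrases the argument purely in terms of the fixed-point functor, observing that \((\cdot)^{(p)}\) commutes with \((\cdot)^{\Gamma_1}\) and that the \(\Gamma_1\)-action on \(\uMor(\Gamma^{(p)}, \Xt_1^{(p)})\) factors through the \emph{quotient map} \(\Frob_{\Gamma_1}\); the underlying facts are the same (epimorphisms of fppf sheaves and quotient maps of groups being the same sort of thing here), so the two presentations are equivalent.

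For the second assertion, you reduce immediately to the split case via Lemma \ref{lem:local-sections-separable} and use \(\Frob_\Gamma \circ \sigma = \sigma^{(p)}\). What you leave implicit—and what the paper does explicitly via a nine-lemma argument—is that \(\Gamma^{(p)}/\Gamma_1^{(p)}\) is identified with \((\Gamma/\Gamma_1)^{(p)}\). This identification is genuinely needed in order to view \(\sigma^{(p)}\colon \underline{S} = (\Gamma/\Gamma_1)^{(p)} \to \Gamma^{(p)}\) as a section of the quotient map \(\Gamma^{(p)} \to \Gamma^{(p)}/\Gamma_1^{(p)}\), which is the form required for Lemma \ref{lem:induction-split-case}; you also need the first assertion in order to replace \(\uMor_{\Gamma_1}\) by \(\uMor_{\Gamma_1^{(p)}}\) before applying that lemma. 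The step is an easy consequence of Frobenius twist being base change along a flat map (so that quotients of fppf group sheaves are preserved), but it is a step, and you should make it explicit. Once it is in place, your identification of both sides with \(\prod_S \Xt_{1\,k'}^{(p)}\) and the observation that the composition morphism becomes the identity under those identifications is exactly what the paper does, so the second assertion is handled correctly modulo that gap.

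The commutativity of the diagram is indeed a direct unwinding of the naturality of the Frobenius transformation, as you say; the paper treats it with comparable brevity.
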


\begin{proof}
We have that
\(\uMor_{\Gamma_1}(\Gamma, \Xt_1)^{(p)} =
(\uMor(\Gamma, \Xt_1)^{\Gamma_1})^{(p)}\)
equals
\((\uMor(\Gamma, \Xt_1)^{(p)})^{\Gamma_1} =
(\uMor(\Gamma^{(p)}, \Xt_1^{(p)}))^{\Gamma_1} =
\uMor_{\Gamma_1}(\Gamma^{(p)}, \Xt_1^{(p)})\).
Since
\(\Gamma_1\) acts on
\(\uMor(\Gamma^{(p)}, \Xt_1^{(p)})\)
\textit{via}
\abmap{\Gamma_1}{\Gamma_1^{(p)}}, which is a quotient map,
we have that
\(\uMor_{\Gamma_1}(\Gamma^{(p)}, \Xt_1^{(p)}) =
\uMor(\Gamma^{(p)}, \Xt_1^{(p)})^{\Gamma_1}\) equals
\(\uMor(\Gamma^{(p)}, \Xt_1^{(p)})^{\Gamma_1^{(p)}} =
\uMor_{\Gamma_1^{(p)}}(\Gamma^{(p)}, \Xt_1^{(p)})\).

We have a commutative diagram
\[\xymatrix{
1 \ar[r] & \Gamma_1 \ar@{->>}[d]_{\Frob_{\Gamma_1}}\ar[r] & \Gamma \ar@{->>}[d]_{\Frob_\Gamma}\ar[r] & \Gamma/\Gamma_1 \ar[d]^{\Frob_{\Gamma/\Gamma_1}}_*[@!270]{\sim}\ar[r] & 1 \\
1 \ar[r] & \Gamma_1^{(p)} \ar[r] & \Gamma^{(p)} \ar[r] & (\Gamma/\Gamma_1)^{(p)} \ar[r] & 1.
}\]
Since the top row is exact,
the left and middle arrows are quotient maps, and
the right arrow is an isomorphism,
it follows from the nine lemma
and the fact that
\(\ker(\Frob_{\Gamma_1})\) equals \(\ker(\Frob_\Gamma)\)
that the bottom row is exact.
That is, the functorial map
\abmap{\Gamma^{(p)}}{(\Gamma/\Gamma_1)^{(p)}}
factors through an isomorphism
\abbimap{\Gamma^{(p)}/\Gamma_1^{(p)}}{(\Gamma/\Gamma_1)^{(p)}}.

To show that
\map{\Frob_\Gamma}
	{\uMor_{\Gamma_1^{(p)}}(\Gamma^{(p)}, \Xt_1^{(p)})}
	{\uMor_{\Gamma_1}(\Gamma, \Xt_1^{(p)})}
is an isomorphism,
it suffices to show that it becomes one after fppf base change.
Thus, by Lemma \ref{lem:local-sections-separable}, we may, and do,
assume that \(\Gamma/\Gamma_1\) is constant, and that
there is a section
\map\sigma{\Gamma/\Gamma_1}\Gamma.
The composition of
the isomorphism
\abbimap{\Gamma^{(p)}/\Gamma_1^{(p)}}{(\Gamma/\Gamma_1)^{(p)}}
with
\map{\sigma^{(p)}}{(\Gamma/\Gamma_1)^{(p)}}{\Gamma^{(p)}}
is a section of the quotient map
\abmap{\Gamma^{(p)}}{\Gamma^{(p)}/\Gamma_1^{(p)}}.
Thus Lemma \ref{lem:induction-split-case} shows that
the choice of \(\sigma\) furnishes isomorphisms
\abbimap
	{\uMor_{\Gamma_1}(\Gamma, \Xt_1^{(p)})}
	{\prod_{(\Gamma/\Gamma_1)(k)} \Xt_1^{(p)}}
and
\abbimap
	{\uMor_{\Gamma_1}(\Gamma^{(p)}, \Xt_1^{(p)}) =
		\uMor_{\Gamma_1^{(p)}}(\Gamma^{(p)}, \Xt_1^{(p)})}
	{\prod_{(\Gamma^{(p)}/\Gamma_1^{(p)})(k)} \Xt_1^{(p)}}
such that
\[\xymatrix{
\uMor_{\Gamma_1}(\Gamma^{(p)}, \Xt_1^{(p)}) \ar[d]_{(\cdot)\circ\Frob_\Gamma}\ar[r] & \prod_{(\Gamma/\Gamma_1)(k)} \Xt_1^{(p)} \ar[d] \\
\uMor_{\Gamma_1}(\Gamma, \Xt_1^{(p)}) \ar[r] & \prod_{(\Gamma^{(p)}/\Gamma_1^{(p)})(k)} \Xt_1^{(p)}
}\]
commutes, where
the right-hand arrow comes from the identification of
the two indexing sets \textit{via} \(\Frob_{\Gamma/\Gamma_1}\).
The result follows.
\end{proof}

\begin{prop}
\label{prop:ind-scheme}
If $\Xt_1$ is an affine scheme over \(k\) with $\Gamma_1$-action, then
\(\uMor_{\Gamma_1}(\Gamma, \Xt_1)\) is an affine $k$-scheme as well.
The properties
``of finite type'',
``smooth'',
``\'etale'',
and
``geometrically connected'', and,
when restricted to linear algebraic \(k\)-group schemes,
``connected'' and
``of multiplicative type'',
are preserved by \(\uMor_{\Gamma_1}(\Gamma, (\cdot))\).
\end{prop}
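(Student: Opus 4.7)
The strategy is to reduce to a finite separable extension of $k$ over which the sheaf admits a description as a finite product. By Lemma~\ref{lem:local-sections-separable}, there is a finite separable extension $k'/k$ such that $(\Gamma/\Gamma_1)_{k'}$ is isomorphic to a constant sheaf $\underline S$ (the set $S$ being finite, since $\Gamma/\Gamma_1$ is étale and of finite type over $k$) and the quotient map $\abmap{\Gamma_{k'}}{(\Gamma/\Gamma_1)_{k'}}$ admits a section $\sigma$. Combining Remark~\ref{rem:ind-bc+Res} with Lemma~\ref{lem:induction-split-case} yields a natural isomorphism of $k'$-sheaves
\[
\uMor_{\Gamma_1}(\Gamma, \Xt_1)_{k'} \cong \prod_S \Xt_{1, k'},
\]
which, by inspection of the explicit inverse $\eta_\sigma$ constructed in the proof of Lemma~\ref{lem:induction-split-case}, is a group-sheaf isomorphism whenever $\Xt_1$ is a group sheaf (since both group laws are defined pointwise and $\Gamma_1$ acts by group automorphisms).

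Since $S$ is finite and affine schemes are closed under finite products, the right-hand side is represented by an affine $k'$-scheme. Applying Theorem~\ref{thm:fppf-descent} to the faithfully flat extension $k \to k'$, I conclude that $\uMor_{\Gamma_1}(\Gamma, \Xt_1)$ is represented by an affine $k$-scheme. The same theorem also descends the properties of being of finite type and of being smooth, both of which are manifestly preserved by finite products. Étaleness (equivalently, smoothness together with unramifiedness, each fppf-local on the base) is also preserved by finite products and so descends in the same way.

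For the remaining properties, I base change further to $\bar k$: since $\overline{k'} = \bar k$, the isomorphism becomes $\uMor_{\Gamma_1}(\Gamma, \Xt_1)_{\bar k} \cong \prod_{S_{\bar k}} \Xt_{1,\bar k}$. A nonempty finite product of connected schemes over an algebraically closed field is connected, so geometric connectedness of $\Xt_1$ implies geometric connectedness of $\uMor_{\Gamma_1}(\Gamma, \Xt_1)$. In the group-scheme case, connectedness is equivalent to geometric connectedness by virtue of the identity $k$-point (so that the connected component containing it descends), which handles connectedness. Finally, if $\Gt_1$ is of multiplicative type, then $\Gt_{1,\bar k}$ is diagonalizable, so $\prod_{S_{\bar k}} \Gt_{1,\bar k}$ is a finite product of diagonalizable $\bar k$-group schemes---hence itself diagonalizable---and $\uMor_{\Gamma_1}(\Gamma, \Gt_1)$ is of multiplicative type.

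The only non-automatic point is the verification that the sheaf isomorphism of Lemma~\ref{lem:induction-split-case} respects the group-sheaf structure when $\Xt_1$ is a group sheaf; this is a short calculation from the formula $\eta_\sigma((x_s))(\sigma(s)\gamma_1\inv) = \gamma_1 \cdot x_s$, using that the target group operation is defined pointwise and commutes with the $\Gamma_1$-action. Everything else is a mechanical application of the facts that each listed property is stable under finite products of affine schemes and descends along fppf covers.
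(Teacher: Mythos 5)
Your proof is correct and follows essentially the same route as the paper's: reduce to a finite separable extension via Lemma~\ref{lem:local-sections-separable}, identify the induction with a finite power via Lemma~\ref{lem:induction-split-case}, descend via Theorem~\ref{thm:fppf-descent}, and observe that each listed property is both product-stable and detectable after field extension. Your explicit verification that $\epsilon_\sigma$ is a group-sheaf isomorphism and the property-by-property check are details the paper leaves implicit (citing EGA~IV 17.7.3 and Milne A.59/12.17), but the argument is the same one.
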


\begin{proof}
	By Lemma~\ref{lem:local-sections-separable}, there is a finite separable extension $k'$ of $k$
such that $(\Gamma/\Gamma_1)_{k'}$ is finite and constant and
the map $\abmap{\Gamma_{k'}}{(\Gamma/\Gamma_1)_{k'}}$ admits a sheaf-theoretic section. By
Lemma~\ref{lem:induction-split-case}, the sheaf $\uMor_{\Gamma_1}(\Gamma, \Xt_1)_{k'}$ is
(representable by) a finite power of $\Xt_{1\,k'}$ and thus is an affine scheme over \(k\).
By Theorem~\ref{thm:fppf-descent}, the functor $\uMor_{\Gamma_1}(\Gamma, \Xt_1)$ is therefore
representable by some affine $k$-scheme,
which is of finite type, respectively smooth, if
\(\Xt_{1\,k'}\) is.
Since all of the properties claimed to be preserved
can be checked after passing to a field extension
(see
\cite{grothendieck:EGA-IV.4}*{Corollaire 17.7.3}
and
\cite{milne:algebraic-groups}*{A.59 and Definition 12.17}%
),
the result follows.
\end{proof}

\begin{rem}
\label{rem:concrete-descent}
The proof of Proposition \ref{prop:ind-scheme} shows,
with the notation there, that
\(\uMor_{\Gamma_1}(\Gamma, \Xt_1)\) is
a scheme by an abstract descent argument, as in
Theorem \ref{thm:fppf-descent}.
We discuss how to realise this descent concretely in our case.
For each \(\gamma \in \Gamma(\ks)\), we can define
\(k_\gamma\) to be the fixed field in \(\ks\) of
\(\stab_{\Gal(k)}(\gamma\Gamma_1(\ks))\), and then construct
a \(\Gamma_1(\ks)\)-valued cocycle \(c_\gamma\) on \(\Gal(k_\gamma)\) by
\abmapto\sigma{\gamma\inv\sigma(\gamma)}.
Then \(\gamma\inv\) provides a morphism
from the base-changed scheme \(\Xt_{1\,k_\gamma}\) to
its twist \(\Xt_{1\,k_\gamma\,c_\gamma}\) by \(c_\gamma\), hence
a morphism
\abmap{\Xt_1}{\WRes_{k_\gamma/k} \Xt_{1\,k_\gamma\,c_\gamma}},
where \(\WRes_{k_\gamma/k}\) is the Weil restriction.
Put
\(\Xt_{1\,\gamma} = \WRes_{k_\gamma/k} \Xt_{1\,k_\gamma\,c_\gamma}\),
and
let us abuse notation by writing again \(\gamma\inv\) for
the map \abmap{\Xt_1}{\Xt_{1\,\gamma}} constructed above.

For every \(\gamma_1 \in \Gamma_1(\ks)\),
we have that
\(k_{\gamma\gamma_1}\) equals \(k_\gamma\),
\(c_{\gamma\gamma_1}\) is cohomologous to \(c_\gamma\),
so
\(\Xt_{1\,k_\gamma\,c_\gamma}\) is isomorphic to
\(\Xt_{1\,k_{\gamma\gamma_1}\,c_{\gamma\gamma_1}}\), and
there is
an isomorphism \abbimap{\Xt_{1\,\gamma}}{\Xt_{1\,\gamma\gamma_1}} such that
the diagram
\[\xymatrix{
\Xt_1 \ar@{=}[d]\ar[r]^{\gamma\inv} & \Xt_{1\,\gamma} \ar[d]^[@]{\sim} \\
\Xt_1 \ar[r]^{\gamma\gamma_1\inv} & \Xt_{1\,\gamma\gamma_1}
}\]
commutes.  Thus we may, and do, regard not just
\(\Xt_{1\,\gamma}\), but also
the map \map{\gamma\inv}{\Xt_1}{\Xt_{1\,\gamma}}, as
depending only on
the coset \(\gamma\Gamma_1(\ks)\).
Similarly, replacing \(\gamma\) by a \(\Gal(k)\)-conjugate replaces
\(k_\gamma\) and
\(\Xt_{1\,k_\gamma\,c_\gamma}\)
by the corresponding \(\Gal(k)\)-conjugates, but
does not change the isomorphism type of
\(\Xt_{1\,\gamma}\), affording a commutative diagram as above.
Thus we may, and do, regard both \(\Xt_{1\,\gamma}\) and
\map{\gamma\inv}{\Xt_1}{\Xt_{1\,\gamma}} as depending only on
the \(\Gal(k)\)-orbit of \(\gamma\Gamma_1(\ks)\).
(So what we are really doing is considering, not just
\(\Xt_{1\,\gamma}\), but the injective limit
\(\injlim_{(\sigma, \gamma_1) \in \Gal(k) \ltimes \Gamma_1(\ks)}
	\sigma(\Xt_{1\,\gamma\gamma_1})\).)
Now consider the map
\[\abmap
	{\uMor_{\Gamma_1}(\Gamma, \Xt_1)}
	{\prod_\gamma
	\Xt_{1\,\gamma}},\]
where \(\gamma\) ranges over
the set of \(\Gal(k)\)-orbits on \(\Gamma(\ks)/\Gamma_1(\ks)\).
This map is an isomorphism, because,
upon base change to \(\ks\), it becomes
\[\abmap
	{\uMor_{\Gamma_{1\,\ks}}(\Gamma_\ks, \Xt_{1\,\ks})}
	{\prod_{\gamma \in \Gal(k)\backslash\Gamma(\ks)/\Gamma_1(\ks)}
		\prod_{\sigma \in \Gal(k)/\stab \gamma\Gamma_1(k)}
			\Xt_{1\,\ks}};
\]
and this latter is an isomorphism by
Lemma \ref{lem:induction-split-case}
(with \(S\) there being \(\Gamma(\ks)/\Gamma_1(\ks)\)).
\end{rem}

\begin{rem}
\label{rem:ind-p-Lie}
Let \(p\) be the characteristic exponent of \(k\).
Write \([p]\) for the trivial map, or
the \(p\)th-power map, on
the Lie algebra of a \(k\)-group scheme
\cite{demazure-gabriel:groupes-algebriques}*
	{Ch.~II, \S7, Proposition 3.4},
according as \(p\) does or does not equal \(1\).
If \(\Gt_1\) is an affine group \(\Gamma_1\)-scheme and
we identify
\(\uLie(\uMor_{\Gamma_1}(\Gamma, \Gt_1))\)
with
\(\uMor_{\Gamma_1}(\Gamma, \uLie(\Gt_1))\)
\textit{via}
Corollary \ref{cor:ind-Lie}, then
it follows from functoriality
\cite{demazure-gabriel:groupes-algebriques}*
	{Ch.~II, \S7, 1.1}
that
\[\xymatrix{
\Gamma \times \uLie(\uMor_{\Gamma_1}(\Gamma, \Gt_1)) \ar[d]\ar[r]^{1 \times [p]} & \Gamma \times \uLie(\uMor_{\Gamma_1}(\Gamma, \Gt_1)) \ar[d] \\
\uLie(\Gt_1) \ar[r]^{[p]} & \uLie(\Gt_1)
}\]
commutes, where the vertical maps are the evaluation maps.
\end{rem}

\begin{defn}
\label{defn:ind-first-section}
Since \(\Gamma_1\) is
an open subgroup scheme of the finite type $k$-group scheme \(\Gamma\),
it is an open and closed subscheme of \(\Gamma\).
If $\mathcal{F}$ is a $\Gamma_1$-sheaf, write \(\iota_{\mathcal{F}}\)
for the natural transformation
\abmap
	{\mathcal F}
	{\uMor_{\Gamma_1}(\Gamma, \mathcal F)}
coming from the identification of
\(\uMor_{\Gamma_1}(\Gamma_1, \mathcal F)\) with
\(\mathcal F\) by evaluation at the identity, followed by
``extension by zero'' to the complementary subscheme.
\end{defn}

For a ring $A$ and a prime ideal $\mathfrak{p} \in \Spec A$, let $k(\mathfrak{p})$ be the residue field
of $A$ at $\mathfrak{p}$, i.e., $k(\mathfrak{p}) = \operatorname{Frac}(A/\mathfrak{p})$. If $A$ is a
$k$-algebra and $\gamma, \gamma' \in \Gamma(A)$, then we will say that $\gamma$ and $\gamma'$
\textit{lie in different $\Gamma_1$-cosets} if, for all $\mathfrak{p} \in \Spec A$, the elements
$\gamma_{k(\mathfrak{p})\alg}$ and $\gamma'_{k(\mathfrak{p})\alg}$ lie in different cosets for the
right translation action of $\Gamma_1(k(\mathfrak{p})\alg)$ on $\Gamma(k(\mathfrak{p})\alg)$, where
\(k(\mathfrak p)\alg\) is an algebraic closure of \(k(\mathfrak p)\).

\begin{prop}
\label{prop:ind-second-adjoint}
Let
\(\Gt_1\) be a group \(\Gamma_1\)-sheaf, and
\(\Ht\) a group \(\Gamma\)-sheaf.
\begin{enumerate}[label=(\alph*), ref=\alph*]
\item\label{subprop:ind-second-adjointish}
The natural transformation
\map\alpha
	{\uHom
		(\uMor_{\Gamma_1}(\Gamma, \Gt_1), \Ht)}
	{\uMor_{\Gamma_1}
		(\Gamma, \uHom(\Gt_1, \Ht))}
given by
\abmapto
	\ell
	{\bigl(\abmapto\gamma
		{\bigl(\abmapto{\gt_1}{\ell(\gamma\inv\cdot(\iota_{\Gt_1}(\gt_1)))}\bigr)}
	\bigr)},
where \(\iota_{\Gt_1}\) is as in Definition \ref{defn:ind-first-section},
is a monomorphism, natural in both \(\Gt_1\) and \(\Ht\).
\item\label{subprop:ind-image-of-second}
For every \(k\)-algebra \(A'\), the set of \(A'\)-points of the
sheaf image
\(\alpha(
	\uHom
		(\uMor_{\Gamma_1}(\Gamma, \Gt_1),
		\Ht)
)\)
consists of those
\(\phi \in
	\uMor_{\Gamma_1}
		(\Gamma,
		\uHom(\Gt_1, \Ht))
	(A')\)
such that, for every \(A'\)-algebra \(A\),
the subsets
\(\phi_A(\gamma)(\Gt_1(A))\) and
\(\phi_A(\gamma')(\Gt_1(A))\) of
\(\Ht(A)\) commute whenever
\(\gamma, \gamma' \in \Gamma(A)\) lie in
different $\Gamma_1(A)$-cosets.
\end{enumerate}
\end{prop}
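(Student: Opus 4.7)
The plan is to prove part (\ref{subprop:ind-second-adjointish}) by a direct generation argument, and then deduce part (\ref{subprop:ind-image-of-second}) by constructing an explicit inverse after suitable fppf cover. Throughout, I will work locally in the fppf topology, using Lemma \ref{lem:local-sections-separable} to pass to a separable extension \(k'/k\) over which \(\Gamma/\Gamma_1\) becomes constant on a set \(S\) and the projection \(\Gamma \to \Gamma/\Gamma_1\) admits a section \(\sigma\). Monomorphisms and identifications of sheaf images can be verified after faithfully flat base change, so this reduction costs nothing.

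For part (\ref{subprop:ind-second-adjointish}), naturality is immediate from the definition of \(\alpha\). To show \(\alpha\) is a monomorphism, I need injectivity of \(\alpha(A)\) for every \(k\)-algebra \(A\). After base change to \(k'\) and invoking Lemma \ref{lem:induction-split-case}, the sheaf \(\uMor_{\Gamma_1}(\Gamma, \Gt_1)_{k'}\) is identified with \(\prod_S \Gt_{1\,k'}\). Under this identification, the element \(\sigma(s)^{-1} \cdot \iota_{\Gt_1}(\gt_1)\) is the tuple with entry \(\gt_1\) in the factor indexed by \(s\) and trivial entries elsewhere. These generate \(\prod_S \Gt_{1\,k'}\) as a group sheaf: any tuple \((\gt_s)_{s \in S}\) is the (unordered) product of the one-place tuples \(\sigma(s)^{-1} \cdot \iota_{\Gt_1}(\gt_s)\), which commute pairwise because they are supported on disjoint cosets. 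Since \(\alpha(\ell)(\gamma)(\gt_1) = \ell(\gamma^{-1} \cdot \iota_{\Gt_1}(\gt_1))\), the data of \(\alpha(\ell)\) determines \(\ell\) on each generator, hence on the group they generate, hence everywhere. This gives injectivity.

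For part (\ref{subprop:ind-image-of-second}), necessity is a computation: for \(\phi = \alpha(\ell)\) and \(\gamma, \gamma' \in \Gamma(A)\) in distinct \(\Gamma_1(A)\)-cosets, the elements \(\gamma^{-1} \cdot \iota_{\Gt_1}(\gt_1)\) and \(\gamma'^{-1} \cdot \iota_{\Gt_1}(\gt_1')\) of \(\uMor_{\Gamma_1}(\Gamma, \Gt_1)(A)\) are supported on disjoint right \(\Gamma_1\)-cosets and therefore commute (this can be checked by pulling back to a cover on which the section \(\sigma\) exists, where the commutation is the visible commutation of tuples with disjoint support); applying the homomorphism \(\ell\) yields the commutation of \(\phi_A(\gamma)(\gt_1)\) and \(\phi_A(\gamma')(\gt_1')\) in \(\Ht(A)\). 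For sufficiency, I construct the inverse by fppf descent. Given \(\phi\) satisfying the commutation hypothesis, choose an fppf cover on which \(\Gamma/\Gamma_1\) is constant with section \(\sigma: S \to \Gamma\); define \(\ell\) on \(\prod_S \Gt_1\) by sending a tuple \((\gt_s)_{s \in S}\) to the product \(\prod_{s \in S} \phi(\sigma(s))(\gt_s)\). The commutation hypothesis ensures this product is a well-defined group homomorphism independent of the ordering of \(S\). Independence of the choice of \(\sigma\) (replacing \(\sigma(s)\) by \(\sigma(s)\gamma_1\) for \(\gamma_1 \in \Gamma_1\)) follows from the \(\Gamma_1\)-equivariance condition on \(\phi\). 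Combining independence with naturality, the locally defined \(\ell\) satisfies the fppf-descent compatibility condition on double overlaps, so it descends to a global homomorphism \(\ell: \uMor_{\Gamma_1}(\Gamma, \Gt_1) \to \Ht\) with \(\alpha(\ell) = \phi\).

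The main obstacle is verifying the descent step in sufficiency cleanly: ensuring that the explicit construction on a trivializing cover is independent of the choice of section \(\sigma\) and compatible with overlaps. This rests on the \(\Gamma_1\)-equivariance built into \(\phi \in \uMor_{\Gamma_1}(\Gamma, \uHom(\Gt_1, \Ht))\), together with the commutation hypothesis, but care is required because \(\Gt_1\) need not be commutative, so ``the product \(\prod_s \phi(\sigma(s))(\gt_s)\)'' requires the commutation across distinct cosets to make sense as an unordered product. Once these compatibilities are verified, the descended \(\ell\) is tautologically a section of \(\alpha\).
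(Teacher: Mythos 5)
Your proof of part (\ref{subprop:ind-second-adjointish}) matches the paper's in all essentials: both pass to $k'$ via Lemma \ref{lem:local-sections-separable}, use Lemma \ref{lem:induction-split-case} to identify $\uMor_{\Gamma_1}(\Gamma, \Gt_1)_{k'}$ with a product, and observe that a homomorphism out of a product is determined by its restrictions to the factors. Your generation argument is just a slightly more explicit version of that observation.

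For part (\ref{subprop:ind-image-of-second}), however, you take a genuinely different route. The paper never constructs an inverse to $\alpha$. Instead, it first proves the key technical fact that the proposed presheaf image $I_\alpha$ is already a \emph{sheaf}: this is done via the map $\Phi$ into $\prod_i \WRes_{k'/k}(\SubSh(\Ht_{k'}))^2$ together with Lemma \ref{lemma:incidence-sheaf}, realizing $I_\alpha$ as a preimage of a sheaf. Once that is in hand, the paper passes to $k'$, identifies $\alpha$ with the concrete restriction map $\uHom(\prod \Gt_1, \Ht) \to \prod \uHom(\Gt_1, \Ht)$, and the characterization of the image drops out of the standard fact about homomorphisms out of a product of groups. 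This structure sidesteps having to verify, on the nose, that an explicit formula defines a global section of $\uHom(\uMor_{\Gamma_1}(\Gamma, \Gt_1), \Ht)$. Your approach instead builds a local inverse and descends — the formula you propose is the one that appears later in the paper as equation \eqref{eq:ind-image-of-second} in Corollary \ref{cor:was-subprop:ind-image-of-second-smooth}, but note that in the paper that formula is \emph{derived} as a consequence of the present Proposition, not used to prove it.

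Two comments on your route. First, your argument is logically sound and yields a correct alternative proof, but it leans on part (\ref{subprop:ind-second-adjointish}) in a way the paper's does not: once you verify that $\alpha(\ell_i) = \phi|_{U_i}$ for each $U_i$ in the trivializing cover, the compatibility on double overlaps is \emph{automatic} from injectivity of $\alpha$ — you never need the independence-of-$\sigma$ check you flag as ``the main obstacle.'' Fix a $\sigma$ on each piece, verify $\alpha(\ell_{\sigma_i}) = \phi|_{U_i}$ there, and use monomorphism to glue; this eliminates the fiddly equivariance bookkeeping. Second, this also means you implicitly establish that $I_\alpha$ is a sheaf for free, because you show it coincides with the presheaf image of the sheaf monomorphism $\alpha$, which is a sheaf. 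The paper's route proves sheaf-hood of $I_\alpha$ independently, which is slightly more information and is the cleaner structure when one wants to avoid any explicit inverse formula. Both approaches are valid; yours is more hands-on, the paper's is more categorical and avoids the point where care about equivariance and choices of coset representatives is required.
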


\begin{proof}
	Let $k'/k$ be a finite separable extension of $k$ such that
$(\Gamma/\Gamma_1)_{k'}$ is constant and the map
$\abmap{\Gamma_{k'}}{(\Gamma/\Gamma_1)_{k'}}$
admits a section $\sigma$, as we may by Lemma~\ref{lem:local-sections-separable}.
In general, to check that a natural transformation of fppf sheaves over $k$ is an isomorphism, it
suffices to pass to an fppf cover of $k$, so to
prove (\ref{subprop:ind-second-adjointish}) we may and do pass from $k$ to $k'$. By
Lemma~\ref{lem:induction-split-case}, there is a natural isomorphism
$\bimap{\epsilon_{\sigma}}
	{\uMor_{\Gamma_1}(\Gamma, \Gt_1)}
	{\prod_{(\Gamma/\Gamma_1)(k)} \Gt_1}$
defined by $\epsilon_{\sigma}(\varphi) = (\varphi(\sigma(s)))_{s \in (\Gamma/\Gamma_1)(k)}$.
We have a similar isomorphism
\abbimap
	{\uMor_{\Gamma_1}(\Gamma, \uHom(\Gt_1, \Ht))}
	{\prod_{(\Gamma/\Gamma_1)(k)} \uHom(\Gt_1, \Ht)}.
Furthermore, the morphism $\iota_{\Gt_1}$ identifies with the inclusion into one factor of
$\prod_{(\Gamma/\Gamma_1)(k)} \Gt_1$, so the first point is simply the statement that
a homomorphism $\abmap{\prod_{(\Gamma/\Gamma_1)(k)} \Gt_1}\Ht$ is
determined by the induced \((\Gamma/\Gamma_1)(k)\)-tuple of
homomorphisms $\abmap{\Gt_1}\Ht$.

For (\ref{subprop:ind-image-of-second}), let $k'/k$ be as above.
Let $(\Gamma/\Gamma_1)(k') = \{\gamma_1, \dots, \gamma_n\}$.
If $\mathcal{F}$ is an fppf sheaf over $k$,
then the Weil restriction $\WRes_{k'/k}(\mathcal{F})$
(whose $A$-points are $\mathcal{F}(A \otimes_k k')$)
is also an fppf sheaf.
Using this, we note that the proposed presheaf image $I_\alpha$ of
$\alpha$ is already a sheaf: indeed, there is a map
\[
    \map\Phi
		{\uHom(\Gt_1, \Ht)}
		{\prod_{i=1}^n \WRes_{k'/k}(\SubSh(\Ht_{k'}))^2}
\]
given by
\[
   \abmapto\phi{\left(\im(\phi_{k'}(\sigma(\gamma_i))), C_{\Ht_{k'}}(\im(\phi_{k'}(\sigma(\gamma_i))))\right)_i}.
\]
The subpresheaf of
$\prod_{i=1}^n \WRes_{k'/k}(\SubSh(\Ht_{k'}))^2$
consisting of those $(\mathcal{G}_1, \mathcal{G}'_1,\dots, \mathcal{G}_n, \mathcal{G}'_n)$
such that $\mathcal{G}_i \subseteq \mathcal{G}'_j$ for all $i \neq j$
is a sheaf
by Lemma~\ref{lemma:incidence-sheaf}, and $I_\alpha$ is
its preimage under
$\Phi$, since to check that $\phi \in \uMor_{\Gamma_1}(\Gamma, \uHom(\Gt_1, \Ht))(A')$ lies in
$I_{\alpha}(A')$ it is enough to check that for every $A' \otimes_k k'$-algebra $A$ and every
$i \neq j$, the subsheaves $\im(\phi_{k'}(\sigma(\gamma_i)))_{A'}$ and
$\im(\phi_{k'}(\sigma(\gamma_j)))_{A'}$ commute. Thus in particular $I_\alpha$ is a sheaf.

It is straightforward to check that $\alpha$ factors through $I_\alpha$
and to check that \(\alpha\) is an isomorphism onto \(I_\alpha\),
it suffices to pass
from $k$ to $k'$. In that case, we use $\sigma$ to identify
$\uMor_{\Gamma_1}(\Gamma, \Gt_1)$ with $\prod_{(\Gamma/\Gamma_1)(k)} \Gt_1$ and
\(\uMor_{\Gamma_1}(\Gamma, \uHom(\Gt_1, \Ht))\) with
	\(\prod_{(\Gamma/\Gamma_1)(k)} \uHom(\Gt_1, \Ht)\).
In this case, $\alpha$ is identified with the map
$\abmap
	{\uHom(\prod_{(\Gamma/\Gamma_1)(k)} \Gt_1, \Ht)}
	{\prod_{(\Gamma/\Gamma_1)(k)} \uHom(\Gt_1, \Ht)}$
whose composition with projection on the factor corresponding to
\(\gamma_i\) is
\abmapto
	\ell
	{\bigl(\abmapto{\gt_1}{\ell(\gamma_i\inv\cdot(\iota_{\Gt_1}(\gt_1)))}\bigr)}.
Thus the result is clear.
\end{proof}

\begin{cor}\label{cor:was-subprop:ind-image-of-second-smooth}
Suppose that
\(\Gt_1\) is a smooth affine group \(\Gamma_1\)-scheme
over \(k\) and $\Ht$ is a group $\Gamma$-scheme.
Let
\(\phi\) be an element of
\(\Mor_{\Gamma_1}(\Gamma, \uHom(\Gt_1, \Ht))\)
such that
\(\phi_\ks(\gamma)(\Gt_1(\ks))\) and
\(\phi_\ks(\gamma')(\Gt_1(\ks))\) commute whenever
\(\gamma, \gamma' \in \Gamma(\ks)\) lie in
different cosets for
the right-translation action of \(\Gamma_1(\ks)\).
Then \(\phi\) is the image under
\(\alpha\) of
the unique element \(\ell\) of
\(\Hom(\uMor_{\Gamma_1}(\Gamma, \Gt_1), \Ht)\)
such that
\begin{equation}
\tag{$*$}
\label{eq:ind-image-of-second}
\ell_\ks(\ft) \quad\text{equals}\quad
\prod_{\gamma \in \Gamma(\ks)/\Gamma_1(\ks)}
	\phi(\gamma)(\ft(\gamma))
\end{equation}
for all
\(\ft \in \uMor_{\Gamma_1}(\Gamma, \Gt_1)(\ks)\).
\end{cor}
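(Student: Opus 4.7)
Uniqueness of $\ell$ is immediate from Proposition \ref{prop:ind-second-adjoint}(\ref{subprop:ind-second-adjointish}), which shows that $\alpha$ is a monomorphism. For existence, by Proposition \ref{prop:ind-second-adjoint}(\ref{subprop:ind-image-of-second}) it suffices to verify that $\phi$ belongs to the image subsheaf $I_\alpha$ of $\alpha$, characterized there by the requirement that, for every $k$-algebra $A$ and every pair $\gamma, \gamma' \in \Gamma(A)$ in distinct $\Gamma_1(A)$-cosets, the subsets $\phi_A(\gamma)(\Gt_1(A))$ and $\phi_A(\gamma')(\Gt_1(A))$ of $\Ht(A)$ commute. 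Because $I_\alpha$ is a sheaf, this membership can be tested after passage to any fppf cover of $k$; I will work over $\ks$.

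The hypothesis gives the desired commutation when $A = \ks$, and the principal task is to upgrade this to arbitrary $\ks$-algebras $A$; this is the step I expect to be the main obstacle. The approach is to globalize the commutator into a morphism of schemes and invoke smoothness and density. Because open subgroups of an algebraic group are also closed, $\Gamma_1 \subseteq \Gamma$ is clopen, so the locus $W \subseteq \Gamma_\ks \times_\ks \Gamma_\ks$ of pairs in distinct $\Gamma_1$-cosets is clopen (it is the preimage of the clopen complement of $\Gamma_{1,\ks}$ under $(\gamma, \gamma') \mapsto \gamma\inv\gamma'$). Let $\phi'\colon \Gamma \times \Gt_1 \to \Ht$ be the morphism adjoint to $\phi$, and form the commutator morphism
\[
C\colon W \times_\ks \Gt_{1,\ks} \times_\ks \Gt_{1,\ks} \to \Ht_\ks,
\qquad (\gamma, \gamma', x, y) \mapsto [\phi'(\gamma, x),\, \phi'(\gamma', y)].
\]
By hypothesis, $C$ is trivial on $\ks$-points. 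Since $\Gamma$ and $\Gt_1$ are smooth, so is the source of $C$; in particular it is reduced, and its $\ks$-points are Zariski dense (as $\ks$ is separably closed). The identity section $\Spec \ks \hookrightarrow \Ht_\ks$ is a closed immersion because $\Ht_\ks$ is separated. A reduced $\ks$-scheme mapping to $\Ht_\ks$ trivially on a dense set of $\ks$-points must factor through the identity section; hence $C$ is the trivial morphism, yielding the required commutation over every $\ks$-algebra $A$.

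Having verified $\phi \in I_\alpha(k)$, Proposition \ref{prop:ind-second-adjoint}(\ref{subprop:ind-second-adjointish}) produces a unique $\ell \in \Hom(\uMor_{\Gamma_1}(\Gamma, \Gt_1), \Ht)$ with $\alpha(\ell) = \phi$. The explicit formula \eqref{eq:ind-image-of-second} then emerges by a direct unwinding: under the isomorphism $\uMor_{\Gamma_1}(\Gamma, \Gt_1)_\ks \cong \prod_{\gamma \in \Gamma(\ks)/\Gamma_1(\ks)} \Gt_{1,\ks}$ coming from Lemma \ref{lem:induction-split-case} (applied after a finite separable extension that splits $\Gamma \to \Gamma/\Gamma_1$), the definition of $\alpha$ in Proposition \ref{prop:ind-second-adjoint}(\ref{subprop:ind-second-adjointish}) identifies the restriction of $\ell_\ks$ to the $\gamma$-th factor with $\phi(\gamma)$, so that the value of $\ell_\ks$ on $\ft$ is the (commuting, hence order-independent) product of the $\phi(\gamma)(\ft(\gamma))$, as required.
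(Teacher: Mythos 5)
Your proof is correct and takes essentially the same route as the paper's: reduce to a separably closed base, then use Zariski density of $\ks$-points in the smooth source to promote the commutativity hypothesis from $\ks$-points to the scheme-theoretic criterion of Proposition \ref{prop:ind-second-adjoint}(\ref{subprop:ind-image-of-second}). You streamline the density argument into a single commutator morphism $C$ on $W \times \Gt_{1\,\ks} \times \Gt_{1\,\ks}$ where the paper proceeds one variable at a time; the only small slip is that $\Spec\ks\to\Spec k$ is not an fppf cover, but the reduction stands anyway because $A\to A\otimes_k\ks$ is injective for every $k$-algebra $A$, so the commutation condition of Proposition \ref{prop:ind-second-adjoint}(\ref{subprop:ind-image-of-second}) over $k$ does follow from the one over $\ks$.
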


\begin{proof}
We use repeatedly that certain schemes are smooth, and that
the rational points of a smooth scheme valued in
a separably closed field are
Zariski dense.

For example, Proposition \ref{prop:ind-scheme} shows that
\(\uMor_{\Gamma_1}(\Gamma, \Gt_1)\) is a smooth scheme, so
\eqref{eq:ind-image-of-second}
does indeed determine a unique element of
\(\uHom(\uMor_{\Gamma_1}(\Gamma, \Gt_1), \Ht)(\ks)\).
Since the proposed element is fixed by \(\Gal(k)\),
it comes from an element of
\(\Hom(\uMor_{\Gamma_1}(\Gamma, \Gt_1), \Ht)\).
Thus, since \(\alpha_\ks\) is a monomorphism, we may, and do,
assume, after replacing \(k\) by \(\ks\), that
\(k\) is separably closed.

First fix \(\gamma, \gamma' \in \Gamma(k)\) in
different \(\Gamma_1(k)\)-cosets.
Then, since \(\phi(\gamma)(\Gt_1)\) and
\(\phi(\gamma')(\Gt_1)\) are
the Zariski closures of
\(\phi(\gamma)(\Gt_1(k))\) and
\(\phi(\gamma')(\Gt_1(k))\), they
commute.

Now discard the fixed element
\(\gamma' \in \Gamma(k)\), and fix only
\(\gamma \in \Gamma(k)\).
Since the complementary subscheme to
the open subscheme \(\gamma\Gamma_1\) of
\(\Gamma\) is
smooth, its set
\(\Gamma(k) \setminus \gamma\Gamma_1(k)\) of
\(k\)-points is Zariski dense. Moreover,
the closed subscheme of \(\Gamma \setminus \gamma\Gamma_1\) whose
\(A\)-points, for every \(k\)-algebra \(A\), are given by
\[
\smashsett
	{\gamma' \in \Gamma(A) \setminus \gamma\Gamma_1(A)}
	{\(\phi_A(\gamma')(\Gt_1(A))\) commutes with
		\(\phi(\gamma)_A(\Gt_1(A))\)}
\]
contains
\(\Gamma(k) \setminus \gamma\Gamma_1(k)\).
By smoothness, we have for every such \(A\) and every
\(\gamma' \in \Gamma(A) \setminus \gamma\Gamma_1(A)\) that
\(\phi(\gamma)_A(\Gt_1(A))\)
commutes with
\(\phi_A(\gamma')(\Gt_1(A))\).

Finally, discard the fixed element \(\gamma \in \Gamma(k)\)
(as well as \(\gamma'\)).
Since \(\Gamma\) is smooth and
the closed subscheme whose \(A\)-points,
for every \(k\)-algebra \(A\), are given by
\[
\smashsett
	{\gamma \in \Gamma(A)}
	{\(\phi_A(\gamma')(\Gt_1(A))\) commutes with
		\(\phi_A(\gamma)(\Gt_1(A))\) for every
	\(\gamma' \in \Gamma(A) \setminus \gamma\Gamma_1(A)\)},
\]
contains \(\Gamma(k)\),
we have that the criterion in
Proposition~\ref{prop:ind-second-adjoint}%
	(\ref{subprop:ind-image-of-second})
for belonging to the image of \(\alpha\)
is satisfied.
It follows from that result that
there is some
\(\ell \in \Hom(\uMor_{\Gamma_1}(\Gamma, \Gt_1), \Ht)\)
such that
\(\alpha(\ell)\) equals \(\phi\).
Now fix \(\ft \in \Mor_{\Gamma_1}(\Gamma, \Gt_1)\).
Since
\(\ft\) and
\(\prod_{\gamma \in \Gamma(k)/\Gamma_1(k)}
	\gamma\inv\cdot\iota_{\Gt_1}(\ft(\gamma))\)
agree on \(\Gamma(k)\), they are equal.
Thus
\(\ell(\ft)\) equals
\(\prod_{\gamma \in \Gamma(k)/\Gamma_1(k)}
	\ell(\gamma\inv\cdot\iota_{\Gt_1}(\ft(\gamma)))\),
which, by definition, equals
\[
\prod_{\gamma \in \Gamma(k)/\Gamma_1(k)}
	\alpha(\ell)(\gamma)(\ft(\gamma)) =
\prod_{\gamma \in \Gamma(k)/\Gamma_1(k)}
	\phi(\gamma)(\ft(\gamma)).\qedhere
\]
\end{proof}

\begin{rem}
\label{rem:ind-char}
Suppose that \(\Tt_1\) is a group of multiplicative type, i.e., that
there is a \(\Z[\Gal(k)]\)-module \(\Mt_1\) such that
\(\Tt_1(A)\) equals \(\Hom_{\Z[\Gal(k)]}(\Mt_1, (\ks \otimes_k A)^\times)\)
for every \(k\)-algebra \(A\).
Then \(\Mt_1\) is
isomorphic, as a \(\Z[\Gal(k)]\)-module, to
\(\Hom(\Tt_{1\,\ks}, \GL_{1, \ks}) = \bX^*(\Tt_{1\,\ks})\).
Suppose moreover that
\(\Tt_1\) is equipped with an action of \(\Gamma_1\).

Put \(\Tt = \uMor_{\Gamma_1}(\Gamma, \Tt_1)\).
Proposition \ref{prop:ind-scheme} already shows that
\(\Tt\) is a group of multiplicative type, but
we can say more.
Namely, Proposition \ref{prop:ind-second-adjoint} provides
a natural isomorphism
\bimap\alpha
	{\uHom(\Tt, \GL_{1, k})}
	{\uMor_{\Gamma_1}
		(\Gamma, \uHom(\Tt_1, \GL_{1, k}))},
hence a map
\abbimap
	{\bX^*(\Tt_\ks) = \Hom(\Tt_\ks, \GL_{1\,\ks})}
	{\Mor_{\Gamma_{1\,\ks}}
		(\Gamma_\ks, \uHom(\Tt_{1\,\ks}, \GL_{1\,\ks}))}
on \(\ks\)-points.
We describe the inverse of this map concretely.

First,
the (\(\Gal(k) \ltimes \Gamma_1(\ks)\))-equivariant map
\abmap
	{\bX^*(\Tt_{1\,\ks})}
	{\bX^*(\Tt_\ks)}
coming from the co-unit
\abmap{\Tt = \uMor_{\Gamma_1}(\Gamma, \Tt_1)}{\Tt_1}
of the adjunction in Lemma \ref{lem:ind-first-adjoint}
extends uniquely to a
(\(\Gal(k) \ltimes \Gamma(\ks)\))-equivariant map
\abmap
	{\Z[\Gamma(\ks)] \otimes_{\Z[\Gamma_1(\ks)]}
		\bX^*(\Tt_{1\,\ks})}
	{\bX^*(\Tt_\ks)}.
Now choose
a set \(S\)
of representatives for
the cosets of \(\Gamma_1(\ks)\) in \(\Gamma(\ks)\).
Each element of \(S\) gives a map
\abmap
	{\Z[\Gamma(\ks)] \otimes_{\Z[\Gamma_1(\ks)]} \bX^*(\Tt_{1\,\ks})}
	{\bX^*(\Tt_{1\,\ks})}, and
these maps assemble to an isomorphism
\abbimap
	{\Z[\Gamma(\ks)] \otimes_{\Z[\Gamma_1(\ks)]} \bX^*(\Tt_{1\,\ks})}
	{\prod_S \bX^*(\Tt_{1\,\ks})}.
We also have an evaluation morphism
\abmap
	{\Mor_{\Gamma_{1\,\ks}}
		(\Gamma_\ks, \uHom(\Tt_{1\,\ks}, \GL_{1, \ks}))}
	{\prod_S \Hom(\Tt_{1\,\ks}, \GL_{1\,\ks})}.
These maps fit together into a commutative diagram
\[\xymatrix{
\Mor_{\Gamma_{1\,\ks}}(\Gamma_\ks, \uHom(\Tt_{1\,\ks}, \GL_{1\,\ks})) \ar[dr]
\ar@{-->}[r] &
\Z[\Gamma(\ks)] \otimes_{\Z[\Gamma_1(\ks)]} \bX^*(\Tt_{1\,\ks}) \ar[d]^*[@!90]{\sim}\ar[r] &
\bX^*(\Tt_\ks), \\
& \prod_S \bX^*(\Tt_{1\,\ks})
}\]
and the composition across the top row is the promised inverse of
the map on \(\ks\)-rational points coming from \(\alpha\).
\end{rem}

\begin{bibdiv}
\begin{biblist}

\bib{adler:thesis}{article}{
  author={Adler, Jeffrey D.},
  title={Refined anisotropic $K$-types and supercuspidal representations},
  journal={Pacific J. Math.},
  volume={185},
  date={1998},
  number={1},
  pages={1\ndash 32},
  issn={0030-8730},
  review={\MR {1653184 (2000f:22019)}},
}

\bib{adler-lansky:lifting-1}{article}{
  author={Adler, Jeffrey D.},
  author={Lansky, Joshua M.},
  title={Lifting representations of finite reductive groups I: semisimple conjugacy classes},
  journal={Canad. J. Math.},
  volume={66},
  date={2014},
  number={6},
  pages={1201--1224},
  issn={0008-414X},
  review={\MR {3270781}},
  doi={10.4153/CJM-2014-013-6},
}

\bib{adler-lansky:lifting1}{article}{
  author={Adler, Jeffrey D.},
  author={Lansky, Joshua M.},
  title={Lifting representations of finite reductive groups I: Semisimple conjugacy classes},
  journal={Canad. J. Math.},
  volume={66},
  year={2014},
  pages={1201\ndash 1224},
  doi={10.4153/CJM-2014-013-6},
  eprint={arXiv:1106.0786},
}

\bib{adler-lansky:lifting2}{article}{
  author={Adler, Jeffrey D.},
  author={Lansky, Joshua M.},
  title={Lifting representations of finite reductive groups II: Explicit conorm functions},
  eprint={arxiv:1109.0794},
  journal={J. Algebra},
  volume={631},
  date={1 October 2023},
  pages={610--657},
  doi={10.1016/j.jalgebra.2023.04.015},
}

\bib{adler-lansky:data-actions}{article}{
  author={Adler, Jeffrey D.},
  author={Lansky, Joshua M.},
  title={Root data with group actions},
  book={ title={Representations of reductive $p$-adic groups: Contributions from Pune}, editor={Aubert, Anne-Marie}, editor={Mishra, Manish}, editor={Roche, Alan}, editor={Spallone, Steven}, series={Progress in Math}, volume={328}, publisher={Birkh\"auser}, year={2019}, },
  eprint={arXiv:1707.01935},
}

\bib{adler-lansky-spice:actions3}{article}{
  author={Adler, Jeffrey D.},
  author={Lansky, Joshua M.},
  author={Spice, Loren R.},
  title={On smooth-group actions on reductive groups and affine buildings},
  status={in preparation},
}

\bib{alperin-gorenstein:schur}{article}{
  author={Alperin, J. L.},
  author={Gorenstein, Daniel},
  title={The multiplicators of certain simple groups},
  journal={Proc. Amer. Math. Soc.},
  volume={17},
  date={1966},
  pages={515--519},
  issn={0002-9939},
  review={\MR {0193141}},
  doi={10.2307/2035202},
}

\bib{SGA-4.1}{collection}{
  editor={Artin, Michael},
  editor={Grothendieck, Alexandre},
  editor={Verdier, Jean-Louis},
  title={Th\'eorie des topos et cohomologie \'etale des sch\'emas. Tome 1},
  language={French},
  series={Lecture Notes in Mathematics},
  volume={Vol. 269},
  note={S\'eminaire de G\'eom\'etrie Alg\'ebrique du Bois-Marie 1963--1964 (SGA 4); Dirig\'e{} par M. Artin, A. Grothendieck et J. L. Verdier. Avec la collaboration de N. Bourbaki, P. Deligne et B. Saint-Donat},
  publisher={Springer-Verlag, Berlin-New York},
  date={1972},
  pages={xix+529},
  review={\MR {0354652}},
}

\bib{aschbacher:s3-free-2-fusion}{article}{
  author={Aschbacher, Michael},
  title={$S_3$-free 2-fusion systems},
  journal={Proc. Edinb. Math. Soc. (2)},
  volume={56},
  date={2013},
  number={1},
  pages={27--48},
  issn={0013-0915},
  review={\MR {3021403}},
  doi={10.1017/S0013091512000235},
}

\bib{borel-desiebenthal}{article}{
  author={Borel, A.},
  author={de Siebenthal, J.},
  title={Les sous-groupes ferm\'es de rang maximum des groupes de Lie clos},
  language={French},
  journal={Comment. Math. Helv.},
  volume={23},
  date={1949},
  pages={200--221},
  issn={0010-2571},
  review={\MR {0032659 (11,326d)}},
}

\bib{borel-tits:reductive-groups}{article}{
  author={Borel, Armand},
  author={Tits, Jacques},
  title={Groupes r\'eductifs},
  language={French},
  journal={Inst. Hautes \'Etudes Sci. Publ. Math.},
  number={27},
  date={1965},
  pages={55\ndash 150},
  issn={0073-8301},
  review={\MR {0207712 (34 \#7527)}},
}

\bib{borel:linear}{book}{
  author={Borel, Armand},
  title={Linear algebraic groups},
  series={Graduate Texts in Mathematics},
  volume={126},
  publisher={Springer--Verlag},
  place={New York},
  date={1991},
  pages={xii+288},
  isbn={0-387-97370-2},
  review={\MR {1102012 (92d:20001)}},
}

\bib{bosch-lutkebohmert-raynaud:neron}{book}{
  author={Bosch, Siegfried},
  author={L{\"u}tkebohmert, Werner},
  author={Raynaud, Michel},
  title={N\'eron models},
  series={Ergebnisse der Mathematik und ihrer Grenzgebiete (3) [Results in Mathematics and Related Areas (3)]},
  volume={21},
  publisher={Springer--Verlag},
  place={Berlin},
  date={1990},
  pages={x+325},
  isbn={3-540-50587-3},
  review={\MR {1045822 (91i:14034)}},
}

\bib{bourbaki:lie-gp+lie-alg_4-6}{book}{
  author={Bourbaki, Nicolas},
  title={Lie groups and Lie algebras. Chapters 4--6},
  series={Elements of Mathematics (Berlin)},
  publisher={Springer--Verlag},
  place={Berlin},
  date={2002},
  pages={xii+300},
  isbn={3-540-42650-7},
  review={\MR {1890629 (2003a:17001)}},
  language={English},
}

\bib{chernousov-elduque-knus-tignol:d4}{article}{
  author={Chernousov, Vladimir},
  author={Elduque, Alberto},
  author={Knus, Max-Albert},
  author={Tignol, Jean-Pierre},
  title={Algebraic groups of type $\mathsf D_4$, triality, and composition algebras},
  journal={Doc. Math.},
  volume={18},
  date={2013},
  pages={413--468},
  issn={1431-0635},
  review={\MR {3084556}},
}

\bib{conrad:red-gp-sch}{article}{
  author={Conrad, Brian},
  title={Reductive group schemes},
  language={English, with English and French summaries},
  conference={ title={Autour des sch\'{e}mas en groupes. Vol. I}, },
  book={ series={Panor. Synth\`eses}, volume={42/43}, publisher={Soc. Math. France, Paris}, },
  date={2014},
  pages={93--444},
  review={\MR {3362641}},
}

\bib{conrad-gabber-prasad:prg}{book}{
  author={Conrad, Brian},
  author={Gabber, Ofer},
  author={Prasad, Gopal},
  title={Pseudo-reductive groups},
  series={New Mathematical Monographs},
  volume={26},
  edition={2},
  publisher={Cambridge University Press, Cambridge},
  date={2015},
  pages={xxiv+665},
  isbn={978-1-107-08723-1},
  review={\MR {3362817}},
  doi={10.1017/CBO9781316092439},
}

\bib{atlas-fg}{book}{
  author={Conway, J. H.},
  author={Curtis, R. T.},
  author={Norton, S. P.},
  author={Parker, R. A.},
  author={Wilson, R. A.},
  title={$\mathbb{ATLAS}$ of finite groups},
  note={Maximal subgroups and ordinary characters for simple groups; With computational assistance from J. G. Thackray},
  publisher={Oxford University Press, Eynsham},
  date={1985},
  pages={xxxiv+252},
  isbn={0-19-853199-0},
  review={\MR {0827219}},
}

\bib{curtis-lehrer-tits:spherical}{article}{
  author={Curtis, C. W.},
  author={Lehrer, G. I.},
  author={Tits, J.},
  title={Spherical buildings and the character of the Steinberg representation},
  journal={Invent. Math.},
  volume={58},
  date={1980},
  number={3},
  pages={201--210},
  issn={0020-9910},
  review={\MR {571572}},
  doi={10.1007/BF01390251},
}

\bib{demazure-gabriel:groupes-algebriques}{book}{
  author={Demazure, Michel},
  author={Gabriel, Pierre},
  title={Groupes alg\'{e}briques. Tome I: G\'{e}om\'{e}trie alg\'{e}brique, g\'{e}n\'{e}ralit\'{e}s, groupes commutatifs},
  language={French},
  note={Avec un appendice {\it Corps de classes local} par Michiel Hazewinkel},
  publisher={Masson \& Cie, \'{E}diteurs, Paris; North-Holland Publishing Co., Amsterdam},
  date={1970},
  pages={xxvi+700},
  review={\MR {0302656}},
}

\bib{SGA-3.1}{book}{
  editor={Demazure, Michel},
  editor={Grothendieck, Alexander},
  title={Sch\'emas en groupes. I: Propri\'et\'es g\'en\'erales des sch\'emas en groupes},
  language={French},
  series={S\'eminaire de G\'eom\'etrie Alg\'ebrique du Bois Marie 1962/64 (SGA 3). Dirig\'e par M. Demazure et A. Grothendieck. Lecture Notes in Mathematics, Vol. 151},
  publisher={Springer-Verlag, Berlin-New York},
  date={1970},
  pages={xv+564},
  review={\MR {0274458}},
}

\bib{SGA-3.3}{book}{
  editor={Demazure, Michel},
  editor={Grothendieck, Alexander},
  title={Sch\'emas en groupes. III: Structure des sch\'emas en groupes r\'eductifs},
  language={French},
  series={S\'eminaire de G\'eom\'etrie Alg\'ebrique du Bois Marie 1962/64 (SGA 3). Dirig\'e par M. Demazure et A. Grothendieck. Lecture Notes in Mathematics, Vol. 153},
  publisher={Springer-Verlag, Berlin-New York},
  date={1970},
  pages={viii+529},
  review={\MR {0274460}},
}

\bib{digne-michel:non-connected}{article}{
  author={Digne, Fran{\c {c}}ois},
  author={Michel, Jean},
  title={Groupes r\'eductifs non connexes},
  language={French, with English and French summaries},
  journal={Ann. Sci. \'Ecole Norm. Sup. (4)},
  volume={27},
  date={1994},
  number={3},
  pages={345--406},
  issn={0012-9593},
  review={\MR {1272294 (95f:20068)}},
}

\bib{digne-michel:quass}{article}{
  author={Digne, Fran\c cois},
  author={Michel, Jean},
  title={Quasi-semisimple elements},
  journal={Proc. Lond. Math. Soc. (3)},
  volume={116},
  date={2018},
  number={5},
  pages={1301--1328},
  issn={0024-6115},
  review={\MR {3805058}},
  doi={10.1112/plms.12121},
}

\bib{feit-thompson:solvable}{article}{
  author={Feit, Walter},
  author={Thompson, John G.},
  title={Solvability of groups of odd order},
  journal={Pacific J. Math.},
  volume={13},
  date={1963},
  pages={775--1029},
  issn={0030-8730},
  review={\MR {0166261}},
}

\bib{grothendieck:EGA-IV.2}{article}{
  author={Grothendieck, A.},
  title={\'{E}l\'{e}ments de g\'{e}om\'{e}trie alg\'{e}brique. IV. \'{E}tude locale des sch\'{e}mas et des morphismes de sch\'{e}mas. II},
  language={French},
  journal={Inst. Hautes \'{E}tudes Sci. Publ. Math.},
  number={24},
  date={1965},
  pages={231},
  issn={0073-8301},
  review={\MR {199181}},
}

\bib{grothendieck:EGA-IV.4}{article}{
  author={Grothendieck, Alexander},
  title={\'El\'ements de g\'eom\'etrie alg\'ebrique. IV. \'Etude locale des sch\'emas et des morphismes de sch\'emas IV},
  language={French},
  journal={Inst. Hautes \'Etudes Sci. Publ. Math.},
  number={32},
  date={1967},
  issn={0073-8301},
  review={\MR {0238860 (39 \#220)}},
}

\bib{isaacs:char-fg}{book}{
  author={Isaacs, I. Martin},
  title={Character theory of finite groups},
  note={Corrected reprint of the 1976 original [Academic Press, New York; MR0460423]},
  publisher={AMS Chelsea Publishing, Providence, RI},
  date={2006},
  pages={xii+310},
  isbn={978-0-8218-4229-4},
  isbn={0-8218-4229-3},
  review={\MR {2270898}},
  doi={10.1090/chel/359},
}

\bib{jantzen:alg-reps}{book}{
  author={Jantzen, Jens Carsten},
  title={Representations of algebraic groups},
  series={Mathematical Surveys and Monographs},
  volume={107},
  edition={2},
  publisher={American Mathematical Society, Providence, RI},
  date={2003},
  pages={xiv+576},
  isbn={0-8218-3527-0},
  review={\MR {2015057}},
}

\bib{kaletha-prasad:bt-theory}{book}{
  author={Kaletha, Tasho},
  author={Prasad, Gopal},
  title={Bruhat-Tits theory---a new approach},
  series={New Mathematical Monographs},
  volume={44},
  publisher={Cambridge University Press, Cambridge},
  date={2023},
  pages={xxx+718},
  isbn={978-1-108-83196-3},
  review={\MR {4520154}},
}

\bib{lemaire:twisted-characters}{article}{
  author={Lemaire, Bertrand},
  title={Caract\`eres tordus des repr\'{e}sentations admissibles},
  eprint={arxiv:1007.3576v2},
}

\bib{milne:etale-cohom}{book}{
  author={Milne, James S.},
  title={\'{E}tale cohomology},
  series={Princeton Mathematical Series},
  volume={No. 33},
  publisher={Princeton University Press, Princeton, NJ},
  date={1980},
  pages={xiii+323},
  isbn={0-691-08238-3},
  review={\MR {0559531}},
}

\bib{milne:algebraic-groups}{book}{
  author={Milne, James S.},
  title={Algebraic groups},
  series={Cambridge Studies in Advanced Mathematics},
  volume={170},
  note={The theory of group schemes of finite type over a field},
  publisher={Cambridge University Press, Cambridge},
  date={2017},
  pages={xvi+644},
  isbn={978-1-107-16748-3},
  review={\MR {3729270}},
  doi={10.1017/9781316711736},
}

\bib{prasad-yu:actions}{article}{
  author={Prasad, Gopal},
  author={Yu, Jiu-Kang},
  title={On finite group actions on reductive groups and buildings},
  journal={Invent. Math.},
  volume={147},
  date={2002},
  number={3},
  pages={545\ndash 560},
  issn={0020-9910},
  review={\MR {1893005 (2003e:20036)}},
}

\bib{serre:galois}{book}{
  author={Serre, Jean-Pierre},
  title={Galois cohomology},
  series={Springer Monographs in Mathematics},
  publisher={Springer--Verlag},
  place={Berlin},
  date={2002},
  pages={x+210},
  isbn={3-540-42192-0},
  review={\MR {1867431 (2002i:12004)}},
  language={English},
}

\bib{serre:cr}{article}{
  author={Serre, Jean-Pierre},
  title={Compl\`ete r\'{e}ductibilit\'{e}},
  language={French, with French summary},
  note={S\'{e}minaire Bourbaki. Vol. 2003/2004},
  journal={Ast\'{e}risque},
  number={299},
  date={2005},
  pages={Exp. No. 932, viii, 195--217},
  issn={0303-1179},
  review={\MR {2167207}},
}

\bib*{seminar:alg-gps}{book}{
  title={Seminar on Algebraic Groups and Related Finite Groups. (Held at The Institute for Advanced Study, Princeton, NJ, 1968/69)},
  editor={Borel, A.},
  editor={Carter, R.},
  series={Lecture Notes in Mathematics},
  volume={131},
  publisher={Springer--Verlag},
  place={Berlin},
  date={1970},
  pages={viii+321},
  review={\MR {0258840 (41 \#3486)}},
}

\bib{springer-steinberg:conj}{article}{
  author={Springer, Tonny A.},
  author={Steinberg, Robert},
  title={Conjugacy classes},
  book={seminar:alg-gps},
  pages={167\ndash 266},
  review={\MR {0268192 (42 \#3091)}},
}

\bib*{proc:corvallis1}{collection}{
  title={Automorphic forms, representations, and $L$-functions. Part 1},
  series={Proceedings of Symposia in Pure Mathematics, XXXIII},
  booktitle={Proceedings of the Symposium in Pure Mathematics of the American Mathematical Society (Twenty-fifth Summer Research Institute) held at Oregon State University, Corvallis, Ore., July 11--August 5, 1977},
  editor={Borel, Armand},
  editor={Casselman, W.},
  publisher={American Mathematical Society},
  place={Providence, R.I.},
  date={1979},
  pages={x+322},
  isbn={0-8218-1435-4},
  review={\MR {546586 (80g:10002a)}},
}

\bib{springer:corvallis}{article}{
  author={Springer, Tonny A.},
  title={Reductive groups},
  book={proc:corvallis1},
  pages={3\ndash 27},
  review={\MR {546587 (80h:20062)}},
}

\bib{springer:lag}{book}{
  author={Springer, Tonny A.},
  title={Linear algebraic groups},
  series={Progress in Mathematics},
  volume={9},
  publisher={Birkh\"auser Boston Inc.},
  place={Boston, MA},
  date={1998},
  pages={xiv+334},
  isbn={0-8176-4021-5},
  review={\MR {1642713 (99h:20075)}},
}

\bib{steinberg:endomorphisms}{book}{
  author={Steinberg, Robert},
  title={Endomorphisms of linear algebraic groups},
  series={Memoirs of the American Mathematical Society, No. 80},
  publisher={American Mathematical Society},
  place={Providence, R.I.},
  date={1968},
  pages={108},
  review={\MR {0230728 (37 \#6288)}},
}

\bib{suzuki:my-groups}{article}{
  author={Suzuki, Michio},
  title={On a class of doubly transitive groups},
  journal={Ann. of Math. (2)},
  volume={75},
  date={1962},
  pages={105--145},
  issn={0003-486X},
  review={\MR {0136646}},
  doi={10.2307/1970423},
}

\bib{tits:unipotent-2}{article}{
  author={Tits, Jacques},
  title={Unipotent elements and parabolic subgroups of reductive groups. II},
  conference={ title={Algebraic groups Utrecht 1986}, },
  book={ series={Lecture Notes in Math.}, volume={1271}, publisher={Springer, Berlin}, },
  isbn={3-540-18234-9},
  date={1987},
  pages={265--284},
  review={\MR {0911145}},
  doi={10.1007/BFb0079243},
}

\bib{toborg-waldecker:3'=>Sz}{article}{
  author={Toborg, Imke},
  author={Waldecker, Rebecca},
  title={Finite simple $3^\prime $-groups are cyclic or Suzuki groups},
  journal={Arch. Math. (Basel)},
  volume={102},
  date={2014},
  number={4},
  pages={301--312},
  issn={0003-889X},
  review={\MR {3196957}},
  doi={10.1007/s00013-014-0630-8},
}

\bib{vistoli:descent}{article}{
  author={Vistoli, Angelo},
  title={Grothendieck topologies, fibered categories and descent theory},
  conference={ title={Fundamental algebraic geometry}, },
  book={ series={Math. Surveys Monogr.}, volume={123}, publisher={Amer. Math. Soc., Providence, RI}, },
  isbn={0-8218-3541-6},
  date={2005},
  pages={1--104},
  review={\MR {2223406}},
}

\bib{MSE4959294}{misc}{
  title={Are all simple groups of order coprime to 3 cyclic? If so, why?},
  author={testaccount (https://math.stackexchange.com/users/1213921/testaccount)},
  note={URL: https://math.stackexchange.com/q/4959294 (version: 2024-08-16)},
  eprint={https://math.stackexchange.com/q/4959294},
  organization={Mathematics Stack Exchange},
}

\bib{MSE4959778}{misc}{
  title={Are all simple groups of order coprime to 3 cyclic? If so, why?},
  author={user531372 (https://math.stackexchange.com/users/1347817/user531372)},
  note={URL: https://math.stackexchange.com/q/4959778 (version: 2024-08-17)},
  eprint={https://math.stackexchange.com/q/4959778},
  organization={Mathematics Stack Exchange},
}

\end{biblist}
\end{bibdiv}
\end{document}